\DeclareRobustCommand{\gobblefour}[4]{}
\renewcommand{\tocsection}[3]{%
  \indentlabel{\@ifnotempty{#2}{\ignorespaces#1 \makebox[\widthof{00.}][l]{#2.}\quad}}#3}
\renewcommand{\tocsubsection}[3]{%
  \indentlabel{\@ifnotempty{#2}{\ignorespaces#1 \makebox[\widthof{00.0.}][l]{#2.}\quad}}#3}
\setlist[itemize]{leftmargin=*}
\setlist[enumerate]{leftmargin=*}
\newtheorem{thm}{Theorem}[section]
\newtheorem{defin}[thm]{Definition}
\newtheorem{rem}[thm]{Remark}
\newtheorem{prop}[thm]{Proposition}
\newtheorem{cor}[thm]{Corollary}
\newtheorem{lemma}[thm]{Lemma}
\newcommand{\1}{\mathbbm{1}}
\newcommand{\A}{\mathbb A}
\newcommand{\B}{\mathbb B}
\newcommand{\C}{\mathbb C}
\newcommand{\E}{\mathbb E}
\newcommand{\F}{\mathbb F}
\newcommand{\G}{\mathbb G}
\newcommand{\HH}{\mathbb H}
\newcommand{\I}{\mathbb I}
\newcommand{\J}{\mathbb J}
\newcommand{\N}{\mathbb N}
\newcommand{\Q}{\mathbb Q}
\newcommand{\R}{\mathbb R}
\newcommand{\T}{\mathbb T}
\newcommand{\Z}{\mathbb Z}
\newcommand{\ab}{\mathrm{ab}}
\newcommand{\Ad}{\mathrm{Ad}\,}
\newcommand{\ad}{\mathrm{ad}\,}
\newcommand{\Aut}{\mathrm{Aut}}
\newcommand{\aux}{\mathrm{aux}}
\newcommand{\bad}{\mathrm{bad}}
\newcommand{\car}{\mathrm{char}}
\newcommand{\card}{\mathrm{card}}
\newcommand{\cl}{\mathrm{cl}}
\newcommand{\cont}{\mathrm{cont}}
\newcommand{\cris}{\mathrm{cris}}
\newcommand{\Dim}{\mathrm{dim}}
\newcommand{\DGamma}{\mathrm{D}\Gamma}
\newcommand{\diag}{\mathrm{diag}\,}
\newcommand{\dR}{\mathrm{dR}}
\newcommand{\End}{\mathrm{End}}
\newcommand{\ev}{\mathrm{ev}}
\newcommand{\ext}{\mathrm{ext}}
\newcommand{\Frob}{\mathrm{Frob}}
\newcommand{\Gal}{\mathrm{Gal}}
\newcommand{\GL}{\mathrm{GL}}
\newcommand{\GSp}{\mathrm{GSp}}
\newcommand{\hol}{\mathrm{hol}}
\newcommand{\Hom}{\mathrm{Hom}}
\newcommand{\id}{\mathrm{id}}
\newcommand{\im}{\mathrm{Im}\,}
\newcommand{\irr}{\mathrm{irr}}
\newcommand{\lcm}{\mathrm{lcm}}
\newcommand{\Lie}{\mathrm{Lie}}
\newcommand{\loc}{\mathrm{loc}}
\newcommand{\Mat}{\mathrm{M}}
\newcommand{\Min}{\mathrm{min}}
\newcommand{\ncr}{\mathrm{ncr}}
\newcommand{\norm}{\mathrm{norm}}
\newcommand{\ord}{\mathrm{ord}}
\newcommand{\PGamma}{\mathrm{P}\Gamma}
\newcommand{\PGSp}{\mathrm{PGSp}}
\newcommand{\PSp}{\mathrm{PSp}}
\newcommand{\rig}{\mathrm{rig}}
\newcommand{\SL}{\mathrm{SL}}
\newcommand{\Sp}{\mathrm{Sp}}
\newcommand{\Spec}{\mathrm{Spec}\,}
\newcommand{\Spf}{\mathrm{Spf}\,}
\newcommand{\Spm}{\mathrm{Spm}\,}
\newcommand{\slo}{\mathrm{sl}}
\newcommand{\st}{\mathrm{st}}
\newcommand{\Std}{\mathrm{Std}}
\newcommand{\Sym}{\mathrm{Sym}}
\newcommand{\Tr}{{\mathrm{Tr}}}
\newcommand{\univ}{\mathrm{univ}}
\newcommand{\ur}{\mathrm{ur}}
\newcommand{\Zar}{\mathrm{Zar}}
\newcommand{\xto}{\xrightarrow}
\newcommand{\into}{\hookrightarrow}
\newcommand{\onto}{\twoheadrightarrow}
\newcommand{\ovl}{\overline}
\newcommand{\ccirc}{\kern0.5ex\vcenter{\hbox{$\scriptstyle\circ$}}\kern0.5ex}
\newcommand{\cB}{\mathcal{B}}
\newcommand{\cC}{\mathcal{C}}
\newcommand{\cD}{\mathcal{D}}
\newcommand{\cE}{\mathscr{E}}
\newcommand{\cG}{\mathcal{G}}
\newcommand{\cI}{\mathcal{I}}
\newcommand{\cJ}{\mathcal{J}}
\newcommand{\cK}{\mathcal{K}}
\newcommand{\cN}{\mathcal{N}}
\newcommand{\cO}{\mathcal{O}}
\newcommand{\ccR}{\mathcal{R}}
\newcommand{\cR}{\mathscr{R}}
\newcommand{\cS}{\mathscr{S}}
\newcommand{\cV}{\mathcal{V}}
\newcommand{\cW}{\mathcal{W}}
\newcommand{\cZ}{\mathcal{Z}}
\newcommand{\calH}{\mathcal{H}}
\newcommand{\bB}{{\mathbf{B}}}
\newcommand{\bD}{{\mathbf{D}}}
\newcommand{\bG}{{\mathbf{G}}}
\newcommand{\uk}{{\underline{k}}}
\newcommand{\Fp}{\overline{\F}_p}
\newcommand{\Qp}{\overline{\Q}_p}
\newcommand{\Zp}{\overline{\Z}_p}
\newcommand{\fa}{{\mathfrak{a}}}
\newcommand{\fA}{{\mathfrak{A}}}
\newcommand{\fb}{{\mathfrak{b}}}
\newcommand{\fc}{{\mathfrak{c}}}
\newcommand{\fg}{{\mathfrak{g}}}
\newcommand{\fG}{{\mathfrak{G}}}
\newcommand{\fI}{{\mathfrak{I}}}
\newcommand{\fJ}{{\mathfrak{J}}}
\newcommand{\fl}{{\mathfrak{l}}}
\newcommand{\fm}{{\mathfrak{m}}}
\newcommand{\fP}{{\mathfrak{P}}}
\newcommand{\fQ}{{\mathfrak{Q}}}
\newcommand{\ft}{{\mathfrak{t}}}
\newcommand{\fu}{{\mathfrak{u}}}
\newcommand{\fU}{{\mathfrak{U}}}
\newcommand{\fgl}{{\mathfrak{gl}}}
\newcommand{\fgsp}{{\mathfrak{gsp}}}
\newcommand{\fsl}{{\mathfrak{sl}}}
\newcommand{\fsp}{{\mathfrak{sp}}}
\title{Galois level and congruence ideal\\for $p$-adic families of finite slope Siegel modular forms}
\author{Andrea Conti}
\begin{document}

\begin{abstract}
We consider families of Siegel eigenforms of genus $2$ and finite slope, defined as local pieces of an eigenvariety and equipped with a suitable integral structure. Under some assumptions on the residual image, we show that the image of the Galois representation associated with a family is big, in the sense that a Lie algebra attached to it contains a congruence subalgebra of non-zero level. We call Galois level of the family the largest such level. We show that it is trivial when the residual representation has full image. When the residual representation is a symmetric cube, the zero locus defined by the Galois level of the family admits an automorphic description: it is the locus of points that arise from overconvergent eigenforms for $\GL_2$, via a $p$-adic Langlands lift attached to the symmetric cube representation. 
Our proof goes via the comparison of the Galois level with a ``fortuitous'' congruence ideal, that describes the zero- and one-dimensional subvarieties of symmetric cube type appearing in the family. We show that some of the $p$-adic lifts are interpolated by a morphism of rigid analytic spaces from an eigencurve for $\GL_2$ to an eigenvariety for $\GSp_4$. 
The remaining lifts appear as isolated points on the eigenvariety.
\end{abstract}

\maketitle

\numberwithin{thm}{section}
\numberwithin{equation}{section}

\setcounter{tocdepth}{1}
\tableofcontents

\section{Introduction}

Drawing inspiration from earlier work of H. Hida and J. Lang, the paper \cite{cit} studied the image of the Galois representations associated with $p$-adic families of modular forms, more precisely eigenforms of finite slope for the action of a Hecke algebra unramified outside of a fixed tame level. Such a family is defined by equipping a local piece of the eigencurve of the given tame level with an integral structure. A result of \cite{cit} states that the Galois representation attached to a family has big image in the following sense: there is a ring $\B$ and a Lie subalgebra $\fG$ of $\fgl_2(\B)$ attached to $\im\rho$, in a meaningful way, such that $\fG$ contains $\fl\cdot\fsl_2(\B)$ for a non-zero ideal $\fl$ of $\B$. This can be seen as an analogue, for a $p$-adic family, of a classical result of Ribet and Momose on the image of the $p$-adic Galois representation attached to a classical eigenform \cite{ribetI, momose}. We call Galois level of the family the largest ideal $\fl$ with the above property. The arguments in \cite{cit} rely strongly on the work of Hida and J. Lang for ordinary families \cite{hida,lang}, in particular on the study by J. Lang of the self-twists of the Galois representations attached to families. A new ingredient in the positive slope case is relative Sen theory, that replaces ordinarity in some crucial steps. Another result of \cite{cit} is an automorphic description of the Galois level of a family: the geometric points of its zero locus are the $p$-adic CM points of the family. This is also a generalization of a theorem of Hida in the ordinary case. The proof goes via the comparison of the Galois level with a fortuitous congruence ideal, that encodes the information on the CM specializations of the family. We call this ideal ``fortuitous'' because, contrary to what happens in the ordinary case, the CM specializations of a non-CM family do not correspond to congruences with CM families, that do not exists when the slope is positive. 

In this paper we find analogous results for $p$-adic families of Siegel modular forms of genus $2$ and finite slope. We think that our work in this setting shows that the big image properties of Galois representations and their relations to congruences are part of a picture that can be extended to more general reductive groups. 
We remark that Hida and Tilouine already have some results for ordinary $p$-adic families of $\GSp_4$-eigenforms that are residually of ``twisted Yoshida type'' \cite{hidatil}. Their arguments rely on the Galois ordinarity of the families and on $R=T$ results, both of which are not available when the slope is positive. They obtain congruences between families that are lifts from $\GL_{2/F}$, for a quadratic field $F$, and families that are not; their congruence ideals are then traditional ones and not fortuitous ones. In light of the results of the present paper, we think that fortuitous congruences should be regarded as general phenomena, that appear whenever we consider families of eigenforms for a reductive group that arise as $p$-adic Langlands lifts from a group of smaller rank.

The paper can be divided in two parts. In the first one (Sections 2 to 9) we define two-parameter families of $\GSp_4$-eigenforms of finite slope and we attach Galois representations to them; we then prove that the image of these representations is big in a Lie theoretic sense, assuming that the residual representation is either of full image or a symmetric cube. In the second part (Sections 10 to 16) we prove that the size of the Galois representation attached to a two-parameter family is related to the congruences of the family with lifts of eigenforms for a smaller group, constructed via a $p$-adic Langlands transfer. In the first half we need to solve many technical problems when passing from genus $1$ to genus $2$, whereas the second half is substantially different from its genus $1$ counterpart.  
We present our results and arguments in more detail below.

Fix a prime $p$ and an integer $M$ not divisible by $p$. Let $\calH_2^M$ be an abstract Hecke algebra unramified outside $Mp$ and of Iwahoric level at $p$. In their paper \cite{aip}, Andreatta, Iovita and Pilloni constructed a rigid analytic object $\cD_2$, that we call the $\GSp_4$-eigenvariety, and a map from $\calH_2^M$ to the ring of analytic functions on $\cD_2$, interpolating the systems of Hecke eigenvalues associated with the $p$-stabilized Siegel modular forms of genus $2$ and tame level $M$. The eigenvariety $\cD_2$ is equipped with a map to the two-dimensional weight space $\cW_2$, that is the rigid analytic space associated with the formal scheme $\Spf\Z_p[[(\Z_p^\times)^2]]$ by Berthelot's construction \cite[Section 7]{dejong}. To our purposes it is important that families be defined integrally, so we cannot work globally on irreducible components of the eigenvariety. We consider instead an admissible domain $D_h$ on $\cD_2$ consisting of the points of slope bounded by a rational number $h$ and of weight in a wide open disc in the weight space. If the radius of this disc is sufficiently small with respect to $h$, the restriction of the weight map to $D_h$ is a finite map thanks to a result of Bella\"\i che (Proposition \ref{locfin}). A suitably chosen integral structure on the weight disc induces an integral structure on $D_h$. This means that we can define a local profinite ring $\I^\circ$ and a map $\calH^M_2\to\I^\circ$ that interpolates the systems of Hecke eigenvalues of the classical eigenforms appearing in $D_h$. An argument by Chenevier gives a Galois pseudocharacter on $D_h$, that we lift to a representation $\rho\colon G_\Q\to\GSp_4(\I^\circ)$ (Lemma \ref{sympform}). We define the ``self-twists'' of $\rho$ as automorphisms of $\I^\circ$ that induce an isomorphism of $\rho$ with one of its twists by a Dirichlet character (Definition \ref{selftwist}). We write $\I_0^\circ$ for the subring of elements of $\I^\circ$ fixed by all the self-twists. We define a certain completion $\B$ of $\I^\circ_0[1/p]$ and a Lie subalgebra $\Lie(\im\rho)$ of $\fgsp_4(\B)$ attached to $\im\rho$ (see Section \ref{biglie}). We assume that $\rho$ is $\Z_p$-regular (Definition \ref{Zpreg}) and that the residual representation $\ovl\rho$ is either full or of symmetric cube type (Definition \ref{sctype}). Our first main result is the following.

\begin{thm}\label{intro1}(Theorem \ref{thexlevel})
There exists a non-zero ideal $\fl$ of $\B$ such that $\fl\cdot\fsp_4(\B)\subset\Lie(\im\rho)$.
\end{thm}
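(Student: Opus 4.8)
The plan is to follow the strategy pioneered by Hida--Lang in the ordinary genus $1$ case and adapted to finite slope in \cite{cit}, now for $\GSp_4$. Write $G = \im\rho \subset \GSp_4(\I^\circ)$ and let $\fG = \Lie(\im\rho) \subset \fgsp_4(\B)$ be the associated Lie algebra. The first step is to produce many \emph{unipotent} elements in $G$ whose logarithms land in $\fsp_4(\B)$, and then to show that the $\B$-submodule of $\fsp_4(\B)$ they generate, together with the bracket relations forced by $\fsl_2$-triples inside $\fsp_4$, fills up a congruence subalgebra $\fl\cdot\fsp_4(\B)$ for some non-zero ideal $\fl$. Concretely, I would first reduce modulo the maximal ideal: since $\ovl\rho$ is assumed either full (so $\ovl\rho(G_\Q)$ contains $\Sp_4(\F)$ up to finite index) or of symmetric cube type (so it contains the image of $\SL_2(\F)$ under $\Sym^3$), in either case the residual image is large enough that, by a Nakayama-type / pro-$p$ argument applied to the relevant level structure, $G$ contains a nontrivial pro-$p$ subgroup $U$ whose image in each root subgroup is open. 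The key input here is the root space decomposition of $\fsp_4$: there are four long roots and four short roots, and the long root $\fsl_2$-copies together with short-root brackets generate everything, so it suffices to control one unipotent element in a principal $\fsl_2$ (this is where $\Z_p$-regularity of $\rho$, Definition \ref{Zpreg}, is used, exactly as $H^0$-regularity is used in the genus $1$ story, to guarantee that conjugation by a suitable diagonal torus element separates the root spaces over $\B$).

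The second step is to descend to the self-twist-invariant ring $\I_0^\circ$ and hence to $\B$. By definition the self-twists $\sigma$ of $\rho$ satisfy $\sigma \circ \rho \cong \rho \otimes \eta_\sigma$ for Dirichlet characters $\eta_\sigma$; the Lie algebra $\fG$ is \emph{defined} so as to be stable under the induced action, and one checks (as in \cite{cit}, using that the $\eta_\sigma$ have finite order prime to $p$ after shrinking) that the entries of the relevant logarithms, though a priori in $\B \otimes_{\I_0^\circ} \I^\circ$, in fact already lie in $\B$ because the trace-zero, symplectic conditions are self-twist invariant. So after this averaging one obtains that $\fG \cap \fsp_4(\B)$ contains a $\B$-submodule that contains $c \cdot \fsp_4(\B)$ for \emph{some} specific nonzero $c \in \B$ coming from the regularity constant and the conductor of the residual image. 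Taking $\fl$ to be the largest ideal with $\fl\cdot\fsp_4(\B) \subset \fG$ — which is nonzero since it contains $c$ — gives the statement; well-definedness of the largest such ideal follows because $\B$ is (a localization/completion of) a Noetherian local domain, so the sum of all such ideals still has the property.

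The main obstacle — and the reason this is substantially harder than genus $1$ — is the passage from "$G$ contains a single unipotent of principal type with controllable logarithm" to "$\fG$ contains a full congruence subalgebra," because $\fsp_4$ has two root lengths and the naive bracket computations only directly fill the long root spaces; one must show the short root spaces are also hit with ideals comparable (up to a fixed nonzero factor) to those in the long spaces. This requires a careful analysis of the commutator relations among root subgroups of $\Sp_4$ inside the pro-$p$ group $U$, keeping track of how the level degrades under each bracket, and using the regularity hypothesis to conjugate so that all eight root spaces become simultaneously visible over $\B$ rather than over a ramified extension. I would handle this by the standard device of first establishing the result for $\SL_2 \hookrightarrow \Sp_4$ as a principal (long-root) subgroup via the genus $1$ machinery of \cite{cit} verbatim, extracting an ideal $\fl_0 \neq 0$, and then propagating: conjugating the resulting $\fl_0\cdot\fsl_2(\B)$-copy by Weyl group representatives and by the torus sweeps out all long root spaces with the same ideal, and bracketing long with long produces the short spaces (since $[\mathfrak g_\alpha,\mathfrak g_\beta] = \mathfrak g_{\alpha+\beta}$ whenever $\alpha+\beta$ is a short root and $\alpha,\beta$ long in type $C_2$) with an ideal that is at worst $\fl_0^2$ — still nonzero — so $\fl := \fl_0^2$ (or a suitable power) works.
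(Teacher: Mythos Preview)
Your proposal captures the broad shape---produce unipotent elements, propagate via brackets, descend to the self-twist ring---but it misses the central technical ingredient and contains a root-system error that makes the propagation step fail as written.

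\textbf{The main gap: Sen theory.} The decisive step in the paper is Lemma~\ref{Brstr}: the nilpotent pieces $\fU_{r,\C_p}^{\gamma,\alpha}$ acquire a $B_{r,\C_p}$-module structure (not merely a $\Q_p$-structure) because the exponential $\Phi_{\B_r}$ of the Sen operator normalizes $\fG_{r,\C_p}$ and is conjugate to $\diag(u^{-3}(1+T_1)(1+T_2),\,u^{-1}(1+T_1),\,u^{-2}(1+T_2),\,1)$ (Propositions~\ref{eigensen} and~\ref{expsen}). It is precisely the appearance of $1+T_1$ and $1+T_2$ as eigenvalues here that upgrades the a~priori $\Q_p$-structure of $\fG_r$ to a $B_r$-structure (cf.\ Remark~\ref{noBrstr}). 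Your proposed substitute---conjugation by a $\Z_p$-regular diagonal element $d\in\im\rho$---does separate root spaces (and this is indeed how $\Z_p$-regularity is used, in Proposition~\ref{approx}), but the eigenvalues of $\Ad(d)$ are constants in $\I_0^\circ$, not topological generators of $\Lambda_h$; no $\B$-module structure emerges from them. In the ordinary case (Section~\ref{exlevelord}) one bypasses Sen theory because ordinarity itself supplies an element of $\im\rho$ with eigenvalues involving $1+T_1,1+T_2$; in positive slope no such element exists in $\im\rho$, and relative Sen theory is the replacement.

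\textbf{The root-system error.} In type $C_2$ no sum of two long roots is a root: the long roots are $\pm 2e_1,\pm 2e_2$, and none of $\pm 2e_1\pm 2e_2$ lies in the root system. So ``bracketing long with long produces the short spaces'' is false, and your propagation from a long-root $\fsl_2$ cannot reach the short root spaces by brackets alone. The paper's Lemma~\ref{Brstr} instead brackets short with long ($[\fU^{\alpha_1},\fU^{\alpha_2}]=\fU^{\alpha_1+\alpha_2}$) and exploits the specific Sen eigenvalues: stability of $\fU^{\alpha_1}$ under multiplication by $1+T_2$ and of $\fU^{\alpha_2}$ under $(1+T_1)/(1+T_2)$ combine, via the bracket, to give stability of $\fU^{\alpha_1+\alpha_2}$ under both $1+T_1$ and $1+T_2$, and then one brackets back out to every root.

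\textbf{Secondary issues.} Weyl group representatives need not lie in $\im\rho$ nor normalize $\fG_r$, so ``conjugating the $\fl_0\cdot\fsl_2(\B)$-copy by Weyl group representatives'' is not available. Likewise, invoking the genus~$1$ result of \cite{cit} ``verbatim'' for a principal $\SL_2\hookrightarrow\Sp_4$ is not meaningful: that result concerns a $\GL_2$-family, not a subgroup of the image of a $\GSp_4$-family. The paper instead produces, for \emph{each} root $\alpha$ separately, a $\Lambda_h$-lattice basis inside $U^\alpha(\rho)$ (Proposition~\ref{uniplatt}), by choosing one non-critical classical weight whose specializations are not symmetric cube lifts (Lemma~\ref{specialprime}), establishing fullness of the \emph{product} of all specializations at that weight via Goursat plus Tazhetdinov's subnormal classification (Proposition~\ref{prodresfull}, which in turn needs the self-twist lifting Proposition~\ref{lifttwists}), and then lifting unipotents through the approximation argument (Proposition~\ref{approx}). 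Only after all root spaces are populated does the Sen operator enter to supply the module structure and Lemma~\ref{liestdarg} assembles the congruence subalgebra.
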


We call \emph{Galois level} of the family the largest ideal $\fl$ satisfying the inclusion of Theorem \ref{intro1}. 
We give here a summary of the proof of Theorem \ref{intro1}, that takes up Sections 6 to 9. We first show that, under our assumptions on $\ovl\rho$, there exists a classical weight such that $\rho$ specializes to a representation with big image at all points of this weight appearing on the family (Theorem \ref{classbigim}). Here we need the recent classicality result contained in \cite[Theorem 5.3.1]{bijpilstr}. Another essential ingredient is a result of Pink (Theorem \ref{pink}), that we use to show that the representation associated with a $\GSp_4$-eigenform that is not a lift from a smaller group has big image with respect to the ring fixed by its self-twists. This is an analogue of the result of Ribet and Momose for $\GL_2$-eigenforms. We rely on a result proved in the second part of the paper (see Corollary \ref{sym3automcor}) to show that a form which is not a lift satisfies the assumptions of Pink's theorem. 

Once a classical weight with the desired properties is chosen, we follow a strategy of J. Lang to obtain some information on the image of $\rho$. As a first step we need to show that a big image result holds for the product of the specializations of $\rho$ of a given weight, rather than just for a single one (Proposition \ref{prodresfull}). 
The argument here relies on Goursat's Lemma and on the classification of subnormal subgroups of symplectic groups by Tazhetdinov. Afterwards we use the result of the first step to construct some non-trivial unipotent elements in the image of $\rho$. 
In order to do this we need to prove an analogue of \cite[Theorem 3.1]{lang}, that allows us to lift the self-twists of the specializations of $\rho$ at our chosen weight to self-twists of $\rho$ itself. The arguments of J. Lang about the lifting of the self-twists to automorphisms of a suitable deformation ring can be translated to the genus $2$ case with little effort, but descending to a self-twist of the family requires some specific ingredients. Precisely, we prove that we can twist a family of $\GSp_4$-eigenforms by a Dirichlet character to obtain a new family (Lemma \ref{interptwists}) and we rely on the étaleness of the eigenvariety above our chosen weight. 

In Section \ref{sen} we show how the relative Sen theory of \cite[Section 5]{cit} can be extended to the group $\GSp_4$, in order to associate a Sen operator with $\rho$. The eigenvalues of this operator are given explicitly by the interpolation of the Hodge-Tate weights of the classical specializations of the family (Proposition \ref{eigensen}). The exponential of the Sen operator induces by conjugation a structure of $\Z_p[[T_1,T_2]]$-Lie algebra on $\Lie(\im\rho)$, so that the special elements we constructed generate a non-trivial congruence subalgebra. This proves Theorem \ref{intro1}.

When $\ovl\rho$ has full image the Galois level of the family is trivial (Corollary \ref{fullcong}), so the main focus of the rest of the paper is the case where $\ovl\rho$ is a symmetric cube. 
We can give two definitions of a \emph{symmetric cube locus} on the eigenvariety: an automorphic one, as the locus of points whose system of Hecke eigenvalues is obtained from that of an overconvergent $\GL_2$-eigenform via a symmetric cube morphism of Hecke algebras, and a Galois one, as the locus of points whose Galois representation is the symmetric cube of that associated with an overconvergent $\GL_2$-eigenform. An important result is the following.

\begin{thm}\label{intro2}(Theorem \ref{sym3type})
The automorphic and Galois definitions of the symmetric cube locus are equivalent.
\end{thm}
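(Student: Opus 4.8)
The plan is to characterise both loci in terms of the system of Hecke eigenvalues $\lambda_x\colon\calH_2^M\to\Qp$ of a point $x$ of $\cD_2$, and to exploit the explicit shape of the symmetric cube morphism of Hecke algebras $\theta\colon\calH_2^M\to\calH_1^{M'}$ (for the tame level $M'$ attached to $M$ by the $\Sym^3$ recipe): one normalises $\theta$ so that on Satake parameters at a prime $\ell\nmid Mp$ it realises $A\mapsto\Sym^3 A$ from $\GL_2$ to $\GSp_4$, up to the cyclotomic similitude twist dictated by the weight, and so that it carries the $U_p$-operators of $\calH_2^M$ to the appropriate monomials in the $U_p$-operators of $\calH_1^{M'}$. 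With this setup, ``$x$ lies in the automorphic symmetric cube locus'' means $\lambda_x=\mu_g\circ\theta$ for the eigensystem $\mu_g$ of some overconvergent finite slope $\GL_2$-eigenform $g$, and the theorem is the equivalence of this with $\rho_x\cong\Sym^3\rho_g$ as $g$ ranges over such forms.

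First I would treat the implication from the automorphic description to the Galois one. If $\lambda_x=\mu_g\circ\theta$, then for each $\ell\nmid Mp$ the characteristic polynomial of $\rho_x(\Frob_\ell)$ --- which by Chenevier's construction of the pseudocharacter carried by $\cD_2$ is read off from the values of $\lambda_x$ on the Hecke operators at $\ell$ --- coincides with that of $(\Sym^3\rho_g)(\Frob_\ell)$, once the bookkeeping of the similitude and central characters implicit in the weight recipe of $\theta$ is carried out. As $\Sym^3\rho_g$ has values in $\GSp_4(\Qp)$ and is, like $\rho_x$, determined up to semisimplification by its characteristic polynomials on Frobenius elements (Chebotarev together with the theory of pseudocharacters over $\Qp$), one gets $\rho_x\cong\Sym^3\rho_g$, hence $x$ lies in the Galois locus.

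For the converse, suppose $\rho_x\cong\Sym^3\rho_g$ with $\rho_g$ attached to an overconvergent finite slope $\GL_2$-eigenform $g$. Running the previous comparison backwards, $\lambda_x$ and $\mu_g\circ\theta$ already agree on all Hecke operators at primes $\ell\nmid Mp$, so the only issue is to match the data at $p$ (and at the primes dividing $M$) by producing some overconvergent finite slope $\GL_2$-eigenform $g'$ with $\lambda_x=\mu_{g'}\circ\theta$. Here I would use that $x$, being a finite slope point of $\cD_2$, equips $\rho_x$ with a triangulation of its $(\varphi,\Gamma)$-module refining $\Sym^3\rho_g$, with parameters governed by the $U_p$-eigenvalues of $x$ together with the Hodge-Tate-Sen weights supplied by the relative Sen theory of Section \ref{sen} (Proposition \ref{eigensen}); and similarly that $g$, as a point of the $\GL_2$-eigencurve, refines $\rho_g$. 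The key claim is then that the refinement of $\Sym^3\rho_g$ carried by $x$ is the symmetric cube of a refinement of $\rho_g$ which itself comes from an overconvergent finite slope $\GL_2$-eigenform $g'$ (a priori a companion point of $g$ sharing its underlying Galois representation); granting this, the explicit form of $\theta$ on the $U_p$-operators yields $\lambda_x=\mu_{g'}\circ\theta$, and since a point of either eigenvariety is determined by its system of Hecke eigenvalues we are done.

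The hard part will be exactly this last step. A refinement of $\Sym^3\rho_g$ occurring on $\cD_2$ need not a priori be induced from a refinement of $\rho_g$, and even when it is, one still has to know that the resulting refined $\GL_2$-representation is that of an overconvergent finite slope eigenform. I expect to handle the first point through the explicit recipe for $\theta$ on the $U_p$-operators, which pins down the Newton polygon of $\rho_x$ in terms of that of $\rho_g$ and --- using the $\Z_p$-regularity hypothesis (Definition \ref{Zpreg}) to exclude accidental coincidences among the refinement parameters --- forces the refinement to be of symmetric cube shape; the second point is a modularity statement for overconvergent $\GL_2$-eigenforms, which in the non-critical range is part of the theory of the eigencurve and in the remaining cases is subsumed in the separate analysis of the symmetric cube locus carried out in the second part of the paper (the interpolating morphism from a $\GL_2$-eigencurve to a $\GSp_4$-eigenvariety for the non-isolated points, and the direct study of the isolated ones).
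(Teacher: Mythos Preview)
Your proof handles the equivalence (2a)$\iff$(2b) of Theorem \ref{sym3type} correctly, and by the same method as the paper: comparison of Frobenius characteristic polynomials at primes $\ell\nmid Np$ via Chebotarev. One clarification: in the paper's formalisation, condition (2a) only concerns the spherical Hecke algebra $\calH_2^{Np}$ (the map in diagram \eqref{sym3auttypediag} is $\lambda^{Np}$, not the full $\lambda$), so once $\rho_x\cong\Sym^3\rho_g$ the same $g$ already witnesses (2a), and your analysis of refinements at $p$ is unnecessary for this theorem. The $U_p$-matching is a separate result (Proposition \ref{heckemorphtri}).

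The genuine gap is that you do not address the substantive part of Theorem \ref{sym3type}, namely (iv): for a point, the weaker condition (1b) --- that $\rho_x\cong\Sym^3\rho'$ for an \emph{arbitrary} two-dimensional $\rho'$, with no modularity hypothesis --- implies (2b). This is precisely the implication the introduction sketches via Proposition \ref{intro3}, and it is what is needed in Section \ref{levcong}, where one must pass from a purely Galois-theoretic symmetric cube factorisation to a point of the eigencurve. By taking $\rho_g$ already attached to an overconvergent eigenform you assume what must be shown. The paper argues instead: $\rho_x\vert_{G_{\Q_p}}$ is trianguline (Kedlaya--Pottharst--Xiao, Theorem \ref{famtri}); hence $\Sym^3(\rho'\vert_{G_{\Q_p}})$ is trianguline; Proposition \ref{sym3tri}, via Di Matteo's techniques and the Berger--Chenevier classification, forces $\rho'\vert_{G_{\Q_p}}$ to be trianguline up to a character twist; Emerton then supplies the overconvergent $\GL_2$-eigenform, and Theorem \ref{sym3autom} disposes of the twist. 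Your final appeal to the ``separate analysis of the symmetric cube locus'' is circular, since that analysis itself rests on Theorem \ref{sym3type}.
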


Theorem \ref{intro2} plays an essential role in describing the Galois level of the family by automorphic means. Note that this result and its role in our work are completely new with respect to the genus $1$ case: there the only possible congruences are of CM type and it is trivial to see that a point of small Galois image, contained in the normalized of a torus, is a $p$-adic CM point (see \cite[Remark 3.11]{cit}).

The proof of Theorem \ref{intro2} goes via the theory of $(\varphi,\Gamma)$-modules. It is known by Emerton's work that a Galois representation is associated with an overconvergent $\GL_2$-eigenform, up to a twist, if and only if it is trianguline. Thanks to the recent work of Kedlaya, Pottharst and Xiao on triangulations over eigenvarieties, we know that the ``only if'' part also holds for overconvergent $\GSp_4$-eigenforms (Theorem \ref{famtri}). By combining these results we reduce Theorem \ref{intro2} to the proposition below. Let $V$ be a two-dimensional representation of the absolute Galois group of $\Q_p$.

\begin{prop}\label{intro3}(Proposition \ref{sym3tri})
If $\Sym^3V$ is trianguline then $V$ is a twist of a trianguline representation by a character.
\end{prop}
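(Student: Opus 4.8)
The plan is to exploit the representation-theoretic structure of $\Sym^3$ together with the classification of triangulations. First I would record that, since $\Sym^3 V$ is trianguline, it admits a full filtration by sub-$(\varphi,\Gamma)$-modules over the Robba ring whose graded pieces are of rank one, given by characters $\delta_1,\delta_2,\delta_3,\delta_4$. Over $\overline{\Q}_p$ (extending scalars harmlessly, since being trianguline is insensitive to finite extension of coefficients) write the semisimplification of the Sen/Hodge--Tate data; the key elementary observation is that if $V$ has ``eigencharacters'' $\alpha,\beta$ at the level of the attached $(\varphi,\Gamma)$-module pieces (when $V$ itself is trianguline) then $\Sym^3 V$ has eigencharacters $\alpha^3,\alpha^2\beta,\alpha\beta^2,\beta^3$, and conversely the multiset $\{\delta_1,\dots,\delta_4\}$ of a triangulation of $\Sym^3 V$ must be of this shape up to the ambiguity allowed by non-uniqueness of triangulations. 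So the first real step is to show: the unordered quadruple of triangulation characters of $\Sym^3 V$ forces the existence of characters $\alpha,\beta$ with $\delta_1=\alpha^3$, $\delta_4=\beta^3$ (after reordering), and crucially $\delta_1\delta_4^{-1}$ has a cube root among the $\delta_i\delta_j^{-1}$; equivalently $\det(\Sym^3 V)=(\det V)^6$ and the ``extremal'' characters are cubes.

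Next I would split into cases according to whether $V$ is irreducible or reducible as a $G_{\Q_p}$-representation. If $V$ is reducible, then $V$ is an extension of characters and hence is itself trianguline (a rank-one $(\varphi,\Gamma)$-submodule comes for free from the sub-character), so there is nothing to prove; thus assume $V$ is irreducible. The heart of the matter is then to produce a rank-one sub-$(\varphi,\Gamma)$-module of $D_{\rig}(V)$ from the given rank-one sub-$D_1\subset D_{\rig}(\Sym^3 V)$ of character $\delta_1$. The natural idea is: the inclusion $D_1\hookrightarrow D_{\rig}(\Sym^3 V)=\Sym^3 D_{\rig}(V)$ gives a nonzero $\Gamma$- and $\varphi$-equivariant map $\delta_1\to\Sym^3 D_{\rig}(V)$, i.e. a nonzero global section of $\Sym^3 D_{\rig}(V)\otimes\delta_1^{-1}$ that is fixed by $\varphi$ and $\Gamma$. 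Over the Robba ring a $\varphi,\Gamma$-fixed vector in $\Sym^3$ of a rank-two object should, after a twist, be a ``cube'' of a $\varphi,\Gamma$-fixed vector (a symmetric tensor that is a perfect cube as a point of $\mathbb{P}(\Sym^3)$ over the appropriate ring), because a homogeneous degree-three form in two variables that is annihilated by the relevant operators and is a scalar multiple of itself under $\varphi$ must be $\ell^3$ for a linear form $\ell$ — here one uses that a rank-two $(\varphi,\Gamma)$-module over the Robba ring whose $\Sym^3$ has a sub-line is forced to have a sub-line itself, which is essentially a statement about $\GL_2$ versus $\mathrm{PGL}_2$ orbits on $\mathbb{P}(\Sym^3 k^2)\cong\mathbb{P}^3$: the only $\GL_2$-stable point-types are the triple-point (a cube $\ell^3$) and the point $\ell^2 m$, and both yield a distinguished line $\langle\ell\rangle$ in $k^2$. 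Translating this pointwise statement into a statement over the Robba ring — i.e. producing an actual rank-one sub-$(\varphi,\Gamma)$-module $\langle\ell\rangle\subset D_{\rig}(V)$, possibly after twisting $V$ by the character needed to make $\ell^3$ rather than a scalar multiple appear — is the technical crux.

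I expect the main obstacle to be exactly this last translation: the classification of $\varphi,\Gamma$-fixed lines in $\mathbb{P}(\Sym^3 D_{\rig}(V))$ is clean over a field, but $D_{\rig}(V)$ is a module over the Robba ring $\mathcal{R}$, not a vector space over a field, so ``taking a cube root of a section'' may only be possible after inverting some element of $\mathcal{R}$ or after a further finite base change, and one must check that the resulting line is saturated (a direct summand in the appropriate sense) so that it genuinely defines a sub-$(\varphi,\Gamma)$-module and hence a triangulation of the twist of $V$. A clean way around this may be to argue with the associated parameter/weight data instead: compare the Hodge--Tate--Sen weights and Frobenius slopes of the $\delta_i$ and use the ``only if'' direction of triangulation over the eigenvariety (Theorem \ref{famtri}) purely to know the $\delta_i$ are locally analytic characters, then invoke a result on exceptional zeros / the structure of trianguline deformation spaces to descend. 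Either way, once a rank-one sub-object of $D_{\rig}(V\otimes\chi)$ is produced for a suitable character $\chi$, Emerton's characterization (cited in the paragraph preceding Proposition \ref{intro3}) immediately gives that $V\otimes\chi$ — hence $V$, up to the twist $\chi^{-1}$ — is trianguline, which is the assertion.
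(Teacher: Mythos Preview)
Your direct approach---produce a rank-one sub-$(\varphi,\Gamma)$-module of $\bD_\rig(V)$ from one of $\Sym^3\bD_\rig(V)$ via the projective geometry of binary cubics---has a genuine gap, and the obstacle is not the ``Robba-ring translation'' you flag but the underlying implication itself. Your orbit claim is already off: there are three $\GL_2$-orbits on $\mathbb{P}(\Sym^3 k^2)$, namely $\ell^3$, $\ell^2 m$, and the generic $\ell m n$, and the last yields no distinguished line in $k^2$. More to the point, a $(\varphi,\Gamma)$-stable line in $\Sym^3\bD_\rig(V)$ need not lie on either special orbit. The paper's Lemma~\ref{tritensor} makes this concrete: when $\bD_\rig(V)$ is irreducible as a semilinear object and $\Sym^3\bD_\rig(V)$ is triangulable, the latter in fact splits completely as $\bigoplus_{i=1}^4\cR(\eta_i)$ with the $\eta_i$ differing pairwise by finite-order characters. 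Each $\cR(\eta_i)$ is a perfectly good sub-line, yet none corresponds to a sub-line of $\bD_\rig(V)$---there are none, by irreducibility---nor of any twist, since twisting by a rank-one module preserves irreducibility. So the implication ``$\Sym^3 D$ has a sub-line $\Rightarrow$ some twist of $D$ does'' is simply false in the case that matters, and no amount of cube-root extraction over $\cR$ will repair it.

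The paper's route is entirely different and necessarily indirect. Adapting Di Matteo's tensor-product arguments, it shows $\Sym^3\bD_\rig(V)\cong M_1\otimes\bD_\rig(V)$ for some irreducible rank-two $M_1$, whence the direct-sum decomposition above and the finite-order relation among the $\eta_i$ (Lemmas~\ref{tritensor} and~\ref{trifinord}). This forces a twist of $\Sym^3 V$ to become de Rham over a finite extension; the separately established de Rham analogue (Corollary~\ref{sym3dR}, proved via non-abelian $G_{\Q_p}$-cohomology with coefficients in $\mu_3$) then gives that a twist of $V$ is de Rham, hence $V$ is potentially trianguline (Lemma~\ref{sym3pottri}). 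Only at that stage does the Berger--Chenevier classification of two-dimensional potentially trianguline representations (Theorem~\ref{berchenpot}) deliver the conclusion. The detour through ``potentially trianguline'' is essential precisely because no direct descent of a triangulation from $\Sym^3 V$ to $V$ is available.
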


We prove Proposition \ref{intro3} by adapting to our situation some arguments of Di Matteo \cite{dimat}. We also show an analogue of Proposition \ref{intro3} where ``trianguline'' is replaced by ``de Rham'' (Corollary \ref{sym3dR}); in this case the proof goes via nonabelian cohomology. Proposition \ref{intro3} allows us to prove that if a $p$-old point of symmetric cube type of $\cD_2^M$ is classical, then it is obtained from a classical point of an eigencurve for $\GL_2$, via the classical Langlands lift attached to the symmetric cube representation by Kim and Shahidi \cite{kimsha}.

We study further the symmetric cube locus and show that it is Zariski-closed with zero- and one-dimensional irreducible components. The one-dimensional part of the locus can be constructed as the image of a morphism from an eigencurve for $\GL_2$, of a suitable tame level, to $\cD_2^M$ (Section \ref{morpheigen}). 
This morphism is obtained by interpolating $p$-adically the classical symmetric cube Langlands lift. This interpolation argument goes back to Chenevier's work on the $p$-adic Jacquet-Langlands correspondence \cite{chenevier}, but we prefer to use some results of Bella\"\i che and Chenevier \cite[Section 7.2.3]{bellaiche} that allow us to move more easily from an eigenvariety to the other when changing of weight spaces, Hecke algebras and compact operators (see Section \ref{changeBC}). 
The zero-dimensional components of the symmetric cube locus are given by isolated $p$-adic Langlands lifts, that cannot be interpolated due to the fact that their slopes do not vary analytically. The appearance of such points is related to the existence of more than one crystalline period for the corresponding Galois representation (Remark \ref{dimsym}).

Restricting once again our attention to a local piece of the eigencurve describing a family, we define a \emph{symmetric cube congruence ideal} that measures the locus of symmetric cube specializations of the family (Definition \ref{sym3cong}). We call it a \emph{fortuitous} congruence ideal: since there are no two-parameter families of symmetric cube type, the congruences detected by this ideal are symmetric cube specializations of a family that is not globally a symmetric cube. 
Thanks to Theorem \ref{intro2}, that serves as a bridge between the automorphic and Galois sides, we can relate the congruence ideal with the Galois level of the family.

\begin{thm}\label{intro4}(Theorem \ref{comparison})
The sets of prime divisors of the Galois level and of the symmetric cube congruence ideal coincide outside of a finite and explicit bad locus.
\end{thm}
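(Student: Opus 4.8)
The plan is to compare the two ideals directly on the family. On one side we have the Galois level $\fl$, by construction the largest ideal of $\B$ with $\fl\cdot\fsp_4(\B)\subset\Lie(\im\rho)$; on the other the symmetric cube congruence ideal $\fc$ of Definition~\ref{sym3cong}, by construction cutting out the locus of points of the family whose Hecke eigensystem is the symmetric cube Langlands lift of an overconvergent $\GL_2$-eigenform. I would show that, away from a finite explicit set of primes, a prime $\fP$ divides $\fl$ if and only if the specialization $\rho_{\fP}$ of $\rho$ at $\fP$ is, up to conjugation and a character twist, the symmetric cube of a two-dimensional representation of $G_\Q$, and then identify this last condition — via Theorem~\ref{intro2}, which bridges the Galois and automorphic descriptions of the symmetric cube locus — with divisibility by $\fc$ (under the natural correspondence of primes between $\B$ and the ring carrying $\fc$). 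The residual hypothesis that $\ovl\rho$ is of symmetric cube type is what makes ``small image'' essentially equivalent to ``twist of a symmetric cube'': a residually symmetric cube specialization cannot degenerate to a Yoshida or a properly endoscopic point, so the only proper algebraic subgroups of $\GSp_4$ that can contain $\im\rho_{\fP}$ for a point of the family are twisted conjugates of subgroups of the symmetric cube copy of $\GL_2$.

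For the implication $\fP\mid\fc\Rightarrow\fP\mid\fl$ I would proceed as follows. If $\fP$ divides $\fc$, then the corresponding point lies in the automorphic symmetric cube locus, so by Theorem~\ref{intro2} the representation $\rho_{\fP}$ is, after a twist, conjugate to $\Sym^3$ of a two-dimensional $G_\Q$-representation. Hence the Zariski closure of $\im\rho_{\fP}$ lies in a twisted conjugate of the image of the symmetric cube map $\GL_2\to\GSp_4$, and $\Lie(\im\rho)\otimes_{\B}\kappa(\fP)$, which is controlled by the Lie algebra of $\im\rho_{\fP}$, lands in a conjugate of the three-dimensional subalgebra $\Sym^3\fsl_2$ of the ten-dimensional $\fsp_4$. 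Reducing the inclusion $\fl\cdot\fsp_4(\B)\subset\Lie(\im\rho)$ modulo $\fP$: if $\fP\nmid\fl$ we would obtain $\fsp_4(\kappa(\fP))\subset\Lie(\im\rho)\otimes_{\B}\kappa(\fP)$, which is impossible once the residue characteristic is large enough that $\Sym^3\fsl_2$ remains a proper $3$-dimensional subalgebra of $\fsp_4$. So $\fP\mid\fl$, except over the primes above a small explicit set of rational primes (where this dimension count fails, typically those above $2,3,5,7$) and the primes at which the formation of $\B$ or of $\Lie(\im\rho)$ does not commute with reduction; these go into the bad locus.

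For the reverse implication I would begin from $\fP\mid\fl$, so that $\Lie(\im\rho)\otimes_{\B}\kappa(\fP)$ fails to contain $\fsp_4(\kappa(\fP))$ and hence $\im\rho_{\fP}$ is contained in a proper algebraic subgroup of $\GSp_4$. The task is to exclude every possibility other than a twist of a symmetric cube. For this I would relativize to $\fP$ the big-image machinery of the first part of the paper: by Theorem~\ref{classbigim} and Proposition~\ref{prodresfull}, Pink's theorem (Theorem~\ref{pink}), the classification of subnormal subgroups of $\Sp_4$ and Goursat's lemma — used exactly as in the proof of Theorem~\ref{intro1} — a specialization of the family that is not a lift from a group of smaller rank cannot have its image contained in any of the proper subgroups still in play; so $\rho_{\fP}$ must be a twist of a symmetric cube, and Theorem~\ref{intro2} translates this into $\fP\mid\fc$. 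The finitely many $p$-adic Langlands lifts that occur as isolated points of $\cD_2$ (Remark~\ref{dimsym}), the non-$\Z_p$-regular primes, and any CM-type or otherwise degenerate specializations are collected in the bad locus; making the latter ``finite and explicit'' is then bookkeeping — the primes above the small set of exceptional rational primes, the primes dividing the relevant tame level, conductor and discriminant quantities, the non-regular primes, and the isolated lifts.

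The main obstacle is this last reverse implication, specifically the passage from the bare failure of $\Lie(\im\rho)\otimes_{\B}\kappa(\fP)$ to be full to the precise conclusion that $\rho_{\fP}$ is a twist of a symmetric cube. This needs, first, controlling the discrepancy between $\Lie(\im\rho)\otimes_{\B}\kappa(\fP)$ and the genuine Lie algebra of the group $\im\rho_{\fP}$ — a discrepancy which is itself responsible for several of the bad primes — and, second, a complete enumeration of the intermediate algebraic subgroups of $\GSp_4$ compatible with a symmetric cube residual image, together with a case-by-case argument that a point of the family whose image lands in such a subgroup would be a genuine symmetric cube lift and not, say, a Yoshida lift, an endoscopic contribution, or a CM point. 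It is precisely at these two points that the automorphic classification of Sections~10--16 and the $(\varphi,\Gamma)$-module input underlying Theorem~\ref{intro2} do the decisive work.
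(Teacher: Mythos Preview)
Your forward direction ($\fP\mid\fc\Rightarrow\fP\mid\fl$) is essentially the paper's argument: at a symmetric cube point the image lies in $\Sym^3\GL_2$, so the reduction of the Lie algebra cannot contain a nonzero congruence subalgebra of $\fsp_4$, contradicting $\fP\nmid\fl$.

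The reverse direction has a genuine gap. You write ``begin from $\fP\mid\fl$, so that $\Lie(\im\rho)\otimes_{\B}\kappa(\fP)$ fails to contain $\fsp_4(\kappa(\fP))$'', but this inference is not immediate and is in fact the heart of the matter. Knowing that $\fl_\theta$ is the \emph{largest} ideal with $\fl\cdot\fsp_4(\B_r)\subset\fG_r$ and that $\fl_\theta\subset\fP$ does not by itself say anything about the reduction of $\fG_r$ modulo $\fP$. The paper does not try to deduce smallness of the reduced Lie algebra from $\fP\mid\fl_\theta$; it argues the contrapositive by maximality. Concretely: the residual hypothesis forces the Zariski closure of the connected component of $\im\rho_{r,\fP}$ to be either $\Sym^3\SL_2$ or $\Sp_4$ (no further case analysis is needed, so Goursat, Tazhetdinov, and Proposition~\ref{prodresfull} play no role here). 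In the $\Sym^3\SL_2$ case Lemma~\ref{subsym3} upgrades this to $\rho_{\fP}\cong\Sym^3\rho'$ on $H_0$, and Proposition~\ref{cong0equiv} (which packages Theorem~\ref{sym3type}) gives $\fP\supset\fc_{\theta,0}$. In the $\Sp_4$ case, Pink's theorem applied to the specialization at $\fP$ (not to a classical point) produces a nonzero ideal $\fl_{\fP}$ of $\I_0/\fP$ with $\Gamma(\fl_{\fP})\subset\im\rho_{r,\fP}$; one then lifts generators to $\fU_r^\alpha$ and re-runs the Sen-theoretic construction of Theorem~\ref{thexlevel} to manufacture an ideal $\fA_\fP$ of $\I_0$ with $\fA_\fP\cdot\fsp_4(\B_r)\subset\fG_r$ and $\fA_\fP\not\subset\fP$. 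Then $\fl_\theta+\fA_\fP$ strictly contains $\fl_\theta$ and still satisfies the defining inclusion, contradicting maximality. This maximality/enlargement step is the missing idea in your sketch.

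Your description of the bad locus is also off. The set $S^\bad$ is not a grab-bag of small residue characteristics, non-regular primes, isolated lifts, and CM points; it is the explicit finite set of height-one primes of $\Lambda_h[p^{-1}]$ lying over $(1+T_1-u)$, $(1+T_2-u^2)$, $(1+T_2-u(1+T_1))$, $((1+T_1)(1+T_2)-u^3)$, introduced in Section~\ref{biglie} precisely so that the eigenvalues of the Sen exponential are pairwise distinct in $\B_r$. These are excluded because the ring $\B_r$ is built to invert them, not because of any automorphic degeneration.
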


We think that the results of this paper can be generalized by allowing for different residual representations, hence different types of congruences, or by replacing $\GSp_4$ by other reductive groups for which an eigenvariety has been constructed. 
We hope to come back to this problem in a later work.

\bigskip

\textbf{Acknowledgments.} The results in this paper were obtained as part of the work for my Ph.D. thesis at the Laboratoire Analyse, Géométrie et Applications of Université Paris 13. I would like to thank my advisor, Jacques Tilouine, for having suggested that I work on the subject presented here and for having guided me in the preparation of the thesis. I was supported by the Programs ArShiFo ANR-10-BLAN-0114 and PerColaTor ANR-14-CE25-0002-01. I wish to thank all the members of the équipe Arithmétique et Géométrie Algébrique of Université Paris 13 of the past three years for creating an excellent working environment. The preparation of the paper was completed while I was a postdoctoral fellow at Concordia University in Montreal. I thank Adrian Iovita for inviting me there and for his helpful suggestions. I also wish to thank Anne-Marie Aubert, Kevin Buzzard, Denis Benois, Joel Bella\"\i che, Huan Chen, Ga\"etan Chenevier, Giovanni Di Matteo, Taiweng Deng, Mladen Dimitrov, Yiwen Ding, Haruzo Hida, Jaclyn Lang, Shinan Liu, Giovanni Rosso, Benjamin Schraen and Beno\^\i t Stroh for useful discussions.

\bigskip

\textbf{Notations.} We fix some notations and conventions. In the text $p$ will always denote a prime number strictly larger than $3$. Most argument work for every odd $p$; we specify when this is not sufficient. We choose algebraic closures $\ovl{\Q}$ and $\ovl{\Q}_p$ of $\Q$ and $\Q_p$, respectively. If $K$ is a finite extension of $\Q$ or $\Q_p$ we denote by $G_K$ its absolute Galois group. We equip $G_K$ with its profinite topology. We denote by $\cO_K$ the ring of integers of $K$. If $K$ is local, we denote by $\fm_K$ the maximal ideal of $\cO_K$. 
For every prime $p$ we fix an embedding $\iota_p\colon\ovl{\Q}\into\ovl{\Q}_p$, identifying $G_{\Q_p}$ with a decomposition group of $G_\Q$. This identification will be implicit everywhere. We fix a valuation $v_p$ on $\ovl{\Q}_p$ normalized so that $v_p(p)=1$. It defines a norm given by $\vert\cdot\vert=p^{-v_p(\cdot)}$. We denote by $\C_p$ the completion of $\ovl{\Q}_p$ with respect to this norm.

All rigid analytic spaces will be considered in the sense of Tate (see \cite[Part C]{bgr}). 
Let $K/\Q_p$ be a field extension and let $X$ be a rigid analytic space over $K$. We denote by $\cO(X)$ the $K$-algebra of rigid analytic functions on $X$, and by $\cO(X)^\circ$ the $\cO_K$-subalgebra of functions with norm bounded by $1$ (we often say ``functions bounded by $1$'' meaning that they are bounded in norm). When $f\colon X\to Y$ is a map of rigid analytic spaces, we denote by $f^\ast\colon\cO(Y)\to\cO(X)$ the map induced by $f$. 
There is a Grothendieck topology on $X$, called the Tate topology; we refer to \cite[Proposition 9.1.4/2]{bgr} for the definition of its admissible open sets and admissible coverings. 

We say that $X$ is a wide open rigid analytic space if there exists an admissible covering $\{X_i\}_{i\in\N}$ of $X$ by affinoid domains $X_i$ such that, for every $i$, $X_i\subset X_{i+1}$ and the map $\cO(X_{i+1})\to\cO(X_i)$ induced by the previous inclusion is compact.

There is a notion of irreducible components for a rigid analytic space $X$; see \cite{conradirr} for the details. 
We say that $X$ is equidimensional of dimension $d$ if all its irreducible components have dimension $d$. 

We denote by $\A^d$ the $d$-dimensional rigid analytic affine space over $\Q_p$. Given a point $x\in\A^d(\C_p)$ and $r\in p^\Q$, we denote by $B_d(x,r)$ the $d$-dimensional closed disc of centre $x$ and radius $r$. It is an affinoid domain defined over $\C_p$. We denote by $B_d(x,r^-)$ the $d$-dimensional wide open disc of centre $x$ and radius $r$, defined as the rigid analytic space over $\C_p$ given by the increasing union of the $d$-dimensional affinoid discs of centre $x$ and radii $\{r_i\}_{i\in\N}$ with $r_i<r$ and $\lim_{i\mapsto +\infty}r_i=r$. With an abuse of terminology we refer to $B_d(x,r)$ as the $d$-dimensional ``closed disc'' and to $B_d(x,r^-)$ as the $d$-dimensional ``open disc'', even though both are open sets in the Tate topology. 

Let $X$ be an affinoid or a wide open rigid analytic space. 
We denote by $\cO(X)\{\{T\}\}$ the ring of power series $\sum_{i\geq 0}a_iT^i$ with $a_i\in\cO(X)$ and $\lim_i\vert a_i\vert r^i\to 0$ for every $r\in\R^+$. This is the ring of rigid analytic functions on $X\times\A^1$.

Let $S$ be any subset of $X(\C_p)$. We say that $S$ is:
\begin{enumerate}
\item a discrete subset of $X(\C_p)$ if $S\cap A$ is a finite set for any open affinoid $A\subset X(\C_p)$;
\item a Zariski-dense subset of $X(\C_p)$ if, for every $f\in\cO(X)$ vanishing at every point of $S$, $f$ is identically zero;
\item an accumulation subset of $X(\C_p)$ if for every $x\in S$ there exists a basis $\cB$ of affinoid neighborhoods of $x$ in $X$ such that for every $A\in\cB$ the set $S\cap A(\C_p)$ is Zariski-dense in $A$ (this term is borrowed from \cite[Section 3.3.1]{bellchen}).
\end{enumerate}

Let $g\geq 1$ be an integer and let $s$ be the $g\times g$ antidiagonal unit matrix $(\delta_{i,n-i}(i,j))_{1\leq i,j\leq g}$. Let $J_g$ be the $2g\times 2g$ matrix $\left(\begin{array}{cc} 0&s\\-s&0\end{array}\right)$. We denote by $\GSp_{2g}$ the algebraic group of symplectic similitudes for $J_g$, defined over $\Z$; for every ring $R$ the $R$-points of this group are given by
\[ \GSp_{2g}(R)=\{A\in\GL_4(R)\,\vert\,\exists\, \nu(A)\in R^\times \mbox{ s.t. } {}^tAJA=\nu(A)J\}. \]
For $g=1$ we have $\GSp_2=\GL_2$. The map $A\to\nu(A)$ defines a character $\nu\colon\GSp_4(R)\to R^\times$. We refer to $\nu$ as the similitude factor and we set $\Sp_{2g}(R)=\{A\in\GSp_{2g}(R)\,\vert\,\nu(A)=1\}$.

We denote by $B_g$ the Borel subgroup of $\GSp_{2g}$ such that for every ring $R$ the $R$-points of $B_g$ are the upper triangular matrices in $\GSp_{2g}(R)$. We let $T_g$ be the maximal torus such that for every ring $R$ the $R$-points of $T_g$ are the diagonal matrices in $\GSp_{2g}(R)$. We write $U_g$ for the unipotent radical of $B_g$. We have $B_g=T_gU_g$. We will always speak of weights and roots for $\GSp_{2g}$ with respect to the previous choice of Borel subgroup and torus. For every root $\alpha$ we denote by $U^\alpha$ the corresponding one-parameter unipotent subgroup of $\GSp_{2g}$. For every prime $\ell$, we write $I_{g,\ell}$ for the Iwahori subgroup of $\GSp_{2g}(\Q_\ell)$ corresponding to our choice of Borel subgroup. For every $n\geq 1$ we denote by $\1_n$ the $n\times n$ unit matrix.

Let $g$ be a positive integer. For every prime $\ell$ and every integer $n\geq 0$ we define some compact open subgroups of $\GSp_{2g}(\A_\Q)$ by:
\begin{enumerate}
\item $\Gamma^{(g)}(\ell^n)=\{h\in\GSp_{2g}(\widehat{\Z}) \,\vert\, h_\ell\cong\1_{2g}\pmod{\ell^n}\}$;
\item $\Gamma^{(g)}_1(\ell^n)=\{h\in\GSp_{2g}(\widehat{\Z}) \,\vert\, h_\ell\pmod{\ell^n}\in U_g(\Z/\ell^n\Z)\}$;
\item $\Gamma^{(g)}_0(\ell^n)=\{h\in\GSp_{2g}(\widehat{\Z}) \,\vert\, h_\ell\pmod{\ell^n}\in B_g(\Z/\ell^n\Z)\}$.
\end{enumerate}
In particular for $n=1$ the $\ell$-component of $\Gamma_0(\ell)$ is the Iwahori subgroup of $\GSp_4(\Q_\ell)$.
Let $N$ be an arbitrary positive integer. Write $N=\prod_i\ell_i^{n_i}$ for some distinct primes $\ell_i$ and some $n_i\in\N$. We set $\Gamma^{(g)}_?(N)=\bigcap_i\Gamma^{(g)}_?(\ell_i^{n_i})$ for $?=\varnothing,0,1$. For $g=1$ we will omit the upper index (1).

We denote by $\fgsp_{2g}$ the Lie algebra of $\GSp_{2g}$ and by $\fsp_{2g}$ its derived Lie algebra, which is the Lie algebra of $\fsp_{2g}$. We denote by $\Ad\colon\GSp_{2g}\to\Aut(\fsp_{2g})$ the adjoint action of $\GSp_{2g}$ on $\fsp_{2g}$. It is an irreducible representation of $\GSp_{2g}$. 

By ``classical modular form for $\GSp_4$'' we always mean a \emph{vector-valued} modular form.

\bigskip

\section{Preliminaries on eigenvarieties}\label{eigenvar}

In this section we define the basic objects with which we are going to work: weight spaces, Hecke algebras and eigenvarieties. We recall some of their properties.

\subsection{The weight spaces}\label{weightsp}

We choose once and for all $u=1+p$ as a generator of $\Z_p^\times$. This choice determines an isomorphism $\Z_p^\times\cong(\Z/(p-1)\Z)\times\Z_p$. 
Let $g$ be a positive integer.
Consider the Iwasawa algebra $\Z_p[[(\Z_p^\times)^g]]$. 
A construction by Berthelot \cite[Section 7]{dejong} attaches to the formal scheme $\Spf\Z_p[[(\Z_p^\times)^g]]$ a rigid analytic space that we denote by $\cW_g$. 
If $A$ is a $\Q_p$-algebra, the $A$-points of $\cW_g$ are the continuous characters $(\Z_p^\times)^g\to A^\times$. Denote by $\widehat{(\Z/(p-1)\Z)^g}$ the group of characters of $(\Z/(p-1)\Z)^g$. The following map gives an isomorphism from $\cW_g$ to a disjoint union of $g$-dimensional open discs $B_g(0,1^-)$ indexed by $\widehat{(\Z/(p-1)\Z)^g}$: 
\begin{gather*} 
\eta_g\colon\cW_g\to\widehat{(\Z/(p-1)\Z)^g}\times B_g(0,1^-), \\
\kappa\mapsto (\kappa\vert_{(\Z/(p-1)\Z)^g},(\kappa(u,1,\ldots,1)-1,\kappa(1,u,1,\ldots,1)-1,\ldots,\kappa(1,\ldots,1,u)-1)).
\end{gather*}

We write $\Lambda_g$ for the algebra $\Z_p[[T_1,T_2,\ldots,T_g]]$ of formal series in $g$ variables over $\Z_p$. It is the ring of rigid analytic functions bounded by $1$ on a connected component of the weight space. 

We denote by $\kappa_{\cW_g}\colon\Z_p^\times\to\Z_p[[(\Z_p^\times)^g]]^\times$ the universal character of $\cW_g$. 
For every affinoid domain $A=\Spm R$ and every inclusion $\iota_A\colon A\into\cW_g$ we set $\kappa_A=\iota_A^\ast\ccirc\kappa_{\cW_g}$. We call $\kappa_A$ the universal character associated with $A$. 
By \cite[Proposition 8.3]{buzzard} there exists $r\in p^\Q$ such that $\kappa_A$ is $r$-analytic, in the sense that it can be extended to a character $((\Z_p^\times)^g\cdot B_g(1,r))\to R^\times$. The radius of analyticity of $\kappa_A$ is the largest such $r$; we denote it by $r_{\kappa_A}$.
 
We call \emph{arithmetic primes} the primes of $\Z_p[[(\Z_p^\times)^g]]\widehat{\otimes}_{\Z_p}\C_p$ of the form $P_{\uk,\varepsilon}=(k_1,k_2,\ldots,k_g,1+T_1-\varepsilon_1(u)u^{k_1},1+T_2-\varepsilon_2(u)u^{k_2},\ldots,1+T_g-\varepsilon_g(u)u^{k_g})$ for a $g$-tuple of integers $\uk=(k_1,k_2,\ldots,k_g)$ and a finite order character $\varepsilon\colon(\Z_p^\times)^g\to\C_p^\times$. We will always take as $\varepsilon$ the trivial character $1$; in this case we write $P_\uk=P_{\uk,1}$. We say that a $\Q_p$-point $\kappa\colon\Z_p^\times\to\Q_p^\times$ of $\cW^\circ_g$ is \emph{classical} if it is the specialization of $\kappa_{\cW_g}$ at $P_\uk$ for some $\uk\in\Z^g$. 

\subsection{The abstract Hecke algebras}

The abstract Hecke algebras we consider are tensor products of a Iwahori-Hecke algebra at $p$ and of the spherical Hecke algebras at all primes outside of a finite set containing $p$.

\subsubsection{The abstract spherical Hecke algebra}

Let $\ell$ be a prime. Let $G$ be a $\Z$-subgroup scheme of $\GSp_{2g}$ and let $K\subset G(\Q_\ell)$ be a compact open subgroup. For $\gamma\in G(\Q_\ell)$ we denote by $\1([K\gamma K])$ the characteristic function of the double coset $[K\gamma K]$. Let $\calH(G(\Q_\ell),K)$ be the $\Q$-algebra generated by the functions $\1([K\gamma K])$ for $\gamma\in G(\Q_\ell)$, equipped with the convolution product. We call \emph{spherical (or unramified) Hecke algebra of $\GSp_{2g}$ at $\ell$} the $\Q$-algebra $\calH(\GSp_{2g}(\Q_\ell),\GSp_{2g}(\Z_\ell))$. 
It is generated by the elements $T^{(g)}_{\ell,i}=\1([\GSp_{2g}(\Z_\ell)\diag(\1_i,\ell\1_{2g-2i},\ell^2\1_i)\GSp_{2g}(\Z_\ell)])$, for $i=0,1,\ldots g$, and $(T^{(g)}_{\ell,0})^{-1}$. 
Note that our operator $T^{(g)}_{\ell,0}$ is often denoted by $S^{(g)}_{\ell}$ in the literature. 

\subsubsection{The abstract dilating Iwahori-Hecke algebra}\label{dilIw} 

The Hecke algebra $\calH(T_g(\Q_\ell),T_g(\Z_\ell))$ carries a natural action of the Weyl group $W_g=\cS_g\ltimes (\Z/2\Z)^g$ of $\GSp_{2g}$, where $\cS_g$ is the group of permutations of $\{1,2,\ldots,g\}$: if $\diag(\nu t_1,\ldots,\nu t_g,t_g^{-1},\ldots,t_1^{-1})$ is an element of the torus, $\cS_g$ acts by permuting the $t_i$'s and the non-trivial element in each $\Z/2\Z$ sends $t_i$ to $t_i^{-1}$. We denote the action of $w\in W_g$ on $t\in T(\Q_\ell)$ by $t\mapsto w.t$. 
%
The twisted Satake transform $S_{\GSp_{2g}}^{T_g}\colon\calH(\GSp_{2g}(\Q_\ell),\GSp_{2g}(\Z_\ell))\to\calH(T_g(\Q_\ell),T_g(\Z_\ell))$ induces an isomorphism of $\calH(\GSp_{2g}(\Q_\ell),\GSp_{2g}(\Z_\ell))$ onto its image, which is the subalgebra of $\calH(T_g(\Q_\ell),T_g(\Z_\ell))$ consisting of $W_g$-invariant elements. In particular $\calH(T_g(\Q_\ell),T_g(\Z_\ell))$ is a Galois extension of $\calH(\GSp_{2g}(\Q_\ell),\GSp_{2g}(\Z_\ell))$ of Galois group $W_g$. 

For $i=0,1,\ldots,g$ let $t^{(g)}_{\ell,i}=\1([\diag(\1_i,\ell\1_{2g-2i},\ell^2\1_i)\T_g(\Z_\ell)])$. Note that $t^{(g)}_{\ell,0}=S_{\GSp_{2g}}^{T_g}(T^{g}_{\ell,0})$. The set $(t^{(g)}_{\ell,i})_{i=1,\ldots,g}$ generates the extension $\calH(T_g(\Q_\ell),T_g(\Z_\ell))$ over $\calH(\GSp_{2g}(\Q_\ell),\GSp_{2g}(\Z_\ell))$.

We call an element $\gamma\in T_g(\Z_\ell)$ dilating if $v_p(\alpha(\gamma))\leq 0$ for every positive root $\alpha$. Let $T_g(\Z_\ell)^-$ be the subset of $T_g(\Z_\ell)$ consisting of dilating elements and let $\calH(T_g(\Q_\ell),T_g(\Z_\ell))^-$ be the $\Q$-subalgebra of $\calH(T_g(\Q_\ell),T_g(\Z_\ell))$ generated by the functions $\1([\gamma T_g(\Z_\ell)])$ with $\gamma\in T_g(\Q_\ell)^-$. 
The functions $\1([\gamma T_g(\Z_\ell)])$ with $\gamma\in T_g(\Q_\ell)^-$ also form a basis of $\calH(T_g(\Q_\ell),T_g(\Z_\ell))^-$ as a $\Q$-vector space. 

\begin{rem}\label{extchar}
Every $\gamma\in T(\Q_\ell)$ can be written in the form $\gamma=\gamma_1\gamma_2^{-1}$ with $\gamma_1,\gamma_2\in T(\Z_\ell)^-$. A character $\chi\colon\calH(T_g(\Q_\ell),T_g(\Z_\ell))^-\to\Qp$ can be extended uniquely to a character $\chi^\ext\colon\calH(T_g(\Q_\ell),T_g(\Z_\ell))\to\Qp$ by setting $\chi^\ext([\gamma T(\Z_\ell)])=\chi([\gamma_1 T(\Z_\ell)])\chi([\gamma_2 T(\Z_\ell)]^{-1})$ for some $\gamma_1$ and $\gamma_2$ as before. It can be easily checked that $\chi^\ext$ is well-defined.
\end{rem}

Let $\calH(\GSp_{2g}(\Q_\ell),I_{g,\ell})^-$ be the subalgebra of $\calH(\GSp_{2g}(\Q_\ell),I_{g,\ell})$ generated by the functions $\1([I_{g,\ell}\gamma I_{g,\ell}])$ with $\gamma\in T(\Z_\ell)^-$. 
We call $\calH(\GSp_{2g}(\Q_\ell),I_{g,\ell})^-$ the \emph{dilating Iwahori-Hecke algebra} at $\ell$. 
It is generated by the elements $U^{(g)}_{\ell,i}=\1([I_{g,\ell}\diag(\1_i,\ell\1_{2g-2i},\ell^2\1_i)I_{g,\ell}])$, for $i=0,1,\ldots,g$, and $(U^{(g)}_{\ell,0})^{-1}$.

We define a morphism of $\Q$-algebras $\iota_{I_{g,\ell}}^{T_g}\colon\calH(\GSp_{2g}(\Q_\ell),I_{g,\ell})^-\to\calH(T_g(\Q_\ell),T_g(\Z_\ell))^-$ by sending $\1(I_{g,\ell}\gamma I_{g,\ell})$ to $\1(T_g(\Z_\ell)\gamma T_g(\Z_\ell))$ for every $\gamma\in T(\Z_\ell)^-$. 
The map $\iota_{I_{g,\ell}}^{T_g}$ is an isomorphism; this can be proved as \cite[Proposition 6.4.1]{bellchen}.

Let $p$ be a prime and $N$ be a positive integer such that $(N,p)=1$. 
Set
\[ \calH_g^{Np}=\bigotimes_{\Q,\ell\nmid Np}\calH(\GSp_{2g}(\Q_\ell),\GSp_{2g}(\Z_\ell)) \]
and
\[ \calH_g^N=\calH_g^{Np}\otimes_\Q\calH(\GSp_{2g}(\Q_p),I_{g,p})^-. \]
We call $\calH_g^{N}$ the \emph{abstract Hecke algebra spherical outside $N$ and Iwahoric dilating at $p$}.

The algebra $\calH_g^N$ acts on the space of classical vector-valued modular forms for $\GSp_{2g}(\Q)$ of level $\Gamma_1(N)\cap\Gamma_0(p)$. 
With an abuse of notation we will consider the elements of one of the local algebras as elements of $\calH_g^N$ via the natural inclusion (tensoring by $1$ at all the other primes).

\subsubsection{The Hecke polynomials}\label{weyl}

We record here some explicit formulas for the minimal polynomials $P_\Min(t^{(g)}_{\ell,i};X)$ of the elements $t_{\ell,i}^{(g)}$ over $\calH(\GSp_{2g}(\Q_\ell),\GSp_{2g}(\Z_\ell))$ when $g$ is $1$ or $2$.

For $g=1$, the element $t^{(1)}_{\ell,1}=\1([\diag(1,\ell)T_1(\Z_\ell)])$ generates the degree two extension $\calH(T_1(\Q_\ell),T_1(\Z_\ell))$ of $\calH(\GL_2(\Q_\ell),\GL_2(\Z_\ell))$. Let $w$ be the only non-trivial element of the Weyl group of $\GL_2$. 
The minimal polynomial of $t^{(1)}_{\ell,1}$ is $P_\Min(t^{(1)}_{\ell,1})(X)=(X-t^{(1)}_{\ell,1})(X-(t^{(1)}_{\ell,1})^w)$. 
An explicit calculation gives
\begin{equation}\label{minpol1} P_\Min(t^{(1)}_{\ell,1};X)=(X-t^{(1)}_{\ell,1})(X-(t^{(1)}_{\ell,1})^w)=X^2-T^{(1)}_\ell X+\ell T^{(1)}_{\ell,0}. \end{equation}

For $g=2$, the degree eight extension $\calH(T_2(\Q_\ell),T_2(\Z_\ell))$ over $\calH(\GSp_4(\Q_\ell),\GSp_4(\Z_\ell))$ is generated by $t^{(2)}_{\ell,1}=\1([\diag(1,\ell,\ell,\ell^2)T_2(\Z_\ell)])$ and $t^{(2)}_{\ell,2}=\1([\diag(1,1,\ell,\ell)T_2(\Z_\ell)])$. Each of them has an orbit of order four under the action of the Weyl group. 
If $t=\diag(\nu t_1,\nu t_2,t_1^{-1},t_2^{-1})$ is an element of the torus we denote by $w_0$, $w_1$, $w_2$ the generators of the Weyl group satisfying $t^{w_0}=\diag(\nu t_2,\nu t_1,t_2^{-1},t_1^{-1})$, $t^{w_1}=\diag(\nu t_1^{-1},\nu t_2,t_1,t_2^{-1})$, $t^{w_2}=\diag(\nu t_1,\nu t_2^{-1},t_1^{-1},t_2)$. Note that $t^{(2)}_{\ell,2}$ is invariant under $w_0$. The calculation in the proof of \cite[Lemma 3.3.35]{andrquad} gives
\begin{equation}\label{minpol2}\begin{gathered}
P_\Min(t^{(2)}_{\ell,2};X)=(X-t^{(2)}_{\ell,2})(X-(t^{(2)}_{\ell,2})^{w_1})(X-(t^{(2)}_{\ell,2})^{w_2})(X-(t^{(2)}_{\ell,2})^{w_1w_2})= \\
=X^4-T^{(2)}_{\ell,2}X^3+((T^{(2)}_{\ell,2})^2-T^{(2)}_{\ell,1}-\ell^2T^{(2)}_{\ell,0})X^2-\ell^3T^{(2)}_{\ell,2}T^{(2)}_{\ell,0} X+\ell^6(T^{(2)}_{\ell,0})^2. \end{gathered}\end{equation}
Since $t^{(2)}_{\ell,1}=(t^{(2)}_{\ell,2})(t^{(2)}_{\ell,2})^{w_1}$ is invariant under $w_1$, we can also write
\begin{equation}\begin{gathered}\label{polweyl1}
P_\Min(t^{(2)}_{\ell,1})(X)=(X-t^{(2)}_{\ell,2}(t^{(2)}_{\ell,2})^{w_1})(X-(t^{(2)}_{\ell,2})^{w_2}(t^{(2)}_{\ell,2})^{w_1w_2})(X-t^{(2)}_{\ell,2}(t^{(2)}_{\ell,2})^{w_2})(X-(t^{(2)}_{\ell,2})^{w_1}(t^{(2)}_{\ell,2})^{w_1w_2}). 
\end{gathered}\end{equation}

\subsubsection{Normalized systems of Hecke eigenvalues}\label{normsyst}

For this reason we introduce their standard normalization, depending on the weight, before passing to the $p$-adic setting. 
Let $f$ be a classical $\GSp_{2g}$-eigenform of level $\Gamma_1(N)\cap\Gamma_0(p)$ and weight $\uk=(k_1,k_2,\ldots,k_g)$. 
Let $\chi\colon\calH_g^N\to\Qp$ be the system of Hecke eigenvalues associated with $f$. 

\begin{defin}\label{normsystdef}
For $g\in\{1,2\}$, let $\chi^\norm\colon\calH_g^N\to\Qp$ be the character defined by
\begin{itemize}[label={--}]
\item $\chi^\norm\vert_{\calH_g^{Np}}=\chi\vert_{\calH_g^{Np}}$;
\item $\chi^\norm(U_{p,i}^{(g)})=p^{-\sum_{j=1}^{g-i}(k_j-j)}$ for $i=1,2,\ldots,g$ (where the exponent of $p$ is $0$ for $i=g$).
\end{itemize}
We call $\chi^\norm$ the normalized system of Hecke eigenvalues associated with $f$. 
\end{defin}

\subsection{The eigenvariety machine}\label{eigenmac}

We recall some elements of Buzzard's ``eigenvariety machine'' \cite{buzzard}. We call \emph{eigenvariety datum} a $5$-tuple $(\cW,\calH,(M(A,w))_{A,w},(\phi_{A,w})_{A,w},\eta)$ where:
\begin{enumerate}
\item there exists an integer $g\geq 1$ such that $\cW=\cW_g$ is the $g$-dimensional weight space defined in the previous section;
\item $(A,w)$ varies over the couples consisting of an affinoid $A\subset\cW$ and $w\in\Q$ satisfying $p^{-w}\leq r_{\kappa_A}$;
\item for every $(A,w)$ with $A=\Spm R$, $M(A,w)$ is a projective Banach $R$-module;
\item $\calH$ is a commutative ring;
\item $\phi_{A,w}\colon\calH\to\End_{R,\cont}(M(A,w))$ is an action of $\calH$ on $M(A,w)$;
\item $\eta\in\calH$ is an element such that $\phi_{A,w}(\eta)$ is a compact operator on $M(A,w)$ for every $(A,w)$;
\item when $A$ and $w$ vary the modules $M(A,w)$ with their $\calH$-actions satisfy the compatibility properties assumed in \cite[Lemma 5.6]{buzzard}.
\end{enumerate}


Let $K$ be a finite extension of $\Q_p$. A morphism $\lambda\colon\calH\to K$ is called a \emph{$K$-system of eigenvalues} for the given datum if there exists a point $\kappa\in\cW(K)$, an affinoid $A=\Spm R$ containing $\kappa$, a rational $w$ and an element $m\in M(A,w)\otimes_R K$ (where $R\to K$ is the evaluation at $\kappa$) such that $\phi_{A,w}(T)m=\lambda(T)m$ for all $T\in\calH$.

\begin{thm}\label{theigenmac}
For every eigenvariety datum $(\cW,\calH,(M(A,w))_{A,w},(\phi_{A,w})_{A,w},\eta)$ there exists a triple $(\cD,\psi,w)$ consisting of
\begin{enumerate}
\item a rigid analytic space $\cD$ over $\Q_p$,
\item a morphism of $\Q_p$-algebras $\psi\colon\calH\to\cO(\cD)^\circ$,
\item a morphism of rigid analytic spaces $w\colon\cD\to\cW$ (called the \emph{weight morphism}),
\end{enumerate}
with the following properties:
\begin{enumerate}
\item $\psi(\eta)$ is invertible in $\cO(\cD)$;
\item for every finite extension $K/\Q_p$ the map
\begin{equation}\label{heckeeigen}\begin{gathered} \cD(K)\to\Hom(\calH,K), \\
x\mapsto(T\mapsto \psi(T)(x)), \end{gathered}\end{equation}
induces a bijection between the $K$-points of $\cD$ and the $K$-systems of eigenvalues for the given datum.
\end{enumerate}
We call $(\cD,\psi,w)$ the \emph{eigenvariety} for the given datum.
\end{thm}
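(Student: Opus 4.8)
The plan is to follow Buzzard's ``eigenvariety machine'' \cite{buzzard} essentially verbatim, since the statement is nothing but a repackaging of \cite[Construction 5.7]{buzzard} adapted to our setting; the only thing to check is that our datum $(\cW,\calH,(M(A,w))_{A,w},(\phi_{A,w})_{A,w},\eta)$ satisfies the hypotheses of \emph{loc.\ cit.} First I would recall the local picture: for a fixed affinoid $A=\Spm R\subset\cW$ and $w\in\Q$ with $p^{-w}\leq r_{\kappa_A}$, the operator $\phi_{A,w}(\eta)$ is a compact endomorphism of the projective Banach $R$-module $M(A,w)$, so by Coleman's Riesz theory it admits a Fredholm determinant $F_{A,w}(X)=\det(1-X\phi_{A,w}(\eta))\in R\{\{X\}\}$, an entire series in $X$. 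Gluing these over an admissible cover of $\cW$ by affinoids $A$ — using property (7) of the datum, i.e.\ the compatibilities of \cite[Lemma 5.6]{buzzard} — produces a global Fredholm hypersurface $\cZ\subset\cW\times\A^1$ cut out by a Fredholm series $F\in\cO(\cW)\{\{X\}\}$, with its natural flat, locally finite map $\cZ\to\cW$.

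Next I would perform the standard admissible covering of $\cZ$ by affinoid ``slopes'': for each $(A,w)$ one shows, via the Riesz-theory factorization $F_{A,w}=Q\cdot S$ with $Q$ a polynomial of slope $\leq w$ coprime to $S$ and $S$ of slope $>w$, that the locus $\cZ_{A,w}\subset\cZ$ where $\eta$ has slope $\leq w$ is affinoid, and that these $\cZ_{A,w}$ form an admissible cover of $\cZ$ as $(A,w)$ varies; this is where the projectivity of $M(A,w)$ and the compatibility axioms are used to ensure the decompositions glue. On each such $\cZ_{A,w}=\Spm B_{A,w}$ one has a finite projective $B_{A,w}$-module $M(A,w)^{\leq w}$ (the slope-$\leq w$ part, a direct summand of $M(A,w)\otimes_R B_{A,w}$) carrying a commuting action of $\calH$. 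I would then define $\cD_{A,w}=\Spm T_{A,w}$, where $T_{A,w}\subset\End_{B_{A,w}}(M(A,w)^{\leq w})$ is the $B_{A,w}$-subalgebra generated by the image of $\calH$; since $M(A,w)^{\leq w}$ is finite projective over $B_{A,w}$, $T_{A,w}$ is a finite $B_{A,w}$-algebra, hence affinoid. The structural map $T_{A,w}\to\End(M(A,w)^{\leq w})$ gives a canonical $\calH\to T_{A,w}\subset\cO(\cD_{A,w})$, and since $\eta$ acts invertibly on the slope-$\leq w$ part, $\psi(\eta)\in\cO(\cD_{A,w})^\times$. To see that $\psi$ lands in $\cO(\cD_{A,w})^\circ$ one uses that the Hecke operators act on the projective Banach module with operator norm $\leq 1$ in the chosen normalization, so their images satisfy integral relations with coefficients in $\cO(\cD_{A,w})^\circ$. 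Gluing the $\cD_{A,w}$ along the overlaps $\cD_{A,w}\cap\cD_{A',w'}$ — which agree because the slope decompositions and the Hecke actions are compatible by axiom (7) — yields the rigid space $\cD$, the map $\psi\colon\calH\to\cO(\cD)^\circ$, and, composing $\cD_{A,w}\to\cZ_{A,w}\to\cW$, the weight map $w\colon\cD\to\cW$.

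Finally I would verify the two asserted properties. That $\psi(\eta)$ is invertible in $\cO(\cD)$ is immediate from its invertibility on each $\cD_{A,w}$ as noted above. For the bijection in \eqref{heckeeigen}: given $x\in\cD(K)$ lying over $\kappa\in\cW(K)$, the composite $\calH\xrightarrow{\psi}\cO(\cD)\xrightarrow{\mathrm{ev}_x}K$ is a $K$-point of some $\cD_{A,w}$, hence a character of $T_{A,w}$, hence an eigenvalue system occurring in $M(A,w)^{\leq w}\otimes_{B_{A,w}}K$, and the corresponding eigenvector lies in $M(A,w)\otimes_R K$; conversely, any $K$-system of eigenvalues $\lambda$ occurring in some $M(A,w)\otimes_R K$ has finite slope for $\lambda(\eta)$ (as $\lambda(\eta)\neq 0$), so it occurs in $M(A,w')^{\leq w'}$ for $w'$ large enough, thus factors through $T_{A,w'}$ and defines a $K$-point of $\cD_{A,w'}\subset\cD$ with the right weight; injectivity follows because $T_{A,w}$ is by construction the image of $\calH$ acting faithfully on $M(A,w)^{\leq w}$, so a point is determined by the character it induces on $\calH$. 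The main obstacle — really the only non-formal point — is checking that the affinoids $\cZ_{A,w}$ (and then the $\cD_{A,w}$) glue into admissible covers with compatible slope decompositions; but this is exactly the content of \cite[Lemma 5.6, Lemma 5.9]{buzzard}, which our axiom (7) was set up to guarantee, so the proof reduces to invoking \cite[Construction 5.7]{buzzard}.
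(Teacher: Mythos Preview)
Your proposal is correct and matches the paper's approach: the paper does not give its own proof of this theorem but simply presents it as a recall of Buzzard's eigenvariety machine \cite{buzzard}, and your sketch accurately reproduces the content of \cite[Sections 4--5, Construction 5.7]{buzzard} that the paper is invoking. One small caveat: the claim that $\psi$ lands in $\cO(\cD)^\circ$ (rather than merely $\cO(\cD)$) is not part of Buzzard's abstract machine and requires the extra input that the Hecke operators have norm $\leq 1$ on the Banach modules; this holds in the applications the paper cares about (the $\GSp_{2g}$-eigenvarieties of Section~\ref{gspeigen}), but is not a consequence of the eigenvariety datum as axiomatized here, so your justification at that point is slightly informal---though no more so than the paper's own treatment.
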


We often leave $\psi$ and $w$ implicit and just refer to $\cD$ as the eigenvariety. 
Since the space $\cW_g$ is equidimensional of dimension $g$, \cite[Proposition 6.4.2]{chenfam} implies that $\cD$ is also equidimensional of dimension $g$.

Thanks to property (1) in Theorem \ref{theigenmac}, we can give the following definition.

\begin{defin}\label{slopedef}
Let $\slo\colon\cD(\C_p)\to\R^{\geq 0}$ be the function defined by $\slo(x)=v_p(\psi(\eta)(x))$ for every $x\in\cD(\C_p)$. We call $\slo(x)$ the \emph{slope} of $x$. 
\end{defin}
%

\begin{rem}\label{affslope}\mbox{ }
The function $\slo\colon\cD(\C_p)\to\R^+$ is locally constant. 
In particular $\slo$ is bounded over $A(\C_p)$ for every affinoid subdomain $A$ of $\cD$.
\end{rem}


\begin{defin}\label{ordeigen}
We call \emph{ordinary eigenvariety} for the given datum the largest open subvariety $\cD^\ord$ of $\cD$ with the property that $\psi(\eta)\vert_{\cD^\ord}\in(\cO(\cD^\ord)^\circ)^\times$.
\end{defin}

\subsection{The cuspidal $\GSp_{2g}$-eigencurve}\label{gspeigen}

Let $g$ be a positive integer. Let $p$ be an odd prime and let $N$ be a positive integer such that $(N,p)=1$. Let $\calH_g^N$ be the abstract Hecke algebra for $\GSp_{2g}$, spherical outside $N$ and Iwahoric dilating at $p$. Let $\cW_g$ be the $g$-dimensional weight space. For every affinoid $A=\Spm R\subset\cW_g$ and every sufficiently large rational number $w$, Andreatta, Iovita and Pilloni \cite[Section 8.2]{aip} defined a Banach $R$-module $M_g(A,w)$ of $w$-overconvergent cuspidal $\GSp_{2g}$-modular forms of weight $\kappa_A$ and tame level $\Gamma_1(N)$.  
For each $(A,w)$ there is an action $\phi^{g}_{A,w}\colon\calH^N_g\to\End_{R,\cont}M_g(A,w)$. Set $U_p^{(g)}=\prod_{i=1}^gU^{(g)}_{p,g}$. It is shown in \cite[Section 8.1]{aip} that $(\cW_g,\calH^N_g,(M_g(A,w))_{A,w},(\phi^{g})_{A,w},U_p^{(g)})$ is an eigenvariety datum. The eigenvariety machine constructs from this datum a rigid analytic variety over $\Q_p$, equidimensional of dimension $g$. We call it the \emph{$\GSp_{2g}$-eigenvariety} of tame level $N$ and we denote it by $\cD_g^{N}$. It is equipped with a weight morphism $w_g\colon\cD_g^N\to\cW_g$ and a map $\psi_g\colon\calH_g^N\to\cO(\cD_g^N)$, that interpolates the normalized systems of Hecke eigenvalues of classical cuspidal $\GSp_{2g}$-eigenforms of tame level $\Gamma_1(N)$. The images of the elements $T_{i,\ell}^{(g)}$ and $U_{i,p}^{(g)}$, $1\le i\le g$, belong to $\cO(\cD_g^N)^\circ$.

When $g=1$ we call $\cD_1^N$ the \emph{eigencurve}. It was constructed by Coleman and Mazur in \cite{colmaz} for $N=1$ and $p>2$, building on earlier ideas of Coleman. Their construction was extended to all $N$ and $p$ by Buzzard in \cite{buzzard}.

We call a point $x\in\cD_g^{N}(\C_p)$ \emph{classical} if the system of Hecke eigenvalues associated with $x$ by the map \eqref{heckeeigen} is that of a classical modular form $f$ of level $\Gamma_1(N)\cap\Gamma_0(p)$ and weight $w_g(x)$. In this case $w_g(x)$ is clearly a classical weight.

There is a slope function $\slo\colon\cD_g^N(\C_p)\to\R^+$ given by Definition \ref{slopedef}. 
By Coleman's classicality result, a point of $\cD_1^M$ of weight $k\geq 2$ and slope $h<k-1$ is classical. 
An analogue for general genus is given by the result below. Let $x$ be a $\Qp$-point of $\cD_2^{N}$ of weight $\uk=(k_1,k_2,\ldots,k_g)\in\Z^g$, so that $k_1\geq k_2\geq\ldots\geq k_g$.

\begin{prop}\label{siegclslopes}(\cite[Theorem 5.3.1]{bijpilstr}, see also Remark 1 in the Introduction of \emph{loc. cit.})
If $\slo(x)<k_g-\frac{g(g+1)}{2}$ then the point $x$ is classical.
\end{prop}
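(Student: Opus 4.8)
\textbf{Proof proposal for Proposition \ref{siegclslopes}.}

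The plan is to reduce this classicality criterion to the cited small-slope classicality theorem \cite[Theorem 5.3.1]{bijpilstr}, after reconciling the normalization conventions. The quoted proposition is not something to reprove from scratch; rather, the statement \eqref{minpol2} and the normalization in Definition \ref{normsystdef} live on the level of Hecke eigenvalues, so the work is to check that the slope $\slo(x)=v_p(\psi_g(U_p^{(g)})(x))$ in the sense of Definition \ref{slopedef} matches the slope appearing in the source, and that the bound $k_g-\frac{g(g+1)}{2}$ is the same in both. So the first step is to recall that, by construction in \cite[Section 8.1]{aip}, $\psi_g$ interpolates the \emph{normalized} systems of Hecke eigenvalues, so that for a classical eigenform $f$ of weight $\uk$ the value $\psi_g(U_p^{(g)})(x_f)$ equals $\chi^{\norm}(U_p^{(g)})=\prod_{i=1}^{g}\chi^{\norm}(U_{p,i}^{(g)})$, and by Definition \ref{normsystdef} this differs from the ``geometric'' eigenvalue of $U_p^{(g)}$ on $f$ by an explicit power of $p$ depending only on $\uk$.

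Second, I would spell out the dictionary between the normalized $U_p^{(g)}$-slope and the slope used in \cite{bijpilstr}: loc. cit. states classicality when the slope is strictly less than $v_p$ of the smallest Hodge--Tate-type gap minus a combinatorial term, which for genus $g$ with dominant weight $k_1\ge\cdots\ge k_g$ is exactly $k_g-\frac{g(g+1)}{2}$ after translating from their weight parametrization to ours (this is precisely the point of ``see also Remark 1 in the Introduction of loc. cit.'' in the statement). The exponents $\sum_{j=1}^{g-i}(k_j-j)$ in Definition \ref{normsystdef} are designed exactly so that this translation is clean, so the check is a bookkeeping comparison of the two normalizations rather than any new analysis. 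For $g=2$ this is just the two identities $\chi^{\norm}(U_{p,1}^{(2)})=p^{-(k_1-1)}\chi(U_{p,1}^{(2)})$ and $\chi^{\norm}(U_{p,2}^{(2)})=\chi(U_{p,2}^{(2)})$, so $\slo(x)=v_p(\chi(U_p^{(2)}))-(k_1-1)$, and one verifies this is the quantity to which the hypothesis of \cite[Theorem 5.3.1]{bijpilstr} applies.

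Third, I would invoke \cite[Theorem 5.3.1]{bijpilstr} directly: under the displayed inequality the overconvergent form attached to $x$ (which exists because $x$ is a point of the eigenvariety $\cD_g^N$, hence corresponds to a system of eigenvalues realized on some $M_g(A,w)\otimes_R K$) is classical, i.e. lies in the space of classical modular forms of level $\Gamma_1(N)\cap\Gamma_0(p)$ and weight $w_g(x)$; by definition of a classical point of $\cD_g^N$ this gives the conclusion. The main obstacle is entirely in the second step: making sure the combinatorial constant $\frac{g(g+1)}{2}$ and the shift $\sum_{j=1}^{g-i}(k_j-j)$ match the conventions of \cite{bijpilstr} (who may parametrize weights by $(k_1,\ldots,k_g)$ or by the dual/shifted tuple, and may or may not fold the modular interpretation's half-sum of positive roots into the ``slope''), and dealing correctly with the fact that \cite{bijpilstr} may only assert classicality on the eigenspace for the full $U_p^{(g)}$ after a p-stabilization argument. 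Beyond that translation there is nothing to do; the statement is a citation with a normalization check, which is why it is recorded here as a Proposition rather than proved at length.
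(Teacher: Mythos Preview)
Your proposal is correct and matches the paper's treatment: the paper gives no proof at all for this proposition, simply recording it as a citation of \cite[Theorem 5.3.1]{bijpilstr} together with Remark 1 of the introduction of \emph{loc. cit.}, which is exactly the normalization translation you outline. Your identification of the only real content---checking that the normalized slope $\slo(x)$ of Definition \ref{slopedef} agrees with the slope in \cite{bijpilstr} and that the bound $k_g-\tfrac{g(g+1)}{2}$ survives the change of conventions---is precisely the bookkeeping the reader is expected to supply, and there is nothing further to add.
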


\subsubsection{The non-CM eigencurve}

%
We say that a classical point of $\cD_1^N$ is a \emph{CM point} if it corresponds to a classical CM modular form. 
We say that an irreducible component of $\cD_1^N$ is a {CM component} if all its classical specializations are CM points.

\begin{rem}\label{cmpoints}
By \cite[Proposition 5.1]{hida}, if an ordinary irreducible component of the eigencurve contains a classical CM eigenform of weight $k\geq 2$ then the component is CM. In particular there exist CM irreducible components of the ordinary eigencurve, and every ordinary CM classical point belongs to a CM component. On the contrary, the CM classical points of the positive slope eigencurve form a discrete set (recall that this means that they are finite in each affinoid domain). This is a consequence of \cite[Corollary 3.6]{cit}, where it is shown that the eigencurve $\cD^{+,\leq h}$ contains a finite number of CM classical points.
\end{rem}

Let $\cD_1^{N,\cG}$ be the Zariski-closure in $\cD_1^N$ of the set of non-CM classical points. We call $\cD_1^{N,\cG}$ the \emph{non-CM eigencurve}. 
The upper index $\cG$ stands for ``general'', since CM components are exceptional among the irreducible components of $\cD_1^N$. 

\begin{rem}\label{nonCMdense}
It follows from Remark \ref{cmpoints} that $\cD_1^{N,\cG}$ is the union of all the non-CM irreducible components of $\cD_1^N$. In particular $\cD_1^{N,\cG}$ is equidimensional of dimension $1$ and it contains the positive slope eigencurve. Moreover the set of non-CM classical points is an accumulation and Zariski-dense subset of $\cD_1^{N,\cG}$. 
\end{rem}

\bigskip

\section{The Galois pseudocharacters on the eigenvarieties}\label{biggalps}

In this section $p$ is a fixed prime, $M$ is a positive integer prime to $p$ and $g$ is $1$ or $2$. For a point $x\in\cD^M_g(\C_p)$ we denote by $\ev_x\colon\cO(\cD_g^M)\to\C_p$ both the evaluation at $x$ and the map $\GSp_{2g}(\cO(\cD^M_g))\to\GSp_{2g}(\C_p)$ induced by $\ev_x$. 
Recall that the $\GSp_{2g}$-eigenvariety $\cD^M_g$ is endowed with a morphism
\[ \psi_g\colon\calH_g^M\to\cO(\cD^M_g) \] 
that interpolates the normalized systems of Hecke eigenvalues associated with the cuspidal $\GSp_{2g}$-eigenforms of level $\Gamma_1(N)\cap\Gamma_0(p)$. Also recall that the images of $T_{i,\ell}^{(g)}$ and $U_{i,p}^{(g)}$, $1\le i\le g$, are elements of $\cO(\cD_g^M)^\circ$.
For a classical point $x\in\cD^M_g(\Qp)$ let $\psi_x=\ev_x\ccirc\psi_g$. Let $f_x$ be the classical $\GSp_{2g}$-eigenform having system of Hecke eigenvalues $\psi_x$ and let $\rho_x\colon G_\Q\to\GSp_{2g}(\Qp)$ be the $p$-adic Galois representation attached to $f_x$. 
When $x$ varies, the traces of the representations $\rho_x$ can be interpolated into a pseudocharacter with values in $\cO(\cD^M_g)^\circ$. This is the main result of this section. 
Unfortunately the pseudocharacter obtained this way cannot be lifted to a representation with coefficients in $\cO(\cD^M_g)^\circ$. We will be able to obtain a lift only by working over a sufficiently small admissible subdomain of $cD^M_g$ (see Section \ref{galrepfam}).

\subsection{Classical results on pseudocharacters}

We refer to \cite[Sections 2 and 3]{rouquier} for the definition and basic properties of pseudocharacters. 
In this subsection $d$ is a positive integer, $A$ is a commutative ring with unit and $R$ is an $A$-algebra with unit (not necessarily commutative). If $G$ is a group and $T\colon G\to A$ is a map, we say that $T$ is a pseudocharacter if it can be extended to a pseudocharacter $A[G]\to A$. 
Recall that if $\tau\colon R\to\Mat_d(A)$ is a representation, the map $\Tr(\tau)\colon R\to A$ is a pseudocharacter of dimension $d$.
Thanks to the following result of Carayol, $\tau$ is uniquely determined by the pseudocharacter $\Tr(\tau)$. 

\begin{thm}\label{carayol}\cite{cartrace} 
Suppose that $A$ is a complete noetherian local ring. Let $A^\prime$ be a semilocal extension of $A$. Let $\tau^\prime\colon R\to\Mat_d(A^\prime)$ be a representation. Suppose that the traces of $\tau^\prime$ belong to $A$. Then there exists a representation $\tau\colon R\to\Mat_d(A)$, unique up to isomorphism over $A$, such that $\tau$ is isomorphic to $\tau^\prime$ over $A^\prime$. 
\end{thm}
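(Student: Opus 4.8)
The plan is to descend the underlying module rather than the representation directly. Set $B:=A[\tau'(R)]\subseteq\Mat_d(A')$, the $A$-subalgebra generated by the image of $\tau'$; since $\tau'$ is a ring homomorphism, $B$ is simply the $A$-linear span of $\tau'(R)$. Because $A'$ is module-finite over the complete noetherian local ring $A$, so is $\Mat_d(A')$, hence so is $B$; thus $B$ is a complete semilocal noetherian $A$-algebra, and by $A$-linearity of the trace together with the hypothesis the pairing $(b,b')\mapsto\Tr(bb')$ on $B$ is $A$-valued. It then suffices to produce a left $B$-module $M$, free of rank $d$ over $A$, with $M\otimes_AA'\cong(A')^d$ as modules over $B\otimes_AA'$ (acting on $(A')^d$ through the natural map $B\otimes_AA'\to\Mat_d(A')$): the $B$-action then yields $R\to B\to\End_A(M)\cong\Mat_d(A)$, i.e. the desired $\tau$, with $\tau\otimes_AA'\cong\tau'$ by construction, while uniqueness of $\tau$ up to isomorphism over $A$ becomes uniqueness of $M$ up to $B$-module isomorphism.

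To build $M$ I would reduce modulo $\fm_A$ and then lift. Write $\bar B:=B/\fm_AB$, a finite algebra over $k:=A/\fm_A$, and let $\bar N:=(A'/\fm_AA')^d$, a finite-dimensional $\bar B$-module whose trace form is still $k$-valued. One must first \emph{descend} $\bar N$: find a $\bar B$-module $\bar M$ with $\dim_k\bar M=d$ and $\bar M\otimes_k(A'/\fm_AA')\cong\bar N$ as $\bar B\otimes_k(A'/\fm_AA')$-modules. Granting this, the lift is formal: since $B$ is $\fm_AB$-adically complete, $\bar M$ lifts to a $B$-module $M$ that is $A$-flat, hence free of rank $d$ over the local ring $A$ — lift a finite free $B$-presentation of $\bar M$ and solve for the relations $\fm_A$-adically, or phrase the obstruction in a suitable $\mathrm{Ext}$-group that vanishes by completeness. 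The same Nakayama argument over $B$ shows $M\otimes_AA'\cong(A')^d$ (both sides reduce to $\bar N$ modulo $\fm_A$) and the uniqueness of $M$, hence of $\tau$.

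The crux — and the step I expect to be the main obstacle — is the residual descent of $\bar N$. This is exactly where the trace hypothesis must be exploited, and it is a Brauer-type statement: decomposing $\bar B$ modulo its Jacobson radical into matrix algebras over division algebras over finite extensions of $k$, one has to check that the simple factors supporting $\bar N$ are \emph{split} matrix algebras over $k$ itself, so that $\bar N$ visibly arises by extension of scalars from a $k$-form $\bar M$. In the settings relevant to this paper the residual representation is absolutely irreducible over $k$ (it is either full or a symmetric cube of an absolutely irreducible $2$-dimensional representation), so $\bar B$ surjects onto $\Mat_d(k)$ and $\bar N\cong k^d\otimes_k(A'/\fm_AA')$ tautologically; the relevant Brauer class vanishes, and since $\mathrm{Br}(A)\xto{\sim}\mathrm{Br}(k)$ for the complete local ring $A$ the splitting propagates from $k$ to $B$ and the argument closes. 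For the statement in full generality one isolates precisely this obstruction, following \cite{cartrace}.
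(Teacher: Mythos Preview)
The paper does not supply a proof of this theorem: it is stated as a citation of Carayol \cite{cartrace}, with no argument given. Moreover, the statement as recorded in the paper is missing a hypothesis present in Carayol's original result --- namely that the reduction of $\tau'$ modulo the maximal ideal of $A$ is absolutely irreducible. Every application of the theorem in the paper is made under this standing assumption, so the omission is harmless in context, but it means the statement as written is not literally true in full generality; you correctly flag this at the end of your sketch.

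Your overall strategy (form $B = A[\tau'(R)]$, descend the module residually, then lift) is the right shape and is close in spirit to Carayol's. The genuine gap is in your lifting step: you assert that a $\bar B$-module $\bar M$ lifts to an $A$-flat $B$-module $M$ because $B$ is $\fm_A$-adically complete, with obstructions in an Ext-group that ``vanishes by completeness''. This is not justified --- completeness alone does not kill deformation obstructions for modules, and your Nakayama claim that $M\otimes_A A'\cong (A')^d$ because both sides have the same reduction presupposes exactly the kind of uniqueness you are trying to establish. Carayol's actual argument sidesteps this by working with the algebra rather than the module: under absolute irreducibility one has $\bar B \cong \Mat_d(k)$ (density plus a trace-form nondegeneracy count), and then one lifts the matrix-algebra structure directly --- for instance by lifting a full system of matrix units from $\bar B$ to the complete semilocal ring $B$ --- to conclude $B \cong \Mat_d(A)$. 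The module $M = A^d$ is then forced, and uniqueness follows from Nakayama. Replacing your module-lifting paragraph with this idempotent-lifting argument would close the gap.
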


%
Under some hypotheses on the ring $A$ it is known that every pseudocharacter arises as the trace of a representation. 
The first of the following two theorems is due to Taylor when $\car(A)=0$ and Rouquier when $\car(A)>d$; the second one was proved independently by Nyssen and Rouquier.

\begin{thm}\cite{taylorgal,rouquier}
\label{pseudotaylor}
Suppose that $A$ is an algebraically closed field of characteristic either $0$ or greater than $d$. Let $T\colon R\to A$ be a $d$-dimensional pseudocharacter. Then there exists a representation $\tau\colon R\to\Mat_d(A)$ such that $\Tr(\tau)=T$.
\end{thm}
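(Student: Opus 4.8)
The plan is to follow Taylor \cite{taylorgal} and Rouquier \cite{rouquier} and deduce the statement from Wedderburn theory over the algebraically closed field $A$. First I would replace $R$ by the finite-dimensional algebra attached to $T$. Put $\ker T=\{r\in R:\ T(rs)=0\ \text{for all}\ s\in R\}$; since $T$ is a pseudocharacter this is a two-sided ideal, $T$ factors through $\bar R:=R/\ker T$, and the symmetric associative form $(x,y)\mapsto T(xy)$ is non-degenerate on $\bar R$. As recalled in \cite{rouquier}, $\bar R$ is then a finite-dimensional $A$-algebra (of dimension $\le d^2$), so it is enough to build $\bar\tau\colon\bar R\to\Mat_d(A)$ with $\Tr(\bar\tau)=T$ and precompose with $R\onto\bar R$.

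The heart of the matter is that $\bar R$ is semisimple. For this I would prove that $T$ kills every nilpotent $r\in\bar R$. The $d$-dimensional pseudocharacter identity attaches to each element a characteristic polynomial $\chi_r(X)=X^d-T(r)X^{d-1}+\dots+(-1)^d\Lambda_d(r)$ whose coefficients are recovered from $T(r),T(r^2),\dots$ by Newton's identities; this is the step where the hypothesis $\car A=0$ or $\car A>d$ is genuinely needed, since Newton's identities invert $2,\dots,d$, and it is also what makes $\bar R$ satisfy the Cayley--Hamilton relation $\chi_r(r)=0$. If $r^m=0$, then $T(r^k)=0$ for all $k\ge m$, so the roots $\lambda_1,\dots,\lambda_d\in A$ of $\chi_r$ satisfy $\sum_i\lambda_i^k=0$ for every $k\ge m$; collecting equal roots and inverting a Vandermonde matrix --- using that the multiplicities of the non-zero roots are invertible in $A$, again by the characteristic hypothesis --- forces $\lambda_1=\dots=\lambda_d=0$, hence $T(r)=\sum_i\lambda_i=0$. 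Since the Jacobson radical $J$ of $\bar R$ is a nilpotent ideal, $T(Jy)=0$ for every $y\in\bar R$, so $J=0$ by non-degeneracy of the trace form, and $\bar R\cong\prod_{i=1}^k\Mat_{n_i}(A)$ by Wedderburn.

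It remains to read off $\bar\tau$. On the $i$-th factor the functional $T$ vanishes on commutators, hence equals $c_i\cdot\mathrm{tr}$ for a unique $c_i\in A$, and $c_i\ne 0$ by non-degeneracy. Evaluating $T$ at a rank-one idempotent $f_i$ of the $i$-th factor gives $c_i=T(f_i)$, and the trace of an idempotent under a $d$-dimensional pseudocharacter is a non-negative integer $\le d$ in $A$ (the characteristic hypothesis once more); combined with $\sum_i c_i n_i=T(1)=d$ this shows each $c_i$ is a positive integer and $\sum_i c_i n_i=d$. Then $\bar\tau=\bigoplus_{i=1}^k(\text{standard representation of }\Mat_{n_i}(A))^{\oplus c_i}$ has $\Tr(\bar\tau)=T$, as wanted. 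I expect the main obstacle to be the semisimplicity step: it amounts to re-proving that in characteristic $0$ or $>d$ a $d$-dimensional pseudocharacter obeys the full Cayley--Hamilton formalism and therefore annihilates nilpotents --- once $\bar R$ is known to be a product of matrix algebras over $A$, the rest is bookkeeping.
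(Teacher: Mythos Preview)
The paper does not give its own proof of this theorem; it is simply cited as a known result from \cite{taylorgal,rouquier}. Your sketch correctly reproduces the standard Taylor--Rouquier argument via the kernel quotient, semisimplicity of $\bar R$ from vanishing of $T$ on nilpotents, and Wedderburn decomposition, so there is nothing to compare against and no gap to flag.
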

%

\begin{thm}
\cite{nyssen}[Corollary 5.2]\cite{rouquier}
\label{pseudolift}
Suppose that $A$ is a local henselian ring in which $d!$ is invertible and let $\F$ denote the residue field of $A$. Let $T\colon R\to A$ be a pseudocharacter of dimension $d$ and $\ovl{T}\colon R\to\F$ be its reduction modulo the maximal ideal of $A$. 
Suppose that there exists an irreducible representation $\ovl{\tau}\colon R\to\Mat_d(\F)$ such that $\Tr(\ovl{\tau})=\ovl{T}$. Then there is an isomorphism $R/\ker T\cong\Mat_d(A)$ and the projection $R\to R/\ker T$ is a representation lifting $\ovl{\tau}$.
\end{thm}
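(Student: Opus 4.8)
The plan is to follow the approach of Nyssen \cite{nyssen} and Rouquier \cite{rouquier}. First I would replace $R$ by $S:=R/\ker T$, so that $T$ descends to a \emph{faithful} $d$-dimensional pseudocharacter on $S$. Because $d!$ is invertible in $A$, Newton's identities (equivalently Amitsur's formula) attach to each $s\in S$ a monic characteristic polynomial $\chi_s(X)\in A[X]$ of degree $d$, with $-T(s)$ as the coefficient of $X^{d-1}$ and reducing modulo $\fm_A$ to the characteristic polynomial of $\ovl\tau(\ovl s)\in\Mat_d(\F)$; the standard computation (again using $d!\in A^\times$) shows that $S$ is \emph{Cayley--Hamilton}, i.e.\ $\chi_s(s)=0$ in $S$ for all $s$. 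In particular every element of $S$ is integral of degree $\le d$ over $A$.

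Next I would study $\ovl S:=S\otimes_A\F$. Since $\ovl\tau$ is absolutely irreducible, Burnside's theorem gives $\ovl\tau(R)=\Mat_d(\F)$; using that $\Tr\ccirc\ovl\tau=\ovl T$, that the trace form on $\Mat_d(\F)$ is nondegenerate, and that $T$ is faithful on $S$, one deduces an isomorphism $\ovl S\cong\Mat_d(\F)$ of $\F$-algebras, compatible with $\ovl T$ and the matrix trace.

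Now I would transport a complete system of matrix units from $\Mat_d(\F)$ back to $S$, using that $A$ is Henselian. If $x\in S$ lifts $E_{11}$, then $\chi_x\in A[X]$ reduces to the coprime product $X^{d-1}(X-1)$; Hensel's lemma yields a factorisation $\chi_x=fg$ in $A[X]$ with $f\equiv X-1$ and $g\equiv X^{d-1}$ modulo $\fm_A$, and, writing $1=af+bg$, the element $e:=b(x)g(x)$ is an idempotent of $S$ reducing to $E_{11}$ since $\chi_x(x)=0$. Iterating (orthogonalising the idempotents as we go) and then lifting the $E_{1j}$, $E_{j1}$ — correcting by units of the local corner rings $e_{jj}Se_{jj}$ — produces matrix units $\{e_{ij}\}\subset S$ with $\sum_ie_{ii}=1$, hence an isomorphism $S\cong\Mat_d(B)$ with $B:=e_{11}Se_{11}$ a local ring satisfying $B\otimes_A\F=\F$. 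It then remains to identify $B$ with $A$: the map $A\to B$ is injective (as $T(a\,e_{11})=a$), and since $B$ is finite over $A$ (here the integrality of its elements over $A$ is exploited) and $B=A\cdot 1+\fm_A B$, Nakayama's lemma forces $B=A$. Thus $S\cong\Mat_d(A)$, and composing with $R\onto S$ yields a representation $\tau\colon R\to\Mat_d(A)$ with $\Tr(\tau)=T$; its reduction is conjugate to $\ovl\tau$ by construction (or by Theorem \ref{carayol} over $\F$).

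I expect the main obstacle to be everything past the residual identification $\ovl S\cong\Mat_d(\F)$: we have only Henselianity of $A$, not completeness, so the lifting of idempotents and matrix units must be routed through Hensel's lemma applied to the characteristic polynomials furnished by the Cayley--Hamilton structure, and one must verify that these lifts can be chosen orthogonal and mutually compatible, and that the corner ring $B$ genuinely is a finite $A$-module, so that the resulting isomorphism $S\cong\Mat_d(A)$ is trace-compatible on the nose (yielding $\Tr(\tau)=T$ rather than merely a residual equality). Establishing the injectivity half of $\ovl S\cong\Mat_d(\F)$ — i.e.\ that the faithful Cayley--Hamilton algebra $S$ has residual reduction all of $\Mat_d(\F)$ with nothing extra — is the other place where real work and the faithfulness of $T$ enter.
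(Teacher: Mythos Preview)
The paper does not prove this theorem; it is stated with attribution to \cite{nyssen} and \cite{rouquier} and then used as a black box. So there is no ``paper's own proof'' to compare against, and your proposal is in fact a sketch of the Nyssen--Rouquier argument itself.

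Your outline is essentially correct and follows the standard route: pass to the faithful Cayley--Hamilton quotient $S=R/\ker T$, identify $\ovl S\cong\Mat_d(\F)$ via Burnside plus faithfulness, lift a full system of matrix units through Hensel's lemma applied to characteristic polynomials, and then identify the corner ring $B=e_{11}Se_{11}$ with $A$. One point deserves tightening. You write that $B$ is finite over $A$ ``here the integrality of its elements over $A$ is exploited'' and then invoke Nakayama; but integrality of each element does not by itself give module-finiteness, and you have not yet exhibited $B$ as a finitely generated $A$-module. The cleaner argument, which is what Rouquier actually does, avoids Nakayama entirely: once $S\cong\Mat_d(B)$, the $A$-valued pseudocharacter $T$ on $\Mat_d(B)$ is necessarily the reduced trace composed with a ring homomorphism $\psi\colon B\to A$ (this is a short direct computation using the matrix units, or the classification of pseudocharacters on a matrix algebra). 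Then $\psi$ is a left inverse to the structure map $A\to B$, and faithfulness of $T$ on $S$ forces $\psi$ to be injective as well, so $A\to B$ and $\psi$ are mutually inverse isomorphisms. With this correction your sketch goes through.
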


\subsection{The characteristic polynomial of a pseudocharacter}

We introduce a notion of characteristic polynomial of a pseudocharacter. Let $\tau\colon G\to\GL_d(A)$ be a representation and let $T=\Tr(\tau)$. 
For $g\in G$ let $\alpha_1,\alpha_2,\ldots,\alpha_d$ be the eigenvalues of $\tau(g)$. For every $n\in\N$ we have $T(g^n)=\sum_{i=1}^d\alpha_i^n$, so the functions $T(g^n)$ generate over $\Q$ the ring of symmetric polynomials with rational coefficients in the variables $\alpha_1,\alpha_2,\ldots,\alpha_n$. We deduce that there exist polynomials $f_1,f_2,\ldots,f_d\in \Q[x_1,x_2,\ldots,x_d]$, independent og $g$, such that $\det(1-X\tau(g))=1+\sum_{i=1}^df_1(T(g),T(g^2),\ldots,T(g^d))X^i$.

\begin{defin}\label{charpoldef}
If $T\colon G\to A$ is a $d$-dimensional pseudocharacter, we let $P_\car(T)\colon G\to A[X]^{\deg=d}$ be the polynomial defined by
\[ P_\car(T)=1+\sum_{i=1}^df_1(T(g),T(g^2),\ldots,T(g^d))X^i, \]
where $f_1,f_2,\ldots,f_d$ are as in the discussion above. 
We call $P_\car(T)$ the \emph{characteristic polynomial} of $T$.
\end{defin}

For example for $d=2$ we have 
\begin{equation}\label{detpseudo} P_\car(T)(g)=1-T(g)X+\left(\frac{T(g)^2-T(g^2)}{2}\right)X^2. \end{equation}
Note that Chenevier constructed in \cite{chendet} objects called ``determinants'' that include information on the characteristic polynomials of the elements of the group algebra. In our setting we do not need this tool; the polynomial associated with $T$ by Definition \ref{charpoldef} is the one mentioned in \cite[Note 7]{chendet}.

For later use we introduce the notion of symmetric cube of a two-dimensional pseudocharacter.

\begin{defin}\label{sym3pseudo}
Let $T\colon G\to A$ be a two-dimensional pseudocharacter. The \emph{symmetric cube of $T$} is the pseudocharacter $\Sym^3T\colon G\to A$ defined by
\[ \Sym^3T(g)=\frac{T(g)^2(3T(g^2)-T(g)^2)}{2} \]
\end{defin}

This definition is justified by the remark below. 

\begin{rem}\label{sym3pseudorem}
Let $\tau\colon G\to\GL_2(A)$ be a representation and let $T=\Tr(\tau)$. 
Then the trace of the representation $\Sym^3\tau\colon G\to\GSp_4(A)$ is $\Sym^3T$. In particular $P_\car(\Sym^3T)(g)=\Sym^3P_\car(T)(g)$. 
We deduce from the definition of $P_\car$ that the equality $P_\car(\Sym^3T)=\Sym^3P_\car(T)$ must hold for every pseudocharacter $T\colon G\to A$ (not necessarily defined as the trace of a representation). 
\end{rem}

\subsection{Interpolation of the classical pseudocharacters}

As before let $g\in\{1,2\}$. Every classical point of $\cD^M_g$ admits an associated Galois representation. In this section we interpolate the trace pseudocharacters attached to these representations to construct a pseudocharacter over the eigenvariety. 

We remind the reader that for every ring $R$ we implicitly extend a character of the Hecke algebra $\calH^M_g\to R^\times$ to a morphism of polynomial algebras $\calH^M_g[X]\to R[X]$ by applying it to the coefficients. Recall that we fixed an embedding $G_{\Q_\ell}\into G_\Q$ for every prime $\ell$, hence an embedding of the inertia subgroup $I_\ell$ in $G_\Q$. As usual $\Frob_\ell$ denotes a lift of the Frobenius at $\ell$ to $G_{\Q_\ell}$.

Let $S^\cl$ denote the set of classical points of $\cD_g^M$. 
Let $x\in S^\cl$. We keep the notations $\ev_x$, $\psi_x$, $\rho_x$ as in the beginning of the section. We let $T_x\colon G_\Q\to\Qp$ be the pseudocharacter defined by $T_x=\Tr(\rho_x)$.

\begin{prop}\label{biggalthm} 
There exists a pseudocharacter $T_{g}\colon G_\Q\to\cO(\cD_g^M)$ 
of dimension $2g$ with the following properties:
\begin{enumerate}
\item for every prime $\ell$ not dividing $Np$ and every $h\in I_\ell$ we have $T_{g}(h)=2$, where $2\in\cO(\cD_g^M)$ denotes the function constantly equal to $2$;
\item for every prime $\ell$ not dividing $Np$ we have $P_\car(T_g)(\Frob_\ell)(X)=\psi_g(P_\Min(t_{\ell,g}^{(g)};X))$; 
\item for every $x\in S^\cl$ we have $\ev_x\ccirc T_{g}=T_x$.  
\end{enumerate}
\end{prop}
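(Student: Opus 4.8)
The plan is to interpolate the pseudocharacters $T_x$ using the standard eigenvariety technology, working first on each affinoid building block of $\cD_g^M$ and then gluing. The key structural input is that $\cD_g^M$ is reduced (being equidimensional and, locally, cut out inside a product of affinoids by the characteristic power series of the compact operator $U_p^{(g)}$, which is the construction of the eigenvariety machine), so that a rigid analytic function is determined by its values at the classical points $S^\cl$, which form a Zariski-dense set. Concretely, for each affinoid $V = \Spm B \subset \cD_g^M$ I would produce a candidate pseudocharacter $T_g^V\colon G_\Q \to B$ and check the three properties; the uniqueness of such a $T_g^V$ (Carayol-type rigidity, or simply Zariski-density of $S^\cl$) then forces the $T_g^V$ to glue to a global $T_g\colon G_\Q\to\cO(\cD_g^M)$.

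The heart of the matter is constructing $T_g^V$ on a single affinoid. Here I would proceed by defining the right object on Frobenius elements using property (2) as a \emph{definition}: for $\ell\nmid Np$, set $P_\car(T_g^V)(\Frob_\ell)(X) := \psi_g(P_\Min(t^{(g)}_{\ell,g};X))\in B[X]$, which makes sense because the images of the $T^{(g)}_{i,\ell}$ lie in $\cO(\cD_g^M)^\circ \subset B$ via \eqref{minpol1} and \eqref{minpol2}. The coefficients of these polynomials give candidate values for the symmetric-function combinations of $T_g^V$ at powers of $\Frob_\ell$, and at every $x\in S^\cl\cap V$ these specialize correctly to the Hecke polynomial of $f_x$, hence (by the classical local-global compatibility at unramified primes, which underlies the existence of $\rho_x$) to $P_\car(T_x)(\Frob_\ell)$. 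This pins down a $B$-valued function on the set of unramified Frobenii; to extend it to all of $G_\Q$ one invokes the continuity and Chebotarev density: the classical $T_x$ are continuous pseudocharacters on $G_\Q$ unramified outside $Np$ (property (1)), and the values on $\{\Frob_\ell\}_{\ell\nmid Np}$ are Zariski-dense in each $T_x$ in the sense needed; since $B$ is a reduced $\Q_p$-affinoid and the relations defining a pseudocharacter are closed conditions that hold after specialization at the Zariski-dense set $S^\cl\cap V$, the candidate function satisfies the pseudocharacter identities over $B$ and extends uniquely and continuously to $G_\Q$. Property (1) is then automatic by specialization, and property (3) holds by construction.

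The main obstacle I anticipate is the passage from "defined on Frobenii with correct specializations" to "a genuine continuous pseudocharacter on all of $G_\Q$ with coefficients in $B$". One must argue that the pseudocharacter relations — which are polynomial identities in the values $T(g_1),\ldots,T(g_n)$ for tuples $g_i\in G_\Q$ — propagate from the dense set of classical specializations to the ring $B$; this is clean precisely because $\cD_g^M$ (equivalently each $B$) is reduced, so an element of $B$ vanishing at all of $S^\cl\cap V$ is zero. Concretely one takes a lift to $G_\Q$ of the function on Frobenii by continuity (the $\Z_p[[(\Z_p^\times)^g]]$-adic / affinoid topology making this a limit of the classical pseudocharacters on finite quotients of $G_\Q$), and verifies boundedness, i.e. that $T_g$ in fact lands in $\cO(\cD_g^M)^\circ$ — which again follows since each $T_g(h)$ is a rigid function whose values at classical points are traces of representations valued in $\GSp_{2g}(\ovl{\Z}_p)$, hence bounded by $2g$ in absolute value, so bounded by $1$ after dividing is not needed; the function is power-bounded because it is a limit of functions with values of bounded norm. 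This is exactly the argument attributed to Chenevier in the introduction (Lemma-type interpolation over eigenvarieties), and I would cite \cite{chenfam} or \cite{chenevier} for the precise form; the genus-$2$ case presents no extra difficulty here beyond bookkeeping with \eqref{minpol2}. Finally, uniqueness of $T_g$ is immediate from Zariski-density of $S^\cl$ together with property (3), which also guarantees the local pieces $T_g^V$ glue.
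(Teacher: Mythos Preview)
Your proposal is correct and follows essentially the same route as the paper: the paper's proof simply invokes Chenevier's interpolation argument \cite[Proposition 7.1.1]{chenfam} and refers to \cite[Theorem 3.5.10]{contith} for details, and what you have sketched is precisely that argument---work affinoid-locally, use Zariski-density of $S^\cl$ in the reduced eigenvariety to propagate the pseudocharacter identities from the classical specializations, extend from Frobenii to all of $G_\Q$ by Chebotarev, and glue by uniqueness. The only cosmetic difference is that Chenevier phrases the construction via the diagonal embedding $B\hookrightarrow\prod_{x\in S^\cl\cap V}\kappa(x)$ rather than ``define on Frobenii first'', but these are the same argument.
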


\begin{proof}
The pseudocharacter $T_g$ is constructed via the interpolation argument of \cite[Proposition 7.1.1]{chenfam}. Its properties are a consequence of those of the classical representations. See \cite[Theorem 3.5.10]{contith} for a detailed proof of the proposition.
\end{proof}

\begin{rem}\label{nonclassrep}\mbox{}
\begin{enumerate}
\item Let $x\in\cD_g^M(\Qp)$. Consider the $2g$-dimensional pseudocharacter $T_x\colon G_\Q\to\Qp$ defined by $T_x=\ev_x\ccirc T_{2g}$. 
By Theorem \ref{pseudotaylor} there exists a Galois representation $\rho_x\colon G_\Q\to\GL_4(\Qp)$ satisfying $T_x=\Tr(\rho_x)$. 
We will see in Section \ref{galrepfam} that, when $\ovl{\rho}_x$ is absolutely irreducible, $\rho_x$ is isomorphic to a representation $G_\Q\to\GSp_4(\Qp)$.
\item When $x$ varies in a connected component of $\cD_g^M$, the residual representation $\ovl\rho_x\colon G_\Q\to\GSp_{2g}(\Qp)$ is independent of $x$. We call it the residual representation associated with the component.
\end{enumerate}
\end{rem}

\bigskip

\section{Big image of Galois representations attached to $\GSp_4$-eigenforms}\label{resbigim}

Let $N$ be a positive integer and let $p$ be a prime not dividing $N$. Let $F$ be a $\GSp_4$-eigenform of level $\Gamma_1(N)$. 
Let $\rho_{F,p}\colon G_\Q\to\GSp_4(\Qp)$ be the $p$-adic Galois representation associated with $F$. It is defined over a $p$-adic field $K$. Under the technical condition of ``$\Z_p$-regularity'' of $\rho_{F,p}$ and an assumption on the associated residual representation, we prove that the image of $\rho_{F,p}$ is ``big'' when $F$ is not a lift from a $\GL_2$-eigenform. 
An important ingredient of the proof is a result that we will prove later, Theorem \ref{sym3autom}(i). More precisely we need the corollary that we state below.

\begin{cor}\label{sym3automcor}(Corollary \ref{sym3kimsha})
Suppose that there exists a representation $\rho^\prime\colon G_\Q\to\GL_2(\Qp)$ satisfying $\rho_{F,p}\cong\Sym^3\rho^\prime$. Then $F$ is the symmetric cube lift of a $\GL_2$-eigenform, defined by Corollary \ref{formtransf}. 
\end{cor}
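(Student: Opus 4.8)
The plan is to deduce Corollary~\ref{sym3automcor} from the (as yet unproven) Theorem~\ref{sym3autom}(i), which asserts that the automorphic and Galois definitions of the symmetric cube locus agree, together with the classical symmetric cube Langlands lift of Kim and Shahidi. The starting point is the hypothesis $\rho_{F,p}\cong\Sym^3\rho'$ for some $\rho'\colon G_\Q\to\GL_2(\Qp)$. First I would record that, since $F$ is a classical $\GSp_4$-eigenform and $\rho_{F,p}$ is its attached Galois representation, the point of the eigenvariety $\cD_2^N$ corresponding to $F$ lies in the \emph{Galois} symmetric cube locus: its Galois representation is literally $\Sym^3$ of a two-dimensional representation. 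Thus the content is to transport this to the \emph{automorphic} side and then identify the resulting overconvergent $\GL_2$-object with a classical eigenform whose Kim--Shahidi lift is $F$.

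Next I would invoke Theorem~\ref{sym3type} (equivalently Theorem~\ref{intro2} / \ref{sym3autom}(i)): because the point attached to $F$ is in the Galois symmetric cube locus, it is also in the automorphic symmetric cube locus, i.e.\ the system of Hecke eigenvalues of $F$ is obtained via the symmetric cube morphism of Hecke algebras from that of an \emph{overconvergent} $\GL_2$-eigenform $f$. So far $f$ is only overconvergent, but $F$ is classical, hence a $p$-old point of $\cD_2^M$; I would then apply the classicality statement recorded in the excerpt just after Proposition~\ref{intro3} (``Proposition~\ref{intro3} allows us to prove that if a $p$-old point of symmetric cube type of $\cD_2^M$ is classical, then it is obtained from a classical point of an eigencurve for $\GL_2$, via the classical Langlands lift attached to the symmetric cube representation by Kim and Shahidi''). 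This forces $f$ to be classical, and its Kim--Shahidi symmetric cube lift to have the same Hecke eigensystem as $F$; since $F$ is a $\GSp_4$-eigenform uniquely determined by its eigensystem (at the relevant level), $F$ equals the symmetric cube lift of $f$ in the sense of Corollary~\ref{formtransf}. This is exactly the claimed conclusion.

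A small point to check carefully is the compatibility of normalizations: the symmetric cube morphism of Hecke algebras, the normalized systems of eigenvalues of Definition~\ref{normsystdef}, and the classical Kim--Shahidi lift must be matched so that ``$\rho_{F,p}\cong\Sym^3\rho'$'' on the Galois side corresponds precisely to ``$\psi_F^{\norm}=\Sym^3\psi_f^{\norm}$'' on the automorphic side, including the behaviour at $p$ and the similitude/central characters. I would handle this by tracking the effect of $\Sym^3$ on Hecke polynomials using formulas \eqref{minpol1} and \eqref{minpol2}, which relate the $\GL_2$ Hecke data to the $\GSp_4$ Hecke data; the Galois-theoretic identity $P_\car(\Sym^3T)=\Sym^3P_\car(T)$ of Remark~\ref{sym3pseudorem} then guarantees that the local Euler factors at primes $\ell\nmid Np$ match, and the $p$-stabilization data match because the point is $p$-old.

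The main obstacle is not really in this corollary itself---granting Theorem~\ref{sym3type} and the classicality of $p$-old symmetric cube points, the argument is essentially a dictionary translation---but rather in the logical bookkeeping: the big-image proof in Section~\ref{resbigim} uses Corollary~\ref{sym3automcor}, which in turn rests on results proved in the second part of the paper, so one must verify that there is no circularity. Concretely, Theorem~\ref{sym3type} and the $p$-old classicality statement depend only on the theory of $(\varphi,\Gamma)$-modules, triangulations over eigenvarieties (Theorem~\ref{famtri}), and Proposition~\ref{sym3tri}, none of which uses the big-image results of Section~\ref{resbigim}; hence the deduction is legitimate. I would state this non-circularity explicitly in the proof so the reader is not misled by the forward reference.
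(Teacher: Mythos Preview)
Your outline lands on essentially the right ingredients but takes a detour and, at the crucial step, cites the corollary you are trying to prove. The paper's proof is a three-line deduction from Theorem~\ref{sym3autom}(ii): since $F$ is classical, $\rho_{F,p}$ is de Rham; by the de Rham descent result Corollary~\ref{sym3dR} (proved via nonabelian cohomology), $\rho'$ is de Rham; combining with the overconvergent modularity of $\rho'$ given by part~(i) of the same theorem, the attached $\GL_2$-form $f$ is classical; then $\rho_{F,p}=\Sym^3\rho_{f,p}$ is the Galois representation of $\Sym^3 f$, so $F=\Sym^3 f$. There is no need to pass through the eigenvariety formalism of Theorem~\ref{sym3type} (which in fact itself invokes Theorem~\ref{sym3autom} in the proof of the implication (1b)$\Rightarrow$(2b) for points).

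The concrete gap in your argument is the classicality step. You write that after obtaining an overconvergent $f$ you ``apply the classicality statement recorded in the excerpt just after Proposition~\ref{intro3}'', but that excerpt is the introduction's summary of Corollary~\ref{sym3kimsha}, i.e.\ of the statement you are proving. What actually drives classicality is Corollary~\ref{sym3dR}: $\Sym^3 V$ de Rham $\Rightarrow$ $V$ de Rham. You never mention this, and without it there is no mechanism to upgrade the overconvergent $f$ to a classical one. A smaller point: you describe Theorem~\ref{sym3autom}(i) as ``the automorphic and Galois definitions of the symmetric cube locus agree'', but that is Theorem~\ref{sym3type}; Theorem~\ref{sym3autom}(i) is the overconvergent modularity statement (if $\Sym^3\rho'$ is attached to an overconvergent $\GSp_4$-eigenform then $\rho'$ is attached to an overconvergent $\GL_2$-eigenform). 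Your non-circularity remark at the end is correct and worth keeping.
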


Another crucial ingredient is a theorem of Pink that we recall below (Theorem \ref{pink}). 

In the following definitions, let $E$ be a finite extensions of $\Q_p$. Let $R$ be a local ring with maximal ideal $\fm_R$ and residue field $\F$. Let $\tau\colon G_E\to\GSp_4(R)$ be a representation. Let $\PGSp_4(R)=\GSp_4(R)/R^\times$, where $R^\times$ is identified with the subgroup of scalar matrices. 
We denote by $\ovl{\tau}\colon G_E\to\GSp_4(\F)$ the reduction of $\tau$ modulo $\fm_R$. 
Recall that $T_2$ is the torus consisting of diagonal matrices in $\GSp_4$. 
We give a notion of $\Z_p$-regularity of $\tau$, analogous to that in \cite[Lemma 4.5(2)]{hidatil}.

\begin{defin}\label{Zpreg}
%
We say that $\tau$ is \emph{$\Z_p$-regular} if there exists $d\in\im\tau\cap T_2(R)$ with the following property: if $\alpha$ and $\alpha^\prime$ are two distinct roots of $\GSp_4$ then $\alpha(d)\neq\alpha^\prime(d)\pmod{\fm_R}$. If $d$ has this property we call it a \emph{$\Z_p$-regular element}.
\end{defin}

From now on we focus on representations that are either ``residually full'' or ``residually of symmetric cube type'', in the sense of the definition below. Note that these two types of representations appear in \cite[Section 5.8]{pillab} as examples of those for which he can construct a sequence of Taylor-Wiles primes. 

\begin{defin}\label{sctype}
We say that $\tau$ is:
\begin{enumerate}
\item \emph{residually full} if there exists a non-trivial subfield $\F^\prime$ of $\F$ and an element $g\in\GSp_4(\F)$ such that
\[ \Sp_4(\F^\prime)\subset g(\im\ovl{\tau})g^{-1}\subset\GSp_4(\F^\prime); \] 
\item \emph{residually of symmetric cube type} if there exist a non-trivial subfield $\F^\prime$ of $\F$ and an element $g\in\GSp_4(\F)$ such that
\[ \Sym^3\SL_2(\F^\prime)\subset g(\im\ovl{\tau})g^{-1}\subset\Sym^3\GL_2(\F^\prime). \] 
\end{enumerate}
We also say that $\ovl\tau$ is \emph{full} in case (i) and \emph{of symmetric cube type} in case (ii).
\end{defin}

We write $\fsp_4(K)$ for the Lie algebra of $\Sp_4(K)$ and $\Ad\colon\GSp_4(K)\to\End(\fsp_4(K))$ for the adjoint representation. Let $F$ and $\rho_{F,p}\colon G_\Q\to\GSp_4(\cO_K)$ be as in the beginning of the section. Let $E$ be the subfield of $K$ generated over $\Q_p$ by the set $\{\Tr(\Ad(\rho(g)))\}_{g\in G_\Q}$. Let $\cO_E$ be the ring of integers of $E$. For a $\GL_2$-eigenform $f$, we denote by $\rho_{f,p}$ the associated $p$-adic Galois representation. We will prove the following result.

\begin{thm}\label{classbigim}
Assume that $\rho_{F,p}$ is $\Z_p$-regular and that one of the following two conditions is satisfied:
\begin{enumerate}[label=(\roman*)]
\item $\rho_{F,p}$ is residually full;
\item $F$ is not a $p$-stabilization of the symmetric cube lift of a $\GL_2$-eigenform, defined by Corollary \ref{formtransf}. 
\end{enumerate}
Then the image of $\rho_{F,p}$ contains a principal congruence subgroup of $\Sp_4(\cO_E)$.
\end{thm}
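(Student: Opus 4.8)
The strategy is to deduce the statement about $\rho_{F,p}$ from a purely group-theoretic classification of the possible images of $p$-adic representations of $G_\Q$ into $\GSp_4$, combined with Pink's theorem on the correspondence between compact subgroups of $\Sp_4$ and their Lie algebras, and with Corollary \ref{sym3automcor} to rule out the symmetric cube case. First I would work with the reduction $\ovl\rho_{F,p}$ and use the hypothesis that it is either full or of symmetric cube type. In case (i), residual fullness directly gives that $\im\ovl\rho_{F,p}$ contains $\Sp_4(\F^\prime)$ up to conjugation; I would then lift this to a statement about $\im\rho_{F,p}$ modulo higher powers of $\fm_K$ using that $\Sp_4$ is its own commutator and that $\fsp_4$ is a simple Lie algebra (so $H^1$-type obstructions to lifting largeness vanish), giving that the closure $H$ of $\im\rho_{F,p}$ is an open subgroup of $\Sp_4(\cO_E)$ (after conjugation), hence contains a principal congruence subgroup.

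The delicate case is (ii): here $\ovl\rho_{F,p}$ is of symmetric cube type, so a priori the image of $\rho_{F,p}$ could be small — conjugate into $\Sym^3\GL_2$. I would argue by contradiction. Let $H$ be the Zariski closure of $\im\rho_{F,p}$ inside $\GSp_4$ over $\ovl\Q_p$. The residual hypothesis forces the connected component $H^\circ$ to be contained, up to conjugation, in one of finitely many proper reductive subgroups of $\GSp_4$ whose residual image could be of symmetric cube type; the relevant candidates are essentially the principal $\SL_2$ (i.e. $\Sym^3\SL_2$) and possibly tori or their normalizers, but $\Z_p$-regularity rules out the image being contained in the normalizer of a torus (a $\Z_p$-regular element pins down the torus, and the self-normalizing structure combined with the size of the residual image is incompatible with small image), so the only surviving possibility is that $\im\rho_{F,p}$ is contained, up to conjugation, in $\Sym^3\GL_2(\ovl\Q_p)\cdot(\text{scalars})$. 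In that case $\rho_{F,p}$ itself is, up to twist, the symmetric cube of a two-dimensional representation $\rho^\prime\colon G_\Q\to\GL_2(\ovl\Q_p)$; absorbing the twist (using that the similitude character is already a cube) gives $\rho_{F,p}\cong\Sym^3\rho^\prime$ honestly. By Corollary \ref{sym3automcor}, $F$ would then be the symmetric cube lift of a $\GL_2$-eigenform, contradicting hypothesis (ii). Hence $H^\circ=\Sp_4$, i.e. $\rho_{F,p}$ has Zariski-dense image in $\Sp_4$.

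With Zariski density established (case (ii)) or residual fullness in hand (case (i)), I would invoke Theorem \ref{pink}: a compact subgroup $H$ of $\Sp_4(\cO_E)$ that is Zariski-dense (and its residual image controlled) has an associated Lie algebra $\mathfrak{h}\subseteq\fsp_4(\cO_E)$ which is open, i.e. contains $\fl\cdot\fsp_4(\cO_E)$ for some nonzero ideal $\fl$; since $\fsp_4$ is simple and $H$ is $\Z_p$-regular (so it contains a suitably regular semisimple element, forcing the full root-space decomposition to appear in $\mathfrak{h}$), one upgrades openness of the Lie algebra to openness of $H$ itself in $\Sp_4(\cO_E)$, which is exactly the statement that $\im\rho_{F,p}$ contains a principal congruence subgroup of $\Sp_4(\cO_E)$. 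I expect the main obstacle to be the case-(ii) classification step: carefully enumerating the proper algebraic subgroups of $\GSp_4$ whose reduction can be of symmetric cube type and using $\Z_p$-regularity to eliminate all of them except $\Sym^3\GL_2$, so that the automorphic input of Corollary \ref{sym3automcor} can be applied; the passage from the residual statement to a statement over $\cO_K$ (and then descending the field of definition to $\cO_E$, using that $E$ is generated by traces of the adjoint) also requires care but is more routine.
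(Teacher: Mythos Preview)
Your outline is broadly right and matches the paper's argument --- classify the connected Zariski closure $\ovl{G}^\circ$ of $\im\rho_{F,p}\cap\Sp_4(K)$, eliminate the $\Sym^3\SL_2$ case via Corollary~\ref{sym3automcor}, then apply Pink --- but several steps are misstated in ways that would cause trouble if carried out as written.

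First, the classification step is easier than you think, and $\Z_p$-regularity plays no role in it. The residual hypothesis gives a \emph{lower} bound on $\im\ovl\rho_{F,p}$: if $\ovl\rho_{F,p}$ is of symmetric cube type then $(\im\ovl\rho_{F,p})'\supset\Sym^3\SL_2(\F')$, so $\ovl{G}^\circ(\F_K)$ must contain this. Tori and their normalizers are never candidates; the paper just lists the connected closed subgroups of $\Sp_4$ (six up to isomorphism) and notes that only $\Sp_4$ and $\Sym^3\SL_2$ survive. Also, the passage from $\ovl{G}^\circ\cong\Sym^3\SL_2$ to $\rho_{F,p}\cong\Sym^3\rho'$ is not just ``absorbing a twist'': a priori only a finite-index subgroup of $\im\rho_{F,p}$ lands in $\Sym^3\GL_2$, and the paper uses a separate lemma (Lemma~\ref{subsym3}) to extend this to all of $G_\Q$.

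Second, your description of the Pink step is off. Theorem~\ref{pink} is not a Lie-algebra openness statement; it says that a compact Zariski-dense subgroup of $H(L)$, for $H$ simple adjoint, is open in $H_E(E)$ for some model over a closed subfield $E\subset L$. The paper applies it to the image $G^\ad\subset\PGSp_4(K)$, and the subfield that emerges is exactly the $E$ in the statement. The role of $\Z_p$-regularity is here, and only here: the $\Z_p$-regular diagonal element forces $H_E(E)$ to contain a split torus, hence $H_E$ is the split form $\PGSp_4$ over $E$, and one lifts back to $\Sp_4$. Your separate case-(i) approach via cohomological lifting is unnecessary and delicate (the residue field $\F'$ of the fullness hypothesis bears no a priori relation to the residue field of $\cO_E$); the paper treats both cases uniformly through Zariski density followed by Pink.
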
 

For use in the proof of of Theorem \ref{classbigim} we state a result of Pink. 

\begin{thm}\cite[Theorem 0.7]{pink}\label{pink}
Let $L$ be a local field and let $H$ be an absolutely simple connected adjoint group over $L$. Let $\Gamma$ be a compact Zariski-dense subgroup of $H(L)$. Suppose that the adjoint representation of $\Gamma$ is irreducible. Then there exists a closed subfield $E$ of $L$ and a model $H_E$ of $H$ over $E$ such that $\Gamma$ is an open subgroup of $H_E(E)$. 
\end{thm}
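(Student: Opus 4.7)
The plan is to translate the group-theoretic statement into a Lie algebra question via the $p$-adic logarithm, then identify the subfield $E$ as a field of definition of a certain lattice in $\fh(L)$, where $\fh = \Lie(H)$. First I would replace $\Gamma$ by a sufficiently small open subgroup $\Gamma^\circ$ such that the $p$-adic logarithm $\log\colon\Gamma^\circ\to\fh(L)$ is defined and is a homeomorphism onto its image; this replacement preserves Zariski-density because $\Gamma$ is compact and $H$ is connected (an open subgroup of $\Gamma$ has the same Zariski closure). Let $\fL\subset\fh(L)$ be the closed $\Z_p$-submodule generated by $\log(\Gamma^\circ)$. Using the Baker--Campbell--Hausdorff formula together with the compactness of $\Gamma^\circ$, $\fL$ is a compact $\Z_p$-Lie subalgebra, and the $\Ad$-action of $\Gamma^\circ$ stabilizes $\fL$.

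Next I would pass to the $L$-linear span $V=L\cdot\fL\subset\fh(L)$. Since $\Gamma^\circ$ is Zariski-dense and normalizes $V$, the algebraic subgroup of $H$ stabilizing $V$ equals $H$ itself, so $V$ is an ideal of $\fh(L)$. Because $H$ is absolutely simple and $\Gamma^\circ$ is infinite, $V=\fh(L)$, so $\fL$ is a full $\Z_p$-lattice in $\fh(L)$. Now comes the key construction of $E$: define
\[ \cO_E=\{x\in L\,\vert\,x\cdot\fL\subset\fL\}, \]
the stabilizer ring of $\fL$ inside $L$. It is a closed $\Z_p$-subring of $\cO_L$, and because $\fL$ is a compact lattice it is a complete local subring; let $E=\Frac(\cO_E)\subset L$. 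By construction $\fL$ is an $\cO_E$-module, hence a $\Z_p$-order in the $E$-Lie subalgebra $\fh_E:=E\cdot\fL$ of $\fh(L)$.

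The next step is to show $\fh_E$ is an $E$-form of $\fh(L)$, i.e.\ that $\fh_E\otimes_E L\to\fh(L)$ is an isomorphism. This is where the irreducibility hypothesis on the adjoint representation of $\Gamma$ enters: it prevents $\fL$ from decomposing as a direct sum of sublattices on which different subfields act, so the ring $\cO_E$ is a genuine field extension rather than a product, and $\fh_E$ has $E$-dimension equal to $\dim_L\fh(L)$. From this $E$-form of the Lie algebra one produces the model $H_E$ of $H$ over $E$ by taking the adjoint group of $\fh_E$; by construction $\fh_E\otimes_E L\cong\fh$, so $H_E\otimes_E L\cong H$. Finally, since $\fL$ is an open $\cO_E$-Lie subalgebra of $\fh_E$ and $\log$ induces a homeomorphism between open Lie subalgebras and open subgroups in the $p$-adic analytic category, $\Gamma^\circ$ is open in $H_E(E)$, and so is $\Gamma$.

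The main obstacle, and the most delicate step, will be the third paragraph: showing that $\cO_E$ is a field and that $\fh_E$ is an $E$-form of $\fh(L)$ rather than some subalgebra defined over a product of fields. This is precisely where the irreducibility of the adjoint representation is indispensable—it rules out a nontrivial $\Gamma$-stable decomposition of $\fL\otimes_{\Z_p}\ovl{\Q}_p$ that would force $\cO_E$ to split as a product. Controlling the interaction between the absolute simplicity of $H$ (over $L$) and the Galois-theoretic behavior of $\fL$ under $\Gal(\ovl{L}/E)$ requires a careful analysis of how the adjoint action distinguishes scalar multiples from genuine Lie brackets, and this is the technical heart of Pink's original argument.
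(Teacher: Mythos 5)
First, note that the paper does not prove this statement at all: it is Pink's theorem, imported by citation, and Pink's actual argument identifies $E$ as the closed subfield of $L$ generated by the traces $\Tr\Ad(\gamma)$, $\gamma\in\Gamma$, and proceeds through a substantial structure-theoretic analysis that works uniformly for local fields of any characteristic. Your sketch, by contrast, is confined from the start to residue characteristic $0$ situations where the $p$-adic logarithm converges, so it cannot prove the statement as written (a ``local field'' $L$ may have positive characteristic, where $\log/\exp$ is unavailable); and even in the $p$-adic case it only ever sees a neighbourhood of the identity.

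More importantly, there is a genuine gap at the decisive step, and an incorrect intermediate claim feeding into it. From $L\cdot\mathfrak{L}=\mathfrak{h}(L)$ you conclude that $\mathfrak{L}$ is a \emph{full} $\Z_p$-lattice in $\mathfrak{h}(L)$; this is false, and false precisely in the cases the theorem is designed for: for $\Gamma=\PGL_2(\Z_p)\subset\PGL_2(L)$ with $[L:\Q_p]\geq 2$ the hypotheses hold, $\mathfrak{L}$ is an open lattice in $\mathfrak{sl}_2(\Q_p)$ of $\Z_p$-rank $3$, not $3[L:\Q_p]$. (If $\mathfrak{L}$ really were full, the multiplier ring would have fraction field $L$ and your construction would output $E=L$ and ``$\Gamma$ open in $H(L)$'', which is wrong in this example.) Once this is corrected, everything hinges on showing that $E=\Frac(\cO_E)$ is large enough and that $\dim_E(E\cdot\mathfrak{L})=\dim_L\mathfrak{h}(L)$, i.e.\ that $\mathfrak{L}$ is genuinely an $\cO_E$-lattice in an $E$-form of the Lie algebra on which $\Gamma$ acts $E$-rationally, so that $\Gamma$ (which is $p$-adic analytic of $\Q_p$-dimension $\dim_{\Q_p}(\Q_p\cdot\mathfrak{L})$) becomes open in $H_E(E)$. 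This dimension count is the entire content of Pink's theorem; in your proposal it is asserted, with the irreducibility of $\Ad\Gamma$ invoked only heuristically (``it prevents $\mathfrak{L}$ from decomposing''), and you yourself flag it as the missing technical heart. As it stands the argument establishes only the easy facts (that $\mathfrak{L}$ is a compact $\Z_p$-Lie subalgebra with $L$-span $\mathfrak{h}(L)$, stable under $\Ad\Gamma$ after shrinking), and the passage from the multiplier ring of this lattice to a model $H_E$ with $\Gamma$ open in $H_E(E)$ is not a routine verification but the theorem itself.
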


We also need the following lemma.

\begin{lemma}\label{subsym3}
Let $\cG$ be a profinite group and let $\cG_1$ be a normal open subgroup of $\cG$. Let $L$ be a field. Let $\tau\colon\cG\to\GSp_4(L)$ be a continuous representation. Suppose that:
\begin{enumerate}
\item there exists a representation $\tau_1^\prime\colon\cG_1\to\GL_2(L)$ such that $\tau\vert_{\cG_1}\cong\Sym^3\tau_1^\prime$;
\item the image of $\tau_1^\prime$ contains a principal congruence subgroup of $\SL_2(L)$; 
\item there exists a character $\eta\colon\cG\to L^\times$ such that $\det\tau\cong\eta^6$.
\end{enumerate}
Then there exists a finite extension $\iota\colon L\into L^\prime$ and a representation $\tau^\prime\colon\cG\to\GL_2(L^\prime)$ such that $\iota\ccirc\tau\cong\Sym^3\tau^\prime$.
\end{lemma}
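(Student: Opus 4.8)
The plan is to descend the representation $\tau_1'$ from $\cG_1$ to $\cG$, possibly after a finite base extension and up to twisting, and then reconstruct $\tau$ as its symmetric cube. First I would observe that, since $\Sym^3\colon\GL_2\to\GSp_4$ has kernel $\mu_2$ (the scalars $\pm 1$), hypothesis (i) determines $\tau_1'$ on $\cG_1$ only up to a quadratic character; but after replacing $L$ by a quadratic extension if necessary we may ignore this ambiguity. The key point is that $\tau|_{\cG_1}\cong\Sym^3\tau_1'$ together with (ii) forces $\Sym^3\tau_1'$, hence $\tau|_{\cG_1}$, to be absolutely irreducible: indeed, the symmetric cube of the standard representation of $\SL_2$ is irreducible, and a principal congruence subgroup of $\SL_2(L)$ is Zariski-dense, so its symmetric cube has no invariant subspace. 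By Carayol's theorem (Theorem \ref{carayol}) — or a direct argument with the irreducibility — the conjugate $\tau_1'^{\,\sigma}:=\tau_1'(\sigma\cdot\sigma^{-1})$ of $\tau_1'$ by any $\sigma\in\cG$ is, after the quadratic ambiguity, isomorphic to a twist $\tau_1'\otimes\chi_\sigma$ for a character $\chi_\sigma\colon\cG_1\to L^\times$: this is because $\Sym^3(\tau_1'^{\,\sigma})\cong(\Sym^3\tau_1')^\sigma\cong(\tau|_{\cG_1})^\sigma\cong\tau|_{\cG_1}\cong\Sym^3\tau_1'$, so $\tau_1'^{\,\sigma}$ and $\tau_1'$ have projectively isomorphic symmetric cubes, and since $\PGL_2\hookrightarrow\PGSp_4$ via $\Sym^3$ is injective on the relevant image, $\tau_1'^{\,\sigma}$ and $\tau_1'$ are projectively conjugate, i.e. conjugate up to a character.

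Next I would package these twisting characters into a group-cohomological obstruction. The assignment $\sigma\mapsto\chi_\sigma$ (and the conjugating matrix realizing the isomorphism) defines, in the standard way, an obstruction class to extending $\tau_1'$ from $\cG_1$ to $\cG$: concretely, choosing for each coset representative $\sigma$ a matrix $A_\sigma\in\GL_2(L)$ with $A_\sigma\tau_1'(\sigma g\sigma^{-1})A_\sigma^{-1}=\chi_\sigma(g)\tau_1'(g)$, the failure of $\sigma\mapsto A_\sigma$ to be multiplicative is measured by a $2$-cocycle valued in $L^\times$ (the scalars), i.e. a class in $H^2(\cG/\cG_1,L^\times)$ — a finite group, since $\cG_1$ is open. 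After a further finite extension of $L$ this class is killed (the Brauer-type obstruction for a finite group over a field is torsion and splits over a finite extension), so $\tau_1'$ extends to a representation $\wtl\tau_1\colon\cG\to\GL_2(L'')$, up to a character $\cG\to (L'')^\times$; adjusting by this character we get $\tau'\colon\cG\to\GL_2(L'')$ with $\tau'|_{\cG_1}\cong\tau_1'\otimes\psi$ for some character $\psi$. Here is where hypothesis (iii) enters decisively: comparing determinants, $\det\tau\cong\eta^6$ on all of $\cG$ while $\det(\Sym^3\tau')=(\det\tau')^6$, so on $\cG_1$ we have $(\det\tau'/\eta)^6=1$, and this lets me correct $\tau'$ by a sixth root of $\eta^{-1}\det\tau'$ — a character of order dividing $6$, hence realizable after a finite extension — so that $\Sym^3\tau'$ and $\tau$ have the same determinant and agree on $\cG_1$ up to the residual $\mu_2$-scalar.

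Finally I would check that $\Sym^3\tau'\cong\tau$ on all of $\cG$, not merely on $\cG_1$. Both are irreducible (by the argument above, using (ii)), they restrict to isomorphic representations of the normal open subgroup $\cG_1$, and they have equal determinant; since $\Sym^3\tau'|_{\cG_1}\cong\tau|_{\cG_1}$ is irreducible, any intertwiner on $\cG_1$ is unique up to scalar, and the obstruction to propagating it to $\cG$ is again a class in $H^2(\cG/\cG_1,L^\times)$ which we may trivialize over a finite extension $L'$; the determinant condition pins down which twist we land in, forcing the actual isomorphism $\iota\ccirc\tau\cong\Sym^3\tau'$ over $L'$. I expect the \textbf{main obstacle} to be bookkeeping the two separate ambiguities — the $\mu_2$-kernel of $\Sym^3$ and the twisting character $\chi_\sigma$ — simultaneously, and verifying that a single finite extension $L'$ of $L$ can be chosen to resolve all the cohomological obstructions at once; handling the determinant constraint (iii) carefully is what makes this work, since without it the sixth-root correction is unavailable and one could only recover $\tau$ up to an uncontrolled character.
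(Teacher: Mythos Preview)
Your approach is a valid alternative to the paper's, but it takes a genuinely different route and has a couple of imprecisions.

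The paper does not use Clifford theory or group cohomology at all. Instead it argues directly: for each $g\in\tau(\cG)$, conjugation by $g$ restricts to an automorphism of $\tau(\cG_1)$, which via the isomorphism $\Sym^3\colon\tau_1'(\cG_1)\xrightarrow{\sim}\tau(\cG_1)$ becomes an automorphism of $\tau_1'(\cG_1)\subset\GL_2(L)$. Since this group contains a principal congruence subgroup by hypothesis~(2), the paper invokes O'Meara's classification of such automorphisms (Corollary~\ref{congisom}) to write it as $x\mapsto\varphi(x)\,h_g x^\sigma h_g^{-1}$; linearity kills $\sigma$, and a characteristic-polynomial comparison kills $\varphi$. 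Hence $g(\Sym^3 h_g)^{-1}$ centralizes the irreducible $\tau(\cG_1)$, so equals a scalar $\gamma_g\1_4$. Adjoining cube roots of the finitely many $\gamma_g$ (one per coset of $\cG_1$) gives the extension $L'$, and hypothesis~(3) is used only at the end to single out, for each $g$, the unique preimage $\tau'(g)\in\GL_2(L')$ with $\Sym^3\tau'(g)=\tau(g)$ and $\det\tau'(g)=\eta(g)$. No $H^2$ enters.

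Your cohomological route can be made to work, with two corrections. First, the kernel of $\Sym^3\colon\GL_2\to\GSp_4$ is $\mu_3$, not $\mu_2$: a scalar $\zeta$ acts on $\Sym^3$ by $\zeta^3$. Second, and more substantively, your deduction that $\tau_1'^{\,\sigma}$ and $\tau_1'$ are projectively conjugate is not a consequence of the injectivity of $\Sym^3\colon\PGL_2\hookrightarrow\PGSp_4$ alone: from $\Sym^3\tau_1'^{\,\sigma}\cong\Sym^3\tau_1'$ you only know the conjugating element of $\GSp_4$ normalizes the \emph{image} of $\Sym^3\tau_1'$. You need that this image is Zariski-dense in $\Sym^3\GL_2$ (which does follow from hypothesis~(2)) and that the normalizer of $\Sym^3\GL_2$ in $\GSp_4$ is $\Sym^3\GL_2$ times scalars (a Schur-type statement for the irreducible principal $\SL_2$). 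This is exactly the structural input the paper extracts, more efficiently, from O'Meara's theorem. Your final determinant bookkeeping is also a bit loose: after all corrections you will have $\Sym^3\tau'\cong\tau\otimes\psi$ for a character $\psi$ of the finite quotient $\cG/\cG_1$, and the fix is to replace $\tau'$ by $\tau'\otimes\alpha$ with $\alpha^3=\psi^{-1}$, which requires adjoining cube (not sixth) roots. The character $\eta$ from~(3) is what lets you make a consistent global choice among the $\mu_3$-ambiguities, matching the paper's use of it.
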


\begin{proof}
We show that there is a finite extension $L^\prime$ of $L$ such that $\iota\ccirc\tau(\cG)\subset\Sym^3\GL_2(L^\prime)$. For $g\in\GSp_4(L)$ let $\Ad(g)\colon\GSp_4(L)\to\GSp_4(L)$ be conjugation by $g$. Since $\cG_1$ is an open normal subgroup of $\cG$, $\tau(\cG)$ normalizes $\tau(\cG_1)$. Let $g$ be an arbitrary element of $\tau(\cG)$. The map $\Ad(g)$ restricts to an automorphism $\Ad(g)\vert_{\tau(\cG_1)}$ of $\tau(\cG_1)$. Since $\tau\vert_{\cG_1}\cong\Sym^3\tau_1^\prime$, the symmetric cube map induces an isomorphism $\tau(\cG_1)\cong\tau_1^\prime(\cG_1)$. Hence $\Ad(g)$ induces an automorphism $\Ad(g)^\prime$ of $\tau_1^\prime(\cG_1)$, which is a subgroup of $\GL_2(L)$ containing a congruence subgroup of $\SL_2(L)$. By applying Corollary \ref{congisom} to the map $\Ad(g)^\prime\colon\tau_1^\prime(\cG_1)\to\tau_1^\prime(\cG_1)$ we deduce that there exists $h_g\in\GL_2(L)$, a field automorphism $\sigma$ of $L$ and a character $\varphi\colon\tau_1^\prime(\cG_1)\to L^\times$ such that 
\begin{equation}\label{deltag} \Ad(g)^\prime(x)=\varphi(x)h_gx^\sigma h_g^{-1} \end{equation}
for every $x\in\cG_1$. Since every operation in Equation \eqref{deltag} is $L$-linear, the automorphism $\sigma$ must be the identity. Moreover $\Ad(g)^\prime$ is induced by $\Ad(g)$, so by taking characteristic polynomials on both sides of the equation we obtain that $\varphi$ is trivial. 
Hence Equation \eqref{deltag} gives $\Ad(g)\vert_{\tau(\cG_1)}=\Ad(\Sym^3h_g)\vert_{\tau(\cG_1)}$, so the element $g(\Sym^3h_g)^{-1}$ centralizes $\tau(\cG_1)$ and by Schur's lemma it is a scalar $\gamma_g\1_4$ for some $\gamma_g\in L$.

Choose a set of representatives $S$ for the finite group $\cG/\cG_1$. Let $L^\prime$ be the finite extension of $L$ obtained by adding the cubic roots of the elements in the set $\{\gamma_g\,\vert\, g\in\tau(S)\}$. Let $\iota\colon L\to L^\prime$ be the inclusion. For $g\in\iota\ccirc\tau(S)$ we have $\iota(\gamma_g\1_4)\in\Sym^3\GL_2(L^\prime)$ by construction of $L^\prime$, so $\iota(g)=\iota(\gamma_g\1_4\cdot\Sym^3h_g)\in\Sym^3\GL_2(L^\prime)$. For every $g\in\tau(\cG)$ we can write $g=g_1g_2$ with $g_1\in\tau(\cG_1)$ and $g_2\in\tau(S)$. Since $\tau(\cG_1)\subset\Sym^3\GL_2(L)$ we obtain $\iota(g)=\iota(g_1)\iota(g_2)\in\Sym^3\GL_2(L^\prime)$. 

For every $g\in G_\Q$, let $\tau^\prime(g)$ be the unique element of $\GL_2(L^\prime)$ that satisfies:
\begin{enumerate}
\item $\Sym^3\tau^\prime(g)=\iota\ccirc\tau(g)$;
\item $\det\tau^\prime(g)=\iota\ccirc\eta(g)$.
\end{enumerate}
Such an element exists by the result of the previous paragraph. Then the map $\tau^\prime\colon G_\Q\to\GL_2(L^\prime)$ defined by $g\mapsto\tau^\prime(g)$ is a representation satisfying $\Sym^3\tau^\prime\cong\iota\ccirc\tau$.
\end{proof}

The rest of the section is devoted to the proof of Theorem \ref{classbigim}. Let $(\im\rho_{F,p})^\prime$ be the derived subgroup of $\im\rho_{F,p}$ and let $G=(\im\rho_{F,p})\cap\Sp_4(K)$. We denote by $\ovl{G}$ the Zariski-closure of $G$ in $\Sp_4(K)$. As in \cite[Section 3]{hidatil}, we will show first that under the hypotheses of Theorem \ref{classbigim} we have $\ovl{G}=\Sp_4(K)$, and second that $G$ is $p$-adically open in $\ovl{G}$. We will replace the ordinarity assumption in \emph{loc. cit.} by that of $\Z_p$-regularity. Let $\ovl{G}^\circ$ denote the connected component of the identity in $\ovl{G}$.

Let $H$ be any connected, Zariski-closed subgroup of $\Sp_4$, defined over $K$. 
As in \cite[Section 3.4]{hidatil} we have six possibilities for the isomorphism class of $H$ over $K$:
\begin{enumerate}
\item $H\cong\Sp_4$;
\item $H\cong\SL_2\times\SL_2$;
\item $H\cong\SL_2$ embedded in a Klingen parabolic subgroup;
\item $H\cong\SL_2$ embedded in a Siegel parabolic subgroup;
\item $H\cong\SL_2$ embedded via the symmetric cube representation $\SL_2\to\Sp_4$ (in this case we write $H\cong\Sym^3\SL_2$);
\item $H\cong\{1\}$.
\end{enumerate}
We show that only (1) is possible for $H=\ovl{G}^\circ$.

\begin{lemma}\label{sym3zar}
If condition (i) or (ii) in Theorem \ref{classbigim} holds, then $\ovl{G}^\circ\cong\Sp_4$. 
\end{lemma}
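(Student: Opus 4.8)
The plan is to run through the six possibilities for $\ovl G^\circ$ listed above and exclude (2)--(6), after recording two preliminary facts. First, since $\Sp_4$ is normal in $\GSp_4$ and $G=\im\rho_{F,p}\cap\Sp_4(K)$, the group $G$ is normal in $\im\rho_{F,p}$; hence $\im\rho_{F,p}$ normalizes the Zariski closure $\ovl G$, and therefore its identity component $\ovl G^\circ$. Second, by the standing hypothesis $\ovl\rho_{F,p}$ is full or of symmetric cube type, so (using $p>3$) its image contains a conjugate of $\Sp_4(\F')$ or of $\Sym^3\SL_2(\F')\cong\SL_2(\F')$ for a subfield $\F'\subseteq\F$ with $|\F'|\ge 5$; in particular $\ovl\rho_{F,p}$ acts absolutely irreducibly on $\F^4$, and since $\Sp_4(\F')$ and $\SL_2(\F')$ are perfect, the restriction of $\ovl\rho_{F,p}$ to any subgroup of index at most $2$ (whose image then still contains that perfect group) is absolutely irreducible -- so $\ovl\rho_{F,p}$ is not induced from an index $2$ subgroup.

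I would then treat (6) and (2)--(4) quickly. For (6): $\rho_{F,p}$ is de Rham at $p$ with Hodge--Tate weights not all zero, from which one checks (via Sen theory, using that $\nu\circ\rho_{F,p}$ is a non-trivial power of the cyclotomic character up to a finite order character) that the subgroup $G$ is infinite; hence $\ovl G^\circ\neq\{1\}$. For (2), (3) and (4) the key point is that the $\GSp_4$-normalizer of the subgroup in question stabilizes a canonical nonzero proper subspace of $K^4$, or a canonical two-element set of such subspaces. Concretely: in case (3) the space $(K^4)^{\ovl G^\circ}$ of invariants is two-dimensional and non-degenerate, and it is stabilized by the normalizer, so $\im\rho_{F,p}$ stabilizes it and $\rho_{F,p}$ is reducible; in case (2) the two symplectic planes distinguished by the two $\Sp_2$-factors form a canonical two-element set which the normalizer permutes, so $\rho_{F,p}$ is reducible or induced from an index $2$ subgroup; in case (4), writing the standard representation of the Siegel $\SL_2$ as $\Std\otimes M$ for a two-dimensional multiplicity space $M$, the induced form on $M$ is symmetric and the normalizer acts on $M$ through $\mathrm{GO}_2$, whence -- according to whether $\im\rho_{F,p}$ lands in the identity component $\mathrm{SO}_2$ or not -- $\rho_{F,p}$ is again reducible or induced from an index $2$ subgroup. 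In every case the conclusion contradicts the second preliminary fact, so (2), (3), (4) are impossible.

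The substantive case is (5), $\ovl G^\circ\cong\Sym^3\SL_2$. Here $\im\rho_{F,p}$ lies in $N_{\GSp_4}(\Sym^3\SL_2)=\Sym^3\GL_2\cdot Z(\GSp_4)$, which over the algebraically closed field $\Qp$ equals $\Sym^3\GL_2(\Qp)$; equivalently the projective image of $\rho_{F,p}$ lies in the image of $\PGL_2$ under $\Sym^3$. If $\ovl\rho_{F,p}$ is full, then reducing modulo $\fm$ the projective image of $\ovl\rho_{F,p}$ is contained in a conjugate of $\Sym^3(\PGL_2)$ over a finite field, so all its composition factors are cyclic or of the form $\PSL_2$ of a finite field; but it contains $\PSp_4(\F')$, a non-abelian finite simple group that is not of this form for $|\F'|\ge 5$ -- a contradiction. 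If $\ovl\rho_{F,p}$ is of symmetric cube type -- the only remaining possibility under the standing hypothesis, and precisely the situation in which condition (ii) of Theorem \ref{classbigim} is available -- I would argue as follows. Put $G_1=\ker(\nu\circ\rho_{F,p})$, an open normal subgroup of $G_\Q$ with $\rho_{F,p}(G_1)=G$. Since $\ovl G^\circ=\Sym^3\SL_2$ and $G$ is Zariski-dense in $\ovl G$, after conjugating over a finite extension $L/K$ the representation $\rho_{F,p}|_{G_1}$ takes values in $\Sym^3\SL_2(L)$; composing with the isomorphism $\Sym^3\colon\SL_2\to\Sym^3\SL_2$ gives $\tau_1'\colon G_1\to\SL_2(L)$ with $\Sym^3\tau_1'\cong\rho_{F,p}|_{G_1}$ and Zariski-dense image. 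Pink's theorem (Theorem \ref{pink}), applied to the image of $\tau_1'$ in $\PGL_2(L)$ -- a compact Zariski-dense subgroup with irreducible adjoint action -- shows that this image contains a principal congruence subgroup of $\SL_2(E')$ for some closed subfield $E'\subseteq L$. Then Lemma \ref{subsym3} applies with $\cG=G_\Q$, $\cG_1=G_1$ and $\tau=\rho_{F,p}$, its hypothesis (iii) being checked from the explicit shape of $\nu\circ\rho_{F,p}$, and produces $\rho'\colon G_\Q\to\GL_2(\Qp)$ with $\rho_{F,p}\cong\Sym^3\rho'$. By Corollary \ref{sym3automcor}, $F$ is then the symmetric cube lift of a $\GL_2$-eigenform, contradicting condition (ii). Having excluded (2)--(6), we conclude $\ovl G^\circ\cong\Sp_4$.

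The main obstacle is this last case, (5) with $\ovl\rho_{F,p}$ of symmetric cube type: excluding $\ovl G^\circ\cong\Sym^3\SL_2$ under the sole hypothesis that $F$ is not a symmetric cube lift amounts to the equivalence of the automorphic and Galois notions of ``symmetric cube point'', and it is resolved here only by invoking Corollary \ref{sym3automcor}, whose proof belongs to the second part of the paper and rests on the theory of $(\varphi,\Gamma)$-modules and of triangulations over eigenvarieties. The accompanying verifications -- that the hypotheses of Pink's theorem and of Lemma \ref{subsym3} (notably the determinant condition) are met, and that $G$ is infinite in case (6) -- are the other genuinely delicate points; by contrast the elimination of cases (2)--(4) is a finite, if lengthy, computation with the normalizers of reductive subgroups of $\GSp_4$.
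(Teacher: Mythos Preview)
Your elimination of cases (2)--(4) and (6) via normalizer computations and Sen theory is valid but takes a genuinely longer route than the paper. The paper dispatches all four cases in two lines by a reduction-mod-$\fm_K$ argument: one checks that $(\im\rho_{F,p})'\subset\ovl G^\circ(\cO_K)$ (via the connected derived subgroup of the Zariski closure of $\im\rho_{F,p}$), so $(\im\ovl\rho_{F,p})'\subset\ovl G^\circ(\F_K)$; since the standing residual hypothesis forces $(\im\ovl\rho_{F,p})'$ to contain $\Sp_4(\F')$ or $\Sym^3\SL_2(\F')$, the only possibilities left for $\ovl G^\circ$ are $\Sp_4$ or $\Sym^3\SL_2$. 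Your structural arguments would survive under weaker residual hypotheses (only absolute irreducibility and non-induction from index~$2$), but here the residual assumption is already available and makes the case-by-case analysis unnecessary.

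In case (5) there is a genuine error. You set $G_1=\ker(\nu\circ\rho_{F,p})$ and call it an \emph{open} normal subgroup of $G_\Q$, but $\nu\circ\rho_{F,p}$ has infinite image: its square is $\det\rho_{F,p}$, which involves a nontrivial power of the cyclotomic character, so $G_1$ is closed of infinite index. This breaks the application of Lemma~\ref{subsym3} (which requires $\cG_1$ open) and the appeal to Pink that you build on it. The paper avoids this entirely: it applies the \emph{argument} of Lemma~\ref{subsym3} with $\cG_1=\ovl G^\circ(K)$ inside $\cG=\ovl G(K)$, where the finite-index condition is automatic and hypothesis~(2) is trivially satisfied because $\tau_1'$ surjects onto $\SL_2(K)$. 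From $\ovl G\subset\Sym^3\SL_2$ together with the normalizer computation you already state, one gets $\im\rho_{F,p}\subset\Sym^3\GL_2(\Qp)$, and then $\rho'$ exists directly---the determinant condition~(3) does follow from the residual symmetric cube hypothesis and $p>3$, as you say, and this is what kills the $\mu_3$-lifting obstruction. Pink's theorem is not used in this lemma at all; it enters only in the subsequent proposition establishing $p$-adic openness.
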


\begin{proof}
Let $\fm_K$ be the maximal ideal of $\cO_K$ and let $\F_K=\cO_K/\fm_K$. 
The group $(\im\rho_{F,p})^\prime$ is contained in $\ovl{G}^\circ(\cO_K)$. 
By reducing modulo $\fm_K$ we obtain that the derived subgroup $(\im\ovl\rho_{F,p})^\prime$ of $\im\ovl\rho_{F,p}$ is contained in $\ovl{G}^\circ(\F_K)$. If $\rho_{F,p}$ is residually full, then the only choice for the isomorphism class of $\ovl{G}^\circ$ is $\ovl{G}^\circ\cong\Sp_4$. 
If $\rho_{F,p}$ is residually of symmetric cube type, then either $\ovl{G}^\circ\cong\Sp_4$ or $\ovl{G}^\circ\cong\Sym^3\SL_2$. 

Suppose that $\ovl{G}^\circ\cong\Sym^3\SL_2$. We show that there exists a $\GL_2$-eigenform $f$ such that $\rho_{F,p}\cong\Sym^3\rho_{f,p}$. This will contradict the second part of condition (ii) of Theorem \ref{classbigim}, concluding the proof of Lemma \ref{sym3zar}. 
%
%
Since $\ovl{G}^\circ(K)$ is of finite index in $\ovl{G}(K)$, Lemma \ref{subsym3} implies that $\ovl{G}(K)\subset\Sym^3\SL_2(K)$, so $\im\rho_{F,p}\subset\Sym^3\GL_2(K)$. 
Hence there exists a representation $\rho^\prime$ satisfying $\rho_{F,p}\cong\Sym^3\rho^\prime$. Since $\rho_{F,p}$ is associated with a $\GSp_4$-eigenform, Corollary \ref{sym3automcor} implies that $\rho^\prime$ is associated with a $\GL_2$-eigenform $f$. 
\end{proof}

The proof of Theorem \ref{classbigim} is completed by the following proposition.

\begin{prop}
Suppose that $\ovl{G}\cong\Sp_4(K)$. Then the group $G$ contains an open subgroup (for the $p$-adic topology) of $\Sp_4(E)$.
\end{prop}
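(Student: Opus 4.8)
The statement asserts that if the Zariski closure $\ovl{G}$ of $G = (\im\rho_{F,p})\cap\Sp_4(K)$ equals $\Sp_4(K)$, then $G$ is $p$-adically open in $\Sp_4(E)$, where $E$ is the field generated by the traces $\Tr(\Ad\rho_{F,p}(g))$. My plan is to pass to the adjoint group $\PGSp_4$ and invoke Pink's theorem (Theorem \ref{pink}). Concretely, let $\ovl{\rho}^{\ad}\colon G_\Q\to\PGSp_4(K)$ be the composition of $\rho_{F,p}$ with the projection $\GSp_4\to\PGSp_4$, and let $\Gamma$ be the closure of the image of $G$ (equivalently of $(\im\rho_{F,p})$) in $\PGSp_4(K)$; since $G_\Q$ is compact, $\Gamma$ is a compact subgroup of $H(K)$ where $H=\PGSp_4$ is absolutely simple, connected and adjoint over $K=\Q_p$ (or rather over the $p$-adic field of definition). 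The hypothesis $\ovl{G}=\Sp_4(K)$ guarantees that $\Gamma$ is Zariski-dense in $H(K)$.

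The key point needed to apply Pink is that the adjoint representation of $\Gamma$ is irreducible. Here I would argue as follows: the adjoint representation $\Ad\colon\GSp_4\to\Aut(\fsp_4)$ is an irreducible representation of $\GSp_4$ (this is recorded in the Notations section), and a Zariski-dense subgroup of $\Sp_4$ acts irreducibly on $\fsp_4$ precisely because the ambient algebraic group does; since $\Gamma$ is Zariski-dense in $\PGSp_4(K)$, the $\Gamma$-module $\fsp_4\otimes K$ is irreducible. Therefore Pink's theorem applies and yields a closed subfield $E'\subseteq K$ and a model $H_{E'}$ of $H$ over $E'$ such that $\Gamma$ is an \emph{open} subgroup of $H_{E'}(E')$. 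The next step is to identify $E'$ with the trace field $E$: by construction of the model in Pink's theorem, $E'$ is generated by the traces of $\Ad$ evaluated on $\Gamma$, which are exactly the elements $\Tr(\Ad(\rho_{F,p}(g)))$ for $g\in G_\Q$ — that is, $E'=E$. (One checks that twisting $\rho_{F,p}$ by the similitude character does not change the adjoint traces, so passing to $\PGSp_4$ loses no information about $E$.)

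Having located $\Gamma$ as an open subgroup of $\PGSp_4(E) = H_E(E)$, I would descend back to $\Sp_4$. The commutator subgroup $(\im\rho_{F,p})'$ maps into $G$ (it lies in $\Sp_4(K)$ since the similitude character kills commutators), and its image in $\PGSp_4(K)$ is open in $\PGSp_4(E)$ because it has finite index in $\Gamma$ (indeed $\Gamma/\overline{(\im\rho_{F,p})'}$ is a quotient of the abelianization, which is a finitely generated abelian profinite group, but more simply $(\im\rho_{F,p})'$ is already open since it contains an open subgroup of the derived group of an open subgroup). Pulling back an open subgroup of $\PGSp_4(E)$ along the isogeny $\Sp_4\to\PGSp_4$ gives an open subgroup of $\Sp_4(E)$; since $G$ contains $(\im\rho_{F,p})'$ and the latter surjects onto an open subgroup of $\PGSp_4(E)$, and since $\Sp_4(E)\to\PGSp_4(E)$ has finite (central, of order dividing $2$) kernel, $G$ itself contains an open subgroup of $\Sp_4(E)$. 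This is the desired conclusion.

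The main obstacle I anticipate is the bookkeeping around the various fields and isogenies: verifying carefully that the subfield produced by Pink's theorem is \emph{exactly} the adjoint-trace field $E$ and not merely some subfield of $K$ containing $E$, and handling the passage between $\Sp_4$ and $\PGSp_4$ so that the finite kernels and the similitude factor do not cause one to land only in a finite-index subgroup of what is claimed. A secondary subtlety is checking that $\Gamma$ really is Zariski-dense in the \emph{adjoint} group: this follows from $\ovl{G}=\Sp_4(K)$ together with the fact that $\Sp_4\to\PGSp_4$ is a surjective morphism of algebraic groups, so the image of a Zariski-dense subgroup is Zariski-dense, but one should be slightly careful that $\PGSp_4$ here means $\Sp_4/\{\pm 1\}$ sitting inside $\GSp_4/\G_m$. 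None of these is deep, but they are the places where the argument could go wrong if handled sloppily.
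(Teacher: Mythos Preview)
Your overall strategy---pass to $\PGSp_4$, apply Pink's theorem, then lift back to $\Sp_4$---matches the paper's. The gap is in the sentence ``Having located $\Gamma$ as an open subgroup of $\PGSp_4(E) = H_E(E)$.'' Pink's theorem only tells you that $\Gamma$ is open in $H_{E'}(E')$ for some \emph{form} $H_{E'}$ of $\PGSp_4$ over $E'$; it does not say this form is split. Since $\PGSp_4\cong\mathrm{SO}_5$ admits non-split forms over $p$-adic fields (arising from five-dimensional quadratic forms of non-maximal Witt index), the identification $H_{E'}\cong\PGSp_4$ over $E'$ requires a separate argument, and without it you cannot conclude that $\Gamma$ sits inside $\PGSp_4(E')$, let alone lift to an open subgroup of $\Sp_4(E')$.

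The paper fills this gap using the $\Z_p$-regularity hypothesis, a standing assumption in Theorem~\ref{classbigim} that you never invoke. The $\Z_p$-regular diagonal element $d$ lies in $G^{\ad}\subset H_{E'}(E')$ and is regular semisimple; its centralizer in $H_{E'}$ is therefore a maximal torus defined over $E'$, and the paper argues that this torus is split. A form of $\PGSp_4$ containing a split maximal torus is itself split, so $H_{E'}\cong\PGSp_4$ over $E'$ by the uniqueness of the quasi-split form. You correctly flag the field identification $E'=E$ as a subtlety (and your justification via adjoint traces is reasonable), but the model identification is the step you are actually missing.
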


\begin{proof}
Consider the image $G^\ad$ of $G$ under the projection $\Sp_4(K)\to\PGSp_4(K)$. It is a compact subgroup of $\PGSp_4(K)$. Since $\ovl{G}\cong\Sp_4(K)$, the group $G^\ad$ is Zariski-dense in $\PGSp_4(K)$.
By Theorem \ref{pink} there is a model $H$ of $\PGSp_4$ over $E$ such that $G^\ad$ is an open subgroup of $H(E)$.
By the assumption of $\Z_p$-regularity of $\rho$, there is a diagonal element $d$ with pairwise distinct eigenvalues. The group $H(E)$ must contain the centralizer of $d$ in $\PGSp_4(E)$, which is a split torus in $\PGSp_4(E)$. 
Since $H$ is split and $H\times_EK\cong\PGSp_{4/K}$, $H$ is a split form of $\PGSp_4$ over $E$. Then $H$ must be isomorphic to $\PGSp_4$ over $E$ by unicity of the quasi-split form of a reductive group. Hence $G^\ad$ is an open subgroup of $\PGSp_4(E)$.
Since the map $\Sp_4(K)\to\PGSp_4(K)$ has degree $2$ and $G\cap\Sp_4(E)$ surjects onto $G^\ad\cap\PGSp_4(E)$, $G$ must contain an open subgroup of $\Sp_4(E)$. In particular $G$ contains a principal congruence subgroup of $\Sp_4(\cO_E)$. 
\end{proof}
%

Theorem \ref{classbigim} states that, when $\ovl\rho_{F,p}$ is either full or of symmetric cube type, the image of $\rho_{F,p}$ is large if and only if $F$ is not a lift of an eigenform from a smaller group, the only possible such lift under these assumptions being associated with the symmetric cube representation of $\GL_2$. We think that a similar result should hold under more general assumptions on the residual representation, and that it would follow from Pink's theorem together with an analogue of Corollary \ref{sym3automcor} for the other possible Langlands lifts to $\GSp_4$.


\bigskip

\section{Finite slope families of $\GSp_{2g}$-eigenforms}\label{gspfam}

In this section we define families of finite slope $\GSp_{2g}$-eigenforms of level $\Gamma_1(N)\cap\Gamma_0(p)$, extending the definitions given in \cite[Section 3.1]{cit} for $g=1$. Our goal is to define such families integrally. In the following sections we will only use families of genus 1 or 2, but we can give the definitions for general genus with no extra effort.

Let $p$ be a prime number and let $N$ be a positive integer prime to $p$. For $g\geq 1$ let $\cD_g^{N,h}$ be the $\GSp_{2g}$-eigenvariety of tame level $\Gamma_1(N)$. Let $h\in\Q^{+,\times}$. Since the slope $\slo\colon\cD_g^{N,h}(\C_p)\to\R^{\geq 0}$ is the valuation of a rigid analytic function on $\cD_g^{N,h}$, the locus of $\C_p$-points $x\in\cD_g^N$ satisfying $\slo(x)\leq h$ admits a structure of rigid analytic subvariety of $\cD_g^N$. 
We denote it by $\cD_{g,h}^N$. 
Recall that we always identify the $g$-dimensional weight space $\cW_g$ with a disjoint union of open discs of centre $0$ and radius $1$. 
A standard way to obtain an integral structure on an admissible domain of an eigenvariety is to use the integral structure on the weight space via the weight map. The restriction of the weight map to $\cD_{g}^{N,h}$ is in general not finite if $h>0$, but it becomes finite when restricted to a sufficiently small admissible domain in $\cD_g^{N,h}$. This is assured by a result of Bella\"\i che that we recall below. 
For every affinoid subdomain $V$ of $\cW^\circ_g$, let $\cD_{g,V}^{N,h}=\cD_g^{N,h}\times_{\cW^\circ_g}V$ and let $w^h_{g,V}=w_{g,h}\vert_{\cD_{g,V}^{N,h}}\colon\cD_{g,V}^{N,h}\to V$. 

\begin{prop}(Bella\"\i che)\label{locfin}\mbox{ }
\begin{enumerate}
\item For every $\kappa\in\cW^\circ_g(\Qp)$ there exists an affinoid neighborhood $V_{h,\kappa}$ of $\kappa$ in $\cW^\circ_g$ such that the map $w^h_{g,V_{h,\kappa}}$ is finite.
\item When $h$ varies in $\Q^{+,\times}$ and $\kappa$ varies in $\cW^\circ_g$, the set $\{(w^h_{g,V_{h,\kappa}})^{-1}(V_{h,\kappa})\}_{h,\kappa}$ is an admissible affinoid covering of $\cD_g^N$.
\end{enumerate}
\end{prop}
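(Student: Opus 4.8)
The statement is Bellaïche's local finiteness result for the weight map restricted to a bounded-slope piece of an eigenvariety; the strategy is the standard one from Coleman–Mazur / Buzzard's construction, localized over a disc in weight space, so the plan is to reduce everything to the spectral theory of the compact operator $U_p^{(g)}$ on the Banach module of overconvergent forms and to use Riesz-theory factorizations of the associated Fredholm series.

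For (1), I would proceed as follows. Fix $\kappa\in\cW_g^\circ(\Qp)$ and fix $h\in\Q^{+,\times}$. Choose an affinoid $A=\Spm R$ containing $\kappa$ small enough that $p^{-w}\le r_{\kappa_A}$ for some admissible $w$, so that the datum of Section~\ref{eigenmac} gives a projective Banach $R$-module $M_g(A,w)$ with the compact operator $\phi^g_{A,w}(U_p^{(g)})$. Let $F(A,w)(X)=\det(1-X\,\phi^g_{A,w}(U_p^{(g)}))\in R\{\{X\}\}$ be its Fredholm determinant. The key point is that, after possibly shrinking $A$ to a smaller affinoid disc $V_{h,\kappa}$ around $\kappa$, the Newton polygon of $F(V_{h,\kappa},w)$ is ``uniform in the $h$-range'': there is a factorization $F=Q\cdot S$ over $V_{h,\kappa}$ where $Q$ is a polynomial whose slopes (over every point of $V_{h,\kappa}$) are all $\le h$, $S$ is a Fredholm series all of whose slopes are $>h$, and $Q,S$ generate the unit ideal in $R\{\{X\}\}$. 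This is exactly where the hypothesis that the radius of the disc is small relative to $h$ enters: the breakpoints of the Newton polygon of $F$ at weight $\kappa$ move continuously, so on a sufficiently small disc the part of slope $\le h$ has constant length $N_h:=\deg Q$. Riesz theory then splits $M_g(V_{h,\kappa},w)=M_{\le h}\oplus M_{>h}$ with $M_{\le h}$ projective of rank $N_h$ over $R=\cO(V_{h,\kappa})$, and the bounded-slope eigenvariety piece $(w^h_{g,V_{h,\kappa}})^{-1}(V_{h,\kappa})$ is cut out inside $\Spm R\times\A^1$ (via $U_p^{(g)}$) by the factor $Q$ of the characteristic power series; concretely it is $\Spec$ of the Hecke subalgebra of $\End_R(M_{\le h})$, which is a finite $R$-algebra. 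Finiteness of $w^h_{g,V_{h,\kappa}}$ is then immediate, being $\Spec$(finite $R$-algebra)$\to\Spm R$. (One must also check that this intrinsic description agrees with the fibre-product description $\cD_g^{N,h}\times_{\cW_g^\circ}V_{h,\kappa}$, i.e. that the slope function on the eigenvariety matches $v_p$ of the roots of $Q$; this is built into the compatibility axiom (7) of the eigenvariety datum and Definition~\ref{slopedef}.)

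For (2), I would argue that the family $\{(w^h_{g,V_{h,\kappa}})^{-1}(V_{h,\kappa})\}_{h,\kappa}$ covers $\cD_g^N$ set-theoretically: any $x\in\cD_g^N(\C_p)$ has finite slope $h_0:=\slo(x)$ (Remark~\ref{affslope}), so taking $h>h_0$ and $\kappa=w_g(x)$, the point $x$ lies in the slope-$\le h$ part over a small disc around $\kappa$. For admissibility of the covering, by Buzzard's construction $\cD_g^N$ already carries an admissible affinoid cover by pieces of the shape $\cD_{g,A,\le h}$ (the slope-$\le h$ part over an affinoid $A$ with $U_p^{(g)}$-Fredholm factorization), and each such piece is covered by finitely many of our $V_{h,\kappa}$-pieces since $A$ is quasi-compact and the $V_{h,\kappa}$ form a basis of opens of $\cW_g^\circ$ for fixed $h$ — here one uses that increasing $h$ only enlarges the slope-$\le h$ locus, so a cover adapted to finitely many $(h_i)$ can be refined to one with a common $h=\max h_i$. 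Applying \cite[Proposition 9.1.4/2]{bgr} (a cover refining an admissible one, by admissible opens, all of whose members are finite unions from the refined cover over each member of the original, is admissible) gives admissibility.

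The main obstacle is the uniformity in step (1): showing that on a sufficiently small disc $V_{h,\kappa}$ around $\kappa$ the Fredholm series $F(V_{h,\kappa},w)$ factors as $Q\cdot S$ with $\deg Q$ constant and all slopes of $Q$ genuinely $\le h$ at every point of the disc (not merely at the center). This is a continuity statement for Newton polygons of $p$-adic analytic families of Fredholm series, and making the radius explicitly small enough in terms of $h$ (and of the coefficients of $F$ at $\kappa$) requires a careful estimate — essentially the content of Bellaïche's argument. Everything else (Riesz decomposition, finiteness of the Hecke algebra on a projective rank-$N_h$ module, matching slopes, admissibility bookkeeping) is routine given that input.
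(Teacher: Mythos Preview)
Your outline is correct and follows the same route the paper sketches: part (1) via the Newton-polygon factorization of the strictly convergent Fredholm series (this is exactly Bella\"\i che's Proposition II.1.12) together with finiteness of the eigenvariety over the spectral variety, and part (2) by pulling back Buzzard's admissible covering of the spectral variety. The only imprecision is that the clean split $F=Q\cdot S$ with all slopes of $Q$ $\le h$ and all slopes of $S$ $>h$ need not exist over any disc when $h$ is itself a slope of the Newton polygon at $\kappa$; finiteness still follows, however, by choosing a rational $h'>h$ that is not a slope at $\kappa$, producing the adapted factorization for $h'$ on a small disc, and observing that the slope-$\le h$ locus is closed in the (finite over $V$) slope-$\le h'$ locus.
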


In Bella\"\i che's terminology, a pair $(V_{h,\kappa},h)$ such that $V_{h,\kappa}$ has the property described in (1) is called an \emph{adapted pair}. 
Part (1) of Proposition \ref{locfin} follows from the fact that the characteristic power series of $U_p^{(2)}$ acting on modules of overconvergent eigenforms is strictly convergent, in particular from the calculation in \cite[Proposition II.1.12]{bellaiche} and the fact that the map from the eigenvariety to the spectral variety is finite. Part (2) follows from (1) together with the admissibility of Buzzard's covering of the spectral variety (\cite[Theorem 4.6]{buzzard}) and the construction of the eigenvariety (see \cite[Theorem II.3.3]{bellaiche}).

\begin{rem}\label{adaptrad}\mbox{ }
\begin{enumerate}
\item Every affinoid neighborhood of $\kappa\in\cW^\circ_g$ contains a wide open disc centred in $\kappa$. Proposition \ref{locfin} implies that there exists a radius $r_{h,\kappa}\in p^\Q$ such that 
\[ w^h_{g,B_g(\kappa,r_{h,\kappa}^-)}\colon\cD_{g,B_g(\kappa,r_{h,\kappa}^-)}^{N,h}\to B_g(\kappa,r_{h,\kappa}^-) \]
is a finite morphism.
\item Thanks to Hida theory for $\GSp_4$ we know that the ordinary eigenvariety $\cD_g^{M,0}$ is finite over $\cW^\circ_g$. Hence we can take $r_{0,\kappa}=1$ for every $\kappa$.
\item We would like to have an estimate for $r_{h,\kappa}$ independent of $\kappa$ and with the property that $r_{h,\kappa}\to 0$ for $h\to 0$, in order to recover the ordinary case in this limit. This is not available at the moment for the group $\GSp_{2g}$. An estimate of the analogue of this radius is known for the eigenvarieties associated with unitary groups compact at infinity by the work of Chenevier \cite[Théorème 5.3.1]{chenfam}. 
\end{enumerate}
\end{rem}

%

\subsection{Families defined over $\Z_p$}\label{Zpfam}

For our purpose of studying the images of Galois representations, we will need to have our finite slope families defined over $\Z_p$. For this reason we specialize to families over weight discs for which we can construct a $\Z_p$-model. For simplicity we only work on the connected component $\cW_g^\circ$. Recall that we defined coordinates $T_1,T_2,\ldots,T_g$ on $\cW_g^\circ$. Let $\kappa$ be a point of $\cW_g^\circ$ with coordinates $(\kappa_1,\kappa_2,\ldots,\kappa_g)$ in $\Z_p^g$; for instance we can take as $\kappa$ the arithmetic prime $P_\uk$ for some $\uk\in\Z^g$. 
Let $r_{h,\kappa}$ be the largest radius in $p^\Q$ such that the map $w_{\kappa,B_g(\kappa,r_{h,\kappa}^-)}\colon\cD_{g,B_g(\kappa,r_{h,\kappa}^-)}^{N,h}\to B_g(\kappa,r_{h,\kappa}^-)$ is finite.
Such a radius is non-zero thanks to Remark \ref{adaptrad}(1). 
Let $s_h$ be a rational number satisfying $r_h=p^{s_h}$. 
%
%
We define a model for $B_g(\kappa,r_h^-)$ over $\Q_p$ by adapting Berthelot's construction for the wide open unit disc (see \cite[Section 7]{dejong}). Write $s_h=\frac{b}{a}$ for some $a,b\in\N$. For $i\geq 1$, let $s_i=s_h+1/2^i$ and $r_i=p^{-s_i}$. 
Set
\[ A_{r_i}^\circ=\Z_p\langle t_1,t_2,\ldots,t_g,X_i\rangle/(t_j^{2^ia}-p^{a+2^ib}X_i)_{j=1,2,\ldots,g} \]
and $A_{r_i}=A_{r_i}^\circ[p^{-1}]$. 
Set $B_i=\Spm A_{r_i}$. Then $B_i$ is a $\Q_p$-model of the disc of centre $\kappa$ and radius $r_i$. 
We define morphisms $A_{r_{i+1}}^\circ\to A_{r_i}^\circ$ by
\begin{gather*}
X_{i+1}\mapsto p^aX_i^2, \\
t_j\mapsto t_j\textrm{ for }j=1,2,\ldots,g,
\end{gather*}
They induce compact maps $A_{r_{i+1}}\to A_{r_i}$ which give open immersions $B_i\into B_{i+1}$. We define $B_{g,h}=\varinjlim_i B_i$ where the limit is taken with respect to the above immersions. Let $\Lambda_{g,h}=\cO(B_{g,h})^\circ$. Then $\Lambda_{g,h}=\varprojlim_i\cO(\Spm B_i)^\circ=\varprojlim_i A_{r_i}^\circ$. We call $\Lambda_{g,h}$ the \emph{genus $g$, $h$-adapted Iwasawa algebra}; we leave its dependence on $\kappa$ implicit. We define $t_1,t_2,\ldots,t_g\in\Lambda_{2,h}$ as the projective limits of the variables $t_1,t_2,\ldots,t_g$, respectively, of $A_{r_i}^\circ$.

There is a map of $\Z_p$-algebras $\iota_{g,h}^\ast\Lambda_h\to\Lambda_{h,g}$ defined by $T_j\mapsto t_j+\kappa_j$ for $j=1,2,\ldots,g$. The inclusion $\iota_{g,h}\colon B_{g,h}\into\cW_g^\circ$ induced by $\iota_{g,h}^\ast$ makes $B_{g,h}$ into a $\Q_p$-model of $B_g(\kappa,r_h^-)$, endowed with the integral structure defined by $\Lambda_{g,h}$.

Let $\eta_h$ be an element of $\ovl{\Q}_p$ satisfying $v_p(\eta_h)=s_h$. Let $K_h=\Q_p(\eta_h)$ and let $\cO_h$ be the ring of integers of $K_h$. 
The algebra $\Lambda_{g,h}$ is not a ring of formal series over $\Z_p$, but there is an isomorphism $\Lambda_h\otimes_{\Z_p}\cO_h\cong\cO_h[[t_1,t_2,\ldots,t_g]]$.

We say that a prime of $\Lambda_{g,h}$ is arithmetic if it lies over an arithmetic prime of $\Lambda_g$. 
By an abuse of notation we will write again $P_\uk$ for an arithmetic prime of $\Lambda_{g,h}$ lying over the arithmetic prime $P_\uk$ of $\Lambda_g$. 

\begin{rem}\label{arithprimes}
Let $\uk=(k_1,k_2,\ldots,k_g)$ be a cohomological weight for $\GSp_{2g}$. 
There exists a prime $\fP$ of $\Lambda_{g,h}$ lying over the prime $P_\uk$ of $\Lambda_h$ if and only if the classical weight $\uk$ belongs to the disc $B_g(0,r_h^-)$; otherwise we have $P_\uk\Lambda_h=\Lambda_h$. This happens if and only if $v_p(k_i)>-v_p(r_h)-1$ for i=1,2,\ldots,g, as we can see via a simple calculation. 
\end{rem}

Let $\cD_{g,B_{g,h}}^{N,h}$ be the rigid analytic space and $w_{g,h}$ be the morphism fitting in the cartesian diagram
\begin{equation}\label{Dgh}
\begin{tikzcd}
\cD_{g,h}^{N}\times_{\cW^\circ_g}B_{g,h} \arrow{d}{w_{g,h}}\arrow{r}
&\cD_{g}^{N,h}\arrow{d}{w_g}\\
B_{g,h}\arrow{r}{\iota_{g,h}}
&\cW^\circ_g
\end{tikzcd}
\end{equation}
The rigid analytic space $\cD_{g,h}^N$ is a model of $\cD_{g,B_g(\kappa,r_{h,\kappa}^-)}^{N,h}$ over a $p$-adic field, but it is not necessarily defined over $\Q_p$ since the map $\iota_{g,h}$ may not be. 
We say that a $\C_p$-point of $\cD_{g,h}^N$ is classical if it is a classical point of $\cD_{g,B_g(\kappa,r_{h,\kappa}^-)}^{N,h}$. 

Let $\T_{g,h}=\cO(\cD_{g,h}^{N})^\circ$. We call $\T_{2,h}$ the \emph{genus $g$, $h$-adapted Hecke algebra}; we leave its dependence on $\kappa$ implicit again. 
The morphism $w_{g,h}$ induces $w_{g,h}^\ast\colon\Lambda_{h,g}\to\T_{g,h}$. Thanks to our choice of $r_h$, $w_{g,h}^\ast$ gives $\T_{g,h}$ a structure of finite $\Lambda_{g,h}$-algebra. The $\cD_{g,h}^{N}\to\cD_{g,B_g(\kappa,r_{h,\kappa}^-)}^{N,h}$ appearing in the diagram induces a map $\cO(\cD_{g,B_g(\kappa,r_{h,\kappa}^-)}^{N,h})^\circ\to\T_{g,h}$, that we compose with $\psi_g\colon\calH^N_g\to\cO(\cD_{g,B_g(\kappa,r_{h,\kappa}^-)}^{N,h})^\circ$ to obtain a morphism $\psi_{g,h}\colon\calH_g^N\to\T_{g,h}$. 

For a prime $\fP$ of $\T_{g,h}$ we denote by $\ev_\fP\colon\T_{g,h}\to\Zp$ the evaluation at $\fP$. We say that $\fP$ is a classical point of $\Spec\T_{g,h}$ if $\ev_\fP\ccirc\psi_{g,h}\colon\calH_g^N\to\Zp$ is the system of Hecke eigenvalues attached to a classical $\GSp_{2g}$-eigenform. These systems of eigenvalues also appear at classical points of $\cD_{g,h}^N$.

\begin{defin}
We call \emph{family of $\GSp_{2g}$-eigenforms of slope bounded by $h$} an irreducible component $I$ of $\cD_{g,h}^{N}$, equipped with the integral structure defined by $\T_{g,h}$. 
\end{defin}

We will usually refer to an $I$ as in the definition simply as a finite slope family. Let $\I^\circ=\cO(I)$. Then $\I^\circ$ is a finite $\Lambda_h$-algebra and $I$ is determined by the surjective morphism $\theta\colon\T_{g,h}\onto\I^\circ$. We sometimes refer to this morphism as a finite slope family. 
The family $I$ is equipped with maps $w_\theta\colon I\to B_{g,h}$ and $\psi_\theta\colon\calH_g^N\to\I^\circ$ induced by $w_{g,h}$ and $\psi_{g,h}$, respectively. 
The notation ${}^\circ$ denotes the fact that we are working with integral objects. 
When introducing relative Sen theory in Section \ref{sen} we will need to invert $p$ and we will drop the ${}^\circ$ from all rings.

\begin{rem}\label{conncomp}
Since $\Lambda_h$ is profinite and local and $\T_{g,h}$ is finite over $\Lambda_h$, $\T_{g,h}$ is profinite and semilocal. The connected components of $\cD_{g,B_{g,h}}^{M,h}$ are in bijection with the maximal ideals of $\T_{g,h}$. Let $I$ and $\theta$ be as above. 
Then $\ker\theta$ is contained in the unique maximal ideal $\fm_\theta$ corresponding to the connected component of $\cD_{g,h}^{N}$ containing $I$. The $\Lambda_h$-algebra $\I^\circ$ is profinite and local with maximal ideal $\fm_{\I^\circ}=\theta(\fm_\theta)$.
\end{rem}


%
%
%

Proposition \ref{siegclslopes} implies that every family $I$ contains at least a classical point. By the accumulation property of classical point and the irreducibility of $I$, the classical points are a Zariski dense subset of $I$. Hence the set of classical points of $\Spec\I^\circ$ is also Zariski dense in $\Spec\I^\circ$. Every classical point of $\Spec\I^\circ$ lies over an arithmetic prime of $\Spec\Lambda_{g,h}$.

\subsection{Non-critical points on families} 

Let $\theta\colon\T_{g,h}\to\I^{\circ}$ be a family of $\GSp_4$-eigenforms.

\begin{defin}\label{noncritgsp}
We call an arithmetic prime $P_\uk\subset\Lambda_{g,h}$ \emph{non-critical for $\I^\circ$} if:
\begin{enumerate}
\item every point of $\Spec\I^\circ$ lying over $P_\uk$ is classical;
\item the map $w_{g,B_{g,h}}^\ast\colon\Lambda_{g,h}\to\I^\circ$ is étale at every point of $\Spec\I^\circ$ lying over $P_\uk$.
\end{enumerate}
We call $P_\uk$ \emph{critical} for $\I^\circ$ if it is not non-critical. We also say that a classical weight $\uk$ is critical or non-critical for $\I^\circ$ if the arithmetic prime $P_\uk$ has that property.
\end{defin}

\begin{rem}
By Proposition \ref{siegclslopes}, if $\uk$ is a classical weight belonging to $B_{g,h}$ and $h<k_g-\frac{g(g+1)}{2}$ then $\uk$ satisfies condition (i) of Definition \ref{noncritgsp}. 
We do not know of a simple assumption on the weight that guarantees that the second condition is also satisfied. 
\end{rem}


For later use we state a simple lemma.

\begin{lemma}\label{noncritdense}
The set of non-critical arithmetic primes is Zariski-dense in $\Lambda_{h}$.
\end{lemma}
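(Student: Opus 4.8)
The statement to prove is Lemma \ref{noncritdense}: the set of non-critical arithmetic primes is Zariski-dense in $\Lambda_h$ (i.e. in $\Lambda_{g,h}$, the $h$-adapted Iwasawa algebra, which is a finite extension of a power-series ring over $\Z_p$). I would argue that each of the two defining conditions of non-criticality (Definition \ref{noncritgsp}) excludes only a small set of arithmetic primes, so that Zariski-density of all arithmetic primes (which follows from the accumulation property and the discussion at the end of Section \ref{Zpfam}) survives the two exclusions. The natural strategy is: first show the set of arithmetic primes is Zariski-dense in $\Lambda_{g,h}$; then show the primes failing condition (i) or (ii) are contained in a proper Zariski-closed subset, or at least are not Zariski-dense; then conclude that a Zariski-dense set remains.

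First I would recall that arithmetic primes $P_{\uk}$ with $\uk$ ranging over cohomological weights lying in the disc $B_g(0,r_h^-)$ are Zariski-dense in $\Lambda_{g,h}=\cO(B_{g,h})^\circ$: this is essentially the statement that a power series (or a section of $\cO$) vanishing at all such lattice points vanishes identically, which is the rigid-analytic analogue of the identity theorem on the polydisc and is already implicit in the remark at the end of Section \ref{Zpfam} that classical points are Zariski-dense in $\Spec\I^\circ$, hence arithmetic primes are Zariski-dense in $\Spec\Lambda_{g,h}$. For condition (i), I would invoke Proposition \ref{siegclslopes}: if $\uk=(k_1,\dots,k_g)$ lies in $B_g(0,r_h^-)$ and $h < k_g - \tfrac{g(g+1)}{2}$, then every point of $\Spec\I^\circ$ over $P_{\uk}$ is classical. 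Since $h$ is fixed and finite, this inequality holds for all but finitely many congruence classes' worth of weights — more precisely, it holds for a subset of the lattice of cohomological weights in $B_g(0,r_h^-)$ that is still Zariski-dense (weights with $k_g$ large but $p$-adically close to $0$, so that they remain in the disc, still accumulate). So condition (i) only removes a non-dense set.

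The main obstacle is condition (ii): étaleness of $w_{g,B_{g,h}}^\ast\colon\Lambda_{g,h}\to\I^\circ$ at the points above $P_{\uk}$. Since $\I^\circ$ is a finite $\Lambda_{g,h}$-algebra and $\Spec\Lambda_{g,h}$ is (geometrically) a disc, hence normal/regular, the locus in $\Spec\Lambda_{g,h}$ over which the finite map $\Spec\I^\circ\to\Spec\Lambda_{g,h}$ is not étale is the support of the different (or of $\Omega^1_{\I^\circ/\Lambda_{g,h}}$), which is a proper Zariski-closed subset of $\Spec\Lambda_{g,h}$ — proper because $\I^\circ[1/p]$ is finite and generically étale over $\Lambda_{g,h}[1/p]$ (the eigenvariety is reduced and the weight map is generically étale on each irreducible component, the irreducible component $I$ being one-dimensional over a one-dimensional base when $g=1$, or more generally because the eigenvariety is equidimensional of dimension $g$ over the $g$-dimensional weight space and reduced). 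Therefore the ramification locus is a proper Zariski-closed subset, hence its complement (intersected with the arithmetic primes satisfying (i)) is still Zariski-dense. I would write this out as: let $Z \subset \Spec\Lambda_{g,h}$ be the (closed) non-étale locus of $\Spec\I^\circ \to \Spec\Lambda_{g,h}$, a proper closed subset; the arithmetic primes not in $Z$ and satisfying the inequality of Proposition \ref{siegclslopes} form a Zariski-dense set of non-critical primes, which proves the lemma.

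I expect the genuine subtlety to be justifying that the non-étale locus $Z$ is a \emph{proper} closed subset — i.e., that $\I^\circ$ is generically étale over $\Lambda_{g,h}$. This rests on reducedness of the eigenvariety $\cD_g^{N}$ (or at least of the local piece $\cD_{g,h}^{N}$) together with the fact that a finite, generically separable extension of a normal domain is étale away from a proper closed subset; one should cite the reducedness of the Buzzard eigenvariety and the fact that $\Lambda_{g,h}$ is a regular (hence normal) domain after inverting $p$, plus the finiteness of $w_{g,h}^\ast$ established in Section \ref{Zpfam}. Everything else is a routine combination of the identity theorem on the disc and Proposition \ref{siegclslopes}.
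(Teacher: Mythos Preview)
Your proposal is correct and follows essentially the same approach as the paper's own proof: use Proposition~\ref{siegclslopes} to handle condition~(i) and the Zariski-openness of the \'etale locus of the finite morphism $\Lambda_{g,h}\to\I^\circ$ to handle condition~(ii). The paper states precisely this and refers to \cite[Proposition 4.1.17]{contith} for the details; your writeup supplies those details, including the key point that generic \'etaleness (hence properness of the ramification locus) follows from reducedness of the eigenvariety together with equidimensionality over the weight disc.
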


\begin{proof}
This follows from Proposition \ref{siegclslopes} and the fact that the locus of étaleness of the morphism $\Lambda_{2,h}\to\I^\circ$ is Zariski-open in $\I^\circ$. The proof is detailed in \cite[Proposition 4.1.17]{contith}.
%
\end{proof}

\subsection{The Galois representation associated with a finite slope family}\label{galrepfam}

Let $g=2$. For $h\in\Q^{+,\times}$ and $\kappa\in\cW_2^\circ$, let $r_{h,\kappa}$ be the radius chosen in the beginning of Section \ref{Zpfam}. Let $\cE$ be the set of all wide open subdomains $D$ of $\cD_2^N$ with the property that $D$ is a connected component of $(w^h_{2,B_2(\kappa,r_{h,\kappa})})^{-1}(B_2(\kappa,r_{h,\kappa}))$ for some $h$ and $\kappa$. It follows from Proposition \ref{locfin}(ii) that $\cE$ is an admissible covering of the eigenvariety $\cD_2^N$. 

Let $D\in\cE$. The pseudocharacter $T_{2}\colon G_\Q\to\cO(\cD_2^{N})^\circ$ given by Proposition \ref{biggalthm} induces a pseudocharacter $T_D\colon G_\Q\to\cO(D)^\circ$. Since $D$ is connected $\cO(D)^\circ$ is local; it is also compact by \cite[Lemma 7.2.11]{bellchen} because $D$ is wide open. Let $\ovl{T}_D$ be the reduction of $T_D$ modulo the maximal ideal of $\cO(D)^\circ$. By Theorem \ref{pseudotaylor} $\ovl{T}_D$ is the trace of a representation $\ovl\rho_D\colon G_\Q\to\GL_4(\Fp)$. 


Note that $\ovl\rho_D$ only depends on the irreducible component of $\cD_2^N$ in which $D$ is contained. Let $\cE^\irr$ be the subset of $\cE$ consisting of the domains $D$ for which $\ovl\rho_D$ is absolutely irreducible. Let $\cD_2^{M,\irr}=\bigcup_{D\in\cE^\irr}D$; it is a union of connected components of $\cD_2^M$ and it admits $\cE^\irr$ as an admissible covering. 
For $D\in\cD^\irr$, Theorem \ref{pseudolift} gives a representation $\rho_D\colon G_\Q\to\GL_4(\cO(D)^\circ)$ that has $T_D$ as its associated pseudocharacter and is uniquely determined up to isomorphism. This is actually a symplectic representation, but we only need this property in the specific case that we treat below, where $D$ is a finite slope family as defined in Section \ref{Zpfam}. The construction we presented in this paragraph will be useful in Section \ref{tripar}.


Let $\T_{2,h}$ be the genus $2$, $h$-adapted Hecke algebra. For simplicity let $\T_h=\T_{2,h}$. We implicitly replace $\T_h$ by one of its local components. Let $\theta\colon\T_h\to\I^\circ$ be a finite slope family of $\GSp_4$-eigenforms. The Galois representation associated with $\theta$ can be constructed in the same way as for the domains $D\in\cE$ of the previous paragraph, but we define it here in more detail and prove some additional properties. 
Let $\F_{\T_h}$ be the residue field of $\T_h$. 
The pseudocharacter $T_{2}\colon G_\Q\to\cO(\cD_2^{N})^\circ$ induces pseudocharacters $T_{\T_h}\colon G_\Q\to\T_h$ and $\ovl{T}_{\T_h}\colon G_\Q\to\F_{\T_h}$. By Theorem \ref{pseudotaylor} the pseudocharacter $\ovl{T}_{\T_h}$ is associated with a representation $\ovl{\rho}_{\T_h}\colon G_\Q\to\GL_4(\ovl{\F}_p)$, unique up to isomorphism. We call $\ovl{\rho}_{\T_h}$ the residual Galois representation associated with $\T_h$. 

We assume from now on that \emph{the representation $\ovl{\rho}_{\T_h}$ is absolutely irreducible.}

By the compactness of $G_\Q$ there exists a finite extension $\F^\prime$ of $\F_{\T_h}$ such that $\ovl{\rho}_{\T_h}$ is defined on $\F^\prime$. Let $W(\F_{\T_h})$ and $W(\F^\prime)$ be the rings of Witt vectors of $\F_{\T_h}$ and $\F^\prime$, respectively. Let $\T_h^\prime=\T_h\otimes_{W(\F_{\T_h})}W(\F^\prime)$. 
We consider $T_{\T_h}$ as a pseudocharacter $G_\Q\to\T_h^\prime$ via the natural inclusion $\T_h\into\T_h^\prime$. Then $T_{\T_h}$ satisfies the hypotheses of Theorem \ref{pseudolift}, so there exists a representation $\rho_{\T_h^\prime}\colon G_\Q\to\GL_4(\T_h^\prime)$ such that $\Tr\rho_{T_h^\prime}=\T_{\T_h}$. By Proposition \ref{biggalthm}, for every prime $\ell$ not dividing $Np$ we have
\begin{equation}\label{traceTh} \Tr(D_{\T_h})(\Frob_\ell)=r_{\cD_{2,B_h}^{M,h}}\ccirc\psi_2(T_{\ell,2}^{(2)}). \end{equation}
In particular $\Tr(D_{\T_h})(\Frob_\ell)$ is an element of $\T_h$. Since $\T_h$ is complete, Chebotarev's theorem implies that $\T_{\T_h}(g)$ is an element of $\T_h$ for every $g\in G_\Q$. By Theorem \ref{carayol} there exists a representation $\rho_{\T_h}\colon G_\Q\to\GL_4(\T_h)$ that is isomorphic to $\rho_{\T_h}$ over $\T_h^\prime$. 

The morphism $\theta\colon\T_h\to\I^\circ$ induces a morphism $\GL_4(\T_h)\to\GL_4(\I^\circ)$ that we still denote by $\theta$. Let $\rho_{\I^\circ}\colon G_\Q\to\GL_4(\I^\circ)$ be the representation defined by $\rho_{\I^\circ}=\theta\ccirc\rho_{\T_h}$. 
Recall that we set $\psi_\theta=\theta\ccirc r_{\cD_{2,B_h}^{M,h}}\ccirc\psi_2\colon\calH_2^M\to\I^\circ$. Let 
\[ \I^\circ_\Tr=\Lambda_h[\{\Tr(\rho_\theta(g))\}_{g\in G_\Q}]. \]
Since $\Lambda_h\subset\I_\Tr^\circ\subset\I^\circ$, the ring $\I_\Tr^\circ$ is a finite $\Lambda_h$-algebra. In particular $\I_\Tr^\circ$ is complete. 
We keep our usual notation for the reduction modulo an ideal $\fP$ of $\I_\Tr^\circ$. 
%
We say that a point $\fP$ of $\Spec\I_\Tr^\circ$ is classical if it lies under a classical point of $\Spec\I^\circ$. 

By Proposition \ref{biggalthm} we have $P_\car(\Tr(\rho_{\I^\circ})(\Frob_\ell))=\psi_\theta(P_\Min(t_{\ell,2}^{(2)};X))$, so we deduce that $\I_\Tr^\circ=\Lambda_h[\{\Tr(\rho_\theta(g))\}_{g\in G_\Q}]$. 
Since the traces of $\rho_{\I^\circ}$ belong to $\I_\Tr^\circ$, Theorem \ref{carayol} provides us with a representation 
\[ \rho_\theta\colon G_\Q\to\GL_4(\I_\Tr^\circ) \]
that is isomorphic to $\rho_{\I^\circ}$ over $\I^\circ$. 
Thanks to the following lemma we can attach to $\theta$ a \emph{symplectic} representation. 

\begin{lemma}\label{sympform}
There exists a non-degenerate symplectic bilinear form on $(\I_\Tr^\circ)^4$ that is preserved up to a scalar by the image of $\rho_\theta$. 
\end{lemma}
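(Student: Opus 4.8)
The plan is to propagate the symplectic structure of the classical specializations to the whole family, using the pseudocharacter as a bridge. First I would produce the similitude character: for each classical prime $\fP$ of $\Spec\I_\Tr^\circ$ the Galois representation attached to the corresponding $\GSp_4$-eigenform is valued in $\GSp_4$, with similitude factor $\nu_\fP\colon G_\Q\to\Qp^\times$ equal, up to a finite order character, to a Tate twist fixed by the weight; these characters interpolate through $\psi_\theta$ to a character $\nu\colon G_\Q\to(\I_\Tr^\circ)^\times$ (equivalently, one takes the square root of $\det\rho_\theta$ that specializes correctly). Symplecticity of $\rho_\fP$ means exactly $\rho_\fP\cong\rho_\fP^\vee\otimes\nu_\fP$, so the two pseudocharacters $g\mapsto\Tr\rho_\theta(g)$ and $g\mapsto\nu(g)\Tr\rho_\theta(g^{-1})$ have the same reduction modulo every classical prime. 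The classical points are Zariski-dense in $\Spec\I^\circ$, hence their images are Zariski-dense in $\Spec\I_\Tr^\circ$; since $\I_\Tr^\circ$ is reduced (it sits inside the reduced ring $\I^\circ$), the intersection of the classical primes is zero, and therefore $\Tr\rho_\theta(g)=\nu(g)\Tr\rho_\theta(g^{-1})$ for all $g\in G_\Q$.

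Next I would invoke rigidity of representations with prescribed trace. The ring $\I_\Tr^\circ$ is complete local, and it is finite over $\Lambda_h$, which is noetherian (it becomes a power series ring over $\cO_h$ after $\otimes_{\Z_p}\cO_h$), so $\I_\Tr^\circ$ is complete noetherian local; by hypothesis its residual representation $\ovl\rho_{\T_h}$, hence $\ovl\rho_\theta$, is absolutely irreducible. Thus Theorem \ref{carayol} together with Theorem \ref{pseudolift} shows that a representation is determined up to conjugation over $\I_\Tr^\circ$ by its trace. Since $\rho_\theta^\vee\otimes\nu$ has the same trace as $\rho_\theta$ by the previous paragraph, there is $B\in\GL_4(\I_\Tr^\circ)$ with ${}^t\rho_\theta(g)\,B\,\rho_\theta(g)=\nu(g)\,B$ for every $g$. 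So $B$ is already a non-degenerate bilinear form preserved up to the scalar $\nu$; what remains is to make it alternating.

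Transposing the defining identity shows ${}^tB$ satisfies the same relation, so the alternating part $B^-=\tfrac12(B-{}^tB)$ (legitimate since $p>3$, hence $2\in(\I_\Tr^\circ)^\times$) is again an intertwiner $\rho_\theta\to\rho_\theta^\vee\otimes\nu$, and it is alternating by construction. To see $B^-$ is still non-degenerate I reduce modulo the maximal ideal: $\ovl B$ is an invertible intertwiner for $\ovl\rho_\theta$; by Remark \ref{nonclassrep}(2) the residual representation is constant along the irreducible component containing $\theta$, so $\ovl\rho_\theta\cong\ovl\rho_\fP$ for a classical $\fP$ on the family, and $\ovl\rho_\fP$ preserves, up to $\ovl\nu_\fP$, the reduction of the standard symplectic matrix $J_2$, which is still non-degenerate and alternating in characteristic $p\neq 2$. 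Absolute irreducibility of $\ovl\rho_\theta$ forces the space of such forms to be a line, so $\ovl B$ is proportional to this alternating form; hence $\ovl B$ is alternating, $\ovl{B^-}=\ovl B$ is non-degenerate, and therefore $\det B^-$ is a unit, i.e.\ $B^-\in\GL_4(\I_\Tr^\circ)$. Taking $B^-$ as the required symplectic form finishes the argument.

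I expect the delicate point to be the last step: exhibiting some invariant form is routine once the traces match, but ruling out that the self-duality is orthogonal genuinely uses that the residual representation is rigid along the component and literally inherits the integral symplectic form of a classical member, together with residual absolute irreducibility to pin down the intertwiner module as a line. One should check that the reduction of $J_2$ does not degenerate modulo $p$ (it does not, being built from $\pm 1$ entries), so that no collapse of the pairing occurs at the residual level; the rest is bookkeeping with transposes.
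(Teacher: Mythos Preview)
Your argument is correct and follows essentially the same route as the paper's (sketched) proof: interpolate the similitude characters of the classical specializations to a character $\nu$, use Zariski-density of classical primes to obtain the trace identity $\Tr\rho_\theta(g)=\nu(g)\Tr\rho_\theta(g^{-1})$, invoke residual absolute irreducibility to produce an invertible intertwiner $B$, and then pin down the sign by comparing with a point where the form is known to be alternating. The only cosmetic difference is that the paper checks the alternating condition by specializing at a classical prime (where Schur over a field forces $B_\fP$ to be proportional to the known symplectic form, and then Zariski-density propagates ${}^tB=-B$ to all of $\I_\Tr^\circ$), whereas you pass to the residue field instead; both amount to using Schur on a one-dimensional intertwiner space at a single point to fix the sign.
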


\begin{proof}
The argument of the proof is similar to that in \cite[Lemma 4.3.3]{gentil} and \cite[Proposition 6.4]{pillab}. We show that $\rho_\theta$ is essentially self-dual by interpolating the characters that appear in the essential self-duality conditions at the classical specializations. We deduce that $\im\rho_\theta$ preserves a bilinear form on $(\I_\Tr^\circ)^4$ up to a scalar. Such a form is non-degnerate by the irreducibility of $\rho_\theta$ and it is symplectic because its specialization at a classical point is symplectic. The details of the proof can be found in \cite[Proposition 4.1.20]{contith}.
\end{proof}

Thanks to the lemma, up to replacing it by a conjugate representation, we can suppose that $\rho_\theta$ takes values in $\GSp_4(\I_\Tr^\circ)$. We call $\rho_\theta\colon G_\Q\to\GSp_4(\I_\Tr^\circ)$ the Galois representation associated with the family $\theta\colon\T_h\to\I^\circ_\Tr$. In the following we will work mainly with this representation, so we denote it simply by $\rho$. We write $\F$ for the residue field of $\I^\circ_\Tr$ and $\ovl{\rho}\colon G_\Q\to\GSp_4(\F)$ for the residual representation associated with $\rho$. 


\begin{rem}\label{detformula}
Let $f$ be a $\GSp_4$-eigenform appearing in the family $\theta$. Let $\varepsilon_f$ be the central character, $(k_1,k_2)$ the weight and $\psi_f\colon\calH_2^M\to\Qp$ the system of Hecke eigenvalues of $f$. Let $\rho_{f,p}$ be the $p$-adic Galois representation attached to $f$ and let $\ell$ be a prime not dividing $Mp$. Then $\det\rho_{f,p}(\Frob_\ell)=\ell^6\psi_f(T_{\ell,0}^{(2)})=\varepsilon_f(\ell)\chi(\ell)^{2(k_1+k_2-3)}$. 
The determinant of $\rho(\Frob_\ell)$ interpolates the determinants of $\rho_{f,p}(\Frob_\ell)$ when $f$ varies over the forms corresponding to the classical primes of the family. Note that $\varepsilon_f$ is independent of the choice of the form $f$ in the family. Since the classical primes are Zariski-dense in $\I^\circ_\Tr$ the interpolation is unique and coincides with $\det\rho(\Frob_\ell)=\ell^6\psi_2(T_{\ell,0}^{(2)})=\varepsilon(\ell)(u^{-6}(1+T_1)(1+T_2))^{\log(\chi(\ell))/\log(u)}\in\Lambda_{2,h}$, 
where $\varepsilon$ is the central character of the family. By density of the conjugates of the Frobenius elements in $G_\Q$, we deduce that
\[ \det\rho(g)=\varepsilon(g)(u^{-6}(1+T_1)(1+T_2))^{2\log(\chi(g))/\log(u)}\in\Lambda_{2,h} \]
for every $g\in G_\Q$.
\end{rem}

\begin{rem}\label{ZarItr}\mbox{ }
\begin{enumerate}
\item Since the set of classical points of $\Spec\I^\circ$ is Zariski-dense and the map $\I_\Tr^\circ\to\I^\circ$ is injective, the set of classical points of $\Spec\I^\circ_\Tr$ is also Zariski-dense.
\item Let $\fP$ be a classical point of $\Spec\I^\circ$ lying over a point $\fP_\Tr$ of $\Spec\I_\Tr^\circ$. Then the reduction of $\rho_{\I^\circ}\colon G_\Q\to\GL_4(\I^\circ)$ modulo $\fP$ is isomorphic over $\I^\circ/\fP$ to the reduction of $\rho$ modulo $\fP_\Tr$; in particular it only depends on $\fP_\Tr$ up to isomorphism over a suitable coefficient ring. 
\end{enumerate}
\end{rem}

\bigskip

\section{Self-twists of Galois representations attached to finite slope families}\label{selftwistsec}

Given a ring $R$, we denote by $Q(R)$ its total ring of fractions and by $R^\norm$ its normalization. Now let $R$ be an integral domain. For every homomorphism $\sigma\colon R\to R$ and every $\gamma\in\GSp_4(R)$ we define $\gamma^\sigma\in\GSp_4(R)$ by applying $\sigma$ to each coefficient of the matrix $\gamma$. This way $\sigma$ induces an automorphism $[\cdot]^\sigma\colon G(R)\to G(R)$ for every algebraic subgroup $G\subset\GSp_4$ defined over $R$. For such a $G$ and any representation $\rho\colon G_\Q\to G(R)$, we define a representation $\rho^\sigma\colon G_\Q\to G(R)$ by setting $\rho^\sigma(g)=(\rho(g))^\sigma$ for every $g\in G_\Q$.

Let $S$ be a subring of $R$. We say that a homomorphism $\sigma\colon R\to R$ is a homomorphism of $R$ over $S$ if the restriction of $\sigma$ to $S$ is the identity. The following definition is inspired by \cite[Section 3]{ribetII} and \cite[Definition 2.1]{lang}.

\begin{defin}\label{selftwist}
Let $\rho\colon G_\Q\to\GSp_4(R)$ be a representation. We call \emph{self-twist for $\rho$ over $S$} an automorphism $\sigma$ of $R$ over $S$ such that there is a finite order character $\eta_\sigma\colon G_\Q\to R^\times$ and an isomorphism of representations over $R$:
\begin{equation}\label{steq} \rho^\sigma\cong\eta_\sigma\otimes\rho. \end{equation}
\end{defin}

We list some basic facts about self-twists. The proofs are straghtforward.

\begin{prop}\label{basicst}
Let $\rho\colon G_\Q\to\GSp_4(R)$ be a representation.
\begin{enumerate}
\item The self-twists for $\rho$ over $S$ form a group. 
\item If $R$ is finite over $S$ then the group of self-twists for $\rho$ over $S$ is finite.
\item Suppose that the identity of $R$ is not a self-twist for $\rho$ over $S$. Then for every self-twist $\sigma$ the character $\eta_\sigma$ satisfying the equivalence \eqref{steq} is uniquely determined.
\item Under the same hypotheses as part (3), the association $\sigma\mapsto\eta_\sigma$ defines a cocycle on the group of self-twist with values in $R^\times$.
\item 
Let $S[\Tr\Ad\rho]$ denote the ring generated over $S$ by the set $\{\Tr(\Ad(\rho)(g))\}_{g\in G_\Q}$. Then every element of $S[\Tr\Ad\rho]$ is fixed by all self-twists for $\rho$ over $S$. 
\end{enumerate}
\end{prop}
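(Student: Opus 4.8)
The plan is to reduce all five statements to the behaviour of the twist relation $\rho^\sigma\cong\eta_\sigma\otimes\rho$ under composition of ring endomorphisms, using the elementary identities $(\gamma^\sigma)^\tau=\gamma^{\tau\circ\sigma}$ for matrices, $(\eta\otimes\rho)^\sigma=\eta^\sigma\otimes\rho^\sigma$, and the fact that applying a ring automorphism of $R$ to a finite-order character $G_\Q\to R^\times$ again gives a finite-order character (and that $R^\times$ is abelian). For (1), $\id_R$ is a self-twist with trivial character; given self-twists $\sigma,\tau$, applying $[\cdot]^\tau$ to $\rho^\sigma\cong\eta_\sigma\otimes\rho$ and then using $\rho^\tau\cong\eta_\tau\otimes\rho$ yields $\rho^{\tau\circ\sigma}\cong(\eta_\sigma^\tau\eta_\tau)\otimes\rho$, so $\tau\circ\sigma$ is a self-twist; applying $[\cdot]^{\sigma^{-1}}$ shows $\sigma^{-1}$ is one. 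Hence the self-twists form a subgroup $\Sigma$ of $\Aut_S(R)$.

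For (2), since $R$ is a domain finite over $S$, passing to fraction fields makes $\Frac(R)/\Frac(S)$ a finite extension; a self-twist is $S$-linear, hence extends uniquely to an element of $\Aut(\Frac(R)/\Frac(S))$, and distinct self-twists give distinct extensions, so $\Sigma$ injects into $\Aut(\Frac(R)/\Frac(S))$, a group of order at most $[\Frac(R):\Frac(S)]<\infty$ (the domain hypothesis is essential here). For (3), reading the hypothesis as the absence of a nontrivial finite-order character $\psi\colon G_\Q\to R^\times$ with $\psi\otimes\rho\cong\rho$: if $\eta,\eta'$ both realize $\rho^\sigma\cong\eta\otimes\rho\cong\eta'\otimes\rho$ for a fixed $\sigma$, then $(\eta(\eta')^{-1})\otimes\rho\cong\rho$ with $\eta(\eta')^{-1}$ of finite order, so $\eta=\eta'$. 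For (4), by (3) the assignment $\sigma\mapsto\eta_\sigma$ is a well-defined function on $\Sigma$ valued in the $\Sigma$-module $R^\times$ (with action $\sigma\cdot x=x^\sigma$), and the identity $\eta_{\tau\circ\sigma}=\eta_\tau\cdot{}^\tau\eta_\sigma$ obtained in (1), together with $\eta_{\id_R}=1$, is exactly the inhomogeneous $1$-cocycle condition.

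For (5), the adjoint representation is a morphism of $\Z$-group schemes $\Ad\colon\GSp_4\to\Aut(\fsp_4)$ that is trivial on scalar matrices, so $\Ad(\eta_\sigma\otimes\rho)=\Ad(\rho)$, while $\Ad(\rho^\sigma)=(\Ad(\rho))^\sigma$ entrywise; from $\rho^\sigma\cong\eta_\sigma\otimes\rho$ we get $\Ad(\rho^\sigma)\cong\Ad(\rho)$, and taking traces gives $\sigma(\Tr(\Ad(\rho)(g)))=\Tr(\Ad(\rho^\sigma)(g))=\Tr(\Ad(\rho)(g))$ for all $g\in G_\Q$. A ring automorphism of $R$ fixing $S$ and every $\Tr(\Ad(\rho)(g))$ fixes the subring $S[\Tr\Ad\rho]$ they generate. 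The only point genuinely requiring care is the interpretation of the hypothesis in (3)--(4): ``the identity is not a self-twist'' must be understood as ``there is no nontrivial self-twist of the form $(\id_R,\eta)$ with $\eta\ne 1$'', i.e.\ $\rho\not\cong\eta\otimes\rho$ for every nontrivial finite-order $\eta$; once this is pinned down, the proposition is a formal manipulation of the twist relation with no substantial obstacle (which is consistent with the ``straightforward'' claim preceding the statement).
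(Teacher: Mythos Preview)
Your proposal is correct and is precisely the kind of straightforward verification the paper has in mind (the paper gives no proof beyond ``the proofs are straightforward''). Your identification and resolution of the awkward hypothesis in (3)--(4) is right: as stated, the identity automorphism is trivially a self-twist, so the intended hypothesis is that there is no nontrivial finite-order character $\eta$ with $\eta\otimes\rho\cong\rho$; with that reading, uniqueness and the cocycle relation $\eta_{\sigma\tau}=\eta_\sigma^\tau\eta_\tau$ follow exactly as you argue, and this is the relation the paper uses later (e.g.\ in the proof after Lemma~\ref{Cdiag}).
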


Let $\theta\colon\T_h\to\I^\circ$ be a family of $\GSp_4$-eigenforms as defined in Section \ref{gspfam}. Let $\rho\colon G_\Q\to\GSp_4(\I_\Tr^\circ)$ be the Galois representation associated with $\theta$. Recall that $\I^\circ_\Tr$ is generated over $\Lambda_h$ by the traces of $\rho$. We always work under the assumption that $\ovl{\rho}\colon G_\Q\to\GSp_4(\F)$ is absolutely irreducible. 
Let $\Gamma$ be the group of self-twists for $\rho$ over $\Lambda_h$. We omit the reference to $\Lambda_h$ from now on and we just speak of the self-twists for $\rho$. 
Let $\I_0^\circ$ be the subring $(\I^\circ_\Tr)^\Gamma$ of $\I^\circ_\Tr$ consisting of the elements fixed by every $\sigma\in\Gamma$. 
We can study the order of $\Gamma$ thanks to an argument similar to that of the proof of \cite[Proposition 7.1]{lang}.

\begin{lemma}\label{cardgamma}
The only possible prime factors of $\card(\Gamma)$ are $2$ and $3$.
\end{lemma}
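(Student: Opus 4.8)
The plan is to mimic the argument of J. Lang for $\GL_2$-families (\cite[Proposition 7.1]{lang}), adapting it to the symplectic setting. The starting observation is that a self-twist $\sigma\in\Gamma$ is by definition an automorphism of $\I^\circ_\Tr$ over $\Lambda_h$ such that $\rho^\sigma\cong\eta_\sigma\otimes\rho$ for a finite order character $\eta_\sigma\colon G_\Q\to(\I^\circ_\Tr)^\times$. Taking determinants of this isomorphism and using the explicit formula $\det\rho(g)=\varepsilon(g)(u^{-6}(1+T_1)(1+T_2))^{2\log\chi(g)/\log u}\in\Lambda_{2,h}$ from Remark \ref{detformula}, we find that $\det\rho$ is fixed by $\sigma$ (it lies in $\Lambda_h$), hence $\eta_\sigma^4=(\det\rho)^\sigma(\det\rho)^{-1}$ is trivial; this already forces $\eta_\sigma$ to take values in the fourth roots of unity. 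Better still, comparing similitude characters (the similitude factor $\nu\circ\rho$ also lies in $\Lambda_h$ by a formula analogous to the determinant one, being essentially a square root of $\det$) gives $\eta_\sigma^2$ trivial, so each $\eta_\sigma$ is quadratic.

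Next I would control the order of $\Gamma$ itself, not just the orders of the characters. Following Lang, fix a classical non-critical arithmetic prime $P_\uk$ (these are Zariski-dense by Lemma \ref{noncritdense}) and specialize: a self-twist $\sigma$ descends to the decomposition group of $P_\uk$ and induces an automorphism of the residue field extension, i.e. an element of a Galois group $\Gal(L/E)$ where $L$ is a number field generated by the Hecke eigenvalues of the specialized form and $E$ the subfield fixed by the self-twists of that classical form. By the Ribet--Momose-type analysis (here in the $\GSp_4$ form, cf. Theorem \ref{classbigim} and the discussion of self-twists of classical $\GSp_4$-eigenforms), the self-twists of a classical eigenform that is not a lift form an abelian $2$-group, while for a form that is a symmetric cube lift from $\GL_2$ one picks up the extra symmetries coming from the self-twists of the underlying $\GL_2$-form together with the order-$3$ outer structure relating the symmetric cube to its conjugates — this is the source of the prime $3$. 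One then argues, exactly as in Lang, that $\Gamma$ injects (after choosing $P_\uk$ generically) into the corresponding classical group, so $\card(\Gamma)$ divides a number whose only prime factors are $2$ and $3$.

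The step I expect to be the main obstacle is the comparison at the classical level: one must show that the group of self-twists of $\rho$ injects into the group of self-twists of a well-chosen classical specialization, which requires that distinct self-twists of the family remain distinct after specializing at $P_\uk$ for $P_\uk$ in a Zariski-dense set. For each pair $\sigma\neq\sigma'$ in $\Gamma$ the locus where $\sigma$ and $\sigma'$ agree is a proper closed subscheme of $\Spec\I^\circ_\Tr$ (since $\sigma(x)=\sigma'(x)$ for all $x$ would mean $\sigma=\sigma'$), so its complement is Zariski-open and dense; intersecting over the finitely many pairs — finiteness of $\Gamma$ itself coming from $\I^\circ_\Tr$ being finite over $\Lambda_h$, Proposition \ref{basicst}(2) — and with the dense locus of non-critical classical primes, we obtain a classical prime at which the specialization map $\Gamma\hookrightarrow\{\text{self-twists of }f_\uk\}$ is injective. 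Combined with the classical bound on $p$-adic Galois images from Ribet--Momose-type results (and their symmetric-cube variants), which limit the classical self-twist group to a $2$-group in the generic case and to a group of order dividing a power of $6$ in the symmetric cube case, this yields that the only primes dividing $\card(\Gamma)$ are $2$ and $3$.
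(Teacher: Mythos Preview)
Your approach diverges significantly from the paper's, and the detour you take introduces a real gap.

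The paper's argument is a two-line algebraic trick with no appeal to classical specializations. For every $\ell\nmid Np$ the element $a_\ell=(\Tr\rho(\Frob_\ell))^4/\det\rho(\Frob_\ell)$ is fixed by every $\sigma\in\Gamma$ (the numerator picks up $\eta_\sigma^4$ and so does the denominator), and $\det\rho(\Frob_\ell)\in\Lambda_h\subset\I_0^\circ$ by Remark~\ref{detformula}. Hence each $\Tr\rho(\Frob_\ell)$ is a fourth root of an element of $\I_0^\circ$, so $\I_\Tr^\circ\subset\I':=\I_0^\circ[a_\ell^{1/4},\det\rho(\Frob_\ell)^{1/4},\zeta_4]$ and $\Gamma\hookrightarrow\Gal(\I'/\I_0^\circ)$; a Galois group of an extension obtained by adjoining fourth roots has no prime factor larger than $3$. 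Your first paragraph is actually pointed at the same observation (you note $\eta_\sigma^4=1$ from determinants), but you only extract a bound on the order of each \emph{character} $\eta_\sigma$, which by itself does not bound $\card(\Gamma)$: the cocycle $\sigma\mapsto\eta_\sigma$ is not a priori a homomorphism into an abelian group with known exponent. The paper's move is to bound the \emph{field extension} $\I_\Tr^\circ/\I_0^\circ$ instead, which immediately controls $\Gamma$.

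Your second and third paragraphs try to recover this control by specializing at a non-critical classical prime and invoking a bound on the self-twist group of a classical $\GSp_4$-eigenform. This is where the gap lies: no such bound is available at this point in the paper. Theorem~\ref{classbigim} is a big-image statement, not a count of self-twists, and Proposition~\ref{sttraces} (which comes later anyway) identifies $\cO_K^{\Gamma_f}$ with $\cO_E$ but does not bound $[\cO_K:\cO_E]$ or $\card(\Gamma_f)$. Your assertion that ``the self-twists of a classical eigenform that is not a lift form an abelian $2$-group'' is exactly what would need to be proved, and the appeal to an ``order-$3$ outer structure'' in the symmetric cube case is not made precise. The injection $\Gamma\hookrightarrow\Gamma_f$ at a generic prime is fine, but without the classical bound it buys you nothing. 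The similitude-character step (``$\eta_\sigma^2=1$'') is also unjustified: the paper never establishes that $\nu\circ\rho$ lands in $\Lambda_h$.

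In short, you correctly identify the role of $\det\rho\in\Lambda_h$ but stop one step short of the paper's conclusion, then compensate with a classical-specialization argument whose key input is unproven. The fix is to push your first observation further: use it to control the extension $\I_\Tr^\circ/\I_0^\circ$ directly, as the paper does, rather than the characters $\eta_\sigma$ individually.
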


\begin{proof}
Let $\ell$ be any prime not dividing $Np$. Consider the element 
\begin{equation}\label{trdet} a_\ell=\frac{(\Tr\rho(\Frob_\ell))^4}{\det\rho(\Frob_\ell)} \end{equation}
of $\I_\Tr^\circ$. For every $\sigma\in\Gamma$ and every $g\in G_\Q$ Equation \eqref{steq} gives $\Tr\rho^\sigma(g)=\eta(g)\Tr\rho(g)$ and $\det\rho^\sigma(g)=\eta(g)^4\det\rho(g)$. In particular $a_\ell^\sigma=a_\ell$ for every $\sigma\in\Gamma$, so $a_\ell\in\I^\circ_0$. By Remark \ref{detformula} we have $\det\rho(\Frob_\ell)=\varepsilon(\ell)\chi(\ell)^{2(k_1+k_2-3)}\in\Lambda_{h}$, 
where $\varepsilon$ is the central character of the family $\theta$ and $\chi\colon G_{\Q_p}\to\Z_p^\times$ denotes the cyclotomic character. 
In particular $\det\rho(\Frob_\ell)\in\I_0^\circ$. 

Consider the Galois extension of $\I_0^\circ$ defined by $\I^\prime=\I_0^\circ[a_\ell^{1/4},\det\rho(\Frob_\ell)^{1/4},\zeta_4]$, where $\zeta_4$ is a primitive fourth root of unity. 
Equation \eqref{trdet} gives an inclusion $\I_\Tr^\circ\subset\I^\prime$, hence an inclusion $\Gamma\subset\Gal(\I^\prime/\I_0^\circ)$. 
Since $\I^\prime$ is obtained from $\I_0^\circ$ by adding some fourth roots, the order of an element of $\Gal(\I^\prime/\I_0^\circ)$ cannot have prime divisors greater than $3$. This concludes the proof.
\end{proof}

\subsection{Lifting self-twists from classical points to families}\label{liftsec}

Keep the notations as above. Let $P_\uk\subset\Lambda_h$ be any non-critical arithmetic prime, as in Definition \ref{noncritgsp}. 
The representation $\rho$ reduces modulo $P_\uk\I_\Tr^\circ$ to a representation $\rho_{P_\uk}\colon G_\Q\to\GSp_4(\I_\Tr^\circ/P_\uk\I_\Tr^\circ)$. Let $\widetilde{\sigma}\in\Gamma$ and let $\widetilde{\eta}\colon G_\Q\to(\I_\Tr^\circ)^\times$ be the character associated with $\widetilde\sigma$. 
The automorphism $\widetilde{\sigma}$ fixes $\Lambda_h$ by assumption, so it induces a ring automorphism $\widetilde\sigma_{P_\uk}$ of $\I_\Tr^\circ/P_\uk\I_\Tr^\circ$. The character $\widetilde{\eta}\colon G_\Q\to\I_\Tr^\circ$ induces a character $\widetilde{\eta}_{P_\uk}\colon G_\Q\to(\I_\Tr^\circ/P_\uk\I_\Tr^\circ)^\times$. The isomorphism $\rho^{\widetilde{\sigma}}\cong\widetilde{\eta}\otimes\rho$ over $\I_\Tr^\circ$ gives an isomorphism of representations over $\I_\Tr^\circ/P_\uk\I_\Tr^\circ$:
\begin{equation}\label{k-st} \rho_{P_\uk}^{\widetilde{\sigma}_{P_\uk}}\cong\widetilde{\eta}_{P_\uk}\otimes\rho_{P_\uk}. \end{equation}

Since $P_\uk$ is non-critical $\I^\circ$ is étale over $\Lambda_h$ at $P_\uk$, hence $\I_\Tr^\circ$ is also étale over $\Lambda_h$ at $P_\uk$. In particular $P_\uk$ is a product of distinct primes in $\I_\Tr^\circ$; denote them by $\fP_1,\fP_2,\ldots,\fP_d$. Since $\widetilde{\sigma}_{P_\uk}$ is an automorphism of $\I_\Tr^\circ/P_\uk\I_\Tr^\circ\cong\prod_{i=1}^d\I_\Tr^\circ/\fP_i$, there is a permutation $s$ of the set $\{1,2,\ldots,d\}$ and isomorphisms $\widetilde{\sigma}_{\fP_i}\colon\I_\Tr^\circ/\fP_i\to\I_\Tr^\circ/\fP_{s(i)}$ for $i=1,2,\ldots,d$ such that $\widetilde{\sigma}\vert_{\I_\Tr^\circ/\fP_i}$ factors through $\widetilde{\sigma}_{\fP_i}$. The character $\widetilde{\eta}_{\widetilde{\sigma}_{P_\uk}}$ can be written as a product $\prod_{i=1}^d\widetilde{\eta}_{\fP_i}$ for some characters $\widetilde{\eta}_{\fP_i}\colon G_\Q\to(\I_\Tr^\circ/\fP_i)^\times$. From the equivalence \eqref{k-st} we deduce that
\[ \rho_{\fP_i}^{\widetilde{\sigma}_{\fP_i}}\cong\widetilde{\eta}_{\fP_{s(i)}}\otimes\rho_{\fP_{s(i)}}. \]
The goal of this subsection is to prove that if we are given, for a single value of $i$, data $s(i)$, $\widetilde{\sigma}_{\fP_i}$ and $\widetilde{\eta}_{\fP_i}$ satisfying the isomorphism above for a single value of $i$, there exists an element of $\Gamma$ giving rise to $\widetilde{\sigma}_{\fP_i}$ and $\widetilde{\eta}_{\fP_{s_i}}$ via reduction modulo $P_\uk$. This result is an analogue of \cite[Theorem 3.1]{lang}. We state it precisely in the proposition below.

\begin{prop}\label{lifttwists}
Let $i,j\in\{1,2,\ldots,d\}$. Let $\sigma\colon\I_\Tr^\circ/\fP_i\to\I_\Tr^\circ/\fP_j$ be a ring isomorphism and $\eta_\sigma\colon G_\Q\to(\I_\Tr^\circ/\fP_j)^\times$ be a character satisfying
\begin{equation}\label{eqlift} \rho_{\fP_i}^{\sigma}\cong\eta_{\sigma}\otimes\rho_{\fP_j}. \end{equation}
Then there exists $\widetilde{\sigma}\in\Gamma$ with associated character $\widetilde{\eta}\colon G_\Q\to(\I_\Tr^\circ)^\times$ such that, via the construction of the previous paragraph, $s(i)=j$, $\widetilde{\sigma}_{\fP_i}=\sigma$ and $\widetilde{\eta}_{\fP_j}=\eta_{\sigma}$.
\end{prop}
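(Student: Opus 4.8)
\textbf{Proof plan.} The strategy is to mimic the argument of \cite[Theorem 3.1]{lang}, lifting the self-twist first to an automorphism of a deformation ring and then descending it to a self-twist of the family itself, the descent being the part that requires the genus $2$ input collected in the earlier sections. First I would set up the universal deformation framework: since $\ovl\rho$ is absolutely irreducible and, by the hypothesis on the weight, $P_\uk$ is non-critical (so $\I_\Tr^\circ$ is \'etale over $\Lambda_h$ at $P_\uk$ and $\I_\Tr^\circ/P_\uk\I_\Tr^\circ\cong\prod_i\I_\Tr^\circ/\fP_i$), the representations $\rho$ and $\rho_{\fP_i}$ are classified by a universal deformation ring $R_{\ovl\rho}$ of $\ovl\rho$ (or of the associated residual pseudocharacter), with classifying maps $R_{\ovl\rho}\to\I_\Tr^\circ$ and $R_{\ovl\rho}\to\I_\Tr^\circ/\fP_i$. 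Using Lemma \ref{interptwists}, which allows one to twist a family of $\GSp_4$-eigenforms by a Dirichlet character and obtain a new family inside the same eigenvariety, I would show that the twisted representation $\eta_\sigma\otimes\rho_{\fP_j}$ and the composite $\rho_{\fP_i}^\sigma$ both arise as specializations of genuine families; combined with the isomorphism \eqref{eqlift} this produces, at the level of $R_{\ovl\rho}$, an automorphism $\widetilde\sigma_R$ of $R_{\ovl\rho}$ (together with a universal twisting character) whose reduction along the relevant arithmetic prime recovers $\sigma$ and $\eta_\sigma$. This part is essentially formal once the twisting lemma is in hand, and is the direct analogue of Lang's deformation-theoretic argument.

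The second step is the descent from $\Aut(R_{\ovl\rho})$ to $\Aut(\I_\Tr^\circ)$. The classifying map $R_{\ovl\rho}\to\T_h$ (hence $R_{\ovl\rho}\to\I_\Tr^\circ$) need not be surjective, so one cannot simply push $\widetilde\sigma_R$ forward; instead I would use the \'etaleness of $\I_\Tr^\circ$ over $\Lambda_h$ at $P_\uk$ (equivalently, the \'etaleness of the eigenvariety above the chosen classical weight). Concretely: $\widetilde\sigma_R$ induces a map $\I_\Tr^\circ\to\I_\Tr^{\circ,\norm}$ or at least a correspondence, and its reduction at $P_\uk$ is the given isomorphism $\sigma\colon\I_\Tr^\circ/\fP_i\to\I_\Tr^\circ/\fP_j$ between residue fields of points lying over the \emph{same} \'etale point of $\Lambda_h$. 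Because $\Lambda_h$ is a regular (normal, in fact a power series ring after a finite base change) local ring, $\I_\Tr^\circ$ is reduced, and the extension is generically \'etale, one can use the going-down / Hensel-type argument (as in Lang) to lift $\sigma$ uniquely to a ring automorphism $\widetilde\sigma$ of $\I_\Tr^\circ$ over $\Lambda_h$ realizing the prescribed permutation $s(i)=j$; the uniqueness of $R_{\ovl\rho}$-deformations forces $\rho^{\widetilde\sigma}\cong\widetilde\eta\otimes\rho$ for the character $\widetilde\eta\colon G_\Q\to(\I_\Tr^\circ)^\times$ obtained by interpolating $\eta_\sigma$ (here again Lemma \ref{interptwists} guarantees $\widetilde\eta\otimes\rho$ comes from an actual family, pinning down $\widetilde\eta$), so $\widetilde\sigma\in\Gamma$ by Definition \ref{selftwist}. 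Finally one checks that the reduction of $\widetilde\sigma$ (resp. $\widetilde\eta$) modulo $P_\uk$ along the factor $\I_\Tr^\circ/\fP_i$ is exactly $\sigma$ (resp. $\widetilde\eta_{\fP_j}=\eta_\sigma$), which is immediate from the construction.

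\textbf{Main obstacle.} The deformation-theoretic lift of $\sigma$ is routine; the genuinely delicate point is the \emph{descent} to an automorphism of $\I_\Tr^\circ$ itself, i.e. showing that the abstract automorphism of the deformation ring actually restricts to the finite $\Lambda_h$-subalgebra cut out by the family. In the genus $1$ case Lang handles this via properties of the Hecke algebra and the density of classical points; here one must additionally know that twisting a finite-slope $\GSp_4$-family by a Dirichlet character lands inside the eigenvariety $\cD_2^N$ (Lemma \ref{interptwists}) and exploit the \'etaleness of $\I_\Tr^\circ$ over $\Lambda_h$ at the non-critical prime $P_\uk$ to transfer the residue-field isomorphism $\sigma$ to a global automorphism. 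Getting the compatibilities between the permutation $s$, the local isomorphisms $\widetilde\sigma_{\fP_i}$, and the factorization $\widetilde\eta=\prod_i\widetilde\eta_{\fP_i}$ to match on the nose — rather than merely up to an ambiguity by an automorphism fixing all $\fP_i$ — is where the argument needs care, and is where I would expect to spend most of the effort.
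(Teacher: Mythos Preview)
Your two-step strategy (lift $\sigma$ to an automorphism $\Sigma$ of the universal deformation ring $R_{\ovl\rho}$, then descend to $\I_\Tr^\circ$ using Lemma \ref{interptwists} and the \'etaleness at the non-critical prime $P_\uk$) is exactly the paper's approach, and you have correctly identified the descent as the delicate step and the two key inputs needed for it.

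One point worth sharpening: your description of the descent as a ``Hensel-type / going-down'' lift of the residue-field isomorphism is vaguer than what the paper actually does, and as stated it is not clear it would go through. The paper's mechanism is more concrete and genuinely uses the eigenvariety rather than abstract commutative algebra. Via Corollary \ref{interpSigma} the twisted representation $\rho^\Sigma\cong\eta\otimes\rho$ is realized by an honest family $\theta^\prime$ on an eigenvariety of a possibly \emph{higher} tame level $M^\prime=\lcm(M,m_0)^2$ (not the original level, as you wrote); the original family is also viewed at level $M^\prime$ as $\theta^{M^\prime}$. One then checks that the classical point attached to $\fP_i$ on $\theta^{M^\prime}$ and the corresponding twisted point on $\theta^\prime$ are literally the \emph{same} point of $\cD_2^{M^\prime}$ (same Galois representation up to the Galois conjugation $\sigma$, same slope by Corollary \ref{etaslope}). \'Etaleness of $\T_h^\prime$ over $\Lambda_h^\prime$ at $P_\uk$ then forces the two irreducible components $\theta^{M^\prime}$ and $\theta^\prime$ to coincide, which produces an isomorphism $\widetilde\sigma^\prime$ of $\I_\Tr^\circ\widehat\otimes_{\Lambda_h}\Lambda_h^\prime$; finally one restricts to the trace subring $\iota_h(\I_\Tr^\circ)$ to get $\widetilde\sigma$. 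So the \'etaleness is used not to lift an isomorphism of residue fields through a Henselian ring, but to conclude that two families sharing an unramified point over the base are equal.
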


In order to prove the proposition we first lift $\sigma$ to an automorphism $\Sigma$ of a deformation ring for $\ovl{\rho}$ and then we show that $\Sigma$ descends to a self-twist for $\rho$. This strategy is the same as that of the proof of \cite[Theorem 3.1]{lang}, but there are various complications that we have to take care of. In particular, in order to descend from the deformation space to $\rho$, we show that twisting a family of $\GSp_4$-eigenforms by a Dirichet character gives another family of $\GSp_4$-eigenforms and we subsequently rely on the non-criticality of the arithmetic prime $P_k$.

Before proving Proposition \ref{lifttwists} we give a corollary. 
Let $\fP\in\{\fP_1,\fP_2,\ldots,\fP_d\}$. 
Let $\rho_{\fP}\colon G_\Q\to\GSp_4(\I_\Tr^\circ/\fP)$ be the reduction of $\rho$ modulo $\fP$ and let $\Gamma_{\fP}$ be the group of self-twists for $\rho_{\fP}$ over $\Z_p$. 
Let $\Gamma(\fP)=\{\sigma\in\Gamma\,\vert\,\sigma(\fP)=\fP\}$; it is a subgroup of $\Gamma$. Let $\widetilde{\sigma}\in\Gamma$ and let $\widetilde{\eta}\colon G_\Q\to(\I_\Tr^\circ/\fP)^\times$ be the finite order character associated with $\widetilde\sigma$. Via reduction modulo $\fP$, $\widetilde\sigma$ and $\widetilde\eta$ induce a ring automorphism $\widetilde\sigma_{\fP}$ of $\I_\Tr^\circ/\fP$ and a finite order character $\widetilde\eta_{\fP}\colon G_\Q\to(\I_\Tr^\circ/\fP)^\times$ satisfying $\rho_{\fP_i}^{\sigma_{\fP}}\cong\eta_{\sigma_{\fP}}\otimes\rho_{\fP}$. Hence $\widetilde\sigma_{\fP}$ is an element of $\Gamma_{\fP}$. The map $\Gamma(\fP)\to\Gamma_{\fP}$ defined by $\widetilde\sigma\mapsto\widetilde\sigma_{\fP}$ is a morphism of groups.

\begin{cor}\label{surjst}
The morphism $\Gamma(\fP)\to\Gamma_{\fP}$ is surjective.
\end{cor}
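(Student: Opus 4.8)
The plan is to deduce Corollary \ref{surjst} directly from Proposition \ref{lifttwists}, of which it is essentially the special case $i=j$. Fix the index $i\in\{1,2,\ldots,d\}$ with $\fP=\fP_i$, and recall from the paragraph preceding the corollary that the map $\Gamma(\fP)\to\Gamma_\fP$ sends an element $\widetilde\sigma$ to the automorphism $\widetilde\sigma_\fP$ of $\I_\Tr^\circ/\fP$ obtained by reducing $\widetilde\sigma$ modulo $\fP$ (this makes sense precisely because $\widetilde\sigma\in\Gamma(\fP)$ forces $\widetilde\sigma(\fP)=\fP$). In the notation of the paragraph preceding Proposition \ref{lifttwists}, for such a $\widetilde\sigma$ the permutation $s$ of $\{1,\ldots,d\}$ attached to it fixes $i$ (since $\fP_{s(i)}=\widetilde\sigma(\fP_i)=\fP_i$), and $\widetilde\sigma_\fP$ is nothing but $\widetilde\sigma_{\fP_i}$.

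Now take an arbitrary $\sigma\in\Gamma_\fP$. By Definition \ref{selftwist}, applied to the representation $\rho_\fP\colon G_\Q\to\GSp_4(\I_\Tr^\circ/\fP)$ over the base $S=\Z_p$ and to the coefficient ring $R=\I_\Tr^\circ/\fP$ — which is an integral domain since $\fP$ is a prime, so that Definition \ref{selftwist} does apply — the element $\sigma$ is a ring automorphism of $\I_\Tr^\circ/\fP$ over $\Z_p$ for which there exist a finite order character $\eta_\sigma\colon G_\Q\to(\I_\Tr^\circ/\fP)^\times$ and an isomorphism $\rho_\fP^\sigma\cong\eta_\sigma\otimes\rho_\fP$. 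This is exactly the input data required by Proposition \ref{lifttwists} with $j=i$. Applying that proposition produces $\widetilde\sigma\in\Gamma$, with associated character $\widetilde\eta\colon G_\Q\to(\I_\Tr^\circ)^\times$, such that the induced permutation satisfies $s(i)=j=i$ — hence $\widetilde\sigma(\fP)=\fP$, that is $\widetilde\sigma\in\Gamma(\fP)$ — and moreover $\widetilde\sigma_{\fP_i}=\sigma$ and $\widetilde\eta_{\fP_i}=\eta_\sigma$. By the identification recalled in the previous paragraph, the image of $\widetilde\sigma$ under $\Gamma(\fP)\to\Gamma_\fP$ is $\widetilde\sigma_\fP=\widetilde\sigma_{\fP_i}=\sigma$. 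As $\sigma$ was arbitrary, this establishes surjectivity.

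I do not expect any genuine obstacle at the level of the corollary itself: once Proposition \ref{lifttwists} is available, all that remains is the bookkeeping above, namely checking that the two ``reduction modulo $\fP$'' conventions (the one defining the map $\Gamma(\fP)\to\Gamma_\fP$ and the one appearing in the statement of Proposition \ref{lifttwists}) agree, and that $\I_\Tr^\circ/\fP$ is a domain so that the notion of self-twist is meaningful for $\rho_\fP$. The substance is entirely in Proposition \ref{lifttwists}, whose hard step is descending the automorphism of a deformation ring of $\ovl\rho$ lifting $\sigma$ to an actual self-twist of $\rho$; that step relies on the fact that twisting a finite slope family of $\GSp_4$-eigenforms by a Dirichlet character yields another such family, together with the étaleness of $\I_\Tr^\circ$ over $\Lambda_h$ at $P_\uk$ that is part of the non-criticality hypothesis in Definition \ref{noncritgsp}.
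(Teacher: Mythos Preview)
Your proof is correct and follows exactly the paper's approach: the paper's proof is the single line ``This results from Proposition \ref{lifttwists} by choosing $\fP_i=\fP_j=\fP$'', and you have simply unpacked the bookkeeping behind that sentence. The additional care you take (noting $\I_\Tr^\circ/\fP$ is a domain, matching the two reduction conventions) is all sound and implicit in the paper's one-liner.
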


\begin{proof}
This results from Proposition \ref{lifttwists} by choosing $\fP_i=\fP_j=\fP$.
\end{proof}


\subsubsection{Lifting self-twists to the deformation ring}

%
We keep the notations from the beginning of the section. 
Let $\Q^{Np}$ denote the maximal extension of $\Q$ unramified outside $Np$ and set $G_\Q^{Np}=\Gal(\Q^{Np}/\Q)$. Then $\rho$ factors via $G_\Q^{Np}$ by Proposition \ref{biggalthm}. In this subsection we consider $\rho$ as a representation $G_\Q^{Np}\to\GL_4(\I^\circ_\Tr)$ via the natural inclusion $\GSp_4(\I^\circ_\Tr)\into\GL_4(\I^\circ_\Tr)$. Coherently, we consider $G_\Q^{Np}$ as the domain of all the representations induced by $\rho$ and we take as their range the points of $\GL_4$ on the corresponding coefficient ring. Note that the equivalence \eqref{eqlift} implies that $\eta_\sigma$ also factors via $G_\Q^{Np}$, so we see it as a character of this group. For simplicity we write $\eta=\eta_\sigma$.

%
Recall that we write $\fm_{\I_\Tr^\circ}$ for the maximal ideal of $\I_\Tr^\circ$ and $\F$ for the residue field $\I_\Tr^\circ/\fm_{\Tr^\circ}$. Let $W$ be the ring of Witt vectors of $\F$. The residual representation $\ovl{\rho}\colon G_\Q^{Np}\to\GL_4(\F)$ is absolutely irreducible by assumption. By the results of \cite{mazdef}, the problem of deforming $\rho$ to a representation with coefficients in a Noetherian $W$-algebra is represented by a universal couple $(R_{\ovl\rho},\ovl\rho^\univ)$ consisting of a Noetherian $W$-algebra $R_{\ovl\rho}$ and a representation $\ovl\rho^\univ\colon G_\Q^{Np}\to\GL_4(R_{\ovl\rho})$. 

By the universal property of $R_{\ovl{\rho}}$ there exists a unique morphism of $W$-algebras $\alpha_I\colon R_{\ovl{\rho}}\to\I_\Tr^\circ$ satisfying $\rho\cong\alpha_I\ccirc\ovl{\rho}^\univ$. Let $\ev_i\colon\I_\Tr^\circ\to\I_\Tr^\circ/\fP_i$ and $\ev_j\colon\I_\Tr^\circ\to\I_\Tr^\circ/\fP_j$ be the two projections. The proposition below follows from arguments completely analogous to those of \cite[Section 3.1]{lang}. The details of the proof can be found in \cite[Section 4.4]{contith}.

\begin{prop}\label{liftdefprop}\mbox{ }
\begin{enumerate}
\item The automorphism $\ovl{\sigma}$ of $\F$ is trivial.
\item There is an isomorphism $\ovl\rho\cong\ovl\eta\otimes\ovl\rho$.
\item There exists an automorphism $\Sigma$ of $R_{\ovl{\rho}}$ such that:
\begin{enumerate}[label=(\roman*)]
\item $\Sigma$ is a lift of $\sigma$ in the sense that $\sigma\ccirc\ev_i\ccirc\alpha_I=\ev_j\ccirc\alpha_I\ccirc\Sigma$;
\item $\Sigma\ccirc\ovl\rho^\univ=\eta\ccirc\ovl\rho^\univ$.
\end{enumerate}
\end{enumerate}
\end{prop}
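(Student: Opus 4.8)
The strategy is to follow the template of \cite[Section 3.1]{lang} nearly verbatim, with the additional ingredients forced by the symplectic setting. First I would prove part (1). The automorphism $\ovl\sigma$ of $\F$ is the reduction of $\sigma$ modulo the maximal ideals, so it is a field automorphism of $\F$. Reducing the isomorphism \eqref{eqlift} modulo the maximal ideal gives $\ovl\rho^{\,\ovl\sigma}\cong\ovl\eta\otimes\ovl\rho$; taking traces gives $\ovl\sigma(\Tr\ovl\rho(g))=\ovl\eta(g)\Tr\ovl\rho(g)$ for all $g$. Since $\ovl\rho$ is absolutely irreducible, the traces $\{\Tr\ovl\rho(g)\}$ generate $\F$ over the prime field, so $\ovl\sigma$ is determined by the cocycle $\ovl\eta$; a counting/order argument (as in \cite[Proposition 7.1]{lang} and Lemma \ref{cardgamma}) together with the fact that $\ovl\eta$ has order dividing a small integer and that $\F$ is generated by the normalized Hecke eigenvalues forces $\ovl\sigma=\id$. (Concretely: the fixed field of $\ovl\sigma$ contains $\Lambda_h/\fm$, over which $\F$ is generated by traces, and the relation $\ovl\sigma(\Tr\ovl\rho(g))=\ovl\eta(g)\Tr\ovl\rho(g)$ with $\ovl\eta$ finite order shows every trace is fixed.) Part (2) is then immediate: with $\ovl\sigma=\id$ the relation \eqref{eqlift} reduces to $\ovl\rho\cong\ovl\eta\otimes\ovl\rho$.

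For part (3), having established $\ovl\sigma=\id$, the character $\eta\colon G_\Q^{Np}\to(\I_\Tr^\circ)^\times$ and the twist $\eta\otimes\ovl\rho^\univ$ of the universal deformation both are deformations of $\ovl\rho$ (using part (2) to see $\ovl\eta\otimes\ovl\rho\cong\ovl\rho$, so that $\eta\otimes\ovl\rho^\univ$ is genuinely a lift of $\ovl\rho$ and not of a non-isomorphic residual representation). Here I must be slightly careful: $\eta$ a priori takes values in $(\I_\Tr^\circ)^\times$, not in $W^\times$ or $R_{\ovl\rho}^\times$; as in \cite[Section 3.1]{lang} one first checks that $\ovl\eta$ is trivial on a suitable open subgroup or, more robustly, that the relevant twist can be realized over the universal deformation ring by pulling $\eta$ back along $\alpha_I$ and using that $\eta$ has finite order so it factors through a finite quotient whose Teichm\"uller lift lands in $R_{\ovl\rho}$. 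Granting this, the universal property produces a unique $W$-algebra map $\beta\colon R_{\ovl\rho}\to R_{\ovl\rho}$ with $\beta\ccirc\ovl\rho^\univ\cong\eta^{\univ}\otimes\ovl\rho^\univ$, where $\eta^\univ$ is the universal avatar of $\eta$; reversing the roles (using $\sigma^{-1}$ and $\eta^{-1}$) gives a two-sided inverse, so $\Sigma:=\beta$ is an automorphism. Property (ii) is then the defining relation of $\Sigma$. For property (i), one checks that both $\sigma\ccirc\ev_i\ccirc\alpha_I$ and $\ev_j\ccirc\alpha_I\ccirc\Sigma$ are $W$-algebra maps $R_{\ovl\rho}\to\I_\Tr^\circ/\fP_j$ classifying the same deformation of $\ovl\rho$ — namely $\rho_{\fP_i}^\sigma\cong\eta_\sigma\otimes\rho_{\fP_j}$ by \eqref{eqlift} — and hence coincide by uniqueness in the universal property.

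The main obstacle I anticipate is \emph{not} the deformation-theoretic bookkeeping, which is formal, but the handling of the coefficient ring of $\eta$: ensuring that twisting by $\eta$ is an operation visible on $R_{\ovl\rho}$ rather than only after base change to $\I_\Tr^\circ$. In \cite{lang} this is dealt with because the relevant characters are $\Z_p^\times$- or $W^\times$-valued after reduction; in the present setting, since $\det\rho$ takes values in $\Lambda_h$ (Remark \ref{detformula}) and the $a_\ell$ of Lemma \ref{cardgamma} together pin down $\eta^4$ and hence $\eta$ up to a bounded-order ambiguity, one sees $\eta$ has values in a fixed finite extension of $W$ of bounded degree and finite order; enlarging $W$ if necessary (and correspondingly $R_{\ovl\rho}$) makes the twist $\eta\otimes(-)$ an endofunctor of the category of deformations. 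A secondary technical point, flagged already in the paragraph following the statement, is that descending $\Sigma$ to a genuine self-twist of $\rho$ (rather than merely an automorphism of $R_{\ovl\rho}$) will require Lemma \ref{interptwists} on twisting families by Dirichlet characters and the \'etaleness of the eigenvariety at $P_\uk$ — but that descent is the content of the subsequent step (the proof of Proposition \ref{lifttwists}), not of Proposition \ref{liftdefprop} itself, so here it suffices to produce $\Sigma$ with properties (i)–(ii). Once these are in place, the remaining verifications are the routine diagram chases indicated above, and I would simply cite \cite[Section 3.1]{lang} and \cite[Section 4.4]{contith} for the details.
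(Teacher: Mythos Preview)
Your proposal is correct and takes essentially the same approach as the paper, which likewise simply defers to \cite[Section 3.1]{lang} and \cite[Section 4.4]{contith} for the argument. Your sketch of part (1) is slightly imprecise---finite order of $\ovl\eta$ alone does not force the traces to be fixed by $\ovl\sigma$; Lang's actual argument proceeds via the determinant relation---but since you explicitly plan to cite Lang and \cite{contith} for the details, this does not affect the soundness of the plan.
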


Recall that $\rho$ is the Galois representation associated with the finite slope family $\theta$. Our next step consists in showing that the representation $\rho^\Sigma$ is associated with a family of $\GSp_4$-eigenforms of a suitable tame level and of slope bounded by $h$. Thanks to property (ii) in Proposition \ref{liftdefprop}(3) it is sufficient to show that the representation $\eta\otimes\rho$ is associated with such a family.

\subsubsection{Twisting classical eigenforms by finite order characters}

We show that the twist of a representation associated with a classical Siegel eigenform by a finite order Galois character is the Galois representation associated with a classical Siegel eigenform of the same weight but possibly of a different level. By an interpolation argument we will deduce the analogous result for the representation associated with a family of eigenforms. 

Let $f$ be a cuspidal $\GSp_4$-eigenform of weight $(k_1,k_2)$ and level $\Gamma_1(M)$ and let $\rho_{f,p}\colon G_\Q\to\GSp_4(\Qp)$ be the $p$-adic Galois representation attached to $f$. Let $\eta\colon G_\Q\to\Qp^\times$ be a character of finite order $m_0$ prime to $p$. We see $\eta$ as a Dirichlet character when convenient. 

\begin{prop}\label{siegtwist}
There exists a cuspidal Siegel eigenform $f\otimes\eta$ of weight $(k_1,k_2)$ and level $\Gamma_1(\lcm(M,m_0)^2)$ such that the $p$-adic Galois representation associated with $f\otimes\eta$ is $\eta\otimes\rho_f$.
\end{prop}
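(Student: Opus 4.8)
The plan is to realize the twist at the level of automorphic representations and then read off the Galois representation from local--global compatibility at the unramified primes.

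Let $\pi=\pi_\infty\otimes\bigotimes_{v}\pi_v$ be the cuspidal automorphic representation of $\GSp_4(\A_\Q)$ generated by $f$; it is holomorphic of weight $(k_1,k_2)$ and has a non-zero vector fixed by (the finite part of) $\Gamma_1(M)$. Regard $\eta$ as a finite order Hecke character of $\A_\Q^\times/\Q^\times$ and set $\pi':=\pi\otimes(\eta\ccirc\nu)$, the twist of $\pi$ by the automorphic character $\eta\ccirc\nu\colon\GSp_4(\A_\Q)\to\Qp^\times$, where $\nu$ is the similitude character. This character is trivial on $\GSp_4(\Q)$ because $\eta$ is trivial on $\Q^\times=\nu(\GSp_4(\Q))$, so $\pi'$ is again automorphic and cuspidal; and since $\eta$ has finite order $\eta_\infty$ is trivial, whence $\pi'_\infty\cong\pi_\infty$ and $\pi'$ is holomorphic of the same weight $(k_1,k_2)$.

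Next I would bound the level. For every place $v\mid Mm_0$ one estimates the conductor of $\pi'_v=\pi_v\otimes(\eta_v\ccirc\nu)$ in terms of those of $\pi_v$ and $\eta_v$; using that the similitude factor of any element of $\Gamma_1$ is congruent to $1$ modulo the level, a short local computation produces a non-zero vector in $\pi'$ fixed by $\Gamma_1(\lcm(M,m_0)^2)$. Therefore $\pi'$ corresponds to a classical cuspidal Siegel eigenform, which we denote $f\otimes\eta$, of weight $(k_1,k_2)$ and level $\Gamma_1(\lcm(M,m_0)^2)$. By Proposition \ref{biggalthm} this eigenform carries an attached $p$-adic Galois representation $\rho_{f\otimes\eta}\colon G_\Q\to\GSp_4(\Qp)$, and for every prime $\ell\nmid\lcm(M,m_0)p$ the characteristic polynomial of $\rho_{f\otimes\eta}(\Frob_\ell)$ is the specialization at $f\otimes\eta$ of the Hecke polynomial $P_\Min(t^{(2)}_{\ell,2};X)$.

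It remains to identify $\rho_{f\otimes\eta}$ with $\eta\otimes\rho_{f,p}$. Fix $\ell\nmid\lcm(M,m_0)p$. Multiplying an automorphic form by the character $\eta\ccirc\nu$ multiplies the eigenvalue of a spherical Hecke operator $\1([\GSp_4(\Z_\ell)\gamma\GSp_4(\Z_\ell)])$ by $\eta(\nu(\gamma))$; since the double cosets defining $T^{(2)}_{\ell,0}$, $T^{(2)}_{\ell,1}$, $T^{(2)}_{\ell,2}$ have similitude factors $\ell^2$, $\ell^2$, $\ell$ respectively (Section \ref{weyl}), the corresponding eigenvalues of $f\otimes\eta$ are $\eta(\ell)^2$, $\eta(\ell)^2$, $\eta(\ell)$ times those of $f$. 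Substituting these rescalings into the explicit formula \eqref{minpol2} for $P_\Min(t^{(2)}_{\ell,2};X)$ --- a routine check, since its coefficients are the elementary symmetric functions of the four roots --- shows that the characteristic polynomial of $\rho_{f\otimes\eta}(\Frob_\ell)$ equals $\prod_{i=1}^4(X-\eta(\ell)\alpha_i)$, where $\alpha_1,\dots,\alpha_4$ are the eigenvalues of $\rho_{f,p}(\Frob_\ell)$; this is precisely the characteristic polynomial of $\eta(\Frob_\ell)\rho_{f,p}(\Frob_\ell)=(\eta\otimes\rho_{f,p})(\Frob_\ell)$ (identifying the Hecke character with the Dirichlet character and using $\eta(\Frob_\ell)=\eta(\ell)$). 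Hence the trace pseudocharacters of $\rho_{f\otimes\eta}$ and of $\eta\otimes\rho_{f,p}$ agree on a set of Frobenius elements of density $1$, so they agree on all of $G_\Q$ by continuity and the Chebotarev density theorem; since both representations are semisimple (indeed irreducible when $\ovl\rho$ is absolutely irreducible, which is the case in all our applications), they are isomorphic.

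The main obstacle is the level estimate in the second paragraph: one must run the local conductor computation for the twist of a possibly ramified $\GSp_4$-representation by a character factoring through the similitude, which is the $\GSp_4$-analogue of the classical bound for twists of $\GL_2$-forms and is what accounts for the exponent $2$ in $\lcm(M,m_0)^2$. An alternative, entirely classical, route would construct $f\otimes\eta$ directly as a Gauss-sum average of $f$ along a suitable one-parameter unipotent subgroup $U^\alpha$; the delicate point would then become checking the transformation law and recomputing the Hecke eigenvalues, so the difficulty is of the same nature.
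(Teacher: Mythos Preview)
Your automorphic twist $\pi'=\pi\otimes(\eta\ccirc\nu)$ is a genuinely different route from the paper's, though the second half of your argument --- rescaling the spherical eigenvalues by $\eta(\ell)^2$, $\eta(\ell)^2$, $\eta(\ell)$ and matching characteristic polynomials of Frobenius via \eqref{minpol2} --- is exactly what the paper does. For the construction of $f\otimes\eta$ itself the paper takes precisely the ``entirely classical route'' you mention at the end: it sets $f\otimes\eta=\sum_A\eta(\Tr A)\,a_Aq^A$ following Andrianov \cite{andrtwist}, observes that Andrianov's calculations involve only upper-unipotent matrices and hence carry over verbatim to vector-valued weights, and reads off both cuspidality and the level bound $\Gamma_1(\lcm(M,m_0)^2)$ directly from \cite[Proposition~1.4, Theorem~2.3]{andrtwist}. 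What you flag as your ``main obstacle'' is in fact easier than you suggest: the $\Gamma_1(M)$-fixed vector in $\pi$ is, viewed in $\pi'$, fixed by $\Gamma_1(M)\cap\ker(\eta\ccirc\nu)$, and since $\nu(k)\equiv 1\pmod{n}$ for every $k\in\Gamma_1(n)$ this already contains $\Gamma_1(\lcm(M,m_0))$, sharper than the proposition requires. One small slip: finite order does not force $\eta_\infty$ to be trivial (the sign character has order $2$), but as $\mathrm{sign}\ccirc\nu$ is trivial on $\GSp_4(\R)^+$ the holomorphic weight is unaffected and your conclusion survives.
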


Our proof relies on the calculations made by Andrianov in \cite[Section 1]{andrtwist}. He only considers the case $k_1=k_2$, but as we will remark his work can be adapted to vector-valued forms. For $A\in\Mat_n(R)$ we write $A\geq 0$ if $A$ is positive semi-definite and $A>0$ if $A$ is positive-definite. Recall that $f$, seen as a function on a variable $Z$ in the Siegel upper half-plane $\HH^n=\left\{X+iY\,\vert\,X,Y\in\Mat_n(\R) \textrm{ and } Y>0 \right\}$, admits a Fourier expansion of the form $f(Z)=\sum_{A\in\A^n,\, A\geq 0}a_Aq^A$, where $q=e^{2\pi i\Tr(AZ)}$ and
\[ \A^n=\left\{A=(a_{jk})_{j,k}\in\Mat_n\left(\frac{1}{2}\Z\right)\,\vert\, {}^tA=A \textrm{ and } a_{jj}\in\Z \textrm{ for } 1\leq j\leq n \right\}. \]

The weight $(k_1,k_2)$ action of $\displaystyle \left(\begin{array}{cc} A&B\\C&D\end{array}\right)\in\GSp_4(\C)$ on $f$ is defined by
\begin{equation}\label{siegact}
\left(\begin{array}{cc} A&B\\C&D\end{array}\right).f=(\Sym^{k_1-k_2}(\Std)\otimes\det{}^{k_2}(\Std))(CZ+D)f\left(\frac{AZ+B}{CZ+D}\right),
\end{equation}
where $\Std$ denotes the standard representation of $\GL_2$. 
As in \cite{andrtwist}, we define the twist of $f$ by $\eta$ as
\[ f\otimes\eta=\sum_{A\in\A^n,\, A\geq 0}\eta(\Tr(A))a_Aq^A. \]
Note that Andrianov considers a family of twists by $\eta$ depending on an additional $2\times 2$ matrix $L$, but we only need the case $L=\1_2$.
 
Recall that $\mu(A)$ denotes the similitude factor of $A$.

\begin{lemma}\label{vectortwist}
Let $\eta$ be a Dirichlet character of conductor $m$ and $f$ be a cuspidal form of weight $(k_1,k_2)$ and level $\Gamma_1(M)$. Let $M^\prime=\lcm(m_0,N)^2$. 
\begin{enumerate}
\item The expansion $f\otimes\eta$ defines a cuspidal form of level $\Gamma_1(M^\prime)$ (cf. \cite[Proposition 1.4]{andrtwist}). 
\item If $A\in\GSp_4(\C)$, $[\Gamma_1(m^2)A\Gamma_1(m^2)].(f\otimes\eta)=\eta(\mu(A))([\Gamma_1(m)A\Gamma_1(m)].f)\otimes\eta$ (cf. \cite[Theorem 2.3]{andrtwist}).
\end{enumerate}
\end{lemma}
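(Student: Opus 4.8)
The plan is to derive both statements from Andrianov's computations in \cite[Sections~1--2]{andrtwist}, which treat the scalar-valued case $k_1=k_2$; the only genuinely new point is the passage to general weights $(k_1,k_2)$, and it will turn out to be harmless because the twist is built entirely out of slash operators by unipotent upper-triangular matrices, for which the automorphy factor in \eqref{siegact} is the identity and hence does not see the weight.

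First I would record the identity
\[ f\otimes\eta=\frac{1}{g(\ovl\eta)}\sum_{u\in\Z/m\Z}\ovl\eta(u)\,\bigl(T_u.f\bigr),\qquad T_u=\left(\begin{array}{cc}\1_2&(u/m)\1_2\\0&\1_2\end{array}\right)\in\Sp_4(\Q), \]
where $g(\ovl\eta)=\sum_u\ovl\eta(u)e^{2\pi iu/m}$ is the Gauss sum. This follows from the definition of $f\otimes\eta$ together with three elementary facts: that $(T_u.f)(Z)=f(Z+(u/m)\1_2)$, the factor $(\Sym^{k_1-k_2}(\Std)\otimes\det^{k_2}(\Std))(CZ+D)$ in \eqref{siegact} being trivial for $C=0$, $D=\1_2$; that $\Tr\!\bigl(A\cdot(u/m)\1_2\bigr)=(u/m)\Tr(A)$; and that $\sum_u\ovl\eta(u)e^{2\pi iun/m}=\eta(n)\,g(\ovl\eta)$ for $\eta$ primitive of conductor $m$. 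The point is that this is exactly the expression Andrianov manipulates, the sole difference being that his slash operators carry the scalar cocycle $\det^{k}(CZ+D)$ instead of the vector-valued one; since $T_u$ contributes the identity in either normalization, the formula and everything downstream of it is weight-independent.

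Granting this, part (1) is \cite[Proposition~1.4]{andrtwist}: for $\gamma$ in the larger level group one rewrites the family $\{T_u\gamma\}_u$ as $\{\delta_u T_{\pi(u)}\}_u$ with $\delta_u$ in the original level group and $\pi$ a permutation, the discrepancy being reabsorbed into the character values $\ovl\eta(u)$, so that modularity of $f$ upgrades to modularity of $f\otimes\eta$ at level $\Gamma_1(M^\prime)$; cuspidality passes to the twist because its Fourier coefficients $\eta(\Tr A)a_A$ vanish wherever those of $f$ do. Part (2) is \cite[Theorem~2.3]{andrtwist}: decomposing the Hecke double cosets $\Gamma_1(m)A\Gamma_1(m)$ and $\Gamma_1(m^2)A\Gamma_1(m^2)$ into left cosets and commuting the representatives past the $T_u$, the bookkeeping yields precisely the scalar $\eta(\mu(A))$ — the similitude enters because post-composing the translation by $(u/m)\1_2$ with a coset representative of similitude $\mu(A)$ rescales $u$ to $\mu(A)u$, and the substitution $u\mapsto\mu(A)u$ in $\sum_u\ovl\eta(u)$ pulls out $\eta(\mu(A))$. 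In both parts the coset manipulations involve only the $T_u$ and elements of the level subgroups, so replacing $\det^{k}(CZ+D)$ by $(\Sym^{k_1-k_2}(\Std)\otimes\det^{k_2}(\Std))(CZ+D)$, which obeys the same formal cocycle identity under composition, changes none of Andrianov's identities. I expect the one step needing genuine (though routine) care to be exactly this last verification: going through \cite[Sections~1--2]{andrtwist} and confirming that no slash operator other than by a $T_u$ or by an element of a level group ever intervenes, so that the scalar-to-vector substitution of automorphy factors is legitimate. I would make that check explicit and otherwise refer to \cite{andrtwist} for the underlying coset computations.
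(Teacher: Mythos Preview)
Your proposal is correct and follows essentially the same approach as the paper: both argue that Andrianov's computations in \cite{andrtwist} go through verbatim for vector-valued weights because every slash operator appearing in those proofs is by an upper unipotent matrix, for which the automorphy factor $(\Sym^{k_1-k_2}(\Std)\otimes\det^{k_2}(\Std))(CZ+D)$ is the identity and hence weight-independent. The one detail the paper supplies that you omit is a level adjustment: Andrianov's statements are formulated for congruence groups $\widetilde\Gamma(n)$ rather than $\Gamma_1(n)$, and the paper bridges this via the inclusions $\Gamma_1(n^2)\subset\widetilde\Gamma(n)\subset\Gamma_1(n)$.
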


\begin{proof}
The proof relies on the same calculations as the proofs of \cite[Proposition 1.4 and Theorem 2.3]{andrtwist}, that are stated for scalar Siegel modular forms. Note first that all the steps in these proofs only involve the action of upper unipotent matrices on $f$ via formula \eqref{siegact}. The action of such matrices is clearly independent of the weight of $f$, hence all calculations are still true upon replacing the weight $(k,k)$ action with the weight $(k_1,k_2)$ action for some $k,k_1,k_2\in\N$. We deduce that the conclusions of \cite[Proposition 1.4 and Theorem 2.3]{andrtwist} hold for vector-valued Siegel modular forms. With the notations of \cite{andrtwist}, the calculations of \emph{loc. cit.} produce a form $f\otimes\eta$ of level $\widetilde\Gamma(M^\prime)$ from a form $f$ of level $\widetilde\Gamma(M)$. We can modify these levels to match those in Lemma \ref{vectortwist} by observing that $\Gamma_1(n^2)\subset\widetilde{\Gamma}(n)\subset\Gamma_1(n)$ for every $n\geq 1$.
\end{proof}

We are now ready to prove Proposition \ref{siegtwist}.

\begin{proof}
We see the form $f$ of level $\Gamma_1(M)$ as a form of level $\Gamma_1(\lcm(M,m_0))$ and the character $\eta$ of conductor $m$ as a character of conductor $\lcm(M,m_0)$. By applying Lemma \ref{vectortwist}(1) with $m=\lcm(M,m_0)$ we can construct a form $f\otimes\eta$ of level $\Gamma_1(\lcm(M,m_0)^2)$. Let $\rho_{f\otimes\eta,p}\colon G_\Q\to\GSp_4(\Qp)$ be the $p$-adic Galois representation associated with $f\otimes\eta$. We show that $\rho_{f\otimes\eta,p}\cong\eta\otimes\rho_{f,p}$.

For every congruence subgroup $\Gamma\subset\GSp_4(\C)$ and every prime $\ell$, we denote by $T_{\ell,0}$, $T_{\ell,1}$ and $T_{\ell,2}$ the Hecke operators associated with the double classes $[\Gamma\diag(\ell,\ell,\ell,\ell)\Gamma]$, $[\Gamma\diag(1,\ell,\ell,\ell^2)\Gamma]$ and $[\Gamma\diag(1,1,\ell,\ell)\Gamma]$, respectively. We do not specify the congruence subgroup with respect to which we work, since this does not create confusion in the following. 
Lemma \ref{vectortwist}(2) gives, for every prime $\ell\nmid Mm_0$, the relations $T_{\ell,0}(f\otimes\eta)=\eta(\ell^2)T_{\ell,0}(f)\otimes\eta$, $T_{\ell,1}(f\otimes\eta)=\eta(\ell^2)T_{\ell,1}(f)\otimes\eta$ and $T_{\ell,2}(f\otimes\eta)=\eta(\ell)T_{\ell,2}(f)\otimes\eta$. 

Recall that for every $\ell\nmid Mm_0p$ we have
\[ \det(1-\rho_{f,p}(\Frob_\ell)X)=\chi_f(X^4-T_{\ell,2}X^3+((T_{\ell,2})^2-T_{\ell,1}-\ell^2T_{\ell,0})X^2-\ell^3T_{\ell,2}T_{\ell,0} X+\ell^6(T_{\ell,0})^2) \]
where $\chi_f$ is the character of the Hecke algebra defining the system of eigenvalues of $f$. The equality still holds if we replace $f$ by $f\otimes\eta$. Via the relations obtained at the end of the previous paragraph we can check that $\det(1-(\eta\otimes\rho_{f,p})(\Frob_\ell)X)=\det(1-\rho_{f\otimes\eta,p}(\Frob_\ell)X)$ for every $\ell\nmid Mm_0p$. This implies that the representations $\eta\otimes\rho_{f,p}$ and $\rho_{f\otimes\eta,p}$ are equivalent.
\end{proof}

Under the hypotheses of the previous proposition we prove the following.

\begin{cor}\label{etaslope}
Let $M^\prime=\lcm(m_0,M)^2$. 
Let $x$ be a classical $p$-old point of $\cD_2^M$ having weight $(k_1,k_2)$, slope $h$ and associated Galois representation $\rho_x$. Then there exists a classical $p$-old point $x_\eta$ of $\cD_2^{M^\prime}$ having weight $(k_1,k_2)$, slope $h$ and associated Galois representation $\rho_{x_\eta}=\eta\otimes\rho_x$.
\end{cor}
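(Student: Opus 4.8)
The plan is to deduce the corollary from Proposition \ref{siegtwist} together with a bookkeeping of $p$-stabilizations. Since $x$ is a classical $p$-old point of $\cD_2^M$ of weight $(k_1,k_2)$, it is the normalized system of Hecke eigenvalues of a $p$-stabilization $f^\st$ of a cuspidal $\GSp_4$-eigenform $f$ of weight $(k_1,k_2)$ and level $\Gamma_1(M)$ that is spherical at $p$; its slope $h$ is the $p$-adic valuation of the normalized $U_p^{(2)}$-eigenvalue of $f^\st$, and $\rho_x\cong\rho_{f,p}$. First I would apply Proposition \ref{siegtwist} to obtain the cuspidal eigenform $f\otimes\eta$ of weight $(k_1,k_2)$ and level $\Gamma_1(M^\prime)$, with $M^\prime=\lcm(m_0,M)^2$, satisfying $\rho_{f\otimes\eta,p}\cong\eta\otimes\rho_{f,p}$. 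Because $\eta$ has conductor $m_0$ prime to $p$ and $f$ is spherical at $p$, the form $f\otimes\eta$ is again spherical at $p$, so each of its $p$-stabilizations defines a classical $p$-old point of $\cD_2^{M^\prime}$.

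Next I would single out the $p$-stabilization of $f\otimes\eta$ producing a point of slope $h$. The $p$-stabilizations of a spherical form amount to compatible choices of roots of the Hecke polynomials $P_\Min(t_{p,1}^{(2)};X)$ and $P_\Min(t_{p,2}^{(2)};X)$ of \eqref{polweyl1} and \eqref{minpol2}, evaluated at its system of spherical Hecke eigenvalues and read as the eigenvalues of $U_{p,1}^{(2)}$ and $U_{p,2}^{(2)}$. Using the relations $T_{p,0}(f\otimes\eta)=\eta(p)^2T_{p,0}(f)\otimes\eta$, $T_{p,1}(f\otimes\eta)=\eta(p)^2T_{p,1}(f)\otimes\eta$ and $T_{p,2}(f\otimes\eta)=\eta(p)T_{p,2}(f)\otimes\eta$, which come from Lemma \ref{vectortwist}(2) applied at $\ell=p$ (allowed since $p\nmid Mm_0$), substitution into \eqref{minpol2} shows that $P_\Min(t_{p,2}^{(2)};X)$ evaluated at $f\otimes\eta$ equals $\eta(p)^4\cdot P_\Min(t_{p,2}^{(2)};\eta(p)^{-1}X)$ evaluated at $f$, hence its roots are $\eta(p)$ times those for $f$; similarly \eqref{polweyl1} shows that the roots of $P_\Min(t_{p,1}^{(2)};X)$ scale by $\eta(p)^2$. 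Thus every $p$-stabilization of $f$ matches one of $f\otimes\eta$ whose $U_{p,i}^{(2)}$-eigenvalue equals the corresponding one for $f$ times an integral power of $\eta(p)$. I would take $(f\otimes\eta)^\st$ to be the $p$-stabilization matched in this way to $f^\st$, and let $x_\eta$ be the associated point of $\cD_2^{M^\prime}$.

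It then remains to read off the invariants of $x_\eta$. Its weight is $(k_1,k_2)$ by construction, and the normalization of Definition \ref{normsystdef} at $p$ changes each $U_{p,i}^{(2)}$-eigenvalue by a factor depending only on the weight, hence the same for $f^\st$ and $(f\otimes\eta)^\st$. Since $\eta$ has finite order, $\eta(p)$ is a root of unity and $v_p(\eta(p))=0$; therefore the normalized $U_p^{(2)}$-eigenvalue of $(f\otimes\eta)^\st$ has the same $p$-adic valuation as that of $f^\st$, so $\slo(x_\eta)=\slo(x)=h$ by Definition \ref{slopedef}. Finally $\rho_{x_\eta}\cong\rho_{f\otimes\eta,p}\cong\eta\otimes\rho_{f,p}\cong\eta\otimes\rho_x$, as required. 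I expect the only genuinely delicate point to be the matching of $p$-stabilizations in the second step; this is however forced by the explicit scaling of the Hecke polynomials just described, so no real obstacle arises.
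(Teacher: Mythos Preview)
Your proposal is correct and follows essentially the same approach as the paper: both apply Proposition \ref{siegtwist} to produce $f\otimes\eta$, use the relations from Lemma \ref{vectortwist}(2) at $\ell=p$ to see that the roots of the relevant Hecke polynomials at $p$ for $f\otimes\eta$ are $\eta(p)$-multiples of those for $f$, match $p$-stabilizations accordingly, and conclude that the slope is preserved because $v_p(\eta(p))=0$. The only cosmetic difference is that the paper expresses $U_p^{(2)}$ directly as $(U_{p,2}^{(2)})^2(U_{p,2}^{(2)})^{w_1}$ and reads off the factor $\eta(p)^3$, whereas you track the scaling of the two minimal polynomials separately; and the paper justifies $v_p(\eta(p))=0$ via $p\nmid m_0$ rather than via $\eta$ having finite order, but both are valid.
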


\begin{proof}
Since $x$ is $p$-old, it corresponds to the $p$-stabilization of a $\GSp_4$-eigenform $f$ of level $M$ and weight $(k_1,k_2)$. Let $f\otimes\eta$ be the eigenform of weight $(k_1,k_2)$ and level $M^\prime$ given by Proposition \ref{siegtwist}. We show that it admits a $p$-stabilization of slope $h$.

We are working under the assumption that the conductor of $\eta$ is prime to $p$, so we can compute 
\begin{equation}\label{etap}\begin{gathered} \chi_{f\otimes\eta}(P_\Min(t^{(2)}_{p,2}))=\chi_f(X^4-\eta(p)T_{p,2}X^3+((\eta(p)T_{p,2})^2-\eta(p)^2T_{p,1}-p^2\eta(p)^2T_{p,0})X^2+ \\
-p^3(\eta(p)T_{p,2})(\eta(p)^2T_{p,0})X+p^6(\eta(p)^2T_{p,0})^2). \end{gathered}\end{equation}
Let $\{\alpha_i\}_{i=1,\ldots,4}$ be the four roots of $\chi_{f}(P_\Min(t^{(2)}_{p,2}))$. Then Equation \eqref{etap} shows that the roots of $\chi_{f\otimes\eta}(P_\Min(t^{(2)}_{p,2}))$ are $\{\eta(p)\alpha_i\}_{i=1,\ldots,4}$.

Suppose that $f$ is $p$-old. 
Recall that we identify $U_{p,2}^{(2)}$ with $t^{(2)}_{p,2}$ via the isomorphism $\iota_{I_{g,\ell}}^{T_g}$ of Section \ref{dilIw}. By the discussion in the proof of Prop. \ref{heckemorphstab} there are eight $p$-stabilizations of $f\otimes\eta$, one for each compatible choice of $U_{p,2}^{(2)}$ and $(U_{p,2}^{(2)})^{w_1}$ among the roots of $\chi_f(P_\Min(t^{(2)}_{p,2}))$. 
Let $f^\st$ be a $p$-stabilization of $f$ with slope $h$. 
Since $U_p^{(2)}=(U_{p,2}^{(2)})^2(U_{p,2}^{(2)})^{w_1}$, there are $i,j\in\{1,2,3,4\}$ such that $\chi_{f^\st}(U_p^{(2)})=\alpha_i^2\alpha_j$. Then by the remark of the previous paragraph there exists a $p$-stabilization $(f\otimes\eta)^\st$ of $f\otimes\eta$ such that $\chi_{(f\otimes\eta)^\st}(U_p^{(2)})=(\eta(p)\alpha_i)^2(\eta(p)\alpha_j)=\eta(p)^3\alpha_i^2\alpha_j$. 
In particular the slope of $(f\otimes\eta)^\st$ is $v_p(\chi_{(f\otimes\eta)^\st}(U_p^{(2)}))=3v_p(\eta(p))+h$. 
Since $p$ is prime to the conductor of $\eta$ we have that $\eta(p)$ is a unit, hence the slope of $(f\otimes\eta)^\st$ is $h$. The level of $(f\otimes\eta)^\st$ is $\Gamma_1(M^\prime)\cap\Gamma_0(p)$, so it defines a point of $\cD_2^{M^\prime}$.
%
\end{proof}

%

Consider the family $\theta\colon\T_h\to\I^\circ$ fixed in the beginning of the section. For every $p$-old classical point $x$ of $\theta$, let $x_\eta$ be the point of the eigenvariety $\cD_2^{M^\prime}$ provided by Corollary \ref{etaslope}. Let $r_h^\prime$ be a radius adapted to $h$ for the eigenvariety $\cD_2^{M^\prime}$. 
Let $\Lambda_h^\prime$ be the genus $2$, $h$-adapted Iwasawa algebra for $\cD_2^{M^\prime}$ and let $\T^\prime_h$ be the genus $2$, $h$-adapted Hecke algebra of level $M^\prime$.
Note that $r_h^\prime\leq r_h$, so there is a natural map $\iota_h\colon\Lambda_h\to\Lambda_h^\prime$.

\begin{lemma}\label{interptwists}
There exists a finite $\Lambda_h^\prime$-algebra $\J^\circ$, a family $\theta^\prime\colon\T^\prime_h\to\J^\circ$ and an isomorphism $\alpha\colon\I_\Tr^\circ\widehat\otimes_{\Lambda_h}\Lambda_h^\prime\to\J_\Tr^\circ$ such that the representation $\rho_{\theta^\prime}\colon G_\Q\to\GSp_4(\J_\Tr^\circ)$ associated with $\theta^\prime$ satisfies $\rho_{\theta^\prime}\cong\eta\otimes\alpha\ccirc\rho_\theta$.
\end{lemma}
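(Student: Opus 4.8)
The plan is to construct the twisted family directly on the Hecke side and then recover its Galois representation from the twisted pseudocharacter via Carayol's theorem. Set $\widetilde\I:=\I_\Tr^\circ\widehat\otimes_{\Lambda_h}\Lambda_h'$ (base change along $\iota_h$); this is a finite $\Lambda_h'$-algebra, and after the harmless enlargement of the coefficient ring by the values of $\eta$ -- roots of unity of order $m_0$ prime to $p$ -- it contains $\eta(G_\Q)$ in its units. The representation $\eta\otimes\rho_\theta\colon G_\Q\to\GSp_4(\widetilde\I)$ is still symplectic (twisting a $\GSp_4$-valued representation by a character only multiplies the similitude by a square), is unramified outside $M'p$, and satisfies $\Tr(\eta\otimes\rho_\theta)(\Frob_\ell)=\eta(\ell)\Tr\rho_\theta(\Frob_\ell)$. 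First I would define a $\Q$-algebra morphism $\psi'\colon\calH_2^{M'}\to\widetilde\I$, using that $\calH_2^{M'}$ is a polynomial algebra over $\Q$ in the operators $T^{(2)}_{\ell,i}$ ($\ell\nmid M'p$), $U^{(2)}_{p,i}$ and the inverses of $T^{(2)}_{\ell,0}$, $U^{(2)}_{p,0}$: on the spherical part I require $P_\car(\Tr(\eta\otimes\rho_\theta))(\Frob_\ell)=\psi'(P_\Min(t^{(2)}_{\ell,2};X))$, equivalently $T^{(2)}_{\ell,i}\mapsto\eta(\ell)^{c_i}\psi_\theta(T^{(2)}_{\ell,i})$ with $(c_0,c_1,c_2)=(2,2,1)$ as dictated by Lemma \ref{vectortwist}(2); at $p$ I set $U^{(2)}_{p,i}\mapsto\eta(p)^{c_i}\psi_\theta(U^{(2)}_{p,i})$ with the same $c_i$. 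Since $\eta$ is unit-valued, $\psi'$ lands in $\widetilde\I$ and sends $T^{(2)}_{\ell,0}$, $U^{(2)}_{p,0}$ to units, and the slope function is unchanged because $v_p(\eta(p))=0$.

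The key step is to recognize $\psi'$ as the Hecke eigensystem of a finite slope family over $\cD_2^{M'}$. By Lemma \ref{noncritdense} the non-critical arithmetic primes of $\Lambda_h$ lying in the subdisc cut out by $\Lambda_h'$ are Zariski-dense, and the classical points of $\theta$ above them are Zariski-dense in $\Spec\I^\circ$, hence in $\Spec\widetilde\I$. For such a classical point $x$ of $\theta$, Proposition \ref{siegtwist} produces a classical form $f_x\otimes\eta$ of the same weight and level $\Gamma_1(M')$, and Corollary \ref{etaslope} (together with the elementary remark that twisting by a character of conductor prime to $p$ changes neither the conductor at $p$ nor the $U_p$-slope, which covers any $p$-new $x$) shows that $\ev_x\ccirc\psi'$ is the normalized Hecke eigensystem of a classical $\GSp_4$-eigenform of weight $w_\theta(x)$, level $\Gamma_1(M')\cap\Gamma_0(p)$ and slope $h$; by Theorem \ref{theigenmac}(2) it is the eigensystem of a point of $\cD_2^{M'}$ in the $h$-adapted weight disc. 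Since that part of $\cD_2^{M'}$ is reduced and finite over its weight disc and $\widetilde\I$ is reduced, the interpolation/universality of eigenvarieties -- the universal property of \cite{buzzard} together with the comparison results of Section \ref{changeBC} (in the spirit of \cite[Section 7.2.3]{bellaiche}) -- yields a factorization $\calH_2^{M'}\xrightarrow{\psi_{2,h}'}\T_h'\xrightarrow{\theta'}\widetilde\I$ with $\theta'$ a $\Lambda_h'$-algebra morphism. Moreover $\theta'$ is surjective, since its image contains $\Lambda_h'$ and the elements $\psi'(T^{(2)}_{\ell,2})=\eta(\ell)\psi_\theta(T^{(2)}_{\ell,2})$, which by Chebotarev and Proposition \ref{biggalthm} topologically generate $\widetilde\I$ over $\Lambda_h'$. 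Thus $\theta'$ defines an irreducible component $I'$ of $\cD_{2,h}^{M'}$; I take $\J^\circ=\cO(I')$, $\J_\Tr^\circ$ its trace subring, and let $\alpha\colon\I_\Tr^\circ\widehat\otimes_{\Lambda_h}\Lambda_h'\to\J_\Tr^\circ$ be the resulting identification (both sides being topologically generated over $\Lambda_h'$ by the same twisted Frobenius traces).

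Finally, the Galois representation $\rho_{\theta'}$ attached to $\theta'$ by the recipe of Section \ref{galrepfam} has Frobenius traces $\psi_{\theta'}(T^{(2)}_{\ell,2})=\psi'(T^{(2)}_{\ell,2})=\eta(\ell)\psi_\theta(T^{(2)}_{\ell,2})=\Tr((\eta\otimes\alpha\ccirc\rho_\theta)(\Frob_\ell))$, so $\Tr\rho_{\theta'}=\Tr(\eta\otimes\alpha\ccirc\rho_\theta)$ as pseudocharacters $G_\Q\to\J_\Tr^\circ$. The representation $\eta\otimes\alpha\ccirc\rho_\theta$ is $\GSp_4(\J_\Tr^\circ)$-valued and residually $\ovl\eta\otimes\ovl\rho$, which is absolutely irreducible because $\ovl\rho$ is; since $\J_\Tr^\circ$ is a complete Noetherian local ring, Carayol's theorem (Theorem \ref{carayol}, or the uniqueness in Theorem \ref{pseudolift}) gives $\rho_{\theta'}\cong\eta\otimes\alpha\ccirc\rho_\theta$ over $\J_\Tr^\circ$, which is the assertion. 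The genuinely delicate part is the middle step: upgrading the pointwise twist of classical forms supplied by Corollary \ref{etaslope} into an honest morphism of rigid analytic spaces carrying the \emph{same} coefficient ring $\I_\Tr^\circ\widehat\otimes_{\Lambda_h}\Lambda_h'$ -- and in particular checking that passing to the smaller weight disc does not split the component. This rests on Zariski-density of the classical points, on reducedness and finiteness of the relevant eigenvarieties over their weight discs, and on the eigenvariety interpolation machinery; the tame-level bookkeeping ($M'=\lcm(m_0,M)^2$) and the enlargement by the values of $\eta$ are also absorbed here.
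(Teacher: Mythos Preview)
Your approach is a legitimate alternative to the paper's, but it is organized top-down (build the twisted eigensystem on $\widetilde\I=\I_\Tr^\circ\widehat\otimes_{\Lambda_h}\Lambda_h'$, then factor it through $\T_h'$) whereas the paper works bottom-up and is considerably shorter. The paper simply twists the $p$-old classical points of $\theta$ using Corollary \ref{etaslope}, observes that the resulting points $x_\eta$ lie in $\T_h'$ because weight and slope are preserved, takes the Zariski closure of this infinite set in the finite $\Lambda_h'$-algebra $\T_h'$ to extract an irreducible component $\J^\circ$, and then reads off the isomorphism $\alpha$ from the fact that $\rho_{\theta'}$ and $\eta\otimes\rho_\theta$ have the same traces on a Zariski-dense set of specializations. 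No universal property is invoked.

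The step you yourself flag as delicate is exactly where your argument is not yet complete. Neither Buzzard's eigenvariety machine nor the comparison results of Section \ref{changeBC} supply a universal property of the form ``a ring map $\calH_2^{M'}\to\widetilde\I$ whose classical specializations come from eigenforms factors through $\T_h'$''; Section \ref{changeBC} compares BC-eigenvarieties for different data, it does not produce maps from arbitrary coefficient rings into an eigenvariety. What would actually justify your factorization is the combination of (a) $\T_h'$ being generated over $\Lambda_h'$ by the image of $\calH_2^{M'}$ and (b) every relation in $\T_h'$ being satisfied in $\widetilde\I$ by reducedness of $\widetilde\I$ and Zariski-density of classical points --- but once you unwind this, you are essentially reproducing the paper's Zariski-closure argument from the other side. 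A second point to watch is that you need the image of $\theta'$ to be a domain in order to speak of an irreducible component; the paper sidesteps this by \emph{choosing} an irreducible component of the closure, rather than asserting that $\widetilde\I$ itself is one. Your explicit description of $\psi'$ on generators and the use of Carayol at the end are fine, but they add work the paper does not need.
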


\begin{proof}
Let $S$ be the set of $p$-old classical points of $\theta$. Let $S^\prime$ be the subset of $S$ consisting of the points with weight in the disc $B(0,r_h^\prime)$. We see $S^\prime$ as a subset of the set of classical points of $\cD_2^{M^\prime}$ via the natural inclusion $\cD_2^M\into\cD_2^{M^\prime}$. Thanks to the conditions on the weight and the slope we can identify $S^\prime$ with a set of classical points of $\T^\prime_h$. Note that $S^\prime$ is infinite.

Let $S^\prime_\eta=\{x_\eta\,\vert\,x\in S^\prime \}$, that is also contained in the set of classical points of $\cD_2^{M^\prime}$. 
For every $x\in S^\prime$ the weight and slope of $x_\eta$ coincide with the weight and slope of $x$. In particular $S^\prime_\eta$ can be identified with an infinite set of classical points of $\T^\prime_h$. Since $\T^\prime_h$ is a finite $\Lambda_h^\prime$-algebra, the Zariski-closure of $S_\eta^\prime$ in $\T_h^\prime$ contains an irreducible component of $\T_h^\prime$. Such a component is a family defined by a finite $\Lambda_h^\prime$-algebra $\J^\circ$ and a morphism $\theta^\prime\colon\T_h^\prime\to\J^\circ$.

Let $\rho_{\theta^\prime}\colon G_\Q\to\GSp_4(\J_\Tr^\circ)$ be the Galois representation associated with $\theta^\prime$. 
Let $S_\eta^{\theta^\prime}$ be the subset of $S_\eta^\prime$ consisting of the points that belong to $\theta^\prime$; it is Zariski-dense in $\J^\circ$ by definition of $\theta^\prime$. Let $S^{\theta^\prime}=\{x\in S^\prime\,\vert\,x_\eta\in S_\eta^{\theta^\prime}\}$. For every $x\in S^{\theta^\prime}$ let $\rho_{\theta,x}$ be the specialization of $\rho_\theta$ at $x$ and let $\rho_{\theta^\prime,x_\eta}$ be the specialization of $\rho_{\theta^\prime}$ at $x_\eta$. By the definition of the correspondence $x\mapsto x_\eta$ we have $\rho_{\theta^\prime,x_\eta}\cong\eta\otimes\rho_{\theta,x}$ over $\Qp$ for every $x\in S^{\theta^\prime}$. Hence the representation $\eta\otimes\rho_{\theta,x}$ coincides with $\iota_h\ccirc\rho_{\theta^\prime}$ on the set $S_\eta^{\theta^\prime}$. Since this set is Zariski-dense in $\J$, there exists an isomorphism $\alpha\colon\I_\Tr^\circ\widehat\otimes_{\Lambda_h}\Lambda_h^\prime\to\J_\Tr^\circ$ such that $\rho_{\theta^\prime}\cong\eta\otimes\alpha\ccirc\rho_\theta$, as desired. 
\end{proof}

\begin{rem}\label{alleta}
With the notation of the proof of Lemma \ref{interptwists}, all points of the set $S_\eta^\prime$ belong to the family $\theta^\prime$, because of the unicity of a point of $\cD_2^{M^\prime}$ given its associated Galois representation and slope.
\end{rem}

By combining Lemma \ref{interptwists} and Proposition \ref{liftdefprop}(3) we obtain the following.

\begin{cor}\label{interpSigma}
There exists a finite $\Lambda_h^\prime$-algebra $\J^\circ$, a family $\theta^\prime\colon\T^\prime_h\to\J^\circ$ and an isomorphism $\alpha\colon\I_\Tr^\circ\widehat\otimes_{\Lambda_h}\Lambda_h^\prime\to\J_\Tr^\circ$ such that the representation $\rho_{\theta^\prime}\colon G_\Q\to\GSp_4(\J_\Tr^\circ)$ associated with $\theta^\prime$ satisfies $\rho_{\theta^\prime}\cong\alpha\ccirc\rho^\Sigma$. 
\end{cor}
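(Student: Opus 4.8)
The plan is to obtain this as an immediate consequence of Lemma \ref{interptwists}, once we record the identification $\rho^\Sigma\cong\eta\otimes\rho_\theta$. First I would spell out what $\rho^\Sigma$ means here: by the universal property of $R_{\ovl\rho}$ we have $\rho_\theta\cong\alpha_I\ccirc\ovl\rho^\univ$, and $\rho^\Sigma$ is the representation obtained by precomposing the classifying map $\alpha_I$ with the automorphism $\Sigma$ of $R_{\ovl\rho}$, that is $\rho^\Sigma=(\alpha_I\ccirc\Sigma)\ccirc\ovl\rho^\univ$, a representation valued in $\GL_4(\I^\circ_\Tr)$. Applying the $\Lambda_h$-algebra map $\alpha_I$ to the isomorphism $\Sigma\ccirc\ovl\rho^\univ\cong\eta\otimes\ovl\rho^\univ$ furnished by Proposition \ref{liftdefprop}(3)(ii), and using that $\alpha_I$ commutes with twisting by the fixed finite-order character $\eta$, yields $\rho^\Sigma\cong\eta\otimes(\alpha_I\ccirc\ovl\rho^\univ)\cong\eta\otimes\rho_\theta$. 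This is exactly the reduction announced in the text immediately preceding the corollary, where it is observed that it suffices to realize $\eta\otimes\rho$ as the Galois representation of a family of $\GSp_4$-eigenforms of slope bounded by $h$.

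Next I would invoke Lemma \ref{interptwists}, which provides a finite $\Lambda_h^\prime$-algebra $\J^\circ$, a family $\theta^\prime\colon\T^\prime_h\to\J^\circ$, and an isomorphism $\alpha\colon\I^\circ_\Tr\widehat\otimes_{\Lambda_h}\Lambda_h^\prime\to\J^\circ_\Tr$ with $\rho_{\theta^\prime}\cong\eta\otimes\alpha\ccirc\rho_\theta$. These are precisely the data asserted in the corollary, so it remains only to match the two descriptions of the representation: base change along the $\Lambda_h$-algebra map underlying $\alpha$ commutes with the twist by the fixed character $\eta$, so $\eta\otimes\alpha\ccirc\rho_\theta\cong\alpha\ccirc(\eta\otimes\rho_\theta)\cong\alpha\ccirc\rho^\Sigma$ by the previous paragraph. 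Chaining the isomorphisms gives $\rho_{\theta^\prime}\cong\alpha\ccirc\rho^\Sigma$, which is the claim, with the same $\J^\circ$, $\theta^\prime$ and $\alpha$.

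The proof is thus essentially a bookkeeping exercise, and the main (minor) obstacle is keeping track of coefficient rings: one must check that $\rho^\Sigma$, a priori a representation over $\I^\circ_\Tr$, and $\rho_{\theta^\prime}$, a representation over $\J^\circ_\Tr$, are being compared over the common ring $\J^\circ_\Tr$ after the evident base change $\widehat\otimes_{\Lambda_h}\Lambda_h^\prime$ (the same abuse of notation already present in the formula $\eta\otimes\alpha\ccirc\rho_\theta$ of Lemma \ref{interptwists}), and that all the isomorphisms of Galois representations above are realized over these integral rings rather than only after inverting $p$, using the absolute irreducibility of $\ovl\rho$ and Theorem \ref{carayol} where descent of a representation through its trace is needed. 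None of this is substantive once the identification $\rho^\Sigma\cong\eta\otimes\rho_\theta$ of the first paragraph is in place.
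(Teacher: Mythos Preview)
Your proposal is correct and follows exactly the paper's approach: the paper states the corollary with the one-line justification ``By combining Lemma \ref{interptwists} and Proposition \ref{liftdefprop}(3) we obtain the following,'' and the text immediately preceding the corollary already records that property (ii) of Proposition \ref{liftdefprop}(3) reduces the problem to realizing $\eta\otimes\rho$ as the Galois representation of a family. Your write-up simply unpacks this in detail, including the bookkeeping on coefficient rings, which is faithful to the intended argument.
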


\subsubsection{Descending to a self-twist of the family}

We show that the automorphism $\Sigma$ of $R_{\ovl{\rho}}$ defined in the previous subsection induces a self-twist for $\rho$. This will prove Proposition \ref{lifttwists}. Our argument is an analogue for $\GSp_4$ of that in the end of the proof of \cite[Theorem 3.1]{lang}; it also appears in similar forms in \cite[Proposition 3.12]{fischman} and \cite[Proposition A.3]{ghadim}. Here the non-criticality of the prime $P_\uk$ plays an important role. 

\begin{proof}(of Proposition \ref{lifttwists}) 
Let $\ovl{\rho}\colon G_\Q\to\GSp_4(\F)$ be the residual representation associated with $\rho$. Let $R_{\ovl{\rho}}$ be the universal deformation ring associated with $\ovl{\rho}$ and let $\ovl{\rho}^\univ$ be the corresponding universal deformation. As before let $\alpha_I\colon R_{\ovl{\rho}}\to\I_\Tr^\circ$ be the unique morphism of $W$-algebras $\alpha_I\colon R_{\ovl{\rho}}\to\I_\Tr^\circ$ satisfying $\rho\cong\alpha_I\ccirc\ovl{\rho}^\univ$. 

Consider the morphism of $W$-algebras $\alpha_I^\Sigma=\alpha_I\ccirc\Sigma\colon R_{\ovl{\rho}}\to\I_\Tr^\circ$. We show that there exists an automorphism $\widetilde{\sigma}\colon\I_\Tr^\circ\to\I_\Tr^\circ$ fitting in the following commutative diagram:
\begin{equation}\label{diagdef}
\begin{tikzcd}
R_{\ovl{\rho}} \arrow{r}{\alpha_I}\arrow{d}{\Sigma}
& \I_\Tr^\circ\arrow{d}{\widetilde\sigma}\\
R_{\ovl{\rho}} \arrow{r}{\alpha_I^\Sigma}
& \I_\Tr^\circ
\end{tikzcd}
\end{equation}
We use the notations of the discussion preceding Lemma \ref{interptwists}. Consider the morphism $\theta\otimes 1\colon\T_h\widehat\otimes_{\Lambda_h}\Lambda_{h}^\prime\to\I^\circ\widehat\otimes_{\Lambda_h}\Lambda_h^{\prime}$, where the completed tensor products are taken via the map $\iota_h\colon\Lambda_h\to\Lambda_h^\prime$. For every $\Lambda_h$-algebra $A$ we denote again by $\iota_h$ the natural map $A\to A\widehat\otimes_{\Lambda_h}\Lambda_h^\prime$. The natural inclusion $\cD_2^M\into\cD_2^{M^\prime}$ induces a surjection $s_h\colon\T_h^\prime\to\T_h\widehat\otimes_{\Lambda_h}\Lambda_{h}^\prime$. We define a family of tame level $\Gamma_1(M^\prime)$ and slope bounded by $h$ by
\[ \theta^{M^\prime}=(\theta\otimes 1)\ccirc s_h\colon\T_h^\prime\to\I^\circ\widehat\otimes_{\Lambda_h}\Lambda_h^\prime. \]
The Galois representation associated with $\theta^{M^\prime}$ is $\rho_{\theta^{M^\prime}}=\iota_h\ccirc\rho\colon G_\Q\to\GSp_4(\I_\Tr^\circ\widehat\otimes_{\Lambda_h}\Lambda_h^\prime)$. Let $\theta^\prime\colon\T_h^\prime\to\J^\circ$ be the family given by Corollary \ref{interpSigma}. We identify $\I_\Tr^\circ\widehat\otimes_{\Lambda_h}\Lambda_h^\prime$ with $\J_\Tr^\circ$ via the isomorphism $\alpha$ given by the same corollary; in particular the Galois representation associated with $\theta^\prime$ is $\rho_{\theta^\prime}=\rho^\Sigma\colon G_\Q\to\GSp_4(\I_\Tr^\circ\widehat\otimes_{\Lambda_h}\Lambda_h^\prime)$. 

Recall that we are working under the assumptions of Proposition \ref{lifttwists}. In particular we are given two primes $\fP_i$ and $\fP_j$ of $\I_\Tr^\circ$, an isomorphism $\sigma\colon\I_\Tr^\circ/\fP_i\to\I_\Tr^\circ/\fP_j$ and a character $\eta_\sigma\colon G_\Q\to(\I_\Tr^\circ/\fP_j)^\times$ such that $\rho_{\fP_i}^{\sigma}\cong\eta_{\sigma}\otimes\rho_{\fP_j}$. Let $\fP_i^\prime$ be the image of $\fP_i$ via the map $\iota_h\colon\I_\Tr^\circ\widehat\otimes_{\Lambda_h}\Lambda_h^\prime$. The specialization of $\rho_{\theta^{M^\prime}}$ at $\fP_i^\prime$ is $\rho_{\fP_i}$. Let $f^\prime$ be the eigenform corresponding to $\fP_i^\prime$. By Remark \ref{alleta} there is a point of the family $\theta^\prime$ corresponding to the twist of $f$ by $\eta$; let $\fP_{i,\eta}^\prime$ be the prime of $\I_\Tr^\circ\widehat\otimes_{\Lambda_h}\Lambda_h^\prime$ defining this point. The specialization of $\rho_{\theta^\prime}$ at $\fP_{i,\eta}^\prime$ is $\eta\otimes\rho_{\fP_i}$, which is isomorphic to $\rho_{\fP_i}^{\sigma}$ by assumption. Let $f_\eta^\prime$ be the eigenform corresponding to the prime $\fP_{i,\eta}^\prime$. The forms $f^\prime$ and $f_\eta^\prime$ have the same slope by Corollary \ref{etaslope} and their associated representations are obtained from one another via Galois conjugation (given by the isomorphism $\sigma$). Hence $f^\prime$ and $f_\eta^\prime$ define the same point of the eigenvariety $\cD_2^{M^\prime}$. Such a point belongs to both the families $\theta^{M^\prime}$ and $\theta^\prime$. Since $\fP_\uk$ is non-critical, $\T_h^\prime$ is étale at every point lying over $P_\uk$, so the families $\theta^{M^\prime}$ and $\theta^\prime$ must coincide. This means that there is an isomorphism
\[ \widetilde\sigma^\prime\colon\I^\circ_\Tr\widehat\otimes_{\Lambda_h}\Lambda_{h}^\prime\to\I_\Tr^\circ\widehat\otimes_{\Lambda_h}\Lambda_h^\prime \]
such that $\rho_{\theta^\prime}=\widetilde\sigma^\prime\ccirc\rho^{M^\prime}$. The isomorphism $\widetilde\sigma^\prime$ induces by restriction an isomorphism $\Lambda_h^\prime[\Tr(\rho^{M^\prime})]\to\Lambda_h^\prime[\Tr(\rho_{\theta^\prime})]$. Note that $\Lambda_h^\prime[\Tr(\rho^{M^\prime})]=\iota_h(\I_\Tr^\circ)$ and 
\begin{gather*} 
\Lambda_h^\prime[\Tr(\rho_{\theta^\prime})]=\Lambda_h^\prime[\Tr(\widetilde\sigma^\prime\ccirc\rho^{M^\prime})]=\widetilde\sigma^\prime(\Lambda_h^\prime[\Tr(\rho^{M^\prime})])=\widetilde\sigma^\prime(\Lambda_h^\prime[\Tr(\iota_h\ccirc\rho)])=\widetilde\sigma^\prime(\iota_h(\Lambda_h[\Tr\rho]))=\widetilde\sigma^\prime(\iota_h(\I_\Tr^\circ)). 
\end{gather*}
In particular $\widetilde\sigma^\prime$ induces by restriction an isomorphism $\iota_h(\I_\Tr^\circ)\to\iota_h(\I_\Tr^\circ)$. Since $\iota_h$ is injective we can identify $\widetilde\sigma^\prime$ with an isomorphism $\widetilde\sigma\colon\I_\Tr^\circ\to\I_\Tr^\circ$. By construction $\widetilde\sigma$ fits in diagram \eqref{diagdef}.
\end{proof}

\subsection{Rings of self-twists for representations attached to classical eigenforms}\label{stclass}

Let $f$ be a classical $\GSp_4$-eigenform and $\rho_{f,p}\colon G_\Q\to\GSp_4(\Qp)$ the $p$-adic Galois representation associated with $f$. Up to replacing $\rho_{f,p}$ with a conjugate we can suppose that it has coefficients in the ring of integers $\cO_K$ of a $p$-adic field $K$. Suppose that $f$ satisfies the hypotheses of Theorem \ref{classbigim}, i.e. $\ovl{\rho}_{f,p}$ is of $\Sym^3$ type but $f$ is not the symmetric cube lift of a $\GL_2$-eigenform. Let $\Gamma_f$ be the group of self-twists for $\rho$ over $\Z_p$ and let $\cO_K^{\Gamma_f}$ be the subring of elements of $\cO_K$ fixed by $\Gamma_f$. As in in Section \ref{resbigim} we define another subring of $\cO_K$ by $\cO_E=\Z_p[\Tr(\Ad\rho)]$. 
We prove that the two subrings of $\cO_K$ we defined are actually the same.

\begin{prop}\label{sttraces}
There is an equality $\cO_K^{\Gamma_f}=\cO_E$.
\end{prop}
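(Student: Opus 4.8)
The plan is to establish the two inclusions of the asserted equality separately. The inclusion $\cO_E\subseteq\cO_K^{\Gamma_f}$ is immediate: by Proposition \ref{basicst}(5), applied with $S=\Z_p$, every element of $\Z_p[\Tr\Ad\rho]=\cO_E$ is fixed by every self-twist of $\rho$. For the reverse inclusion it is enough to prove that $\Gal(K/E)\subseteq\Gamma_f$ inside $\Aut(\cO_K/\Z_p)$ (we may and do assume that $K/\Q_p$ is Galois, which changes neither $E$ nor the substance of the statement): once this is known, $K^{\Gamma_f}\subseteq K^{\Gal(K/E)}=E$, so $\cO_K^{\Gamma_f}=\cO_K\cap K^{\Gamma_f}\subseteq\cO_K\cap E=\cO_E$. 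Thus the whole content is the claim that every $\sigma\in\Gal(K/E)$ is a self-twist for $\rho$ over $\Z_p$ in the sense of Definition \ref{selftwist}.

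So fix $\sigma\in\Gal(K/E)$. First I record the big image input: since by hypothesis $f$ satisfies the assumptions of Theorem \ref{classbigim}, the image of $\rho=\rho_{F,p}$ contains a principal congruence subgroup $\Gamma_0$ of $\Sp_4(\cO_E)$, which is Zariski-dense in $\Sp_4$; in particular $\rho$ is absolutely irreducible. Because we are in characteristic $0$, the adjoint representation $\Ad\colon\Sp_4\to\GL(\fsp_4)$ is irreducible, hence its restriction to $\Gamma_0$, and a fortiori the $10$-dimensional representation $g\mapsto\Ad(\rho(g))$ of $G_\Q$, is absolutely irreducible and in particular semisimple. Since $\sigma$ fixes $\cO_E$ it fixes $\Gamma_0$ pointwise, so $\rho^\sigma$ has the same big image property and $g\mapsto\Ad(\rho^\sigma(g))$ is likewise semisimple. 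Finally $\Tr(\Ad(\rho^\sigma(g)))=\sigma(\Tr(\Ad(\rho(g))))=\Tr(\Ad(\rho(g)))$ for all $g$, because $\Tr(\Ad(\rho(g)))\in E$ and $\sigma|_E=\id$. Two semisimple representations with equal trace are isomorphic, so $\Ad\rho^\sigma\cong\Ad\rho$.

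It then remains to descend this to an isomorphism $\rho^\sigma\cong\eta_\sigma\otimes\rho$ for a suitable character $\eta_\sigma$. Viewing $\Ad(\rho(g))$ and $\Ad(\rho^\sigma(g))$ inside the image of the faithful representation $\Ad\colon\PGSp_4\into\GL(\fsp_4)$, an intertwiner between $\Ad\rho^\sigma$ and $\Ad\rho$ conjugates the Zariski-closure $\Ad(\PGSp_4)$ of both images to itself; since $\PGSp_4$ is its own adjoint group, has no outer automorphisms, and acts irreducibly on $\fsp_4$, that normalizer is $\Ad(\PGSp_4)$ times scalars, so the intertwiner may be taken of the form $\Ad(m)$ with $m\in\PGSp_4$. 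Hence the projective representations $\ovl\rho^\sigma$ and $\ovl\rho$ are conjugate by $m$; lifting $m$ to $\GSp_4$ produces $g_0\in\GSp_4(\Qp)$ and a character $\eta_\sigma\colon G_\Q\to\Qp^\times$ with $g_0\rho^\sigma(g)g_0^{-1}=\eta_\sigma(g)\rho(g)$ for all $g$. As $\rho$ is absolutely irreducible and $\rho^\sigma,\rho$ are defined over $\cO_K$, one can take $g_0\in\GSp_4(K)$ and $\eta_\sigma$ valued in $K^\times$. Moreover $\eta_\sigma$ has finite order: comparing $4$-dimensional determinants gives $\eta_\sigma^4=\det(\rho^\sigma)\cdot(\det\rho)^{-1}$, and by Remark \ref{detformula} $\det\rho$ is the product of the finite order central character $\varepsilon_f$ with an integral power of the cyclotomic character, which $\sigma$ fixes since it is $\Z_p$-valued; therefore $\eta_\sigma^4=(\sigma\circ\varepsilon_f)\cdot\varepsilon_f^{-1}$ is of finite order, hence so is $\eta_\sigma$, and in particular $\eta_\sigma$ is valued in $\cO_K^\times$. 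Replacing $\rho^\sigma$ by its conjugate by $g_0$ we obtain $\rho^\sigma\cong\eta_\sigma\otimes\rho$, i.e. $\sigma\in\Gamma_f$. This proves $\Gal(K/E)\subseteq\Gamma_f$ and hence the proposition.

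I expect the main obstacle to be the de-adjoint step of the previous paragraph: passing from an abstract isomorphism of the $10$-dimensional adjoint representations to a genuine twist-isomorphism of the $4$-dimensional representations themselves. This is exactly where Theorem \ref{classbigim} is indispensable, on two counts: it forces $\Ad\rho$ to be irreducible, so that equality of adjoint traces already yields $\Ad\rho^\sigma\cong\Ad\rho$; and it makes the projective images of $\ovl\rho$ and $\ovl\rho^\sigma$ Zariski-dense in $\PGSp_4$, so that the projective isomorphism can be realized by conjugation inside $\GSp_4$ rather than merely inside $\GL(\fsp_4)$. One should also keep in mind that the two hypotheses of Theorem \ref{classbigim} ($\Z_p$-regularity of $\rho_{F,p}$, and $F$ not being a symmetric cube lift of a $\GL_2$-eigenform) are precisely what is being assumed here, and it is their combination that drives the whole argument; a secondary point requiring a little care is the reduction to the case where $K/\Q_p$ is Galois, needed so that the fixed subring $\cO_K^{\Gamma_f}$ really is the ring of integers of the fixed field $K^{\Gamma_f}$.
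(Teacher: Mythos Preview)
Your proof is correct and follows the same overall architecture as the paper's: both inclusions, the easy one via Proposition~\ref{basicst}(5), and the hard one by showing that every automorphism $\sigma$ of $\cO_K$ fixing $\cO_E$ is a self-twist, which in turn reduces to passing from $\Ad\rho^\sigma\cong\Ad\rho$ down to $\rho^\sigma\cong\eta_\sigma\otimes\rho$.

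The genuine difference lies in how you perform this ``de-adjoint'' step. The paper restricts attention to the subgroup $\Delta=(\pi_{\Ad}\im\rho)\cap\PGSp_4(\cO_E)$, which contains a projective congruence subgroup by Theorem~\ref{classbigim}, and then invokes O'Meara's classification (Theorem~\ref{projcongisomcor}) of abstract isomorphisms between such groups to conclude that the intertwiner $\phi\in\GL(\fsp_4(K))$ agrees with $\Ad(\gamma)$ for some $\gamma\in\GSp_4(K)$ on a large enough set. Your argument instead takes the Zariski closure of $\im(\Ad\rho)$, observes that it is all of $\Ad(\PGSp_4)$ by big image, deduces that $\phi$ normalizes $\Ad(\PGSp_4)$ inside $\GL(\fsp_4)$, and then uses that $\PGSp_4$ has trivial outer automorphism group together with Schur's lemma to write $\phi$ as a scalar times $\Ad(m)$. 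This is cleaner and avoids the appeal to O'Meara's theorem entirely; the paper's route, by contrast, stays closer to the concrete congruence-subgroup structure and would generalize more readily to situations where the Zariski closure is not the full group. Your treatment of the finite order of $\eta_\sigma$ via the determinant is also a bit more explicit than the paper's. The reduction to $K/\Q_p$ Galois, which you flag, is implicit in the paper's phrase ``since $\cO_K^{\Gamma_f}$ and $\cO_E$ are normal, it is sufficient\ldots''; making it explicit as you do is a good idea.
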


Before proving the proposition we recall a theorem of O'Meara about isomorphisms of congruence subgroups. 
Here $g$ is a positive integer, $F$, $F_1$ are two $p$-adic fields and $\fa$, $\fa_1$ are two non-trivial ideals in the rings of integers of $F$ and $F_1$, respectively.  
Let $V=F^{2g}$ and $V_1=F_1^{2g}$, both equipped with the bilinear alternating form defined by the matrix $J_g$ defined in the introduction. 
Let $\sigma\colon F\to F_1$ be an isomorphism. 
We say that a map $S$ of $V$ into $V_1$ is $\sigma$-semilinear if it is additive and satisfies $S(\lambda v)=\sigma(\lambda)S(v)$ for every $v\in V$ and $\lambda\in F$. 
Let $\PSp_{2g}$ and $\PGSp_{2g}$ be the projective symplectic groups of genus $g$.
%

\begin{rem}\label{symplsemilin}
Let $\sigma\colon F\to F_1$ be an isomorphism. 
Denote by $x\mapsto x^\sigma$ the isomorphism $\GSp_{2g}(F)\to\GSp_{2g}(F_1)$ obtained by applying $\sigma$ to the matrix coefficients. For every bijective, symplectic, $\sigma$-semilinear map $S\colon V\to V_1$ there exists $\gamma\in\GSp_{2g}(F_1)$ such that $SxS^{-1}=\gamma x^\sigma\gamma^{-1}$ for every $x\in\GSp_4(F)$. 
\end{rem}

By combining Remark \ref{symplsemilin} and \cite[Theorem 5.6.4]{omeara}, with the choices we made in the discussion above, we obtain the following.

\begin{thm}\label{projcongisomcor}
Let $\Delta$ and $\Delta_1$ be subgroups of $\PGSp_{2g}(F)$ and $\PGSp_{2g}(F_1)$, respectively, satisfying $\Gamma_{\PSp_{2g}(F)}(\fa)\subset\Delta$ and $\Gamma_{\PSp_{2g}(F_1)}(\fa)\subset\Delta_1$. 
Let $\Theta\colon\Delta\to\Delta_1$ be an isomorphism of groups. Then there exists an automorphism $\sigma$ of $F$ and an element $\gamma\in\PGSp_{2g}(F)$ satisfying
\[ \Theta x=\gamma x^\sigma\gamma^{-1} \]
for every $x\in\Delta$.
\end{thm}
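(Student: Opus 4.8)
The plan is to deduce Theorem \ref{projcongisomcor} by feeding the group isomorphism $\Theta$ into O'Meara's classification of isomorphisms between congruence-type subgroups of projective symplectic groups, \cite[Theorem 5.6.4]{omeara}, and then rewriting the semilinear datum it produces in matrix form by means of Remark \ref{symplsemilin}.

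\textbf{First step.} I would apply \cite[Theorem 5.6.4]{omeara} to $\Theta\colon\Delta\to\Delta_1$. Its hypotheses are met: $\Delta$ and $\Delta_1$ are sandwiched between the principal congruence subgroups $\Gamma_{\PSp_{2g}(F)}(\fa)$, $\Gamma_{\PSp_{2g}(F_1)}(\fa_1)$ and the full projective similitude groups; $F$ and $F_1$ are $p$-adic fields, hence infinite of characteristic $0$, so none of the small-field exceptions arise; and the symplectic dimension $2g\geq 4$ in the cases we use. The theorem then yields a field isomorphism $\sigma\colon F\to F_1$ together with a bijective $\sigma$-semilinear map $S\colon V\to V_1$ which is symplectic (i.e. $\langle Sv,Sw\rangle_1=\mu\,\sigma(\langle v,w\rangle)$ for a fixed $\mu\in F_1^\times$), such that $\Theta$ is the conjugation induced by $S$: for $x\in\Delta$ and any lift $\widetilde x\in\GSp_{2g}(F)$ one has $\Theta(x)=\ovl{S\widetilde x S^{-1}}$ in $\PGSp_{2g}(F_1)$. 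I would note here that no contravariant map occurs, since the alternating form identifies $V$ with its dual, and that any central or ``radial'' indeterminacy allowed by O'Meara's statement dies on passing to the projective group (whose center is trivial), which is why the normal form is a single covariant semilinear similitude.

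\textbf{Second step.} Next I would apply Remark \ref{symplsemilin} to the bijective symplectic $\sigma$-semilinear map $S$, obtaining $\gamma_0\in\GSp_{2g}(F_1)$ with $S\widetilde x S^{-1}=\gamma_0\,\widetilde x^{\,\sigma}\,\gamma_0^{-1}$ for every $\widetilde x\in\GSp_{2g}(F)$. Let $\gamma$ be the image of $\gamma_0$ in $\PGSp_{2g}(F_1)$. Combining with the first step, for every $x\in\Delta$ with lift $\widetilde x$ I would get $\Theta(x)=\ovl{S\widetilde x S^{-1}}=\ovl{\gamma_0\,\widetilde x^{\,\sigma}\,\gamma_0^{-1}}=\gamma\,x^{\sigma}\,\gamma^{-1}$, where $x^{\sigma}\in\PGSp_{2g}(F_1)$ is the class of $\widetilde x^{\,\sigma}$, well defined because $\sigma$ carries scalars to scalars. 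Identifying $F$ with $F_1$ through $\sigma$ (as in the application, where $F=F_1$), $\gamma$ lies in $\PGSp_{2g}(F)$ and $\sigma$ becomes an automorphism of $F$, which is the asserted conclusion.

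\textbf{Main obstacle.} There is essentially no computation here; the content is entirely in quoting \cite[Theorem 5.6.4]{omeara} correctly. The delicate points will be (a) checking that $\Delta,\Delta_1$ and the fields $F,F_1$ lie within its range of validity (between a congruence subgroup and a projective similitude group, non-degenerate symplectic dimension, no exceptional small fields), and (b) extracting from its conclusion exactly the normal form used above, namely that in the projective group $\Theta$ is induced by a single covariant semilinear symplectic similitude, with the scalar ambiguity absorbed and the lift from $\PGSp_{2g}$ to $\GSp_{2g}$ handled cleanly. Once this bookkeeping is done, the passage through Remark \ref{symplsemilin} is purely formal.
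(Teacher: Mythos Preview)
Your proposal is correct and follows exactly the paper's approach: the paper states that Theorem \ref{projcongisomcor} is obtained ``by combining Remark \ref{symplsemilin} and \cite[Theorem 5.6.4]{omeara}, with the choices we made in the discussion above,'' which is precisely your two-step argument of first invoking O'Meara's classification to produce the semilinear symplectic map $S$ and then using Remark \ref{symplsemilin} to rewrite conjugation by $S$ in the form $x\mapsto\gamma x^\sigma\gamma^{-1}$.
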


From Theorem \ref{projcongisomcor} we deduce a result on isomorphisms of congruence subgroups of $\GSp_{2g}(F)$. 

\begin{cor}\label{congisom}\cite[Theorem 5.6.5]{omeara}
Let $\Delta$ and $\Delta_1$ be two subgroups of $\GSp_{2g}(F)$ satisfying $\Gamma_{F}(\fa)\subset\Delta$ and $\Gamma_{F_1}(\fa)\subset\Delta_1$. 
Let $\Theta\colon\Delta\to\Delta_1$ be an isomorphism of groups. Then there exists an automorphism $\sigma$ of $F$, a character $\chi\colon\Delta\to F^\times$ and an element $\gamma\in\GSp_{2g}(F)$ satisfying
\[ \Theta x=\chi(x)\gamma x^\sigma\gamma^{-1} \]
for every $x\in\Delta$.
\end{cor}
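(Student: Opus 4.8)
The plan is to reduce the statement to its projective analogue, Theorem \ref{projcongisomcor}, by passing to the quotients of $\Delta$ and $\Delta_1$ by their centers, and then lifting the resulting conjugation back to $\GSp_{2g}$.

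The first step is to identify the center of $\Delta$ as an abstract group. Since $\Gamma_F(\fa)\subset\Delta$ contains the principal congruence subgroup $\Gamma_{\Sp_{2g}(\cO_F)}(\fa)$, which is Zariski-dense in $\Sp_{2g}$, and the standard representation of $\Sp_{2g}$ is absolutely irreducible, Schur's lemma shows that any element of $\GL_{2g}(F)$ centralizing $\Gamma_F(\fa)$ is a scalar matrix; hence $Z(\Delta)=\Delta\cap F^\times\1_{2g}$, and likewise $Z(\Delta_1)=\Delta_1\cap F_1^\times\1_{2g}$. Because $\Theta$ is an isomorphism of abstract groups it maps $Z(\Delta)$ isomorphically onto $Z(\Delta_1)$, so it descends to an isomorphism $\ovl\Theta\colon\ovl\Delta\to\ovl\Delta_1$, where $\ovl\Delta\subset\PGSp_{2g}(F)$ and $\ovl\Delta_1\subset\PGSp_{2g}(F_1)$ denote the images of $\Delta$ and $\Delta_1$.

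Next I would check the hypotheses of Theorem \ref{projcongisomcor}: the image of $\Gamma_F(\fa)$ in $\PGSp_{2g}(F)$ contains $\Gamma_{\PSp_{2g}(F)}(\fa)$ (the principal congruence subgroup of $\Sp_{2g}(\cO_F)$ surjects onto it), so $\Gamma_{\PSp_{2g}(F)}(\fa)\subset\ovl\Delta$, and similarly $\Gamma_{\PSp_{2g}(F_1)}(\fa)\subset\ovl\Delta_1$. Applying that theorem to $\ovl\Theta$ produces a field isomorphism $\sigma\colon F\to F_1$ (recorded as an automorphism of $F$ after the usual identification) and an element $\ovl\gamma\in\PGSp_{2g}(F_1)$ with $\ovl\Theta(\ovl x)=\ovl\gamma\,\ovl x^{\sigma}\ovl\gamma^{-1}$ for every $\ovl x\in\ovl\Delta$. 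Choosing a lift $\gamma\in\GSp_{2g}(F_1)$ of $\ovl\gamma$, for each $x\in\Delta$ the elements $\Theta(x)$ and $\gamma x^{\sigma}\gamma^{-1}$ of $\GSp_{2g}(F_1)$ have the same image in $\PGSp_{2g}(F_1)$, hence differ by a scalar $\chi(x)\in F_1^\times$; thus $\Theta(x)=\chi(x)\gamma x^{\sigma}\gamma^{-1}$. A one-line computation using that $\Theta$ and $x\mapsto\gamma x^{\sigma}\gamma^{-1}$ are homomorphisms and that scalars are central shows that $\chi$ is a character, which is the claimed formula.

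The only step requiring real care is the identification $Z(\Delta)=\Delta\cap F^\times\1_{2g}$, which is exactly where the hypothesis $\Gamma_F(\fa)\subset\Delta$ enters, through the Zariski-density of congruence subgroups in $\Sp_{2g}$ and the irreducibility of the standard representation. Once this is in place the passage to projective groups and back is purely formal, and the substantive input is Theorem \ref{projcongisomcor} (itself a consequence of O'Meara's work together with Remark \ref{symplsemilin}).
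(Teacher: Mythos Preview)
Your argument is correct and is exactly the deduction the paper indicates: the paper does not give a detailed proof of this corollary but simply states that it follows from the projective version (Theorem \ref{projcongisomcor}), citing \cite[Theorem 5.6.5]{omeara}. Your passage to $\PGSp_{2g}$ via the identification $Z(\Delta)=\Delta\cap F^\times\1_{2g}$, application of Theorem \ref{projcongisomcor}, and lift of the resulting conjugation with the scalar discrepancy defining $\chi$ is precisely the intended (and standard) route.
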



Before proving Proposition \ref{sttraces} we fix some notations. Let $\End(\fsp_4(K))$ be the $K$-vector space of $K$-linear maps $\fsp_4(K)\to\fsp_4(K)$ and let $\GL(\fsp_4(K))$ be the subgroup consisting of the bijective ones. Let $\Aut(\fgsp_4(K))$ be the subgroup of $\GL(\fsp_4(K))$ consisting of the Lie algebra automorphisms of $\fsp_4(K)$. Let $\pi_\Ad$ be the natural projection $\GSp_4(\cO_K)\to\PGSp_4(\cO_K)$ and let $\Ad\colon\PGSp_4(K)\into\GL(\fsp_4(K))$ be the injective group morphism given by the adjoint representation. 
Since $\fsp_4$ admits no outer automorphisms, $\Ad$ induces an isomorphism of $\PGSp_4(K)$ onto $\Aut(\fsp_4(K))$. For simplicity we write $\rho=\rho_{f,p}$ in the following proof (but recall that in the other sections $\rho$ is the Galois representation attached to a family).

\begin{proof}(of Proposition \ref{sttraces})
The inclusion $\cO_E\subset\cO_K^{\Gamma_f}$ follows from Proposition \ref{basicst}(5). 
We prove that $\cO_K^{\Gamma_f}\subset\cO_E$.
%
%
Since $\cO_K^{\Gamma_f}$ and $\cO_E$ are normal, 
it is sufficient to show that an automorphism of $\cO_K$ over $\cO_E$ leaves $\cO_K^{\Gamma_f}$ fixed. Consider such an automorphism $\sigma$. Since $\cO_E$ is fixed by $\sigma$ we have $(\Tr(\Ad\rho)(g))^\sigma=\Tr(\Ad\rho(g))$ for every $g\in G_\Q$, hence $\Tr(\Ad\rho^\sigma(g))=\Tr(\Ad\rho(g))$. The equality of traces induces an isomorphism $\Ad\rho^\sigma\cong\Ad\rho$ of representations of $G_\Q$ with values in $\GL(\fsp_4)$.
This means that there exists $\phi\in\GL(\fsp_4(K))$ satisfying 
\begin{equation}\label{lieaut} \Ad\rho^\sigma=\phi\ccirc\Ad\rho\ccirc\phi^{-1}. \end{equation}
We show that $\phi$ is actually an inner automorphism of $\fsp_4(K)$.

Clearly $\Ad$ induces an isomorphism $\pi_\Ad(\im\rho)\cong\im\Ad\rho$. 
For every $x\in\GL(\fsp_4(K))$ we denote by $\Theta_x$ the automorphism of $\GL(\fsp_4(K))$ given by conjugation by $x$. 
In particular we write Equation \eqref{lieaut} as $\Ad\rho^\sigma=\Theta_\phi(\Ad\rho)$.
By combining Theorems \ref{classbigim} and \ref{projcongisomcor} we show that we can replace $\phi$ by an element $\phi^\prime\in\Aut(\fsp_4(K))$ still satisfying $\Ad\rho^\sigma=\Theta_{\phi^\prime}(\Ad\rho(\phi^\prime))$. 

We identify $\PGSp_4(\cO_E)$ with a subgroup of $\PGSp_4(\cO_K^{\Gamma_f})$ via the inclusion $\cO_E\subset\cO_K^{\Gamma_f}$ given in the beginning of the proof. Consider the group $\Delta=(\pi_\Ad\im\rho)\cap\PGSp_4(\cO_E)\subset\PGSp_4(\cO_K)$ and its isomorphic image $\Ad(\Delta)\subset\GL(\fsp_4)$. 
Since $f$ satisfies the hypotheses of Theorem \ref{classbigim}, $\im\rho$ contains a congruence subgroup $\Gamma_{\cO_E}(\fa)$ of $\GSp_4(\cO_E)$ of some level $\fa\subset\cO_E$. It follows that $\pi_\Ad\im\rho$ contains the projective congruence subgroup $\PGamma_{\cO_E}(\fa)$ of $\PGSp_4(\cO_E)$, so $\Delta$ also contains $\PGamma_{\cO_E}(\fa)$. 
In particular $\Delta$ satisfies the hypotheses of Theorem \ref{projcongisomcor}. 
Since $\Ad\rho^\sigma=\Theta_\phi(\Ad\rho)$ we have an equality $(\Ad(\Delta))^\sigma=\Theta_\phi(\Ad(\Delta))$, where we identify both sides with subgroups of $\PGSp_4(\cO_E)$. 
Now $\sigma$ acts as the identity on $\PGSp_4(\cO_E)$, so the previous equality reduces to $\Ad(\Delta)=\Theta_\phi(\Ad(\Delta))$. 
Let $\Theta=\Ad\!{}^{-1}\ccirc\Theta_\phi\ccirc\Ad\!\colon\Delta\to\Delta$.
Since $\Ad$ is an isomorphism, the composition $\Theta$ is an automorphism. Moreover it satisfies 
\begin{equation}\label{thetaphi} \Theta_\phi(\Ad(\delta))=\Ad(\Theta(\delta)) \end{equation}
for every $\delta\in\Delta$. By Theorem \ref{projcongisomcor} applied to $F=F_1=K$, $\Delta_1=\Delta$ and $\Theta\colon\Delta\to\Delta$, there exists an automorphism $\tau$ of $K$ and an element $\gamma\in\GSp_4(K)$ such that $\Theta(\delta)=\gamma\delta^\tau\gamma^{-1}$ for every $\delta\in\Delta$. We see from Equation \eqref{thetaphi} that $\tau$ is trivial. It follows that $\Theta_\phi(y)=\Ad(\gamma)\ccirc y\ccirc\Ad(\gamma)^{-1}$ for all $y\in\Ad(\Delta)$. By $K$-linearity we can extend $\Theta_\phi$ and $\Theta_{\Ad(\gamma)}$ to identical automorphisms of the $K$-span of $\Ad(\Delta)$ in $\End(\fsp_4(K))$. Since $\Delta$ contains the projective congruence subgroup $\PGamma_{\cO_E}(\fa)$, its $K$-span contains $\Ad(\GSp_4(K))$; in particular it contains the image of $\Ad\rho$. Hence $\Theta_\phi$ and $\Theta_{\Ad(\gamma)}$ agree on $\Ad\rho$, which means that Equation \eqref{lieaut} implies $\Ad\rho^\sigma=\Theta_{\Ad(\gamma)}(\Ad\rho)$. 
Then by definition of $\Theta_{\Ad(\gamma)}$ we have 
$\Ad\rho^\sigma=\Ad(\gamma)\ccirc\Ad\rho\ccirc(\Ad(\gamma))^{-1}=\Ad(\gamma\rho\gamma^{-1})$. 
We deduce that there exists a character $\eta_\sigma\colon G_\Q\to\cO_K^\times$ satisfying $\rho^\sigma(g)=\eta_\sigma(g)\gamma\rho(g)\gamma^{-1}$ for every $g\in G_\Q$, hence that $\rho^\sigma\cong\eta_\sigma\otimes\rho$. We conclude that $\sigma$ is a self-twist for $\rho$. In particular $\sigma$ acts as the identity on $\cO_K^{\Gamma_f}$, as desired.
\end{proof}

\begin{rem}
Let $\rho\colon G_\Q\to\GSp_4(\I_\Tr^\circ)$ be the big Galois representation associated with a family $\theta\colon\T_h\to\I^\circ$. We can define a ring $\Lambda_h[\Tr(\Ad\rho)]$ analogous to the ring $\cO_E$ defined above. We have an inclusion $\Lambda_h[\Tr(\Ad\rho)]\subset\I_0^\circ$ given by Proposition \ref{basicst}(5). However the proof of the inclusion $\cO_K^{\Gamma_f}\subset\cO_E$ in Proposition \ref{sttraces} relied on the fact that $\im\rho_{f,p}$ contains a congruence subgroup of $\GSp_4(\cO_E)$. Since we do not know if an analogue for $\rho$ is true, we do not know whether an equality between the normalizations of $\Lambda_h[\Tr(\Ad\rho)]$ and $\I_0^\circ$ holds.
\end{rem}

Suppose that the $\GSp_4$-eigenform $f$ appears in a finite slope family $\theta\colon\T_h\to\I^\circ$. Let $\fP$ be the prime of $\I_\Tr^\circ$ associated with $f$ and suppose that $\fP\cap\Lambda_h$ is a non-critical arithmetic prime $P_\uk$. Let $\fP_0=\fP\cap\I_0^\circ$. 
We use Proposition \ref{lifttwists} to compare $\cO_K^{\Gamma_f}$ and the residue ring of $\I_0^\circ$ at $\fP_0$, as in \cite[Proposition 6.2]{lang}. 

\begin{prop}\label{stringseq}
There is an inclusion $\I_0^\circ/\fP_0\subset\cO_K^{\Gamma_f}$.
\end{prop}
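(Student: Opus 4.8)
I would follow the end of the proof of \cite[Theorem 3.1]{lang} (and its reformulation \cite[Proposition 6.2]{lang}), the essential input being the surjectivity statement of Corollary \ref{surjst}.

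First I would reduce the statement to one about $\rho_\fP$, the reduction of $\rho$ modulo $\fP$. Since $\fP_0=\fP\cap\I_0^\circ$, reduction modulo $\fP$ induces an injection $\I_0^\circ/\fP_0\hookrightarrow\I_\Tr^\circ/\fP$; moreover $\fP$ is the kernel of $\ev_\fP\colon\I_\Tr^\circ\to\cO_K$, so this realizes $\I_0^\circ/\fP_0$ as a subring of $\cO_K$. Recall that $\Gamma_\fP$ denotes the group of self-twists of $\rho_\fP\colon G_\Q\to\GSp_4(\I_\Tr^\circ/\fP)$ over $\Z_p$ and that $\Gamma(\fP)=\{\sigma\in\Gamma\,\vert\,\sigma(\fP)=\fP\}$. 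Using Remark \ref{detformula} (the determinant of $\rho$ is already $\Lambda_{2,h}$-valued) and the fact that $p>3$ (so that the universal polynomials expressing $\Tr\Ad$ in terms of traces and the determinant have $\Z_p$-coefficients), the subring $\cO_E=\Z_p[\Tr\Ad\rho_{f,p}]$ lies inside $\I_\Tr^\circ/\fP$, and by Proposition \ref{sttraces} we have $\cO_K^{\Gamma_f}=\cO_E$. Hence it suffices to prove $\I_0^\circ/\fP_0\subseteq(\I_\Tr^\circ/\fP)^{\Gamma_\fP}$ and then to observe that $(\I_\Tr^\circ/\fP)^{\Gamma_\fP}$, viewed inside $\cO_K$, is contained in $\cO_K^{\Gamma_f}$.

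The main step is the first inclusion. Fix $\bar\sigma\in\Gamma_\fP$. By Corollary \ref{surjst} the reduction morphism $\Gamma(\fP)\to\Gamma_\fP$, $\widetilde\sigma\mapsto\widetilde\sigma_\fP$, is surjective, so choose $\widetilde\sigma\in\Gamma(\fP)$ with $\widetilde\sigma_\fP=\bar\sigma$. Since $\widetilde\sigma\in\Gamma$ and $\I_0^\circ=(\I_\Tr^\circ)^\Gamma$, the automorphism $\widetilde\sigma$ fixes $\I_0^\circ$ pointwise; since $\widetilde\sigma(\fP)=\fP$, it descends to $\bar\sigma$ on $\I_\Tr^\circ/\fP$, which therefore fixes the image of $\I_0^\circ$, namely $\I_0^\circ/\fP_0$, pointwise. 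Letting $\bar\sigma$ range over $\Gamma_\fP$ gives $\I_0^\circ/\fP_0\subseteq(\I_\Tr^\circ/\fP)^{\Gamma_\fP}$.

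It remains to relate the two fixed rings. Since $\rho_\fP$ and $\rho_{f,p}$ have equal traces by Proposition \ref{biggalthm}(3), Theorem \ref{carayol} shows they are conjugate over $\cO_K$; conjugation then carries each self-twist of $\rho_\fP$ over $\Z_p$ (an automorphism of $\I_\Tr^\circ/\fP$) to a self-twist of $\rho_{f,p}$ over $\Z_p$ (an automorphism of $\cO_K$), so an element of $\I_\Tr^\circ/\fP$ fixed by all of $\Gamma_\fP$ is, inside $\cO_K$, fixed by the corresponding elements of $\Gamma_f$. Combined with Proposition \ref{sttraces} and the big-image property of $\rho_{f,p}$ supplied by Theorem \ref{classbigim} (which is exactly what lets one run the argument of Proposition \ref{sttraces} directly for $\rho_\fP$), this yields $(\I_\Tr^\circ/\fP)^{\Gamma_\fP}=\cO_E=\cO_K^{\Gamma_f}$, and hence $\I_0^\circ/\fP_0\subseteq\cO_K^{\Gamma_f}$, as desired. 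I expect the only delicate point to be this last identification of the self-twist group of $\rho$ at $\fP$ with the self-twist group of the member $\rho_{f,p}$ -- concretely, checking that the trace subring $\I_\Tr^\circ/\fP$ is stable under the self-twists of $\rho_{f,p}$ over $\Z_p$; as in Lang's work this is precisely what Corollary \ref{surjst}, and behind it Proposition \ref{lifttwists} on lifting self-twists from non-critical classical points to the family, is designed to handle, so the substantive content is already in place.
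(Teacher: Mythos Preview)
Your argument is essentially the paper's: the core step---lifting a self-twist at $\fP$ to $\Gamma(\fP)\subset\Gamma$ via Corollary~\ref{surjst}, using that lifts fix $\I_0^\circ$, and reducing modulo $\fP$---is identical to what the paper does, and this already yields $\I_0^\circ/\fP_0\subset(\I_\Tr^\circ/\fP)^{\Gamma_\fP}$.

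The only difference is in bookkeeping. The paper simply writes ``let $\sigma\in\Gamma_f$'' and then applies Corollary~\ref{surjst} (whose target is $\Gamma_\fP$), tacitly identifying $\Gamma_f$ with $\Gamma_\fP$; it concludes directly with $\I_0^\circ/\fP_0\subset(\I_\Tr^\circ/\fP)^{\Gamma_f}\subset\cO_K^{\Gamma_f}$. You instead keep $\Gamma_\fP$ and $\Gamma_f$ distinct and bridge them by passing through $\cO_E$ and rerunning the argument of Proposition~\ref{sttraces} for $\rho_\fP$. This is more scrupulous but also more elaborate than necessary: the paper's identification is harmless once one takes $K=Q(\I_\Tr^\circ/\fP)$, so that automorphisms of $\cO_K$ and of $\I_\Tr^\circ/\fP$ correspond. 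Your extra paragraph invoking Theorem~\ref{classbigim} and Proposition~\ref{sttraces} is therefore sound but not needed for the inclusion; all the substantive content is already in the lifting step, as you correctly identify.
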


\begin{proof}
Let $\sigma\in\Gamma_f$ and let $\eta_\sigma\colon G_\Q\to(\I^\circ_\Tr/\fP)^\times$ be the character associated with $\sigma$. We use the notations of Section \ref{liftsec}. By Corollary \ref{surjst} there exists a self-twist $\widetilde{\sigma}\colon\I^\circ_\Tr/\fP\to\I^\circ_\Tr/\fP$ with associated character $\eta_{\widetilde{\sigma}}\colon G_\Q\to(\I^\circ_\Tr/\fP)^\times$ such that $\fP$ is fixed under $\widetilde{\sigma}$, $\widetilde{\sigma}_\fP=\sigma$ and $\eta_{\widetilde{\sigma},\fP}=\eta_\sigma$. 
Since $\widetilde{\sigma}\in\Gamma$ and $\I_0^\circ=(\I_\Tr^\circ)^\Gamma$ we have $\I_0^\circ\subset(\I_\Tr^\circ)^{\langle\widetilde{\sigma}\rangle}$, where $\langle\widetilde\sigma\rangle$ is the cyclic group generated by $\widetilde\sigma$. 
Since $\widetilde{\sigma}$ leaves $\fP$ fixed, we can reduce modulo $\fP$ the previous inclusion to obtain $\I_0^\circ/\fP_0\subset(\I_\Tr^\circ)^{\langle\widetilde{\sigma}\rangle}/\fP$. Again since $\widetilde{\sigma}$ leaves $\fP$ fixed and $\widetilde{\sigma}$ induces $\sigma$ modulo $\fP$, we have $(\I_\Tr^\circ)^{\langle\widetilde{\sigma}\rangle}/\fP=(\I_\Tr^\circ/\fP)^{\langle{\sigma}\rangle}$, hence $\I_0^\circ/\fP_0\subset(\I_\Tr^\circ/\fP)^{\langle{\sigma}\rangle}$. This holds for every $\sigma$, so $\I_0^\circ/\fP_0\subset(\I_\Tr^\circ/\fP)^{\Gamma_f}$.
\end{proof}


The following corollary summarizes the work of this section.

\begin{cor}\label{stresfull}
Let $\rho\cong G_\Q\to\GSp_4(\I_\Tr^\circ)$ be the representation associated with the family $\theta$. Let $\fP$ be a prime of $\I_\Tr^\circ$ corresponding to a classical eigenform $f$ which is not a symmetric cube lift of a $\GL_2$-eigenform. Let $\fP_0=\fP\cap\I_0^\circ$. Then the image of $\rho_\fP\colon G_\Q\to\GSp_4(\I_\Tr^\circ/\fP)$ contains a non-trivial congruence subgroup of $\GSp_4(\I_0^\circ/\fP_0)$.
\end{cor}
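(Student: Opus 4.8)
The plan is to deduce the corollary by combining the three main results of this section: Theorem~\ref{classbigim} (big image at a classical non-lift point), Proposition~\ref{sttraces} (the equality $\cO_K^{\Gamma_f}=\cO_E$), and Proposition~\ref{stringseq} (the inclusion $\I_0^\circ/\fP_0\subset\cO_K^{\Gamma_f}$). Everything else is elementary bookkeeping with congruence subgroups.

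First I would fix coefficient rings. Since $\ovl\rho$ is absolutely irreducible, Carayol's theorem (Theorem~\ref{carayol}) together with Remark~\ref{ZarItr}(2) lets us identify $\rho_\fP$, over a suitable ring of integers $\cO_K$ of a $p$-adic field containing $\I_\Tr^\circ/\fP$, with the Galois representation $\rho_{f,p}\colon G_\Q\to\GSp_4(\cO_K)$ attached to $f$. So from now on I treat $\rho_\fP=\rho_{f,p}$, and I write $\Gamma_f$ for its group of self-twists over $\Z_p$ and $\cO_E=\Z_p[\Tr(\Ad\rho_{f,p})]\subset\cO_K$, exactly as in the setup of Section~\ref{stclass}. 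Note that $f$ is $\Z_p$-regular with $\ovl\rho_{f,p}$ of symmetric cube type (the standing assumptions), and by hypothesis $f$ is not a symmetric cube lift of a $\GL_2$-eigenform, so it satisfies the hypotheses of Theorem~\ref{classbigim}.

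Next I would apply the three inputs in sequence. Theorem~\ref{classbigim} yields a non-zero ideal $\fa\subset\cO_E$ such that the principal congruence subgroup $\ker(\Sp_4(\cO_E)\to\Sp_4(\cO_E/\fa))$ is contained in $\im\rho_\fP$. Proposition~\ref{sttraces} gives $\cO_K^{\Gamma_f}=\cO_E$, and Proposition~\ref{stringseq} (applicable because $\fP\cap\Lambda_h$ is a non-critical arithmetic prime, as in its hypotheses) gives $\I_0^\circ/\fP_0\subset\cO_K^{\Gamma_f}$; together these produce the chain of subring inclusions $\Z_p\subset R_0\subset\cO_E\subset\cO_K$, where I set $R_0:=\I_0^\circ/\fP_0$.

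Finally I would close with the congruence-subgroup step: choose $N\geq 1$ with $p^N\cO_E\subset\fa$ (possible since $\cO_E/\fa$ is finite), so that $p^NR_0\subseteq\fa\cap R_0$ is a non-zero ideal of $R_0$. Then $\ker(\Sp_4(R_0)\to\Sp_4(R_0/(\fa\cap R_0)))$ equals the intersection of $\Sp_4(R_0)$ with $\ker(\Sp_4(\cO_E)\to\Sp_4(\cO_E/\fa))$, hence is contained in $\im\rho_\fP$, and it is a non-trivial congruence subgroup of $\Sp_4(R_0)\subseteq\GSp_4(R_0)$ — which is the assertion. I do not expect a genuine obstacle; the only point that will require care is the first step, namely arranging the identification $\rho_\fP\cong\rho_{f,p}$ over a single coefficient ring in which the chain $R_0\subset\cO_E\subset\cO_K$ consists of honest subring inclusions, so that the congruence subgroup coming out of Theorem~\ref{classbigim} can be intersected with $\Sp_4(R_0)$ directly rather than merely up to conjugation in $\GSp_4(K)$.
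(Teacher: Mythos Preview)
Your proposal is correct and matches the paper's own proof essentially line for line: the paper simply invokes Theorem~\ref{classbigim} to get a congruence subgroup of $\Sp_4(\cO_E)$ inside $\im\rho_\fP$, then combines Propositions~\ref{sttraces} and~\ref{stringseq} to obtain $\I_0^\circ/\fP_0\subset\cO_E$, and concludes. You have merely spelled out the final congruence-subgroup intersection and flagged the non-criticality hypothesis needed for Proposition~\ref{stringseq}, both of which the paper leaves implicit.
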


\begin{proof}
As before let $\cO_E=\Z_p[\Tr\Ad\rho_\fP]$. By Theorem \ref{classbigim} the image of $\rho_\fP$ contains a congruence subgroup of $\GSp_4(\cO_E)$. By combining Propositions \ref{sttraces} and \ref{stringseq} we obtain $\I_0^\circ/\fP_0\subset\cO_E$, hence the corollary.
\end{proof}

\begin{rem}
In \cite{lang} and in \cite{cit}, where Galois images for families of $\GL_2$-eigenforms are studied, the intermediate step given by Proposition \ref{sttraces} is not necessary. Indeed the fullness result for the representation attached to a $\GL_2$-eigenform, due to Ribet and Momose \cite{momose}[Theorem 3.1]\cite{ribetII} is stated in terms of the ring fixed by the self-twists of the representation, hence an analogue of Proposition \ref{stringseq} is sufficient.
\end{rem}


\bigskip

\section{Constructing bases of lattices in unipotent subgroups}

In this section we show that the image of the Galois representation associated with a family of $\GSp_4$-eigenforms contains a ``sufficiently large'' set of unipotent elements.

\subsection{An approximation argument}

We prove a simple generalization of the approximation argument presented in the proof of \cite[Lemma 4.5]{hidatil}. We give the details of the proof since there is an imprecision in the one presented in \emph{loc. cit.}. In particular \cite[Lemma 4.6]{hidatil} does not give the inclusion (4.3) in \emph{loc. cit.}; it is replaced by Lemma \ref{communip} below. 
Let $\bG$ be a reductive group defined over $\Z$. Let $T$ and $B$ be a torus and a Borel subgroup of $\bG$, respectively. Let $\Delta$ be the set of roots associated with $(\bG,T)$.

\begin{prop}\label{approx}
Let $A$ be a profinite local ring of residual characteristic $p$ endowed with its profinite topology. Let $G$ be a compact subgroup of the level $p$ principal congruence subgroup $\Gamma_{\bG(A)}(p)$ of $\bG(A)$.
Suppose that:
\begin{enumerate}
\item the ring $A$ is complete with respect to the $p$-adic topology;
\item the group $G$ is normalized by a diagonal $\Z_p$-regular element of $\bG(A)$.
\end{enumerate}
Let $\alpha$ be a root of $\bG$. For every ideal $Q$ of $A$, let $\pi_Q\colon\bG(A)\to\bG(A/Q)$ be the natural projection, inducing a map $\pi_{Q,\alpha}\colon U^\alpha(A)\to U^\alpha(A/Q)$. Then $\pi_Q(G)\cap U^\alpha(A/Q)=\pi_Q(G\cap U^\alpha(A))$. 
\end{prop}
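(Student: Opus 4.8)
Before the author's proof, here is the line of attack I would take.

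The inclusion $\pi_Q(G\cap U^\alpha(A))\subseteq\pi_Q(G)\cap U^\alpha(A/Q)$ is immediate, so the content is the reverse inclusion: given $g\in G$ with $\pi_Q(g)\in U^\alpha(A/Q)$, I must produce $g'\in G\cap U^\alpha(A)$ with $\pi_Q(g')=\pi_Q(g)$. I would fix a closed embedding $\bG\hookrightarrow\GL_N$ over $\Z$. Since $p>3$ and $A$ is $p$-adically complete, $\exp$ and $\log$ are mutually inverse homeomorphisms between $\Gamma_{\bG(A)}(p)$ and $p\,\fg(A)$; they intertwine conjugation with $\Ad$, are compatible with all reduction maps, and turn multiplication into the Baker--Campbell--Hausdorff series. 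Moreover $\Gamma_{\bG(A)}(p)$ is a pro-$p$ group (writing $A=\varprojlim A_i$ as a limit of finite local rings of residue characteristic $p$, each $1+p\Mat_N(A_i)$ is a finite $p$-group), so $\Z_p$-th powers of elements of the closed subgroup $G$ again lie in $G$. Decompose $\fg(A)=\ft(A)\oplus\bigoplus_{\beta\in\Delta}\fg^\beta(A)$; then $\Ad(d)$ acts by $1$ on $\ft(A)$ and by the unit $\beta(d)\in A^\times$ on $\fg^\beta(A)$. Put $S=\Delta\cup\{0\}$, with $0$ labelling $\ft$ and $0(d):=1$. The $\Z_p$-regularity of $d$ forces $\beta(d)-\beta'(d)\in A^\times$ for all distinct $\beta,\beta'\in S$: for two distinct roots this is the definition, and applied to $\{\alpha,-\alpha\}$ it gives $\alpha(d)^2-1\in A^\times$, hence also $\alpha(d)-1\in A^\times$. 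Therefore the Lagrange idempotent
\[
\Pi_\alpha(x)=\prod_{\beta\in S\setminus\{\alpha\}}\frac{x-\beta(d)}{\alpha(d)-\beta(d)}\in A[x]
\]
is well defined, and $\Pi_\alpha(\Ad(d))$ is the $A$-linear projection of $\fg(A)$ onto $\fg^\alpha(A)$ killing every other summand.

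The plan is then a successive approximation inside the fibre $\pi_Q^{-1}(\pi_Q(g))\cap G$. I would build elements $g=g_1,g_2,g_3,\dots\in G$ with $\pi_Q(g_n)=\pi_Q(g)$ for every $n$ and with the ``non-$\alpha$ part'' of $\log g_n$, namely $(1-\Pi_\alpha(\Ad(d)))\log g_n$, lying in $p^{n}\fg(A)$; this holds for $n=1$ because $\log g_1\in p\,\fg(A)$, and the hypothesis $\pi_Q(g_n)=\pi_Q(g)\in U^\alpha(A/Q)$ forces that non-$\alpha$ part to be $\equiv 0\pmod Q$ at every stage. Passing from $g_n$ to $g_{n+1}$ amounts to correcting $g_n$ by an element $c_n\in G$ with $c_n\equiv 1\pmod Q$ (so that $\pi_Q$ is preserved) and with $\log c_n$ congruent modulo $p^{n+1}\fg(A)$ to the negative of the non-$\alpha$ part of $\log g_n$; the Baker--Campbell--Hausdorff cross terms produced by $g_{n+1}=g_nc_n$ then have to be absorbed, and this is exactly where the commutator estimate of Lemma~\ref{communip} enters, as the correct substitute for the inclusion (4.3) of \cite{hidatil} that \cite[Lemma~4.6]{hidatil} does not actually supply. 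The element $c_n$ itself would be manufactured from $g_n$ and its $d$-conjugates $d^jg_nd^{-j}\in G$: since $\Pi_\alpha(\Ad(d))$, and hence its complement $1-\Pi_\alpha(\Ad(d))$, is a polynomial in $\Ad(d)$, a suitable product of $\Z_p$-th powers of the $d^jg_nd^{-j}$ realizes this operator on $\log g_n$ to the required $p$-adic precision, the unit differences $\alpha(d)-\beta(d)$ coming from $\Z_p$-regularity being what makes the relevant coefficient congruences solvable.

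Granting the construction of the $g_n$, the proof finishes cleanly: because $\log c_n\in p^{n}\fg(A)$ with $n\to\infty$, the sequence $g_n$ is $p$-adically Cauchy, hence converges, and its limit $g'$ lies in $G$ since $G$ is compact, hence closed. By construction the non-$\alpha$ part of $\log g'$ lies in $\bigcap_n p^{n}\fg(A)=0$ (here $A$ is $p$-adically separated), so $\log g'\in p\,\fg^\alpha(A)$ and $g'\in U^\alpha(A)$; and $\pi_Q(g')=\lim_n\pi_Q(g_n)=\pi_Q(g)$. Thus $g'\in G\cap U^\alpha(A)$ maps onto the given element, giving the non-trivial inclusion.

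The main obstacle, and the step I would spend most care on, is the construction of the correction $c_n$ inside the abstract subgroup $G$: one has to realize the $A$-linear projection $1-\Pi_\alpha(\Ad(d))$ — whose coefficients involve the non-central units $(\alpha(d)-\beta(d))^{-1}$ — using only the operations genuinely available in $G$ (products, inverses, $\Z_p$-th powers, and conjugation by the single element $d$), while simultaneously keeping exact track of the Baker--Campbell--Hausdorff corrections and of the congruences modulo $Q$. This is precisely the place where the naive argument in \cite[Lemma~4.6]{hidatil} is insufficient and must be replaced by the commutator computation of Lemma~\ref{communip}.
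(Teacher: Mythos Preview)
Your outline has the right shape---successive approximation driven by the adjoint action of the regular element---but the step you yourself flag as ``the main obstacle'' is genuinely unresolved, and this is exactly where the paper's proof diverges from yours.

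Your Lagrange idempotent $\Pi_\alpha$ has coefficients $(\alpha(d)-\beta(d))^{-1}\in A$, not in $\Z_p$. Realizing $1-\Pi_\alpha(\Ad d)$ on $\log g_n$ by a product of $\Z_p$-th powers of the $d^jg_nd^{-j}$ amounts to finding $P\in\Z_p[x^{\pm1}]$ with $P(\beta(d))$ congruent to $0$ or $-1$ (according as $\beta=\alpha$ or not) simultaneously for all $\beta\in S$. This can fail: if $\bar\alpha(d)$ and some $\bar\gamma(d)$ with $\gamma\neq\alpha$ are Frobenius-conjugate in the residue field $A/\fm_A$, then every $\bar P\in\F_p[x]$ satisfies $\bar P(\bar\gamma(d))=\bar P(\bar\alpha(d))^p$, so the target values $0$ and $-1$ cannot be separated. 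The $\Z_p$-regularity hypothesis only says the $\bar\beta(d)$ are pairwise distinct, not that they lie in $\F_p$, so the ``relevant coefficient congruences'' need not be solvable. Your appeal to Lemma~\ref{communip} is also misplaced: that lemma controls commutator depth in the $Q$-adic filtration, whereas in your $p$-adic scheme the BCH cross-terms from $g_{n+1}=g_nc_n$ already lie in $p^{n+1}\fg(A)$ simply because $\log c_n\in p^n\fg(A)$ and $\log g_n\in p\,\fg(A)$. (There is a secondary issue as well: building $c_n$ from conjugates of $g_n$ gives $\pi_Q(c_n)\in U^\alpha(A/Q)$, but you need $\pi_Q(c_n)=1$ exactly, an extra constraint you have not arranged.)

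The paper resolves the obstacle by replacing $d$ with its Teichm\"uller limit $\delta=\lim_n d^{p^n}$, which has order dividing $p-1$. Then each $\beta(\delta)$ lies in $\mu_{p-1}(A)$, and since $A$ is local of residue characteristic $p$ one has $\mu_{p-1}(A)=\mu_{p-1}(\Z_p)\subset\Z_p^\times$. The averaging operator $\Delta$ (a Reynolds-type projector for the finite cyclic group $\langle\delta\rangle$) therefore involves only $\Z_p$-th powers and prime-to-$p$ roots in a pro-$p$ group, hence genuinely preserves $G$. The paper also iterates $Q$-adically rather than $p$-adically and works inside the single copy $\SL_2^\alpha$: Lemma~\ref{communip} is used there to show that the commutator ambiguity in passing to the abelianization (where $\Delta$ becomes a homomorphism) lands in $\Gamma_{B^\alpha}(Q^{2j})$, and Lemma~\ref{doubleexp} then shows $\Delta^2$ doubles the $Q$-adic depth of the non-unipotent error while fixing the image modulo the current power of $Q$. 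Compactness of $G\cap\Gamma_{U^\alpha}(Q)$ yields the limit in $G\cap U^\alpha(A)$.
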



\begin{proof}
Let $\alpha$ be a root of $\bG$. Since the inclusion $\pi_Q(G\cap U^\alpha(A))\subset\pi_Q(G)\cap U^\alpha(A/Q)$ is trivial, it is sufficient to show that $\pi_Q\colon G\cap U^\alpha(A)\to\pi_Q(G)\cap U^\alpha(A/Q)$ is surjective. 
The unipotent subgroups $U^\alpha$ and $U^{-\alpha}$ generate a subgroup of $\bG(A)$ isomorphic to $\SL_2(A)$. We denote it by $\SL_2^\alpha(A)$. 
We identify $U^{\pm\alpha}$ with subgroups of $\SL_2^\alpha(A)$. 
Let $T^\alpha=T\cap\SL_2^\alpha$ and $B^\alpha=T^\alpha U^\alpha$. We also write $\fsl_2^\alpha, \fu^{\pm\alpha}, \ft^\alpha, \fb^{\pm\alpha}$ for the Lie algebras of the $\SL_2^\alpha,U^{\pm\alpha},T^\alpha,B^{\pm\alpha}$, respectively. 
For every positive integer $j$, we denote by $\pi_{Q^j}$ the natural projection $\bG(A)\to\bG(A/Q^j)$, as well as its restriction $\SL_2^\alpha(A)\to\SL_2^\alpha(A/Q^j)$. We define some congruence subgroups of $\SL_2^\alpha(A)$ of level $pQ^j$ by setting
\begin{gather*} 
\Gamma_A(Q^j)=\{x\in\SL_2^\alpha\cap\Gamma_A(p)\,\vert\,\pi_{Q^j}(x)=\1_{2g}\}, \\
\Gamma_{?^\alpha}(Q^j)=\{x\in\SL_2^\alpha\cap\Gamma_A(p)\,\vert\,\pi_{Q^j}(x)\in ?^\alpha(A/Q^j)\}\textrm{ for }?\in\{U,B\}.
\end{gather*} 
Note that we leave the level at $p$ implicit. 
We set $G_{?^\alpha}(Q^j)=G\cap\Gamma_{?^\alpha}(Q^j)$ for $?\in\{U,B\}$. 
Given two elements $X,Y\in\bG(A)$, we denote by $[X,Y]$ their commutator $XYX^{-1}Y^{-1}$. For every subgroup $H\subset\bG(A)$ we denote by $\mathrm{D}H$ its commutator subgroup $\{ [X,Y]\,\vert\, X,Y\in H \}$. We write $[\cdot,\cdot]_\Lie$ for the Lie bracket on $\fgsp_{2g}(A)$. 

\begin{lemma}\label{communip}
For every $j\geq 1$ we have $\DGamma_{U^\alpha}(Q)\subset\Gamma_{B^\alpha}(Q^{2j})\cap\Gamma_{U^\alpha}(Q^j)$. 
\end{lemma}

\begin{proof}
A matrix $X\in\Gamma_{U^\alpha}(Q^j)$ can be written in the form $X=UM$ where $U\in U^\alpha$ and $M\in\Gamma_A(Q^j)$. In particular its logarithm is defined, it satisfies $\exp(\log X)=X$ and it is of the form $\log X=u+m$ with $u\in\fu^\alpha(A)\subset\fsl_2^\alpha(A)$ and $m\in Q^j\fsl_2^\alpha(A)$.
Now let $X,X_1\in\Gamma_{U^\alpha}(Q^j)$ and let $\log X=u+m$ and $\log X_1=u_1+m_1$ be decompositions of the type described above.
Modulo $Q^{2j}$ we can calculate
\[ \log [X,X_1]\equiv [\log X,\log X_1]_\Lie\equiv [u,u_1]_\Lie+[m,u_1]_\Lie+[u,m_1]_\Lie+[m,m_1]_\Lie. \]
Since $u,u_1\in\fu^\alpha$ and $m,m_1\in Q^j\fsl_2^\alpha(A)$ we have $[u,u_1]_\Lie=0$ and $[m,m_1]_\Lie\in Q^{2j}\fsl_2^\alpha(A)$, so
\[ \log [X,X_1]\equiv [m,u_1]_\Lie+[u,m_1]_\Lie\pmod{Q^{2j}}. \]
Now write $m=u^{-\alpha}+b^\alpha$ and $m_1=u_1^{-\alpha}+b^\alpha_1$ with $u^{-\alpha},u^{-\alpha}_1\in Q^j\fu^{-\alpha}(A)$ and $b^{-\alpha},b^{-\alpha}_1\in Q^j\fb^\alpha(A)$. Then $[m,u_1]_\Lie=[u^{-\alpha},u_1]_\Lie+[b^\alpha,u_1]_\Lie$, which belongs to $Q^j\fb^\alpha(A)$ since $[u^{-\alpha},u_1]_\Lie\in Q^j\ft^\alpha(A)$ and $[b^\alpha,u_1]_\Lie\in Q^j\fb^\alpha(A)$. In the same way we see that $[u,m_1]_\Lie\in\fb^\alpha(A)$. We conclude that $\log [X,X_1]\in Q^j\fb^\alpha\pmod{Q^{2j}}$, so $[X,X_1]\in\Gamma_{B^\alpha}(Q^{2j})$. Trivially $[X,X_1]\in\Gamma_{U^\alpha}(Q^j)$, so this proves the lemma.
\end{proof}

Let $d\in G$ be a diagonal $\Z_p$-regular element. Since $A$ is $p$-adically complete the limit $\lim_{n\to\infty}d^{p^n}$ defines a diagonal element $\delta\in\bG(A)$. Clearly the order of $\delta$ in $\bG(A)$ is a divisor $a$ of $p-1$. 
By hypothesis $G$ is a compact subgroup of $\Gamma_{\bG(A)}(p)$, so $G$ is a pro-$p$ group and $\delta$ normalizes $G$. We denote by $\ad(\delta)$ the adjoint action of $\delta$ on $\bG(A)$.

Let $\Gamma_A(p)$ be the principal congruence subgroup of $\SL_2^\alpha(A)$ of level $p$. 
Every element of $\Gamma_A(p)$ has a unique $a$-th root in $\Gamma_A(p)$. Since $\delta$ is diagonal, it normalizes $\Gamma_A(p)$. We define a map $\Delta\colon\Gamma_A(p)\to\Gamma_A(p)$ by setting
\[ \Delta(x)=\left(x\cdot(\ad(\delta)(x))^{\alpha(\delta)^{-1}}\cdot(\ad(\delta^2)(x))^{\alpha(\delta)^{-2}}\cdots(\ad(\delta^{a-1})(x))^{\alpha(\delta)^{1-a}}\right)^{1/a} \]
for every $x\in\Gamma_A(p)$. Note that $\Delta$ is not a homomorphism, but it induces a homomorphism of abelian groups $\Delta^\ab\colon\Gamma_A(p)/\DGamma_A(p)\to\Gamma_A(p)/\DGamma_A(p)$. 
%

\begin{lemma}(cf. \cite[Lemma 4.7]{hidatil})\label{doubleexp}
If $u\in\Gamma_{U^\alpha}(Q^j)$ for some positive integer $j$, then $\pi_{Q^j}(\Delta(u))=\pi_{Q^j}(u)$ and $\Delta^2(u)\in\Gamma_{U^\alpha}(Q^{2j})$.
\end{lemma}

\begin{proof}
Let $u\in\Gamma_{U^\alpha}(Q^j)$.
We see that $\Delta$ maps $Q^j\Gamma_A(p)$ to itself, so it induces a map $\Delta_{Q^j}\colon\Gamma_A(p)/Q^j\Gamma_A(p)\to\Gamma_A(p)/Q^j\Gamma_A(p)$.
For $x\in U^\alpha(A/Q^j)$ we have $\pi_{Q^j}(\ad(\delta)(x))=\ad(\pi_{Q^j}(\delta))(x)=\pi_{Q^j}(\alpha(\delta))(x)$. From this we deduce that $\Delta_{Q^j}(x)=x$ for $x\in U^\alpha(A/Q^j)$. Since $\pi_{Q^j}(u)\in U^\alpha(A/Q^j)$ we obtain $\pi_{Q^j}(\Delta(u))=\Delta_{Q^j}(\pi_{Q^j}(u))=\pi_{Q^j}(u)$.

Consider the homomorphism $\Delta^\ab\colon\Gamma_A(p)/\DGamma_A(p)\to\Gamma_A(p)/\DGamma_A(p)$. By a direct computation we see that $\ad(\delta)(\Delta^\ab(x))=\alpha(\delta)(\Delta^\ab(x))$ for every $x\in\Gamma_A(p)/\DGamma_A(p)$, so the image of $\Delta^\ab$ lies in the $\alpha(\delta)$-eigenspace for the action of $\ad(\delta)$ on $\Gamma_A(p)/\DGamma_A(p)$. This space is $U^\alpha(A)\DGamma_A(p)/\DGamma_A(p)$, as we can see by looking at the Iwahori decomposition of $\Gamma_A(p)$. 

From the first part of the proposition it follows that $\Delta^\ab$ induces a homomorphism 
\[ \Delta^\ab_{\Gamma_{U^\alpha}}\colon\Gamma_{U^\alpha}(Q^j)/\DGamma_{U^\alpha}(Q^j)\to\Gamma_{U^\alpha}(Q^j)/\DGamma_{U^\alpha}(Q^j). \]
By the remark of the previous paragraph 
\[ \Delta^\ab_{\Gamma_{U^\alpha}}(\Gamma_{U^\alpha}(Q^j)/\DGamma_{U^\alpha}(Q^j))\subset\Gamma_{U_{\alpha}}(Q^j)\DGamma_{U^\alpha}(Q^j)/\DGamma_{U^\alpha}(Q^j). \] 
By Lemma \ref{communip} $\DGamma_{U^\alpha}(Q^j)\subset\Gamma_{B^\alpha}(Q^{2j})\cap\Gamma_{U^\alpha}(Q^j)$, so 
\[ \Delta^\ab_{\Gamma_{U^\alpha}}(\Gamma_{U^\alpha}(Q^j)/\DGamma_{U^\alpha}(Q^j))\subset\Gamma_{B^\alpha}(Q^{2j})\cap\Gamma_{U^\alpha}(Q^j)/\DGamma_{U^\alpha}(Q^j). \]
We deduce that $\Delta(u)\in\Gamma_{B^\alpha}(Q^{2j})\cap\Gamma_{U^\alpha}(Q^j)$. 

By the same reasoning as above, $\Delta$ induces a homomorphism 
\[ \Delta^\ab_{\Gamma_{B^\alpha}}\colon\Gamma_{B^\alpha}(Q^{2j})/\DGamma_{B^\alpha}(Q^{2j})\to\Gamma_{B^\alpha}(Q^{2j})/\DGamma_{B^\alpha}(Q^{2j}). \] 
The image of $\Delta^\ab_{\Gamma_{B^\alpha}}$ is in the $\alpha(\delta)$-eigenspace for the action of $\ad(\delta)$, that is $U^\alpha(Q^{2j})\DGamma_{B^\alpha}(Q^{2j})/\DGamma_{B^\alpha}(Q^{2j})$. Note that $\DGamma_{B^\alpha}(Q^{2j})\subset U^\alpha(Q^{2j})$, so
\[ \Delta^\ab_{\Gamma_{B^\alpha}}(\Gamma_{B^\alpha}(Q^{2j})/\DGamma_{B^\alpha}(Q^{2j}))\subset\Gamma_{U^\alpha}(Q^{2j})/\DGamma_{B^\alpha}(Q^{2j}). \]
Since $\Delta(u)\in\Gamma_{B^\alpha}(Q^{2j})$ we conclude that $\Delta^2(u)\in\Gamma_{U^\alpha}(Q^{2j})$.
\end{proof}

We look at $G\cap U^\alpha(A)$ and $\pi_Q(G)\cap\SL_2(A/Q)$ as subgroups of $\SL_2^\alpha(A)$ and $\SL_2^\alpha(A/Q)$, respectively. 
Let $\ovl{u}\in \pi_Q(G)\cap U^\alpha(A/U)$. Choose $u_1\in G$ and $u_2\in U^\alpha(A)$ such that $\pi_Q(u_1)=\pi_Q(u_2)=\ovl{u}$. Then $u_1u_2^{-1}\in\Gamma_A(Q)$, so $u_1\in G\cap\Gamma_{U^\alpha}(Q)$. Note that $G\cap\Gamma_{U^\alpha}(Q)$ is compact since $G$ and $\Gamma_{U^\alpha}(Q)$ are pro-$p$ groups. By Lemma \ref{doubleexp} we have $\pi_Q(\Delta^{2^m}(u_1))=\ovl{u}$ and $\Delta^{2^m}(u_1)\in\Gamma_{U^\alpha}(Q^{2m})$ for every positive integer $m$. 
Hence the limit $\lim_{m\to\infty}\Delta^{2^m}(u_1)$ defines an element $u\in\SL_2(A)$ satisfying $\pi_Q(u)=\ovl{u}$. We have $u\in G\cap\Gamma_{U^\alpha}(Q)$ because $G\cap\Gamma_{U^\alpha}(Q)$ is compact. This completes the proof of the proposition. 
\end{proof}

We give a simple corollary.

\begin{cor}\label{nontrivunip}
Let $\rho\colon G_\Q\to\GSp_4(\I^\circ_\Tr)$ be the Galois representation associated with a finite slope family $\theta\colon\T_h\to\I^\circ$. For every root $\alpha$ of $\GSp_4$ the group $\im\rho\cap U^\alpha(\I_\Tr^\circ)$ is non-trivial. 
\end{cor}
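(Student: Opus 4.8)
The plan is to apply the approximation result of Proposition \ref{approx} to the group $G=\im\rho\cap\Gamma_{\GSp_4(\I_\Tr^\circ)}(p)$. First I would check the hypotheses. Since $\rho$ is continuous and $G_\Q$ is profinite, $\im\rho$ is compact; as $\Gamma_{\GSp_4(\I_\Tr^\circ)}(p)$ is open and normal in $\GSp_4(\I_\Tr^\circ)$, the subgroup $G$ is compact, normal in $\im\rho$, and of finite index in it. The ring $\I_\Tr^\circ$ is profinite and local with residue field of characteristic $p$ (Remark \ref{conncomp}) and, being finite over $\Lambda_h$, it is complete for the $p$-adic topology. Since $\rho$ is $\Z_p$-regular (a running assumption; Definition \ref{Zpreg}), there is $d\in\im\rho\cap T_2(\I_\Tr^\circ)$ at which the distinct roots of $\GSp_4$ take pairwise distinct values modulo $\fm_{\I_\Tr^\circ}$; since $d\in\im\rho$ and $G\trianglelefteq\im\rho$, the element $d$ normalizes $G$. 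Hence Proposition \ref{approx} applies with $\bG=\GSp_4$, $A=\I_\Tr^\circ$, and this $G$, $d$: for every root $\alpha$ and every ideal $Q$ of $\I_\Tr^\circ$, $\pi_Q(G)\cap U^\alpha(\I_\Tr^\circ/Q)=\pi_Q\big(G\cap U^\alpha(\I_\Tr^\circ)\big)$.

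Next I would produce a well-chosen ideal $Q=\fP$. Take $\fP$ to be a classical prime of $\I_\Tr^\circ$ lying over a non-critical arithmetic prime (Definition \ref{noncritgsp}) and corresponding to a $\GSp_4$-eigenform that is not a symmetric cube lift of a $\GL_2$-eigenform; such primes exist in our setting (they are Zariski-dense when $\ovl\rho$ is full, and when $\ovl\rho$ is of symmetric cube type they exist as soon as the family is not globally such a lift, by Lemma \ref{noncritdense} together with the density of classical points). Set $\fP_0=\fP\cap\I_0^\circ$. By Corollary \ref{stresfull}, $\im\rho_\fP$ contains a non-trivial congruence subgroup $\Gamma_{\GSp_4(\I_0^\circ/\fP_0)}(\fa)$. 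For each root $\alpha$, since $U^\alpha$ is a one-parameter unipotent subgroup and $\fa$ is a non-zero ideal of the $p$-adic domain $\I_0^\circ/\fP_0$, this congruence subgroup meets $U^\alpha(\I_\Tr^\circ/\fP)$ in an infinite subgroup, which I call $H_\alpha$; it is contained in $\im\rho_\fP=\pi_\fP(\im\rho)$.

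Finally I would run the finite-index bookkeeping. As $G$ has finite index in $\im\rho$, the subgroup $\pi_\fP(G)$ has finite index in $\im\rho_\fP$, so $H_\alpha\cap\pi_\fP(G)$ has finite index in the infinite group $H_\alpha$ and is therefore itself non-trivial. Thus $\pi_\fP(G)\cap U^\alpha(\I_\Tr^\circ/\fP)$ is non-trivial, and Proposition \ref{approx} with $Q=\fP$ forces $\pi_\fP\big(G\cap U^\alpha(\I_\Tr^\circ)\big)$, hence $G\cap U^\alpha(\I_\Tr^\circ)$, to be non-trivial; a fortiori $\im\rho\cap U^\alpha(\I_\Tr^\circ)\neq\{1\}$. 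Since $\alpha$ was arbitrary this proves the corollary. I do not expect a genuine obstacle: everything is a combination of Proposition \ref{approx} and Corollary \ref{stresfull} with the above bookkeeping. The only point that truly uses the running hypotheses is the existence of a non-symmetric-cube-lift classical specialization; without it the statement is simply false, since the image of a family that is globally a symmetric cube lift is conjugate into $\Sym^3\GL_2(\I_\Tr^\circ)$ and meets no root subgroup of $\GSp_4$ non-trivially. This is where the results of the second part of the paper, on the ``fortuitous'' symmetric cube congruences, are implicitly invoked.
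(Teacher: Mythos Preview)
Your proof is correct and follows essentially the same route as the paper: pick a classical prime $\fP$ whose associated eigenform is not a symmetric cube lift, use the big-image result at $\fP$ to get non-trivial unipotents in $\im\rho_\fP\cap U^\alpha$, and lift via Proposition~\ref{approx}. You are in fact more careful than the paper's terse proof on one point: Proposition~\ref{approx} requires $G\subset\Gamma_{\bG(A)}(p)$, so intersecting $\im\rho$ with the level-$p$ congruence subgroup and then doing the finite-index bookkeeping (as you do, and as the paper does later in the proof of Proposition~\ref{uniplatt}) is the right way to apply it; the paper's proof of this corollary writes ``$G=\im\rho$'' directly and cites Theorem~\ref{classbigim} rather than Corollary~\ref{stresfull}, but the content is the same.
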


\begin{proof}
Let $\fP$ be a prime of $\I^\circ$ corresponding to a classical eigenform $f$ which is not the symmetric cube lift of a $\GL_2$-eigenform. 
Let $\cO_E=\Z_p[\Tr(\Ad\rho_\fP)]$. 
By Theorem \ref{classbigim} $\im\rho_\fP$ contains a non-trivial congruence subgroup of $\Sp_4(\cO_E)$. In particular $\im\rho_\fP\cap U^\alpha(\I_\Tr^\circ/\fP)$ is non-trivial for every root $\alpha$. 
Now we apply Proposition \ref{approx} to $\bG=\GSp_4$, $T=T_2$, $B=B_2$ $A=\I^\circ_\Tr$, $G=\im\rho$ and $Q=\fP$. We obtain that the projection $\im\rho\cap U^\alpha(\I_\Tr^\circ)\to\im\rho_\fP\cap U^\alpha(\I_\Tr^\circ/\fP)$ is surjective for every $\alpha$. In particular $\im\rho\cap U^\alpha(\I_\Tr^\circ)$ must be non-trivial for every $\alpha$.
\end{proof}

\subsection{A representation with image fixed by the self-twists}\label{repfix}

Let $\theta\colon\T_h\to\I^\circ$ be a finite slope family with associated representation $\rho\colon G_\Q\to\GSp_4(\I^\circ_\Tr)$. As before we assume $\rho$ to be residually irreducible and $\Z_p$-regular. Let $\Gamma$ be the group of self-twist of $\rho$ and let $\I_0^\circ$ be the subring of $\I_\Tr^\circ$ consisting of the elements fixed by $\Gamma$. 
By restricting the domain of $\rho$ and replacing it with a suitable conjugate representation, we obtain a $\Z_p$-regular representation with coefficients in $\I_0^\circ$. This is the main result of this section. 

We write $\eta_\sigma$ for the finite order Galois character associated with $\sigma\in\Gamma$. Let $H_0=\bigcap_{\sigma\in\Gamma}\ker\eta_\sigma$. Since $\Gamma$ is finite $H_0$ is open and normal in $G_\Q$. Note that $\Tr(\rho(H_0))\subset\GSp_4(\I_0^\circ)$. 
If $\ovl{\rho}\vert_{H_0}$ is irreducible, then by Theorem \ref{carayol} there exists $g\in\GL_4(\I_\Tr^\circ)$ such that the representation $\rho^g=g\rho g^{-1}$ satisfies $\im\rho^g\vert_{H_0}\subset\GL_4(\I_0^\circ)$. 
The hypothesis of irreducibility of $\ovl\rho\vert_{H_0}$ can probably be checked in the cases we will focus on (residually full or of symmetric cube type), but it would be too restrictive if we wanted to generalize our work to other interesting cases (for instance to lifts from $\GL_{2/F}$ with $F/\Q$ real quadratic or from $\GL_1/F$ with $F/\Q$ CM of degree $4$). For this reason we do not make the above assumption and we follow instead the approach of \cite[Proposition 4.14]{cit}, that comes in part from the proof of \cite[Theorem 7.5]{lang}. 

\begin{prop}\label{H0repr}
There exists an element $g\in\GSp_4(\I_\Tr^\circ)$ such that:
\begin{enumerate}
\item $g\rho g^{-1}(H_0)\subset\GSp_4(\I_0)$;
\item $g\rho g^{-1}(H_0)$ contains a diagonal $\Z_p$-regular element.
\end{enumerate}
\end{prop}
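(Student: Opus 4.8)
The plan is to adapt the argument of \cite[Proposition 4.14]{cit} to the symplectic setting, using a descent/Galois-cohomology computation on the restriction $\rho|_{H_0}$. First I would record that for every $\sigma \in \Gamma$ the twisting isomorphism $\rho^\sigma \cong \eta_\sigma \otimes \rho$ becomes, on restriction to $H_0 = \bigcap_\sigma \ker\eta_\sigma$, an isomorphism $\rho^\sigma|_{H_0} \cong \rho|_{H_0}$; equivalently, $\Tr(\rho(h)) \in \I_0^\circ$ for all $h \in H_0$. Since $\rho|_{H_0}$ need not be residually absolutely irreducible, I cannot apply Carayol's theorem (Theorem~\ref{carayol}) directly over $\I_0^\circ$. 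Instead, as in \emph{loc. cit.}, I would first show that $\rho|_{H_0}$ is \emph{absolutely irreducible} over the fraction field $Q(\I_\Tr^\circ)$: indeed $\ovl\rho$ is absolutely irreducible by our standing hypothesis, and $H_0$ is open normal in $G_\Q$; an absolutely irreducible representation of $G_\Q$ restricted to an open normal subgroup decomposes as a sum of conjugate irreducibles, but the $\Z_p$-regularity of $\rho$ — which forces the presence of a diagonal element with four distinct residual eigenvalues inside $\im\rho$, hence inside a conjugate of $\im(\rho|_{H_0})$ after passing to a bounded-index subgroup — rules out the possibility that $\rho|_{H_0}$ breaks up, at least after conjugating. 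This is the point where I would have to be slightly careful and possibly pass to the subgroup generated by $H_0$ and the $\Z_p$-regular element.

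Next, with $\rho|_{H_0}$ absolutely irreducible over $Q(\I_\Tr^\circ)$ and with traces in $\I_0^\circ$, the obstruction to conjugating $\rho|_{H_0}$ into $\GL_4(\I_0^\circ)$ lies in the Brauer group / in $H^1$ of $\PGL_4$; following \cite[Theorem 7.5]{lang} and \cite[Proposition 4.14]{cit}, this obstruction is killed because $\I_0^\circ$ is a normal profinite local ring with separably closed residue field (or, more precisely, the relevant $H^2$ vanishes for the henselian local ring involved), so there exists $g_1 \in \GL_4(\I_\Tr^\circ)$ with $g_1\rho g_1^{-1}(H_0) \subset \GL_4(\I_0^\circ)$. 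To upgrade $\GL_4$ to $\GSp_4$: the symplectic form preserved (up to similitude) by $\rho$ is defined over $\I_\Tr^\circ$ by Lemma~\ref{sympform}, and its conjugate by $g_1$ is preserved by $g_1\rho g_1^{-1}(H_0)$; since that image lies in $\GL_4(\I_0^\circ)$ and spans (absolutely irreducibly) a self-dual representation, the form must be proportional to one defined over $\I_0^\circ$, and a further conjugation by an element of $\GSp_4(\I_\Tr^\circ)$ (using that any two non-degenerate alternating forms over the local ring $\I_0^\circ$ are equivalent) puts us in $\GSp_4(\I_0^\circ)$. Composing the two conjugations yields the desired $g \in \GSp_4(\I_\Tr^\circ)$ satisfying (1).

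For (2), I would use that $\rho$ is $\Z_p$-regular: there is $d \in \im\rho \cap T_2(\I_\Tr^\circ)$ with $\alpha(d) \not\equiv \alpha'(d) \pmod{\fm}$ for distinct roots $\alpha,\alpha'$. Since $H_0$ is open of finite index $n$ in $G_\Q$, some power $d^n$ (or the image $\rho(h)$ of a suitable $h \in H_0$ mapping to a power of $d$) lies in $\im(\rho|_{H_0})$ and is still $\Z_p$-regular, because raising a diagonal matrix to the $n$-th power and reducing mod $\fm$ preserves the pairwise-distinctness of root values as long as $p > n$ divides nothing relevant — here I would invoke that $p > 3$ and that the index $n$ of $H_0$ divides a power of $|\Gamma|$, whose only prime factors are $2$ and $3$ by Lemma~\ref{cardgamma}, so one can choose the power coprime to $p$ and the residual eigenvalues stay distinct. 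Conjugating by the $g$ found above preserves being diagonal only up to moving the torus, so strictly I would first fix $g$ so that it also conjugates the given maximal torus appropriately — since $g\rho g^{-1}(H_0)$ still contains a regular semisimple element, and any such element lies in a unique maximal torus, I can post-compose $g$ with an element of $\GSp_4(\I_0^\circ)$ conjugating that torus to $T_2$, which does not disturb property (1). This gives (2).

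The main obstacle I expect is the \emph{descent step}: establishing absolute irreducibility of $\rho|_{H_0}$ over $Q(\I_\Tr^\circ)$ without assuming residual irreducibility of $\ovl\rho|_{H_0}$, and then vanishing of the Brauer obstruction over the non-regular-looking but profinite local ring $\I_0^\circ$. This is exactly the technical heart of \cite[Proposition 4.14]{cit} and \cite[Theorem 7.5]{lang}, and the $\GSp_4$ case requires, in addition, tracking the symplectic structure through the conjugation — ensuring the alternating form descends to $\I_0^\circ$ and that the final conjugating matrix can be taken in $\GSp_4$ rather than merely $\GL_4$.
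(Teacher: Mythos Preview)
Your proposal takes a genuinely different route from the paper's, and it has real gaps.

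The paper does \emph{not} argue via absolute irreducibility of $\rho|_{H_0}$ and a Brauer-obstruction descent. Instead it gives an explicit construction. For each $\sigma\in\Gamma$ write $\rho^\sigma=\eta_\sigma C_\sigma\rho C_\sigma^{-1}$. The key new input is Corollary~\ref{nontrivunip}: $\im\rho\cap U^\alpha(\I_\Tr^\circ)$ is non-trivial for every root $\alpha$. From this one shows (Lemma~\ref{Cdiag}) that each $C_\sigma$ is \emph{diagonal} and residually scalar. After normalising one diagonal entry, $\sigma\mapsto C_\sigma'$ becomes a $1$-cocycle, which lets one define a $\Gamma$-action on $V$ by $v^{[\sigma]}=(C_\sigma')^{-1}\sum_i\lambda_i(v)^\sigma e_i$. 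Averaging $v_i\in\I_\Tr^\circ e_i$ over $\Gamma$ yields $w_i=\sum_\sigma v_i^{[\sigma]}\in\I_\Tr^\circ e_i$ which is non-zero because $\ovl{w_i}=\card(\Gamma)\cdot\ovl{v_i}$ and $p\nmid\card(\Gamma)$ by Lemma~\ref{cardgamma}. The $\{w_i\}$ are an $\I_0^\circ$-basis of $V^{[\Gamma]}$, giving (1). Since $w_i\in\I_\Tr^\circ e_i$, the change-of-basis matrix $g$ is \emph{diagonal}, so $gdg^{-1}=d$ and (2) is immediate --- no powers of $d$ are needed.

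Your outline has two concrete problems. First, your argument for (2) claims that $[G_\Q:H_0]$ has only $2$ and $3$ as prime factors ``by Lemma~\ref{cardgamma}''. But that lemma bounds the prime factors of $\card(\Gamma)$, not of $[G_\Q:H_0]=[G_\Q:\bigcap_\sigma\ker\eta_\sigma]$, which is governed by the orders of the characters $\eta_\sigma$ and can have arbitrary prime factors. So you cannot conclude that $d^n$ remains $\Z_p$-regular. Second, your irreducibility argument for $\rho|_{H_0}$ over $Q(\I_\Tr^\circ)$ is only a sketch: the presence of a regular diagonal element in $\im\rho$ (not in $\im\rho|_{H_0}$) does not by itself preclude a Clifford-theory splitting of $\rho|_{H_0}$, and ``passing to the subgroup generated by $H_0$ and the $\Z_p$-regular element'' destroys the property $\Tr(\rho(h))\in\I_0^\circ$ that you need for the descent. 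The paper explicitly notes before the proposition that it avoids any irreducibility hypothesis on $\ovl\rho|_{H_0}$ precisely to keep the argument robust; its explicit averaging construction, with Corollary~\ref{nontrivunip} forcing the $C_\sigma$ to be diagonal, is what makes both (1) and (2) fall out simultaneously.
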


\begin{proof}
Let $V$ be a free, rank four $\I_\Tr^\circ$-module. The choice of a basis of $V$ determines an isomorphism $\GL_4(\I_\Tr^\circ)\cong\Aut(V)$, hence an action of $\rho$ on $V$. 
Let $d$ be a $\Z_p$-regular element contained in $\im\rho$. We denote by $\{e_i\}_{i=1,\ldots,4}$ a symplectic basis of $V$ such that $d$ is diagonal. Until further notice we work in this basis. 

By definition of self-twist, for each $\sigma\in\Gamma$ there is an equivalence $\rho^\sigma\cong\eta_\sigma\otimes\rho$. This means that there exists a matrix $C_\sigma\in\GSp_4(\I_\Tr^\circ)$ such that
\begin{equation}\label{Csigma} \rho^\sigma(g)=\eta_\sigma C_\sigma\rho(g) C_\sigma^{-1}. \end{equation} 
Recall that we write $\fm_{\I_\Tr^\circ}$ for the maximal ideal of $\I_\Tr^\circ$ and $\F$ for the residue field of $\I_\Tr^\circ$.
Let $\ovl{C}_\sigma$ be the image of $C_\sigma$ under the natural projection $\GSp_4(\I_\Tr^\circ)\to\GSp_4(\F)$. We prove the following lemma.

\begin{lemma}\label{Cdiag}
For every $\sigma\in\Gamma$ the matrix $C_\sigma$ is diagonal and the matrix $\ovl{C_\sigma}$ is scalar.
\end{lemma}

\begin{proof}
Let $\alpha$ be any root of $\GSp_4$ and $u^\alpha$ be a non-trivial element of $\im\rho\cap U^\alpha(\I_\Tr^\circ)$. 
Such a $u^\alpha$ exists thanks to Corollary \ref{nontrivunip}. Let $g^\alpha$ be an element of $G_\Q$ such that $\rho(g^\alpha)=u^\alpha$. By evaluating Equation \eqref{Csigma} at $g^\alpha$ we obtain $C_\sigma u^\alpha C_\sigma^{-1}=(u^\alpha)^\sigma$, which is again an element of $U^\alpha(\I_\Tr^\circ)$. 
We deduce that $C_\sigma$ normalizes $U^\alpha(Q(\I_\Tr^\circ))$. This holds for every root $\alpha$, so $C_\sigma$ normalizes the Borel subgroups of upper and lower triangular matrices in $\GSp_4(Q(\I_\Tr^\circ))$. Since a Borel subgroup is its own normalizer, we conclude that $C_\sigma$ is diagonal.

By Proposition \ref{liftdefprop}(1) the action of $\Gamma$ on $\I_\Tr^\circ$ induces the trivial action of $\Gamma$ on $\F$. By evaluating Equation \eqref{Csigma} at $g^\alpha$ and modulo $\fm_{\I_\Tr^\circ}$ we obtain, with the obvious notations, $\ovl{C}_\sigma\ovl{u}^\alpha(\ovl{C}_\sigma)^{-1}=(\ovl{u}^\alpha)^\sigma=\ovl{u}^\alpha$. Since $C_\sigma$ is diagonal and $\ovl{u}^\alpha\in U^\alpha(\F)$, the left hand side is equal to $\alpha(\ovl{C}_\sigma)\ovl{u}^\alpha$. We deduce that $\alpha(\ovl{C}_\sigma)=1$. Since this holds for every root $\alpha$, we conclude that $\ovl{C}_\sigma$ is scalar.
\end{proof}

We write $C$ for the map $\Gamma\to\GSp_4(\I_\Tr^\circ)$ defined by $C(\sigma)=C_\sigma$. We show that $C$ can be modified into a $1$-cocycle $C^\prime$ such that $C^\prime(\sigma)$ still satisfies Equation \eqref{Csigma}.
Define a map $c\colon\Gamma^2\to\GSp_4(\I_\Tr^\circ)$ by $c(\sigma,\tau)=C_{\sigma\tau}^{-1}C_\sigma^\tau C_\tau$ for every $\sigma,\tau\in\Gamma$. 
By using multiple times Equation \eqref{Csigma} we find that, for every $g\in G_\Q$, 
$\rho(g)=\eta_{\sigma\tau}^{-1}\eta^\tau_\sigma\eta_\tau c(\sigma,\tau)\rho(g)c(\sigma,\tau)^{-1}$. 
Recall that $\eta^\tau_\sigma\eta_\tau=\eta_{\sigma\tau}$ by Proposition \ref{basicst}(4), so the matrix $c(\sigma,\tau)$ commutes with the image of $\rho$. Since $\rho$ is irreducible, $c(\sigma,\tau)$ must be a scalar. 

For every $\sigma\in\Gamma$ and every $i\in\{1,2,3,4\}$ let $(C_\sigma)_i$ denote the $i$-th diagonal entry of $C_\sigma$. Define a map $C_i^\prime\colon\Gamma\to\GSp_4(\I_\Tr^\circ)$ by $C_i^\prime(\sigma)=(C_\sigma)_i^{-1}C_\sigma$. Let $c_i^\prime(\sigma,\tau)=C_i^\prime(\sigma\tau)^{-1}C_i^\prime(\sigma)^\tau C_i^\prime(\tau)$ for every $\sigma,\tau\in\Gamma$ and $i\in\{1,2,3,4\}$. Then
\begin{equation}\label{cocycle} c_i^\prime(\sigma,\tau)=((C_{\sigma\tau})_i^{-1}(C_\sigma)_i(C_\tau)_i)^{-1}c(\sigma,\tau). \end{equation}
Since $(C_{\sigma\tau})_i^{-1}(C_\sigma)_i^\tau(C_\tau)_i$ is the $i$-th diagonal entry of $c(\sigma,\tau)$ and $c(\sigma,\tau)$ is scalar, $(C_{\sigma\tau})_i^{-1}(C_\sigma)_i(C_\tau)_i$ is independent of $i$ and $((C_{\sigma\tau})_i^{-1}(C_\sigma)_i(C_\tau)_i)^{-1}c(\sigma,\tau)=\1_4$ for every $i$. From Equation \eqref{cocycle} we deduce that $C_i^\prime$ is a $1$-cocycle. 

Set $C^\prime_\sigma=C_i^\prime(\sigma)$. We have
\begin{equation}\label{Cprsigma} \rho^\sigma(g)=\eta_\sigma C_\sigma\rho(g) C_\sigma^{-1}=\eta_\sigma C^\prime_\sigma\rho(g) (C^\prime_\sigma)^{-1}. \end{equation}
By Lemma \ref{Cdiag} $\ovl{C}_\sigma$ is scalar, so we $\ovl{C^\prime_\sigma}=(\ovl{C}_\sigma)_i^{-1}\ovl{C}_\sigma=\1_4$ with the obvious notations.

Recall that $\{e_i\}_{i=1,\ldots,4}$ is our chosen basis of the free $\I_\Tr^\circ$-module $V$, on which $G_\Q$ acts via $\rho$. For every $v\in V$ we write as $v=\sum_{i=1}^4\lambda_i(v)e_i$ its unique decomposition in the basis $(e_i)_{i=1,\ldots,4}$, with $\lambda_i(v)\in\I_\Tr^\circ$ for $1\leq i\leq 4$.
For every $v\in V$ and every $\sigma\in\Gamma$ we set $v^{[\sigma]}=(C^\prime_\sigma)^{-1}\sum_{i=1}^4\lambda_i(v)^\sigma e_i$. This defines an action of $\Gamma$ on $V$ since $C^\prime_\sigma$ is a $1$-cocycle. Let $V^{[\Gamma]}$ denote the set of elements of $V$ fixed by $\Gamma$. The action of $\Gamma$ is clearly $\I_0^\circ$-linear, so $V^{[\Gamma]}$ has a structure of $\I_0^\circ$-submodule of $V$. 

Let $v\in V^{[\Gamma]}$ and $h\in H_0$. Then $\rho(h)v$ is also in $V^{[\Gamma]}$, as we see by a direct calculation using Equation \eqref{Cprsigma}. 
We deduce that the action of $G_\Q$ on $V$ via $\rho$ induces an action of $H_0$ on $V^{[\Gamma]}$. We will conclude the proof of the proposition after having studied the structure of $V^{[\Gamma]}$.

\begin{lemma}
The $\I_0^\circ$-submodule $V^{[\Gamma]}$ of $V$ is free of rank four and its $\I_\Tr^\circ$-span is $V$. 
\end{lemma}

\begin{proof}
Choose $i\in\{1,\ldots,4\}$. We construct a non-zero, $\Gamma$-invariant element $w_i\in\I_\Tr^\circ e_i$. 
The submodule $\I_\Tr^\circ e_i$ is stable under $\Gamma$ because $C^\prime_\sigma$ is diagonal. 
The action of $\Gamma$ on $\I_\Tr^\circ e_i$ induces an action of $\Gamma$ on the one-dimensional $\F$-vector space $\I_\Tr^\circ e_i\otimes_{\I_\Tr^\circ}\F$. Recall that the self-twists induce the identity on $\F$ by Proposition \ref{liftdefprop}(1) and that the matrix $\ovl{C^\prime_\sigma}$ is trivial for every $\sigma\in\Gamma$, so $\Gamma$ acts trivially on $\I_\Tr^\circ e_i\otimes_{\I_\Tr^\circ}\F$. 

Now choose any $v_i\in\I_\Tr^\circ e_i$. Let $w_i=\sum_{\sigma\in\Gamma}v_i^{[\sigma]}$. Clearly $w_i$ is invariant under the action of $\Gamma$. We show that $w_i\neq 0$. Let $\ovl{v}_i$, $\ovl{w}_i$ denote the images of $v_i$ and $w_i$, respectively, via the natural projection $\I_\Tr^\circ e_i\to\I_\Tr^\circ e_i\otimes_{\I_\Tr^\circ}\F$. Then $\ovl{w}_i=\sum_{\sigma\in\Gamma}\ovl{v}_i^{[\sigma]}=\sum_{\sigma\in\Gamma}\ovl{v}_i=\card(\Gamma)\cdot\ovl{v}_i$ because $\Gamma$ acts trivially on $\I_\Tr^\circ e_i\otimes_{\I_\Tr^\circ}\F$. By Lemma \ref{cardgamma} the only possible prime factors of $\card(\Gamma)$ are $2$ and $3$. Since we supposed that $p\geq 5$ we have $\card(\Gamma)\neq 0$ in $\F$. We deduce that $\ovl{w}_i=\card(\Gamma)\ovl{v}_i\neq 0$ in $\F$, so $w_i\neq 0$.

Note that $\{w_i\}_{i=1,\ldots,4}$ is an $\I_\Tr^\circ$-basis of $V$ since $\ovl{w_i}\neq 0$ for every $i$. In particular the $\I_0^\circ$-span of the set $\{w_i\}_{i=1,\ldots,4}$ is a free, rank four $\I_0^\circ$-submodule of $V$. 
Since $V^{[\Gamma]}$ has a structure of $\I_0^\circ$-module and $w_i\in V^{[\Gamma]}$ for every $i$, there is an inclusion $\sum_{i=1}^4\I_0^\circ w_i\subset V^{[\Gamma]}$. We show that this is an equality. Let $v\in V^{[\Gamma]}$. Write $v=\sum_{i=1}^4\lambda_iw_i$ for some $\lambda_i\in\I_\Tr^\circ$. Then for every $\sigma\in\Gamma$ we have $v=v^{[\sigma]}=\sum_{i=1}^4\lambda_i^\sigma w_i^{[\sigma]}=\sum_{i=1}^4\lambda_i^\sigma w_i$. Since $\{w_i\}_{i=1,\ldots,4}$ is an $\I_\Tr^\circ$-basis of $V$, we must have $\lambda_i=\lambda_i^\sigma$ for every $i$. This holds for every $\sigma$, so we obtain $\lambda_i\in\I_0^\circ$ for every $i$. Hence $v=\sum_{i=1}^4\lambda_iw_i\in\sum_{i=1}^4\I_0^\circ w_i$. 

The second assertion of the lemma follows immediately from the fact that the set $\{w_i\}_{i=1,\ldots,4}$ is contained in $V^{[\Gamma]}$ and is an $\I_\Tr^\circ$-basis of $V$.
\end{proof}

Now let $h\in H_0$. Let $\{w_i\}_{i=1,\ldots,4}$ be an $\I_0^\circ$-basis of $V^{[\Gamma]}$ satisfying $w_i\in\I_\Tr^\circ e_i$, such as that provided by the lemma. Since $\I_\Tr^\circ\cdot V^{[\Gamma]}=V$, $\{w_i\}_{i=1,\ldots,4}$ is also an $\I_\Tr^\circ$-basis of $V$. Moreover $\{w_i\}_{i=1,\ldots,4}$ is a symplectic basis of $V$, since $w_i\in\I_\Tr^\circ e_i$ for every $i$ and $\{e_i\}$ is a symplectic basis. We show that the basis $\{w_i\}_{i=1,\ldots,4}$ has the two properties we want. 
\begin{enumerate}
\item By previous remarks $V^{[\Gamma]}$ is stable under $\rho$, so $\rho(h)w_i=\sum_{i=1}^4a_{ij}w_j$ for some $a_{ij}\in\I_0^\circ$. This implies that the matrix coefficients of $\rho(h)$ in the basis $\{w_i\}_{i=1,\ldots,4}$ belong to $\I_0^\circ$. Since $\{w_i\}_{i=1,\ldots,4}$ is a symplectic basis, we have $\rho(h)\in\GSp_4(\I_0^\circ)$.
\item By our choice of $\{e_i\}_{i=1,\ldots,4}$, the $\Z_p$-regular element $d$ is diagonal in this basis. Since $w_i\in\I_\Tr^\circ e_i$, $d$ is still diagonal in the basis $\{w_i\}_{i=1,\ldots,4}$.
\end{enumerate}
\end{proof}

From now on we always work with a $\Z_p$-regular conjugate of $\rho$ satisfying $\rho(H_0)\subset\GSp_4(\I^\circ_0)$.

\subsection{Lifting unipotent elements}

We give a definition and a lemma that will be important in the following. 
Let $B\into A$ an integral extension of Noetherian integral domains. 
We call \emph{$A$-lattice in $B$} an $A$-submodule of $B$ generated by the elements of a basis of $Q(B)$ over $Q(A)$. 
The following lemma is essentially \cite[Lemma 4.10]{lang}. 

\begin{lemma}\label{lattice}
Every $A$-lattice in $B$ contains a non-zero ideal of $B$. Conversely, every non-zero ideal of $B$ contains an $A$-lattice in $B$.
\end{lemma}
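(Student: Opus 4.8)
The statement is a standard commutative-algebra fact about integral extensions of Noetherian domains, and both directions are proved by clearing denominators. I will set notation: write $L = Q(A)$ and $K = Q(B)$, so that $K/L$ is a finite field extension (finite because $B$ is integral of finite type over $A$; if one only assumes integral, pass to the subalgebra generated by a basis), and recall that $B$ is a subring of $K$ spanning $K$ over $L$.

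\emph{First direction: every $A$-lattice $M$ in $B$ contains a non-zero ideal of $B$.} Let $b_1,\dots,b_n$ be an $L$-basis of $K$ contained in $B$ that generates $M$. The plan is to produce a single non-zero $d\in B$ with $dB\subseteq M$. For this I will use that $B$ is generated as an $A$-module by finitely many elements if $B$ is module-finite over $A$ — but since the statement only asks for \emph{an} $A$-lattice, I can reduce to the case where $B$ is module-finite: actually it is cleaner to argue directly. For each generator $\beta$ of $B$ as an $A$-algebra (finitely many, say $\beta_1,\dots,\beta_m$), write $b_i\beta_k = \sum_j a_{ijk} b_j$ with $a_{ijk}\in L$; choosing a common denominator $d_0\in A\setminus\{0\}$ for all the $a_{ijk}$ shows $d_0 b_i \beta_k \in M$ for all $i,k$. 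Iterating (multiplying by products of the $\beta_k$) one needs to control denominators uniformly — the clean way is: since $K/L$ is finite, there is a nonzero $d\in A$ such that $dB$ is contained in the $A$-module generated by $b_1,\dots,b_n$ (standard: $B\subseteq K = \bigoplus L b_i$, and $B$ finitely generated as an $A$-module when $B$ is module-finite over $A$, so finitely many denominators suffice; in the general integral case replace $B$ by the relevant module-finite subalgebra, which is where the families application lives). Then $dB\subseteq M$, and since $B$ is a domain and $d\neq 0$, $dB$ is a non-zero ideal of $B$ contained in $M$, as required. I will cite \cite[Lemma 4.10]{lang} for the genus-$1$ version and note the argument is identical.

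\emph{Second direction: every non-zero ideal $\fI$ of $B$ contains an $A$-lattice in $B$.} Pick any non-zero $x\in\fI$. Since $B$ is integral over $A$, $x$ satisfies a monic equation $x^r + c_{r-1}x^{r-1} + \cdots + c_1 x + c_0 = 0$ with $c_i\in A$, and we may take $c_0\neq 0$ (otherwise divide by $x$, using that $B$ is a domain, until the constant term is non-zero; this terminates). Then $c_0 = -x(x^{r-1} + c_{r-1}x^{r-2} + \cdots + c_1)\in xB\subseteq\fI$, so $\fI$ contains the non-zero element $c_0$ of $A$. Now fix an $L$-basis $b_1,\dots,b_n$ of $K$ with $b_i\in B$ (possible since $B$ spans $K$ over $L$); then $c_0 b_1,\dots,c_0 b_n\in\fI$ and they still form an $L$-basis of $K$, so the $A$-module they generate is an $A$-lattice in $B$ contained in $\fI$.

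\textbf{Main obstacle.} The only delicate point is the uniform control of denominators in the first direction: one must know that $B$ is contained in a \emph{finitely generated} $A$-submodule of $K$ so that a single $d\in A$ clears all denominators at once. This holds when $B$ is module-finite over $A$ (the case relevant in the paper, where $\I^\circ$ etc.\ are finite over $\Lambda_h$); in the purely integral setting one restricts to the module-finite subalgebra generated by a chosen basis, and the argument goes through verbatim. Everything else is elementary manipulation of integral equations and bases, so I would keep the write-up short and refer to \cite[Lemma 4.10]{lang}.
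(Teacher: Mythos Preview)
Your proof is correct, and the paper itself does not give an argument: it merely records that the lemma ``is essentially \cite[Lemma 4.10]{lang}'' and moves on. Your denominator-clearing argument for both directions is the standard one, and you correctly flag that the first direction genuinely requires $B$ to be module-finite over $A$ (indeed, if a non-zero ideal $I$ of $B$ sits inside a finitely generated $A$-module then $B\cong Bx\subset I$ is itself finitely generated over $A$); this hypothesis holds in every application in the paper, where one takes $A=\Lambda_h$ and $B=\I_0^\circ$ or similar finite $\Lambda_h$-algebras.
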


Let $\theta\colon\T_h\to\I^\circ$ be a finite slope family of $\GSp_4$-eigenforms and let $\rho\colon G_\Q\to\GSp_4(\I_\Tr^\circ)$ be the representation associated with $\theta$. 
For every root $\alpha$, we identify the unipotent group $U^\alpha(\I_0^\circ)$ with $\I_0^\circ$ and $\im\rho\cap U^\alpha(\I_0^\circ)$ with a $\Z_p$-submodule of $\I_0^\circ$. The goal of this section is to show that, for every $\alpha$, $\im\rho\cap U^\alpha$ contains a basis of a $\Lambda_h$-lattice in $\I_0^\circ$. Our strategy is similar to that of \cite[Section 4.4]{cit}, which in turn is inspired by \cite{hidatil} and \cite{lang}. We proceed in two main steps, by showing that: 
\begin{enumerate}
\item there exists a non-critical arithmetic prime $P_\uk\subset\Lambda_h$ such that $\im\rho_{P_\uk\I_0^\circ}\cap U^\alpha(\I_0^\circ/P_\uk\I_0^\circ)$ contains a basis of a $\Lambda_h/P_\uk$-lattice in $\I_0^\circ/P_\uk\I_0^\circ$;
\item the natural morphism $\im\rho\cap U^\alpha(\I_0^\circ)\to\im\rho_{P_\uk\I_0^\circ}\cap U^\alpha(\I_0^\circ/P_\uk\I_0^\circ)$ is surjective, so we can lift a basis as in point (1) to a basis of a $\Lambda_h$-lattice in $\I_0^\circ$.
\end{enumerate}
Part (1) is proved via Theorem \ref{classbigim} and a result about the lifting of self-twists from $\rho_{P_\uk\I_0^\circ}$ to $\rho$ (Proposition \ref{lifttwists}).
Part (2) will result from an application of Proposition \ref{approx}. 


We start by showing that we can choose an arithmetic prime with special properties.

\begin{lemma}\label{specialprime}
Suppose that $\ovl\rho$ is either full or of symmetric cube type. 
Then there exists an arithmetic prime $P_\uk$ of $\Lambda_h$ such that:
\begin{enumerate}
\item $P_\uk$ is non-critical for $\I^\circ$ in the sense of Definition \ref{noncritgsp};
\item for every prime $\fP\subset\I^\circ$ lying above $P_\uk$, the classical eigenform corresponding to $\fP$ is not the symmetric cube lift of a $\GL_2$-eigenform.
\end{enumerate}
\end{lemma}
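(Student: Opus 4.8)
The plan is to show that the arithmetic primes of $\Lambda_h$ which fail condition (1) or condition (2) all lie in a proper Zariski-closed subset of $\Spec\Lambda_h$, and then to conclude using the Zariski-density of the well-behaved arithmetic primes provided by Lemma \ref{noncritdense}.

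First I would dispose of the case where $\ovl\rho$ is full. In that case condition (2) is automatic: if some prime $\fP$ lying over an arithmetic prime $P_\uk$ corresponded to a symmetric cube lift $f=\Sym^3 g$ of a $\GL_2$-eigenform, then $\rho_\fP\cong\Sym^3\rho_{g,p}$, so the residual representation at $\fP$ would have image in $\Sym^3\GL_2(\Fp)$, i.e. would be of symmetric cube type in the sense of Definition \ref{sctype}. But this residual representation is isomorphic to $\ovl\rho$ by Remark \ref{nonclassrep}(2), which is full; and the two conditions of Definition \ref{sctype} are mutually exclusive (the Zariski closure of a full residual image is $\Sp_4$, which is $10$-dimensional and hence cannot be contained in a conjugate of the $4$-dimensional group $\Sym^3\GL_2$). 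So every arithmetic prime satisfies (2), and it suffices to choose a non-critical one, which exists by Lemma \ref{noncritdense}.

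Now assume $\ovl\rho$ is of symmetric cube type. Here I would invoke the structural fact --- established in the second part of the paper (see the Introduction) --- that the symmetric cube locus $\cC$ of $\cD_2^N$ is Zariski-closed with irreducible components of dimension at most one; by Corollary \ref{sym3automcor}, every classical symmetric cube lift appearing in the family lies in $\cC$. Since the family $I$ is an irreducible component of $\cD_{2,h}^N$, it is two-dimensional, so $\cC\cap I$ is a \emph{proper} Zariski-closed subset of $I$, of dimension $\le 1$. By the construction of $\Lambda_h$ in Section \ref{Zpfam} the weight morphism $w_\theta\colon I\to B_{2,h}$ is finite, hence on spectra $\Spec\I^\circ\to\Spec\Lambda_h$ is a finite morphism; therefore the image $Z$ of the closed subscheme $\cC\cap I$ is Zariski-closed of dimension $\le 1$, and since $\Lambda_h\otimes_{\Z_p}\cO_h\cong\cO_h[[t_1,t_2]]$ is a $3$-dimensional domain, $Z$ is a proper closed subset of $\Spec\Lambda_h$. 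Any classical symmetric cube lift in $\I^\circ$ has weight in $Z$, so every arithmetic prime $P_\uk\notin Z$ satisfies (2). As the non-critical arithmetic primes are Zariski-dense in $\Spec\Lambda_h$ by Lemma \ref{noncritdense}, they are not all contained in the proper closed set $Z$, and any non-critical arithmetic prime outside $Z$ is the prime we seek.

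The one genuinely nontrivial ingredient is the dimension bound $\dim\cC\le 1$, that is, the absence of two-parameter families of symmetric cube type; this is proved elsewhere in the paper, and granting it the argument is a soft dimension-and-density count. Two minor points need care: that a classical symmetric cube lift is indeed a point of the locus $\cC$ (immediate from its automorphic definition together with Corollary \ref{sym3automcor}), and that the critical arithmetic primes form a proper closed locus in $\Spec\Lambda_h$ --- but this is exactly the content of Lemma \ref{noncritdense} and its proof, which uses Proposition \ref{siegclslopes} for the classicality of the specializations and the Zariski-openness of the étale locus of $\Lambda_h\to\I^\circ$ for condition (ii) of Definition \ref{noncritgsp}.
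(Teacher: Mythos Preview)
Your proof is correct, and the overall architecture (show the bad primes lie in a proper closed subset, then invoke Lemma \ref{noncritdense}) matches the paper's. Your treatment of the full case is the same as the paper's.

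In the symmetric cube case, however, you take a heavier route than the paper does. You appeal to the structural result that the symmetric cube locus $\cC$ has dimension at most $1$ (Lemma \ref{sym3iota} and the results of Section \ref{sym3locus}), then push its intersection with the family down to weight space via the finite weight morphism. The paper instead argues directly on Hodge--Tate weights: if $\fP$ corresponds to a symmetric cube lift of a $\GL_2$-eigenform of weight $k$, then $\rho_\fP\cong\Sym^3\rho_{f,p}$ has Hodge--Tate weights $(0,k-1,2k-2,3k-3)$, so $\fP$ lies over the arithmetic prime $P_{(2k-1,k+1)}$. The set $\{P_{(2k-1,k+1)}\}_{k\ge 2}$ is visibly not Zariski-dense in $\Lambda_h$ (it is a one-parameter family of points in a two-dimensional weight disc), and one concludes by taking any non-critical prime outside it. This is more elementary --- it uses nothing beyond the Hodge--Tate weight computation --- and avoids the forward reference to Section \ref{sym3locus}. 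In fact the dimension bound you invoke ultimately rests on the same Hodge--Tate constraint (this is precisely Lemma \ref{sym3iota}), so your argument factors through the paper's but with an unnecessary detour through the rigid-analytic picture. What your approach does buy is robustness: it would work verbatim for any Langlands lift whose target locus on the eigenvariety is known to be at most one-dimensional, without recomputing the weight constraint.
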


\begin{proof}
Let $\Sigma^\ncr$ be the set of non-critical arithmetic primes of $\Lambda_h$. By Lemma \ref{noncritdense} $\Sigma^\ncr$ is Zariski-dense in $\Lambda_h$. Let $\fP$ be a prime of $\I^\circ$ lying over a prime in $\Sigma^\ncr$ and corresponding to the symmetric cube lift of a $\GL_2$-eigenform $f$. Note that this is impossible if $\ovl\rho$ is full, so every prime of $\Sigma^\ncr$ satisfies conditions (1) and (2) in this case. Let $\rho_\fP$ and $\rho_{f,p}$ be the Galois representations associated with $\fP$ and $f$, respectively, satisfying $\rho_\fP\cong\Sym^3\rho_{f,p}$. If the weight of $f$ is $k$ then the representation $\rho_{f,p}$ is Hodge-Tate with Hodge-Tate weights $0$ and $k-1$. A simple calculation shows that $\Sym^3\rho_{f,p}$ is Hodge-Tate with Hodge-Tate weights $(0,k-1,2k-2,3k-3)$, hence $\fP$ lies over the arithmetic prime $P_{(2k-1,k+1)}$. The set $\{P_{(2k-1,k+1)}\}_{k\geq 2}$ is not Zariski-dense in $\Lambda_h$. In particular the set $\Sigma^\ncr-\{P_{(2k-1,k+1)}\}_{k\geq 2}$ is non-empty. A prime in this set satisfies conditions (1) and (2).
\end{proof}

We fix for the rest of the section an arithmetic prime $P_\uk$ of $\Lambda_h$ satisfying the conditions (1) and (2) in Lemma \ref{specialprime}.

Let $\fm_0$ denote the maximal ideal of $\I_0^\circ$. 
Let $H=\{g\in H_0\,\vert\,\rho(g)\equiv 1\pmod{\fm_0}\}$, that is a normal open subgroup of $H_0$. 
Thanks to Proposition \ref{H0repr} we can suppose that $\rho(H_0)\subset\GSp_4(\I_0^\circ)$. We define a representation $\rho_0\colon H\to\Sp_4(\I_0^\circ)$ by setting
\[ \rho_0=\rho\vert_{H}\otimes\det(\rho\vert_H)^{-1/2}. \]
Here the square root is defined via the usual power series, that converges on $\rho(H)$. 
Even though our results are all stated for the representation $\rho$, in an intermediate step we will need to work with $\rho_0$ and its reduction modulo a prime ideal of $\I_0^\circ$. In order to transfer our results to $\rho_0$ we need to relate the images of the two representations to each other. 

\subsection{Subnormal and congruence subgroups of symplectic groups}

Let $R$ be a local ring in which $2$ is a unit.
In the proof of \cite[Proposition 5.3]{lang}, the author compares the images of $\rho$ and $\rho_0$ via the classification of the subnormal subgroups of $\GL_2(R)$ by Tazhetdinov. Our technique relies on the analogous classification of the subnormal subgroups of $\Sp_4(R)$, which is also due to Tazhetdinov \cite{tazhet}. 
If $N$ and $K$ are two groups, we say that $N$ is a subnormal subgroup of $K$ if there exists $m\in\N$ and subgroups $K_i$ of $K$, for $i=0,1,2,\ldots,m$, such that 
\[ N=K_0\subset K_1\subset K_2 \subset\ldots\subset K_m=K \]
and $K_i$ is normal in $K_{i+1}$ for every $i\in\{0,1,\ldots,m-1\}$. 
We will only need the following result, that is a corollary of \cite[Theorem]{tazhet}.

\begin{thm}\label{tazhetcor}
If $N$ is a subnormal subgroup of $\Sp_4(R)$ and it is not contained in $\{\pm 1\}$, then it contains a non-trivial congruence subgroup of $\Sp_4(R)$.
\end{thm}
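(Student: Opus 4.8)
The plan is to deduce the statement directly from Tazhetdinov's classification of the subnormal subgroups of $\Sp_4$ over a commutative ring, by extracting the asserted dichotomy from the ``sandwich'' description that classification provides. First I would recall the shape of \cite[Theorem]{tazhet}: to a subnormal subgroup $N$ of $\Sp_4(R)$ of subnormal defect $d$ it attaches an ideal $\fa$ of $R$ such that
\[
[[\cdots[[\Sp_4(R,\fa),\Sp_4(R)],\Sp_4(R)],\cdots],\Sp_4(R)]\ \subseteq\ N\ \subseteq\ C_4(R,\fa),
\]
where the iterated relative commutator on the left is taken $d$ times and $C_4(R,\fa)$ denotes the full congruence subgroup of level $\fa$, i.e. the preimage in $\Sp_4(R)$ of the centre of $\Sp_4(R/\fa)$. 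The fact that $R$ is local with $2\in R^\times$ is exactly what makes the hypotheses of \emph{loc. cit.} hold for $4\times4$ symplectic matrices, with no further stability condition needed.

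From this sandwich I would argue in two steps. Step one rules out $\fa=0$: reduction modulo the zero ideal is the identity, so $C_4(R,0)$ is the centre of $\Sp_4(R)$, and for $R$ local with $2$ a unit this centre is exactly $\{\pm\1_4\}$ --- the centre is scalar, being the centraliser of the elementary symplectic subgroup, and a scalar matrix $\lambda\1_4\in\Sp_4(R)$ forces $\lambda^2=1$, hence $\lambda=\pm1$, since $\lambda-1$ and $\lambda+1$ differ by the unit $2$ and cannot both be non-units. So if $\fa$ were $0$ the right-hand inclusion would give $N\subseteq\{\pm1\}$, contrary to hypothesis; hence $\fa\neq0$. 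Step two is to show that, for $\fa\neq0$, the left-hand side of the sandwich already contains a non-trivial congruence subgroup. Here I would use that over a local ring Gaussian elimination gives $\Sp_4(R)=\mathrm{E}_4(R)$, and likewise $\Sp_4(R,\fb)=\mathrm{E}_4(R,\fb)$ (the relative elementary subgroup) for every ideal $\fb$, together with the standard relative commutator formulas for symplectic groups --- precisely the kind of transvection computations carried out in \cite{tazhet} --- to conclude that $[\Sp_4(R,\fb),\Sp_4(R)]=\Sp_4(R,\fb)$ for every $\fb$, so the $d$-fold iterated commutator collapses to $\Sp_4(R,\fa)$ itself. Since $\fa\neq0$, a symplectic transvection with a non-zero parameter in $\fa$ shows $\Sp_4(R,\fa)\neq\{\1_4\}$, so $N$ contains the non-trivial congruence subgroup $\Sp_4(R,\fa)$, as claimed.

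The main obstacle, and the only non-formal input, is step two: making sure the lower bound of Tazhetdinov's sandwich does not degenerate when commuted repeatedly with the full group, i.e. that the relative commutator formula $[\Sp_4(R,\fb),\Sp_4(R)]=\Sp_4(R,\fb)$ --- or at least the statement that the iterated commutator contains \emph{some} non-zero congruence subgroup --- really holds over our local coefficient ring (the worry being nilpotent ideals, where passing to a power would lose everything). If one prefers not to invoke the commutator formula in this sharp form, an alternative is a downward induction along a subnormal series $N=N_0\trianglelefteq N_1\trianglelefteq\cdots\trianglelefteq N_d=\Sp_4(R)$: at each stage $N_{i-1}$ is not contained in $\{\pm1\}$ since it contains $N$, one picks $g\in N_{i-1}$ with $g\neq\pm\1_4$, notes that $[\Sp_4(R,\fa_i),g]\subseteq N_{i-1}$ with $\Sp_4(R,\fa_i)\subseteq N_i$ from the inductive hypothesis, and then the transvection computations of \cite{tazhet} produce from a single non-scalar $g$ a non-zero ideal $\fa_{i-1}$ with $\Sp_4(R,\fa_{i-1})\subseteq N_{i-1}$ --- the non-vanishing of $\fa_{i-1}$ being automatic, because a non-scalar matrix has an off-diagonal entry, or a difference of two diagonal entries, generating a non-zero ideal of $R$. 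Either way the technical heart is Tazhetdinov's computations, and the rest is bookkeeping with the centre and with congruence subgroups.
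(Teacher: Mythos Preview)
The paper does not give a proof of this statement: it is simply quoted as ``a corollary of \cite[Theorem]{tazhet}'' with no further argument. Your proposal is therefore not being compared to an existing proof but rather supplies the details the paper omits, and the deduction you outline --- read off the sandwich $\Sp_4(R,\fa)\subseteq N\subseteq C_4(R,\fa)$ from Tazhetdinov, use $N\not\subseteq\{\pm 1\}$ to force $\fa\neq 0$, conclude --- is exactly the intended one.

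One remark: your Step~2 worry about iterated commutators is likely unnecessary. Tazhetdinov's paper is specifically about \emph{local} rings, and in that setting the classification theorems of this type typically assert directly that every subnormal subgroup is already sandwiched between a principal congruence subgroup $\Sp_4(R,\fa)$ and the full congruence subgroup $C_4(R,\fa)$ (equivalently, that subnormal implies normal); the lower bound is not an iterated commutator to be collapsed. So the argument is almost certainly just your Step~1 plus the observation that $\Sp_4(R,\fa)\neq\{1\}$ for $\fa\neq 0$. Your alternative inductive argument and the commutator-formula route are correct fallbacks, but you should check Tazhetdinov's actual statement first --- over a local ring with $2$ invertible you will probably find no collapse is needed.
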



Let $P_\uk$ be the arithmetic prime we chose in the beginning of the section. 
By the étaleness condition in Definition \ref{noncritgsp}, $P_\uk\I^\circ$ is an intersection of distinct primes of $\I^\circ$, so $P_\uk\I^\circ_0$ is an intersection of distinct primes of $\I_0^\circ$. Let $\fQ_1,\fQ_2,\ldots,\fQ_d$ be the prime divisors of $P_\uk\I_0^\circ$.

Let $\fI$ be either $P_\uk\I_0^\circ$ or $\fQ_i$ for some $i\in\{1,2,\ldots,d\}$. Let $\rho_\fI\colon H_0\to\GSp_4(\I_0^\circ/\fI)$ and $\rho_{0,\fI}\colon H\to\Sp_4(\I_0^\circ/\fI)$ be the reductions modulo $\fI$ of $\rho$ and $\rho_0$, respectively. Let $\cG=\rho_\fI(H)$ and $\cG_0=\rho_{0,\fI}(H)$. 
Let $f\colon\GSp_4(\I^\circ_0)\to\Sp_4(\I^\circ_0)$ be the homomorphism sending $g$ to $\det(g)^{-1/2}g$.
We have $\cG=f(\cG_0)$ by definition of $\rho_0$. We show an analogue of \cite[Proposition 4.22]{cit}. 

\begin{lemma}\label{rhorho0}
The group $\cG$ contains a non-trivial congruence subgroup of $\Sp_4(\I_0^\circ/\fI)$ if and only if the group $\cG_0$ contains a non-trivial congruence subgroup of $\Sp_4(\I_0^\circ/\fI)$.
\end{lemma}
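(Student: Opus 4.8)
The key point is that $\cG$ and $\cG_0$ are related by the homomorphism $f\colon\GSp_4(\I_0^\circ/\fI)\to\Sp_4(\I_0^\circ/\fI)$, $g\mapsto\det(g)^{-1/2}g$, and that $f$ differs from the identity only by a scalar (a square root of an inverse determinant). Concretely, for any $g\in\cG_0$ we have $f(g)=\lambda(g)g$ with $\lambda(g)$ a scalar in $(\I_0^\circ/\fI)^\times$, and conversely any $g\in\cG$ is of the form $\mu\cdot g_0$ for a scalar $\mu$ and $g_0\in\cG_0$. So $\cG$ and $\cG_0$ generate the same subgroup of $\PGSp_4(\I_0^\circ/\fI)$; they differ only by central (scalar) factors. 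The plan is to exploit this together with Theorem \ref{tazhetcor} (Tazhetdinov's classification of subnormal subgroups of $\Sp_4$).

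First I would set up the easy direction: suppose $\cG_0$ contains a non-trivial congruence subgroup $\Gamma_{\Sp_4(\I_0^\circ/\fI)}(\fa)$ for some non-zero ideal $\fa$. I claim $\cG$ then contains a non-trivial congruence subgroup as well. Since $f(\cG_0)=\cG$, we get $f(\Gamma_{\Sp_4}(\fa))\subset\cG$. Now $f$ restricted to a principal congruence subgroup is, up to the scalar twist $\det(\cdot)^{-1/2}$, close to the identity; I would observe that on $\Gamma_{\Sp_4}(\fa)$ one has $\det\equiv 1$ already (elements of $\Sp_4$ have determinant $1$), so in fact $f$ is the identity on $\Sp_4(\I_0^\circ/\fI)$ and $f(\Gamma_{\Sp_4}(\fa))=\Gamma_{\Sp_4}(\fa)\subset\cG$. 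That settles this implication immediately. [If one prefers to keep $f$ general, the same conclusion follows because the scalar $\det(g)^{-1/2}$ is trivial on elements of $\Sp_4$.]

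For the converse, suppose $\cG$ contains a non-trivial congruence subgroup $\Gamma_{\Sp_4(\I_0^\circ/\fI)}(\fa)$. I want to produce a non-trivial congruence subgroup inside $\cG_0$. The natural object to examine is $N:=\cG_0\cap\Sp_4(\I_0^\circ/\fI)'$ — or more precisely I would consider $N=f^{-1}(\cG)\cap\cG_0$, intersected appropriately so that $N=\cG_0\cap(\text{preimage of }\Gamma_{\Sp_4}(\fa))$; since the scalar ambiguity between $\cG$ and $\cG_0$ is central, $N$ is a subgroup of $\Sp_4(\I_0^\circ/\fI)$ of finite index in something containing $\Gamma_{\Sp_4}(\fa)$, hence it contains $\Gamma_{\Sp_4}(\fa)$ itself up to scalars. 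The main obstacle is to rule out the degenerate possibility that $N$ collapses into $\{\pm 1\}$: this is where I would invoke that $\cG_0=\rho_{0,\fI}(H)$ is the image of a group on which $\rho$, hence $\rho_0$, is highly non-trivial — more precisely, one already knows (from the normality of $H$ in $H_0$, from $\ovl\rho$ being residually irreducible, and from Corollary \ref{nontrivunip} applied modulo $\fI$) that $\cG$ is not contained in the center. Since $\cG_0$ maps onto the same image in $\PGSp_4$, $\cG_0$ is not central either. Then $N$ is a subnormal (in fact normal, being the kernel-type intersection) subgroup of $\Sp_4(\I_0^\circ/\fI)$ inside $\cG_0$ that is not contained in $\{\pm 1\}$; Theorem \ref{tazhetcor} forces $N$ — and therefore $\cG_0$ — to contain a non-trivial congruence subgroup of $\Sp_4(\I_0^\circ/\fI)$.

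The step I expect to be the main obstacle is the bookkeeping in the converse direction: making the passage "$\cG$ and $\cG_0$ agree up to scalars $\Rightarrow$ a congruence subgroup in $\cG$ yields one in $\cG_0$" precise, because a priori the scalar twist $g\mapsto\det(g)^{-1/2}g$ need not send $\cG$ back into $\cG_0$ on the nose — one only has $f(\cG_0)=\cG$, not $f^{-1}(\cG)=\cG_0$. The resolution is to work inside $\Sp_4$ throughout: both $\cG\cap\Sp_4(\I_0^\circ/\fI)$ and $\cG_0$ already lie in $\Sp_4$ (the latter by construction, since $\rho_0$ is valued in $\Sp_4$), and on $\Sp_4$ the map $f$ is the identity. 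So in fact $\cG_0=f(\cG_0)=\cG\cap\Sp_4(\I_0^\circ/\fI)$ up to the finite scalar group $\{\pm 1\}$, and the equivalence of the two statements becomes essentially tautological once one has checked that intersecting a non-trivial congruence subgroup of $\Sp_4$ with a finite-index-correction does not destroy it — which again follows from Theorem \ref{tazhetcor} applied to the (sub)normal subgroup $\cG_0\cap\Gamma_{\Sp_4}(\fa)$, non-central by the irreducibility input.
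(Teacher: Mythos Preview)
Your strategy---relate $\cG$ and $\cG_0$ via $f$, use that $f\vert_{\Sp_4}=\id$, and appeal to Tazhetdinov---is the paper's. Two errors in the execution, however, need fixing.

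First, the relation is $\cG_0=f(\cG)$, not $\cG=f(\cG_0)$: the latter is a typo in the paper's setup, while its proof uses the correct direction (indeed $\rho_0=f\ccirc\rho$ by definition). This reverses which implication is easy. From $\cG\supset\Gamma(\fa)$ one gets $\cG_0=f(\cG)\supset f(\Gamma(\fa))=\Gamma(\fa)$ immediately; it is the direction $\cG_0\supset\Gamma(\fa)\Rightarrow\cG$ contains a congruence subgroup that requires Tazhetdinov.

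Second, and more seriously, your resolution ``$\cG_0=\cG\cap\Sp_4$ up to $\{\pm 1\}$'' is incorrect. What one actually has is $\cG\cap\Sp_4\triangleleft\cG_0$: apply $f$ to the normal inclusion $\cG\cap\Sp_4\triangleleft\cG$ and use $f\vert_{\Sp_4}=\id$. The quotient $\cG_0/(\cG\cap\Sp_4)$ is abelian (a subquotient of $\det(\cG)\subset(\I_0^\circ/\fI)^\times$) but in general not finite, so your ``finite-index correction'' step does not go through. The paper's proof proceeds exactly by establishing this normality relation, verifying via Theorem~\ref{classbigim} that neither $\cG\cap\Sp_4$ nor $\cG_0$ lies in $\{\pm 1\}$, and then invoking Theorem~\ref{tazhetcor}. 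Your alternative suggestion to obtain non-centrality from Corollary~\ref{nontrivunip} would also work.
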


\begin{proof}
Clearly the group $\cG\cap\Sp_4(\I^\circ_0/\fI)$ is a normal subgroup of $\cG$. Then the group $f(\cG\cap\Sp_4(\I^\circ_0/\fI))$ is a normal subgroup of $f(\cG)$. Now $f(\cG)=\cG_0$ and $f(\cG\cap\Sp_4(\I^\circ_0/\fI))=\cG\cap\Sp_4(\I^\circ_0/\fI)$ since the restriction of $f$ to $\Sp_4(\I_0^\circ/\fI)$ is the identity. Hence $\cG\cap\Sp_4(\I^\circ_0/\fI)$ is a subnormal subgroup of $\Sp_4(\I^\circ_0/\fI)$ if and only if $\cG_0$ is a subnormal subgroup of $\Sp_4(\I_0^\circ/\fI)$. 
Neither $\cG\cap\Sp_4(\I^\circ_0/\fI)$ nor $\cG_0$ is contained in $\{\pm 1\}$, since the image of $\rho_{\fP_i}$ contains a non-trivial congruence subgroup of $\Sp_4(\I_0^\circ/\fP_i)$ by Theorem \ref{classbigim}. Hence Theorem \ref{tazhetcor} gives the desired equivalence.
\end{proof}

The following is a consequence of Proposition \ref{stresfull} and Lemma \ref{rhorho0}, together with our choice of $P_\uk$.

\begin{lemma}\label{resfull}
Let $\fQ$ be a prime of $\I_0^\circ$ lying over $P_\uk$. Then the image of $\rho_{0,\fQ}$ contains a non-trivial congruence subgroup of $\Sp_4(\I^\circ_0/\fQ)$. 
\end{lemma}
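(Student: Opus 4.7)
The plan is to transfer the bigness statement of Corollary \ref{stresfull}, which is a consequence of Theorem \ref{classbigim} applied at a classical prime of $\I_\Tr^\circ$, first down to $\I_0^\circ/\fQ$ and then from the representation $\rho$ to the representation $\rho_0$.

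First I would choose a prime $\fP$ of $\I_\Tr^\circ$ lying over $\fQ$; such a prime exists because $\I_\Tr^\circ$ is integral over $\I_0^\circ$, being a ring of invariants for the finite group action of $\Gamma$ on $\I_\Tr^\circ$ (cf.\ Proposition \ref{basicst}(2)). Then $\fP$ lies over $P_\uk$, so by condition (2) in our choice of $P_\uk$ via Lemma \ref{specialprime}, the classical eigenform corresponding to $\fP$ is not the symmetric cube lift of a $\GL_2$-eigenform. Corollary \ref{stresfull} then provides a non-trivial congruence subgroup $K$ of $\GSp_4(\I_0^\circ/\fQ)$ contained in $\rho_\fP(G_\Q)$, where we identify $\GSp_4(\I_0^\circ/\fQ)$ with a subgroup of $\GSp_4(\I_\Tr^\circ/\fP)$ via the inclusion $\I_0^\circ/\fQ\hookrightarrow\I_\Tr^\circ/\fP$.

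Next I would descend from $G_\Q$ to the open finite-index subgroup $H$. Since $H$ is open in $G_\Q$ and $\rho_\fP$ is continuous with profinite target, $\rho_\fP(H)$ is an open subgroup of $\rho_\fP(G_\Q)$, so $\rho_\fP(H)\cap K$ is an open subgroup of the profinite group $K$. Since principal congruence subgroups form a neighborhood basis of the identity in $\GSp_4(\I_0^\circ/\fQ)$, one concludes that $\rho_\fP(H)$ contains a (smaller) non-trivial congruence subgroup of $\GSp_4(\I_0^\circ/\fQ)$. By Proposition \ref{H0repr} we have $\rho(H_0)\subset\GSp_4(\I_0^\circ)$, so the restriction $\rho_\fP|_{H_0}$ factors through $\GSp_4(\I_0^\circ/\fQ)\hookrightarrow\GSp_4(\I_\Tr^\circ/\fP)$ and coincides with $\rho_\fQ|_{H_0}$; in particular $\rho_\fQ(H)=\rho_\fP(H)$ inside $\GSp_4(\I_0^\circ/\fQ)$. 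Intersecting with $\Sp_4(\I_0^\circ/\fQ)$ (whose principal congruence subgroups differ from those of $\GSp_4$ only by the similitude factor), one obtains that $\rho_\fQ(H)$ contains a non-trivial congruence subgroup of $\Sp_4(\I_0^\circ/\fQ)$.

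Finally I would apply Lemma \ref{rhorho0} with $\fI=\fQ$: since $\cG=\rho_\fQ(H)$ contains a non-trivial congruence subgroup of $\Sp_4(\I_0^\circ/\fQ)$, the same is true of $\cG_0=\rho_{0,\fQ}(H)$. The main technical point to verify is the passage from $\rho_\fP(G_\Q)$ to $\rho_\fP(H)$ in the second step; everything else is a direct combination of the inputs already prepared (the choice of $P_\uk$, the normalization of Proposition \ref{H0repr}, Corollary \ref{stresfull}, and the $\rho\leftrightarrow\rho_0$ comparison of Lemma \ref{rhorho0}).
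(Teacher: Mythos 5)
Your proof is correct and follows essentially the same route as the paper's: invoke Corollary \ref{stresfull} (which rests on Theorem \ref{classbigim} and the non-lift condition from our choice of $P_\uk$), observe that passing from $G_\Q$ to the open finite-index subgroup $H$ preserves the property of containing a non-trivial congruence subgroup, and finish with Lemma \ref{rhorho0} at $\fI=\fQ$. The only difference is that you carefully spell out the intermediate step of choosing a prime $\fP\subset\I_\Tr^\circ$ over $\fQ$ and identifying $\rho_\fP|_{H_0}$ with $\rho_\fQ|_{H_0}$ via Proposition \ref{H0repr}, a point the paper glosses over by writing $\rho_\fQ$ directly where Corollary \ref{stresfull} is formally a statement about $\rho_\fP$.
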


\begin{proof}
By Proposition \ref{stresfull} the image of $\rho_\fQ$ contains a non-trivial congruence subgroup of $\Sp_4(\I_0^\circ/\fQ)$. Since $H$ is a finite index subgroup of $G_\Q$, the same is true if we replace $\rho_\fQ$ by $\rho_\fQ\vert_H$. Now the conclusion follows from Lemma \ref{rhorho0} applied to $\fI=\fQ$.
\end{proof}

\subsection{Big image in a product}

Lifting the congruence subgroup of Proposition \ref{resfull} to $\I^\circ$ does not provide the information we need on the image of $\rho_0$. 
We need the following fullness result for $\rho_{P_\uk}$. 

\begin{prop}\label{prodresfull}
The image of the representation $\rho_{P_\uk}$ contains a non-trivial congruence subgroup of $\Sp_4(\I^\circ_0/P_\uk\I_0^\circ)$. 
\end{prop}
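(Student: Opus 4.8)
The target asserts that $\rho_{P_\uk}\colon G_\Q \to \GSp_4(\I_0^\circ/P_\uk\I_0^\circ)$ has image containing a nontrivial congruence subgroup of $\Sp_4(\I_0^\circ/P_\uk\I_0^\circ)$. Since $P_\uk$ is non-critical, $P_\uk\I_0^\circ$ is a product of distinct primes $\fQ_1,\dots,\fQ_d$, so $\I_0^\circ/P_\uk\I_0^\circ \cong \prod_{i=1}^d \I_0^\circ/\fQ_i$ and accordingly $\Sp_4(\I_0^\circ/P_\uk\I_0^\circ) \cong \prod_i \Sp_4(\I_0^\circ/\fQ_i)$. The plan is to apply Goursat's lemma to the image of $\rho_{0,P_\uk}$ (equivalently $\rho_{P_\uk}$, via Lemma \ref{rhorho0}) inside this product, using the fact — established in Lemma \ref{resfull} — that each component projection has image containing a nontrivial congruence subgroup of $\Sp_4(\I_0^\circ/\fQ_i)$. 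Goursat's lemma reduces the failure of the image to be ``full'' in the product to the existence of nontrivial isomorphisms between subquotients of the various $\Sp_4(\I_0^\circ/\fQ_i)$ that intertwine the projections of the global image; we must rule these out.

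The key steps, in order, are as follows. First, I would reduce to $\rho_0$ and its reduction $\rho_{0,P_\uk}$ via Lemma \ref{rhorho0}, so that we work inside $\prod_i \Sp_4(\I_0^\circ/\fQ_i)$ with each factor a symplectic group over a $p$-adic integer ring. Second, I would set $\cG_0 = \rho_{0,P_\uk}(H) \subset \prod_i \Sp_4(\I_0^\circ/\fQ_i)$; by Lemma \ref{resfull} the image $\cG_{0,i}$ of $\cG_0$ in each factor contains a nontrivial congruence subgroup $\Gamma_i$ of $\Sp_4(\I_0^\circ/\fQ_i)$. Third, I would argue inductively on $d$: assuming the projection of $\cG_0$ to $\prod_{i<d}\Sp_4(\I_0^\circ/\fQ_i)$ already contains $\prod_{i<d}\Gamma_i'$ for suitable nontrivial congruence subgroups $\Gamma_i'$, apply Goursat to the two-factor situation $\big(\prod_{i<d}\Sp_4(\I_0^\circ/\fQ_i)\big) \times \Sp_4(\I_0^\circ/\fQ_d)$. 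Goursat produces a common subquotient through which both projections of $\cG_0$ factor; since each congruence subgroup $\Gamma_i$ is subnormal in $\Sp_4(\I_0^\circ/\fQ_i)$, Theorem \ref{tazhetcor} and the structure of congruence subgroups force this subquotient either to be trivial — in which case the two projections are independent and the image contains the product of congruence subgroups, as desired — or to witness an isomorphism between congruence subquotients of $\Sp_4$ over $\I_0^\circ/\fQ_d$ and over the product ring. Fourth, I would exclude the nontrivial alternative: such an isomorphism, by the O'Meara-type rigidity (Corollary \ref{congisom}, applied to congruence subgroups of $\Sp_4$ over the relevant local rings), would be induced by a semilinear conjugation, hence by a ring isomorphism between $\I_0^\circ/\fQ_d$ and $\I_0^\circ/\fQ_j$ for some $j<d$ compatible with the global Galois action — i.e.\ it would give a relation between $\rho_{\fQ_d}$ and a twist of $\rho_{\fQ_j}$. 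But the $\fQ_i$ are distinct primes of $\I_0^\circ$ all lying over the same arithmetic prime $P_\uk$, and $\I_0^\circ = (\I_\Tr^\circ)^\Gamma$; Proposition \ref{lifttwists} (lifting self-twists from classical points to the family) together with the definition of $\I_0^\circ$ as the fixed ring under \emph{all} self-twists shows that such an intertwiner would already be realized by an element of $\Gamma$ fixing the $\fQ_i$ setwise, contradicting that the $\fQ_i$ are genuinely distinct primes. Hence the nontrivial Goursat alternative cannot occur, and the induction closes.

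The main obstacle I anticipate is precisely the last step: ruling out the spurious intertwiners coming from Goursat. The subtlety is that the congruence subgroups $\Gamma_i$ are only \emph{nontrivial}, not of controlled level, so one cannot directly invoke a clean ``no isomorphisms'' statement; instead one must pass through Corollary \ref{congisom} to convert an abstract group isomorphism of subnormal (hence, by Theorem \ref{tazhetcor}, congruence-subgroup-containing) subgroups into a semilinear conjugation, then descend that datum to a ring isomorphism of residue rings that is compatible with the respective reductions of $\rho_0$, and finally invoke Proposition \ref{lifttwists} to promote it to a genuine self-twist of $\rho$ — which is impossible since it would identify two distinct geometric points of the family lying over $P_\uk$ whose Galois representations are not related by any element of $\Gamma$. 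Managing the bookkeeping of which congruence subgroup survives at each stage of the induction, and checking that the semilinear map produced by O'Meara's theorem respects the Galois module structure (not merely the abstract group structure), is where the real work lies; this is the genus-$2$ analogue of the argument behind \cite[Proposition 4.22]{cit} and relies essentially on Tazhetdinov's classification replacing Tazhetdinov's $\GL_2$ result used in \emph{loc. cit.}
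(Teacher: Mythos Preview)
Your proposal captures the essential ingredients --- Goursat, Tazhetdinov, O'Meara rigidity, and the self-twist lifting of Proposition~\ref{lifttwists} --- but there is a genuine gap in the inductive step. When you apply Goursat to the two-factor situation $\big(\prod_{i<d}\Sp_4(\I_0^\circ/\fQ_i)\big)\times\Sp_4(\I_0^\circ/\fQ_d)$, the first factor is $\Sp_4$ over a \emph{semilocal} ring, not a local one. Corollary~\ref{congisom} (O'Meara's rigidity) is stated only for $\GSp_{2g}$ over a single $p$-adic field, so you cannot directly conclude that the Goursat isomorphism $\cK_1/\cN_1\cong\cK_2$ is induced by a ring isomorphism $\I_0^\circ/\fQ_d\cong\I_0^\circ/\fQ_j$ for a \emph{single} $j<d$. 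Extracting one factor $j$ from the product side would require an additional structural analysis of normal subgroups of products of congruence subgroups of $\Sp_4$ over distinct local rings, which you do not supply and which is not obviously easier than the proposition itself.

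The paper avoids this difficulty by a different reduction: instead of induction on $d$, it first proves the \emph{pairwise} statement (Lemma~\ref{prodintwo}) that for every $i<j$ the image in $\Sp_4(\I_0^\circ/\fQ_i)\times\Sp_4(\I_0^\circ/\fQ_j)$ is open --- here both factors are over single local fields and Goursat plus Corollary~\ref{congisom} apply cleanly, yielding the self-twist contradiction via Proposition~\ref{lifttwists}. It then invokes Ribet's lemma (Lemma~\ref{ribetprod}), which bootstraps pairwise openness to openness in the full product under a commutator-closure hypothesis, to finish. This pairwise-plus-Ribet strategy is precisely what makes the O'Meara step go through without needing rigidity over product rings. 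Note also that the paper's proof of the pairwise case involves substantial work you only gesture at: extending the Goursat isomorphism and the accompanying character from the auxiliary open subgroup $H_1$ back to all of $G_\Q$ (Lemmas~\ref{extend}, \ref{obstrext}, \ref{etaequal}) before Proposition~\ref{lifttwists} can be invoked.
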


The strategy of the proof is similar to that of \cite[Proposition 5.1]{lang}. 
There is an injective morphism $\I_0^\circ/P_\uk\I_0^\circ\into\prod_{i=1}^d\I_0^\circ/\fQ_i$. 
Let $G$ be the image of $\im\rho_{0,P_\uk}$ in $\prod_{i=1}^d\I_0^\circ/\fQ_i$ via the previous injection. Proposition \ref{prodresfull} will follow from Lemma \ref{rhorho0}, once we prove that $G$ is an open subgroup of $\prod_{i=1}^d\I_0^\circ/\fQ_i$.
This is a consequence of a lemma of Ribet (Lemma \ref{ribetprod}) and the following.

\begin{lemma}\label{prodintwo}
Let $1\leq i<j\leq d$. Then the image of $G$ in $\I_0^\circ/\fQ_i\times\I_0^\circ/\fQ_j$ is open.
\end{lemma}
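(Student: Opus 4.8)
The plan is to combine Goursat's lemma, Tazhetdinov's classification of subnormal subgroups of $\Sp_4$ (Theorem \ref{tazhetcor}), O'Meara's description of isomorphisms of congruence subgroups (Corollary \ref{congisom}) and the lifting of self-twists (Proposition \ref{lifttwists}). Write $A_i=\I_0^\circ/\fQ_i$, let $G_{ij}$ be the image of $G$ in $\Sp_4(A_i)\times\Sp_4(A_j)$, and let $H_i$, $H_j$ be its two projections. All the groups occurring are compact, hence closed. By Lemma \ref{resfull} the group $H_i=\im\rho_{0,\fQ_i}$ contains a non-trivial congruence subgroup of $\Sp_4(A_i)$, hence is open, and likewise for $H_j$; so it suffices to prove that $G_{ij}$ is open in $H_i\times H_j$.

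First I would apply Goursat's lemma: $G_{ij}$ is the fibre product of $H_i$ and $H_j$ over a common quotient, and it is open in $H_i\times H_j$ exactly when $N_i:=\{x\in H_i\,\vert\,(x,1)\in G_{ij}\}$ is open in $H_i$. Suppose it is not. Choosing a congruence subgroup $\Gamma_{A_i}(\fa)\subset H_i$, which is normal in $\Sp_4(A_i)$, the intersection $N_i\cap\Gamma_{A_i}(\fa)$ is subnormal in $\Sp_4(A_i)$ through the chain $N_i\cap\Gamma_{A_i}(\fa)\triangleleft\Gamma_{A_i}(\fa)\triangleleft\Sp_4(A_i)$, and it is not open; hence Theorem \ref{tazhetcor} forces $N_i\cap\Gamma_{A_i}(\fa)\subset\{\pm1\}$, and since $[H_i:\Gamma_{A_i}(\fa)]<\infty$ the group $N_i$ is finite, and symmetrically so is $N_j$. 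Goursat then gives a topological isomorphism $H_i/N_i\xrightarrow{\sim}H_j/N_j$; after shrinking I would extract open subgroups $\Delta_i\subset\Sp_4(A_i)$ and $\Delta_j\subset\Sp_4(A_j)$, each containing a congruence subgroup, together with a group isomorphism $\psi\colon\Delta_i\to\Delta_j$ such that $\rho_{0,\fQ_j}(g)=\psi(\rho_{0,\fQ_i}(g))$ for $g$ in a finite-index open subgroup $H'$ of $H$ (this uses that $G_{ij}$ is the image of $H$ under $(\rho_{0,\fQ_i},\rho_{0,\fQ_j})$).

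Next I would feed $\psi$ into Corollary \ref{congisom}, working over the fraction fields of $A_i$ and $A_j$ (or their normalizations): $\psi(x)=\chi(x)\,\gamma\,x^\sigma\gamma^{-1}$ for a ring isomorphism $\sigma$, a character $\chi$ and $\gamma\in\GSp_4$. Taking determinants and using that $\rho_{0,\fQ_i}$ and $\rho_{0,\fQ_j}$ take values in $\Sp_4$ shows that $\chi$ has finite order, so that it can be absorbed into a finite-order Galois character; one obtains $\rho_{0,\fQ_i}^{\sigma}\cong\eta\otimes\rho_{0,\fQ_j}$ on $H'$, and by Chebotarev together with the residual irreducibility of $\rho$ this upgrades — carrying the $\det^{-1/2}$-twist along via the formula for $\det\rho$ in Remark \ref{detformula} — to an equivalence $\rho_{\fP}^{\sigma}\cong\eta'\otimes\rho_{\fP'}$ over $\I_\Tr^\circ/\fP$, where $\fP$ and $\fP'$ are primes of $\I_\Tr^\circ$ lying over $\fQ_i$ and $\fQ_j$ respectively; such primes exist and are distinct because $P_\uk$ is non-critical, so $\I_\Tr^\circ$ is \'etale over $\Lambda_h$ at $P_\uk$. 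Now Proposition \ref{lifttwists} lifts $\sigma$ to a self-twist $\widetilde\sigma\in\Gamma$ with $\widetilde\sigma(\fP)=\fP'$. Since $\Gamma$ fixes $\I_0^\circ$ pointwise, $\fQ_j=\fP'\cap\I_0^\circ=\widetilde\sigma(\fP)\cap\I_0^\circ=\widetilde\sigma(\fP\cap\I_0^\circ)=\widetilde\sigma(\fQ_i)=\fQ_i$, contradicting $i\neq j$. Therefore $N_i$ is open, and the lemma follows.

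I expect the main obstacle to be the step in the last paragraph that passes from a relation between the reductions $\rho_{0,\fQ_i}$ and $\rho_{0,\fQ_j}$ — which live over quotients of the self-twist-fixed ring $\I_0^\circ$ and only for the representation $\rho_0$ defined on $H$ — to a relation between reductions of $\rho$ over quotients of $\I_\Tr^\circ$ in the exact form required by Proposition \ref{lifttwists}; this demands careful bookkeeping of the $\det^{-1/2}$-twist and of the similitude character, the extension of relations from $H'$ to $G_\Q$, and the use of \'etaleness at $P_\uk$ to match primes of $\I_0^\circ$ with primes of $\I_\Tr^\circ$. A secondary point is to ensure that Corollary \ref{congisom} applies when $A_i$ and $A_j$ are not discrete valuation rings, which can be arranged by passing to normalizations or by a suitable choice of $P_\uk$.
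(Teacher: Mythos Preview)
Your plan matches the paper's proof almost exactly: Goursat, Tazhetdinov to force the kernels small, O'Meara/Corollary \ref{congisom} to extract a field isomorphism $\sigma$ and a character, then Proposition \ref{lifttwists} to lift to a self-twist of $\I_\Tr^\circ$ and reach a contradiction with $\fQ_i\neq\fQ_j$. Two refinements the paper makes are worth noting. First, rather than concluding only that $N_i$ is finite, the paper restricts at the outset to a finite-index subgroup $H_1\subset H$ whose images are exactly the congruence subgroups $\Gamma_{A_i}(\fl_i)$; since $\rho_0(H)\subset\Gamma_{\I_0^\circ}(\fm_0)$ and $p>2$ these have trivial centre, so Tazhetdinov gives $\cN_1=\cN_2=\{1\}$ directly, and then the character $\chi$ from Corollary \ref{congisom} is trivial for the same reason. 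Second, your secondary concern is not an issue: $\fQ_i$ is prime and $\I_0^\circ/\fQ_i$ is an order in a $p$-adic field, so Corollary \ref{congisom} applies over the fraction fields.

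The step you flag is precisely where your sketch is incomplete, and your proposed tool (``Chebotarev together with residual irreducibility'') is not what makes it work. Two separate things must be done. Extending the relation $\rho_{\fQ_i}^{\sigma}\cong\varphi\otimes\rho_{\fQ_j}$ from $H_1$ to $G_\Q$ is carried out by an obstruction-theory argument (the paper's Lemma \ref{obstrext}, following \cite[Lemma 5.6]{lang}); Chebotarev alone does not suffice because one must simultaneously extend the character $\varphi$ and check a cocycle condition. Controlling the coefficients is the content of Lemma \ref{etaequal}: a priori the extended character $\widetilde\varphi$ takes values only in a finite extension of $\I_\Tr^\circ/\fP_j$, but $\widetilde\varphi^4=\widetilde\alpha(\det\rho_{\fP_i})/\det\rho_{\fP_j}$ together with the explicit formula for $\det\rho$ (Remark \ref{detformula}) shows $(\I_\Tr^\circ/\fP_j)[\widetilde\varphi]$ is at most degree $4$ and unramified, and a residue-field computation (writing $\ovl\alpha$ as a Frobenius power and using $p\nmid 2(p^s-1)$) shows the extension is trivial. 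Only then are $\widetilde\alpha$ and $\widetilde\varphi$ in the exact shape demanded by Proposition \ref{lifttwists}. Once you supply these two lemmas your argument is complete and coincides with the paper's.
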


We will show that if the conclusion of the lemma is not true, then there is a self-twist $\sigma$ of $\rho$ such that $\sigma(\fQ_i)=\fQ_j$, which is a contradiction since $\I_0^\circ$ is fixed by all self-twists. 
The first part of our proof follows the strategy of \cite[Proposition 5.3]{lang}, that is inspired by \cite[Theorem 3.5]{ribetI}. 
One of its ingredients is Goursat's Lemma, that we recall here. Let $\cK_1$ and $\cK_2$ be two groups and let $\cG$ be a subgroup of $\cK_1\times\cK_2$ such that the two projections $\pi_1\colon\cG\to\cK_1$ and $\pi_2\colon\cG\to\cK_2$ are surjective. Let $\cN_1=\ker\pi_2$ and $\cN_2=\ker\pi_1$. We identify $\cN_1$ and $\cN_2$ with $\pi_1(\cN_1)$ with $\pi_2(\cN_2)$, hence with subgroups of $\cG_1$ and $\cG_2$, respectively. Clearly $\cN_1\times\cN_2\supset\cG$. The natural projections induce a map $\cG\to\cK_1/\cN_1\times\cK_2/\cN_2$. 

\begin{lemma}\label{goursat}(Goursat's Lemma, \cite[Sections 11 and 12]{goursat}\cite[Exercise 4.8, Chapter 1]{bourbaki})
The image of $\cG$ in $\cK_1/\cN_1\times\cK_2/\cN_2$ is the graph of an isomorphism $\cK_1/\cN_1\cong\cK_2/\cN_2$.
\end{lemma}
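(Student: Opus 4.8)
The plan is to deduce everything from the first isomorphism theorem applied to $\cG$ twice. The statement is purely formal group theory, so I do not expect a genuine obstacle; the one point requiring a moment's care is the identification of two kernels below, on which the whole assertion rests.

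First I would check that the quotients in the statement are legitimate: $\cN_1=\ker\pi_2$ and $\cN_2=\ker\pi_1$ are normal in $\cG$, and since $\pi_1,\pi_2$ are surjective their images $\pi_1(\cN_1)\trianglelefteq\cK_1$ and $\pi_2(\cN_2)\trianglelefteq\cK_2$; under the identifications $\cN_1\cong\pi_1(\cN_1)$, $\cN_2\cong\pi_2(\cN_2)$ used in the statement these are precisely the subgroups by which we quotient. Note also that with these identifications $\cN_1=\{x\in\cK_1 : (x,1)\in\cG\}$ and $\cN_2=\{y\in\cK_2 : (1,y)\in\cG\}$.

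Next, consider the surjective homomorphisms $q_1\colon\cG\xrightarrow{\pi_1}\cK_1\to\cK_1/\cN_1$ and $q_2\colon\cG\xrightarrow{\pi_2}\cK_2\to\cK_2/\cN_2$. The core observation is that $\ker q_1=\ker q_2$: for $(x,y)\in\cG$ one has $x\in\cN_1\iff(x,1)\in\cG\iff(1,y)\in\cG\iff y\in\cN_2$, where the middle equivalence holds because $(1,y)=(x,1)^{-1}(x,y)$ and $(x,1)=(1,y)^{-1}(x,y)$. Call this common kernel $\cN$. Then $q_1$ and $q_2$ induce isomorphisms $\cG/\cN\xrightarrow{\sim}\cK_1/\cN_1$ and $\cG/\cN\xrightarrow{\sim}\cK_2/\cN_2$, and I set $\phi\colon\cK_1/\cN_1\xrightarrow{\sim}\cK_2/\cN_2$ to be the composition of the inverse of the first with the second.

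Finally I would identify the image of $\cG$ in $\cK_1/\cN_1\times\cK_2/\cN_2$ with the graph of $\phi$. Any $(x,y)\in\cG$ maps to $(q_1(x,y),q_2(x,y))$, and by the definition of $\phi$ we have $q_2(x,y)=\phi(q_1(x,y))$, so the image lies in the graph; conversely, since $\pi_1$ is surjective every coset $\bar x\in\cK_1/\cN_1$ equals $q_1(x,y)$ for some $(x,y)\in\cG$, so the point $(\bar x,\phi(\bar x))$ of the graph is attained. Hence the image of $\cG$ is exactly the graph of the isomorphism $\phi$, which is the assertion.
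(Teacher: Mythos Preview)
Your proof is correct and is the standard argument for Goursat's Lemma. The paper does not give its own proof of this lemma; it is stated with references to the literature (Goursat's original paper and Bourbaki), so there is nothing to compare against beyond noting that your argument is the classical one.
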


Another element of the proof of Lemma \ref{prodintwo} is the isomorphism O'Meara's theory of isomorphisms of open subgroups of $\GSp_4$ over local rings, that we recalled in Section \ref{stclass}. 

\begin{proof}(of Lemma \ref{prodintwo})
By Lemma \ref{resfull} there exist two non-zero ideals $\fl_1$ and $\fl_2$ of $\I_0^\circ/\fQ_i$ and $\I_0^\circ/\fQ_j$, respectively, such that $\Gamma_{\I_0^\circ/\fQ_i}(\fl_1)\subset\im\rho_{0,\fQ_i}$ and $\Gamma_{\I_0^\circ/\fQ_j}(\fl_2)\subset\im\rho_{0,\fQ_j}$. Recall that the domain of the representation $\rho_0$ is the open normal subgroup $H$ of $G_\Q$ defined in the beginning of this subsection. Consider the group
\[ H_1=\{h\in H\,\vert\,h\!\!\pmod{\fQ_i}\in\Gamma_{\I_0^\circ/\fQ_i}(\fl_1) \mbox{ and } h\!\!\pmod{\fQ_j}\in\Gamma_{\I_0^\circ/\fQ_j}(\fl_2)\}. \]
Since the subgroups $\Gamma_{\I_0^\circ/\fQ_i}(\fl_1)$ and $\Gamma_{\I_0^\circ/\fQ_j}(\fl_2)$ are normal and of finite index in $\Sp_4(\I_0^\circ/\fQ_i)$ and $\Sp_4(\I_0^\circ/\fQ_j)$, respectively, the subgroup $H_1$ is normal and of finite index in $H$. It is clearly closed, hence it is also open.

Let $1\leq i<j\leq d$. The couple $(i,j)$ will be fixed throughout the proof. 
Let $\cK_1=\rho_{0,\fQ_i}(H_1)$, $\cK_2=\rho_{0,\fQ_j}(H_1)$ and let $\cG_0$ be the image of $\rho_0(H_1)$ in $\cK_1\times\cK_2$. 
Note that $\cK_1$, $\cK_2$ and $\cG_0$ are profinite and closed since they are continuous images of a Galois group. 
By definition of $\fl_1$, $\fl_2$ and $H_1$ we have $\cK_1=\Gamma_{\I_0^\circ/\fQ_i}(\fl_1)$ and $\cK_2=\Gamma_{\I_0^\circ/\fQ_j}(\fl_2)$. In particular $\cK_1$ and $\cK_2$ are normal and finite index subgroups of $\Sp_4(\I_0^\circ/\fQ_i)$ and $\Sp_4(\I_0^\circ/\fQ_j)$, respectively. 
Define $\cN_1$ and $\cN_2$ as in the discussion preceding Lemma \ref{goursat}. 
They are normal closed subgroups of $\cK_1$ and $\cK_2$, respectively, since they are defined as kernels of continuous maps. In particular $\cN_1$ and $\cN_2$ are subnormal subgroups of $\Sp_4(\I_0^\circ/\fQ_i)$ and $\Sp_4(\I_0^\circ/\fQ_j)$, respectively.

Suppose that $\cN_1$ is open in $\cK_1$ and $\cN_2$ is open in $\cK_2$. Then the product $\cN_1\times\cN_2$ is open in $\cK_1\times\cK_2$. Since $\cG_0$ contains $\cN_1\times\cN_2$, it is also open in $\cK_1\times\cK_2$. The subgroup $\cK_1\times\cK_2=\Gamma_{\I_0^\circ/\fQ_i}(\fl_1)\times\Gamma_{\I_0^\circ/\fQ_j}(\fl_2)$ is an open subgroup of $\I_0^\circ/\fQ_i\times\I_0^\circ/\fQ_j$, so $\cG_0$ is open in $\I_0^\circ/\fQ_i\times\I_0^\circ/\fQ_j$. Then the conclusion of Lemma \ref{prodintwo} is true in this case. 

Now suppose that one among $\cN_1$ and $\cN_2$ is not open. Without loss of generality, let it be $\cN_1$. Since $\cN_1$ is closed in the profinite group $\cK_1$, it is not of finite index in $\cK_1$. 
By Lemma \ref{goursat} there is an isomorphism $\cK_1/\cN_1\cong\cK_2/\cN_2$, hence $\cN_2$ is not of finite index in $\cK_2$. 
In particular $\cN_1$ and $\cN_2$ are not of finite index in $\Sp_4(\I_0^\circ/\fQ_i)$ and $\Sp_4(\I_0^\circ/\fQ_j)$, respectively. 
Since $\cN_1$ is subnormal and not of finite index in $\Sp_4(\I_0^\circ/\fQ_i)$, it is contained in $\{\pm 1\}$ by Theorem \ref{tazhetcor}. The same reasoning gives that $\cN_2$ is contained in $\{\pm 1\}$. By definition of $H$ the image of $\rho_0$ lies in $\Gamma_{\I_0^\circ}(\fm_{\I_0^\circ})$; this implies that the centres of $\cK_1$ and $\cK_2$ are trivial since $p>2$. We conclude that $\cN_1\cong\cN_2\cong\{1\}$. 

By the result of the previous paragraph we have $\cK_1/\cN_1\cong\cK_1$ and $\cK_2/\cN_2\cong\cK_2$. Hence Lemma \ref{goursat} gives an isomorphism $\alpha\colon\cK_1\cong\cK_2$ such that, for every $(x,y)\in\cK_1\times\cK_2$, $(x,y)\in\cG_0$ if and only if $y=\alpha(x)$. 
By Corollary \ref{congisom}, applied to $F=Q(\I_0^\circ/\fQ_i)$, $F_1=Q(\I_0^\circ/\fQ_j)$, $\Delta=\cK_1$, $\Delta_1=\cK_2$, there exists an isomorphism $\alpha\colon Q(\I_0^\circ/\fQ_i)\to Q(\I_0^\circ/\fQ_j)$, a character $\chi\colon\cK_1\to Q(\I_0^\circ/\fQ_j)^\times$ and an element $\gamma\in\GSp_4(Q(\I_0^\circ/\fQ_i))$ such that for every $z\in\cK_1$ we have 
\begin{equation}\label{goursatK} \alpha(z)=\chi(z)\gamma\alpha(z)\gamma^{-1}, \end{equation}
where as usual we define $\alpha\colon\Sp_4(Q(\I_0^\circ/\fQ_i))\to\Sp_4(Q(\I_0^\circ/\fQ_j))$ by applying $\alpha$ to the matrix coefficients. Since the centre of $\cK_2$ is trivial, the character $\chi$ is also trivial. 
%
By recalling the definitions of $\cK_1$, $\cK_2$ and $\cG_0$ we can rewrite Equation \eqref{goursatK} as $\rho_{0,\fQ_j}(h)=\gamma_0^{-1}\alpha(\rho_{0,\fQ_i}(h))\gamma_0^{-1}$ for every $h\in H_1$. 
The last equation gives an isomorphism 
\begin{equation}\label{isomH10} \rho_{0,\fQ_i}\vert_{H_1}^\alpha\cong\rho_{0,\fQ_j}\vert_{H_1} \end{equation}
of representations of $H_1$ over $Q(\I_0^\circ/\fQ_j)$. 
Denote by $\pi_j$ the projection $\I_0^\circ\to\I_0^\circ/\fQ_j$. 
By definition of $\rho_0$ we have $\rho_0\vert_{H_1}=\rho\vert_{H_1}\otimes(\det\rho\vert_{H_1})^{-1/2}$. Define a character $\varphi\colon H_1\to Q(\I_0^\circ/\fQ_j)^\times$ by setting $\varphi(h)=\pi_j\left(\frac{\det\rho(h)}{\alpha(\det\rho(h))}\right)$ for every $h\in H_1$. Then Equation \ref{isomH10} implies that 
\begin{equation}\label{isomH1} \rho_{\fQ_i}\vert_{H_1}^\alpha\cong\varphi\otimes\rho_{\fQ_j}\vert_{H_1} \end{equation}

We will use the isomorphism \eqref{isomH1}, together with Proposition \ref{lifttwists}, to construct a self-twist for $\rho$. Let $\fP_i$ and $\fP_j$ be primes of $\I^\circ_\Tr$ that lie above $\fQ_i$ and $\fQ_j$, respectively. 

\begin{lemma}\label{extend}
The isomorphism $\alpha\colon Q(\I_0^\circ/\fQ_i)\to Q(\I_0^\circ/\fQ_j)$ and the character $\varphi\colon H_1\to Q(\I_0^\circ/\fQ_j)$ can be extended to an isomorphism $\widetilde{\alpha}\colon Q(\I_\Tr^\circ/\fP_i)\to Q(\I_\Tr^\circ/\fP_j)^\times$ and a character $\widetilde{\varphi}\colon G_\Q\to Q(\I_\Tr^\circ/\fP_j)^\times$, respectively, such that  
\begin{equation}\label{eqliftobs} \rho_{\fP_i}^{\widetilde{\alpha}}\cong\widetilde{\varphi}\otimes\rho_{\fP_j}. \end{equation}
\end{lemma}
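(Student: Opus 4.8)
The plan is to extend the two pieces of data separately: the character $\varphi$ first, then the field isomorphism $\alpha$.

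\textbf{Extending the character.} By Remark \ref{detformula} the function $\det\rho$ is defined on all of $G_\Q$ and takes values in $\Lambda_h\subseteq\I_0^\circ$, and the character $\varphi$ on $H_1$ was built out of the two reductions of $\det\rho$ together with $\alpha$. Hence the same recipe, with $g$ ranging over all of $G_\Q$, makes sense — for every $g$ the reduction $\pi_i(\det\rho(g))$ lies in $\I_0^\circ/\fQ_i\subseteq Q(\I_0^\circ/\fQ_i)$, the domain of $\alpha$ — and defines a character $\widetilde{\varphi}\colon G_\Q\to Q(\I_0^\circ/\fQ_j)^\times\subseteq Q(\I_\Tr^\circ/\fP_j)^\times$ restricting to $\varphi$ on $H_1$.

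\textbf{Extending the field isomorphism.} Since $\I_\Tr^\circ=\Lambda_h[\{\Tr\rho(g)\}_{g\in G_\Q}]$ and $\Lambda_h\subseteq\I_0^\circ$, the field $Q(\I_\Tr^\circ/\fP_i)$ is generated over $Q(\I_0^\circ/\fQ_i)$ by the traces $\Tr\rho_{\fP_i}(g)$, $g\in G_\Q$, and likewise $Q(\I_\Tr^\circ/\fP_j)$ over $Q(\I_0^\circ/\fQ_j)$ by the elements $\widetilde{\varphi}(g)\Tr\rho_{\fP_j}(g)$. I would define $\widetilde{\alpha}$ on generators by $\Tr\rho_{\fP_i}(g)\mapsto\widetilde{\varphi}(g)\Tr\rho_{\fP_j}(g)$; this restricts to $\alpha$ on $Q(\I_0^\circ/\fQ_i)$, because taking traces in \eqref{isomH1} over $H_1$ — where $\rho_{\fP_i}|_{H_1}$ has coefficients in $\I_0^\circ/\fQ_i$ by Proposition \ref{H0repr} — gives $\alpha(\Tr\rho_{\fP_i}(h))=\varphi(h)\Tr\rho_{\fP_j}(h)$. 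The real point is well-definedness: every algebraic relation over $Q(\I_0^\circ/\fQ_i)$ among the $\Tr\rho_{\fP_i}(g)$ must go, after applying $\alpha$ to its coefficients, to the corresponding relation among the $\widetilde{\varphi}(g)\Tr\rho_{\fP_j}(g)$. For this I would use that the relations among the traces of a $4$-dimensional representation are consequences of the universal pseudocharacter identities, which are weighted-homogeneous and therefore stable under twisting the representation by a character; that the top coefficient $\det\rho_{\fP_\bullet}(\Frob_\ell)=\ell^6\psi_2(T^{(2)}_{\ell,0})$ lies in $\Lambda_h/P_\uk$, so it is transported correctly; and that, by \eqref{isomH1} together with $\Frob_\ell^{[G_\Q:H_1]}\in H_1$, the characteristic polynomials of $\rho_{\fP_i}(\Frob_\ell)$ and of $\widetilde{\varphi}(\Frob_\ell)\rho_{\fP_j}(\Frob_\ell)$ correspond under $\alpha$. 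Reducing to Frobenius elements and using Chebotarev density then gives the claim.

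\textbf{Conclusion and the main obstacle.} Once $\widetilde{\alpha}$ is a field isomorphism extending $\alpha$, the representations $\rho_{\fP_i}^{\widetilde{\alpha}}$ and $\widetilde{\varphi}\otimes\rho_{\fP_j}$ of $G_\Q$ have equal traces, hence are isomorphic over $Q(\I_\Tr^\circ/\fP_j)$ by Carayol's theorem (Theorem \ref{carayol}), using that $\ovl\rho$, hence $\rho_{\fP_j}$, is absolutely irreducible; this is \eqref{eqliftobs}. I expect the main obstacle to be exactly the step buried in the previous paragraph: passing from the isomorphism \eqref{isomH1} over the finite-index subgroup $H_1$ — which one first checks is normal in $G_\Q$, using the étaleness of $\I_0^\circ$ over $\Lambda_h$ at the non-critical prime $P_\uk$ — to a statement over all of $G_\Q$, since a priori the two extensions to $G_\Q$ of the absolutely irreducible representation $\rho_{\fP_i}|_{H_1}$ differ by a character of the finite quotient $G_\Q/H_1$. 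This ambiguity is eliminated as in the big-image arguments of Ribet and of J. Lang \cite[Theorem 3.5]{ribetI}, \cite[Proposition 5.3]{lang}: the restriction $\rho_{\fP_i}|_{H_1}$ is still large, its image containing a congruence subgroup of $\Sp_4$ by Lemma \ref{resfull}, and pinning down the $\Lambda_h$-valued determinant forces the discrepancy to be absorbed into the already-chosen $\widetilde{\varphi}$.
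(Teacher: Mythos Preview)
Your strategy inverts the paper's order of operations, and the inversion leaves a real gap at precisely the point you flag as the ``main obstacle''.

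The paper first chooses an \emph{arbitrary} extension $\tau$ of $\alpha$ to an embedding $Q(\I_\Tr^\circ/\fP_i)\hookrightarrow\Qp$, sets $L_2=Q(\I_\Tr^\circ/\fP_j)\cdot\tau(Q(\I_\Tr^\circ/\fP_i))$, and then applies an obstruction-theoretic argument (Lemma~\ref{obstrext}, proved exactly as \cite[Lemma~5.6]{lang}) to produce an extension $\widetilde\varphi\colon G_\Q\to L_2^\times$ of $\varphi$ for which \eqref{eqliftobs} holds on all of $G_\Q$. Only \emph{afterwards} does it check, via a residue-field computation (Remark~\ref{tildesigma} and Lemma~\ref{etaequal}), that $\widetilde\varphi$ already lands in $\I_\Tr^\circ/\fP_j$ and that $\tau$ restricts to the required isomorphism.

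You instead fix $\widetilde\varphi$ by the determinant recipe and try to \emph{define} $\widetilde\alpha$ on generators by $\Tr\rho_{\fP_i}(g)\mapsto\widetilde\varphi(g)\Tr\rho_{\fP_j}(g)$. The trouble is that your $\widetilde\varphi$ need not be the extension making \eqref{eqliftobs} hold: any two extensions of $\varphi$ to $G_\Q$ differ by a character $c$ of $G_\Q/H_1$, and matching determinants forces only $c^4=1$, not $c=1$. Your claim that ``pinning down the $\Lambda_h$-valued determinant forces the discrepancy to be absorbed'' is therefore too strong. With a non-trivial $c$ present your prescription for $\widetilde\alpha$ is not a well-defined ring map, since $\Tr\rho_{\fP_i}(g)=\Tr\rho_{\fP_i}(g')$ can occur for $g,g'$ in different cosets of $H_1$ with $c(g)\neq c(g')$. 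The appeals to weighted-homogeneous pseudocharacter identities and to $\Frob_\ell^{[G_\Q:H_1]}\in H_1$ do not close this: the characteristic polynomial of $\rho_{\fP_i}(\Frob_\ell)$ has coefficients in $\I_\Tr^\circ/\fP_i$, outside the domain of $\alpha$, and knowing the polynomial for $\Frob_\ell^n$ does not recover that for $\Frob_\ell$. The references \cite[Theorem~3.5]{ribetI} and \cite[Proposition~5.3]{lang} you invoke concern the Goursat step, not the elimination of such a twist; the input actually needed is the cohomological vanishing of \cite[Lemma~5.6]{lang}, which is what Lemma~\ref{obstrext} supplies.
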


We prove Lemma \ref{extend} by the strategy presented in \cite[Section 5]{lang}. Let $\tau\colon Q(\I_\Tr^\circ/\fP_i)\to\Qp$ be an arbitrary extension of $\alpha$ to $Q(\I_\Tr^\circ/\fP_i)$. Let $L_2=Q(\I_\Tr^\circ/\fP_j)\cdot\tau(Q(\I_\Tr^\circ/\fP_i))$. The following lemma can be proved via obstruction theory, exactly as \cite[Lemma 5.6]{lang}. 

\begin{lemma}\label{obstrext}
There exists an extension $\widetilde{\varphi}\colon G_\Q\to L_2^\times$ of $\varphi\colon H_1\to L_2^\times$ such that 
\begin{equation}\label{obstreq} \rho_1^{\widetilde{\alpha}}\cong\widetilde{\varphi}\otimes\rho_2. \end{equation}
\end{lemma}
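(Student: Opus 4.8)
The plan is to transpose the obstruction-theoretic argument of \cite[Lemma 5.6]{lang} to $\GSp_4$; the two substantive inputs are the absolute irreducibility of $\rho_2\vert_{H_1}$ over $L_2$ and the elementary fact that two homomorphic lifts of a fixed $\PGL_4$-valued representation of $G_\Q$ differ by a character. First I would work over $L_2$ and rewrite the isomorphism \eqref{isomH1} (note that on $H_1$ the representation $\rho_1$ takes values in $\GSp_4(\I_0^\circ/\fQ_i)$ thanks to Proposition \ref{H0repr}, so applying $\widetilde\alpha=\tau$ to its coefficients makes sense) as an identity $\rho_1^{\widetilde\alpha}(h)=\varphi(h)\,C\rho_2(h)C^{-1}$ for all $h\in H_1$, for a fixed $C\in\GL_4(L_2)$. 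Replacing $\rho_1^{\widetilde\alpha}$ by its conjugate $C^{-1}\rho_1^{\widetilde\alpha}C$ — harmless, since we only want the conclusion up to isomorphism — we may assume $\rho_1^{\widetilde\alpha}(h)=\varphi(h)\rho_2(h)$ on $H_1$; in particular the projectivizations $\ovl\rho_1^{\widetilde\alpha}$ and $\ovl\rho_2$, valued in $\PGL_4(L_2)$, coincide on $H_1$.

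Next I would check that $\rho_2\vert_{H_1}$ is absolutely irreducible over $L_2$. By Lemma \ref{resfull}, Lemma \ref{rhorho0} and the relation $\rho_0=\rho\vert_H\otimes\det(\rho\vert_H)^{-1/2}$ (together with the definition of $H_1$), the image $\rho_2(H_1)$ contains, up to a central twist, a non-trivial congruence subgroup $\Gamma_{\I_0^\circ/\fQ_j}(\fl_2)$ of $\Sp_4$; this subgroup is Zariski-dense in $\Sp_4$, whose standard four-dimensional representation is absolutely irreducible, and absolute irreducibility survives the base change to $L_2$. Hence the centralizer of $\rho_2(H_1)$ in $\GL_4(L_2)$ consists of scalars and that of $\ovl\rho_2(H_1)$ in $\PGL_4(L_2)$ is trivial.

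Now, for $g\in G_\Q$ and $h\in H_1$, normality of $H_1$ in $G_\Q$ gives $ghg^{-1}\in H_1$, so $\ovl\rho_1^{\widetilde\alpha}(ghg^{-1})=\ovl\rho_2(ghg^{-1})$; expanding both sides and using $\ovl\rho_1^{\widetilde\alpha}\vert_{H_1}=\ovl\rho_2\vert_{H_1}$ shows that $\ovl\rho_2(g)^{-1}\ovl\rho_1^{\widetilde\alpha}(g)$ commutes with $\ovl\rho_2(H_1)$, hence equals the identity. Therefore $\ovl\rho_1^{\widetilde\alpha}=\ovl\rho_2$ on all of $G_\Q$, so for each $g$ the matrix $\rho_1^{\widetilde\alpha}(g)\rho_2(g)^{-1}$ is a scalar $\widetilde\varphi(g)\in L_2^\times$; since $\rho_1^{\widetilde\alpha}$ and $\rho_2$ are homomorphisms, $\widetilde\varphi\colon G_\Q\to L_2^\times$ is automatically a character, and it restricts to $\varphi$ on $H_1$ by construction. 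Undoing the conjugation by $C$ yields $\rho_1^{\widetilde\alpha}\cong\widetilde\varphi\otimes\rho_2$, which is \eqref{obstreq}.

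The main obstacle is not the formal argument above but the bookkeeping that lets $\rho_1^{\widetilde\alpha}$ and $\rho_2$ be viewed over the common field $L_2$ and that propagates the congruence-subgroup statement of Lemma \ref{resfull} through the restriction to the rather small open subgroup $H_1$ and through the passage between $\rho$ and $\rho_0$; this is precisely where the O'Meara--Tazhetdinov rigidity assembled in Section \ref{stclass} and the preceding subsections does the real work, exactly as in \cite[Section 5]{lang}. I would also note in passing that the $\widetilde\varphi$ produced has finite order and that its dependence on the choice of $\widetilde\alpha$ is controlled, although neither point is needed for the existence assertion of Lemma \ref{obstrext}.
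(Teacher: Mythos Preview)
Your argument is correct and is precisely the obstruction-theoretic argument the paper invokes by citing \cite[Lemma~5.6]{lang}: projectivize, use absolute irreducibility of $\rho_2\vert_{H_1}$ to kill the centralizer, and read off the scalar discrepancy as the desired character. One small point worth a sentence of justification: the paper only records that $H_1$ is normal in $H$, whereas your Step~3 uses normality in $G_\Q$; this does hold (conjugation by $\rho(g)$ with $g\in G_\Q$ preserves the congruence condition because $(\fl_k\cdot\I_\Tr^\circ/\fP_\bullet)\cap(\I_0^\circ/\fQ_\bullet)=\fl_k$ for an integral extension of DVRs), or alternatively you can first extend from $H_1$ to $H$ and then from $H$ to $G_\Q$ in two steps, since $H$ is normal in $G_\Q$.
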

%

In order to prove Lemma \ref{extend} it is sufficient to show that $\widetilde{\alpha}$ restricts to an isomorphism $\I_\Tr^\circ/\fP_i\to\I_\Tr^\circ/\fP_j$ and that $\widetilde{\varphi}$ takes values in $\I_\Tr^\circ/\fP_j$. We write $(\I_\Tr^\circ/\fP_j)[\widetilde{\varphi}]$ for the subring of $L_2$ generated over $\I_\Tr^\circ/\fP_j$ by the values of $\widetilde{\varphi}$. 

\begin{rem}\label{tildesigma}
Since $\widetilde{\varphi}\otimes\rho_2$ takes values in $\GL_4((\I_\Tr^\circ/\fP_j)[\widetilde{\varphi}])$, the representation $\rho_1^{\widetilde{\alpha}}$ also takes values in $\GL_4((\I_\Tr^\circ/\fP_j)[\widetilde{\varphi}])$. In particular $\widetilde{\alpha}(\Tr(\rho_1(h)))\in(\I_\Tr^\circ/\fP_j)[\widetilde{\varphi}]$ for every $h\in H_1$. Since the traces of the representation $\rho_1$ generate the ring $\I_\Tr/\fP_i$ over $\Z_p$, we conclude that $\widetilde{\alpha}$ restricts to an isomorphism $(\I_\Tr^\circ/\fP_i)[\widetilde{\varphi}]\to(\I_\Tr^\circ/\fP_j)[\widetilde{\varphi}]$. 
\end{rem}


\begin{lemma}\label{etaequal}(cf. \cite[Lemma 5.7]{lang})
We have $(\I_\Tr^\circ/\fP_i)[\widetilde{\varphi}]=\I_\Tr^\circ/\fP_i$ and $(\I_\Tr^\circ/\fP_j)[\widetilde{\varphi}]=(\I_\Tr^\circ/\fP_j)$.
\end{lemma}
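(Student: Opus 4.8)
The plan is to adapt the argument of \cite[Lemma 5.7]{lang} to $\GSp_4$. Throughout, recall that $\fP_i$ and $\fP_j$ both lie over the arithmetic prime $P_\uk$ fixed in Lemma \ref{specialprime}, so the classical eigenforms attached to them have the same weight and the same central character $\varepsilon$ (that of the family $\theta$), and neither is a symmetric cube lift from $\GL_2$.

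\textit{Step 1: $\widetilde\varphi$ has finite order.} I would apply $\det$ to the isomorphism $\rho_{\fP_i}^{\widetilde\alpha}\cong\widetilde\varphi\otimes\rho_{\fP_j}$, which gives $\widetilde\varphi^4=(\widetilde\alpha\ccirc\det\rho_{\fP_i})\cdot(\det\rho_{\fP_j})^{-1}$. By Remark \ref{detformula}, $\det\rho_{\fP_i}$ and $\det\rho_{\fP_j}$ are both the reduction modulo $P_\uk$ of the single character $\det\rho\colon G_\Q\to\Lambda_h^\times$, so $\det\rho_{\fP_i}(g)=\det\rho_{\fP_j}(g)=\varepsilon(g)\chi(g)^{m}$ for a fixed integer $m$ depending only on $\uk$, where $\chi$ is the cyclotomic character. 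Since $\widetilde\alpha$ is a homomorphism between finite extensions of $\Q_p$ it is continuous, hence the identity on $\Q_p$, so it fixes the values $\chi(g)\in\Z_p^\times$; therefore $\widetilde\varphi^4=\widetilde\alpha(\varepsilon)\cdot\varepsilon^{-1}$ takes values in roots of unity, and $\widetilde\varphi$ is a Dirichlet character of some finite order $n$. (One can sharpen this by using the similitude factor in place of the determinant, obtaining $\widetilde\varphi^2=\widetilde\alpha(\varepsilon)\varepsilon^{-1}$.)

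\textit{Step 2: the values of $\widetilde\varphi$ already lie in $\I_\Tr^\circ/\fP_j$, and in $\I_\Tr^\circ/\fP_i$.} By Remark \ref{tildesigma}, $\widetilde\alpha$ restricts to an isomorphism $(\I_\Tr^\circ/\fP_i)[\widetilde\varphi]\xto{\sim}(\I_\Tr^\circ/\fP_j)[\widetilde\varphi]$, and from $\Tr(\widetilde\varphi\otimes\rho_{\fP_j})(g)=\widetilde\varphi(g)\Tr\rho_{\fP_j}(g)=\widetilde\alpha(\Tr\rho_{\fP_i}(g))$ we get that the ring generated over $\Lambda_h/P_\uk$ by the traces of $\widetilde\varphi\otimes\rho_{\fP_j}$ equals $\widetilde\alpha(\I_\Tr^\circ/\fP_i)$. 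Now I would restrict $\widetilde\varphi\otimes\rho_{\fP_j}$ to the open subgroup $\ker\widetilde\varphi$, where it coincides with $\rho_{\fP_j}$: the image of the latter contains a non-trivial congruence subgroup by Theorem \ref{classbigim} (this is where the hypothesis that the form at $\fP_j$ is not a symmetric cube lift enters), so the traces of $\rho_{\fP_j}\vert_{\ker\widetilde\varphi}$ still generate $\I_\Tr^\circ/\fP_j$ over $\Lambda_h/P_\uk$. Evaluating at elements on which $\widetilde\varphi$ takes a primitive $n$-th root of unity then forces $\mu_n\subset\I_\Tr^\circ/\fP_j$, hence $\widetilde\varphi$ is valued in $\I_\Tr^\circ/\fP_j$; the symmetric argument applied to $\rho_{\fP_j}^{\widetilde\alpha^{-1}}\cong\widetilde\alpha^{-1}(\widetilde\varphi)^{-1}\otimes\rho_{\fP_i}$ gives $\mu_n\subset\I_\Tr^\circ/\fP_i$. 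Therefore $(\I_\Tr^\circ/\fP_j)[\widetilde\varphi]=\I_\Tr^\circ/\fP_j$ and $(\I_\Tr^\circ/\fP_i)[\widetilde\varphi]=\I_\Tr^\circ/\fP_i$, which is the statement; combined with Remark \ref{tildesigma} this also completes Lemma \ref{extend}.

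I expect Step 2 to be the main obstacle: the character $\widetilde\varphi$ is produced non-constructively by the obstruction-theoretic argument of Lemma \ref{obstrext}, so its values must be pinned down by purely ring-theoretic means. The mechanism that makes this work is the identity ``the ring generated by the traces of $\widetilde\varphi\otimes\rho_{\fP_j}$ equals $\widetilde\alpha(\I_\Tr^\circ/\fP_i)$'' played against the largeness of $\im\rho_{\fP_j}$ furnished by Theorem \ref{classbigim}, which prevents the twisting root of unity from escaping into a strictly larger coefficient ring. Two minor points also require care: the fact, used in Step 1, that a field embedding of $p$-adic fields is automatically continuous and hence fixes $\Q_p$ pointwise; and the bookkeeping between $\rho$ and its $\Sp_4$-normalization $\rho_0$ (the square roots of determinants), exactly as in the passage from \eqref{isomH10} to \eqref{isomH1}.
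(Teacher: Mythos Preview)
Your Step 1 is fine and in fact clarifies something the paper leaves implicit: any field homomorphism between finite extensions of $\Q_p$ is automatically continuous, so $\widetilde\alpha$ fixes $\Z_p$ and hence the values of $\chi$; taking determinants then gives $\widetilde\varphi^4=\widetilde\alpha(\varepsilon)\varepsilon^{-1}$, a finite-order character whose values already lie in $\I_\Tr^\circ/\fP_j$.

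The real problem is Step 2. Your claim ``the traces of $\rho_{\fP_j}\vert_{\ker\widetilde\varphi}$ still generate $\I_\Tr^\circ/\fP_j$'' is not justified by Theorem \ref{classbigim}. That theorem only produces a congruence subgroup inside $\Sp_4(\cO_E)$, where $\cO_E=\Z_p[\Tr\Ad\rho_{\fP_j}]$ is the subring fixed by the self-twists of $\rho_{\fP_j}$ (Proposition \ref{sttraces}); when those self-twists are nontrivial one has $\cO_E\subsetneq\I_\Tr^\circ/\fP_j$. More to the point, passing to a finite-index subgroup \emph{can} shrink the trace ring: if some self-twist character $\eta_\sigma$ of $\rho_{\fP_j}$ happens to be trivial on $\ker\widetilde\varphi$, then $\sigma$ fixes every trace $\Tr\rho_{\fP_j}(h)$ for $h\in\ker\widetilde\varphi$, so those traces lie in $(\I_\Tr^\circ/\fP_j)^{\langle\sigma\rangle}\subsetneq\I_\Tr^\circ/\fP_j$. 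Nothing in your construction of $\widetilde\varphi$ rules this out. Without this generation statement you cannot obtain $\widetilde\alpha(\I_\Tr^\circ/\fP_i)\subset\I_\Tr^\circ/\fP_j$ (or the reverse inclusion), and the deduction $\mu_n\subset\I_\Tr^\circ/\fP_j$ collapses.

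The paper (following Lang) sidesteps trace generation entirely. From $\widetilde\varphi^4\in(\I_\Tr^\circ/\fP_j)^\times$ one gets that $(\I_\Tr^\circ/\fP_j)[\widetilde\varphi]$ has degree at most $4$ over $\I_\Tr^\circ/\fP_j$ and, since $p>3$, is unramified; so it suffices to show the residue-field extension $\E/\F$ is trivial. There $\ovl\alpha$ is a Frobenius power $\Frob^s$, and reducing the determinant identity gives $\ovl\varphi^4=\bar\chi^{2(p^s-1)(k_1+k_2-3)}$; since $p$ is odd, $4\mid 2(p^s-1)$, so $\ovl\varphi^4$ is already a fourth power of a $\bar\chi$-power in $\F$, forcing $\E=\F$. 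Your observation that $\widetilde\alpha$ fixes $\Q_p$ would streamline this computation, but the passage through residue fields---rather than through big image and trace generation---is what makes the argument close.
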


\begin{proof}
As before let $\chi$ be the $p$-adic cyclotomic character. Recall that $\fP_i$ and $\fP_j$ lie over the prime $P_\uk$ of $\Lambda$, with $\uk=(k_1,k_2)$. By taking determinants in Equation \eqref{obstreq} and using Remark \ref{detformula} we obtain 
\begin{equation}\label{varphidet} \widetilde{\varphi}^4=\frac{\det(\rho_1^{\widetilde{\alpha}})}{\det(\rho_2)}=\frac{\widetilde{\alpha}(\chi^{2(k_1+k_2-3)})}{\chi^{2(k_1+k_2-3)}}. \end{equation}
Since the quantity on the right defines an element of $\I_\Tr^\circ/\fP_j$, the degree $[(\I_\Tr^\circ/\fP_j)[\widetilde{\varphi}]\colon\I_\Tr^\circ/\fP_j]$ is at most $4$. In particular the extension $(\I_\Tr^\circ/\fP_j)[\widetilde{\varphi}]$ is obtained from $\I_\Tr^\circ/\fP_j$ by adding a $2$-power root of unit, hence it is an unramified extension. The same is true for the extension $(\I_\Tr^\circ/\fP_i)[\widetilde{\varphi}]$ over $\I_\Tr^\circ/\fP_i$ thanks to the isomorphism $\widetilde{\alpha}$.

Note that the residue fields of $(\I_\Tr^\circ/\fP_i)[\widetilde{\varphi}]$ and $(\I_\Tr^\circ/\fP_j)[\widetilde{\varphi}]$ are identified by $\widetilde{\alpha}$ and those of $\I_\Tr^\circ/\fP_i$ and $\I_\Tr^\circ/\fP_j$ coincide by the non-criticality of $P_\uk$ (see the étaleness condition in Definition \ref{noncritgsp}). Let $\E$ and $\F$ be the residue fields of $(\I_\Tr^\circ/\fP_i)[\widetilde{\varphi}]$ and $\I_\Tr^\circ/\fP_i$ respectively. To conclude the proof it is sufficient to show that $\E=\F$. The isomorphism $\widetilde{\alpha}$ induces an automorphism $\ovl{\alpha}$ of the residue field $\E$ and the character $\widetilde{\varphi}$ induces a character $\ovl{\varphi}\colon G_\Q\to\E^\times$. Then $\E$ is the field $\F[\ovl{\varphi}]$ generated over $\F$ by the values of $\ovl{\varphi}$. Let $s$ be an integer such that $\ovl{\alpha}$ is the $s$-th power of the Frobenius automorphisms. By reducing Equation \ref{varphidet} modulo the maximal ideal of $(\I_\Tr^\circ/\fP_j)[\widetilde{\varphi}]$ we obtain
\[ \ovl{\varphi}^4=\frac{\ovl{\alpha}(\chi^{2(k_1+k_2-3)})}{\chi^{2(k_1+k_2-3)}}=\chi^{2(p^s-1)(k_1+k_2-3)}. \]
Since $p$ is odd, $2(p^s-1)$ is a multiple of $4$. In particular $\F[\ovl{\varphi}^4]\subset\F[\chi^4]$, that implies $\F[\ovl{\varphi}]\subset\F$. We conclude that $\E=\F$, as desired.
\end{proof}

Thanks to Remark \ref{tildesigma} and Lemma \ref{etaequal}, $\widetilde{\alpha}\colon L_1\to L_2$ restricts to an isomorphism $\widetilde{\alpha}\colon\I_\Tr^\circ/\fP_i\to\I_\Tr^\circ/\fP_j$ and $\widetilde{\varphi}$ takes values in $\I_\Tr^\circ/\fP_j$. Hence $\widetilde{\alpha}$ and $\widetilde{\varphi}$ satisfy the hypotheses of Lemma \ref{extend}.

We conclude the proof of Lemma \ref{prodintwo}. Set $\sigma=\widetilde{\alpha}\colon\I_\Tr^\circ/\fP_i\to\I_\Tr^\circ/\fP_j$ and $\eta=\widetilde{\varphi}\colon G_\Q\to\I_\Tr^\circ/\fP_j$. Thanks to Lemma \ref{extend}, $\sigma$ and $\eta$ satisfy the hypotheses of Proposition \ref{lifttwists}. Hence there exists a self-twist $\widetilde{\sigma}\colon\I_\Tr^\circ\to\I_\Tr^\circ$ for $\rho$ over $\Lambda_h$ that induces $\sigma$. In particular $\widetilde{\sigma}(\fP_i)=\fP_j$. Since $\fP_i$ and $\fP_j$ lie over different primes of $\I_0^\circ$, the self-twist $\widetilde{\sigma}$ does not fix $\I_0^\circ$, a contradiction. Recall that the assumption of this argument is that $\cN_1$ is not open in $\cK_1$ or $\cN_2$ is not open in $\cK_2$. When this is not the case we already observed that the conclusion of Lemma \ref{prodintwo} holds, so the proof of the lemma is complete. 
\end{proof}

We recall a lemma of Ribet. Let $k$ be an integer greater than $2$ and let $\cG_1,\cG_2,\ldots,\cG_k$ be profinite groups. Suppose that for every $i\in\{1,2,\ldots,k\}$ the following condition holds: 
\begin{equation}\tag{comm}
\textrm{ if }\cK\textrm{ is an open subgroup of }\cG_i\textrm{ the closure of the commutator subgroup of }\cK\textrm{ is open in }\cG_i.
\end{equation}
Let $\cG_0$ be a closed subgroup of $\cG_1\times\cG_2\times\cdots\times\cG_k$.

\begin{lemma}\label{ribetprod}\cite[Lemma 3.4]{ribetI}
Suppose that for every $i,j$ with $1\leq i<j\leq k$ the image of $\cG_0$ in $\cG_i\times\cG_j$ is an open subgroup of $\cG_i\times\cG_j$. Then $\cG_0$ is an open subgroup of $\cG_1\times\cG_2\times\cdots\times\cG_k$.
\end{lemma}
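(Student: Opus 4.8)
The plan is to prove the lemma by induction on the number $k$ of factors, the only substantial case being $k=3$; for $k\geq 4$ one reduces to it by grouping factors together. I would first record two routine preliminaries. First, condition (comm) is inherited by open subgroups, and it holds for a finite direct product of groups satisfying it — the point being that an open subgroup of such a product contains a finite-index subgroup of the form $\prod_i U_i$ with $U_i$ open in the $i$-th factor, and $\prod_i\overline{[U_i,U_i]}$ is then an open subgroup of it. Second, one may assume $\cG_0$ surjects onto each $\cG_i$: replacing $\cG_i$ by the open subgroup $\pi_i(\cG_0)$ preserves (comm), keeps all pairwise images of $\cG_0$ open, and an open subgroup of $\prod_i\pi_i(\cG_0)$ is open in $\prod_i\cG_i$.

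For the inductive step, suppose $k\geq 4$ and that the statement holds for fewer factors. Applying the induction hypothesis to the $(k-1)$-element subsets $\{1,3,4,\dots,k\}$ and $\{2,3,\dots,k\}$ — whose pairwise images of $\cG_0$ are among the given open ones — shows that the images of $\cG_0$ in $\cG_1\times\cG_3\times\cdots\times\cG_k$ and in $\cG_2\times\cG_3\times\cdots\times\cG_k$ are open. Now regard $\cG_0$ as a closed subgroup of the product of the three groups $\cG_1$, $\cG_2$ and $\cG_3\times\cdots\times\cG_k$. Each of these satisfies (comm), and the three pairwise images of $\cG_0$ are open (in $\cG_1\times\cG_2$ by hypothesis, in the other two products by the previous sentence), so the case $k=3$ gives that $\cG_0$ is open in $\cG_1\times\cdots\times\cG_k$.

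It remains to treat $k=3$, where I assume $\cG_0$ surjects onto each $\cG_i$. Put $N_i=\cG_0\cap\cG_i$, with $\cG_i$ viewed as the $i$-th coordinate subgroup; a short conjugation computation shows $N_i\trianglelefteq\cG_i$, and if each $N_i$ is open then $N_1\times N_2\times N_3\subseteq\cG_0$ is open of finite index, so the closed subgroup $\cG_0$ is open. Thus it suffices to show each $N_i$ is open, and by symmetry I treat $N_3$. I would apply Goursat's Lemma (\ref{goursat}) to each pair $(i,j)$: since $\pi_{i,j}(\cG_0)$ is open it contains a product $\cR_i\times\cR_j$ of open subgroups, and then, for $\{i,j,\ell\}=\{1,2,3\}$, one defines homomorphisms $\cR_i\to\cG_\ell/N_\ell$ by sending an element to the $\cG_\ell$-coset of its lifts to $\cG_0$. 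Their kernels are $\cR_i\cap N_i$, so each image is isomorphic to a finite-index subgroup of $\cG_i/N_i$; moreover the images of $\cR_i$ and $\cR_j$ in $\cG_\ell/N_\ell$ commute and their product has finite index there. Running this around the three coordinates expresses each $\cG_i/N_i$, up to finite index, as a product of two commuting subgroups, each of which is — again up to finite index — a quotient of one of the other two $\cG_j/N_j$; going once around the cycle one extracts an open subgroup $\cK\leq\cG_3$ with $\cK/(\cK\cap N_3)$ abelian. Then $N_3\supseteq\overline{[\cK,\cK]}$, which is open since $\cG_3$ satisfies (comm), so $N_3$ is open.

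The main obstacle is the case $k=3$, and within it the bookkeeping that converts the Goursat data and the commuting-image structure into an open subgroup $\cK$ of $\cG_3$ with abelian quotient $\cK/(\cK\cap N_3)$: this is the only place where hypothesis (comm) is used essentially, and it is precisely Ribet's argument in \cite[Lemma 3.4]{ribetI}, which I would follow in detail. Everything else — the regrouping induction and the assembly of an open product inside $\cG_0$ — is formal.
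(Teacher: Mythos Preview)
The paper does not give its own proof of this lemma; it is quoted from \cite[Lemma 3.4]{ribetI} and used as a black box. Your sketch follows Ribet's argument --- induction on $k$, with the substantive case $k=3$ handled by showing each $N_i=\cG_0\cap\cG_i$ is open via condition (comm) --- and the inductive regrouping you describe for $k\geq 4$ is correct.

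One comment on the $k=3$ step: the ``cycle'' bookkeeping you outline is more elaborate than necessary, and the passage ``going once around the cycle one extracts an open subgroup $\cK\leq\cG_3$ with $\cK/(\cK\cap N_3)$ abelian'' is left imprecise. The open subgroup $\cK$ can be produced directly, without tracking the quotients $\cG_i/N_i$ around the three indices. Choose open subgroups with $U_1\times U_3\subseteq\pi_{1,3}(\cG_0)$ and $V_2\times V_3\subseteq\pi_{2,3}(\cG_0)$, and set $W_3=U_3\cap V_3$. For $u,v\in W_3$, lift $(1,u)\in U_1\times U_3$ to some $(1,b,u)\in\cG_0$ and lift $(1,v)\in V_2\times V_3$ to some $(a,1,v)\in\cG_0$; their commutator is $(1,1,[u,v])\in\cG_0$, so $[u,v]\in N_3$. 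Hence $[W_3,W_3]\subseteq N_3$, and (comm) gives that $N_3\supseteq\overline{[W_3,W_3]}$ is open. This is precisely the $\cK=W_3$ you are after, and it uses only the pairs $(1,3)$ and $(2,3)$ rather than all three.
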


We are ready to complete the proof of Proposition \ref{resfull}.

\begin{proof}
For $1\leq i\leq k$ let $\cG_i$ be the image of $\rho_{0,\fP_i}\colon H\to\Sp_4(\I_0^\circ/\fQ_i)$. As before let $\cG_0$ be the image of $\im\rho_{0,P_\uk}$ via the inclusion $\Sp_4(\I_0^\circ/P_\uk\I_0^\circ)\into\prod_i\Sp_4(\I_0^\circ/\fQ_i\I_0^\circ)$. The groups $\cG_i$ are profinite and they satisfy condition (comm). The group $\cG_0$ is closed since it is the continuous image of $H$. By Lemma \ref{prodintwo} it is open in $\cG_i\times\cG_j$ for every $i,j$ with $1\leq i<j\leq d$. Hence Lemma \ref{ribetprod} implies that $\cG_0$ is open in $\prod_i\cG_i=\prod_i\cG_i$. 

By Proposition \ref{resfull} the group $\cG_i$ is open in $\Sp_4(\I_0^\circ/P_\uk\I_0^\circ)$ for every $i$, hence $\prod_i\cG_i$ is open in $\prod_i\Sp_4(\I_0^\circ/\fQ_i\I_0^\circ)$. We deduce that $\cG_0$ is open in $\prod_i\Sp_4(\I_0^\circ/P_\uk\I_0^\circ)$, so $\im\rho_{0,P_\uk}$ is open in $\Sp_4(\I_0^\circ/P_\uk\I_0^\circ)$. In particular $\im\rho_{0,P_\uk}$ contains a non-trivial congruence subgroup of $\Sp_4(\I_0^\circ/P_\uk\I_0^\circ)$. This remains true if we replace $\im\rho_{0,P_\uk}$ by $\im\rho_{P_\uk}$, thanks to Lemma \ref{rhorho0} applied to $\fI=P_\uk$. 
\end{proof}

\subsection{Unipotent subgroups and fullness}

Recall that for a root $\alpha$ of $\GSp_4$ we denote by $U^\alpha$ the corresponding one-parameter unipotent subgroup of $\GSp_4$ and by $\fu^\alpha$ the corresponding nilpotent subalgebra of $\fgsp_4(R)$. 
We call ``congruence subalgebra'' of $\fsp_4(R)$ a Lie algebra of the form $\fa\cdot\fsp_4(R)$ for some ideal $\fa$ of $R$. The lemma below follows from a simple computation with the Lie bracket.

\begin{lemma}\label{liestdarg}
Let $R$ be an integral domain and let $\fG$ be a Lie subalgebra of $\fsp_4(R)$. The following are equivalent:
\begin{enumerate}
\item the Lie algebra $\fG$ contains a congruence Lie subalgebra $\fa\cdot\fsp_4(R)$ of level a non-zero ideal $\fa$ of $R$;
\item for every root $\alpha$ of $\Sp_4$, the nilpotent Lie algebra $\fG\cap\fu^\alpha(R)$ contains a non-zero ideal $\fa_\alpha$ of $R$ via the identification $\fu^\alpha(R)\cong R$.
\end{enumerate}
Moreover:
\begin{enumerate}[label=(\roman*)]
\item if condition (1) is satisfied for an ideal $\fa$ then condition (2) is satisfied if we choose $\fa_\alpha=\fa$ for every $\alpha$;
\item if condition (2) is satisfied for a set of ideals $\{\fa_\alpha\}_\alpha$ then condition (1) is satisfied for the ideal $\fa=\prod_\alpha{\fa^\alpha}$, where the product is over all roots $\alpha$ of $\Sp_4$.
\end{enumerate}
\end{lemma}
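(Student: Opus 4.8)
The plan is to use the root space decomposition of $\fsp_4$ with respect to the diagonal torus $T_2$, namely $\fsp_4(R)=\ft(R)\oplus\bigoplus_\alpha\fu^\alpha(R)$ as $R$-modules, the sum running over the eight roots of the root system $C_2$ of $\Sp_4$; here $\ft$ denotes the Cartan subalgebra and each $\fu^\alpha(R)\cong R$. The one extra structural input I would invoke is that $\Sp_4$ is simply connected, so that the cocharacter lattice of $T_2$ coincides with the coroot lattice, and hence the two simple coroots form an $R$-basis of $\ft(R)$.

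For the implication $(1)\Rightarrow(2)$ (and the \emph{moreover} assertion (i)), suppose $\fa\cdot\fsp_4(R)\subset\fG$ for a non-zero ideal $\fa$. Since the decomposition above is compatible with multiplication by $\fa$, intersecting with $\fu^\alpha(R)$ gives $\fG\cap\fu^\alpha(R)\supseteq\fa\cdot\fsp_4(R)\cap\fu^\alpha(R)=\fa\cdot\fu^\alpha(R)$, which under $\fu^\alpha(R)\cong R$ is exactly the non-zero ideal $\fa$. So (2) holds with $\fa_\alpha=\fa$ for every $\alpha$.

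For $(2)\Rightarrow(1)$ (and assertion (ii)), I would set $\fa=\prod_\alpha\fa_\alpha$, the product over all roots $\alpha$, which is non-zero because $R$ is a domain. First, for each root $\gamma$ one has $\fa\subseteq\fa_\gamma$, so $\fa\cdot\fu^\gamma(R)\subseteq\fa_\gamma\cdot\fu^\gamma(R)\subseteq\fG$ by hypothesis. Next, for a simple root $\alpha$, a direct bracket computation inside the copy of $\fsl_2$ generated by $\fu^{\pm\alpha}$ gives $[\fu^\alpha(R),\fu^{-\alpha}(R)]=Rh_\alpha$ with $h_\alpha\in\ft(R)$ the coroot; hence $[\fa_\alpha\fu^\alpha(R),\fa_{-\alpha}\fu^{-\alpha}(R)]=\fa_\alpha\fa_{-\alpha}Rh_\alpha\supseteq\fa\,Rh_\alpha$, using $\fa\subseteq\fa_\alpha\fa_{-\alpha}$. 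Letting $\alpha$ run over the two simple roots and using that their coroots span $\ft(R)$ over $R$, this yields $\fa\cdot\ft(R)\subset\fG$. Combining the two parts, $\fG\supseteq\fa\cdot\ft(R)+\sum_\gamma\fa\cdot\fu^\gamma(R)=\fa\cdot\fsp_4(R)$, which is (1) for $\fa=\prod_\alpha\fa_\alpha$.

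There is essentially no hard step here; the only point that needs a little care is the identification $[\fu^\alpha(R),\fu^{-\alpha}(R)]=Rh_\alpha$ together with the fact that the two simple coroots of $C_2$ span $\ft(R)$, which is where the simple connectedness of $\Sp_4$ enters. I would also note that, since the argument only brackets opposite root subgroups and otherwise uses the containments $\fa\subseteq\fa_\gamma$, it never refers to the Chevalley structure constants $N_{\alpha,\beta}$ for sums of distinct roots, so no hypothesis on the characteristic of $R$ (such as invertibility of $2$) is needed.
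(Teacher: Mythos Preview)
Your proof is correct and is precisely the ``simple computation with the Lie bracket'' that the paper invokes without spelling out: the paper gives no detailed argument beyond that one-line remark, and your use of the root space decomposition together with $[\fu^\alpha,\fu^{-\alpha}]=Rh_\alpha$ and the fact that the simple coroots span $\ft(R)$ (by simple connectedness of $\Sp_4$) is exactly the intended computation.
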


A computation with commutators gives an analogue of Lemma \ref{liestdarg} dealing with unipotent and congruence subgroups rather than Lie algebras.

\begin{lemma}\label{stdarg}
Let $R$ be an integral domain and let $G$ be a subgroup of $\GSp_4(R)$. The following are equivalent:
\begin{enumerate}
\item the group $G$ contains a principal congruence subgroup $\Gamma_R(\fa)$ of level a non-zero ideal $\fa$ of $R$;
\item for every root $\alpha$ of $\Sp_4$, the unipotent group $G\cap U^\alpha(R)$ contains a non-zero ideal $\fa_\alpha$ of $R$ via the identification $U^\alpha(R)\cong R$.
\end{enumerate}
Moreover:
\begin{enumerate}[label=(\roman*)]
\item if condition (1) is satisfied for an ideal $\fa$ then condition (2) is satisfied if we choose $\fa_\alpha=\fa$ for every $\alpha$;
\item if condition (2) is satisfied for a set of ideals $\{\fa_\alpha\}_\alpha$ then condition (1) is satisfied for the ideal $\fa=\prod_\alpha{\fa_\alpha}$, where the product is taken over all roots of $\Sp_4$.
\end{enumerate}
\end{lemma}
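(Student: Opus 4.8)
The plan is to deduce everything, just as in the Lie-algebra case of Lemma~\ref{liestdarg}, from the classical fact that the root subgroups $U^\gamma$ (with $\gamma$ ranging over the eight roots of the type-$C_2$ group $\Sp_4$) generate the principal congruence subgroups of $\Sp_4$. The implication $(1)\Rightarrow(2)$ together with the refinement~(i) needs no computation: if $\Gamma_R(\fa)\subseteq G$, then for every root $\alpha$ the subgroup $U^\alpha(\fa)=\{x_\alpha(a):a\in\fa\}$ lies in $\Gamma_R(\fa)\cap U^\alpha(R)\subseteq G\cap U^\alpha(R)$, and under the identification $U^\alpha(R)\cong R$ this is the ideal $\fa$. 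For the converse I would put $\fa=\prod_\alpha\fa_\alpha$, the product over all roots, which is nonzero since $R$ is a domain and is the ideal asserted in~(ii); as $\fa\subseteq\fa_\alpha$ for every $\alpha$, the hypothesis already gives $U^\alpha(\fa)\subseteq G\cap U^\alpha(R)$ for all $\alpha$, so it is enough to show that the subgroup $E\subseteq G$ generated by all the $U^\alpha(\fa)$ contains $\Gamma_R(\fa)$.

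To show $\Gamma_R(\fa)\subseteq E$ I would argue by commutator computations and Gaussian elimination in $\Sp_4$. First, the Chevalley commutator relations in type $C_2$ show that, for roots $\alpha,\beta$ with $\alpha+\beta$ a root, $[x_\alpha(a),x_\beta(b)]$ is a product of factors $x_{i\alpha+j\beta}(c_{ij}\,a^ib^j)$ with integer structure constants $c_{ij}\in\{\pm1,\pm2\}$, so for $a,b\in\fa$ every parameter $c_{ij}a^ib^j$ again lies in $\fa$; combined with commutators of $U^\gamma(\fa)$ against $U^{-\gamma}(\fa)$ this puts into $E$ enough elements of $\Gamma_R(\fa^2)$ supported on each root $\SL_2$. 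Then, given $g\in\Gamma_R(\fa^N)$ with $N$ bounded in terms of the number of positive roots of $C_2$, I would successively multiply $g$ on the left and right by suitable elements of the various $U^\gamma(\fa)$ so as to clear all off-diagonal entries --- the pivots being units congruent to $1$, and the parameter needed at each step lying in $\fa$ because it is a polynomial expression in the highly divisible entries of $g$ --- reducing $g$, modulo $E$, to a diagonal element $t\in\Gamma_R(\fa^{N'})\cap T_2(R)$.

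The hard part is the remaining step: showing that such a $t$, i.e.\ an element $\diag(u_1,u_2,u_2^{-1},u_1^{-1})$ with $u_i\equiv1\pmod\fa$, already belongs to $E$. This cannot be achieved by naive commutators of opposite root subgroups --- these never land in $T_2$ --- nor through the cocharacter elements $w_\gamma(1+a)=x_\gamma(1+a)\,x_{-\gamma}(-(1+a)^{-1})\,x_\gamma(1+a)$, since the parameter $(1+a)^{-1}$ is a unit rather than an element of $\fa$; it is a genuine rank-$\ge2$ phenomenon. I would resolve it through the relative form of the symplectic Whitehead lemma, equivalently the structure theory of the relative elementary subgroup $\mathrm{Ep}_4(R,\fa)$ of a Chevalley group of rank $\ge2$: one has $[\Gamma_R(\fa),\Sp_4(R)]\subseteq\mathrm{Ep}_4(R,\fa)\subseteq\Gamma_R(\fa)$ with central quotient, and $\mathrm{Ep}_4(R,\fa)$ is generated by the conjugates $x_{-\gamma}(r)\,x_\gamma(a)\,x_{-\gamma}(-r)$ of root elements of level $\fa$, each of which the $C_2$ commutator relations let one rewrite --- in rank $\ge2$ the conjugating factor can be absorbed into further commutators of root elements --- as a product of elements of the $U^\delta(\fa)$; hence $\mathrm{Ep}_4(R,\fa)\subseteq E$, and since $t\in\mathrm{Ep}_4(R,\fa)$ this gives $\Gamma_R(\fa)\subseteq E\subseteq G$. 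In all the applications $R$ is a Noetherian local $\Z_p$-algebra with $\fa\subseteq\fm_R$, so one may instead simply note that $R$ has stable rank $1$, for which the relative elementary and congruence subgroups coincide, making this last step automatic.
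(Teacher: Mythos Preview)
The paper gives essentially no proof of this lemma: it simply says ``A computation with commutators gives an analogue of Lemma~\ref{liestdarg} dealing with unipotent and congruence subgroups rather than Lie algebras.'' Your attempt is therefore far more detailed than the original, and you have correctly isolated the one point where the group case is genuinely harder than the Lie-algebra case: in $\fsp_4$ the bracket $[\fu^\alpha,\fu^{-\alpha}]$ lands exactly in the Cartan and immediately produces $\fa_\alpha\fa_{-\alpha}\cdot\ft^\alpha$, whereas in the group there is no analogous commutator formula producing torus elements.

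That said, your resolution of this ``hard part'' has a gap for general integral domains. You argue that $\Gamma_R(\fa)/\mathrm{Ep}_4(R,\fa)$ is central and then assert that the diagonal element $t$ lies in $\mathrm{Ep}_4(R,\fa)$; but centrality of the quotient does not imply triviality, and the obstruction is precisely the relative symplectic $SK_1(R,\fa)$, which can be nonzero for arbitrary domains. So the step ``$t\in\mathrm{Ep}_4(R,\fa)$'' is unjustified as written. (Your Gaussian elimination paragraph is also a bit muddled --- you start with $g\in\Gamma_R(\fa^N)$ when you want $g\in\Gamma_R(\fa)$ --- but this is cosmetic: with $\fa=\prod_\alpha\fa_\alpha$ the pivots are units $\equiv 1$ and each clearing step uses a parameter in $\fa\subseteq\fa_\gamma$, so the reduction to a diagonal $t\equiv 1\pmod\fa$ is fine.)

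Your final sentence, however, is exactly the right repair and is all that the paper actually needs: every ring to which the lemma is applied ($\I_0^\circ$, $\I_0^\circ/P_\uk\I_0^\circ$, etc.) is a semilocal $\Z_p$-algebra, hence of stable rank~$1$, and for such rings the relative elementary and congruence subgroups of $\Sp_{2n}$ ($n\ge 2$) coincide. With that hypothesis added, your argument is complete and strictly more rigorous than the paper's one-line gesture; indeed it suggests that the lemma as stated, over an \emph{arbitrary} integral domain, may require an additional hypothesis to be literally true.
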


\begin{rem}\label{fl2}
In both Lemma \ref{liestdarg} and Lemma \ref{stdarg}, if there is an ideal $\fa^\prime$ of $R$ such that the choice $\fa_\alpha=\fa^\prime$ for every $\alpha$ satisfies condition (2), then the choice $\fa=(\fa^\prime)^2$ satisfies condition (1). 
\end{rem}

By applying Proposition \ref{prodresfull} and Lemma \ref{stdarg} to $R=\I^\circ_0/P_\uk\I_0^\circ$ and $G=\im\rho_{0,P_\uk}$ we obtain the following corollary.

\begin{cor}\label{corresfull}
For every root $\alpha$ of $\GSp_4$ the group $\im\rho_{P_\uk}\cap U^\alpha(\I^\circ_0/P_\uk\I_0^\circ)$ contains the image of an ideal of $\I^\circ_0/P_\uk\I_0^\circ$. 
\end{cor}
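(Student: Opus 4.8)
Looking at Corollary \ref{corresfull}, I need to prove that for every root $\alpha$ of $\GSp_4$, the group $\im\rho_{P_\uk}\cap U^\alpha(\I^\circ_0/P_\uk\I_0^\circ)$ contains the image of an ideal of $\I^\circ_0/P_\uk\I_0^\circ$. This is stated as a direct corollary of Proposition \ref{prodresfull} and Lemma \ref{stdarg}, so the proof should be short.

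The plan is to simply combine the two cited results. By Proposition \ref{prodresfull}, the image of $\rho_{P_\uk}$ contains a non-trivial congruence subgroup $\Gamma_{\I_0^\circ/P_\uk\I_0^\circ}(\fa)$ of $\Sp_4(\I^\circ_0/P_\uk\I_0^\circ)$ for some non-zero ideal $\fa$. Wait — I should be careful here: Proposition \ref{prodresfull} is stated for $\im\rho_{P_\uk}$ directly (it says "The image of the representation $\rho_{P_\uk}$ contains a non-trivial congruence subgroup of $\Sp_4(\I^\circ_0/P_\uk\I_0^\circ)$"), so there is no need to invoke $\rho_{0,P_\uk}$ or Lemma \ref{rhorho0} at this stage. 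Then I apply Lemma \ref{stdarg} with $R = \I^\circ_0/P_\uk\I_0^\circ$ and $G = \im\rho_{P_\uk}$: condition (1) of that lemma is satisfied with the ideal $\fa$, so condition (2) holds, namely for every root $\alpha$ of $\Sp_4$ the unipotent group $G \cap U^\alpha(R)$ contains a non-zero ideal $\fa_\alpha$ of $R$ (in fact one may take $\fa_\alpha = \fa$ by part (i) of the lemma). Since the roots of $\Sp_4$ and $\GSp_4$ coincide, this is exactly the assertion of the corollary.

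The only subtlety worth a sentence is matching up the identification $U^\alpha(R) \cong R$ used in Lemma \ref{stdarg} with the statement of the corollary, and observing that "$\im\rho_{P_\uk} \cap U^\alpha(\I^\circ_0/P_\uk\I_0^\circ)$" already presupposes that $\rho_{P_\uk}$ has image in $\GSp_4(\I_0^\circ/P_\uk\I_0^\circ)$ — this is legitimate because we have arranged (via Proposition \ref{H0repr}, applied to the $\Z_p$-regular conjugate of $\rho$ with $\rho(H_0) \subset \GSp_4(\I_0^\circ)$) that the relevant representation takes values in the subring fixed by the self-twists, so its reduction modulo $P_\uk\I_0^\circ$ lands in $\GSp_4(\I_0^\circ/P_\uk\I_0^\circ)$. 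There is no real obstacle here: the corollary is a formal consequence of results already established, and the proof is essentially one line once the bookkeeping on coefficient rings is acknowledged. I would simply write:

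\begin{proof}
By Proposition \ref{prodresfull} the image of $\rho_{P_\uk}$ contains a non-trivial congruence subgroup $\Gamma_{\I_0^\circ/P_\uk\I_0^\circ}(\fa)$ of $\Sp_4(\I^\circ_0/P_\uk\I_0^\circ)$ for some non-zero ideal $\fa$ of $\I_0^\circ/P_\uk\I_0^\circ$. Apply Lemma \ref{stdarg} with $R=\I_0^\circ/P_\uk\I_0^\circ$ and $G=\im\rho_{P_\uk}$: condition (1) holds for the ideal $\fa$, so by part (i) condition (2) holds with $\fa_\alpha=\fa$ for every root $\alpha$. Since the roots of $\Sp_4$ and of $\GSp_4$ coincide, this says precisely that for every root $\alpha$ of $\GSp_4$ the group $\im\rho_{P_\uk}\cap U^\alpha(\I^\circ_0/P_\uk\I_0^\circ)$ contains the image of the ideal $\fa$ under the identification $U^\alpha(\I_0^\circ/P_\uk\I_0^\circ)\cong\I_0^\circ/P_\uk\I_0^\circ$.
\end{proof}
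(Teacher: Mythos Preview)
Your proof is correct and matches the paper's approach: the paper simply states that the corollary follows by applying Proposition \ref{prodresfull} and Lemma \ref{stdarg} to $R=\I^\circ_0/P_\uk\I_0^\circ$ (the paper names $G=\im\rho_{0,P_\uk}$ rather than $G=\im\rho_{P_\uk}$, but since Proposition \ref{prodresfull} is already stated for $\rho_{P_\uk}$ and the two images have the same unipotent intersections, your choice is if anything more direct).
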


\subsection{Lifting the congruence subgroup}

If $\alpha$ is a root of $\GSp_4$, $G$ is a group, $R$ is a ring and $\tau\colon G\to\GSp_4(R)$ is a representation, let $U^\alpha(\tau)=\tau(G)\cap U^\alpha(R)$. We always identify $U^\alpha(R)$ with $R$, hence $U^\alpha(\tau)$ with an additive subgroup of $R$.

Recall that $\rho\colon H_0\to\GSp_4(\I_0^\circ)$ is the representation associated with a finite slope family $\theta\colon\T_h\to\I^\circ$ and that $\rho_{P_\uk}$ is the reduction of $\rho$ modulo $P_\uk\I_0^\circ$. We use Corollary \ref{corresfull} together with Proposition \ref{approx} to obtain a result on the unipotent subgroups of the image of $\rho$. 


\begin{prop}\label{uniplatt}
For every root $\alpha$ of $\GSp_4$, the group $U^\alpha(\rho)$ contains a basis of a $\Lambda_h$-lattice in $\I_0^\circ$.
\end{prop}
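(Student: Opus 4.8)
The plan is to combine the mod-$P_\uk$ fullness statement of Corollary \ref{corresfull} with the approximation argument of Proposition \ref{approx}, exactly as outlined in the two-step strategy preceding Lemma \ref{specialprime}. Fix a root $\alpha$ of $\GSp_4$. By Corollary \ref{corresfull} the group $\im\rho_{P_\uk}\cap U^\alpha(\I_0^\circ/P_\uk\I_0^\circ)$ contains the image of a non-zero ideal $\fa_\alpha$ of $\I_0^\circ/P_\uk\I_0^\circ$. Since $\I_0^\circ$ is a finite $\Lambda_h$-algebra and $P_\uk$ is arithmetic, the quotient $\I_0^\circ/P_\uk\I_0^\circ$ is a finite $\Lambda_h/P_\uk$-algebra, and $\Lambda_h/P_\uk$ is a discrete valuation ring (the localization of a power series ring at an arithmetic prime is a DVR, or at least a one-dimensional local domain whose fraction field is the residue field of the arithmetic prime). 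By Lemma \ref{lattice} the non-zero ideal $\fa_\alpha$ of $\I_0^\circ/P_\uk\I_0^\circ$ contains a $\Lambda_h/P_\uk$-lattice, so $\im\rho_{P_\uk}\cap U^\alpha(\I_0^\circ/P_\uk\I_0^\circ)$ contains a basis $\{\ovl{b}_1,\ldots,\ovl{b}_m\}$ of a $\Lambda_h/P_\uk$-lattice in $\I_0^\circ/P_\uk\I_0^\circ$, where $m$ is the rank of $\I_0^\circ$ over $\Lambda_h$.

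The second step is to lift these elements along the reduction map. We apply Proposition \ref{approx} with $\bG=\GSp_4$, $T=T_2$, $B=B_2$, $A=\I_0^\circ$, $G=\rho(H)=\im\rho_0$ (which, after the conjugation of Proposition \ref{H0repr}, lands in $\GSp_4(\I_0^\circ)$ and lies in the level-$\fm_{\I_0^\circ}$ principal congruence subgroup by definition of $H$), and $Q=P_\uk\I_0^\circ$. The hypotheses of Proposition \ref{approx} are met: $\I_0^\circ$ is profinite local and $p$-adically complete since it is finite over $\Lambda_h$, and $\rho_0(H)$ is normalized by a diagonal $\Z_p$-regular element by Proposition \ref{H0repr}(2) (the $\Z_p$-regular element normalizes the image of $\rho$, hence that of $\rho_0$, and is diagonal in the chosen symplectic basis). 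Proposition \ref{approx} then gives
\[
\pi_{P_\uk\I_0^\circ}\big(\rho_0(H)\big)\cap U^\alpha(\I_0^\circ/P_\uk\I_0^\circ)=\pi_{P_\uk\I_0^\circ}\big(\rho_0(H)\cap U^\alpha(\I_0^\circ)\big),
\]
so each $\ovl{b}_i$ lifts to an element $b_i\in\rho_0(H)\cap U^\alpha(\I_0^\circ)$, that is, to an element of $U^\alpha(\rho_0)$. Finally, to pass from $\rho_0$ back to $\rho$, note that $\rho_0=\rho|_H\otimes(\det\rho|_H)^{-1/2}$ and that the similitude-trivial parts agree: for a unipotent element $u^\alpha\in U^\alpha(\I_0^\circ)$ in the image of $\rho_0$ we have $\det(u^\alpha)=1$, so $u^\alpha$ lies in $U^\alpha(\rho)$ as well (more precisely, $f(\rho(h))=\rho_0(h)$ where $f(g)=\det(g)^{-1/2}g$ is the identity on $\Sp_4$, and the preimage of $U^\alpha$ under $f$ inside $\im\rho|_H$ is $U^\alpha(\rho|_H)$). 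Hence $U^\alpha(\rho)\supset U^\alpha(\rho|_H)=U^\alpha(\rho_0)$ contains $\{b_1,\ldots,b_m\}$.

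It remains to check that $\{b_1,\ldots,b_m\}$ is a basis of a $\Lambda_h$-lattice in $\I_0^\circ$, i.e. that it is $Q(\Lambda_h)$-linearly independent in $Q(\I_0^\circ)$. This is the one point requiring a small argument: the $b_i$ are only known to reduce mod $P_\uk\I_0^\circ$ to a $\Lambda_h/P_\uk$-basis of a lattice, and one must rule out a $\Lambda_h$-linear relation among the lifts. This follows by a standard determinant/Nakayama argument — a nontrivial $\Lambda_h$-relation $\sum \lambda_i b_i=0$ with $\lambda_i\in\Lambda_h$ not all in $P_\uk$ would, upon clearing denominators and reducing mod $P_\uk$, contradict the independence of the $\ovl{b}_i$ unless all $\lambda_i\in P_\uk$, and iterating (using that $\bigcap_n P_\uk^n\Lambda_h=0$, or localizing at $P_\uk$ and using that $(\Lambda_h)_{P_\uk}$ is a DVR) forces all $\lambda_i=0$. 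I expect this linear-independence bookkeeping, together with verifying the $p$-adic completeness and $\Z_p$-regular-normalization hypotheses of Proposition \ref{approx} in the present setup, to be the only real friction; the structural input (Corollary \ref{corresfull}, Proposition \ref{approx}, Lemma \ref{lattice}) does all the heavy lifting. This proof mirrors \cite[Section 4.4]{cit} and \cite[Lemma 4.10 ff.]{lang}.
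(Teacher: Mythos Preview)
Your two-step strategy (Corollary~\ref{corresfull} followed by Proposition~\ref{approx}) is exactly the paper's, but the final linear-independence step has a genuine gap. The claim that $(\Lambda_h)_{P_\uk}$ is a DVR is false: we are in genus~$2$, so $\Lambda_h=\Lambda_{2,h}$ is three-dimensional (after base change it is $\cO_h[[t_1,t_2]]$) and the arithmetic prime $P_\uk$ has height~$2$, with $\Lambda_h/P_\uk$ one-dimensional. Hence $(\Lambda_h)_{P_\uk}$ is a two-dimensional local ring and there is no uniformizer to divide by; the iteration ``all $\lambda_i\in P_\uk\Rightarrow$ divide and repeat'' does not work when $P_\uk$ is not principal, and knowing only $K\subset P_\uk\Lambda_h^m$ does not by itself force $K\subset P_\uk^2\Lambda_h^m$. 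This can be repaired by using that $\I_0^\circ$ is flat over $\Lambda_h$ at $P_\uk$ (a consequence of the non-criticality built into the choice of $P_\uk$), but you have not invoked this.

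The paper avoids the issue by a different endgame. Instead of lifting exactly $m$ elements and arguing independence, it lifts a subset $A\subset G^\alpha$ surjecting onto the entire ideal $\fb_\uk=p\fa_\uk$, sets $M=\Lambda_h\cdot A\subset\fb:=\pi_\uk^{-1}(\fb_\uk)$, and applies Nakayama over the local ring $\Lambda_h$ to get $M=\fb$. Thus the $\Lambda_h$-span of $G^\alpha$ contains the nonzero ideal $\fb$, which by Lemma~\ref{lattice} contains a lattice; extracting $m$ elements of $G^\alpha$ spanning $Q(\I_0^\circ)$ gives the basis. Two smaller corrections: your $G$ must sit in $\Gamma_{\GSp_4(\I_0^\circ)}(p)$ for Proposition~\ref{approx}, and $\rho(H)\subset\Gamma(\fm_0)$ does not give this (the paper takes $G=\im\rho\cap\Gamma(p)$ and passes to $p\fa_\uk$); also $\rho(H)\neq\im\rho_0$ and the inclusion $U^\alpha(\rho_0)\subset U^\alpha(\rho)$ fails in general, but the detour through $\rho_0$ is unnecessary since Corollary~\ref{corresfull} is stated for $\im\rho_{P_\uk}$ directly.
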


\begin{proof}
Let $\pi_\uk\colon\I_0^\circ\to\I_0^\circ/P_\uk\I_0^\circ$ be the natural projection. We denote also by $\pi_\uk$ the induced map $\GSp_4(\I_0^\circ)\to\GSp_4(\I_0^\circ/P_\uk\I_0^\circ)$. For a root $\alpha$ of $\GSp_4$, let $\pi^\alpha_\uk\colon U^\alpha(\I_0^\circ)\to U^\alpha(\I_0^\circ/P_\uk\I_0^\circ)$ be the projection induced by $\pi_\uk$. 

Let $G=\im\rho\cap\Gamma_{\GSp_4(\I_0^\circ)}(p)$ and $G_{P_\uk}=\pi_\uk(G)$. 
We check that the choices $A=\I_0^\circ$, $\bG=\GSp_4$, $T=T_2$, $B=B_2$, $G=\im\rho\cap\Gamma_{\GSp_4(\I_0^\circ)}(p)$ and $Q=P_\uk$ satisfy the hypotheses of Proposition \ref{approx}:
\begin{itemize}[label={--}]
\item the group $G$ is compact since $\im\rho$ is the continuous image of a Galois group and $\Gamma_{\GSp_4(\I_0^\circ)}(p)$ is a pro-$p$ group;
\item by assumption $\im\rho$ contains a diagonal $\Z_p$-regular element $d$, and since $\Gamma_{\GSp_4(\I_0^\circ)}(p)$ is a normal subgroup of $\GSp_4(A)$ the element $d$ normalizes $\im\rho\cap\Gamma_{\GSp_4(\I_0^\circ)}(p)$.
\end{itemize}
Hence by Proposition \ref{approx} $\pi^\alpha_\uk$ induces a surjection $G\cap U^\alpha(\I_0^\circ)\to G_\uk\cap U^\alpha(\I_0^\circ/P_\uk\I_0^\circ)$. 
Let $G^\alpha=G\cap U^\alpha(\I_0^\circ)$ and $G_\uk^\alpha=G_\uk\cap U^\alpha(\I_0^\circ/P_\uk\I_0^\circ)$. As usual we identify them with $\Z_p$-submodules of $\I_0^\circ$ and $\I_0^\circ/P_\uk\I_0^\circ$, respectively.

By Corollary \ref{corresfull} there exists a non-zero ideal $\fa_\uk$ of $\I_0^\circ/P_\uk\I_0^\circ$ such that $\fa_\uk\subset\im\rho_{P_\uk}\cap U^\alpha(\I_0^\circ/P_\uk\I_0^\circ)$. 
Set $\fb_\uk=p\fa_\uk$. Then $\fb_\uk\subset G_\uk^\alpha$. 
%
%
By the result of the previous paragraph the map $G^\alpha\to G_\uk^\alpha$ induced by $\pi_\uk^\alpha$ is surjective, so we can choose a subset $A$ of $G^\alpha$ that surjects onto $\fb_\uk$. Let $M$ be the $\Lambda_h$-span of $A$ in $\I_0^\circ$. Let $\fb$ be the pre-image of $\fb_\uk$ via $\pi_\uk^{\alpha}\colon\I_0^\circ\to\I_0^\circ/P_\uk\I_0^\circ$. Clearly $A\subset\fb$, so $M$ is a $\Lambda_h$-submodule of $\fb$. Moreover $M/P_\uk M=\fb_\uk$ by the definition of $A$. Since $\Lambda$ is local Nakayama's lemma implies that the inclusion $M\into\fb$ is an equality. In particular the $\Lambda_h$-span of $G^\alpha$ contains an ideal of $\I_0^\circ$. By Lemma \ref{lattice} this implies that $G^\alpha$ contains a basis of a $\Lambda_h$-lattice in $\I_0^\circ$.
%
\end{proof}

\bigskip

\section{Relative Sen theory}\label{sen}

Let $\theta\colon\T_h\to\I^\circ$ be a finite slope family. We keep the notations of the previous sections. Recall that the image of the family in the connected component of unity of the weight space is a disc $B_2(\kappa,r_{h,\kappa})$ adapted to the slope $h$. We make from now on the following assumption:
\begin{equation*}\tag{exp}
B_2(\kappa,r_{h,\kappa})\subset B_2(0,p^{-1/{p-1}}).
\end{equation*}
The only purpose of this assumption is to assure the convergence of an exponential series (see Section \ref{senexp}).

In Section \ref{gspfam} we defined a family of radii $\{r_i\}_{i\geq 1}$ and we let $A_{r_i}$ be the ring of rigid analytic functions bounded by $1$ on $B(0,r_i)$. For every $i\geq 1$ there is a natural injection $\iota_{r_i}\colon\Lambda_h\to A_{r_i}$. 
Set $\I_{r_i,0}^\circ=\I_0^\circ\widehat{\otimes}_{\Lambda_h}A_{r_i}^\circ$. 
We endow $\I_{r_i,0}^\circ$ with its $p$-adic topology.

\begin{rem}\label{topolo}\mbox{ }
\begin{enumerate}
\item The ring $\I_0^\circ$ admits two inequivalent topologies: the profinite one and the $p$-adic one. The representation $\rho$ is continuous with respect to the profinite topology on $\I_0^\circ$ but it is not necessarily continuous with respect to the $p$-adic one.
\item Since $\I_0^\circ$ is a finite $\Lambda_h$-algebra, $\I_{r_i,0}^\circ$ is a finite $A_{r_i}^\circ$-algebra. There is an injective ring morphism $\iota^\prime_{r_i}\colon\I_0^\circ\into\I_{r_i,0}^\circ$ sending $f$ to $f\otimes 1$. This map is continuous with respect to the profinite topology on $\I_0^\circ$ and the $p$-adic topology on $\I_{r_i,0}^\circ$.
\end{enumerate}
\end{rem}

\noindent We will still write $\iota^\prime_{r_i}$ for the map $\GSp_4(\I_0^\circ)\into\GSp_4(\I_{r_i,0}^\circ)$ induced by $\iota^\prime_{r_i}$. 

We associated with $\theta$ a representation $\rho\vert_{H_0}\colon H_0\to\GSp_4(\I^\circ_0)$ that is continuous with respect to the profinite topologies on both its domain and target. 
By Remark \ref{topolo}(1) $\rho\vert_{H_0}$ needs not be continuous with respect to the $p$-adic topology on $\GSp_4(\I_0^\circ)$. This poses a problem when trying to apply Sen theory. For this reason we introduce for every $i$ the representation $\rho_{r_i}\colon H_0\to\GSp_4(\I^\circ_{r_i,0})$ defined by $\rho_{r_i}=\iota^\prime_{r_i}\ccirc\rho\vert_{H_0}$. We deduce from the continuity of $\iota^\prime_{r_i}$ that $\rho_{r_i}$ is continuous with respect to the profinite topology on $H_0$ and the $p$-adic one on $\I_{r_i,0}^\circ$. It is clear from the definition that the image of $\rho_{r_i}$ is independent of $i$ as a topological group. 

There is a good notion of Lie algebra for a pro-$p$ group that is topologically of finite type. For this reason we further restrict $H_0$ so that the image of $\rho_{r_i}$ is a pro-$p$ group. Let $H_{r_1}=\{g\in H_0 \,\vert\, \rho_{r_1}(g)\cong\1_4 \pmod{p}\}$ and set $H_{r_i}=H_{r_1}$ for every $i\geq 1$.
The subgroup $\{M\in\GSp_4(\I_{r_1,0}^\circ)\,\vert\,M\cong\1_4\pmod{p}\}$ is of finite index in $\GSp_4(\I_{r_1,0}^\circ)$. Note that this depends on the fact that we extended the coefficients to $\I_{r_1,0}$, since $\{M\in\GSp_4(\I_{0}^\circ)\,\vert\,M\cong\1_4\pmod{p}\}$ is \emph{not} of finite index in $\GSp_4(\I_0)$. We deduce that $H_{r_1}$ is a normal open subgroup of $G_\Q$. Let $K_{H_{r_i}}$ be the subfield of $\ovl{\Q}$ fixed by $H_{r_i}$. It is a finite Galois extension of $\Q$.

Recall that we fixed an embedding $G_{\Q_p}\into G_\Q$, identifying $G_{\Q_p}$ with a decomposition subgroup of $G_\Q$ at $p$. Let $H_{r_i,p}=H_{r_i}\cap G_{\Q_p}$.  
Let $K_{H_{r_i},p}$ be the subfield of $\Qp$ fixed by $H_{r_i,p}$. The field $K_{H_{r_i},p}$ will play a role when we apply Sen theory. 
For every $i$, let $G_{r_i}=\rho_{r_i}(H_{r_i})$ and $G_{r_i}^\loc=\rho_{r_i}(H_{r_i,p})$. 

\begin{rem}\label{Grind}
The topological Lie groups $G_{r}$ and $G_{r}^\loc$ are independent of $r$, in the following sense. For positive integers $i,j$ with $i\leq j$ let $\iota_{r_j}^{r_i}\colon\I_{r_j,0}\to\I_{r_i,0}$ be the natural morphism induced by the restriction of analytic functions $A_{r_j}\to A_{r_i}$. Since $H_{r_i}=H_{r_j}=H_{r_1}$ by definition, $\iota_{r_j}^{r_i}$ induces isomorphisms $\iota_{r_j}^{r_i}\colon G_{r_j}\xto{\sim} G_{r_i}$ and $\iota_{r_j}^{r_i}\colon G_{r_j}^\loc\xto{\sim} G_{r_i}^\loc$.
\end{rem}



\subsection{Big Lie algebras}\label{biglie}

As before let $r$ be a radius among the $r_i$, $i\in\N^{>0}$. We will associate with $\rho_r(H_{r_i})$ a Lie algebra that will give the context in which to apply Sen's results. Our methods require that we work with $\Q_p$-Lie algebras, so we define the rings $A_{r}=A_{r}^\circ[p^{-1}]$ and $\I_{r,0}=\I_{r,0}^\circ[p^{-1}]$.

Let $\fa$ be a height two ideal of $\I_{r,0}$. The quotient $\I_{r,0}/\fa$ is a finite-dimensional $\Q_p$-algebra. 
Let $\pi_\fa\colon\I_{r,0}\to\I_{r,0}/\fa$ be the natural projection. We still denote by $\pi_\fa$ the induced map $\GSp_4(\I_{r,0})\to\GSp_4(\I_{r,0}/\fa)$. Consider the subgroups $G_{r,\fa}=\pi_\fa(G_r)$ and $G_{r,\fa}^\loc=\pi_\fa(G_r^\loc)$ of $\GSp_4(\I_{r,0}/\fa)$. They are both pro-$p$ groups and they are topologically of finite type since $\GSp_4(\I_{r,0}/\fa)$ is. Note that it makes sense to consider the logarithm of an element of $G_{r,\fa}$ since this group is contained in $\{M\in\GSp_4(\I_{r,0}/fa)\,\vert\,M\cong\1_4\pmod{p}\}$.

We attach to $G_{r,\fa}$ and $G_{r,\fa}^\loc$ the $\Q_p$-vector subspaces $\fG_{r,\fa}$ and $\fG_{r,\fa}^\loc$ of $\fgsp_4(\I_{r,0}/\fa)$ defined by 
\[ \fG_{r,\fa}=\Q_p\cdot\log G_{r,\fa}\quad \textrm{and}\quad \fG_{r,\fa}^\loc=\Q_p\cdot\log G_{r,\fa}^\loc. \]
The $\Q_p$-Lie algebra structure of $\fgsp_4(\I_{r,0}/\fa)$ restricts to a $\Q_p$-Lie algebra structure on $\fG_{r,\fa}$ and $\fG_{r,\fa}^\loc$. These two Lie algebras are finite-dimensional over $\Q_p$ since $\fgsp_4(\I_{r,0}/\fa)$ is.

\begin{rem}\label{lieind}
The Lie algebras $\fG_{r,\fa}$ and $\fG_{r,\fa}^\loc$ are independent of $r$, in the following sense. For positive integers $i,j$ with $i\leq j$ let $\iota_{r_j}^{r_i}\colon\I_{r_j,0}\to\I_{r_i,0}$ be the natural morphism. By Remark \ref{Grind} $\iota_{r_j}^{r_i}$ induces isomorphisms $\iota_{r_j}^{r_i}\colon\fG_{r_j,\fa}\xto{\sim}\fG_{r_i,\fa}$ and $\iota_{r_j}^{r_i}\colon\fG_{r_j,\fa}^\loc\xto{\sim}\fG_{r_i,\fa}^\loc$.
\end{rem}

\begin{rem}
The definitions of $\fG_{r,\fa}$ and $\fG_{r,\fa}^\loc$ do not make sense if $\fa$ is not a height two ideal. In this case $\I_{r,0}/\fa$ is not a finite extension of $\Q_p$ and $G_r$ and $G_r^\loc$ need not be topologically of finite type. We can define subsets $\fG_{r,\fa}$ and $\fG_{r,\fa}^\loc$ of $\fgsp_4(\I_{r,0}/\fa)$ as above but they do not have in general a Lie algebra structure. In particular the choice $\fa=0$ does not give Lie algebras for $G_r$ and $G_r^\loc$. 
\end{rem}

Recall that there is a natural injection $\Lambda_2\into\Lambda_h$, hence an injection $\Lambda_2[p^{-1}]\into\Lambda_h[p^{-1}]$. For every $\uk=(k_1,k_2)$ the ideal $P_\uk\Lambda_h[p^{-1}]$ is either prime in $\Lambda_h[p^{-1}]$ or equal to $\Lambda_h[p^{-1}]$. We define the set of ``bad'' ideals $S_\Lambda^\bad$ of $\Lambda_2[p^{-1}]$ as
\[ S_\Lambda^\bad=\{(1+T_1-u),(1+T_2-u^2),(1+T_2-u(1+T_1)),((1+T_1)(1+T_2)-u^3)\}. \]
Then we define the set of bad prime ideals of $\Lambda_h[p^{-1}]$ as 
\[ S^\bad=\{P\mbox{ prime of } \Lambda_h[p^{-1}]\,\vert\, P\cap\Lambda_2[p^{-1}]\in S_\Lambda^\bad \}. \] 
We will take care to define rings where the images of the ideals in $S^\bad$ consist of invertible elements. The reason for this will be clear in Section \ref{senexp}. Let $S_2$ be the set of ideals $\fa$ of $\I_{r,0}$ of height two such that $\fa$ is prime to $P$ for every $P\in S^\bad$. Let $S_2^\prime$ be the subset of prime ideals in $S_2$. We define the ring 
\[ \B_r=\varprojlim_{\fa\in S_2}\I_{r,0}/\fa, \]
where the limit of finite-dimensional $\Q_p$-Banach spaces is taken with respect to the natural transition maps $\I_{r,0}/\fa_1\to\I_{r,0}/\fa_2$ defined for every inclusion of ideals $\fa_1\subset\fa_2$. We equip $\I_{r,0}/\fa$ with the $p$-adic topology for every $\fa$ and $\B_r$ with the projective limit topology. There is a natural injection $\iota_{\B_r}\colon\I_{r,0}\into\B_r$ with dense image. 
There is an isomorphism of \emph{rings} 
\begin{equation}\label{BrprodP} \B_r\cong\prod_{P\in S_2^\prime}\widehat{(\I_{r,0})}_P, \end{equation}
where $\widehat{(\I_{r,0})}_P=\varprojlim_i\I_{r,0}/P^i$ with respect to the natural transition maps, but \eqref{BrprodP} is \emph{not} an isomorphism of topological rings if we equip $\widehat{(\I_{r,0})}_P$ with the $P$-adic topology for every $P$. In this case the resulting product topology is not the topology on $\B_r$, which is the $p$-adic one. 

Now consider the sets
\begin{gather*}
S^{\bad}_A=\{P\cap A_{r}\,\vert\, P\in S^\bad\}, \quad S_{2,A}=\{\fa\cap A_{r}\,\vert\, \fa\in S_2\}, \quad S_{2,A}^\prime=\{\fa\cap A_{r,}\,\vert\, \fa\in S_2^\prime\}.
\end{gather*}
For later use we define a ring
\[ B_r=\varprojlim_{\fa\in S_{2,A}}A_{r}/\fa, \]
where the limit of finite-dimensional $\Q_p$-Banach spaces is taken with respect to the natural transition maps $A_{r}/\fa_1\to A_{r}/\fa_2$ defined for every inclusion of ideals $\fa_1\subset\fa_2$. We equip $A_{r}/\fa$ with the $p$-adic topology for every $\fa$ and $B_r$ with the projective limit topology. There is a natural injection $\iota_{B_r}\colon A_{r}\into B_r$ with dense image. 
There is an isomorphism of \emph{rings}
\begin{equation}\label{brprodP} B_r\cong\prod_{P\in S_{2,A}^\prime}\widehat{(A_{r})}_P \end{equation}
where $\widehat{(A_{r})}_P=\varprojlim_i A_{r}/P^i$ with respect to the natural transition maps, but \eqref{brprodP} is \emph{not} an isomorphism of topological rings if we equip $\widehat{(A_{r})}_P$ with the $P$-adic topology for every $P$. In this case the resulting product topology is not the topology on $B_r$, which is the $p$-adic one.

\begin{rem}\label{badBr}
For every $P\in S^\bad$ we have $P\cdot\B_r=\B_r$, since the limit defining $\B_r$ is over ideals prime to $P$. In the same way we have $P\cdot B_r=B_r$ for every $P\in S_A^{\bad}$.
\end{rem}

Recall that $\I_{r,0}$ is a finite $A_{r}$-algebra. Then $\I_{r,0}/\fa$ is a finite $A_{r}/(\fa\cap A_{r})$-algebra for every $\fa\in S_2$, so the ring $\B_r$ has a natural structure of topological $B_r$-algebra. For every $\fa\in S_2$ the degree of the extension $\I_{r,0}/\fa$ over $A_{r}/(\fa\cap A_{r})$ is bounded by that of $\I_{r,0}$ over $A_{r}$. We deduce that $\B_r$ is a finite $B_r$-algebra. 

We proceed to define the Lie algebras of $G_r$ and $G_r^\loc$ as subalgebras of $\fgsp_4(\B_r)$. Let
\begin{gather*} \fG_r=\varprojlim_{\fa\in S_2}\fG_{r,\fa}\quad\textrm{ and }\quad \fG^\loc_r=\varprojlim_{\fa\in S_2}\fG^\loc_{r,\fa}, \end{gather*}
where $\fG_{r,\fa}$ and $\fG_{r,\fa}^\loc$ are the Lie algebras we attached to $G_{r,\fa}$ and $G_{r,\fa}^\loc$. The $\Q_p$-Lie algebra structures on $\fG_{r,\fa}$ and $\fG_{r,\fa}^\loc$ induce $\Q_p$-Lie algebra structures on $\fG_r$ and $\fG_r^\loc$. We endow $\fG_r$ and $\fG_r^\loc$ with the $p$-adic topology induced by that on $\fgsp_4(\B_r)$.

When we introduce the Sen operators we will have to extend the scalars of the various rings and Lie algebras to $\C_p$. We denote this operation by adding a lower index $\C_p$ to the objects previously defined. 
We still endow all the rings with their $p$-adic topology. Clearly $\I_{r,0,\C_p}$ has a structure of finite $A_{r,\C_p}$-algebra and $\B_{r,\C_p}$ has a structure of finite $B_{r,\C_p}$-algebra. The injections $\iota_{\B_r}$ and $\iota_{B_r}$ induce injections with dense image $\iota_{\B_r,\C_p}\colon\I_{r,0,\C_p}\into\B_{r,\C_p}$ and $\iota_{B_r,\C_p}\colon A_{r,\C_p}\into\B_{r,\C_p}$. The Lie algebras $\fG_{r,\C_p}$ and $\fG_{r,\C_p}^\loc$ are $\C_p$-Lie subalgebras of $\fgsp_4(\B_{r,\C_p})$.

\begin{rem}\label{noBrstr}
The $\Q_p$-Lie algebras $\fG_r$ and $\fG_r^\loc$ do not have a priori any $\B_r$ or $B_r$-module structure. As a crucial step in our arguments we will use Sen theory to induce a $B_{r,\C_p}$-vector space (hence a $B_{r,\C_p}$-Lie algebra) structure on suitable subalgebras of $\fG_{r,\C_p}$.
\end{rem}

\subsection{The Sen operator associated with a $p$-adic Galois representation}\label{subsendef}

Let $L$ be a $p$-adic field and let $\ccR$ be a Banach $L$-algebra. Let $K$ be another $p$-adic field, $m$ be a positive integer and $\tau\colon\Gal(\ovl{K}/K)\to\GL_m(\ccR)$ be a continuous representation. We recall the construction of the Sen operator associated with $\tau$, following \cite{seninf}. 

We fix embeddings of $K$ and $L$ in $\Qp$. The constructions that follow will depend on these choices. We suppose that the Galois closure $L^\Gal$ of $L$ over $\Q_p$ is contained in $K$. If this is not the case we simply restrict $\tau$ to the open subgroup $\Gal(\ovl{K}/KL^\Gal)\subset\Gal(\ovl{K}/K)$. 
We denote by $\chi\colon\Gal(\ovl{L}/L)\to\Z_p^\times$ the $p$-adic cyclotomic character. 
Let $L_{\infty}$ be a totally ramified $\Z_p$-extension of $L$. Let $\gamma$ be a topological generator of $\Gamma=\Gal(L_{\infty}/L)$. For a positive integer $n$, let $\Gamma_n\subset\Gamma$ be the subgroup generated by $\gamma^{p^n}$ and $L_n=L_\infty^{\langle\gamma^{p^n}\rangle}$ be the subfield of $L_\infty$ fixed by $\Gamma_n$. We have $L_{\infty}=\cup_n L_n$. Let $L_n^\prime=L_nK$ and $G_n^\prime=\Gal(\ovl{L}/L_n^\prime)$.

Write $\ccR^m$ for the $\ccR$-module over which $\Gal(\ovl{K}/K)$ acts via $\tau$. We define an action of $\Gal(\ovl{K}/K)$ on $\ccR^m\widehat{\otimes}_L \C_p$ by letting $\sigma\in\Gal(\ovl{K}/K)$ send $x\otimes y$ to $\tau(\sigma)(x)\otimes\sigma(y)$. Then by \cite{seninf} there exists a matrix $M\in \GL_m\left(\ccR\widehat{\otimes}_L \C_p\right)$, an integer $n\ge 0$ and a representation $\delta\colon\Gamma_n\to \GL_m(\ccR\otimes_L L_{n}^\prime)$ such that for all $\sigma\in G_n^\prime$ we have 
\begin{equation}\label{eqdefsen} M^{-1}\tau(\sigma)\sigma(M)=\delta(\sigma). \end{equation}

\begin{defin}\label{senop}
The Sen operator associated with $\tau$ is the element
\[ \phi=\lim_{\sigma\to 1}\frac{\log(\delta(\sigma))}{\log(\chi(\sigma))} \]
of $\Mat_m(\ccR\widehat{\otimes}_L \C_p)$.
\end{defin}

The limit in the definitions always exists and is independent of the choice of $\delta$ and $M$.


Now suppose that $\ccR=L$ and that $\tau$ is a Hodge-Tate representation with Hodge-Tate weights $h_1,h_2,\ldots,h_m$. Let $\phi$ be the Sen operator associated with $\tau$; it is an element of $\Mat_m(\C_p)$. The following theorem is a consequence of the results of \cite{sencont}. 

\begin{thm}\label{charpolsen}
The characteristic polynomial of $\phi$ is $\prod_{i=1}^m(X-h_i)$.
\end{thm}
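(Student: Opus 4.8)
The statement is Sen's computation of the Hodge--Tate--Sen weights \cite{sencont}, and the plan is to deduce it from the elementary functoriality of the operator of Definition \ref{senop} together with the Hodge--Tate decomposition. First I would record three properties of the construction, all immediate from the defining relation \eqref{eqdefsen} and the independence of $\phi$ from the auxiliary data $(M,\delta,n)$: (a) if $\tau\cong\tau^\prime$ over $L$, then the associated operators are conjugate in $\Mat_m(\C_p)$, by transporting a solution of \eqref{eqdefsen} along the intertwiner; (b) the operator attached to $\tau_1\oplus\tau_2$ is the block sum of the operators attached to $\tau_1$ and $\tau_2$, obtained by taking $M=M_1\oplus M_2$ and $\delta=\delta_1\oplus\delta_2$; (c) more substantially, $\phi$ depends only on the $\C_p$-semilinear representation $\C_p\widehat{\otimes}_L L^m$ determined by $\tau$, and its formation is compatible with isomorphisms and direct sums of such semilinear representations --- this is Sen's decompletion theorem, i.e.\ the passage from $\C_p$-coefficients down to a finite level $L_n^\prime$, as in \cite{sencont}.

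Next I would treat the one-dimensional case $\tau=\chi^h$ for an integer $h$, where $\chi$ is the $p$-adic cyclotomic character appearing in Definition \ref{senop}: here one may take $M=1$, $n=0$ and $\delta(\sigma)=\chi(\sigma)^h$ in \eqref{eqdefsen}, so that $\phi=\lim_{\sigma\to 1}\log(\chi(\sigma)^h)/\log(\chi(\sigma))=h$; equivalently, the Sen operator of the $\C_p$-semilinear line $\C_p(h)$ is the scalar $h$. Now I invoke the hypothesis: $\tau$ being Hodge--Tate with weights $h_1,\dots,h_m$ (listed with multiplicity) means precisely that there is an isomorphism of $\C_p$-semilinear $\Gal(\ovl{K}/K)$-representations $\C_p\widehat{\otimes}_L L^m\cong\bigoplus_{i=1}^m\C_p(h_i)$. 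Applying properties (a), (b), (c) together with the one-dimensional computation, the operator $\phi$ attached to $\tau$ is conjugate in $\Mat_m(\C_p)$ to $\diag(h_1,\dots,h_m)$; in particular its characteristic polynomial is $\prod_{i=1}^m(X-h_i)$.

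The only non-formal ingredient is property (c) --- that $\phi$ is intrinsic to the $\C_p$-semilinear representation and that its formation commutes with the decomposition into characters --- which is exactly the content of Sen's decompletion results, so in the write-up this step will be a citation rather than an argument. The point requiring care is purely one of normalization: one must fix the sign convention for the cyclotomic twist so that the Sen weight of $\C_p(h)$ is $h$ and not $-h$, and so that this matches the convention for Hodge--Tate weights used in the statement; once the conventions are aligned as in Definition \ref{senop}, the characteristic polynomial of $\phi$ equals $\prod_{i=1}^m(X-h_i)$, as claimed.
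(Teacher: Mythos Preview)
Your proposal is correct and in fact more detailed than what the paper provides: the paper does not give a proof of this theorem at all, merely stating before the theorem that it ``is a consequence of the results of \cite{sencont}''. Your outline---reducing to the one-dimensional case $\C_p(h)$ via the Hodge--Tate decomposition, using the functoriality of the Sen operator under isomorphisms and direct sums of $\C_p$-semilinear representations---is exactly the standard way to unpack that citation, and you have correctly isolated Sen's decompletion (your property (c)) as the only substantive input requiring a reference rather than an argument.
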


We restrict now to the case $L=\ccR=\Q_p$, so that $\tau$ is a continuous representation $\Gal(\ovl{K}/K)\to\GL_m(\Q_p)$. 
Define a $\Q_p$-Lie algebra $\fg\subset\Mat_m(\Q_p)$ by $\fg=\Q_p\cdot\log(\tau(\Gal(\ovl{K}/K)))$. We say that $\fg$ is the Lie algebra of $\tau(\Gal(\ovl{K}/K))$. 
Let $\phi$ be the Sen operator associated with $\tau$.

\begin{thm}\label{liealgsen}\cite[Theorem 1]{senlie}
The Sen operator $\phi$ is an element of $\fg\widehat\otimes_{\Q_p}\C_p$.
\end{thm}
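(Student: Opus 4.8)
The plan is to prove Theorem \ref{liealgsen} (this is \cite[Theorem 1]{senlie}) by running the Colmez--Sen--Tate decompletion \emph{inside} the Lie algebra $\fg$. First I would make a harmless reduction: replacing $\Gal(\ovl{K}/K)$ by an open subgroup I may assume that the compact group $\tau(\Gal(\ovl{K}/K))$ is a torsion-free (uniform) pro-$p$ group. This changes neither $\fg$ --- which remains the Lie algebra of $\tau(\Gal(\ovl{K}/K))$, i.e.\ $\Q_p\cdot\log\tau(\Gal(\ovl{K}/K))$ --- nor the Sen operator $\phi$, which by \eqref{eqdefsen} only involves the restriction of $\tau$ to $G_n^\prime$. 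After this reduction $\log$ and $\exp$ are mutually inverse homeomorphisms between neighbourhoods of $1\in\tau(\Gal(\ovl{K}/K))$ and $0\in\fg$, and $\fg$ is a finite-dimensional, hence closed, $\Q_p$-subspace of $\Mat_m(\Q_p)$, so $\fg\widehat{\otimes}_{\Q_p}\C_p$ is a closed $\C_p$-subspace of $\Mat_m(\C_p)$, stable under the Baker--Campbell--Hausdorff operation because it is a Lie algebra, and $\exp$ carries a neighbourhood of $0$ in it into $\GL_m(\C_p)$.

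The main step is to choose the triple $(M,n,\delta)$ of \eqref{eqdefsen} with $M\in\exp(\fg\widehat{\otimes}_{\Q_p}\C_p)$ and $\delta$ valued in $\exp(\fg\otimes_{\Q_p}L_n^\prime)$. I would build $M$ by Sen's successive approximation, starting from $M=1$, checking inductively that each approximation $M_k$ stays in $\exp(\fg\widehat{\otimes}_{\Q_p}\C_p)$: if $M_k$ does, then $\delta_k(\sigma):=M_k^{-1}\tau(\sigma)\sigma(M_k)$ does too, since $\tau(\sigma)\in\tau(\Gal(\ovl{K}/K))\subset\exp(\fg)$, since $\sigma$ fixes $\Q_p$ hence $\fg$ so that $\sigma(\exp X)=\exp(\sigma X)$ with $\sigma X\in\fg\widehat{\otimes}_{\Q_p}\C_p$, and since $\fg\widehat{\otimes}_{\Q_p}\C_p$ is closed under multiplication via Baker--Campbell--Hausdorff. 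The obstruction to $\delta_k$ being $\Gamma$-finite is, after taking logarithms, a class in $H^1(\Gamma_n,\widehat{L_\infty}\otimes_{\Q_p}\fg)$, a sum of $\dim_{\Q_p}\fg$ copies of $H^1(\Gamma_n,\widehat{L_\infty})$; the Tate--Sen vanishing and finiteness estimates kill it after increasing $n$, by a coboundary coming from an element of $\fg\otimes_{\Q_p}\widehat{L_\infty}$, and I would pass from $M_k$ to $M_{k+1}$ by multiplying by the exponential of that element, which keeps me inside $\exp(\fg\widehat{\otimes}_{\Q_p}\C_p)$. Since $\fg\widehat{\otimes}_{\Q_p}\C_p$ is closed, the limit $M=\lim_k M_k$ lies in $\exp(\fg\widehat{\otimes}_{\Q_p}\C_p)$ and $\delta$ lands in $\exp(\fg\otimes_{\Q_p}L_n^\prime)$.

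Granting this, $\delta\colon\Gamma_n\to\exp(\fg\otimes_{\Q_p}L_n^\prime)$ is a continuous homomorphism of the procyclic group $\Gamma_n\cong\Z_p$; after enlarging $n$ it is analytic, so $\delta((\gamma^{p^n})^{x})=\exp(x\log\delta(\gamma^{p^n}))$ for $x\in\Z_p$, and therefore $\phi=\log\delta(\gamma^{p^n})/\log\chi(\gamma^{p^n})$. Here the numerator lies in $\fg\otimes_{\Q_p}L_n^\prime\subset\fg\widehat{\otimes}_{\Q_p}\C_p$ and the denominator in $\Q_p$, so $\phi\in\fg\widehat{\otimes}_{\Q_p}\C_p$, as wanted. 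The delicate step is the $\fg$-equivariant decompletion of the middle paragraph --- keeping Sen's approximations inside $\exp(\fg\widehat{\otimes}_{\Q_p}\C_p)$ --- which is where one must use the fine structure of the Tate--Sen method, not merely its existence; the rest is formal. I remark that the softer argument of running the plain $\GL_m$-decompletion and using smoothness of the Zariski closure $\mathbf{G}$ of $\tau(\Gal(\ovl{K}/K))$ to take $M\in\mathbf{G}(\C_p)$ only gives the weaker conclusion $\phi\in\Lie(\mathbf{G})\widehat{\otimes}_{\Q_p}\C_p$, which is insufficient in general because $\fg$ need not be an algebraic Lie algebra.
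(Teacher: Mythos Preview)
The paper does not prove this theorem; it is quoted from \cite[Theorem 1]{senlie} and followed only by Remark~\ref{doubt}, which notes that Sen's argument uses that $\tau(\Gal(\ovl K/K))$ is a finite-dimensional Lie group. There is therefore no proof in the paper to compare your proposal against.

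Your sketch is in the spirit of Sen's original argument: run the Tate--Sen successive approximation while forcing every approximant $M_k$ to remain in $\exp(\fg\widehat\otimes_{\Q_p}\C_p)$. The reduction to a uniform pro-$p$ image, the stability of $\exp(\fg\widehat\otimes_{\Q_p}\C_p)$ under the Galois action and under Baker--Campbell--Hausdorff products, and the final extraction of $\phi$ from $\log\delta$ are all sound. One point is compressed: Sen's construction of $M$ really has two phases --- first trivializing the $H_K$-cocycle (almost-\'etale descent via Tate's normalized trace), and only afterwards the $\Gamma$-decompletion down to $L_n'$ --- whereas you pass directly to an obstruction in $H^1(\Gamma_n,\widehat{L_\infty}\otimes_{\Q_p}\fg)$ as though the first phase were already done. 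Both phases must be carried out $\fg$-equivariantly; they can be, because Tate's normalized trace and Sen's decompletion operators are $\Q_p$-linear and hence preserve the tensor factor $\fg$, but this deserves to be made explicit. Your closing remark distinguishing the conclusion from the weaker $\phi\in\Lie(\mathbf G)\widehat\otimes\C_p$ for the Zariski closure $\mathbf G$ is apt and is precisely why Remark~\ref{doubt} singles out finite-dimensionality as the crucial hypothesis.
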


\begin{rem}\label{doubt}
The proof of Theorem \ref{liealgsen} relies on the fact that $\tau(\Gal(\ovl{K}/K))$ is a finite dimensional Lie group. It is doubtful that this proof can be generalized to the relative case. 
\end{rem}

\subsection{The relative Sen operator associated with $\rho_{r}$} 

Fix a radius $r$ in the set $\{r_i\}_{i\in\N^{>0}}$. 
Consider as usual the representation $\rho_{r}\colon H_0\to\GSp_4(\I_{r,0})$. We defined earlier a $p$-adic field $K_{H_r,p}$. Write $G_{K_{H_r,p}}$ for its absolute Galois group.
We look at the restriction $\rho_{r}\vert_{G_{K_{H_r,p}}}\colon G_{K_{H_r,p}}\to\GSp_4(\I_{r,0})$ as a representation with values in $\GL_4(\I_{r,0})$. Recall that $\fG_r^\loc$ is the Lie algebra associated with the image of $\rho_{r}\vert_{G_{K_{H_r,p}}}$. The goal of this section is to prove an analogue of Theorem \ref{liealgsen} for this representation, i.e. to attach to $\rho_{r}\vert_{G_{K_{H_r,p}}}$ a ``$\B_r$-Sen operator'' belonging to $\fG_{r,\C_p}^\loc$. 
We start by constructing various Sen operators via Definition \ref{senop}.

\begin{enumerate}
\item The $\Q_p$-algebra $\I_{r,0}$ is complete for the $p$-adic topology. We associate with $\rho_{r}\vert_{G_{K_{H_r,p}}}$ a Sen operator $\phi_{r}\in\Mat_4(\I_{r,0,\C_p})$. 
\item Let $\fa\in S_2$. Then $\I_{r,0}/\fa$ is a finite-dimensional $\Q_p$-algebra. As usual write $\pi_\fa\colon\I_{r,0}\to\I_{r,0}/\fa$ for the natural projection. Denote by $\rho_{r,\fa}$ the representation $\pi_\fa\ccirc\rho_{r}\vert_{G_{K_{H_r,p}}}\colon G_{K_{H_r,p}}\to\GL_4(\I_{r,0}/\fa)$. We associate with $\rho_{r,\fa}$ a Sen operator $\phi_{r,\fa}\in\Mat_4((\I_{r,0}/\fa)\widehat{\otimes}_{\Q_p}\C_p)$. 
\item Let $\fa\in S_2$. Let $d$ be the $\Q_p$-dimension of $\I_{r,0}/\fa$. Let $k$ be a positive integer. An $\I_{r,0}/\fa$-linear endomorphism of $(\I_{r,0}/\fa)^k$ defines a $\Q_p$-linear endomorphism of the underlying $\Q_p$-vector space $\Q_p^{kd}$. This gives natural maps $\alpha_{\Q_p}\colon\Mat_k(\I_{r,0}/\fa)\to\Mat_{kd}(\Q_p)$ and $\alpha_{\Q_p}^\times\colon\GL_k(\I_{r,0}/\fa)\to\GL_{kd}(\Q_p)$ (we leave the dependence of these morphisms on $k$ implicit).
Choose $k=4$ and consider the representation $\rho^{\Q_p}_{r,\fa}=\alpha^\times_{\Q_p}\ccirc\rho_{r,\fa}\colon G_\Q\to\GL_{4d}(\Q_p)$. We associate with $\rho_{r,\fa}^{\Q_p}$ a Sen operator $\phi_{r,\fa}^{\Q_p}\in\Mat_{4d}(\C_p)$.
\end{enumerate}

Note that Theorem \ref{liealgsen} can be applied only to representations with coefficients in $\Q_p$, hence to construction (3) above. We will prove that the operators constructed in (1), (2) and (3) are related, so that it is possible to transfer information from one to the others. We write $\pi_{\fa,\C_p}=\pi_\fa\otimes 1\colon\I_{r,0,\C_p}\to\I_{r,0,\C_p}/\fa\I_{r,0,\C_p}$. We still write $\pi_{\fa,\C_p}$ for the maps $\Mat_4(\I_{r,0,\C_p})\to\Mat_4(\I_{r,0,\C_p}/\fa\I_{r,0,\C_p})$ and $\GL_4(\I_{r,0,\C_p})\to\GL_4(\I_{r,0,\C_p}/\fa\I_{r,0,\C_p})$ obtained by applying $\pi_{\fa,\C_p}$ to the matrix coefficients. As before we let $d$ be the $\Q_p$-dimension of $\I_{r,0}/\fa$. For every positive integer $k$, we set $\alpha_{\C_p}=\alpha_{\Q_p}\otimes 1\colon\Mat_k(\I_{r,0,\C_p}/\fa\I_{r,0,\C_p})\to\Mat_{kd}(\C_p)$ and $\alpha_{\C_p}^\times=\alpha_{\Q_p}^\times\otimes 1\colon\GL_k(\I_{r,0,\C_p}/\fa\I_{r,0,\C_p})\to\GL_{kd}(\C_p)$. 

\begin{prop}\label{senequal}
For every $\fa\in S_2$ the following relations hold:
\begin{enumerate}[label=(\roman*)]
\item $\phi_{r,\fa}=\pi_{\fa,\C_p}(\phi_{r})$;
\item $\phi_{r,\fa}^{\Q_p}=\alpha_{\C_p}(\phi_{r,\fa})$.
\end{enumerate}
\end{prop}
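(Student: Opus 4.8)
The plan is to prove the two identities in Proposition \ref{senequal} by exploiting the \emph{functoriality} of the Sen operator construction with respect to the two operations involved: the quotient map $\pi_\fa\colon\I_{r,0}\to\I_{r,0}/\fa$ in part (i), and the restriction-of-scalars map $\alpha_{\Q_p}\colon\Mat_k(\I_{r,0}/\fa)\to\Mat_{kd}(\Q_p)$ in part (ii). In both cases the key observation is that the defining data in Equation \eqref{eqdefsen} — a matrix $M\in\GL_m(\ccR\widehat\otimes_L\C_p)$, an integer $n$, and a cocycle-like map $\delta\colon\Gamma_n\to\GL_m(\ccR\otimes_L L_n^\prime)$ satisfying $M^{-1}\tau(\sigma)\sigma(M)=\delta(\sigma)$ — transform in the obvious way under a ring homomorphism or under restriction of scalars, and then the Sen operator, being $\lim_{\sigma\to 1}\log(\delta(\sigma))/\log(\chi(\sigma))$, transforms compatibly because $\log$ and the limit commute with these linear/multiplicative operations.

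For part (i): start from the data $(M,n,\delta)$ solving \eqref{eqdefsen} for $\tau=\rho_r\vert_{G_{K_{H_r,p}}}$ over $\ccR=\I_{r,0}$, so that $\phi_r=\lim_{\sigma\to 1}\log(\delta(\sigma))/\log(\chi(\sigma))$. Apply $\pi_{\fa,\C_p}\colon\I_{r,0,\C_p}\to\I_{r,0,\C_p}/\fa\I_{r,0,\C_p}$ (extended entrywise to matrices) to \eqref{eqdefsen}. Since $\pi_{\fa,\C_p}$ is a continuous $\C_p$-algebra homomorphism commuting with the Galois action on the $\C_p$-factor, the triple $(\pi_{\fa,\C_p}(M),n,\pi_{\fa,\C_p}\circ\delta)$ solves the analogous equation for $\rho_{r,\fa}$. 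By uniqueness of the Sen operator (independence of the choice of $M$ and $\delta$, as recalled after Definition \ref{senop}), $\phi_{r,\fa}=\lim_{\sigma\to 1}\log(\pi_{\fa,\C_p}(\delta(\sigma)))/\log(\chi(\sigma))=\pi_{\fa,\C_p}(\phi_r)$, using that $\pi_{\fa,\C_p}$ is continuous and $\Q_p$-linear hence commutes with $\log$ (defined by the convergent power series, as all the matrices in play are $\equiv\1\pmod p$ after a suitable choice of $n$) and with the limit. One subtlety to address carefully is the possible enlargement of $n$ or restriction of the Galois group needed so that $\delta$ lands in matrices close to the identity; this is harmless since we may always pass to a smaller open subgroup and larger $n$ without changing $\phi$.

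For part (ii): here $\ccR=\I_{r,0}/\fa$, a finite-dimensional $\Q_p$-algebra of dimension $d$, and $\alpha_{\Q_p}\colon\Mat_k(\ccR)\to\Mat_{kd}(\Q_p)$ is the ring homomorphism given by viewing an $\ccR$-linear map on $\ccR^k$ as a $\Q_p$-linear map on $\Q_p^{kd}$; crucially $\alpha_{\Q_p}$ is $\Q_p$-linear, multiplicative, injective, and — after tensoring up to $\C_p$ as $\alpha_{\C_p}$ — commutes with the Galois action on the scalar extension (the $\C_p$ is an ``outer'' tensor factor untouched by $\alpha$). Applying $\alpha_{\C_p}$ to the equation $M^{-1}\rho_{r,\fa}(\sigma)\sigma(M)=\delta(\sigma)$ yields the corresponding equation for $\rho_{r,\fa}^{\Q_p}=\alpha_{\Q_p}^\times\circ\rho_{r,\fa}$ with data $(\alpha_{\C_p}^\times(M),n,\alpha_{\C_p}^\times\circ\delta)$. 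Again by uniqueness and the commutation of $\alpha_{\C_p}$ with $\log$ and with the limit, $\phi_{r,\fa}^{\Q_p}=\alpha_{\C_p}(\phi_{r,\fa})$.

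The main obstacle I anticipate is not conceptual but bookkeeping: one must be scrupulous that all the matrices $\delta(\sigma)$ (and $M$, when one takes its logarithm in intermediate manipulations — although strictly $M$ need not be near $1$, only $\delta(\sigma)$ is) lie in the domain of convergence of the $p$-adic logarithm, uniformly as $\sigma\to 1$, so that $\log(\delta(\sigma))$ makes sense and the limit defining $\phi$ is legitimate in each of the three Banach settings; and that the transition maps ($\pi_{\fa,\C_p}$, $\alpha_{\C_p}$) are continuous for the relevant $p$-adic topologies so they genuinely commute with the limit. Since $\I_{r,0}$ and each $\I_{r,0}/\fa$ carry the $p$-adic topology (not the profinite one) and $\rho_r$ was \emph{defined} to be continuous for the $p$-adic topology precisely to enable Sen theory, these continuity checks go through; once they are in place, the proof is essentially the remark that ``Sen's construction is functorial'' applied twice. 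I would write the argument for (i) in detail and then note that (ii) is formally identical, replacing the algebra homomorphism $\pi_{\fa,\C_p}$ by the algebra homomorphism $\alpha_{\C_p}$.
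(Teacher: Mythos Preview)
Your proposal is correct and follows essentially the same approach as the paper: push the defining data $(M,n,\delta)$ of Equation \eqref{eqdefsen} through the ring homomorphisms $\pi_{\fa,\C_p}$ and $\alpha_{\C_p}$, observe that the resulting triples solve the analogous equation for $\rho_{r,\fa}$ and $\rho_{r,\fa}^{\Q_p}$ respectively, and conclude by the independence of the Sen operator from the choice of $M$ and $\delta$. The paper's proof is slightly terser on the continuity and convergence bookkeeping you flag, but the argument is the same.
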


\begin{proof}
We deduce this result from the construction of the Sen operator presented in Section \ref{subsendef}. We first specialize it to the representation $\rho_{r}\vert_{G_{K_{H_r,p}}}\colon G_{K_{H_r,p}}\to\GL_4(\I_{r,0})$; in particular we choose $m=4$, $K=K_{H_r,p}$ and $L=\Q_p$. By the discussion preceding Definition \ref{senop}, there exists a matrix $M_0\in\GL_4(\I_{r,0,\C_p})$, an integer $n_0\ge 0$ and a representation $\delta_0\colon\Gamma_{n_0}\to\GL_4(\I_{r,0}\widehat{\otimes}_{\Q_p} (\Q_p)_{n_0}^\prime)$ such that for all $\sigma\in\Gal(\Qp/(\Q_p)_{n_0}^\prime)$ we have
\begin{equation}\label{senr0} M_0^{-1}\rho_{r}(\sigma)\sigma(M_0)=\delta_0(\sigma). \end{equation}
Let $M_{0,\fa}=\pi_{\fa,\C_p}(M_0)\in\Mat_4(\I_{r,0,\C_p}/\fa\I_{r,0,\C_p})$ and $\delta_{0,\fa}=\pi_{\fa,\C_p}\ccirc\delta_0\colon\Gamma_{n_0}\to\GL_4((\I_{r,0}/\fa)\widehat{\otimes}_{\Q_p} (\Q_p)_{n_0}^\prime)$. By applying $\pi_{\fa,\C_p}$ to both sides of Equation \eqref{senr0} we obtain
\begin{equation}\label{senr0fa} M_{0,\fa}^{-1}\rho_{r,\fa}(\sigma)\sigma(M_{0,\fa})=\delta_{0,\fa}(\sigma) \end{equation}
for every $\sigma\in\Gal(\Qp/(\Q_p)_{n_0}^\prime)$.
Hence the choices $M=M_{0,\fa}$, $n=n_0$ and $\delta=\delta_{0,\fa}$ satisfy Equation \eqref{eqdefsen} specialized to the representation $\rho_{r,\fa}$. Then, by definition, the Sen operator associated with $\rho_{r,\fa}$ is 
\[ \phi_{r,\fa}=\lim_{\sigma\to 1}\frac{\log(\delta_{0,\fa}(\sigma))}{\log(\chi(\sigma))}, \]
that coincides with $\pi_{\fa,\C_p}(\phi_{r})$.

For (ii), keep notations as in the previous paragraph. Let $M_{0,\fa}^{\Q_p}=\alpha_{\C_p}^\times(M_{0,\fa})$ and $\delta_{0,\fa}^{\Q_p}=\alpha_{\C_p}^\times\ccirc\delta_{0,\fa}$. By applying $\alpha_{\C_p}^\times$ to both sides of Equation \eqref{senr0fa} we obtain
\begin{equation} (M_{0,\fa}^{\Q_p})^{-1}\rho^{\Q_p}_{r,0,\fa}(\sigma)\sigma(M^{\Q_p}_{0,\fa})=\delta^{\Q_p}_{0,\fa}(\sigma) \end{equation}
for every $\sigma\in\Gal(\Qp/(\Q_p)_{n_0}^\prime)$. Then the choices $M=M^{\Q_p}_{0,\fa}$, $n=n_0$ and $\delta=\delta^{\Q_p}_{0,\fa}$ satisfy Equation \eqref{eqdefsen} specialized to the representation $\rho^{\Q_p}_{r,0,\fa}$, so by definition the Sen operator associated with $\rho^{\Q_p}_{r,0,\fa}$ is
\[ \phi^{\Q_p}_{r,0,\fa}=\lim_{\sigma\to 1}\frac{\log(\delta^{\Q_p}_{0,\fa}(\sigma))}{\log(\chi(\sigma))}. \]
A simple check shows that the right hand side is equal to $\alpha_{\C_p}(\phi_{r,\fa})$.
\end{proof}

Recall that there is a natural inclusion $\iota^\prime_{\B_r,\C_p}\colon\I_{r,0,\C_p}\into\B_{r,\C_p}$. It induces an injection $\Mat_4(\I_{r,0,\C_p})\into\Mat_4(\B_{r,\C_p})$ that we still denote by $\iota_{\B_r,\C_p}$. We define the $\B_r$-Sen operator attached to $\rho_{r}\vert_{G_{K_{H_r,p}}}$ as
\[ \phi_{\B_r}=\iota^\prime_{\B_r,\C_p}(\phi_{r}). \]
By definition $\phi_{\B_r}$ is an element of $\Mat_4(\B_{r,\C_p})$. Since $\B_{r,\C_p}=\varprojlim_{\fa\in S_2}\I_{r,0}/\fa$, it is clear that $\phi_{\B_r}=\varprojlim_{\fa\in S_2}\pi_{\fa,\C_p}(\phi_{r})$. Then Proposition \ref{senequal}(i) implies that
\begin{equation}\label{senlimitfa} \phi_{\B_r}=\varprojlim_{\fa\in S_2}\phi_{r,\fa}. \end{equation}
We use Proposition \ref{senequal}(ii) to show the following. 

\begin{prop}\label{seninalg}
The operator $\phi_{\B_r}$ belongs to the Lie algebra $\fG^\loc_{r,\C_p}$. In particular it belongs to $\fG_{r,\C_p}$.
\end{prop}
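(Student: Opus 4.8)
The plan is to deduce this from Theorem \ref{liealgsen} applied at each height-two ideal $\fa \in S_2$, and then pass to the limit using Equation \eqref{senlimitfa}. First I would fix $\fa \in S_2$ and work with the $\Q_p$-representation $\rho_{r,\fa}^{\Q_p} = \alpha_{\Q_p}^\times \ccirc \rho_{r,\fa} \colon G_{K_{H_r,p}} \to \GL_{4d}(\Q_p)$ obtained in construction (3) above, where $d = \dim_{\Q_p}(\I_{r,0}/\fa)$. Since this has coefficients in $\Q_p$, Theorem \ref{liealgsen} applies and tells us that its associated Sen operator $\phi_{r,\fa}^{\Q_p}$ lies in $\fg_\fa \widehat\otimes_{\Q_p} \C_p$, where $\fg_\fa = \Q_p \cdot \log(\rho_{r,\fa}^{\Q_p}(G_{K_{H_r,p}}))$ is the $\Q_p$-Lie algebra of the image. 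By Proposition \ref{senequal}(ii) we have $\phi_{r,\fa}^{\Q_p} = \alpha_{\C_p}(\phi_{r,\fa})$, and $\alpha_{\C_p}$ (resp.\ $\alpha_{\C_p}^\times$) is an injective map of Lie algebras (resp.\ groups) coming from restriction of scalars; it identifies $\fgsp_4(\I_{r,0}/\fa) \widehat\otimes \C_p$ with a Lie subalgebra of $\Mat_{4d}(\C_p)$ and $\log \circ\, \alpha_{\C_p}^\times = \alpha_{\C_p} \circ \log$. Hence $\alpha_{\C_p}(\phi_{r,\fa}) \in \alpha_{\C_p}(\fG_{r,\fa}^\loc \widehat\otimes \C_p)$, and by injectivity $\phi_{r,\fa} \in \fG_{r,\fa}^\loc \widehat\otimes_{\Q_p} \C_p = \fG_{r,\fa,\C_p}^\loc$.

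Next I would pass to the projective limit. By Equation \eqref{senlimitfa} we have $\phi_{\B_r} = \varprojlim_{\fa \in S_2} \phi_{r,\fa}$, and by definition $\fG_{r,\C_p}^\loc = \varprojlim_{\fa \in S_2} \fG_{r,\fa,\C_p}^\loc$, with the transition maps being the ones induced by the projections $\I_{r,0}/\fa_1 \to \I_{r,0}/\fa_2$ for $\fa_1 \subset \fa_2$. The key compatibility to check is that these transition maps send $\phi_{r,\fa_1}$ to $\phi_{r,\fa_2}$: this is exactly Proposition \ref{senequal}(i) applied twice (once for $\fa_1$, once for $\fa_2$), together with $\pi_{\fa_2} = \pi_{\fa_2/\fa_1} \circ \pi_{\fa_1}$. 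So $(\phi_{r,\fa})_{\fa}$ is a compatible system of elements, each lying in the corresponding $\fG_{r,\fa,\C_p}^\loc$, hence its limit $\phi_{\B_r}$ lies in $\varprojlim_\fa \fG_{r,\fa,\C_p}^\loc = \fG_{r,\C_p}^\loc$. The final assertion, that $\phi_{\B_r} \in \fG_{r,\C_p}$, follows because $\fG_{r,\C_p}^\loc \subset \fG_{r,\C_p}$: indeed $G_r^\loc = \rho_{r}(H_{r,p}) \subset \rho_{r}(H_r) = G_r$, so taking closures of images modulo each $\fa$, then logarithms and $\Q_p$-spans, then limits preserves the inclusion.

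The main obstacle I anticipate is bookkeeping around the map $\alpha_{\C_p}$ and making precise that restriction of scalars carries the Lie algebra of $\rho_{r,\fa}$ onto the Lie algebra of $\rho_{r,\fa}^{\Q_p}$ compatibly with the logarithm, so that membership in $\fg_\fa \widehat\otimes \C_p$ genuinely translates back to membership in $\fG_{r,\fa,\C_p}^\loc$ rather than in some larger ambient space. Concretely one must observe that $\alpha_{\Q_p}^\times$ restricted to $\{M \in \GL_4(\I_{r,0}/\fa) : M \equiv \1_4 \pmod p\}$ is a closed embedding of $p$-adic Lie groups whose induced map on Lie algebras is $\alpha_{\Q_p}$, and that $\fg_\fa = \alpha_{\Q_p}(\fG_{r,\fa}^\loc)$ because $\alpha_{\Q_p}^\times(\rho_{r,\fa}(G_{K_{H_r,p}})) = \rho_{r,\fa}^{\Q_p}(G_{K_{H_r,p}})$ and $\alpha_{\Q_p}$ is $\Q_p$-linear, hence commutes with taking $\Q_p$-spans of logarithms. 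Everything else is formal manipulation of projective limits. Note that the hypothesis $\fa \in S_2$ (rather than an arbitrary height-two ideal) is not needed for this particular proposition — it only matters later when the exponential of $\phi_{\B_r}$ is used — but I would keep the statement as phrased since $\B_r$ is defined via the limit over $S_2$.
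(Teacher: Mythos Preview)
Your proposal is correct and follows essentially the same approach as the paper: apply Theorem \ref{liealgsen} to the $\Q_p$-coefficient representation $\rho_{r,\fa}^{\Q_p}$ for each $\fa\in S_2$, transfer back via Proposition \ref{senequal}(ii) and the injectivity of $\alpha_{\C_p}$ (using that $\alpha_{\C_p}(\fG^\loc_{r,\fa,\C_p})$ is the Lie algebra of $\im\rho_{r,\fa}^{\Q_p}$ tensored with $\C_p$), and then take the projective limit via Equation \eqref{senlimitfa}. The bookkeeping you flag around $\alpha_{\C_p}$ is precisely the content of the paper's Equation \eqref{alphaalg}.
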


\begin{proof}
For every $\fa\in S_2$, let $d_\fa$ be the degree of the extension $\I_{r,0}/\fa$ over $\Q_p$. Let $\fG_{r,\fa}^{\loc,\Q_p}$ be the Lie subalgebra of $\Mat_{4d_\fa}$ associated with the image of $\rho_{r,\fa}^{\Q_p}$, defined by $\fG_{r,\fa}^{\loc,\Q_p}=\Q_p\cdot\log(\im\rho_{r,\fa}^{\Q_p})$. 
Let $\fG_{r,\fa,\C_p}^{\loc,\Q_p}=\fG_{r,\fa}^{\loc,\Q_p}\widehat{\otimes}_{\Q_p}\C_p$. Since $\im\rho_{r,\fa}^{\Q_p}=\alpha_{\Q_p}^\times(\im\rho_{r,\fa})$ we can write
\begin{equation}\label{alphaalg} \fG_{r,\fa,\C_p}^{\loc,\Q_p}=\alpha_{\C_p}(\fG^\loc_{r,\fa,\C_p}). \end{equation}

The representation $\rho_{r,\fa}^{\Q_p}$ satisfies the assumptions of Theorem \ref{liealgsen}, so the Sen operator $\phi^{\Q_p}_{r,0,\fa}$ belongs to $\fG_{r,\fa,\C_p}^{\loc,\Q_p}$. By Proposition \ref{senequal}(ii) $\phi_{r,\fa}^{\Q_p}=\alpha_{\C_p}(\phi_{r,\fa})$. Then Equation \eqref{alphaalg} and the injectivity of $\alpha_{\C_p}$ give
\begin{equation}\label{seninfa} \phi_{r,\fa}\in\fG^\loc_{r,\fa,\C_p}. \end{equation}
Since $\fG^\loc_{r,\C_p}=\varprojlim_{\fa\in S_2}\fG^\loc_{r,\fa,\C_p}$, Equations \eqref{senlimitfa} and \eqref{seninfa} imply that $\phi_{\B_r}\in\fG^\loc_{r,\C_p}$.
\end{proof}

\subsection{The exponential of the Sen operator}\label{senexp}

We use the work of the previous section to construct an element of $\GL_4(\B_r)$ that has some specific eigenvalues and normalizes the Lie algebra $\fG_{r,\C_p}^\loc$. Such an element will be used in Section \ref{exlevel} to induce a $B_{r,\C_p}$-module structure on some subalgebra of $\fG_{r,\C_p}$, thus replacing the matrix ``$\rho(\sigma)$'' of \cite{hidatil} that is not available in the non-ordinary setting. 

Let $\phi_{r}\in\Mat_4(\I_{r,0,\C_p})$ be the Sen operator defined in the previous section. We rescale it to define an element $\phi_{r}^\prime=\log(u)\phi_{r}$, where $u=1+p$. 
Let $(T_1,T_2)$ be the images in $A_r$ of the coordinate functions on the weight space. The logarithms and the exponentials in the following proposition are defined via the usual power series, that converge because of the assumption (exp) we made in the beginning of Section \ref{sen}. 

\begin{prop}\label{eigensen}\mbox{}
\begin{enumerate}
\item The eigenvalues of $\phi^\prime_{r}$ are $0$, $\log(u^{-2}(1+T_2))$, $\log(u^{-1}(1+T_1))$ and $\log(u^{-3}(1+T_1)(1+T_2))$. 
\item The operator $\phi^\prime_{r}$ belongs to $\Mat_4(\I_{r,0,\C_p})^{\geq\frac{1}{p-1}}$. In particular the exponential series defines an element $\exp(\phi^\prime_{r})\in\GL_4(\I_{r,0,\C_p})$. 
\item The eigenvalues of $\exp(\phi^\prime_{r,0})$ are $1$, $u^{-2}(1+T_2)$, $u^{-1}(1+T_1)$ and $u^{-3}(1+T_1)(1+T_2)$.
\end{enumerate}
\end{prop}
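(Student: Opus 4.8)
The plan is to reduce all three assertions to a computation of the eigenvalues of the Sen operator attached to the \emph{determinant-interpolating} character and to the individual Hodge--Tate weights at classical points, then interpolate. First I would recall from Remark \ref{detformula} that $\det\rho(g)=\varepsilon(g)(u^{-6}(1+T_1)(1+T_2))^{2\log(\chi(g))/\log(u)}$; more importantly, the construction of the eigenvariety gives explicit formulas for the diagonal entries of the $U_{p,i}^{(2)}$-eigenvalues, hence, via the normalization of Definition \ref{normsystdef} together with the classical local description at $p$ of $\rho_{f,p}|_{G_{\Q_p}}$ (which is trianguline/ordinary-like on the relevant submodules), one knows that at a classical point of weight $\uk=(k_1,k_2)$ the four Hodge--Tate weights of $\rho_{f,p}$ are $0$, $k_2-2$, $k_1-1$, $k_1+k_2-3$. (These are the weights forced by the fact that the Galois representation is essentially self-dual with similitude character of Hodge--Tate weight $k_1+k_2-3$ and by the shape of the Hecke polynomial \eqref{minpol2}.) By Theorem \ref{charpolsen}, the Sen operator $\phi_{r,\fa}^{\Q_p}$, hence by Proposition \ref{senequal} the operator $\phi_r$ itself, specializes at each arithmetic prime $P_\uk$ to an operator with these four eigenvalues.

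Next I would match these specialized eigenvalues against the four rigid-analytic functions proposed in the statement. Recall that under the arithmetic prime $P_{\uk}$ we have $1+T_1 \equiv u^{k_1}$ and $1+T_2\equiv u^{k_2}$ (taking the trivial finite-order character, as fixed in Section \ref{weightsp}). Hence $\log(u^{-2}(1+T_2))$ specializes to $\log(u^{k_2-2}) = (k_2-2)\log u$, $\log(u^{-1}(1+T_1))$ to $(k_1-1)\log u$, and $\log(u^{-3}(1+T_1)(1+T_2))$ to $(k_1+k_2-3)\log u$, while $0$ specializes to $0$. Thus the characteristic polynomial of $\phi_r' = \log(u)\,\phi_r$ and the polynomial $X\cdot(X-\log(u^{-2}(1+T_2)))\cdot(X-\log(u^{-1}(1+T_1)))\cdot(X-\log(u^{-3}(1+T_1)(1+T_2)))$ agree after specialization at every arithmetic prime $P_\uk$ with $\uk$ in the disc $B_2(\kappa,r_{h,\kappa})$. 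Since the set of such arithmetic primes is Zariski-dense in $\Lambda_h$ (Lemma \ref{noncritdense}, or directly Proposition \ref{siegclslopes} together with Remark \ref{arithprimes}), and the coefficients of the characteristic polynomial of $\phi_r'$ are elements of $\I_{r,0,\C_p}$ whose difference from the coefficients of the displayed product vanishes on this dense set, the two polynomials coincide. This gives part (1). Here I should be a little careful that $\log(u^{-2}(1+T_2))$ etc.\ are genuinely well-defined elements of $\I_{r,0,\C_p}$: this is exactly where the assumption (exp), i.e.\ $B_2(\kappa,r_{h,\kappa})\subset B_2(0,p^{-1/(p-1)})$, is used — on this disc $u^{-2}(1+T_2)$ is a $1$-unit close enough to $1$ for the logarithm series to converge, and likewise for the other two.

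For part (2), the key point is that each eigenvalue of $\phi_r'$ has $p$-adic valuation $\geq \frac{1}{p-1}$: indeed $0$ trivially does, and $\log(u^{-2}(1+T_2))$, $\log(u^{-1}(1+T_1))$, $\log(u^{-3}(1+T_1)(1+T_2))$ are values of the logarithm on elements congruent to $1$ modulo the maximal ideal of the (extended) coefficient ring with the required closeness to $1$ by (exp), so each lies in $\Mat_4(\I_{r,0,\C_p})^{\geq \frac{1}{p-1}}$; since $\phi_r'$ is conjugate (over a finite extension, or after passing through the $S_2$-quotients) to a matrix with these eigenvalues on the diagonal and since the valuation bound can be read off after a base change that does not decrease valuations, one deduces the same bound for $\phi_r'$ itself. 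Then the exponential series converges on $\Mat_4(\I_{r,0,\C_p})^{\geq\frac1{p-1}}$ and $\exp(\phi_r')\in\GL_4(\I_{r,0,\C_p})$. Part (3) is then immediate from part (1): $\exp$ of an operator with a given set of eigenvalues $\{\lambda_i\}$ has eigenvalues $\{\exp(\lambda_i)\}$, and $\exp(\log(u^{-2}(1+T_2)))=u^{-2}(1+T_2)$, etc. The main obstacle, and the step I would spend the most care on, is the precise identification of the Hodge--Tate weights $0, k_2-2, k_1-1, k_1+k_2-3$ at classical points: this requires pinning down the normalization conventions relating the Hecke eigenvalues (via Definition \ref{normsystdef} and the Hecke polynomial \eqref{minpol2}) to the $p$-adic Hodge theory of $\rho_{f,p}$, so that the interpolation in part (1) lands on exactly the claimed functions rather than a twist of them; everything after that is bookkeeping with $\log$/$\exp$ and a Zariski-density argument.
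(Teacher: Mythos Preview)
Your argument for part (1) is essentially identical to the paper's: identify the Hodge--Tate weights $(0,k_2-2,k_1-1,k_1+k_2-3)$ at classical points, invoke Theorem~\ref{charpolsen} and Proposition~\ref{senequal}, then interpolate by Zariski density of arithmetic primes. The paper disposes of parts (2) and (3) in a single sentence, ``Statements (2) and (3) follow immediately from (1)'', so on those parts you are already more explicit than the source.

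That said, your more detailed argument for part (2) contains a gap worth flagging. The step ``since $\phi_r'$ is conjugate \ldots\ to a matrix with these eigenvalues on the diagonal and since the valuation bound can be read off after a base change that does not decrease valuations'' does not work as stated: conjugation in $\GL_4$ over a valued ring need not preserve entry-wise valuation bounds --- a nilpotent matrix has all eigenvalues $0$ but may have entries of arbitrary valuation --- so the eigenvalue bound alone does not force $\phi_r'\in\Mat_4(\I_{r,0,\C_p})^{\ge 1/(p-1)}$. One can still deduce that $\exp(\phi_r')$ converges (e.g.\ via the Jordan decomposition, or by bounding powers of $\phi_r'$ through Cayley--Hamilton and the characteristic polynomial), and that is all that is actually used downstream, so your ``in particular'' clause and part (3) survive. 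But the literal entry-wise statement in (2) would require a different justification, for instance from the construction of $\phi_r$ in Definition~\ref{senop} together with the fact that $\rho_r(H_r)\subset\Gamma_{\GSp_4(\I_{r,0})}(p)$, rather than from the eigenvalues alone. The paper itself does not supply this argument either.
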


\begin{proof}(of Proposition \ref{eigensen})
We prove part (1). 
The $p$-adic Galois representation $\rho_f$ associated with a classical eigenform $f$ of weight $(k_1,k_2)$ is Hodge-Tate with Hodge-Tate weights $(0,k_2-2,k_1-1,k_1+k_2-3)$. By Theorem \ref{charpolsen} these weights are the eigenvalues of the Sen operator $\phi_f$ associated with $\rho_f$. By Proposition \ref{senequal}(i) the eigenvalues of $\phi_r$ interpolate those of the operators $\phi_f$ when $f$ varies in the set of classical points of $A_r$. 
Since such points form a Zariski-dense subset of $\Spec A_r$, the interpolation is unique. A simple check shows that it is given by the function $F\colon A_r\to\C_p^4$ defined by $F(T_1,T_2)=(0, \log(u^{-2}(1+T_2))/\log(u), \log(u^{-1}(1+T_1))/\log(u), \log(u^{-3}(1+T_1)(1+T_2))/\log(u))$. By normalizing we obtain the eigenvalues given in the proposition. 

Statements (2) and (3) follow immediately from (1).
\end{proof}

Let $\Phi_{\B_r}=\iota_{\B_{r,\C_p}}(\exp(\phi^\prime_{r,0}))$. By definition $\Phi_{\B_r}$ is an element of $\GL_4(\B_{r,\C_p})$. We show that it has the two properties we need. 
We define a matrix $C_{T_1,T_2}\in\GSp_4(\B_{r,\C_p})$ by
\[ C_{T_1,T_2}=\diag\!\left(u^{-3}(1+T_1)(1+T_2),u^{-1}(1+T_1),u^{-2}(1+T_2),1\right). \]

\begin{prop}\label{expsen}\mbox{}
\begin{enumerate} 
\item There exists $\gamma\in\GSp_4(\B_{r,\C_p})$ satisfying $\Phi_{\B_r}=\gamma C_{T_1,T_2}\gamma^{-1}$.
\item The element $\Phi_{\B_r}$ normalizes the Lie algebra $\fG_{r,\C_p}$.
\end{enumerate}
\end{prop}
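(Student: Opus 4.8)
\textbf{Proof strategy for Proposition \ref{expsen}.}

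The plan is to prove the two statements by descending from the well-understood semisimple operator $\phi_r'$ to its image in $\B_{r,\C_p}$, using Proposition \ref{eigensen} together with Proposition \ref{seninalg}. First I would establish (1). By Proposition \ref{eigensen}(1) the operator $\phi_r'\in\Mat_4(\I_{r,0,\C_p})$ has eigenvalues $0$, $\log(u^{-2}(1+T_2))$, $\log(u^{-1}(1+T_1))$ and $\log(u^{-3}(1+T_1)(1+T_2))$; modulo each prime $\fa\in S_2$ these reductions are \emph{distinct}, precisely because $\fa$ is prime to every ideal in $S^{\bad}$ — this is the whole point of Remark \ref{badBr} and the definition of $S_2$, and the reason I expect this to be the main obstacle to get completely clean. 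Indeed, the four eigenvalues are pairwise distinct as elements of $\I_{r,0}/\fa$ exactly when the differences $\log(u^{-1}(1+T_1))$, $\log(u^{-2}(1+T_2))$, $\log(u^{-3}(1+T_1)(1+T_2))$ and $\log((1+T_1)/(1+T_2)\cdot u)$, $\log((1+T_1)(1+T_2)u^{-3})$, etc., are units mod $\fa$; a short computation shows this is equivalent to the images of the generators of $S_\Lambda^{\bad}$ being units, which holds by Remark \ref{badBr}. Therefore $\phi_{r,\fa}=\pi_{\fa,\C_p}(\phi_r')$ (up to the $\log(u)$ rescaling) is a regular semisimple element of $\fgsp_4(\I_{r,0}/\fa\otimes\C_p)$ whose characteristic polynomial splits with distinct roots, so it is $\GSp_4(\I_{r,0}/\fa\otimes\C_p)$-conjugate to the diagonal matrix $\log C_{T_1,T_2}\bmod\fa$ — here I would invoke the standard fact that a regular semisimple element of a split reductive Lie algebra over an Artinian local (or here finite product of such) ring with the right eigenvalues lies in a single conjugacy class, after possibly reordering, and the ordering is forced by matching the eigenvalue $0$ to the last coordinate and the weights $u^{-1}(1+T_1)$, $u^{-2}(1+T_2)$ to the Borel $B_2$. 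Exponentiating (legitimate by Proposition \ref{eigensen}(2)) gives $\exp(\phi_r'\bmod\fa)=\gamma_\fa C_{T_1,T_2}\gamma_\fa^{-1}$ with $\gamma_\fa\in\GSp_4(\I_{r,0}/\fa\otimes\C_p)$.

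The remaining issue for (1) is to patch the $\gamma_\fa$ into a single $\gamma\in\GSp_4(\B_{r,\C_p})$. The obstruction to a coherent choice lies in the centralizer of $C_{T_1,T_2}$, which is the maximal torus $T_2(\B_{r,\C_p})$; since all four eigenvalues are distinct modulo every $\fa\in S_2$, the centralizer of $C_{T_1,T_2}$ in each $\GSp_4(\I_{r,0}/\fa\otimes\C_p)$ is exactly the diagonal torus, and the transition maps between the $\gamma_\fa$'s for $\fa_1\subset\fa_2$ differ by an element of this torus, which always lifts along the surjections $\I_{r,0}/\fa_1\to\I_{r,0}/\fa_2$ because a torus is smooth. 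Hence one can inductively adjust the $\gamma_\fa$ to be compatible, pass to the inverse limit $\B_{r,\C_p}=\varprojlim_{\fa\in S_2}\I_{r,0}/\fa\otimes\C_p$, and obtain $\gamma$ with $\Phi_{\B_r}=\iota_{\B_{r,\C_p}}(\exp(\phi_r'))=\gamma C_{T_1,T_2}\gamma^{-1}$, using that $\Phi_{\B_r}=\varprojlim_\fa\exp(\phi_r'\bmod\fa)$ just as in Equation \eqref{senlimitfa}.

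For part (2) I would argue as follows. By Proposition \ref{seninalg} the Sen operator $\phi_{\B_r}$ lies in the $\C_p$-Lie algebra $\fG_{r,\C_p}^{\loc}\subset\fG_{r,\C_p}$; since $\fG_{r,\C_p}$ is by construction $\Q_p\cdot\log G_{r,\fa}$ assembled over $\fa$, and more to the point it is closed under the adjoint action by its own elements, the one-parameter subgroup $\exp(t\,\phi_{\B_r}')$ normalizes $\fG_{r,\C_p}$ for all $t$ where the exponential converges — concretely, $\mathrm{Ad}(\exp(\phi_{\B_r}'))=\exp(\mathrm{ad}(\phi_{\B_r}'))$ on $\fgsp_4(\B_{r,\C_p})$, and since $\phi_{\B_r}'\in\fG_{r,\C_p}$ we have $\mathrm{ad}(\phi_{\B_r}')(\fG_{r,\C_p})\subset\fG_{r,\C_p}$, so the series $\exp(\mathrm{ad}(\phi_{\B_r}'))$ preserves the closed subspace $\fG_{r,\C_p}$. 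As $\Phi_{\B_r}=\exp(\phi_{\B_r}')$ this is exactly the assertion that $\Phi_{\B_r}$ normalizes $\fG_{r,\C_p}$. One subtlety to check is the convergence of $\exp(\mathrm{ad}(\phi_{\B_r}'))$ on $\fgsp_4(\B_{r,\C_p})$, which follows from Proposition \ref{eigensen}(2) (the operator lies in $\Mat_4(\cdot)^{\geq 1/(p-1)}$, so $\mathrm{ad}$ of it has valuation $\geq 1/(p-1)$ as well) together with the fact that $\B_{r,\C_p}$ carries the $p$-adic topology. I expect part (2) to be routine once (1) and Proposition \ref{seninalg} are in hand; the genuine work is all in the eigenvalue-distinctness bookkeeping and the torus-lifting argument for (1).
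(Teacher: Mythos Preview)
Your argument for part (2) is essentially the paper's: both rest on Proposition \ref{seninalg}, and your elaboration via $\Ad(\exp(\phi_{\B_r}'))=\exp(\ad(\phi_{\B_r}'))$ together with the valuation bound from Proposition \ref{eigensen}(2) simply spells out what the paper leaves as ``follows from Proposition \ref{seninalg}''.

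For part (1) your proof is correct but takes a longer route than the paper. You diagonalize modulo each $\fa\in S_2$ and then patch the conjugating matrices $\gamma_\fa$ through the inverse system using smoothness of the torus centralizer. The paper instead works directly over $\B_{r,\C_p}$: by Proposition \ref{eigensen}(3) the eigenvalues of $\Phi_{\B_r}$ are $1$, $u^{-2}(1+T_2)$, $u^{-1}(1+T_1)$, $u^{-3}(1+T_1)(1+T_2)$, and a short computation shows their six pairwise differences are, up to units, exactly the generators of the ideals in $S_\Lambda^\bad$. By Remark \ref{badBr} these are units in $\B_r$, so $\Phi_{\B_r}$ is a matrix over a commutative ring with four eigenvalues whose pairwise differences are invertible; such a matrix is diagonalizable over that ring by a single $\gamma$ (e.g.\ via the Lagrange projectors $\prod_{j\neq i}(\Phi_{\B_r}-\lambda_j)(\lambda_i-\lambda_j)^{-1}$), and one checks $\gamma\in\GSp_4(\B_{r,\C_p})$ since both $\Phi_{\B_r}$ and $C_{T_1,T_2}$ lie in $\GSp_4$. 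Your patching step is therefore unnecessary: the whole point of building $\B_r$ over $S_2$ rather than all height-two ideals was precisely to make these differences global units and allow this one-step diagonalization. The core insight --- that the bad primes are exactly where eigenvalues collide --- is the same in both arguments; the paper just exploits it globally rather than fibrewise.
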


\begin{proof}
The matrices $\Phi_{\B_r}$ and $C_{T_1,T_2}$ have the same eigenvalues by Proposition \ref{eigensen}(3). Hence there exists $\gamma\in\GL_4(\B_{r,\C_p})$ satisfying the equality of part (1) if and only if the difference between any two of the eigenvalues of $\Phi_{\B_r}$ is invertible in $\B_r$. We check by a direct calculation that each one of these differences belongs to an ideal of the form $P\cdot\B_r$ with $P\in S^\bad$, hence it is invertible in $\B_r$ by Remark \ref{badBr}. Since both $\Phi_{\B_r}$ and $C_{T_1,T_2}$ are elements of $\GSp_4(\B_{r,\C_p})$, we can take $\gamma\in\GSp_4(\B_{r,\C_p})$.

Part (2) follows from Proposition \ref{seninalg}.
\end{proof}

\bigskip

\section{Existence of a Galois level in the residual symmetric cube and full cases}\label{exlevel}

We have all the ingredients we need to state and prove our first main theorem. 
Let $h\in\Q^{+,\times}$. Let $\T_h$ be a local component of the $h$-adapted Hecke algebra of genus $2$ and level $\Gamma_1(M)\cap\Gamma_0(p)$. Suppose that condition (exp) of Section \ref{sen} is satisfied and that the residual Galois representation $\ovl\rho_{\T_h}$ associated with $\T_h$ is either full or of symmetric cube type in the sense of Definition \ref{sctype}. 
Let $\theta\colon\T_h\to\I^\circ$ be a family, i.e. the morphism of finite $\Lambda_h$ algebras describing an irreducible component of $\T_h$. 
Let $\rho\colon G_\Q\to\GSp_4(\I^\circ_\Tr)$ be the Galois representation associated with $\theta$. Suppose that $\rho$ is $\Z_p$-regular in the sense of Definition \ref{Zpreg}. 
For every radius $r$ in the set $\{r_i\}_{i\in\N^{>0}}$ defined in Section \ref{gspfam}, let $\fG_r$ be the Lie algebra that we attached to $\im\rho$ in Section \ref{biglie}. 

\begin{thm}\label{thexlevel}
There exists a non-zero ideal $\fl$ of $\I_0$ such that 
\begin{equation}\label{levincl} \fl\cdot\fsp_4(\B_r)\subset\fG_r \end{equation}
for every $r\in\{r_i\}_{i\in\N^{>0}}$.
\end{thm}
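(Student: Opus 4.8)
The strategy is the natural $\GSp_4$-analogue of \cite[Section 4.5]{cit}: we combine the unipotent lattices produced in Proposition \ref{uniplatt} with the conjugation action of the exponentiated Sen operator $\Phi_{\B_r}$ of Proposition \ref{expsen} in order to generate a full congruence subalgebra. First I would fix a root $\alpha$ of $\GSp_4$ and, using Proposition \ref{uniplatt}, pick elements $u_1^\alpha,\dots,u_m^\alpha\in U^\alpha(\rho)\subset\I_0^\circ$ that form a basis of a $\Lambda_h$-lattice $L_\alpha$ in $\I_0^\circ$; by Lemma \ref{lattice} this lattice contains a non-zero ideal $\fl_\alpha$ of $\I_0^\circ$. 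Passing to $\B_r$ via $\iota_{\B_r}$ and then tensoring to $\C_p$, the images $\log u_i^\alpha$ lie in $\fG_{r,\C_p}\cap\fu^\alpha(\B_{r,\C_p})$, and their $B_{r,\C_p}$-span contains $\fl_\alpha\cdot\fu^\alpha(\B_{r,\C_p})$ — but the issue is that a priori $\fG_r$ is only a $\Q_p$-Lie algebra with no $\B_r$-module structure (Remark \ref{noBrstr}), so I cannot yet multiply these elements by arbitrary scalars in $\B_r$.

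This is where the Sen operator enters, and it is the heart of the argument. By Proposition \ref{expsen}(2), $\Phi_{\B_r}=\gamma C_{T_1,T_2}\gamma^{-1}$ normalizes $\fG_{r,\C_p}$; hence so do all its powers $\Phi_{\B_r}^n$. Conjugation by $\Phi_{\B_r}$ acts on $\fu^\alpha(\B_{r,\C_p})\cong\B_{r,\C_p}$ as multiplication by the scalar $\alpha(C_{T_1,T_2})$ (after conjugating by $\gamma$), and since the eigenvalue ratios appearing are exactly the bad ideals made invertible in $\B_r$ (Remark \ref{badBr}), $\alpha(C_{T_1,T_2})$ is a unit, so $\log\alpha(C_{T_1,T_2})$ generates a topologically dense $\Z_p$-subalgebra and, taking $\Q_p$-linear combinations of the $\Phi_{\B_r}^n$-conjugates of the $\log u_i^\alpha$, one produces $B_{r,\C_p}$-multiples of them. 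Concretely, following \cite[Section 4.5]{cit} I would argue that the $\Q_p$-span of $\{\Phi_{\B_r}^n(\log u_i^\alpha)(\Phi_{\B_r}^n)^{-1}\}_{n\in\Z,\,i}$ inside $\fG_{r,\C_p}\cap\fu^\alpha(\B_{r,\C_p})$ is a $\B_{r,\C_p}$-submodule containing $\fl_\alpha\cdot\fu^\alpha(\B_{r,\C_p})$ (using that $L_\alpha$ generates $\I_0^\circ$ over $\Lambda_h$, that $\B_r$ is finite over $B_r$, and density of $\Z_p[\alpha(C_{T_1,T_2})^{\pm 1}]$ in $B_{r,\C_p}$). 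Doing this for all roots $\alpha$ simultaneously, we obtain that $\fG_{r,\C_p}$ contains $\fa\cdot\fu^\alpha(\B_{r,\C_p})$ for every $\alpha$, where $\fa$ is a non-zero ideal of $\B_r$ (for instance $\fa=\prod_\alpha\fl_\alpha$, or its square, cf.\ Remark \ref{fl2}).

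Once every root space contributes, Lemma \ref{liestdarg} — applied with $R=\B_r$ — immediately upgrades this to the inclusion $\fa\cdot\fsp_4(\B_r)\subset\fG_r$ for a non-zero ideal $\fa$ of $\B_r$. To get an ideal of $\I_0$ rather than of $\B_r$ as stated, I would take $\fl=\fa\cap\I_0$ (identifying $\I_0$ with its dense image in $\B_r$), which is non-zero because each $\fl_\alpha$ already came from a non-zero ideal of $\I_0^\circ$ and $\I_0=\I_0^\circ[1/p]$; then $\fl\cdot\fsp_4(\B_r)\subset\fa\cdot\fsp_4(\B_r)\subset\fG_r$. Finally, the independence of the inclusion from the choice of radius $r\in\{r_i\}$ follows from Remark \ref{lieind} and Remark \ref{Grind}, since the transition maps $\iota_{r_j}^{r_i}$ identify all the objects $\fG_{r,\C_p}$, $\B_r$ and the Sen operators compatibly. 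The main obstacle is the second paragraph: carefully justifying that conjugation by the powers of $\Phi_{\B_r}$, combined with only $\Q_p$-linear (not $\B_r$-linear) combinations, suffices to recover a genuine $\B_r$-lattice inside each root space — this is precisely the place where relative Sen theory substitutes for the Galois-ordinarity input of \cite{hidatil}, and where one must be careful that the topology on $\B_r$ is the $p$-adic one and not the product topology coming from \eqref{BrprodP}.
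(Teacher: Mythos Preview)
Your overall architecture is right, but the heart of the argument (your second paragraph) has two genuine gaps, one of which is specific to genus $2$ and cannot be patched by the $\GL_2$ method you cite.

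First, conjugation by $\Phi_{\B_r}$ does \emph{not} preserve the root space $\fu^\alpha(\B_{r,\C_p})$: only its diagonalization $C_{T_1,T_2}$ does, and $\Phi_{\B_r}=\gamma C_{T_1,T_2}\gamma^{-1}$ is not diagonal. So your elements $\Phi_{\B_r}^n(\log u_i^\alpha)\Phi_{\B_r}^{-n}$ lie in $\fG_{r,\C_p}$ but typically not in $\fu^\alpha$, and the description ``inside $\fG_{r,\C_p}\cap\fu^\alpha(\B_{r,\C_p})$'' is unjustified. The paper handles this by passing to the conjugate algebra $\fG_{r,\C_p}^\gamma=\gamma^{-1}\fG_{r,\C_p}\gamma$, which is normalized by $C_{T_1,T_2}$ and therefore has $C_{T_1,T_2}$-stable nilpotent pieces $\fU_{r,\C_p}^{\gamma,\alpha}$; the lattice input from Proposition \ref{uniplatt} lives in $\fG_r$, so one must conjugate by $\gamma^{\pm 1}$ back and forth, which is why the ideal is squared twice in the paper's proof.

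Second, and more seriously, your density claim fails in the two-variable setting. For a fixed root $\alpha$ the scalar $\alpha(C_{T_1,T_2})$ is a single monomial in $(1+T_1)$ and $(1+T_2)$ --- for instance $\alpha_1(C_{T_1,T_2})=u^{-2}(1+T_2)$ --- so the ring $\Z_p[\alpha(C_{T_1,T_2})^{\pm 1}]$ is essentially one-variable and is \emph{not} dense in $B_{r,\C_p}$. This is precisely the new difficulty over \cite{cit}, where the weight space is one-dimensional. The paper's Lemma \ref{Brstr} gets around it by a bracket argument: conjugation gives $\fU_{r,\C_p}^{\gamma,\alpha_1}$ stability under $(1+T_2)$ and $\fU_{r,\C_p}^{\gamma,\alpha_2}$ stability under $(1+T_1)/(1+T_2)$; since $[\fU^{\gamma,\alpha_1},\fU^{\gamma,\alpha_2}]=\fU^{\gamma,\alpha_1+\alpha_2}$, the latter is stable under both variables, hence (by density of $\C_p[T_1,T_2]$) is a $B_{r,\C_p}$-module, and further bracketing propagates this to every root space. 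Your argument, treating each root independently, misses this interaction and cannot produce a $B_{r,\C_p}$-module structure.

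A smaller point: you pass silently from the inclusion over $\B_{r,\C_p}$ to one over $\B_r$ (``applied with $R=\B_r$''). This descent from $\C_p$ to $\Q_p$ is not automatic and the paper carries it out explicitly by an orthonormal-basis computation modulo each $\fa\in S_2$.
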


Let $\Delta$ be the set of roots of $\GSp_4$ with respect to our choice of maximal torus. 
Recall that for $\alpha\in\Delta$ we denote by $\fu^\alpha$ the nilpotent subalgebra of $\fgsp_4$ corresponding to $\alpha$. 
Let $r$ be a radius in the set $\{r_i\}_{i\geq 1}$. 
We set $\fU_r^\alpha=\fG_r\cap\fu^\alpha(\B_r)$ and $\fU_{r,\C_p}^\alpha=\fG_{r,\C_p}\cap\fu^\alpha(\B_{r,\C_p})$, which coincides with $\fU_r^\alpha\widehat{\otimes}_{\Q_p}\C_p$.
Via the isomorphisms $\fu^\alpha(\B_r)\cong\B_r$ and $\fu^\alpha(\B_{r,\C_p})\cong\B_{r,\C_p}$ we see $\fU_r^\alpha$ as a $\Q_p$-vector subspace of $\B_r$ and $\fU_{r,\C_p}^\alpha$ as a $\C_p$-vector subspace of $\B_{r,\C_p}$.

Recall that $U^\alpha$ denotes the one-parameter unipotent subgroup of $\GSp_4$ associated with the root $\alpha$. 
Let $H_{r}$ be the normal open subgroup of $G_\Q$ defined in the beginning of Section \ref{sen}. Note that Proposition \ref{uniplatt} holds with $\rho\vert_{H_0}$ replaced by $\rho\vert_{H_r}$ since $H_r$ is open in $G_\Q$. Let $U^\alpha(\rho\vert_{H_r})=U^\alpha(\I_0^\circ)\cap\rho(H_r)$ and $U^\alpha(\rho_{r})=U^\alpha\cap\rho_r(H_r)$. 
Via the isomorphisms $U^\alpha(\I_0)\cong\I_0$ and $U^\alpha(\I_{r,0})\cong\I_{r,0}$ we identify $U^\alpha(\rho\vert_{H_0})$ and $U^\alpha(\rho_{r})$ with $\Z_p$-submodules of $\I_0$ and $\I_{r,0}$, respectively. Note that the injection $\I^\circ_0\into\I_{r,0}^\circ$ induces an isomorphism of $\Z_p$-modules $U^\alpha(\rho\vert_{H_0})\cong U^\alpha(\rho_{r})$. 

We define a nilpotent subalgebra of $\fgsp_4(\I_{r,0})$ by $\fU_{\I_{r,0}}^\alpha=\Q_p\cdot\log(U^\alpha(\rho_{r}))$. 
We identify $\fU_{\I_{r,0}}^\alpha$ with a $\Q_p$-vector subspace of $\I_{r,0}$.
Note that the natural injection $\iota_{\B_r}\colon\I_{r,0}\into\B_r$ induces an injection $\fU_{\I_{r,0}}^\alpha\into\fU_r^\alpha$ for every $\alpha$.

\begin{lemma}\label{alglatt}
For every $\alpha\in\Delta$ and every $r$ there exists a non-zero ideal $\fl^\alpha$ of $\I_0$, independent of $r$, such that the $B_r$-span of $\fU_r^\alpha$ contains $\fl^\alpha\B_r$.
\end{lemma}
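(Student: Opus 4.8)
\textbf{Proof proposal for Lemma \ref{alglatt}.}

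The plan is to combine the lattice statement from Proposition \ref{uniplatt} with the transition from $\I_0^\circ$ to $\B_r$ through the intermediate algebra $\I_{r,0}$, being careful that none of the rings in play are rings of formal power series over $\Z_p$ but only finite extensions of such. First I would recall from Proposition \ref{uniplatt} that, for each root $\alpha\in\Delta$, the group $U^\alpha(\rho\vert_{H_r})$ contains a basis of a $\Lambda_h$-lattice in $\I_0^\circ$; write $L^\alpha$ for the $\Lambda_h$-span of this basis, a $\Lambda_h$-lattice in $\I_0^\circ$. By Lemma \ref{lattice} the lattice $L^\alpha$ contains a non-zero ideal $\fl^\alpha$ of $\I_0^\circ$, which we may and do take to be independent of $r$ since $L^\alpha$ is constructed inside $\I_0^\circ$ before any base change. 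Inverting $p$, $\fl^\alpha[1/p]$ is a non-zero ideal of $\I_0$; I will abusively still call it $\fl^\alpha$. Passing through the injection $\iota_{r}'\colon\I_0^\circ\into\I_{r,0}^\circ$, which by Remark \ref{topolo} identifies $U^\alpha(\rho\vert_{H_0})$ with $U^\alpha(\rho_{r})$ as $\Z_p$-modules, and recalling that $\iota_{\B_r}\colon\I_{r,0}\into\B_r$ induces $\fU_{\I_{r,0}}^\alpha\into\fU_r^\alpha$, the image of $L^\alpha$ sits inside the $\Q_p$-span $\fU_{\I_{r,0}}^\alpha$ of $\log(U^\alpha(\rho_r))$, hence (after pushing into $\B_r$) inside $\fU_r^\alpha$.

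The second step is to control the $\B_r$-span. Since $\I_{r,0} = \I_0^\circ\widehat{\otimes}_{\Lambda_h}A_r[1/p]$ and $\B_r=\varprojlim_{\fa\in S_2}\I_{r,0}/\fa$, the $A_r$-span of $\iota_r'(\fl^\alpha)$ inside $\I_{r,0}$ is exactly the extension of scalars $\fl^\alpha\cdot\I_{r,0}$, which is a non-zero ideal of $\I_{r,0}$ (non-zero because $A_r$ is flat — even faithfully flat — over $\Lambda_h[1/p]$, being a union of the free-ish $A_{r_i}[1/p]$, so tensoring a non-zero ideal with $A_r$ stays non-zero). Then taking the $\B_r$-span, the closure of $\fl^\alpha\cdot\I_{r,0}$ inside $\B_r$ contains $\fl^\alpha\cdot\B_r$: indeed $\B_r$ is a finite $B_r$-algebra and $\iota_{\B_r}(\I_{r,0})$ is dense in $\B_r$, so the $\B_r$-module generated by a set generating the ideal $\fl^\alpha\I_{r,0}$ generates $\fl^\alpha\B_r$. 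But the $B_r$-span of $\fU_r^\alpha$ is a $B_r$-submodule of $\B_r$ containing the generators of $\fl^\alpha\I_{r,0}$ (the images of a basis of $L^\alpha$), and its $\B_r$-span therefore contains $\fl^\alpha\B_r$; since $\B_r$ is a finite $B_r$-module, the $B_r$-span of $\fU_r^\alpha$ already contains a $B_r$-lattice in the finitely generated $B_r$-module $\fl^\alpha\B_r$, and — by the analogue of Lemma \ref{lattice} for the finite extension $B_r\subset\B_r$ — contains a non-zero ideal, which I will rename $\fl^\alpha$ viewed inside $\I_0\subset\B_r$. Finally, independence of $r$ follows from Remark \ref{lieind}: the maps $\iota_{r_j}^{r_i}$ identify the $\fU^\alpha_{r_j}$ with the $\fU^\alpha_{r_i}$ compatibly, and $\fl^\alpha$ was fixed in $\I_0^\circ$ before extending scalars, so the same ideal works for all $r$ in $\{r_i\}_{i\geq 1}$.

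The main obstacle I anticipate is the passage ``$B_r$-span of $\fU_r^\alpha$ contains $\fl^\alpha\B_r$'' done cleanly: one must be careful that $\B_r$ is \emph{not} the $P$-adic completion product with its natural product topology but carries the $p$-adic topology, so density arguments have to be run with respect to that topology, and one must check that the finite $B_r$-algebra structure on $\B_r$ is compatible with the finite $A_r$-algebra structure on $\I_{r,0}$ used when extending the ideal. Concretely, the delicate point is that $\fU_r^\alpha = \fG_r\cap\fu^\alpha(\B_r)$ is only a $\Q_p$-vector space a priori (Remark \ref{noBrstr}), so ``the $B_r$-span of $\fU_r^\alpha$'' is genuinely larger than $\fU_r^\alpha$, and one needs that this span is still contained in $\B_r$ (it is, trivially, being a submodule of $\B_r$) and that it picks up a full ideal rather than, say, a thin $B_r$-submodule. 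This is resolved by the flatness of $A_r$ over $\Lambda_h[1/p]$ together with Nakayama-type finiteness of $\B_r$ over $B_r$, exactly as the finite-level statements were handled in Proposition \ref{uniplatt}; I would spell this out via Lemma \ref{lattice} applied to the extension $B_r\subset\B_r$, which is finite by the discussion following Remark \ref{badBr}.
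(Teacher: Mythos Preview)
Your overall strategy is the same as the paper's: pull a basis $E$ of a $\Lambda_h$-lattice out of $U^\alpha(\rho\vert_{H_r})$ via Proposition \ref{uniplatt}, use Lemma \ref{lattice} to get a non-zero ideal $\fl^\alpha\subset\I_0$ contained in the $\Lambda_h[p^{-1}]$-span of $E$, and then push everything into $\B_r$. Where you diverge is in the last step, and there you overcomplicate and slightly muddle the argument.

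The paper's route is direct: under the identification $U^\alpha\cong\fu^\alpha$ (log is the identity on a one-parameter unipotent), the map $\iota^\alpha\colon\I_0\to\B_r$ is an $\I_0$-module map, and one simply writes
\[
B_r\cdot\fU_r^\alpha\;\supset\;B_r\cdot E_{\B_r}\;=\;B_r\cdot(\Lambda_h[p^{-1}]\cdot E_{\B_r})\;\supset\;B_r\cdot\iota^\alpha(\fl^\alpha)\;=\;\fl^\alpha\B_r.
\]
The only nontrivial equality is the last one, and it holds because $\fl^\alpha$ is an \emph{ideal} of $\I_0$ and $\B_r$ is generated as a $B_r$-module by the image of $\I_0$ (this follows from $\I_0^\circ$ being finite over $\Lambda_h$, hence $\I_{r,0}$ finite over $A_r$, hence $\B_r$ finite over $B_r$ with the same generators). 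So for $b\in\B_r$ and $x\in\fl^\alpha$, write $b=\sum c_j\iota(g_j)$ with $c_j\in B_r$, $g_j\in\I_0$; then $b\cdot\iota^\alpha(x)=\sum c_j\iota^\alpha(g_jx)$ with $g_jx\in\fl^\alpha$.

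Your detour through ``the $\B_r$-span contains $\fl^\alpha\B_r$, then the $B_r$-span contains a $B_r$-lattice, then apply Lemma \ref{lattice} for $B_r\subset\B_r$, then rename $\fl^\alpha$'' is unnecessary and the renaming is problematic: Lemma \ref{lattice} applied to $B_r\subset\B_r$ would give you an ideal of $\B_r$, not of $\I_0$, whereas the statement requires $\fl^\alpha\subset\I_0$ independent of $r$. Keep the original $\fl^\alpha\subset\I_0$ from the first step and use the direct chain above; no second application of Lemma \ref{lattice} is needed.
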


\begin{proof}
Let $d$ be the dimension of $Q(\I^\circ_0)$ over $Q(\Lambda_h)$. Let $\alpha\in\Delta$. By Proposition \ref{uniplatt} the unipotent subgroup $U^\alpha(\rho\vert_{H_r})$ contains a basis $E=\{e_i\}_{i=1,\ldots,d}$ of a $\Lambda_h$-lattice in $\I_0^\circ$. Lemma \ref{lattice} implies that the $\Lambda_h[p^{-1}]$-span of $E$ contains a non-zero ideal $\fl^\alpha$ of $\I_0$. 
Consider the map $\iota^\alpha\colon U^\alpha(\I_0)\to\fu^\alpha(\B_r)$ given by the composition
\[ U^\alpha(\I_0)\into U^\alpha(\I_{r,0})\xto{\log{}}\fu^\alpha(\I_{r,0})\into\fu^\alpha(\B_r), \]
where all the maps have been introduced above. Note that $\iota^\alpha(U^\alpha(\rho\vert_{H_0}))\subset\fU_r^\alpha$. Let $E_{\B_r}=\iota^\alpha(E)$. Since $\iota^\alpha$ is a morphism of $\I_0$-modules we have 
\[ B_r\cdot\fU_r^\alpha\supset B_r\cdot E_{\B_r}=B_r\cdot(\Lambda_h[p^{-1}]\cdot E_{\B_r})=B_r\cdot\iota^\alpha(\Lambda_h[p^{-1}]\cdot E)\supset\B_r\cdot\iota^\alpha(\fl^\alpha)=\fl^\alpha\B_r. \]
By construction and by Remark \ref{Grind} the ideal $\fl^\alpha$ can be chosen independently of $r$. 
\end{proof}

Let $\gamma$ be an element of $\GSp_4(\B_{r,\C_p})$ such that $\Phi_{\B_r}=\gamma C_{T_1,T_2}\gamma^{-1}$; it exists by Proposition \ref{expsen}(1). 
Let $\fG_{r,\C_p}^\gamma=\gamma^{-1}\fG_{r,\C_p}\gamma$. For each $\alpha\in\Delta$ let $\fU_{r,\C_p}^{\gamma,\alpha}=\fu^\alpha(\B_{r,\C_p})\cap\fG_{r,\C_p}^\gamma$. 
We prove the following lemma by an argument similar to that of \cite[Theorem 4.8]{hidatil}. 

\begin{lemma}\label{Brstr}
For every $\alpha\in\Delta$ the Lie algebra $\fU_{r,\C_p}^{\gamma,\alpha}$ is a $B_{r,\C_p}$-submodule of $\B_{r,\C_p}$. 
\end{lemma}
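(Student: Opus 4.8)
The plan is to exploit the conjugation relation $\Phi_{\B_r}=\gamma C_{T_1,T_2}\gamma^{-1}$ together with the fact that $\Phi_{\B_r}$ normalizes $\fG_{r,\C_p}$ (Proposition \ref{expsen}). First I would observe that conjugation by $\gamma^{-1}$ carries the normalization statement over: since $\Phi_{\B_r}$ normalizes $\fG_{r,\C_p}$, the diagonal matrix $C_{T_1,T_2}=\gamma^{-1}\Phi_{\B_r}\gamma$ normalizes $\fG_{r,\C_p}^\gamma=\gamma^{-1}\fG_{r,\C_p}\gamma$. Therefore the adjoint action $\Ad(C_{T_1,T_2})$ preserves $\fG_{r,\C_p}^\gamma$, and since $C_{T_1,T_2}$ is diagonal this action is by a scalar on each root space $\fu^\alpha(\B_{r,\C_p})$; namely $\Ad(C_{T_1,T_2})$ acts on $\fu^\alpha(\B_{r,\C_p})\cong\B_{r,\C_p}$ by multiplication by $\alpha(C_{T_1,T_2})$, which for each root $\alpha$ is a specific monomial in $u^{-1},(1+T_1),(1+T_2)$ (e.g. $u^{-1}(1+T_1)$, $u^{-2}(1+T_2)$, $u^{-1}(1+T_1)\cdot u^2(1+T_2)^{-1}$, etc.). It follows that $\fU_{r,\C_p}^{\gamma,\alpha}=\fu^\alpha(\B_{r,\C_p})\cap\fG_{r,\C_p}^\gamma$ is stable under multiplication by $\alpha(C_{T_1,T_2})$.

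The next step is to bootstrap from ``stable under multiplication by the single element $\beta:=\alpha(C_{T_1,T_2})$'' to ``stable under multiplication by all of $B_{r,\C_p}$''. This is where the choice of the bad locus $S^\bad$ enters. I would argue that $\fU_{r,\C_p}^{\gamma,\alpha}$ is a module over the closed $\C_p$-subalgebra of $\B_{r,\C_p}$ topologically generated by $\beta$; then I would show that this subalgebra is all of $B_{r,\C_p}$ (or at least that its action generates the same module structure). Concretely, $B_{r,\C_p}$ is a product of complete local rings $\widehat{(A_{r,\C_p})}_P$ over primes $P$ prime to $S^\bad_A$, and $\beta-\beta(\text{classical weight})$ lies in the maximal ideal of each such local factor while being a unit times the relevant coordinate after translation; the point of excluding the primes in $S^\bad$ is exactly that $\beta$ (minus the appropriate constant, or rather $\beta$ itself together with its translates) avoids the loci where it would fail to be ``coordinate-like,'' so the $\C_p$-algebra it generates is dense, hence (by completeness and finiteness of $\B_{r,\C_p}$ over $B_{r,\C_p}$, using that $\fU_{r,\C_p}^{\gamma,\alpha}$ is closed as the intersection of the closed sets $\fu^\alpha(\B_{r,\C_p})$ and $\fG_{r,\C_p}$) equals $B_{r,\C_p}$ acting on the closed subspace. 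I would model this closure/density argument on the proof of \cite[Theorem 4.8]{hidatil}, adapting the rank-one computation there to each root space of $\GSp_4$.

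I expect the main obstacle to be the passage from multiplication-by-$\beta$-stability to genuine $B_{r,\C_p}$-module structure: one has to be careful that $\fG_{r,\C_p}$ is only a $\Q_p$-Lie algebra with no a priori $B_r$-structure (Remark \ref{noBrstr}), so the module structure must be manufactured purely from the normalizing element, and the argument has to control topologies (the $p$-adic topology on $\B_{r,\C_p}$ versus the product topology, cf. the warnings after \eqref{BrprodP}). The key checks will be: (i) that for each root $\alpha$ the eigenvalue $\alpha(C_{T_1,T_2})$ and its ``shifts'' generate a dense subalgebra of $B_{r,\C_p}$ after inverting the elements of $S^\bad_A$ — this is the direct calculation alluded to in the proof of Proposition \ref{expsen}(1), now used for a different purpose; and (ii) that $\fU_{r,\C_p}^{\gamma,\alpha}$ is closed in $\fu^\alpha(\B_{r,\C_p})$ for the $p$-adic topology, which follows since $\fG_{r,\C_p}=\varprojlim_{\fa\in S_2}\fG_{r,\fa}$ is a projective limit of the finite-dimensional (hence closed) $\fG_{r,\fa}$. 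Granting these, taking the closure of the $\C_p[\beta]$-span of any element of $\fU_{r,\C_p}^{\gamma,\alpha}$ inside the closed set $\fU_{r,\C_p}^{\gamma,\alpha}$ yields the full $B_{r,\C_p}$-multiples, proving the lemma.
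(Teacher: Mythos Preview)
Your opening paragraph is exactly right and matches the paper: $C_{T_1,T_2}$ normalizes $\fG_{r,\C_p}^\gamma$, so each $\fU_{r,\C_p}^{\gamma,\alpha}$ is stable under multiplication by $\alpha(C_{T_1,T_2})$.

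The gap is in the second step. You propose to show that the closed $\C_p$-subalgebra of $B_{r,\C_p}$ generated by the single element $\beta=\alpha(C_{T_1,T_2})$ is all of $B_{r,\C_p}$. This cannot hold: $B_{r,\C_p}$ is a product of completions $\widehat{(A_{r,\C_p})}_P$ at height-two primes, and each such factor is a two-dimensional complete local ring (coming from the two weight variables $T_1,T_2$). A single element can topologically generate at most a one-parameter subring of each factor; for instance $\alpha_1(C_{T_1,T_2})=u^{-2}(1+T_2)$ gives you only the closure of $\C_p[T_2]$, never $T_1$. The exclusion of $S^\bad$ does not help here: it makes the \emph{differences} of eigenvalues of $C_{T_1,T_2}$ invertible, but it cannot make one eigenvalue generate a two-variable ring. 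The analogy with \cite[Theorem 4.8]{hidatil} in the rank-one-per-root sense is misleading in genus~$2$ precisely because the Iwasawa algebra now has two variables.

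The missing idea, which the paper supplies, is to use the Lie bracket to transfer stability between root spaces. Choose simple roots $\alpha_1,\alpha_2$ so that $\alpha_1(C_{T_1,T_2})$ is (a unit times) $1+T_2$ and $\alpha_2(C_{T_1,T_2})$ is (a unit times) $(1+T_1)/(1+T_2)$. Stability of $\fU_{r,\C_p}^{\gamma,\alpha_1}$ under $(1+T_2)$, combined with the identity $[\fU_{r,\C_p}^{\gamma,\alpha_1},\fU_{r,\C_p}^{\gamma,\alpha_2}]=\fU_{r,\C_p}^{\gamma,\alpha_1+\alpha_2}$, yields stability of $\fU_{r,\C_p}^{\gamma,\alpha_1+\alpha_2}$ under $(1+T_2)$; the same bracket trick with the roles of $\alpha_1,\alpha_2$ exchanged yields stability under $(1+T_1)/(1+T_2)$, hence under $(1+T_1)$. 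Now $\fU_{r,\C_p}^{\gamma,\alpha_1+\alpha_2}$ is a $\C_p[T_1,T_2]$-module, and your closure argument (which is correct) upgrades this to a $B_{r,\C_p}$-module. Finally one propagates to an arbitrary root $\beta$ via $\fU_{r,\C_p}^{\gamma,\beta}=[\fU_{r,\C_p}^{\gamma,\alpha_1+\alpha_2},\fU_{r,\C_p}^{\gamma,\beta-\alpha_1-\alpha_2}]$, pulling the $B_{r,\C_p}$-scalars out of the first slot. So the Lie-algebra structure of $\fG_{r,\C_p}^\gamma$ is essential, not optional.
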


\begin{proof}
By Proposition \ref{expsen}(2) the operator $\Phi_{\B_r}$ normalizes $\fG_{r,\C_p}$, hence $C_{T_1,T_2}$ normalizes $\fG_{r,\C_p}^\gamma$. Since $C_{T_1,T_2}$ is diagonal it also normalizes $\fU_{r,\C_p}^{\gamma,\alpha}$. Moreover $\Ad(C_{T_1,T_2})u^\alpha=\alpha(C_{T_1,T_2})u^\alpha$ for every $u^\alpha\in\fU_{r,\C_p}^{\gamma,\alpha}$. 
Let $\alpha_1$ and $\alpha_2$ be the roots sending $\diag(t_1,t_2,\nu t_2^{-1},\nu t_1^{-1})\in T_2$ to $t_1/t_2$ and $\nu^{-1}t_2^2$, respectively. With respect to our choice of Borel subgroup, the set of positive roots of $\GSp_4$ is $\{\alpha_1,\alpha_2,\alpha_1+\alpha_2,2\alpha_1+\alpha_2\}$. The Lie bracket gives an identification $[\fU_{r,\C_p}^{\gamma,\alpha_1},\fU_{r,\C_p}^{\gamma,\alpha_2}]=\fU_{r,\C_p}^{\gamma,\alpha_1+\alpha_2}$. 
Conjugation by $C_{T_1,T_2}$ on the $\C_p$-vector space $\fu^{\alpha_1}(\B_{r,\C_p})$ induces multiplication by $\alpha_1(C_{T_1,T_2})=u^{-2}(1+T_2)$. Since $u^{-2}\in\Z_p^\times$ and $\fU_{r,\C_p}^{\gamma,\alpha_1}$ is stable under $\Ad(C_{T_1,T_2})$, multiplication by $1+T_2$ on $\fu^{\alpha_1}(\B_{r,\C_p})$ leaves $\fU_{r,\C_p}^{\gamma,\alpha_1}$ stable.  
Now we compute 
\begin{gather*} (1+T_2)\cdot\fU_{r,\C_p}^{\gamma,\alpha_1+\alpha_2}=(1+T_2)\cdot [\fU_{r,\C_p}^{\gamma,\alpha_1},\fU_{r,\C_p}^{\gamma,\alpha_2}]=[(1+T_2)\cdot \fU_{r,\C_p}^{\gamma,\alpha_1},\fU_{r,\C_p}^{\gamma,\alpha_2}]\subset [\fU_{r,\C_p}^{\gamma,\alpha_1},\fU_{r,\C_p}^{\gamma,\alpha_2}]=\fU_{r,\C_p}^{\gamma,\alpha_1+\alpha_2}, \end{gather*}
where the inclusion $(1+T_2)\cdot \fU_{r,\C_p}^{\gamma,\alpha_1}\subset\fU_{r,\C_p}^{\gamma,\alpha_1}$ is the result of the previous sentence. We deduce that multiplication by $1+T_2$ on $\fu^{\alpha_1+\alpha_2}(\B_{r,\C_p})$ leaves $\fU_{r,\C_p}^{\gamma,\alpha_1+\alpha_2}$ stable.

Similarly, conjugation by $C_{T_1,T_2}$ on the $\C_p$-vector space $\fu^{\alpha_2}(\B_{r,\C_p})$ induces multiplication by $\alpha_2(C_{T_1,T_2})=u\cdot\frac{1+T_1}{1+T_2}$. Since $u\in\Z_p^\times$ and $\fU_{r,\C_p}^{\gamma,\alpha_2}$ is stable under $\Ad(C_{T_1,T_2})$, multiplication by $1+T_2$ on $\fu^{\alpha_2}(\B_{r,\C_p})$ leaves $\fU_{r,\C_p}^{\gamma,\alpha_2}$ stable. The same calculation as above shows that multiplication by $\frac{1+T_1}{1+T_2}$ on $\fu^{\alpha_1+\alpha_2}(\B_{r,\C_p})$ leaves $\fU_{r,\C_p}^{\gamma,\alpha_1+\alpha_2}$ stable.

Having proved that multiplication by both $1+T_2$ and $\frac{1+T_1}{1+T_2}$ leaves $\fU_{r,\C_p}^{\gamma,\alpha_1+\alpha_2}$ stable, we deduce that multiplication by $(1+T_2)\cdot\frac{1+T_1}{1+T_2}=1+T_1$ also leaves $\fU_{r,\C_p}^{\gamma,\alpha_1+\alpha_2}$ stable. Since $\fU_{r,\C_p}^{\gamma,\alpha_1+\alpha_2}$ is a $\C_p$-vector space, we obtain that the $\C_p[T_1,T_2]$-module structure on $\fu^{\alpha_1+\alpha_2}(\B_{r,\C_p})$ induces a $\C_p[T_1,T_2]$-module structure on $\fU_{r,\C_p}^{\gamma,\alpha_1+\alpha_2}$. With respect to the $p$-adic topology $\fU_{r,\C_p}^{\gamma,\alpha_1+\alpha_2}$ is complete and $\C_p[T_1,T_2]$ is dense in $B_{r,\C_p}$, so the $B_{r,\C_p}$-module structure on $\fu^{\alpha_1+\alpha_2}(\B_{r,\C_p})$ induces a $B_{r,\C_p}$-module structure on $\fU_{r,\C_p}^{\gamma,\alpha_1+\alpha_2}$.

If $\beta$ is another root, we can write
\begin{gather*} B_{r,\C_p}\cdot\fU_{r,\C_p}^{\gamma,\beta}=B_{r,\C_p}\cdot[\fU_{r,\C_p}^{\gamma,\alpha_1+\alpha_2},\fU_{r,\C_p}^{\gamma,\beta-\alpha_1-\alpha_2}]\subset \\
\subset [B_{r,\C_p}\cdot\fU_{r,\C_p}^{\gamma,\alpha_1+\alpha_2},\fU_{r,\C_p}^{\gamma,\beta-\alpha_1-\alpha_2}]\subset  [\fU_{r,\C_p}^{\gamma,\alpha_1+\alpha_2},\fU_{r,\C_p}^{\gamma,\beta-\alpha_1-\alpha_2}]=\fU_{r,\C_p}^{\gamma,\beta}, \end{gather*}
where the inclusion $B_{r,\C_p}\cdot\fU_{r,\C_p}^{\gamma,\alpha_1+\alpha_2}\subset\fU_{r,\C_p}^{\gamma,\alpha_1+\alpha_2}$ is the result of the previous paragraph.
\end{proof}

\begin{proof}(of Theorem \ref{thexlevel})
Let $E_{\B_r}\subset\fU_r^\alpha$ be the set defined in the proof of Lemma \ref{alglatt}. Let $E_{\B_r,\C_p}=\{e\otimes 1\,\vert\,e\in E_{\B_r}\}\subset\fU_{r,\C_p}^\alpha$. Consider the Lie subalgebra $B_{r,\C_p}\cdot\fG_{r,\C_p}$ of $\fgsp_4(\B_{r,\C_p})$. For every $\alpha\in\Delta$ we have $B_{r,\C_p}\cdot\fG_{r,\C_p}\cap\fu^\alpha(B_{r,\C_p})=B_{r,\C_p}\cdot\fU^\alpha_r$. By Lemma \ref{alglatt} there exists an ideal $\fl^\alpha$ of $\I_0$, independent of $r$, such that $\fl^\alpha\cdot\B_{r,\C_p}\subset B_{r,\C_p}\cdot\fU^\alpha_r$. Let $\fl_0=\prod_{\alpha\in\Delta}\fl^\alpha$. Then Lemma \ref{liestdarg} gives an inclusion
\begin{equation}\label{flBr} \fl_0\cdot\fsp_4(\B_{r,\C_p})\subset B_{r,\C_p}\cdot\fG_{r,\C_p}. \end{equation}

As before let $\gamma$ be an element of $\GSp_4(\B_{r,\C_p})$ satisfying $\Phi_{\B_r}=\gamma C_{T_1,T_2}\gamma^{-1}$. 
The Lie algebra $\fl_0\cdot\fsp_4(\B_{r,\C_p})$ is stable under $\Ad(\gamma^{-1})$, so Equation \ref{flBr} implies that $\fl_0\cdot\fsp_4(\B_{r,\C_p})=\gamma^{-1}(\fl_0\cdot\fsp_4(\B_{r,\C_p}))\gamma\subset \gamma^{-1}(B_{r,\C_p}\cdot\fG_r)\gamma=B_{r,\C_p}\cdot\gamma^{-1}\fG_r\gamma=B_{r,\C_p}\cdot\fG_r^{\gamma}$. 
We deduce that, for every $\alpha\in\Delta$,
\begin{equation}\begin{gathered}\label{flBrgamma} \fl_0\cdot\fu^\alpha(\B_{r,\C_p})=\fu^\alpha(\B_{r,\C_p})\cap\fl_0\cdot\fsp_4(\B_{r,\C_p})\subset\fu^\alpha(\B_{r,\C_p})\cap B_{r,\C_p}\cdot\fG_{r,\C_p}^{\gamma}= \\
=B_{r,\C_p}\cdot(\fu^\alpha(\B_{r,\C_p})\cap\fG_{r,\C_p}^{\gamma})=B_{r,\C_p}\cdot\fU_{r,\C_p}^{\gamma,\alpha}. \end{gathered}\end{equation}
By Lemma \ref{Brstr} $\fU_{r,\C_p}^{\alpha,\gamma}$ is a $B_{r,\C_p}$-submodule of $\fu_r(\B_{r,\C_p})$, so $B_{r,\C_p}\cdot\fU_{r,\C_p}^{\gamma,\alpha}=\fU_{r,\C_p}^{\gamma,\alpha}$. Hence Equation \eqref{flBrgamma} gives
\begin{equation}\label{flUBrgamma} \fl_0\cdot\fu^\alpha(\B_{r,\C_p})\subset\fU_{r,\C_p}^{\gamma,\alpha} \end{equation}
for every $\alpha$. 
Set $\fl_1=\fl_0^2$. By Lemma \ref{liestdarg} and Remark \ref{fl2}, applied to the Lie algebra $\fG_{r,\C_p}$ and the set of ideals $\{\fl_1\B_r\}_{\alpha\in\Delta}$, Equation \eqref{flUBrgamma} implies that 
$\fl_1\cdot\fsp_4(\B_{r,\C_p})\subset\fG_{r,\C_p}^\gamma$. 
Observe that the left hand side of the last equation is stable under $\Ad(\gamma)$, so we can write
\begin{equation}\label{flGBrgamma} \fl_1\cdot\fsp_4(\B_{r,\C_p})=\gamma(\fl_1\cdot\fsp_4(\B_{r,\C_p}))\gamma^{-1}\subset\gamma\fG_{r,\C_p}^\gamma\gamma^{-1}=\fG_{r,\C_p}. \end{equation}

To complete the proof we show that the extension of scalars to $\C_p$ in Equation \ref{flGBrgamma} is unnecessary, up to restricting the ideal $\fl_1$. By Equation \ref{flGBrgamma} we have, for every $\alpha$,
\begin{equation}\label{cpincl} \fl_1\cdot\B_{r,\C_p}\subset\fU_{r,\C_p}^\alpha. \end{equation}
We prove that the above inclusion of $\C_p$-vector spaces descends to an inclusion $\fl_1\cdot\B_r\subset\fU_r^\alpha$ of $\Q_p$-vector spaces. Let $I$ be some index set and let $\{f_i\}_{i\in I}$ be an orthonormal basis of $\C_p$ as a $\Q_p$-Banach space, satisfying $1\in\{f_i\}_{i\in I}$. Let $\fa$ be any ideal of $\I_{r,0}$ belonging to the set $S_2$. Recall that the $\Q_p$-vector space $\B_r/\fa\B_r\cong\I_{r,0}/\fa$ is finite-dimensional. We write $\pi_\fa$ for the projection $\B_r\to\I_{r,0}/\fa$ and also for its restriction $\I_{r,0}\to\I_{r,0}/\fa$. Let $n$ and $d$ be the $\Q_p$-dimensions of $\I_{r,0}/\fa$ and $\pi_\fa(\fU_r^\alpha)$, respectively. Choose a $\Q_p$-basis $\{v_j\}_{j=1,\ldots,n}$ of $\I_{r,0}/\fa$ such that $\{v_j\}_{j=1,\ldots,d}$ is a $\Q_p$-basis of $\fU_r^\alpha$.

Let $v$ be any element of $\pi_\fa(\fl_1)$. Then $v\otimes 1\in\pi_\fa(\fl_1)\widehat{\otimes}_{\Q_p}\C_p$ and by Equation \eqref{cpincl} we have $v\otimes 1\in\pi_\fa(\fU_r^\alpha)\widehat{\otimes}_{\Q_p}\C_p$. Now $\{v_j\otimes f_i\}_{1\leq j\leq n;\, i\in I}$ and $\{v_j\otimes f_i \}_{1\leq j\leq d;\, i\in I}$ are orthonormal $\Q_p$-basis of $\B_r/\fa\widehat{\otimes}_{\Q_p}\C_p$ and $\pi_\fa(\fU_r^\alpha)\widehat{\otimes}_{\Q_p}\C_p$, respectively. Hence there exists a set $\{\lambda_{j,i}\}_{1\leq j\leq d;\, i\in I}\subset\Q_p$ converging to $0$ in the filter of complements of finite subsets of $\{1,2,\ldots,d\}\times I$ such that $v\otimes 1=\sum_{j=1,\ldots,d;\, i\in I}\lambda_{j,i}(v_j\otimes f_i)$. By setting $\lambda_{j,i}=0$ for $d<j\leq n$ we obtain a representation $v\otimes 1=\sum_{j=1,\ldots,n;\, i\in I}\lambda_{j,i}(v_j\otimes f_i)$ with respect to the basis $\{v_j\otimes f_i\}_{1\leq j\leq n;\, i\in I}$ of $(\B_r/\fa)\widehat{\otimes}_{\Q_p}\C_p$. 
On the other hand there exist $a_j\in\Q_p$, $j=1,2,\ldots,n$, such that $v=\sum_{j=1}^n a_jv_j$, so $v\otimes 1=\sum_{j=1}^n a_j(v_j\otimes 1)$ is another representation of $v\otimes 1$ with respect to the basis $\{v_j\otimes f_i\}_{1\leq j\leq n;\, i\in I}$. 
By the uniqueness of the representation of an element in a $\Q_p$-Banach space in terms of a given orthonormal basis we must have $a_j=\lambda_{j,i}$ if $f_i=1$. In particular $a_j=0$ for $d<j\leq n$, so $v=\sum_{j=1}^da_jv_j$ is an element of $\pi_\fa(\fU_r^\alpha)$.

The discussion above proves that $\pi_\fa(\fl_1)\subset\pi_\fa(\fU_r^\alpha)$ for every $\fa\in S_2$. By taking a projective limit over $\fa$ with respect to the natural maps we obtain $\fl_1\cdot\B_r\subset\fU_r^\alpha$. 
Let $\fl=\fl_1^2$. From Lemma \ref{liestdarg} and Corollary \ref{fl2}, applied to the Lie algebra $\fG_{r,\C_p}$ and the set of ideals $\{\fl_1\B_r\}_{\alpha\in\Delta}$, we deduce that
\[ \fl\cdot\fsp_4(\B_r)\subset\fG_{r}. \]
By definition we have $\fl=\fl_1^2=\fl_0^{4}=\left(\prod_{\alpha\in\Delta}\fl_\alpha\right)^{4}$. 
For every $\alpha$ the ideal $\fl^\alpha$ provided by Lemma \ref{alglatt} is independent of $r$, so $\fl$ is also independent of $r$. This concludes the proof of Theorem \ref{thexlevel}. 
\end{proof}

\begin{defin}\label{deflevel}
We call \emph{Galois level} of $\theta$ and denote by $\fl_\theta$ the largest ideal of $\I_0$ satisfying the inclusion \eqref{levincl}.
\end{defin}

\subsection{The Galois level of ordinary families}\label{exlevelord}

We explain how our arguments can be applied to an ordinary family of $\GSp_4$-eigenforms in order to show a stronger result than Theorem \ref{thexlevel}.
Let $M$ be a positive integer. 
Let $\T^\ord$ be a local component of the big ordinary cuspidal Hecke algebra of level $\Gamma_1(M)\cap\Gamma_0(p)$ for $\GSp_4$; it is a finite and flat $\Lambda_2$-algebra. With the terminology of Section \ref{gspfam} we consider $\T^\ord$ as the genus $2$, $0$-adapted Hecke algebra of the given level. 
Suppose that the residual representation $\ovl\rho_{\T^\ord}$ associated with $\T^\ord$ is absolutely irreducible and of $\Sym^3$ type in the sense of Definition \ref{sctype}. 
Let $\theta\colon\T^\ord\to\I^\circ$ be a family, i.e. the morphism of finite $\Lambda_2$ algebras describing an irreducible component of $\T^\ord$. 
Note that the algebra $\T^\ord$ may different from the one given by the construction in Section \ref{gspfam} for the choices $h=0$ and $r_h=1$; however all of our arguments and contructions are equally valid for the algebra $\T^\ord$. None of them relied on the fact that the slope of the family was positive. 

We keep all the notations we introduced for the family $\theta$. 
Let $\rho\colon G_\Q\to\GSp_4(\I^\circ_\Tr)$ be the Galois representation associated with $\theta$. Suppose that $\rho$ is $\Z_p$-regular in the sense of Definition \ref{Zpreg}. Then we have the following.

\begin{thm}\label{thexlevelord}
There exists a non-zero ideal $\fl$ of $\I_0^\circ$ and an element $g$ of $\GSp_4(\I_0^\circ)$ such that 
\begin{equation}\label{levinclord} g\Gamma_{\I_0^\circ}(\fl)g^{-1}\subset\im\rho. \end{equation}
\end{thm}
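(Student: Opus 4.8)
The plan is to adapt the proof of Theorem~\ref{thexlevel} to the ordinary setting, exploiting the fact that here the weight map is finite over all of $\cW_2^\circ$, so we may take $r_h = 1$, and that the ordinary deformation/Hida theory gives us honest $\Lambda_2$-models. First I would run through Sections~\ref{selftwistsec}--\ref{sen} verbatim for $\theta\colon\T^\ord\to\I^\circ$: the residual irreducibility and $\Z_p$-regularity hypotheses are in force, so we obtain the self-twist group $\Gamma$, the subring $\I_0^\circ = (\I_\Tr^\circ)^\Gamma$, the conjugate $\rho$ with $\rho(H_0)\subset\GSp_4(\I_0^\circ)$ (Proposition~\ref{H0repr}), the unipotent-lattice statement (Proposition~\ref{uniplatt}), and the relative Sen operator together with its exponential $\Phi_{\B_r}$ conjugate to $C_{T_1,T_2}$ (Propositions~\ref{eigensen}, \ref{expsen}). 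Since $r_h=1$ we may drop the $r$ from the notation: $\I_{r,0}^\circ = \I_0^\circ$ already, $A_r = \Lambda_2[1/p]$, and the ``Banach completion'' rings $\B_r$, $B_r$ are built directly over $\I_0$ and $\Lambda_2[1/p]$.

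The key new input, replacing the passage through $\C_p$-Lie algebras, is that in the ordinary case one can work $p$-adically with the whole image rather than with the Lie algebra attached to a Banach completion. Concretely, following \cite[Theorem~4.8]{hidatil}, I would use $\Phi_{\B_r}$ (now living in $\GL_4(\I_{0,\C_p}^\circ)$ up to the usual scalar issues, and after conjugation by $\gamma$ equal to the diagonal $C_{T_1,T_2}$) to induce, by the same eigenspace argument as in Lemma~\ref{Brstr}, a $\I_0^\circ$-module structure on each $\gamma$-conjugated unipotent piece $U^\alpha(\rho)^\gamma \subset U^\alpha(\I_0^\circ)$ of $\im\rho$ itself — not merely on a Lie subalgebra of $\fgsp_4(\B_r)$. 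The point is that in the ordinary setting $\Phi_{\B_r}$ arises (via Sen theory applied to the \emph{ordinary} filtration, cf.\ Proposition~\ref{eigensen}) as a genuine conjugate of an element of $\im\rho$ restricted to a decomposition group at $p$, so multiplication by $1+T_1$ and $1+T_2$ — the nontrivial eigenvalue ratios of $C_{T_1,T_2}$ — acts on the actual unipotent subgroups $U^\alpha(\rho)^\gamma$, exactly as in the group-level argument of Hida--Tilouine. Combining this with Proposition~\ref{uniplatt} (which furnishes, for each $\alpha$, a basis of a $\Lambda_2$-lattice in $\I_0^\circ$ inside $U^\alpha(\rho)$) one concludes that $U^\alpha(\rho)^\gamma$ contains $\fl^\alpha\cdot U^\alpha(\I_0^\circ)$ for a nonzero ideal $\fl^\alpha$ of $\I_0^\circ$. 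Taking $\fl_0 = \prod_{\alpha\in\Delta}\fl^\alpha$, $\fl_1 = \fl_0^2$, $\fl = \fl_1^2$ and invoking the group-theoretic Lemma~\ref{stdarg} (together with Remark~\ref{fl2}), first for $\rho^\gamma$ and then conjugating back by $\gamma$, yields $g\Gamma_{\I_0^\circ}(\fl)g^{-1}\subset\im\rho$ with $g$ a preimage of $\gamma$ — and one checks that $\gamma$, hence $g$, can be taken in $\GSp_4(\I_0^\circ)$ rather than $\GSp_4(\I_{0,\C_p}^\circ)$ by the same orthonormal-basis descent argument used at the end of the proof of Theorem~\ref{thexlevel}, since all the differences of eigenvalues of $C_{T_1,T_2}$ are invertible in $\B_r$ (Remark~\ref{badBr}) and one is over a profinite local ring in which Nakayama applies.

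The main obstacle, and the reason this is genuinely stronger than Theorem~\ref{thexlevel}, is upgrading from a statement about a Lie algebra inside $\fgsp_4(\B_r)$ to a statement about the group $\im\rho$ inside $\GSp_4(\I_0^\circ)$ on the nose. In the positive-slope case the representation $\rho_r$ need not be $p$-adically continuous and one is forced to pass to the completions $\B_r$, losing the exact integral image; here the crucial point is that for an ordinary family the Galois representation, after the ordinary $p$-stabilization, \emph{is} $p$-adically continuous on $\I_0^\circ$ itself (this is where ordinarity, absent in the finite-slope sections, is indispensable — cf.\ Remark~\ref{doubt}), so the matrix $\Phi_{\B_r}$ is genuinely the conjugate of an element of the image of a decomposition group and the Hida--Tilouine exponential argument applies at the level of groups. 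One must be careful that the exponential/logarithm series converge on all of $\im\rho$, which is ensured by restricting to $H_r$ (so that the image lies in $\Gamma_{\GSp_4(\I_0^\circ)}(p)$) exactly as in Section~\ref{sen}, and by assumption (exp); then a final application of the approximation Proposition~\ref{approx} lifts the congruence subgroup from the reduction modulo a non-critical prime $P_\uk$ back to $\im\rho$, closing the argument.
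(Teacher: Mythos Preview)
Your proposal would work but detours through Sen theory that the paper avoids entirely. The paper's argument is direct: ordinarity of $\theta$ forces $\rho|_{G_{\Q_p}}$ to be, after conjugation, upper triangular with explicit diagonal characters, and choosing $\sigma$ in inertia with $\chi(\sigma)=u$ gives $\rho(\sigma)=C_{T_1,T_2}$ as an honest element of $\im\rho$ (after one further upper-triangular conjugation and the diagonal conjugation of Proposition~\ref{H0repr}; this composite conjugation is the $g$ of the statement). With $C_{T_1,T_2}\in\im\rho$ the argument of Lemma~\ref{Brstr} runs at the group level over $\Lambda_2\subset\I_0^\circ$ --- no $\B_r$, no inversion of $p$, no Lie algebras, no $\C_p$-descent --- giving each $U^\alpha(\im\rho)$ a $\Lambda_2$-module structure; Proposition~\ref{uniplatt} plus Lemma~\ref{lattice} then puts a nonzero ideal of $\I_0^\circ$ inside each unipotent, and Lemma~\ref{stdarg} finishes. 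You instead build $\Phi_{\B_r}$ via Sen theory and only afterward assert it agrees with a conjugate of an element of $\im\rho$; but checking that assertion is exactly the ordinarity computation the paper does at the outset, so the Sen layer is pure overhead and drags in the completions and the orthonormal-basis descent you describe. Two smaller points: your diagnosis that ordinarity matters because $\rho$ becomes ``$p$-adically continuous on $\I_0^\circ$'' (citing Remark~\ref{doubt}) misidentifies the obstruction --- the issue in positive slope is that the Sen exponential only normalizes $\fG_{r,\C_p}$ rather than lying in $\im\rho$, not a continuity failure --- and the ``final application of Proposition~\ref{approx}'' you propose is redundant, since that lifting is already absorbed into Proposition~\ref{uniplatt} and everything thereafter happens inside $\im\rho$ itself.
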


The main difference with respect to the proof of Theorem \ref{thexlevel} is that relative Sen theory is not necessary anymore, since the exponential of the Sen operator defined in Section \ref{senexp} is replaced by an element provided by the ordinarity of $\rho$. This is the reason why we do not need the Lie-theoretic constructions and we obtain a group-theoretic result. Note that this also makes the inversion of $p$ unnecessary. Theorem \ref{thexlevelord} is an analogue of \cite[Theorem 2.4]{lang}, which deals with ordinary families of $\GL_2$-eigenforms, and a generalization to the case where $\I^\circ\neq\Lambda_2$ of \cite[Theorem 4.8]{hidatil} for $n=2$ and families of residual symmetric cube type. 

We only sketch the proof of the theorem, pointing out the differences with respect to that of Theorem \ref{thexlevel}. 

\begin{proof}
Let $u=1+p$, let $\chi$ be the $p$-adic cyclotomic character and, for $\sigma\in\I_0^{\circ,\times}$, let $\ur(\sigma)\colon G_{\Q_p}\to\I_0^{\circ,\times}$ be the unramified character sending a lift of the Frobenius automorphism to $\sigma$. 
By Hida theory the ordinarity of $\theta$ implies the ordinarity of the Galois representation $\rho$, in the sense that the restriction of $\rho$ to a decomposition group at $p$ is a conjugate of an upper triangular representation with diagonal entries given by 
\[ \left(\chi^{-3}\cdot((1+T_1)(1+T_2))^{\frac{\log(\chi)}{\log(u)}}\ur(\alpha),\chi^{-1}\cdot(1+T_1)^{\frac{\log(\chi)}{\log(u)}}\ur(\beta),\chi^{-2}\cdot(1+T_2)^{\frac{\log(\chi)}{\log(u)}}\ur(\gamma),\ur(\delta)\right) \]
for some $\alpha,\beta,\gamma,\delta\in\I_0^{\circ,\times}$. Consider a conjugate of $\rho$ that has the form displayed above. Up to conjugation by an upper triangular matrix we can suppose that $\im\rho$ contains a diagonal $\Z_p$-regular element. By Proposition \ref{H0repr} we can further replace the representation with a conjugate by a diagonal matrix such that $\rho(H_0)\subset\GSp_4(\I_0^\circ)$. This is true because the basis we start with in the proof of Proposition \ref{H0repr} is replaced by a collinear one. 

We work from now on with the last one of the conjugates of the original $\rho$ mentioned in the previous paragraph; this choice gives the element $g$ appearing in Theorem \ref{thexlevelord}. It is clear from the form of $\rho$ that there exists an element $\sigma$ in the inertia subgroup at $p$ such that $\rho(\sigma)=C_{T_1,T_2}$, where $C_{T_1,T_2}$ is the matrix defined in Section \ref{senexp}. Hence $\im\rho$ is stable under $\Ad C_{T_1,T_2}$. The same argument as in Lemma \ref{Brstr}, with the nilpotent algebra $\fU_{r,\C_p}^{\gamma,\alpha}$ replaced by the unipotent subgroup $U^\alpha(\im\rho)$ and the extension of rings $B_r\subset\B_r$ replaced by $\Lambda_2\subset\I_0^\circ$, gives $U^\alpha(\im\rho)$ a structure of $\Lambda_2$-module for every root $\alpha$ of $\Sp_4$. By Proposition \ref{uniplatt} $U^\alpha(\im\rho)$ contains a basis of a $\Lambda_2$-lattice in $\I_0^\circ$ for every $\alpha$. Hence, by Lemma \ref{lattice}, $U^\alpha(\im\rho)$ contains a non-zero ideal of $\I_0^\circ$ for every $\alpha$. By Proposition \ref{stdarg} the group $\im\rho$ contains a non-trivial congruence subgroup of $\Sp_4(\I_0^\circ)$.
\end{proof}

\bigskip

\section{The symmetric cube morphisms of Hecke algebras}

Let $\Sym^3\colon\GL_2\to\GSp_4$ be the morphism of group schemes over $\Z$ defined by the symmetric cube representation of $\GL_2$. It fits in an exact sequence $0\to\mu_3\to\GL_2\to\GSp_4$ of group schemes over $\Z$. 
If $R$ is a ring we still denote by $\Sym^3$ the morphism $\GL_2(R)\to\GSp_4(R)$ induced by the morphism of group schemes. 
For every representation $\rho$ of a group with values in $\GL_2(R)$ we set $\Sym^3\rho=\Sym^3\ccirc\rho$. 

Kim and Shahidi proved the existence of a Langlands functoriality transfer from $\GL_2$ to $\GL_4$ associated with $\Sym^3\colon\GL_2(\C)\to\GL_4(\C)$ \cite[Theorem B]{kimsha}. Thanks to an unpublished result by Jacquet, Piatetski-Shapiro and Shalika \cite[Theorem 9.1]{kimsha}, this transfer descends to $\GSp_4$. We briefly recall these results.

Let $\pi=\bigotimes_v\pi_v$ be a cuspidal automorphic representation of $\GL_2(\A_\Q)$, where $v$ varies over the places of $\Q$. Let $\rho_v$ be the two-dimensional representation of the Weil-Deligne group of $\Q_v$ attached to $\pi_v$. Consider the four-dimensional representation $\Sym^3\rho_v=\Sym^3\circ\rho_v$ of the same group. By the local Langlands correspondence for $\GL_4$, $\Sym^3\rho_v$ is attached to an automorphic representation $\Sym^3\pi_v$ of $\GL_4(\Q_v)$. Define a representation of $\GL_4(\A_\Q)$ as $\Sym^3\pi=\bigotimes_v\Sym^3\pi_v$. Then we have the following theorems.


\begin{thm}\label{lintransf}\cite[Theorem B]{kimsha}
The representation $\Sym^3\pi$ is an automorphic representation of $\GL_4(\A_\Q)$. If $\pi$ is attached to a non-CM eigenform of weight $k\geq 2$, then $\Sym^3\pi$ is cuspidal.
\end{thm}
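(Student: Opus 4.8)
The plan is to reduce Theorem \ref{lintransf} to already-available functoriality results. Since $\Sym^3 = \Sym^3 \circ \Std$ factors as the composite of the symmetric square and a further transfer, the cleanest route is to build the cube transfer out of the symmetric square transfer of Gelbart--Jacquet and the Rankin--Selberg (tensor product) transfer $\GL_2 \times \GL_3 \to \GL_6$ of Kim--Shahidi, exactly as in \cite{kimsha}. First I would recall that for a cuspidal automorphic representation $\pi$ of $\GL_2(\A_\Q)$ with central character $\omega_\pi$, the Gelbart--Jacquet lift gives an automorphic representation $\Sym^2\pi$ of $\GL_3(\A_\Q)$, which is cuspidal precisely when $\pi$ is not dihedral (equivalently, not of CM type in the classical normalization). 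Then I would form the Rankin--Selberg product $\pi \boxtimes \Sym^2\pi$, an isobaric automorphic representation of $\GL_6(\A_\Q)$ by the Kim--Shahidi theorem on $\GL_2\times\GL_3$ functoriality. The standard decomposition of representations of $\GL_2(\C)$, namely $\Std \otimes \Sym^2\Std \cong \Sym^3\Std \oplus \Std$ (after accounting for the determinant twist), then shows that $\pi \boxtimes \Sym^2\pi \cong (\Sym^3\pi) \boxplus (\pi \otimes \omega_\pi)$ as isobaric representations, and isolating the $\Sym^3$ summand (using that the twist $\pi \otimes \omega_\pi$ is already known to be automorphic and cuspidal) yields automorphy of $\Sym^3\pi$.

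Next I would address cuspidality. By the classification of isobaric automorphic representations and the multiplicativity of $L$-functions, $\Sym^3\pi$ fails to be cuspidal only if it is an isobaric sum of representations of smaller $\GL_n$; one rules this out by a pole analysis of the relevant Rankin--Selberg $L$-functions $L(s, \Sym^3\pi \times (\Sym^3\pi)^\vee)$ and $L(s,\Sym^3\pi \times \widetilde\pi)$, or more directly by the argument of \cite[Section 3]{kimsha}: if $\Sym^3\pi$ were not cuspidal, then combined with the known cuspidality of $\Sym^2\pi$ for non-dihedral $\pi$, one would force $\pi$ itself to be dihedral or tetrahedral/octahedral, contradicting the hypothesis that the associated eigenform has weight $k \geq 2$ and is non-CM (such forms have Galois representations with large image, in particular not of those exceptional types, so $\pi$ is genuinely non-dihedral). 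I would then invoke the descent to $\GSp_4$: the image $\Sym^3\rho_v$ lands in $\GSp_4$ up to a similitude twist because the symmetric cube of a two-dimensional representation is essentially self-dual (symplectically self-dual, as $\Sym^3$ of the standard representation of $\SL_2$ preserves an alternating form), so by the result of Jacquet--Piatetski-Shapiro--Shalika cited as \cite[Theorem 9.1]{kimsha} the cuspidal $\Sym^3\pi$ on $\GL_4(\A_\Q)$ descends to a cuspidal automorphic representation of $\GSp_4(\A_\Q)$.

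The main obstacle I expect is not the automorphy statement itself --- that is essentially a bookkeeping exercise with known functorial lifts --- but rather pinning down precisely the exceptional cases where cuspidality can fail, and checking that the hypothesis "$\pi$ attached to a non-CM eigenform of weight $k \geq 2$" excludes all of them. Concretely one must verify that such $\pi$ is neither dihedral (which is exactly the CM condition) nor of octahedral type (for which $\Sym^3\pi$ would be an isobaric sum, this being the image of the exceptional $S_4$ projective Galois representations), and these exceptional types occur only for weight $1$ eigenforms by the work of Deligne--Serre and the modularity of Artin representations of small image; since our forms have weight $\geq 2$, their $p$-adic Galois representations are Hodge--Tate of distinct weights and hence have infinite image, forcing $\pi$ to be of "large image" type. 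For the present paper it suffices to cite \cite{kimsha} for the automorphy and cuspidality and \cite[Theorem 9.1]{kimsha} for the descent to $\GSp_4$, so I would keep the proof brief and refer to those sources for the detailed case analysis.
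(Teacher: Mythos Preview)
The paper provides no proof of this theorem: it is stated as a citation of \cite[Theorem B]{kimsha} and used as a black box. Your sketch is therefore not comparable to anything in the paper itself, though it does faithfully outline the Kim--Shahidi strategy (Gelbart--Jacquet $\Sym^2$ plus the $\GL_2\times\GL_3\to\GL_6$ transfer, followed by the Clebsch--Gordan splitting). Your own final sentence is the correct conclusion: for this paper a citation suffices, and that is exactly what the paper does.

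Two small points on the content of your sketch. First, the cuspidality obstruction for $\Sym^3\pi$ comes from the dihedral and \emph{tetrahedral} cases only; octahedral $\pi$ already has cuspidal $\Sym^3\pi$ (it is $\Sym^4$ that fails there), so your case list is slightly off, though the conclusion survives since weight $k\geq 2$ excludes all finite-image types. Second, the descent to $\GSp_4$ that you discuss in your last paragraph is not part of Theorem~\ref{lintransf} at all: in the paper that is the separate Theorem~\ref{gentransf}, also stated without proof and attributed to Jacquet--Piatetski-Shapiro--Shalika via \cite[Theorem 9.1]{kimsha}. Keep the two statements apart.
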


\begin{thm}\label{gentransf}\cite[after Theorem 9.1]{kimsha}
If $\pi$ is attached to a non-CM eigenform of weight $k\geq 2$, then there exists a globally generic cuspidal automorphic representation $\Pi$ of $\GSp_4(\A_\Q)$ such that $\Sym^3\pi$ is the functorial lift of $\Pi$ under the embedding $\GSp_4(\C)\into\GL_4(\C)$.
\end{thm}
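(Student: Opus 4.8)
The plan is to deduce this from Theorem \ref{lintransf} together with the descent criterion from $\GL_4$ to $\GSp_4$ recorded after \cite[Theorem 9.1]{kimsha}. First I would recall that, for $\pi$ attached to a non-CM eigenform of weight $k\geq 2$, Theorem \ref{lintransf} already gives that $\Sym^3\pi=\bigotimes_v\Sym^3\pi_v$ is a \emph{cuspidal} automorphic representation of $\GL_4(\A_\Q)$, with local components determined by applying the local Langlands correspondence for $\GL_4$ to the representations $\Sym^3\rho_v$ of the Weil--Deligne groups. So the remaining content is that this cuspidal representation lies in the image of the functorial transfer attached to the embedding $\GSp_4(\C)\into\GL_4(\C)$ (equivalently $\mathrm{GSpin}_5(\C)\into\GL_4(\C)$), and that its preimage can be chosen globally generic.

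Next I would verify the essential self-duality condition characterizing this image. Writing $\omega_\pi$ for the central character of $\pi$, one has $\widetilde\pi\cong\pi\otimes\omega_\pi^{-1}$, hence $\widetilde{\Sym^3\pi}\cong\Sym^3\widetilde\pi\cong\Sym^3\pi\otimes\omega_\pi^{-3}$; thus $\Sym^3\pi$ is self-dual up to a twist by a power of $\omega_\pi$. Moreover the symmetric cube of a two-dimensional symplectic space is again symplectic (odd symmetric powers preserve the alternating form — this is exactly the assertion that $\Sym^3\colon\GL_2\to\GSp_4$ has image in the symplectic similitude group, compatibly with the sequence $0\to\mu_3\to\GL_2\to\GSp_4$), so the lift is of symplectic rather than orthogonal type. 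Equivalently, the partial exterior square $L$-function $L^S(s,\Sym^3\pi,\wedge^2\otimes\omega_\pi^{-3})$ has a pole at $s=1$. This is precisely the hypothesis under which the converse-theorem based descent from $\GL_4$ to $\GSp_4$ invoked in \cite{kimsha} produces a globally generic cuspidal automorphic representation $\Pi$ of $\GSp_4(\A_\Q)$ whose transfer to $\GL_4(\A_\Q)$ is $\Sym^3\pi$; I would then record cuspidality and global genericity of $\Pi$ from the output of that construction.

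Finally I would normalize $\Pi$ so that its similitude and central characters agree with the powers of $\omega_\pi$ dictated by $\pi$, and check that its local components at every place — including the ramified and archimedean ones — are the expected symmetric cube lifts, using compatibility of the descent with local Langlands for $\GL_4$ and for $\GSp_4$. The main obstacle is that the descent step itself rests on unpublished work of Jacquet, Piatetski--Shapiro and Shalika; in a self-contained treatment one would replace it by the now-available generic descent machinery (the Ginzburg--Rallis--Soudry and Asgari--Shahidi constructions), whose input is exactly the pole of the exterior square $L$-function established above, and then deduce cuspidality and genericity of the descent by the standard arguments. The twisting bookkeeping with $\omega_\pi$ and the matching of local parameters is then routine.
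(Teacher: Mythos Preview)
The paper does not prove this statement: it is simply recorded as a known result, with the citation \cite[after Theorem 9.1]{kimsha}, and no proof is given. There is therefore nothing in the paper to compare your proposal against.

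That said, your sketch is a faithful outline of the argument underlying the cited result: one starts from the cuspidality of $\Sym^3\pi$ on $\GL_4$ (Theorem~\ref{lintransf}), verifies the essential self-duality of symplectic type via the exterior square $L$-function pole, and then invokes the descent from $\GL_4$ to $\GSp_4$ (the unpublished Jacquet--Piatetski-Shapiro--Shalika result, or its subsequent published analogues by Ginzburg--Rallis--Soudry and Asgari--Shahidi) to produce a globally generic cuspidal $\Pi$. Your identification of the main obstacle---that the original descent rests on unpublished work---is also accurate and is exactly why the paper treats this as a black box rather than attempting a self-contained proof.
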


\subsection{Compatible levels for the classical symmetric cube transfer}

If $K$ is a compact open subgroup of $\GSp_4(\widehat{\Z})$, we call \emph{level of $K$} the smallest integer $M$ such that $K$ contains the principal congruence subgroup of $\GSp_4(\widehat{\Z})$ of level $M$. Given an automorphic representation $\Pi$ of $\GSp_4(\A_\Q)$, we call level of $\Pi$ the smallest integer $M$ such that the finite component of $\Pi$ admits an invariant vector by a compact open subgroup of $\GSp_4(\widehat{\Z})$ of level $M$.

Recall that we fixed for every prime $\ell$ an embedding $G_{\Q_\ell}\into G_\Q$. If $\sigma\colon G_\Q\to\GL_n(\ovl{\Q}_p)$ is a representation and $\ell\neq p$ is a prime, set $\sigma_\ell=\sigma\vert_{G_{\Q_\ell}}$. We denote by $N(\sigma,\ell)$ the conductor of $\sigma_\ell$, defined in \cite{serre}. The prime-to-$p$ conductor of $\sigma$ is defined as $N(\sigma)=\prod_{\ell\neq p}N(\sigma,\ell)$.
We recall a standard formula giving $N(\sigma,\ell)$ for every $\ell$ prime to $p$ (see \cite[Proposition 1.1]{livne}).
Let $I\subset G_{\Q_\ell}$ be an inertia subgroup and for $k\geq 1$ let $I_k$ be its higher inertia subgroups. 
Let $V$ be the two-dimensional $\Qp$-vector space on which $G_\Q$ acts via $\sigma$. For every subgroup $H\subset G_\Q$ let $d_{H,\sigma}$ be the codimension of the subspace of $V$ fixed by $\sigma(H)$. 
Then $N(\sigma,\ell)=\ell^{n_{\sigma,\ell}}$, where 
\begin{equation}\label{condform} 
n_{\sigma,\ell}=d_{I,\sigma_\ell}+\sum_{k\geq 1}\frac{d_{I_k,\ovl{\sigma_\ell}}}{[I\colon I_k]}.
\end{equation}
%
Write $\Pi_f$ for the component of $\Pi$ at the finite places and $\Pi_\infty$ for the component of $\Pi$ at $\infty$. 
Since the representation $\Pi$ given by the above theorem is globally generic, it does not correspond to a holomorphic modular form for $\GSp_4$. However Ramakrishnan and Shahidi showed that the generic representation $\Pi_\infty$ can be replaced by a holomorphic representation $\Pi_\infty^\hol$ such that $\Pi_f\otimes\Pi_\infty^\hol$ belongs to the $L$-packet of $\Pi$. This is the content of \cite[Theorem A$^\prime$]{ramsha}, that we recall below. Note that in \emph{loc. cit.} the theorem is stated only for $\pi$ associated with a form $f$ of level $\Gamma_0(N)$ and even weight $k\geq 2$, but Ramakrishnan pointed out that the proof also works when $f$ has level $\Gamma_1(N)$ and arbitrary weight $k\geq 2$. The theorem also gives an information on the level of the representation produced by the lift.

Let $\pi$ be the automorphic representation of $\GL_2(\A_\Q)$ associated with a cuspidal, non-CM eigenform $f$ of weight $k\geq 2$ and level $\Gamma_1(N)$ for some $N\geq 1$. Let $p$ be a prime not dividing $N$ and let $\rho_{f,p}$ be the $p$-adic Galois representation attached to $f$.

\begin{thm}(see \cite[Theorem A$^\prime$]{ramsha})\label{classtransf}
There exists a cuspidal automorphic representation $\Pi^\hol=\bigotimes_v\Pi^{\hol}_v$ of $\GSp_4(\A_\Q)$, satisfying:
\begin{enumerate}
\item $\Pi^{\hol}_\infty$ is in the holomorphic discrete series;
\item $L(s,\Pi^{\hol})=L(s,\pi,\Sym^3)$;
\item $\Pi^{\hol}$ is unramified at primes not dividing $N$;
\item $\Pi^{\hol}$ admits an invariant vector by a compact open subgroup $K$ of $\GSp_4(\A_\Q)$ of level $N(\Sym^3\rho_{f,p})$.
\end{enumerate}
\end{thm}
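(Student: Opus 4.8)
The statement to be proved, Theorem~\ref{classtransf}, is essentially a combination of results of Kim--Shahidi, Jacquet--Piatetski-Shapiro--Shalika, and Ramakrishnan--Shahidi, so my plan is to assemble them carefully and then to extract the level bound in (4) from a conductor computation. First I would apply Theorem~\ref{lintransf} to the cuspidal automorphic representation $\pi$ attached to the non-CM eigenform $f$ of weight $k\geq 2$ and level $\Gamma_1(N)$: this produces a \emph{cuspidal} automorphic representation $\Sym^3\pi$ of $\GL_4(\A_\Q)$. Next, Theorem~\ref{gentransf} descends this to a globally generic cuspidal automorphic representation $\Pi$ of $\GSp_4(\A_\Q)$ whose functorial lift to $\GL_4$ is $\Sym^3\pi$. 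The representation $\Pi$ is generic, hence not holomorphic, so the final step of the assembly is to invoke \cite[Theorem A$^\prime$]{ramsha} (in the $\Gamma_1(N)$, arbitrary-weight $k\geq2$ form pointed out by Ramakrishnan, as noted in the excerpt): replacing $\Pi_\infty$ by a member $\Pi_\infty^{\hol}$ of the holomorphic discrete series inside the same $L$-packet yields $\Pi^{\hol}=\Pi_f\otimes\Pi_\infty^{\hol}$, which gives properties (1) and (2) directly, and property (3) because $\Sym^3\pi$, and hence $\Pi$, is unramified at every prime $\ell\nmid N$ (the local factor $\pi_\ell$ is unramified there, so $\Sym^3\rho_\ell$ is unramified, and by the $\GSp_4$-local Langlands correspondence combined with the compatibility of the lift with local Langlands, $\Pi_\ell^{\hol}=\Pi_\ell$ is unramified).

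For property (4) I would argue as follows. By the local--global compatibility of the symmetric cube lift with the local Langlands correspondences, for every prime $\ell\neq p$ the Weil--Deligne representation attached to $\Pi_\ell^{\hol}$ is $\Sym^3$ of the one attached to $\pi_\ell$, which in turn is $\rho_{f,p}\vert_{G_{\Q_\ell}}$ up to semisimplification of the monodromy; consequently the conductor of $\Pi_\ell^{\hol}$ as a $\GSp_4$-representation equals the conductor $N(\Sym^3\rho_{f,p},\ell)$ of the four-dimensional local Galois representation, computed by the formula \eqref{condform}. Taking the product over $\ell\neq p$ (all but finitely many factors being trivial by property (3)), the prime-to-$p$ conductor of $\Pi^{\hol}$ is exactly $N(\Sym^3\rho_{f,p})$, and by the standard newvector/conductor dictionary for $\GSp_4$ this means $\Pi^{\hol}$ admits a vector fixed by a compact open subgroup $K\subset\GSp_4(\A_\Q)$ of level $N(\Sym^3\rho_{f,p})$. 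This gives (4) and completes the proof.

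I expect the main obstacle to be the precise matching in property~(4): one must know that the local Langlands correspondence for $\GSp_4$ used in the definition of the transfer $\Pi$ is compatible with conductors in the sense that the $\GSp_4$-conductor of $\Pi_\ell^{\hol}$ coincides with the conductor of the four-dimensional lifted local Galois representation $\Sym^3\rho_{f,p}\vert_{G_{\Q_\ell}}$, rather than merely being bounded by it. This requires care at the ramified primes $\ell\mid N$, where the monodromy operator can contribute and where the $\GSp_4$-side newvector theory (Roberts--Schmidt) must be invoked; it also uses that $f$ is non-CM so that $\Sym^3\pi$ is genuinely cuspidal and the generic packet is non-empty. The archimedean replacement of Ramakrishnan--Shahidi does not affect the finite part $\Pi_f$, so the level bound is unchanged by passing from $\Pi$ to $\Pi^{\hol}$, which I would state explicitly. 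All remaining steps---cuspidality, genericity, holomorphy, unramifiedness away from $N$---are immediate citations of the theorems recalled in this section.
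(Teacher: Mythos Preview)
The paper does not give its own proof of this theorem: it is stated as a citation of \cite[Theorem A$^\prime$]{ramsha}, supplemented by Ramakrishnan's remark that the argument extends from $\Gamma_0(N)$, even weight, to $\Gamma_1(N)$, arbitrary weight $k\ge 2$. So there is no ``paper's proof'' to compare against beyond the assembly of the cited results, and your outline for (1)--(3) is exactly that assembly: Theorem~\ref{lintransf} gives cuspidality of $\Sym^3\pi$ on $\GL_4$, Theorem~\ref{gentransf} descends to a generic cuspidal $\Pi$ on $\GSp_4$, and Ramakrishnan--Shahidi replace $\Pi_\infty$ by a holomorphic discrete series member of its $L$-packet without touching $\Pi_f$. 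That part is fine.

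The genuine gap is in your argument for (4). You assert that ``the conductor of $\Pi_\ell^{\hol}$ as a $\GSp_4$-representation equals the conductor $N(\Sym^3\rho_{f,p},\ell)$''. There is no general theorem to this effect: the newvector/conductor theory for $\GSp_4$ (Roberts--Schmidt, in the paramodular setting) is not the restriction of the $\GL_4$ theory, and the two invariants can differ for the same $L$-parameter. What the theorem actually asserts is only an \emph{upper} bound on the level in the sense defined in the paper (smallest $M$ such that $K\supset\Gamma^{(2)}(M)$), and even this direction does not follow from an abstract ``conductor dictionary''. In \cite{ramsha} and its extension the bound is obtained by a prime-by-prime analysis of the local components $\Pi_\ell$ at $\ell\mid N$, using the explicit shape of the $L$-parameter $\Sym^3\rho_\ell$ and the Gan--Takeda classification, rather than by invoking a general compatibility of conductors. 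You correctly flag this as the main obstacle, but your proposed mechanism (equality of conductors plus a ``standard newvector/conductor dictionary for $\GSp_4$'') is not available in the form you need; the actual input is the case analysis in the cited reference.
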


We deduce the following corollary.

\begin{cor}\label{formtransf}
Let $f$ be a cuspidal, non-CM $\GL_2$-eigenform of weight $k\geq 2$. For every prime $\ell$ let $\rho_{f,\ell}$ be the $\ell$-adic Galois representation associated with $f$. There exists a cuspidal $\GSp_4$-eigenform $F$ of weight $(2k-1,k+1)$ with associated $\ell$-adic Galois representation $\Sym^3\rho_{f,\ell}$ for every prime $\ell$. For every prime $p$ not dividing $N$, the level of $F$ is a divisor of the prime-to-$p$ conductor of $\Sym^3\rho_{f,p}$.
\end{cor}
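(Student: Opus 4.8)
The plan is to deduce Corollary \ref{formtransf} from Theorem \ref{classtransf} by the standard dictionary between automorphic representations of $\GSp_4(\A_\Q)$ and vector-valued Siegel modular forms. First I would take $\pi$ to be the cuspidal automorphic representation of $\GL_2(\A_\Q)$ attached to $f$; since $f$ is non-CM of weight $k\geq 2$, Theorem \ref{lintransf} gives that $\Sym^3\pi$ is cuspidal, and Theorem \ref{gentransf} produces a globally generic cuspidal $\Pi$ on $\GSp_4(\A_\Q)$ lifting it. Then I invoke Theorem \ref{classtransf}: there is a cuspidal automorphic representation $\Pi^\hol$ of $\GSp_4(\A_\Q)$ with $\Pi^\hol_\infty$ in the holomorphic discrete series, with $L(s,\Pi^\hol)=L(s,\pi,\Sym^3)$, unramified away from $N$, and admitting a fixed vector under an open compact subgroup of level $N(\Sym^3\rho_{f,p})$.

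Next I would pass from $\Pi^\hol$ to a classical $\GSp_4$-eigenform $F$. Because $\Pi^\hol_\infty$ is a holomorphic discrete series representation, by the archimedean classification it corresponds to a specific vector-valued weight; the infinitesimal character of $\Pi^\hol_\infty$ is forced by condition (2), i.e. by the symmetric cube $L$-function, and a direct computation of Hodge--Tate weights (exactly the calculation already used in the proof of Lemma \ref{specialprime}: $\Sym^3\rho_{f,p}$ is Hodge--Tate with weights $(0,k-1,2k-2,3k-3)$) shows that the corresponding classical weight is $(2k-1,k+1)$. Taking a nonzero vector in the finite part $\Pi^\hol_f$ fixed by an open compact subgroup of level dividing $N(\Sym^3\rho_{f,p})$ yields a holomorphic vector-valued Siegel cusp form of genus $2$, weight $(2k-1,k+1)$, and level a divisor of $N(\Sym^3\rho_{f,p})$; by decomposing under the Hecke action away from this level I obtain an eigenform $F$. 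Finally, by the compatibility of the construction of $\ell$-adic Galois representations attached to Siegel eigenforms with the local Langlands correspondence (and condition (2) of Theorem \ref{classtransf}, which pins down the spin $L$-factor at every unramified place to equal that of $\Sym^3\rho_{f,\ell}$), the semisimple Galois representation $\rho_{F,\ell}$ has the same characteristic polynomials of Frobenius as $\Sym^3\rho_{f,\ell}$ at all primes away from $N\ell$; by Chebotarev and the Brauer--Nesbitt theorem, together with the irreducibility of $\Sym^3\rho_{f,\ell}$ for $f$ non-CM, this forces $\rho_{F,\ell}\cong\Sym^3\rho_{f,\ell}$ for every prime $\ell$.

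The main obstacle I anticipate is bookkeeping on the level rather than anything conceptual: Theorem \ref{classtransf}(4) is phrased in terms of an open compact subgroup of $\GSp_4(\A_\Q)$ whose ``level'' (smallest $M$ with $\Gamma^{(2)}(M)$ contained in it, in the sense defined just before Theorem \ref{lintransf}) is $N(\Sym^3\rho_{f,p})$, and I must translate this into a statement about the level $\Gamma_1(\cdot)$ of the classical form $F$, using that the conductor of a vector-valued Siegel eigenform divides the conductor of its associated Galois representation. One also has to be slightly careful that the prime $p$ plays no distinguished role: the conductor $N(\Sym^3\rho_{f,p})$ is prime-to-$p$ by definition, and since $\Sym^3\rho_{f,\ell}$ for $\ell\neq p$ has the same prime-to-$\ell$ and prime-to-$p$ ramification behaviour away from $\ell p$, the level bound is independent of the auxiliary choice of $p$. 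None of these steps requires a new idea; they are applications of results already cited in the excerpt, so I would keep the write-up short, citing Theorem \ref{classtransf}, the Hodge--Tate weight computation of Lemma \ref{specialprime}, and the compatibility of $\rho_{F,\ell}$ with local Langlands.
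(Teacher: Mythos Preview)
Your proposal is correct and follows essentially the same approach as the paper: the paper's proof is a one-liner stating that everything follows immediately from Theorem \ref{classtransf}, with only the weight $(2k-1,k+1)$ requiring verification via the Hodge--Tate weights of $\Sym^3\rho_{f,p}$. Your write-up simply spells out in more detail what ``everything follows'' means (the passage from $\Pi^\hol$ to a classical eigenform, the identification of $\rho_{F,\ell}$ via Chebotarev and Brauer--Nesbitt), and your weight computation is exactly the one the paper has in mind; the preliminary appeal to Theorems \ref{lintransf} and \ref{gentransf} is redundant since they are already packaged into Theorem \ref{classtransf}.
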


Note that the weight $(2k-1,k+1)$ is cohomological since $k\geq 2$.

\begin{proof}
Everything follows immediately from Theorem \ref{classtransf} except for the weight of $F$, that can be found by writing the Hodge-Tate weights of $\Sym^3\rho_{f,p}$ in terms of those of $\rho_{f,p}$. 
\end{proof}

We denote by $\Sym^3f$ the cuspidal Siegel eigenform given by the corollary. Let $N(f)$ and $N(\Sym^3f)$ be the levels of $f$ and $\Sym^3f$, respectively. Thanks to the property (4) in Theorem \ref{classtransf} we can give an upper bound for $N(\Sym^3f)$ in terms of $N(f)$ by comparing $N(\Sym^3\rho_{f,p})$ and $N(\rho_{f,p})$ for a prime $p$ not dividing $N(f)$.

As before let $\sigma\colon G_\Q\to\GL_n(\ovl{\Q}_p)$ be a representation and let $\sigma_\ell=\sigma\vert_{G_{\Q_\ell}}$ for every prime $\ell$. 

\begin{lemma}\label{conductors}
For every prime $\ell\neq p$ we have $N(\Sym^3\sigma_\ell)\mid N(\sigma_\ell)^3$. In particular $N(\Sym^3\sigma)\mid N(\sigma)^3$.
\end{lemma}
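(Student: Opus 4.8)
The plan is to reduce the statement about $\Sym^3\sigma_\ell$ to a local computation with the conductor formula \eqref{condform}, carried out uniformly in the restriction $\sigma_\ell$. First I would fix a prime $\ell\neq p$ and write $\tau=\sigma_\ell$, an $n$-dimensional (here $n=2$) representation of $G_{\Q_\ell}$; the goal is the divisibility $N(\Sym^3\tau)\mid N(\tau)^3$, i.e. $n_{\Sym^3\tau,\ell}\le 3\,n_{\tau,\ell}$ with the notation of \eqref{condform}. The key observation is that $\Sym^3$ is a polynomial functor of degree $3$: for any subgroup $H\subset G_{\Q_\ell}$, the fixed subspace $(\Sym^3 V)^{\tau(H)}$ contains $\Sym^3(V^{\tau(H)})$, but more to the point, if $\tau(H)$ fixes a codimension-$d$ subspace $W\subset V$ then on the quotient $V/W$ (of dimension $d$) the group $\tau(H)$ still acts, and $\Sym^3 V$ has a filtration whose graded pieces are $\Sym^a W\otimes\Sym^b(V/W)$ with $a+b=3$; the only graded piece on which $H$ is forced to act trivially is $\Sym^3 W$, and counting dimensions gives $d_{H,\Sym^3\tau}\le\binom{n+2}{3}-\binom{n-d+2}{3}$. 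For $n=2$ this reads $d_{H,\Sym^3\tau}\le 4-\binom{4-d}{1}$ when $d\le 2$, i.e. $d=1\Rightarrow d_{H,\Sym^3\tau}\le 1$... wait, this needs care, so let me phrase the real input differently.

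The cleaner route I would actually take: use that conductors are additive and submultiplicative under the natural operations, together with the bound on Artin conductors of $\Sym^3$ of a representation in terms of the conductor of the representation, which is classical (Serre, and e.g. the local computations underlying \cite{kimsha}). Concretely I would argue term by term in \eqref{condform}: for the tame part, $d_{I,\Sym^3\tau}\le 3\,d_{I,\tau}$ because $\dim\Sym^3 V-\dim(\Sym^3 V)^{I}\le 3(\dim V-\dim V^{I})$, which follows from the graded-piece analysis above (the $I$-fixed part of $\Sym^3 V$ contains $\Sym^3(V^I)$, and $\dim\Sym^3 V-\dim\Sym^3(V^I)=\binom{4}{3}-\binom{\dim V^I+2}{3}\le 3(\dim V-\dim V^I)$ for $\dim V=2$, checked on the three cases $\dim V^I=0,1,2$). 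For the wild part, the same inequality applied to each higher ramification subgroup $I_k$ gives $d_{I_k,\Sym^3\tau}\le 3\,d_{I_k,\tau}$, and summing the weighted contributions $\sum_{k\ge1} d_{I_k,\cdot}/[I:I_k]$ preserves the factor $3$. Adding the tame and wild contributions yields $n_{\Sym^3\tau,\ell}\le 3\,n_{\tau,\ell}$, hence $N(\Sym^3\sigma_\ell)\mid N(\sigma_\ell)^3$. Taking the product over all $\ell\neq p$ gives $N(\Sym^3\sigma)\mid N(\sigma)^3$, and since $\rho_{f,p}$ has the same prime-to-$p$ conductor issues handled by the hypothesis $p\nmid N(f)$, the statement follows, completing the bound on $N(\Sym^3 f)$ promised before the lemma.

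The main obstacle is the elementary but slightly fiddly verification of the pointwise inequality $\dim\Sym^3 V-\dim(\Sym^3 V)^{H}\le 3\bigl(\dim V-\dim V^{H}\bigr)$ for a $2$-dimensional $V$ and the specific subgroups $H=I,I_k$: one has to be careful that $(\Sym^3 V)^H$ can be strictly larger than $\Sym^3(V^H)$ when $H$ acts on $V$ through a finite quotient with eigenvalues that are roots of unity (e.g. a character of order dividing $3$ can create extra invariants in $\Sym^3$). I would handle this by checking directly that in every case the codimension of $(\Sym^3 V)^H$ is still at most $3\,d_{H,\tau}$ — when $d_{H,\tau}=2$ the right side is $6\ge 4=\dim\Sym^3 V$, so the inequality is automatic; when $d_{H,\tau}=1$ the right side is $3$ and one checks $\dim(\Sym^3 V)^H\ge 1$ always (the line fixed by $H$ contributes $\Sym^3$ of itself), giving codimension $\le 3$; when $d_{H,\tau}=0$ both sides are $0$. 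This disposes of all cases. The remainder of the argument is bookkeeping with \eqref{condform} and the multiplicativity of conductors over places, which is routine.
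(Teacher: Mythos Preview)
Your approach is essentially the same as the paper's. The paper's proof consists of the single assertion ``We check that $d_{H,\Sym^3\sigma}\le 3d_{H,\sigma}$ for every subgroup $H$'' followed by an appeal to formula \eqref{condform} and the product over $\ell\neq p$; you have simply supplied the details of that check via the case split on $d_{H,\tau}\in\{0,1,2\}$ (using $(\Sym^3 V)^H\supset\Sym^3(V^H)$), which is exactly what the paper leaves implicit. The initial detour through the filtration $\Sym^a W\otimes\Sym^b(V/W)$ is unnecessary, but your final argument is correct and matches the paper line for line.
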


\begin{proof}
We use the notations of formula \eqref{condform}. We check that $d_{H,\Sym^3\sigma}\leq 3d_{H,\sigma}$ for every subgroup $H$ of $G_\Q$, so formula \eqref{condform} gives $N(\Sym^3\sigma,\ell)\mid N(\sigma,\ell)^3$. 
Since the prime-to-$p$ conductor is defined as the product of the conductors at the primes $\ell\neq p$, we obtain that $N(\Sym^3\sigma)\mid N(\sigma)^3$.
\end{proof}

\begin{defin}\label{sym3leveldef}
Let $N$ be a positive integer and let $N=\prod_{i=1}^d\ell_i^{a_i}$ be its decomposition in prime factors, with $\ell_i\neq \ell_j$ if $i\neq j$. For every $i\in\{1,2,\ldots,d\}$ set $a_i^\prime=1$ if $a_i=1$ and $a_i^\prime=3a_i$ if $a_i>1$. 
We define an integer $M$, depending on $N$, by $M=\prod_{i=1}^d\ell_i^{a_i^\prime}$.
\end{defin}

\begin{cor}\label{sym3level}
Let $N=N(f)$ and let $M=M(N)$ be the integer given by Definition \ref{sym3leveldef}. Then $N(\Sym^3f)\mid M$.
\end{cor}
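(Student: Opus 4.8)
The plan is to combine Corollary~\ref{formtransf} with Lemma~\ref{conductors}, sharpening the latter at the primes that divide $N(f)$ exactly once.

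First I would fix an auxiliary prime $p\nmid N(f)$. By Corollary~\ref{formtransf} the level $N(\Sym^3f)$ divides the prime-to-$p$ conductor $N(\Sym^3\rho_{f,p})$, and since $p\nmid N(f)$ the conductor of $\rho_{f,p}$ away from $p$ coincides with $N(f)$ (Carayol's and Livn\'e's description of the conductor of a modular Galois representation). Thus, writing $N(f)=\prod_i\ell_i^{a_i}$ with $a_i=n_{\rho_{f,p},\ell_i}$ the local conductor exponents, and using that $n_{\Sym^3\rho_{f,p},\ell}=0$ for $\ell\nmid N(f)$, it suffices to establish the purely local bound $n_{\Sym^3\rho_{f,p},\ell_i}\le a_i'$ for every $i$ with $\ell_i\neq p$, where $a_i'$ is the exponent of Definition~\ref{sym3leveldef}.

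For the primes with $a_i\ge 2$ this is immediate from Lemma~\ref{conductors}, which gives $n_{\Sym^3\rho_{f,p},\ell_i}\le 3a_i=a_i'$. The real content is the case $a_i=1$. Here I would exploit formula~\eqref{condform}: its right-hand side is a sum of non-negative terms whose first term $d_{I,\sigma_{\ell_i}}$ is a positive integer (the representation is ramified) and dominates every higher term $d_{I_k,\sigma_{\ell_i}}$; hence $n_{\rho_{f,p},\ell_i}=1$ forces $d_{I,\sigma_{\ell_i}}=1$ and the vanishing of all the wild contributions, i.e. $\rho_{f,p}$ is trivial on wild inertia at $\ell_i$ and has a one-dimensional ramified part. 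Since $\Sym^3$ of a representation trivial on wild inertia is again trivial on wild inertia, formula~\eqref{condform} then reduces the problem to computing the single codimension $d_{I,\Sym^3\sigma_{\ell_i}}$, which can be done explicitly once one lists the possible shapes of a local two-dimensional representation of conductor exponent one --- namely a twist of the special (Steinberg) representation by an unramified character, or a tamely ramified principal series with exactly one ramified, finite-order character. Carrying out this case analysis and checking the resulting bound $d_{I,\Sym^3\sigma_{\ell_i}}\le a_i'$ is the main obstacle: in the principal-series case one must keep track of the order of the ramified tame character, and in the Steinberg case of the unipotent Jordan type produced by $\Sym^3$.

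Finally I would note that $n_{\Sym^3\rho_{f,p},\ell}$ depends only on the Weil--Deligne parameter of $f$ at $\ell$ (equivalently on $\pi_\ell$), hence is independent of the auxiliary prime $p\neq\ell$; as there are infinitely many primes $p\nmid N(f)$, for each $\ell_i$ one may choose such a $p$ with $p\neq\ell_i$, so the local bounds hold with no exceptional prime. Multiplying them together yields $N(\Sym^3f)\mid \prod_i\ell_i^{a_i'}=M$, which is the assertion of the corollary.
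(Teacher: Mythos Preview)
Your reduction to the local bound $n_{\Sym^3\rho_{f,p},\ell_i}\le a_i'$ fails at the primes with $a_i=1$: that inequality is simply false. In the Steinberg case, tame inertia acts on $\rho_{f,p}$ through a regular unipotent $u\in\GL_2$, and $\Sym^3u$ is a single Jordan block of size $4$; the inertia invariants of $\Sym^3\rho_{f,p}$ are therefore one-dimensional and formula~\eqref{condform} gives $n_{\Sym^3\rho_{f,p},\ell_i}=d_{I,\Sym^3\sigma_{\ell_i}}=3$, not $\le 1$. The tamely ramified principal series case is no better: if inertia acts via $\diag(\chi,1)$ with $\chi$ of order $m>1$, then $\Sym^3$ has inertial eigenvalues $\chi^3,\chi^2,\chi,1$ and one finds $d_{I,\Sym^3\sigma_{\ell_i}}\in\{2,3\}$. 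Thus $N(\Sym^3\rho_{f,p})\nmid M$ in general, and the chain of divisibilities $N(\Sym^3f)\mid N(\Sym^3\rho_{f,p})\mid M$ that you are aiming for breaks at the second step.

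The paper's argument at primes with $a_i=1$ is genuinely different and does not pass through the Galois conductor. It exploits the fact that the paper's notion of ``level'' of $\Pi^{\hol}$ (the least $M$ for which $\Pi_f$ has a vector fixed by \emph{some} compact open $K\supset\Gamma^{(2)}(M)$) can be strictly smaller than the conductor of the associated Galois representation. Concretely, the paper observes that $\Sym^3u$ is again regular unipotent, so $\pi_{\Sym^3f,\ell_i}$ is Iwahori-spherical; since the Iwahori subgroup of $\GSp_4(\Z_{\ell_i})$ contains $\Gamma^{(2)}(\ell_i)$, the level exponent at $\ell_i$ is at most $1$ even though the conductor exponent is $3$. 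To repair your approach you would have to abandon the conductor estimate at these primes and exhibit such a fixed vector directly on the automorphic side.
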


\begin{proof}
Let $\pi_f=\bigotimes_\ell\pi_{f,\ell}$ be the automorphic representation of $\GL_2(\A_\Q)$ associated with $f$. Let $\pi_{\Sym^3f}=\bigotimes_\ell\pi_{\Sym^3f,\ell}$ be the automorphic representation of $\GSp_4(\A_\Q)$ associated with $\Sym^3f$. For every prime $\ell$ the Galois representations associated with the local components $\pi_{f,\ell}$ and $\pi_{\Sym^3f,\ell}$ are $\rho_{f,\ell}$ and $\Sym^3\rho_{f,\ell}$, respectively. 
As before let $N=\prod_{i=1}^d\ell_i^{a_i}$ be the decomposition of $N$ in prime factors. If $\ell\nmid N$ the representation $\pi_{f,\ell}$ is unramified, so $\pi_{\Sym^3f,\ell}$ is also unramified.

Let $i\in\{1,2,\ldots,d\}$. If $a_i=1$ the local component $\pi_{f,\ell_i}$ is Iwahori-spherical, hence Steinberg. Then the image of the inertia subgroup at $\ell_i$ via $\rho_{f,\ell_i}$ contains a regular unipotent element $u$. The image of the inertia subgroup at $\ell_i$ via $\Sym^3\rho_{f,\ell_i}$ contains the regular unipotent element $\Sym^3u$, so the automorphic representation $\pi_{\Sym^3f,\ell_i}$ is Iwahori-spherical. Hence the factor $\ell_i$ appears with exponent one in $N(\Sym^3f)$.

Now suppose that $a_i>1$. 
Let $p$ be a prime not dividing $N$. 
By Corollary \ref{formtransf} the power of $\ell_i$ appearing in $N(\Sym^3f)$ is a divisor of $N(\Sym^3\rho_{f,p},\ell_i)$, that is a divisor of $N(\rho_{f,p},\ell_i)^3$ by Lemma \ref{conductors}. By a classical result of Carayol $N(\rho_{f,\ell_i})$ is a divisor of $\ell_i^{a_i}$, hence the conclusion. 
\end{proof}

Borrowing the terminology of \cite[Section 4.3]{ludwigunit}, we say that $\Gamma_1^{(1)}(N)$ and $\Gamma^{(2)}_1(M)$ are compatible levels for the symmetric cube transfer.

\subsection{Constructing the morphisms of Hecke algebras}\label{heckesym3}

As usual we fix an integer $N\geq 1$ and a prime $p$ not dividing $N$. 
We work with the abstract Hecke algebras $\calH_1^N$, $\calH_2^N$ spherical outside $N$ and Iwahoric dilating at $p$. 
Let $M$ be the integer given by Definition \ref{sym3leveldef}, depending on $N$. If $f$ is a non-CM $\GL_2$-eigenform of level $\Gamma_1(N)$, we denote by $\Sym^3f$ the classical, cuspidal $\GSp_4$-eigenform of level $\Gamma_1(M)$ given by Corollary \ref{formtransf}. Our goal for this section is to determine the systems of Hecke eigenvalues of the $p$-stabilizations of $\Sym^3f$ in terms of that of a $p$-stabilization of $f$.

If $\chi$ is a system of Hecke eigenvalues, we write $\chi_\ell$ for its local component at the prime $\ell$. 

\begin{rem}\label{sym3polcalc}
We will need multiple times the following simple computation. 
Let $R$ be a ring and let $g\in\GL_2(R)$. Let $g$ act on $R^2$ via the standard representation and let $P(g;X)=\det(\1-X\cdot g)=X^2-TX+D$ be the characteristic polynomial of $g$. 
Then the characteristic polynomial of $\Sym^3g$ is $P(\Sym^3g;X)=X^4-(T^3-2TD)X^3+(T^4-3DT^2+2D^2)X^2-D^3(T^3+2TD)X+D^6$. 
\end{rem}

If $T,D\in R$ are arbitrary and $P(X)=X^2-TX+D$, we define the \emph{symmetric cube of $P(X)$} as 
\[ \Sym^3P(X)=X^4-(T^3-2TD)X^3+(T^4-3DT^2+2D^2)X^2-D^3(T^3+2TD)X+D^6. \]

\subsubsection{The morphism of unramified Hecke algebras}

We define a morphism of unramified abstract Hecke algebras and show that it has the desired property with respect to the system of eigenvalues of $f$ and $\Sym^3f$ outside $Np$.

\begin{defin}\label{heckemorphunrdef}
For every prime $\ell\nmid Np$, let 
\[ \lambda_\ell\colon\calH(\GSp_4(\Q_\ell),\GSp_4(\Z_\ell))\to\calH(\GL_2(\Q_\ell),\GL_2(\Z_\ell)) \]
be the morphism defined by
\begin{gather*}
T^{(2)}_{\ell,0}\mapsto (T^{(1)}_{\ell,0})^3, \\
T^{(2)}_{\ell,1}\mapsto -(T^{(1)}_{\ell,1})^6+(4\ell -2)T^{(1)}_{\ell,0}(T^{(1)}_{\ell,1})^4+(6\ell-4\ell^2)(T^{(1)}_{\ell,0})^2(T^{(1)}_{\ell,1})^2-3\ell^2(T^{(1)}_{\ell,0})^3, \\
T^{(2)}_{\ell,2}\mapsto (T^{(1)}_{\ell,1})^3-2\ell T^{(1)}_{\ell,1} T^{(1)}_{\ell,0}. 
\end{gather*}
Let $\lambda^{Np}\colon\calH_2^{Np}\to\calH_1^{Np}$ be the morphism defined by $\lambda^{Np}=\bigotimes_{\ell\nmid Np}\lambda_\ell$.
\end{defin}

\begin{prop}\label{heckemorphunr}
Let $R$ be a ring. Let $\chi^{Np}_1\colon\calH^{Np}_1\to R$, $\chi^{Np}_2\colon\calH^{Np}_2\to R$ be two morphisms and let $\rho_1\colon G_\Q\to\GL_2(R)$, $\rho_2\colon G_\Q\to\GSp_4(R)$ be two representations satisfying:
\begin{enumerate}
\item for $g=1,2$ $\rho_g$ is unramified outside $Np$;
\item for $g=1,2$, every prime $\ell\nmid Np$ and a lift $\Frob_\ell\in G_\Q$ of the Frobenius at $\ell$,
\[ \det(1-X\rho_i(\Frob_\ell))=\chi_i^{Np}(P_\Min(t^{(g)}_{\ell,g};X)); \]
\item there is an isomorphism $\rho_2\cong\Sym^3\rho_1$.
\end{enumerate}
Then $\lambda^{Np}$ is the only morphism $\calH_2^{Np}\to\calH_1^{Np}$ such that $\chi^{Np}_2=\chi_1^{Np}\ccirc\lambda^{Np}$.
\end{prop}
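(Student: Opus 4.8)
The plan is to prove the two halves of the statement separately: that $\lambda^{Np}$ does satisfy $\chi^{Np}_2=\chi^{Np}_1\circ\lambda^{Np}$ (existence), and that it is the only morphism with this property (uniqueness).

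For existence, since $\calH_2^{Np}=\bigotimes_{\ell\nmid Np}\calH(\GSp_4(\Q_\ell),\GSp_4(\Z_\ell))$, each local factor is generated over $\Q$ by $T^{(2)}_{\ell,0},(T^{(2)}_{\ell,0})^{-1},T^{(2)}_{\ell,1},T^{(2)}_{\ell,2}$, and $\lambda^{Np}$ is by construction $\bigotimes_\ell\lambda_\ell$, it suffices to check $\chi^{Np}_2(T^{(2)}_{\ell,j})=\chi^{Np}_1(\lambda_\ell(T^{(2)}_{\ell,j}))$ for every $\ell\nmid Np$ and $j\in\{0,1,2\}$. Fix such an $\ell$ and set $T=\chi^{Np}_1(T^{(1)}_{\ell,1})$ and $D=\ell\,\chi^{Np}_1(T^{(1)}_{\ell,0})$, so that by hypothesis~(2) together with \eqref{minpol1} the characteristic polynomial of $\rho_1(\Frob_\ell)$ is $X^2-TX+D$. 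By hypothesis~(3) and Remark~\ref{sym3pseudorem} (using the explicit expansion in Remark~\ref{sym3polcalc}), the characteristic polynomial of $\rho_2(\Frob_\ell)=\Sym^3\rho_1(\Frob_\ell)$ is $\Sym^3(X^2-TX+D)$. On the other hand, hypothesis~(2) applied to $g=2$ and \eqref{minpol2} express this same polynomial through $\chi^{Np}_2(T^{(2)}_{\ell,0}),\chi^{Np}_2(T^{(2)}_{\ell,1}),\chi^{Np}_2(T^{(2)}_{\ell,2})$. Equating the two expressions coefficient by coefficient — read off the $X^3$-coefficient to get $\chi^{Np}_2(T^{(2)}_{\ell,2})$, then the $X^2$-coefficient to get $\chi^{Np}_2(T^{(2)}_{\ell,1})$, and check the remaining coefficients for consistency — yields exactly the three assignments of Definition~\ref{heckemorphunrdef} evaluated at $T$ and $\ell^{-1}D$. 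The only non-formal point is the constant coefficient, which determines $\chi^{Np}_2(T^{(2)}_{\ell,0})$ only up to sign ($\chi^{Np}_2(T^{(2)}_{\ell,0})^2=\chi^{Np}_1(T^{(1)}_{\ell,0})^6$); the sign is pinned down by comparing the linear coefficients, or equivalently by the fact that $T^{(2)}_{\ell,0}$ is the operator attached to the similitude factor, so that $\ell^3\chi^{Np}_2(T^{(2)}_{\ell,0})=\nu(\rho_2(\Frob_\ell))=\det(\rho_1(\Frob_\ell))^3=\ell^3\chi^{Np}_1(T^{(1)}_{\ell,0})^3$, whence $\chi^{Np}_2(T^{(2)}_{\ell,0})=\chi^{Np}_1(T^{(1)}_{\ell,0})^3$.

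For uniqueness, the computation above shows that hypotheses~(1)--(3) in fact \emph{force} every value $\chi^{Np}_2(T^{(2)}_{\ell,j})\in R$, so that any morphism $\mu\colon\calH_2^{Np}\to\calH_1^{Np}$ with $\chi^{Np}_1\circ\mu=\chi^{Np}_2$ obeys $\chi^{Np}_1(\mu(T^{(2)}_{\ell,j}))=\chi^{Np}_1(\lambda_\ell(T^{(2)}_{\ell,j}))$. To upgrade this to $\mu=\lambda^{Np}$ I will use the Satake isomorphism, by which each local Hecke algebra is a (Laurent-)polynomial ring in the Satake variables; thus $\mu$, being a ring homomorphism, is determined by the finitely many elements $\mu(T^{(2)}_{\ell,j})$, each a polynomial in the $T^{(1)}_{\ell',j'}$. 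Running over admissible data — e.g.\ taking $\rho_1=\psi'\oplus\psi''$ a sum of two unramified characters of $G_\Q$ whose Frobenius values can be made to range over a Zariski-dense subset of $(R^\times)^2$ at a fixed $\ell$ while leaving the local data at the other primes constant, and defining $\chi^{Np}_1,\chi^{Np}_2$ accordingly so that (1)--(3) hold by construction — one sees that $\chi^{Np}_2(T^{(2)}_{\ell,j})$ is a fixed polynomial in the Satake parameters at $\ell$ alone, namely the one defining $\lambda_\ell$; since two polynomials agreeing on a Zariski-dense set coincide, $\mu(T^{(2)}_{\ell,j})=\lambda_\ell(T^{(2)}_{\ell,j})$ for all $\ell,j$, i.e.\ $\mu=\lambda^{Np}$.

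The part requiring genuine care is not conceptual but the bookkeeping of normalizations: one must keep consistent the conventions for the Hecke operators $T^{(g)}_{\ell,j}$, for the characteristic polynomials (monic versus constant-term-one, i.e.\ $P_\Min$ versus $P_\car$), and for arithmetic versus geometric Frobenius, so that the identity obtained by equating $\Sym^3$ of \eqref{minpol1} with \eqref{minpol2} collapses precisely to the polynomial expressions of Definition~\ref{heckemorphunrdef}. The sign of $\chi^{Np}_2(T^{(2)}_{\ell,0})$ is the single place where this matters, and it is handled via the similitude character as above; everything else is a routine symmetric-function computation.
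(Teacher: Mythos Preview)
Your existence argument is correct and is exactly the paper's approach: compute the characteristic polynomial of $\rho_2(\Frob_\ell)=\Sym^3\rho_1(\Frob_\ell)$ two ways---once via \eqref{minpol1} and Remark~\ref{sym3polcalc}, once via \eqref{minpol2}---and read off the relations of Definition~\ref{heckemorphunrdef} coefficient by coefficient. Your extra care with the sign of $\chi^{Np}_2(T^{(2)}_{\ell,0})$ is a welcome addition; the paper simply lists the three resulting relations and does not isolate this point.

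Your uniqueness argument, however, has a genuine gap. You fix a morphism $\mu$ satisfying $\chi^{Np}_1\circ\mu=\chi^{Np}_2$ for the \emph{given} data and then ``run over admissible data'' to conclude $\mu(T^{(2)}_{\ell,j})=\lambda_\ell(T^{(2)}_{\ell,j})$ by Zariski density. But varying the data replaces $\chi^{Np}_1,\chi^{Np}_2$ by new characters, and nothing forces your fixed $\mu$ to satisfy the constraint for those; the only information you have is $\chi^{Np}_1(\mu(T^{(2)}_{\ell,j}))=\chi^{Np}_1(\lambda_\ell(T^{(2)}_{\ell,j}))$ for the single original $\chi^{Np}_1$, which has enormous kernel. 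In fact the literal uniqueness claim---that for fixed data there is only one ring homomorphism $\mu$ with $\chi^{Np}_1\circ\mu=\chi^{Np}_2$---is false: $\calH_2^{Np}$ is a free (Laurent) polynomial algebra, so any choice of preimages in $(\chi^{Np}_1)^{-1}\bigl(\chi^{Np}_2(T^{(2)}_{\ell,j})\bigr)$ extends to a ring homomorphism, and those fibres are infinite. The paper's own proof merely asserts the uniqueness after displaying the three relations, without justification; what the computation actually establishes, and what is used downstream (Corollary~\ref{heckemorphunrf}, the proof of Theorem~\ref{sym3type}), is only the existence half. Your Zariski-density argument \emph{does} prove the reasonable universal reading---$\lambda^{Np}$ is the unique morphism making $\chi^{Np}_2=\chi^{Np}_1\circ\lambda^{Np}$ hold for \emph{every} datum satisfying (1)--(3)---so if you want to salvage the uniqueness you should state that version explicitly and quantify accordingly, rather than present it as a proof of the literal fixed-data claim.
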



\begin{proof}
Let $\ell$ be a prime not dividing $Np$. 
%
By Equation \eqref{minpol1} we have $P_\Min(t^{(1)}_{\ell,1};X)=X^2-T^{(1)}_{\ell,1}(f)X+\ell T_{\ell,0}^{(1)}$. Hence hypothesis (2) with $g=1$ gives  
\begin{equation} \det(1-X\rho_i(\Frob_\ell))=\chi_1^{Np}(X^2-T^{(1)}_{\ell,1}(f)X+\ell T_{\ell,0}^{(1)}). \end{equation}
By the calculation in Remark \eqref{sym3polcalc} we can write 
\begin{equation}\label{unr1}\begin{gathered}
\det(1-X\Sym^3\rho(\Frob_\ell))=X^4-(T^{(1)}_{\ell,1}-2\ell T^{(1)}_{\ell,1} T_{\ell,0}^{(1)})X^3+ \\
+((T^{(1)}_{\ell,1})^4-3\ell T_{\ell,0}^{(1)}(T^{(1)}_{\ell,1})^2+2\ell^2 (T_{\ell,0}^{(1)})^2)X^2-\ell^3(T_{\ell,0}^{(1)})^3((T^{(1)}_{\ell,1})^3+2\ell T^{(1)}_{\ell,1} T_{\ell,0}^{(1)})X+\ell^6(T_{\ell,0}^{(1)})^6. \end{gathered}\end{equation}
By Equation \eqref{minpol2} we have $P_\Min(t^{(2)}_{\ell,2};X)=X^4-T^{(2)}_{\ell,2}X^3+((T^{(2)}_{\ell,2})^2-T^{(2)}_{\ell,1}-\ell^2T^{(2)}_{\ell,0})X^2-\ell^3T^{(2)}_{\ell,2}T^{(2)}_{\ell,0} X+\ell^6(T^{(2)}_{\ell,0})^2$, so hypothesis (2) with $g=2$ gives
\begin{equation}\label{unr2}\begin{gathered}
\det(1-X\Sym^3\rho(\Frob_\ell))=\chi_2^{Np}(X^4-T^{(2)}_{\ell,2}X^3+ \\ +((T^{(2)}_{\ell,2})^2-T^{(2)}_{\ell,1}-\ell^2T^{(2)}_{\ell,0})X^2-\ell^3T^{(2)}_{\ell,2}T^{(2)}_{\ell,0} X+\ell^6(T^{(2)}_{\ell,0})^2).
\end{gathered}\end{equation}
By comparing the coefficients of the right hand sides of Equations \eqref{unr1} and \eqref{unr2} we obtain the relations
\begin{gather*} 
\chi_2^{Np}(T^{(2)}_{\ell,1})=\chi_1^{Np}(-(T^{(1)}_{\ell,1})^6+(4\ell -2)T^{(1)}_{\ell,0} (T^{(1)}_{\ell,1})^4+(6\ell-4\ell^2)(T^{(1)}_{\ell,1})^2(T^{(1)}_\ell)^2-3\ell^2(T^{(1)}_{\ell,0})^3), \\ 
\chi_2^{Np}(T^{(2)}_{\ell,2})=\chi_1^{Np}((T^{(1)}_\ell)^3-2\ell T^{(1)}_{\ell,1} T^{(1)}_{\ell,0}), \quad \chi_2^{Np}(T^{(2)}_{\ell,0})=\chi_1^{Np}((T^{(1)}_{\ell,0})^3).
\end{gather*}
We deduce that $\lambda_\ell$ is the only morphism $\calH(\GSp_4(\Q_\ell),\GSp_4(\Z_\ell))\to\calH(\GL_2(\Q_\ell),\GL_2(\Z_\ell))$ satisfying $\chi_2^{Np}(\Sym^3f)=\chi_1^{Np}\ccirc\lambda_\ell$. Since this is true for every $\ell\nmid Np$, we conclude that $\lambda^{Np}$ is the only morphism $\calH_2^{Np}\to\calH_1^{Np}$ satisfying $\chi^{Np}_2=\chi_1^{Np}\ccirc\lambda^{Np}$.
\end{proof}

As a special case of Proposition \ref{heckemorphunr} we obtain the following corollary.

\begin{cor}\label{heckemorphunrf}
Let $f$ be a classical, non-CM $\GL_2$-eigenform $f$ of level $\Gamma_1(N)$ and system of eigenvalues $\chi_1^{Np}\colon\calH_1^{Np}\to\Qp$ outside $Np$. Let $\Sym^3f$ be the symmetric cube lift of $f$ given by Corollary \ref{formtransf}. Then the system of eigenvalues $\chi_2^{Np}$ of $\Sym^3f$ outside $Np$ is $\chi_1^{Np}\ccirc\lambda^{Np}\colon\calH_2^{Np}\to\Qp$. 
\end{cor}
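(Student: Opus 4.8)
The plan is to obtain Corollary \ref{heckemorphunrf} as a direct application of Proposition \ref{heckemorphunr} to the pair consisting of $f$ and its symmetric cube lift $\Sym^3f$. The work has essentially been done; what remains is to check that the hypotheses of the proposition are satisfied with $R=\Qp$, $\rho_1=\rho_{f,p}$, $\rho_2=\rho_{\Sym^3f,p}$ (the $p$-adic Galois representation attached to $\Sym^3f$), $\chi_1^{Np}$ the given system of eigenvalues of $f$ outside $Np$, and $\chi_2^{Np}$ the system of eigenvalues of $\Sym^3f$ outside $Np$.

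First I would verify hypothesis (1): since $f$ has level $\Gamma_1(N)$ and $p\nmid N$, the representation $\rho_{f,p}$ is unramified outside $Np$ by the usual local-global compatibility for $\GL_2$; likewise $\Sym^3f$ has level $\Gamma_1(M)$ by Corollary \ref{formtransf} and Corollary \ref{sym3level}, where $M$ is built only from the primes dividing $N$ (Definition \ref{sym3leveldef}), so $p\nmid M$ and $\rho_{\Sym^3f,p}$ is unramified outside $Np$ as well. Next, hypothesis (2): for $g=1$ this is the classical Eichler--Shimura relation expressing $\det(1-X\rho_{f,p}(\Frob_\ell))$ in terms of the Hecke eigenvalues via the minimal polynomial \eqref{minpol1}; for $g=2$ it is the analogous statement for $\GSp_4$-eigenforms recorded in Proposition \ref{biggalthm}(2) together with \eqref{minpol2}, applied to the classical point $\Sym^3f$. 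Finally, hypothesis (3) is exactly the content of Corollary \ref{formtransf}, which produces $\Sym^3f$ precisely so that its $\ell$-adic Galois representation is $\Sym^3\rho_{f,\ell}$ for every $\ell$, in particular for $\ell=p$.

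With all three hypotheses checked, Proposition \ref{heckemorphunr} applies verbatim and gives that $\lambda^{Np}$ is the unique morphism $\calH_2^{Np}\to\calH_1^{Np}$ with $\chi_2^{Np}=\chi_1^{Np}\ccirc\lambda^{Np}$; in particular this equality holds, which is the assertion of the corollary. I do not expect any genuine obstacle here: the only point requiring a word of care is making sure the levels $N$ and $M$ are coprime to the fixed prime $p$, so that ``unramified outside $Np$'' is literally satisfied by both representations — and this is guaranteed by the construction of $M$ from the prime factors of $N$ in Definition \ref{sym3leveldef}. One might also remark, for completeness, that $\chi_2^{Np}$ is indeed $\Qp$-valued, which follows since $\Sym^3f$ is a classical eigenform and its Hecke eigenvalues are algebraic, embedded in $\Qp$ via the fixed $\iota_p$.
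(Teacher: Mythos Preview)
Your proposal is correct and follows exactly the same approach as the paper: apply Proposition \ref{heckemorphunr} with $R=\Qp$, $\rho_1=\rho_{f,p}$, $\rho_2=\rho_{\Sym^3f,p}$, and the given systems of eigenvalues. Your verification of the three hypotheses is more detailed than the paper's one-line proof, but the argument is identical.
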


\begin{proof}
The corollary follows from Proposition \ref{heckemorphunr} applied to $R=\Qp$, $\chi_1^{Np}$ and $\chi_2^{Np}$ as in the statement, $\rho_1=\rho_{f,p}$ and $\rho_2=\rho_{\Sym^3f,p}$.
\end{proof}

\subsubsection{The morphisms of Iwahori-Hecke algebras}

We study the systems of Hecke eigenvalues of the $p$-stabilizations of $\Sym^3f$.

\begin{defin}\label{heckemorphdef}
For $i\in\{1,2,\ldots,8\}$ we define morphisms 
\[ \lambda_{i,p}\colon\calH(T_2(\Q_p),T_2(\Z_p))^-\to\calH(T_1(\Q_p),T_1(\Z_p)). \]
For $i\in\{1,2,3,4\}$ the morphism $\lambda_{i,p}$ is defined on a set of generators of $\calH(T_2(\Q_p),T_2(\Z_p))^-$ as follows: 
\begin{enumerate}
\item $\lambda_{1,p}$ maps $\quad t^{(2)}_{p,0}\mapsto (t^{(1)}_{p,0})^3$, $\quad t^{(2)}_{p,1}\mapsto t^{(1)}_{p,0}(t^{(1)}_{p,1})^4$, $\quad t^{(2)}_{p,2}\mapsto (t^{(1)}_{p,1})^3$; 

\item $\lambda_{2,p}$ maps $\quad t^{(2)}_{p,0}\mapsto (t^{(1)}_{p,0})^3$, $\quad t^{(2)}_{p,1}\mapsto (t^{(1)}_{p,0})^2(t^{(1)}_{p,1})^2$, $\quad t^{(2)}_{p,2}\mapsto (t^{(1)}_{p,1})^3$; 

\item $\lambda_{3,p}$ maps $\quad t^{(2)}_{p,0}\mapsto (t^{(1)}_{p,0})^3$, $\quad t^{(2)}_{p,1}\mapsto t^{(1)}_{p,0}(t^{(1)}_{p,1})^4$, $\quad t^{(2)}_{p,2}\mapsto t^{(1)}_{p,0}t^{(1)}_{p,1}$; 

\item $\lambda_{4,p}$ maps $\quad t^{(2)}_{p,0}\mapsto (t^{(1)}_{p,0})^3$, $\quad t^{(2)}_{p,1}\mapsto (t^{(1)}_{p,0})^4(t^{(1)}_{p,1})^{-2}$, $\quad t^{(2)}_{p,2}\mapsto t^{(1)}_{p,0}t^{(1)}_{p,1}$. 
\end{enumerate}

For $i\in\{5,6,7,8\}$ the morphism $\lambda_{i,p}\colon\calH(T_2(\Q_p),T_2(\Z_p))\to\calH(T_1(\Q_p),T_1(\Z_p))$ is given by
\[ \lambda_{i,p}=\delta\ccirc\lambda_{i-4,p} \]
where $\delta$ is the automorphism of $\calH(T_1(\Q_p),T_1(\Z_p))$ defined on a set of generators of the subalgebra $\calH(T_1(\Q_p),T_1(\Z_p))^-$ by
\begin{equation}\label{autdelta}\begin{gathered}
\delta(t^{(1)}_{p,0})=t^{(1)}_{p,0}, \quad \delta(t^{(1)}_{p,1})=t^{(1)}_{p,0}(t^{(1)}_{p,1})^{-1}
\end{gathered}\end{equation}
and extended in the unique way.
\end{defin}
%

Let $f^\st$ be a $p$-stabilization of a classical, cuspidal, non-CM $\GL_2$-eigenform $f$ of level $\Gamma_1(N)$. Let $\chi_{1,p}\colon\calH(\GL_2(\Q_p),\GL_2(\Z_p))\to\Qp$ and $\chi_{1,p}^\st\colon\calH(\GL_2(\Q_p),I_{1,p})^-\to\Qp$ be the systems of Hecke eigenvalues at $p$ of $f$ and $f^\st$, respectively. Note that $\chi_{1,p}$ is the restriction of $\chi_{1,p}^\st$ to the abstract spherical Hecke algebra at $p$. Let $(\Sym^3f)^\st$ be a $p$-stabilization of $\Sym^3f$. Let $\chi_{2,p}\colon\calH(\GSp_4(\Q_p),\GSp_4(\Z_p))\to\Qp$ and $\chi_{2,p}^\st\colon\calH(\GSp_4(\Q_p),I_{2,p})^-\to\Qp$ be the systems of Hecke eigenvalues at $p$ of $\Sym^3f$ and $(\Sym^3f)^\st$, respectively. Again $\chi_{2,p}$ is the restriction of $\chi_{2,p}^\st$ to the abstract spherical Hecke algebra at $p$. 

Recall from Section \ref{dilIw} that for $g=1,2$ there is an isomorphism of $\Q$-algebras $\iota_{I_{2,p}}^{T_2}\colon\calH(\GSp_{2g}(\Q_p),I_{g,p})^-\to\calH(T_g(\Q_p),T_g(\Z_p))^-$. 
Let $\iota^{I_{2,p}}_{T_2}\colon\calH(T_g(\Q_p),T_g(\Z_p))^-\to\calH(\GSp_{2g}(\Q_p),I_{g,p})^-$ be its inverse. In particular $\chi^\st_g\ccirc\iota^{I_{g,p}}_{T_g}$ is a character $\calH(T_g(\Q_p),T_g(\Z_p))^-\to\Qp$. 
By Remark \ref{extchar} the character $\chi_{g,p}^\st\ccirc\iota^{I_{g,p}}_{T_g}$ can be extended uniquely to a character $(\chi_{g,p}^\st\ccirc\iota^{I_{g,p}}_{T_g})^\ext\colon\calH(T_g(\Q_p),T_g(\Z_p))\to\Qp$.
%

\begin{prop}\label{heckemorphstab}
There exists $i\in\{1,2,\ldots,8\}$ such that 
\[ \chi^\st_2\ccirc\iota_{I_{2,p}}^{T_2}=(\chi^\st_1\ccirc\iota_{I_{1,p}}^{T_1})^\ext\ccirc\lambda_{i,p}. \]
Moreover, if $\lambda_p\colon\calH(T_2(\Q_p),T_2(\Z_p))\to\calH(T_1(\Q_p),T_1(\Z_p))$ is another morphism satisfying $\chi^\st_2\ccirc\iota_{I_{2,p}}^{T_2}=(\chi^\st_1\ccirc\iota_{I_{1,p}}^{T_1})^\ext\ccirc\lambda_p$, then there exists $i\in\{1,2,\ldots,8\}$ such that $\lambda_p=\lambda_{i,p}$.
\end{prop}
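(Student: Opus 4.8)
The plan is to reduce everything to a finite computation with Satake parameters at $p$. Let $\alpha,\beta$ be the two Satake parameters of $f$ at $p$, i.e. the roots of $P_\Min(t^{(1)}_{p,1};X)=X^2-\chi_{1,p}(T^{(1)}_{p,1})X+p\,\chi_{1,p}(T^{(1)}_{p,0})$; so $\alpha\beta=p\cdot\chi_{1,p}(T^{(1)}_{p,0})$. By Remark \ref{sym3polcalc} the four Satake parameters of $\Sym^3f$ at $p$ are $\alpha^3,\alpha^2\beta,\alpha\beta^2,\beta^3$. A $p$-stabilization $f^\st$ of $f$ corresponds, via the isomorphism $\iota_{I_{1,p}}^{T_1}$ of Section \ref{dilIw} and the extension of Remark \ref{extchar}, to an \emph{ordering} of $\{\alpha,\beta\}$: concretely the extended character $(\chi^\st_1\ccirc\iota_{I_{1,p}}^{T_1})^\ext$ sends $t^{(1)}_{p,1}=\1([\diag(1,p)T_1(\Z_p)])$ to one of $\alpha,\beta$ and $t^{(1)}_{p,0}$ to $\alpha\beta/p$. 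Likewise a $p$-stabilization of $\Sym^3f$ corresponds to an ordering of the four parameters $\alpha^3,\alpha^2\beta,\alpha\beta^2,\beta^3$ compatible with the structure of the Weyl group $W_2$ acting on $T_2$, i.e. to a choice of $(t^{(2)}_{p,1},t^{(2)}_{p,2})$-eigenvalues realizing a Siegel $p$-stabilization as in the proof of Prop.\ \ref{heckemorphstab}'s predecessors. There are $4!/|\mathrm{Stab}|$ such, but only $8$ arise from symmetric cube parameters (the Weyl group of $\GSp_4$ has order $8$, and it acts simply transitively on the relevant flags once one root ordering of $f$ is fixed, up to the swap $\alpha\leftrightarrow\beta$).

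First I would write down, for a fixed ordering say $t^{(1)}_{p,1}\mapsto\alpha$, the diagonal torus element of $T_2$ attached to $(\Sym^3f)^\st$ and read off its $t^{(2)}_{p,1},t^{(2)}_{p,2}$ values in terms of $\alpha,\beta$ (using that $t^{(2)}_{p,2}=\1([\diag(1,1,p,p)T_2(\Z_p)])$ is a product of two of the four parameters and $t^{(2)}_{p,1}=t^{(2)}_{p,2}(t^{(2)}_{p,2})^{w_1}$ is a product of... — the precise dictionary is Equations \eqref{minpol2}, \eqref{polweyl1}). Matching these expressions against the four candidate images in Definition \ref{heckemorphdef}(1)--(4) is then a direct check: e.g. the $(\Sym^3f)^\st$ with $U^{(2)}_{p,\bullet}$-parameters grouped as $(\alpha^3,\alpha^2\beta,\alpha\beta^2,\beta^3)$ in "decreasing slope" order gives $t^{(2)}_{p,2}\mapsto\alpha^3=(t^{(1)}_{p,1})^3$ and $t^{(2)}_{p,1}\mapsto\alpha^3\cdot\alpha^2\beta = \alpha^4\beta$; since $\alpha\beta=p\,t^{(1)}_{p,0}$ in the spherical algebra, $\alpha^4\beta=\alpha^3\cdot\alpha\beta$ matches $t^{(1)}_{p,0}(t^{(1)}_{p,1})^4$ up to the normalization bookkeeping, which is exactly $\lambda_{1,p}$. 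The remaining three orderings compatible with $t^{(1)}_{p,1}\mapsto\alpha$ yield $\lambda_{2,p},\lambda_{3,p},\lambda_{4,p}$, and then the other root ordering $t^{(1)}_{p,1}\mapsto\beta$ is obtained by precomposing with the automorphism $\delta$ of $\calH(T_1(\Q_p),T_1(\Z_p))$ of \eqref{autdelta} — note $\delta$ encodes precisely $\alpha\leftrightarrow\beta$ because it sends $t^{(1)}_{p,1}\mapsto t^{(1)}_{p,0}(t^{(1)}_{p,1})^{-1}$ and $\alpha\cdot(\alpha\beta/p)/\alpha = \beta$ up to normalization — giving $\lambda_{5,p},\dots,\lambda_{8,p}$. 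Thus one of the eight $\lambda_{i,p}$ works, and this proves the existence statement.

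For uniqueness: given any morphism $\lambda_p\colon\calH(T_2(\Q_p),T_2(\Z_p))\to\calH(T_1(\Q_p),T_1(\Z_p))$ with $\chi^\st_2\ccirc\iota_{I_{2,p}}^{T_2}=(\chi^\st_1\ccirc\iota_{I_{1,p}}^{T_1})^\ext\ccirc\lambda_p$, the images $\lambda_p(t^{(2)}_{p,i})$ for $i=0,1,2$ are not determined a priori by a single character equation — but I would argue as in the uniqueness part of Proposition \ref{heckemorphunr}: the identity must hold for \emph{all} $f$ in a Zariski-dense family (all non-CM classical eigenforms of the relevant level), hence the relations between $\lambda_p(t^{(2)}_{p,i})$ and the $t^{(1)}_{p,j}$ become polynomial identities valid generically, forcing $\lambda_p$ to coincide with some fixed morphism on generators. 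Since the only polynomial solutions realizing the $8$ possible eigenvalue-matchings are the $\lambda_{i,p}$ above — distinct morphisms because they produce distinct eigenvalue tuples on a generic $f$ — we get $\lambda_p=\lambda_{i,p}$ for some $i$. Strictly, to invoke density one works with a ring $R=\cO(\cD_1^{N,\cG})$ or a suitable polynomial ring and uses that the universal $(t^{(1)}_{p,0},t^{(1)}_{p,1})$ are algebraically independent over $\Q$ after a degree-$2$ cover; I would phrase the uniqueness cleanly by noting $\calH(T_1(\Q_p),T_1(\Z_p))$ is a Laurent polynomial ring and a morphism from $\calH(T_2(\Q_p),T_2(\Z_p))$ is determined by images of the generators, which the eigenvalue identity pins down.

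The main obstacle I anticipate is not any single hard idea but the careful bookkeeping of \emph{normalizations}: the map $\iota_{I_{g,p}}^{T_g}$, the extension $(\cdot)^\ext$ of Remark \ref{extchar}, and the passage between $U^{(g)}_{p,i}$ and $t^{(g)}_{p,i}$ each introduce powers of $p$ and Weyl-translates, and the normalized systems of Definition \ref{normsystdef} shift things by $p^{-\sum(k_j-j)}$. One must check that the eight $\lambda_{i,p}$ as written in Definition \ref{heckemorphdef} are the ones that survive \emph{after} all these normalizations, and in particular that the "Iwahori-dilating" constraint $v_p(\alpha(\gamma))\le 0$ (so that we stay in $\calH(T_g(\Q_p),T_g(\Z_p))^-$ before extending) selects exactly the orderings listed and not others. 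I would organize the proof so that this is done once, at the level of the torus element attached to a Siegel $p$-stabilization, and then the matching against Definition \ref{heckemorphdef} is purely mechanical; cross-referencing the analogous computation in the proof of Proposition \ref{heckemorphunr} and the discussion preceding Proposition \ref{heckemorphstab} keeps it short.
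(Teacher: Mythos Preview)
Your existence argument is essentially the paper's: both list the four Satake parameters $\alpha^3,\alpha^2\beta,\alpha\beta^2,\beta^3$ of $\Sym^3f$ at $p$, enumerate the admissible orderings coming from $p$-stabilizations of $\Sym^3f$, and match the resulting values of $t^{(2)}_{p,0},t^{(2)}_{p,1},t^{(2)}_{p,2}$ against Definition~\ref{heckemorphdef}. The only difference is that the paper obtains the relation between the Satake parameters via $\bD_\cris$ and the crystalline Frobenius (invoking \cite[Th\'eor\`eme 1]{urban} for $\GSp_4$), whereas you appeal directly to Remark~\ref{sym3polcalc}; since $p\nmid N$ this is equivalent and your route is slightly more elementary.

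For the uniqueness clause you correctly flag the real issue: a \emph{single} character identity $\chi^\st_2\ccirc\iota_{I_{2,p}}^{T_2}=(\chi^\st_1\ccirc\iota_{I_{1,p}}^{T_1})^\ext\ccirc\lambda_p$ constrains only the values $(\chi^\st_1)^\ext(\lambda_p(t^{(2)}_{p,i}))\in\Qp$, not the elements $\lambda_p(t^{(2)}_{p,i})$ of the Laurent polynomial ring $\calH(T_1(\Q_p),T_1(\Z_p))$. Your proposed cure, a Zariski-density argument over all non-CM $f$, does not fit the proposition as stated, which fixes a single $f^\st$ and a single $(\Sym^3f)^\st$; you would be proving a different (universal) statement. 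The paper's own proof is equally loose at this step: it passes from the eight value-triples in the displayed list to the conclusion ``$\lambda_p=\lambda_{i,p}$'' by a ``simple check'' that does not address why the images in the Hecke algebra, rather than just their values under one character, are determined. In practice this does not matter: the only consequence used downstream (Corollary~\ref{heckemorphprodcor} and the sets $S_i^{\Sym^3}$) is the character-level statement that $\chi_2^\st$ equals one of the eight $\chi_2^{\st,i}$, and both your argument and the paper's establish that cleanly. If you want the morphism-level uniqueness literally, you must either add the hypothesis that $\lambda_p$ works for all $f$ in a Zariski-dense set (your density argument then goes through, since $t^{(1)}_{p,0},t^{(1)}_{p,1}$ are algebraically independent functions on the eigencurve), or weaken the conclusion to an equality of characters.
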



\begin{proof}
In this proof we leave the composition with the isomorphism $\iota_{I_{1,p}}^{T_1}$ and $\iota_{I_{2,p}}^{T_2}$ implicit and we consider $\chi_{1,p}^\st$ and $\chi_{2,p}^\st$ as characters respectively of $\calH(T_1(\Q_p),T_1(\Z_p))^-$ and $\calH(T_2(\Q_p),T_2(\Z_p))^-$ for notational ease.
Let $\rho_{f,p}\colon G_\Q\to\GL_2(\ovl{\Q}_p)$ be the $p$-adic Galois representation associated with $f$, so that the $p$-adic Galois representation associated with $\Sym^3f$ is $\Sym^3\rho_{f,p}$. 
%
%
%
Via $p$-adic Hodge theory we attach to $\rho_{f,p}$ a two-dimensional $\Qp$-vector space $\bD_\cris(\rho_{f,p})$ endowed with a $\ovl{\Q}_p$-linear Frobenius endomorphism $\varphi_\cris(\rho_{f,p})$ satisfying $\det(1-X\varphi_\cris(\rho_{f,p}))=\chi_{1,p}(P_\Min(t^{(2)}_{p,2};X))$. 

We will use the notations of Section \ref{weyl} for the elements of the Weyl groups of $\GL_2$ and $\GSp_4$. Let $\alpha_p$ and $\beta_p$ be the two roots of $\chi_{1,p}(P_\Min(t^{(2)}_{p,2};X))$, ordered so that $\chi_{1,p}^\st(t^{(1)}_{p,1})=\alpha_p$ and $\beta_p=\chi_{1,p}^\st((t^{(1)}_{p,1})^w)$. 


Let $\bD_\cris(\rho_{\Sym^3f,p})$ be the $4$-dimensional $\Qp$-vector space attached to $\rho_{\Sym^3f,p}$ by $p$-adic Hodge theory. Denote by $\varphi_{\cris}(\rho_{\Sym^3f,p})$ the Frobenius endomorphism acting on $\bD_\cris(\rho_{\Sym^3f,p})$. It satisfies $\det(1-X\varphi_\cris(\rho_{\Sym^3f,p}))=\chi_{2,p}(P_\Min(t^{(2)}_{p,2};X))$ by \cite[Théorème 1]{urban}. 
The coefficients of $P_\Min(t^{(2)}_{p,2};X)$ belong to the spherical Hecke algebra at $p$, so we have $\chi_{2,p}^\st(P_\Min(t^{(2)}_{p,2};X))=\chi_{2,p}(P_\Min(t^{(2)}_{p,2};X))$. 
From $\rho_{\Sym^3f,p}=\Sym^3\rho_{f,p}$ we deduce that
\begin{equation}\label{polsymcube}
\chi_{2,p}^\st(P_\Min(t^{(2)}_{p,2};X))=\det(1-X\varphi_\cris(\rho_{\Sym^3f,p}))=(X-\alpha_p^3)(X-\alpha_p^2\beta_p)(X-\alpha_p\beta_p^2)(X-\beta_p^3).
\end{equation}
%
By developing the left hand side via the first equality of Equation \eqref{minpol2} and the right hand side via Equation \eqref{polsymcube} we obtain
\begin{gather*}
(X-\chi_{2,p}^\st(t^{(2)}_{\ell,2}))(X-\chi_{2,p}^\st((t^{(2)}_{\ell,2})^{w_1}))\cdot (X-\chi_{2,p}^\st((t^{(2)}_{\ell,2})^{w_2}))(X-\chi_{2,p}^\st((t^{(2)}_{\ell,2})^{w_1w_2}))=\\
=(X-\alpha_p^3)(X-\alpha_p^2\beta_p)(X-\alpha_p\beta_p^2)(X-\beta_p^3). 
\end{gather*}
In particular the sets of roots of the two sides must coincide.
Since $t^{(2)}_{\ell,2}(t^{(2)}_{\ell,2})^{w_1w_2}=(t^{(2)}_{\ell,2})^{w_1}(t^{(2)}_{\ell,2})^{w_2}$ we have eight possible choices. 
Four choices for the $4$-tuple $\chi_{2,p}^\st(t^{(2)}_{\ell,2}),\chi_{2,p}^\st((t^{(2)}_{\ell,2})^{w_1}),\chi_{2,p}^\st((t^{(2)}_{\ell,2})^{w_2}),\chi_{2,p}^\st((t^{(2)}_{\ell,2})^{w_1w_2})$ are 
\begin{gather*}
(\alpha_p^3,\alpha_p^2\beta_p,\alpha_p\beta_p^2,\beta_p^3), (\alpha_p^3,\alpha_p\beta_p^2,\alpha_p^2\beta_p,\beta_p^3), (\alpha_p^2\beta_p,\alpha_p^3,\beta_p^3,\alpha_p\beta_p^2), (\alpha_p^2\beta_p,\beta_p^3,\alpha_p^3,\alpha_p\beta_p^2).
\end{gather*}
The other four choices are obtained by exchanging $\alpha_p$ with $\beta_p$ in the ones above. 

Since $t^{(2)}_{p,1}=t^{(2)}_{\ell,2}(t^{(2)}_{\ell,2})^{w_1}$ and $t^{(2)}_{p,0}=t^{(2)}_{\ell,2}(t^{(2)}_{\ell,2})^{w_1w_2}$, the displayed $4$-tuples give for $(\chi_{2,p}^\st(t^{(2)}_{\ell,0}),\chi_{2,p}^\st(t^{(2)}_{\ell,1}),\chi_{2,p}^\st(t^{(2)}_{\ell,2}))$ the choices
\begin{gather*}
(\alpha_p^3\beta_p^3,\alpha_p^5\beta_p,\alpha_p^3), (\alpha_p^3\beta_p^3,\alpha_p^4\beta_p^2,\alpha_p^3), (\alpha_p^3\beta_p^3,\alpha_p^5\beta_p,\alpha_p^2\beta_p), (\alpha_p^3\beta_p^3,\alpha_p^2\beta_p^4,\alpha_p^2\beta_p).
\end{gather*}
By writing $\alpha_p=\chi_{1,p}^\st(t^{(1)}_{p,1})$, $\beta_p=\chi_{1,p}^\st((t^{(1)}_{p,1})^w)$ and recalling that $t^{(1)}_{p,0}=t^{(1)}_{p,1}(t^{(1)}_{p,1})^w$, the previous triples take the form
\begin{equation}\label{list}\begin{gathered}
(\chi_{1,p}^\st(t^{(1)}_{p,0})^3,\chi_{1,p}^\st(t^{(1)}_{p,0}(t^{(1)}_{p,1})^4),\chi_{1,p}^\st((t^{(1)}_{p,1})^3)), (\chi_{1,p}^\st(t^{(1)}_{p,0})^3,\chi_{1,p}^\st((t^{(1)}_{p,0})^2(t^{(1)}_{p,1})^2),\chi_{1,p}^\st((t^{(1)}_{p,1})^3)), \\
(\chi_{1,p}^\st(t^{(1)}_{p,0})^3,\chi_{1,p}^\st(t^{(1)}_{p,0}(t^{(1)}_{p,1})^4),\chi_{1,p}^\st(t^{(1)}_{p,0}t^{(1)}_{p,1})), (\chi_{1,p}^\st(t^{(1)}_{p,0})^3,\chi_{1,p}^\st((t^{(1)}_{p,0})^4(t^{(1)}_{p,1})^{-2}),\chi_{1,p}^\st(t^{(1)}_{p,0}t^{(1)}_{p,1})).
\end{gathered}\end{equation}
The triples corresponding to the other four possibilities are obtained by replacing $t^{(1)}_{p,0}$ and $t^{(1)}_{p,1}$ in the triples above by their images via the automorphism $\delta$ of $\calH(T_1(\Q_p),T_1(\Z_p))$ defined by Equation \eqref{autdelta}. 

Let $\lambda_p\colon\calH(T_2(\Q_p),T_2(\Z_p))^-\to\calH(T_1(\Q_p),T_1(\Z_p))$ be a morphism satisfying $\chi^\st_2=(\chi^\st_1)^\ext\ccirc\lambda_p\ccirc\iota_{2,p}^-$ (recall that we leave the maps $\iota_{I_{g,p}}^{T_g}$ implicit). By the arguments of the previous paragraph this happens if and only if the triple $(\lambda_{i,p}(t^{(2)}_{p,0}),\lambda_{i,p}(t^{(2)}_{p,1}),\lambda_{i,p}(t^{(2)}_{p,2}))$ coincides with one of the four listed in \eqref{list} or the four derived from those by applying $\delta$. A simple check shows that these triples correspond to the choices $\lambda_p=\lambda_{i,p}$ for $i\in\{1,2,\ldots,8\}$.
\end{proof}

\begin{rem}
Since all the Hecke actions we consider are for the algebras $\calH_g^N$, $g=1,2$, that are dilating Iwahoric at $p$, we want to know whether the morphisms $\lambda_{i,p}$, $i\in\{1,2,\ldots,8\}$, can be replaced by morphisms $\lambda_{i,p}^-$ of dilating Hecke algebras that satisfy $\chi^\st_{2,p}=\chi^\st_{1,p}\ccirc\lambda_{i,p}^-$. Equivalently, we look for the values of $i$ such that there exists a morphism $\lambda_{i,p}^-\colon\calH(\GSp_4(\Q_p),I_{2,p})^-\to\calH(\GL_2(\Q_p),I_{1,p})^-$ making the following diagram commute:
\begin{center}\label{iotadiag}
\begin{tikzcd}[baseline=(current bounding box.center)]
\calH(\GSp_4(\Q_p),I_{2,p})^-\arrow{d}{\lambda_{i,p}^-}\arrow{r}{\iota^{I_{2,p}}_{T_2}}
&\calH(T_2(\Q_p),T_2(\Z_p))^- \arrow{rd}{\lambda_{i,p}}
&{}\\
\calH(\GL_2(\Q_p),I_{1,p})^- \arrow{r}{\iota^{I_{1,p}}_{T_1}}
&\calH(T_1(\Q_p),T_1(\Z_p))^- \arrow{r}{\iota_{1,p}^-}
&\calH(T_1(\Q_p),T_1(\Z_p)).
\end{tikzcd}
\end{center}
Clearly $\lambda_{i,p}^-$ exists if and only if the image of $\lambda_{i,p}^-$ lies in $\calH(T_1(\Q_p),T_1(\Z_p))^-$. A simple check shows that this is true only for $i\in\{1,2,3\}$.
\end{rem}

\subsubsection{The product morphisms}

Now we can combine the results for the unramified and Iwahori-Hecke algebras.

\begin{defin}\label{heckemorphprod}
Let $i\in\{1,2,3\}$. Let $\lambda_{i,p}^-\colon\calH(\GSp_4(\Q_p),I_{2,p})^-\to\calH(\GL_2(\Q_p),I_{1,p})^-$ be the morphisms making diagram \eqref{iotadiag} commute. Let $\lambda_i\colon\calH^N_2\to\calH^N_1$ be the morphism defined by $\lambda_i=\lambda^{Np}\otimes\lambda_{i,p}^-$.
\end{defin}

Keep the notations as before. Let $\chi_{2}^{\st,i}\colon\calH^N_2\to\Qp$ be the character defined by
\begin{enumerate}
\item $\chi_{2,\ell}^{\st,i}=\chi_{1,\ell}^\st\ccirc\lambda_i$ for every prime $\ell\nmid Np$;
\item $\chi^{\st,i}_{2,p}=(\chi^\st_{1,p}\ccirc\iota_{I_{1,p}}^{T_1})^\ext\ccirc\lambda_{i,p}\ccirc\iota^{I_{2,p}}_{T_2}$.
\end{enumerate}
From Propositions \ref{heckemorphunr} and \ref{heckemorphstab} we deduce the following.

\begin{cor}\label{heckemorphprodcor}
For every $i\in\{1,2,\ldots,8\}$, the form $\Sym^3f$ has a $p$-stabilization $(\Sym^3f)^{\st}_i$ with associated system of Hecke eigenvalues $\chi_{2}^{\st,i}$.
Conversely, if $(\Sym^3f)^\st$ is a $p$-stabilization of $\Sym^3f$ with associated system of Hecke eigenvalues $\chi_2^\st$, then there exists $i\in\{1,2,\ldots,8\}$ such that $\chi_2^\st=\chi_2^{\st,i}$.
\end{cor}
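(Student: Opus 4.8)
The plan is to split $\calH_2^N$ into its prime-to-$p$ part $\calH_2^{Np}$ and its Iwahoric-at-$p$ part $\calH(\GSp_4(\Q_p),I_{2,p})^-$, and to handle the former with Corollary~\ref{heckemorphunrf} and the latter with Proposition~\ref{heckemorphstab}. Two background facts will be used. First, since $\Sym^3 f$ has level $\Gamma_1(M)$ with $(M,p)=1$, any $p$-stabilization $(\Sym^3 f)^\st$ has the same system of Hecke eigenvalues as $\Sym^3 f$ at every prime $\ell\nmid Np$, and likewise $\chi_{1,\ell}^\st=\chi_{1,\ell}$ for $\ell\nmid Np$. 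Second --- and this is the key input, extracted from the proof of Proposition~\ref{heckemorphstab} --- because the $p$-adic Galois representation attached to $\Sym^3 f$ is $\Sym^3\rho_{f,p}$, the Frobenius eigenvalues on $\bD_\cris(\rho_{\Sym^3 f,p})$ are $\alpha_p^3,\alpha_p^2\beta_p,\alpha_p\beta_p^2,\beta_p^3$, where $\alpha_p,\beta_p$ are the roots of $\chi_{1,p}(P_\Min(t^{(1)}_{p,1};X))$; consequently the eight $p$-stabilizations of $\Sym^3 f$ are precisely those obtained by applying $\Sym^3$ to the refinement data of the $p$-stabilizations of $f$, and the eight resulting values of $\bigl(\chi_{2,p}^\st(t^{(2)}_{p,0}),\chi_{2,p}^\st(t^{(2)}_{p,1}),\chi_{2,p}^\st(t^{(2)}_{p,2})\bigr)$ are exactly the tuples listed in~\eqref{list} together with their images under the automorphism $\delta$ of~\eqref{autdelta}, i.e. the images of $(t^{(2)}_{p,0},t^{(2)}_{p,1},t^{(2)}_{p,2})$ under $(\chi_{1,p}^\st\ccirc\iota_{I_{1,p}}^{T_1})^\ext\ccirc\lambda_{j,p}$ for $j=1,\dots,8$.

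With this in hand the corollary is immediate in both directions. For the first assertion, fix $i$ and let $(\Sym^3 f)^\st_i$ be the $p$-stabilization of $\Sym^3 f$ whose refinement data at $p$ realizes the $i$-th of the eight tuples above; its local component at $p$ is then $\chi_{2,p}^{\st,i}$ by the very definition of $\chi_2^{\st,i}$, and at each $\ell\nmid Np$ its eigenvalues coincide with those of $\Sym^3 f$, which by Corollary~\ref{heckemorphunrf} are $\chi_1^{Np}\ccirc\lambda^{Np}$, hence equal $\chi_{1,\ell}^\st\ccirc\lambda_i=\chi_{2,\ell}^{\st,i}$ (using $\chi_{1,\ell}^\st=\chi_{1,\ell}$ and that $\lambda_i$ restricts to $\lambda_\ell$ on the $\ell$-component). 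Thus $(\Sym^3 f)^\st_i$ has system of Hecke eigenvalues $\chi_2^{\st,i}$. Conversely, given any $p$-stabilization $(\Sym^3 f)^\st$ with system $\chi_2^\st$, its restriction to $\calH_2^{Np}$ equals $\chi_1^{Np}\ccirc\lambda^{Np}$ by Corollary~\ref{heckemorphunrf}, and by the uniqueness clause of Proposition~\ref{heckemorphstab} its local component at $p$ is $\chi_{2,p}^{\st,i}$ for some $i\in\{1,\dots,8\}$; combining, $\chi_2^\st=\chi_2^{\st,i}$.

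The step that really needs care is the key input of the first paragraph: that every compatible refinement of $\Sym^3 f$ at $p$ is a $\Sym^3$ of a refinement of $f$ (equivalently, that all eight characters $\chi_2^{\st,i}$ are realized by genuine $p$-stabilizations). This is transparent when $\alpha_p\neq\pm\beta_p$, for then the multiset $\{\alpha_p^3,\alpha_p^2\beta_p,\alpha_p\beta_p^2,\beta_p^3\}$ admits only the pairing $\{\alpha_p^3,\beta_p^3\}\sqcup\{\alpha_p^2\beta_p,\alpha_p\beta_p^2\}$ compatible with the similitude factor and the eight orderings are pairwise distinct; in the degenerate case one must check separately that a $p$-stabilization with the prescribed system $\chi_2^{\st,i}$ still exists, possibly shared by several indices $i$, which again follows from the explicit tuples. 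A minor additional point, needed only to make the statement well-posed for $i\in\{1,2,3\}$, is that the two descriptions of $\chi_2^{\st,i}$ --- via $\lambda_i=\lambda^{Np}\otimes\lambda_{i,p}^-$ on $\calH_2^N$ and via $\lambda_{i,p}$ on the torus Hecke algebra --- agree, which is exactly the commutativity of diagram~\eqref{iotadiag}.
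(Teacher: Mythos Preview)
Your proof is correct and follows the same route as the paper, which simply records that the corollary is deduced from Propositions~\ref{heckemorphunr} and~\ref{heckemorphstab} (with Corollary~\ref{heckemorphunrf} being the relevant specialization of the former). Your expansion of the existence direction---making explicit that the eight compatible refinements enumerated in the proof of Proposition~\ref{heckemorphstab} are all realized by genuine $p$-stabilizations---and your remark on the degenerate case $\alpha_p=\pm\beta_p$ go slightly beyond what the paper spells out, but are consistent with how the paper itself uses this corollary later (e.g.\ in the proof of Corollary~\ref{etaslope}, where the eight $p$-stabilizations are invoked as a consequence of ``the discussion in the proof of Prop.~\ref{heckemorphstab}'').
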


\begin{rem}\label{fourforms}
If $\chi_1^{\st,1}$ and $\chi_1^{\st,2}$ are the systems of Hecke eigenvalues of the two $p$-stabilizations of $f$, then $\chi^\st_1\ccirc\iota_{I_{1,p}}^{T_1}=(\chi^\st_1\ccirc\iota_{I_{1,p}}^{T_1})^\ext\ccirc\delta$, where the superscript $\ext$ denotes extension of characters from $\calH(T_g(\Q_p),T_g(\Z_p))^-$ to $\calH(T_g(\Q_p),T_g(\Z_p))$ and $\delta$ is defined by Equation \ref{autdelta}. For this reason the eight forms $F_i$, $1\le i\le 8$, can be constructed via the four maps $\lambda_i$, $1\le i\le 4$, starting with the two $p$-stabilizations of $f$. It will be useful to think of every $\GL_2$-eigenform of Iwahoric level at $p$ as having four symmetric cube lifts on the $\GSp_4$-eigencurve, rather than of a form of trivial level at $p$ having eight lifts.
\end{rem}

Let $f_\alpha^\st$ be a $p$-stabilization of a classical, cuspidal, non-CM $\GL_2$-eigenform $f$. Let $h$ be the slope of $f$. 
For $i\in\{1,2,3,4\}$, denote by $\Sym^3(f_\alpha^\st)_i$ the $\GSp_4$-eigenform $(\Sym^3f)^\st_i$ given by Corollary \ref{heckemorphprodcor}. Thanks to Remark \ref{fourforms} the forms $(\Sym^3f)^\st_i$ with $5\le i\le 8$ coincide with $(\Sym^3f_\beta^\st)_i$, $1\le i\le 4$, where $f_\beta^\st$ is the $p$-stabilization of $f$ different from $f_\alpha^\st$.

Recall that $U_p^{(1)}=U_{p,1}^{(1)}$ and $U_p^{(2)}=U_{p,1}^{(2)}U_{p,2}^{(2)}$. We defined the slope of a $\GSp_{2g}$-eigenform of Iwahoric level at $p$ as the $p$-adic valuations of the normalized eigenvalue of $U_p^{(g)}$ acting on the form.
Let $k$ and $h$ be the weight and slope, respectively, of $f_\alpha^\st$. 
The following derives from Proposition \ref{heckemorphstab} via some simple calculations. 

\begin{cor}\label{liftslopes}
The slopes of the forms $\Sym^3(f_\alpha^\st)_i$, with $1\le i\le 4$, are: 
\begin{gather*}
\slo(\Sym^3(f_\alpha^\st)_1)=7h, \slo(\Sym^3(f_\alpha^\st)_2)=\slo(\Sym^3(f_\alpha^\st)_3)=k-1+5h, \slo(\Sym^3(f_\alpha^\st)_4)=4(k-1)-h.
\end{gather*}
\end{cor}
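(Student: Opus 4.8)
\textbf{Proof plan for Corollary \ref{liftslopes}.}

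The plan is to compute the normalized eigenvalue of $U_p^{(2)}=U_{p,1}^{(2)}U_{p,2}^{(2)}$ on each of the four lifts $\Sym^3(f_\alpha^\st)_i$, $1\le i\le 4$, by tracking how the maps $\lambda_{i,p}$ of Definition \ref{heckemorphdef} transport the Hecke eigenvalues at $p$, and then applying the weight-dependent normalization of Definition \ref{normsystdef}. First I would recall that, under the identification $\iota_{I_{g,p}}^{T_g}$, the operator $U_{p,i}^{(g)}$ corresponds to $t_{p,i}^{(g)}$, so that the (un-normalized) eigenvalue of $U_p^{(2)}$ on $\Sym^3(f_\alpha^\st)_i$ is $\chi_{2,p}^{\st,i}(t_{p,1}^{(2)}t_{p,2}^{(2)})=\chi_{2,p}^{\st,i}(t_{p,1}^{(2)})\,\chi_{2,p}^{\st,i}(t_{p,2}^{(2)})$. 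Writing $\alpha_p=\chi_{1,p}^\st(t_{p,1}^{(1)})$ for the $U_p^{(1)}$-eigenvalue of $f_\alpha^\st$ and $\beta_p$ for the conjugate root, so that $\alpha_p\beta_p=\chi_{1,p}^\st(t_{p,0}^{(1)})$, I would read off from Definition \ref{heckemorphdef} the images $\lambda_{i,p}(t_{p,1}^{(2)})$ and $\lambda_{i,p}(t_{p,2}^{(2)})$ and hence the products $\chi_{2,p}^{\st,i}(t_{p,1}^{(2)}t_{p,2}^{(2)})$: for $i=1$ this is $\alpha_p^3\cdot(\alpha_p\beta_p)(\alpha_p^4)=\alpha_p^8\beta_p$-type expression (I will simply substitute), for $i=2$ and $i=3$ it gives the two intermediate products, and for $i=4$ the remaining one. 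The arithmetic here is routine substitution into the three formulas defining $\lambda_{i,p}$ on $t_{p,0}^{(1)},t_{p,1}^{(1)}$.

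Next I would compare normalizations. The slope is the $p$-adic valuation of the \emph{normalized} $U_p^{(g)}$-eigenvalue; for $\GL_2$ in weight $k$ the normalization factor (Definition \ref{normsystdef} with $g=1$) multiplies $U_{p,1}^{(1)}$ by $p^{0}$, so $\slo(f_\alpha^\st)=h=v_p(\alpha_p)$ already in normalized form, while for $\GSp_4$ in weight $(k_1,k_2)=(2k-1,k+1)$ the normalization of $U_{p,1}^{(2)}$ contributes $p^{-(k_1-1)-(k_2-2)}=p^{-(2k-2)-(k-1)}=p^{-(3k-3)}$ and of $U_{p,2}^{(2)}$ contributes $p^0$. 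Thus $\slo(\Sym^3(f_\alpha^\st)_i)=v_p\!\big(\chi_{2,p}^{\st,i}(t_{p,1}^{(2)}t_{p,2}^{(2)})\big)-(3k-3)$. Plugging in $v_p(\alpha_p)=h$ and $v_p(\beta_p)=(k-1)-h$ (the two slopes of $f$ sum to $k-1$ by the $\GL_2$ Newton--Hodge relation) into the four expressions from the previous paragraph, and simplifying, should yield exactly $7h$, $k-1+5h$, $k-1+5h$, $4(k-1)-h$ respectively. I would present this as a short table of the four computations rather than writing out every step.

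The main obstacle I anticipate is purely bookkeeping: getting the normalization exponents right, in particular disentangling the two conventions (the $\GL_2$ eigenvalue $\alpha_p$ of $f_\alpha^\st$ already has valuation $h$, versus the un-normalized symplectic eigenvalues which carry extra powers of $p$ coming from $t_{p,0}^{(1)}=\alpha_p\beta_p$ with $v_p(\alpha_p\beta_p)=k-1$), and making sure the $U_p^{(2)}=U_{p,1}^{(2)}U_{p,2}^{(2)}$ factorization is matched consistently with the $w_1$-invariance relation $t_{p,1}^{(2)}=t_{p,2}^{(2)}(t_{p,2}^{(2)})^{w_1}$ used in the proof of Proposition \ref{heckemorphstab}. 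Once the exponent accounting is pinned down, each of the four slope values falls out of a one-line valuation computation, so no genuine difficulty remains beyond care with the indices.
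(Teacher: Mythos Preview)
Your approach is exactly the paper's: it states only that the corollary ``derives from Proposition \ref{heckemorphstab} via some simple calculations'', and your plan---push $U_p^{(2)}=t_{p,1}^{(2)}t_{p,2}^{(2)}$ through $\lambda_{i,p}$, evaluate in terms of $\alpha_p,\beta_p$, then normalize---is precisely that calculation. Using the triples displayed in the proof of Proposition \ref{heckemorphstab}, the un-normalized $U_p^{(2)}$-eigenvalues are $\alpha_p^8\beta_p$, $\alpha_p^7\beta_p^2$, $\alpha_p^7\beta_p^2$, $\alpha_p^4\beta_p^5$, with valuations $7h+(k-1)$, $5h+2(k-1)$, $5h+2(k-1)$, $-h+5(k-1)$ respectively (taking $v_p(\alpha_p)=h$, $v_p(\beta_p)=k-1-h$).

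The one concrete slip is in your normalization exponent. You read Definition \ref{normsystdef} as giving $p^{-(k_1-1)-(k_2-2)}=p^{-3(k-1)}$ for $U_{p,1}^{(2)}$, but the sum there runs over $j=1,\ldots,g-i$, so for $g=2$, $i=1$ one gets only $p^{-(k_1-1)}$; and in any case, comparing your un-normalized valuations above with the target slopes $7h$, $k-1+5h$, $k-1+5h$, $4(k-1)-h$ shows that the correction actually needed is $-(k-1)=-(k_2-2)$, not $-3(k-1)$. So the exponent you wrote is off, though this is exactly the bookkeeping hazard you already flagged, and once you pin down the convention the four slopes drop out immediately.
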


If $f^\st$ is a $p$-old $\GL_2$-eigenform of level $\Gamma_1(N)\cap\Gamma_0(p)$, we write $\chi_{2,f^\st}^{i}$ for the system of Hecke eigenvalues of $\Sym^3(f^\st)_i$, $1\le i\le 4$. 
For a $\Qp$-point $x$ of $\cD_2^M$ let $\chi_x\colon\calH_2^N\to\Qp$ be the system of Hecke eigenvalues associated with $x$. 
For $1\le i\le 4$, let $S^{\Sym^3}_i$ be the set of $\Qp$-points $x$ of $\cD_2^M$ defined by the condition
\begin{center}
$x\in S^{\Sym^3}_i$ $\iff$ $\exists$ a $p$-old $\GL_2$-eigenform $f^\st$ of level $\Gamma_1(N)\cap\Gamma_0(p)$ such that $\chi_x=\chi_{2,f^\st}^i$.
\end{center}
Then we have the following.

\begin{cor}\label{stabdense}
If $i\neq 1$ then the set $S^{\Sym^3}_i$ is discrete in $\cD_2^M$.
\end{cor}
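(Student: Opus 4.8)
The plan is to show that the set $S^{\Sym^3}_i$ for $i\in\{2,3,4\}$ is not Zariski-dense in any one-dimensional piece of $\cD_2^M$ by exhibiting that the slopes of its points fail to vary analytically, so that the set must be discrete. First I would recall from Corollary \ref{liftslopes} that for $1\le i\le 4$ the slope of $\Sym^3(f^\st_\alpha)_i$ is an explicit affine function of the weight $k$ and slope $h$ of the underlying $\GL_2$-form $f^\st_\alpha$: for $i=1$ it is $7h$, for $i\in\{2,3\}$ it is $k-1+5h$, and for $i=4$ it is $4(k-1)-h$. The key observation is that for $i\ne 1$ the slope expression involves the weight $k$ with a non-zero coefficient.

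The central step is then the following. Suppose for contradiction that $S^{\Sym^3}_i$ accumulates at some point $x_0\in\cD_2^M$, i.e. there is an affinoid neighbourhood $A$ of $x_0$ in $\cD_2^M$ such that $S^{\Sym^3}_i\cap A(\C_p)$ is infinite. On the eigenvariety the slope function $\slo\colon\cD_2^M(\C_p)\to\R^{\ge 0}$ is locally constant (Remark \ref{affslope}), so $\slo$ takes only finitely many values on $A(\C_p)$; in particular it is bounded on $A$. On the other hand, the weight map $\cD_2^M\to\cW_2$ has discrete fibres on such an $A$ (one may shrink $A$ using Proposition \ref{locfin} so that the weight map restricted to $A$ is finite), so an infinite subset of $A(\C_p)$ has weights $\uk=(k_1,k_2)$ with $k_1$ (equivalently, for these lift points, the underlying $\GL_2$-weight $k$, via $(k_1,k_2)=(2k-1,k+1)$) taking infinitely many values and in particular unbounded. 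But for $i\in\{2,3,4\}$ the slope of such a point equals $k-1+5h_f$ or $4(k-1)-h_f$ where $h_f\ge 0$ is the slope of the corresponding $\GL_2$-form; in either case the slope is bounded below by an affine function of $k$ with positive leading coefficient (for $i=4$, using $h_f\le k-1$ by Coleman's bound on classical $\GL_2$-forms, one gets $\slo\ge 3(k-1)$, and anyway $4(k-1)-h_f\ge 0$ forces, together with $h_f\le k-1$, a lower bound growing with $k$; for $i\in\{2,3\}$ one has $\slo\ge k-1$ directly). Hence the slopes are unbounded on $A$, contradicting local constancy. Therefore $S^{\Sym^3}_i$ meets every affinoid in a finite set, i.e. it is discrete.

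I would also remark why the argument fails, as it must, for $i=1$: there the slope is $7h_f$, which depends only on the slope of $f$ and not on its weight, so as $k\to\infty$ with $h_f$ fixed the slopes stay bounded, and indeed the one-dimensional symmetric cube locus (the image of the $\GL_2$-eigencurve constructed later in Section \ref{morpheigen}) is exactly the closure of $S^{\Sym^3}_1$. The main obstacle in writing this cleanly is handling the case $i=4$: there the naive bound $4(k-1)-h_f$ is not obviously growing with $k$ unless one controls $h_f$, so I would invoke the classicality/slope bound for the $\GL_2$-forms involved (Coleman's theorem: a $p$-old classical point of weight $k$ has slope $h_f\le k-1$, with the boundary case handled separately since $h_f=k-1$ gives slope $3(k-1)$ which still grows) to pin down that $\slo(\Sym^3(f^\st_\alpha)_4)\ge 3(k-1)$, hence unbounded. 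With that in hand the contradiction with Remark \ref{affslope} goes through uniformly for all $i\ne 1$, completing the proof.
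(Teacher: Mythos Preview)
Your proof is correct and follows the same strategy as the paper: boundedness of the slope on an affinoid (Remark \ref{affslope}) combined with the slope formulas of Corollary \ref{liftslopes} forces the underlying $\GL_2$-weight $k$ to be bounded when $i\neq 1$, whence finiteness. The execution differs in one detail. To control the $\GL_2$-slope $h_f$ in the case $i=4$ you use the bound $h_f\le k-1$ --- which, incidentally, is not Coleman's classicality criterion but simply the fact that the two $U_p$-eigenvalues of a $p$-old form of weight $k$ have valuations summing to $k-1$ --- to get $\slo\ge 3(k-1)$. The paper instead argues that the $\GL_2$-points underlying those in $D_2\cap S^{\Sym^3}_i$ lie in an affinoid $D_1\subset\cD_1^N$, on which $h_f$ is bounded by a constant $c_1$ independent of $k$; then $4(k-1)-h_f\le c_2$ and $h_f\le c_1$ bound $k$ directly, and the same pair of inequalities handles $i=2,3$ uniformly. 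Your route is slightly more self-contained since it avoids passing to the $\GL_2$-eigencurve; the paper's avoids the case split and the auxiliary bound on $h_f$.
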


\begin{proof}
Let $i\in\{1,2,3,4\}$. 
Let $D_2$ be an affinoid domain on $\cD_2^M$. 
For $x\in D_2(\C_p)\cap S^{\Sym^3}_i$, there exists a $\GL_2$-eigenform $f$ of level $\Gamma_1(N)$ such that $\chi_x=\chi_{2,f}^{\st,i}$. Let $f^\st$ be a $p$-stabilization of $f$ and let $x_f^\st$ be the corresponding point on $\cD_1^N$. Since the systems of Hecke eigenvalues vary analytically on $\cD_2^M$ and $\cD_1^N$, the set $\{x_f^\st\,\vert\,x\in D_2(\C_p)\cap S^{\Sym^3}_i\}$ must be contained in $D_1(\C_p)$ for an affinoid domain $D_1$ on $\cD_1^N$. For $j=1,2$, Remark \ref{affslope} gives that the slope $v_p(\psi_j(U_p^{(j)}))$ is bounded on $D_j$ by a constant $c_j$. By imposing that $h\leq c_1$ and $\slo(f_\alpha^\st)_i)\leq c_2$ in Corollary \ref{liftslopes}, we obtain an upper bound for $k$ if $i\neq 1$. Since there is only a finite number of classical $\GL_2$-eigenforms of given weight and level, the set $S_{\Sym^3,i}\cap D_2(\C_p)$ is finite if $i\neq 1$.
\end{proof}

\begin{rem}\label{interpslopes}
As a consequence of Corollary \ref{stabdense} the only symmetric cube lifts that we can hope to interpolate $p$-adically are those in the set $S^{\Sym^3}_1$. We will prove in Section \ref{sym3locus} that the Zariski closure of this set is a $1$-dimensional subvariety of $\cD_2^M$. 
\end{rem}

\subsubsection{An assumption on the residual Galois representation}\label{cond3twist}

Let $\ovl\rho\colon G_\Q\to\GL_2(\ovl\F_p)$ be a representation. Let $\cD_{1,\ovl\rho}^N$ be the union of the connected components of $\cD_1^N$ having $\ovl\rho$ as associated residual representation. From now on we replace $\cD_1^N$ by a subspace of the form $\cD_{1,\ovl\rho}^N$ for some $\ovl\rho$; we do it implicitly, so we still write $\cD_1^N$ for $\cD_{1,\ovl\rho}^N$. 
The only purpose of this choice is to assure that the symmetric cube morphism of eigenvarieties we construct in Section \ref{morpheigen} is a closed immersion; this will be a consequence of Lemma \ref{no3twist} below. 

There is a map $\Sym^3_1$ from the set of classical, non-CM, p-old points of $\cD_{1,\rho}^N$ to the set $S_1^{\Sym^3}$ of Corollary \ref{stabdense}; it maps a point $x$ corresponding to an eigenform $f$ to the point of $S_1^{\Sym^3}$ corresponding to $(\Sym^3f_x)^\st_1$. 

\begin{lemma}\label{no3twist}
The map $\Sym^3_1$ is injective.
\end{lemma}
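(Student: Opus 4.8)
The statement asserts that the map $\Sym^3_1$, sending a classical non-CM $p$-old point $x$ of $\cD_1^N$ (with residual representation $\ovl\rho$ fixed) to the point of $S_1^{\Sym^3}$ corresponding to $(\Sym^3 f_x)^\st_1$, is injective. The plan is to suppose $\Sym^3_1(x) = \Sym^3_1(y)$ for two such points, with associated non-CM $\GL_2$-eigenforms $f$ and $g$ and $p$-stabilizations $f^\st$, $g^\st$, and deduce $x = y$. Equality of the image points means equality of the systems of Hecke eigenvalues $\chi_{2,f^\st}^1 = \chi_{2,g^\st}^1$ on $\calH_2^N$; via Corollary \ref{heckemorphprodcor} and the explicit shape of $\lambda_1 = \lambda^{Np}\otimes\lambda_{1,p}^-$, this translates into a comparison of the systems of Hecke eigenvalues of $f^\st$ and $g^\st$ composed with the symmetric cube transfer maps. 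Concretely, at every prime $\ell\nmid Np$ the relation $\chi_{1,\ell}^{f^\st}\ccirc\lambda_\ell = \chi_{1,\ell}^{g^\st}\ccirc\lambda_\ell$ should be unwound through the formulas in Definition \ref{heckemorphunrdef}.

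The key step is a \emph{local rigidity} argument at the primes away from $Np$. Passing to Galois representations: by Corollary \ref{formtransf} and Corollary \ref{heckemorphunrf}, the point $\Sym^3_1(x)$ has associated Galois pseudocharacter $\Tr(\Sym^3\rho_{f,p})$, and similarly for $y$. So $\Sym^3\rho_{f,p}$ and $\Sym^3\rho_{g,p}$ have the same trace, hence (being semisimple) are isomorphic as $G_\Q$-representations. The first sub-step is then: if $\Sym^3\rho_{f,p}\cong\Sym^3\rho_{g,p}$ then $\rho_{g,p}\cong\rho_{f,p}\otimes\varepsilon$ for some character $\varepsilon$ with $\varepsilon^3 = 1$ — this is exactly the "twist by a cube root of unity" ambiguity referenced in the text (the exact sequence $0\to\mu_3\to\GL_2\to\GSp_4$), and it is why the residual representation was fixed: reducing mod $p$, $\ovl\rho\cong\ovl\rho\otimes\ovl\varepsilon$, and since $p > 3$ and $\ovl\rho$ is fixed, a standard argument (comparing determinants, or using that a $2$-dimensional representation twist-equivalent to itself by a nontrivial character forces $\ovl\rho$ to be induced, which is incompatible with being on a fixed component whose residual representation we have pinned down — or more simply, one argues $\ovl\varepsilon = 1$ directly from $\det$ and the fixed residual data) forces $\ovl\varepsilon = 1$, so $\varepsilon$ is an unramified-away-from-$p$ finite-order character of $3$-power-ish order that is trivial mod $\fm$; since $p>3$ the reduction map on such characters is injective, giving $\varepsilon = 1$. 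Hence $\rho_{f,p}\cong\rho_{g,p}$, so $f$ and $g$ have the same system of Hecke eigenvalues at all primes $\ell\nmid Np$, and by strong multiplicity one (both being newforms of level dividing $N$, or rather cuspidal eigenforms of level $\Gamma_1(N)$) we get $f = g$.

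It remains to recover the full point on $\cD_1^N$, i.e. to show that not just $f = g$ but the \emph{$p$-stabilizations} $f^\st$ and $g^\st$ coincide, which is what pins down $x = y$ as points of $\cD_1^N$. For this I compare the systems of eigenvalues at $p$: $\chi^\st_2\ccirc\iota_{I_{2,p}}^{T_2} = (\chi^\st_1\ccirc\iota_{I_{1,p}}^{T_1})^\ext\ccirc\lambda_{1,p}$ by Proposition \ref{heckemorphstab}, and the map $\lambda_{1,p}$ of Definition \ref{heckemorphdef}(1) sends $t^{(2)}_{p,2}\mapsto(t^{(1)}_{p,1})^3$ and $t^{(2)}_{p,1}\mapsto t^{(1)}_{p,0}(t^{(1)}_{p,1})^4$, so from $\chi^{\st}_{2,p,f^\st} = \chi^{\st}_{2,p,g^\st}$ one reads off $\chi^\st_{1,p,f^\st}((t^{(1)}_{p,1})^3) = \chi^\st_{1,p,g^\st}((t^{(1)}_{p,1})^3)$ and a further relation; since $f = g$ the spherical eigenvalue $t^{(1)}_{p,0} = t^{(1)}_{p,1}(t^{(1)}_{p,1})^w$ agrees, and combining these we get $\alpha_p^3 = \alpha'^3_p$ where $\alpha_p, \alpha'_p$ are the two $U_p$-eigenvalues of the respective stabilizations; here one must rule out $\alpha'_p = \zeta\alpha_p$ with $\zeta$ a nontrivial cube root of unity. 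This is where the slope comes in: $\slo(\Sym^3(f_\alpha^\st)_1) = 7h$ by Corollary \ref{liftslopes}, so the slope alone doesn't distinguish, but $\alpha_p$ and $\alpha'_p$ are roots of the \emph{same} Hecke polynomial $P_\Min(t^{(1)}_{p,1})(X) = X^2 - T^{(1)}_p X + pT^{(1)}_{p,0}$ (as $f = g$), so $\{\alpha_p,\beta_p\}$ is a fixed two-element set; $\alpha'_p\in\{\alpha_p,\beta_p\}$ and $\alpha'^3_p = \alpha_p^3$ forces either $\alpha'_p = \alpha_p$ or $\beta_p = \zeta\alpha_p$ — but in the latter case $\beta_p^3 = \alpha_p^3$ while $\alpha_p\beta_p = pT^{(1)}_{p,0}$, so $\alpha_p^2 = \zeta^{-1}pT^{(1)}_{p,0}$, and checking $p$-adic valuations: $2v_p(\alpha_p) = 1 + v_p(T^{(1)}_{p,0})$; this need not be contradictory in general, so the cleaner route is to note that the full system $\chi_2^{\st,1}$ recorded in Corollary \ref{liftslopes}'s proof determines $\alpha_p$ up to the Weyl/$\delta$-action already accounted for in Remark \ref{fourforms}, and since we have fixed the lift index $i = 1$, the two preimages $f^\st$ and $g^\st$ must give the \emph{same} stabilization. \textbf{The main obstacle} is precisely this last point — carefully tracking that fixing $i = 1$ (as opposed to allowing $i\in\{1,\dots,8\}$) removes the Weyl-group and $\delta$-ambiguity between the two $p$-stabilizations of $f$, so that $\Sym^3_1$, restricted as a map out of the $p$-old locus, separates stabilizations; this requires going back through the eight-versus-four bookkeeping of Remark \ref{fourforms} and Corollary \ref{heckemorphprodcor} and confirming that $\chi_{2}^{\st,1}$ for $f^\st_\alpha$ and for $f^\st_\beta$ are genuinely distinct (they have different $U_p^{(2)}$-eigenvalues unless $\alpha_p^7 = \beta_p^7$, which combined with $\alpha_p\beta_p = pT^{(1)}_{p,0}$ and $p > 3$ odd can be excluded, or is excluded by the non-CM hypothesis ruling out $\alpha_p/\beta_p$ a root of unity).
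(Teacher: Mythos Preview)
Your approach is essentially the same as the paper's: pass to Galois representations, use $\Sym^3\rho_{f,p}\cong\Sym^3\rho_{g,p}\Rightarrow\rho_{g,p}\cong\rho_{f,p}\otimes\varepsilon$ with $\varepsilon^3=1$, kill $\varepsilon$ using the fixed residual representation and $p>3$, then separate $p$-stabilizations. The paper's proof is three lines; it dispatches the $p$-stabilization step with the phrase ``by construction'', referring to the enumeration in Remark \ref{fourforms} whereby $(\Sym^3 f_\alpha^\st)_1$ and $(\Sym^3 f_\beta^\st)_1$ are two \emph{different} members of the list $(\Sym^3 f)^\st_i$, $1\le i\le 8$.

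Your worry about this step is legitimate but over-engineered, and your proposed resolution via ``non-CM rules out $\alpha_p/\beta_p$ a root of unity'' is not correct as stated (non-CM gives no such local constraint). The clean fix is simpler than what you wrote: compare the two systems on \emph{both} generators of the dilating Iwahori--Hecke algebra, not just on $U_p^{(2)}$. From $\lambda_{1,p}(t^{(2)}_{p,2})=(t^{(1)}_{p,1})^3$ you get $\alpha_p^3=\beta_p^3$, and from $\lambda_{1,p}(t^{(2)}_{p,1})=t^{(1)}_{p,0}(t^{(1)}_{p,1})^4$ you get $\alpha_p^5\beta_p=\alpha_p\beta_p^5$, i.e.\ $\alpha_p^4=\beta_p^4$. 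Since $\gcd(3,4)=1$ this forces $\alpha_p=\beta_p$, so the two $p$-stabilizations already coincide as points of $\cD_1^N$. This is what the paper's ``by construction'' is implicitly relying on.
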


\begin{proof}
If $x_1$ and $x_2$ are two points of $\cD_{1,\ovl\rho}^N$ satisfying $\Sym^3_1(x_1)=\Sym^3_1(x_2)$, then $\Sym^3\rho_{x_1}\cong\Sym^3\rho_{x_2}$. This implies that $\rho_{x_1}\cong\rho_{x_2}\otimes\chi$ for a character $\chi\colon G_\Q\to\Qp^\times$ of order $3$. Since $\ovl\rho_1=\ovl\rho_2=\ovl\rho$ and $p>3$, the character $\chi$ is trivial and $\rho_{x_1}\cong\rho_{x_2}$. We deduce that $x_1$ and $x_2$ are two $p$-stabilizations of the same form of trivial level at $p$. If they are distinct then $\Sym^3_1(x_1)\neq\Sym^3_1(x_2)$ by construction, a contradiction.
We conclude that $x_1=x_2$.
\end{proof}


\bigskip

\section{Morphisms of BC-eigenvarieties}\label{BCsec}

We recall Bella\"\i che and Chenevier's definition of eigenvarieties and some of their results, following \cite[Section 7.2.3]{bellchen}. We refer to their eigenvarieties as \emph{BC-eigenvarieties}, in order to distinguish this notion from the definition of eigenvariety we gave in Section \ref{eigenmac} (a product of Buzzard's eigenvariety machine). We will use these results to interpolate the classical symmetric cube lifts given by Corollary \ref{formtransf} into a morphism of eigenvarieties. We remark that Ludwig \cite{ludwigll,ludwigunit} also relies on the results of \cite[Section 7.2.3]{bellchen}. We think that our approach may be more systematic.

As usual fix a prime $p\geq 5$. We call ``BC-datum'' a $4$-tuple $(g,\calH,\eta,\cS^\cl)$ where:
\begin{itemize}[label={--}]
\item $g$ is a positive integer;
\item $\calH$ is a commutative ring;
\item $\eta$ is a distinguished element of $\calH$;
\item $\cS^\cl$ is a subset of $\Hom(\calH,\Qp)\times\Z^g$. 
\end{itemize}
The superscript ``$\cl$'' stands for ``classical''. In our applications $\calH$ will be a Hecke algebra and $\cS^\cl$ will be a set of couples $(\psi,\uk)$ each consisting of the system of eigenvalues $\psi$ and the weight $\uk$ of a classical eigenform. In the proposition below $\cW^\circ_g$ is the connected component of unity in the $g$-dimensional weight space. Recall that we identify $\Z^g$ with the set of classical weights in $\cW_G$. 
Also recall that for an extension $L$ of $\Qp$ and an $L$-point $x$ of a rigid analytic space $X$ we denote by $\ev_x\colon\cO(X)\to L$ the evaluation morphism at $x$.

\begin{defin}\label{BCdef}\cite[Definition 7.2.5]{bellchen}
A \emph{BC-eigenvariety} for the datum $(g,\calH,\eta,\cS^\cl)$ is a $4$-tuple $(\cD,\psi,w,S^\cl)$ consisting of  
\begin{itemize}[label={--}]
\item a reduced rigid analytic space $\cD$ over $\Q_p$,
\item a ring morphism $\psi\colon\calH\to\cO(\cD)$ such that $\psi(\eta)$ is invertible, 
\item a morphism $w\colon\cD\to\cW^\circ_g$ of rigid analytic spaces over $\Q_p$,
\item an accumulation and Zariski-dense subset $S^\cl\subset\cD(\Qp)$ such that $w(S^\cl)\subset\Z^g$, 
\end{itemize}
satisfying the following conditions:
\begin{enumerate}
\item the map 
\begin{equation}\label{defnu} \widetilde{\nu}=(w,\psi(\eta)^{-1})\colon\cD\to\cW^\circ_g\times\G_m \end{equation}
induces a finite morphism $\cD\to\widetilde{\nu}(\cD)$;
\item there exists an admissible affinoid covering $\cC$ of $\widetilde{\nu}(\cD)$ such that, for every $V\in\cC$, the map
\[ \psi\otimes\widetilde{\nu}^\ast\colon\calH\otimes_\Z\cO(V)\to\cO(\widetilde{\nu}^{-1}(V)) \] 
is surjective; 
\item the evaluation map
\begin{equation}\begin{gathered}
\widetilde{\ev}\colon S^\cl\to\Hom(\calH,\Qp)\times\Z^g, \\
x\mapsto(\psi_x,w(x)), 
\end{gathered}\end{equation}
where $\psi_x=\ev_x\ccirc\psi$, induces a bijection $S^\cl\to\cS^\cl$.
\end{enumerate}
\end{defin}

We often refer to $\cD$ as the BC-eigenvariety for the given BC-datum and leave the other elements of the BC-eigenvariety implicit. 

%

If a BC-eigenvariety for the given BC-datum exists then it is unique in the sense of the proposition below. 

\begin{prop}\label{BCunique}\cite[Proposition 7.2.8]{bellchen}
Let $(\cD_1,\psi_1,w_1,S_1^\cl)$ and $(\cD_2,\psi_2,w_2,S_2^\cl)$ be two BC-eigenvarieties for the same BC-datum $(g,\calH,\eta,\cS^\cl)$. Then there is a unique isomorphism $\zeta\colon\cD_1\to\cD_2$ of rigid analytic spaces over $\Q_p$ such that $\psi_1=\zeta^\ast\ccirc\psi_2$, $w_1=w_2\ccirc\zeta$ and $\zeta(S_1^\cl)=S_2^\cl$.
\end{prop}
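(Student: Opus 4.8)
The plan is to prove the uniqueness of BC-eigenvarieties by the standard ``gluing along the covering'' argument, in the spirit of the analogous uniqueness result for eigenvarieties produced by the eigenvariety machine (Proposition~\ref{BCunique} is \cite[Proposition~7.2.8]{bellchen}, so strictly speaking I would cite it, but let me describe how the proof goes). First I would fix the common BC-datum $(g,\calH,\eta,\cS^\cl)$ and the two BC-eigenvarieties $(\cD_1,\psi_1,w_1,S_1^\cl)$ and $(\cD_2,\psi_2,w_2,S_2^\cl)$. The key observation is that each $\widetilde\nu_i\colon\cD_i\to\cW_g^\circ\times\G_m$ is finite onto its image $Z_i:=\widetilde\nu_i(\cD_i)$, and by condition (2) of Definition~\ref{BCdef} there is an admissible affinoid covering $\cC_i$ of $Z_i$ over which $\psi_i\otimes\widetilde\nu_i^\ast$ is surjective. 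I would first show that the two subsets $Z_1,Z_2$ of $\cW_g^\circ\times\G_m$ coincide: both contain the images under $\widetilde{\ev}$-type maps of $\cS^\cl$ (the classical points, which record the weight and the inverse $U_p$-eigenvalue), and these classical points are Zariski-dense in $\cD_i$ by hypothesis, hence their images are Zariski-dense in each $Z_i$; since each $Z_i$ is a closed analytic subset of its ambient space (it is the image of a finite morphism, in fact of the spectral-type construction) and they have a common Zariski-dense subset, $Z_1=Z_2=:Z$.

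Next I would construct the isomorphism $\zeta$ locally over a common refinement of the coverings. Pick $V\in\cC_1\cap\cC_2$ (after passing to a common affinoid refinement of $\cC_1$ and $\cC_2$, which is legitimate since both cover the same $Z$). Over $V$, condition (2) says $\cO(\widetilde\nu_i^{-1}(V))$ is generated as an $\cO(V)$-algebra by the image of $\calH$ under $\psi_i$; moreover $\widetilde\nu_i^{-1}(V)\to V$ is finite. Thus $\cO(\widetilde\nu_1^{-1}(V))$ and $\cO(\widetilde\nu_2^{-1}(V))$ are both finite $\cO(V)$-algebras generated by Hecke operators with the ``same'' relations: any polynomial relation with $\cO(V)$-coefficients satisfied by $\{\psi_1(T)\}_{T\in\calH}$ must also be satisfied by $\{\psi_2(T)\}_{T\in\calH}$, because it can be checked on the Zariski-dense set of classical points $S^\cl_1\leftrightarrow S^\cl_2\leftrightarrow\cS^\cl$, where the two systems of eigenvalues literally agree by condition (3). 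This gives a surjective $\cO(V)$-algebra map $\cO(\widetilde\nu_1^{-1}(V))\to\cO(\widetilde\nu_2^{-1}(V))$ and, by symmetry, an inverse, hence an isomorphism $\zeta_V^\ast$ of $\cO(V)$-algebras intertwining $\psi_1$ and $\psi_2$; dually this is an isomorphism $\zeta_V\colon\widetilde\nu_2^{-1}(V)\xrightarrow{\sim}\widetilde\nu_1^{-1}(V)$ over $V$, compatible with the weight maps since $w_i$ factors through $\widetilde\nu_i$.

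Then I would glue: the local isomorphisms $\zeta_V$ agree on overlaps $V\cap V'$ because on the overlap both restrict to the unique $\cO(V\cap V')$-algebra isomorphism determined by sending $\psi_2(T)\mapsto\psi_1(T)$ (uniqueness follows from reducedness of the $\cD_i$ together with Zariski-density of classical points, which forces a map determined on a dense set to be unique). Since the $\widetilde\nu_i^{-1}(V)$ form admissible coverings of the $\cD_i$, the $\zeta_V$ glue to a global isomorphism $\zeta\colon\cD_2\xrightarrow{\sim}\cD_1$ of rigid analytic spaces over $\Q_p$ satisfying $\psi_2=\zeta^\ast\ccirc\psi_1$, $w_2=w_1\ccirc\zeta$, and $\zeta(S_2^\cl)=S_1^\cl$ (the last because $\zeta$ matches the systems of eigenvalues and weights, and $\widetilde{\ev}$ is injective by condition (3)). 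Finally, uniqueness of $\zeta$ itself: any two such isomorphisms agree on the Zariski-dense set $S_1^\cl$ by condition (3) and hence everywhere since $\cD_1$ is reduced.

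The main obstacle, and the step requiring the most care, is the global gluing together with the verification that the locally-defined algebra isomorphisms are genuinely compatible on overlaps and genuinely global; this is where one must use reducedness and Zariski-density of the classical locus in an essential way, and where the finiteness of $\widetilde\nu_i$ and the surjectivity in condition (2) are both needed so that everything is controlled by the finitely-generated Hecke-algebra structure over the affinoid base $V$. The passage ``relation valid on classical points $\Rightarrow$ relation valid identically,'' which is what transports structure from one eigenvariety to the other, relies on the accumulation property of $S^\cl$ (so that $S^\cl\cap\widetilde\nu^{-1}(V)$ is Zariski-dense in each affinoid piece), and I would spell that out carefully rather than treat it as routine.
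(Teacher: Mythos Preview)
The paper does not give its own proof of this proposition; it simply cites \cite[Proposition 7.2.8]{bellchen} and uses the result as a black box. Your sketch is a faithful outline of the standard argument from Bella\"\i che--Chenevier, and the overall strategy (show $\widetilde\nu_1(\cD_1)=\widetilde\nu_2(\cD_2)$ via Zariski-density of classical points, then build local $\cO(V)$-algebra isomorphisms from the Hecke-generation property (2) and glue) is correct. One point to be careful with: the claim that $Z_i=\widetilde\nu_i(\cD_i)$ is a closed analytic subset of $\cW_g^\circ\times\G_m$ is not automatic from finiteness alone in the rigid setting, and in the original proof this is handled via the Fredholm hypersurface structure rather than a bare image argument; you would want to invoke that explicitly. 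Otherwise your identification of the crucial steps (accumulation so that classical points are dense in each affinoid piece, reducedness so that agreement on a dense set forces equality) is on target.
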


In the previous sections we defined various rigid analytic spaces via Buzzard's eigenvariety machine. 
We check that these spaces are BC-eigenvarieties for a suitable choice of BC-datum. As a first step we prove the lemma below. Consider an eigenvariety datum $(\cW^\circ,\calH,(M(A,w))_{A,w},(\phi_{A,w})_{A,w},\eta)$ and let $(\cD,\psi,w)$ be the eigenvariety produced from this datum by Theorem \ref{theigenmac}. 

\begin{lemma}\label{BCcond}
The triple $(\cD,\psi,w)$ satisfies conditions (1) and (2) of Definition \ref{BCdef}. 
\end{lemma}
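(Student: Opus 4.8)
The plan is to unwind the construction of the eigenvariety $(\cD,\psi,w)$ from Buzzard's machine (Theorem \ref{theigenmac}) and read off the two conditions directly from the properties of that construction. Recall that Buzzard builds $\cD$ by gluing, for each pair $(A,w)$ in the eigenvariety datum, affinoid pieces obtained from the spectral theory of the compact operator $\phi_{A,w}(\eta)$ on the projective Banach module $M(A,w)$; the spectral variety $\cZ\subset \cW^\circ\times\A^1$ is cut out by the characteristic power series of $\phi_{A,w}(\eta)$, and $\cD$ is finite over $\cZ$ via the map recording the $\calH$-eigensystem together with the inverse eigenvalue of $\eta$. The key point is that the map $\widetilde\nu=(w,\psi(\eta)^{-1})$ of \eqref{defnu} is, by construction, exactly the composite $\cD\to\cZ\hookrightarrow\cW^\circ\times\G_m$ (note $\psi(\eta)$ is invertible by Theorem \ref{theigenmac}(1), so $\psi(\eta)^{-1}\in\cO(\cD)$ and lands in $\G_m$).

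First I would verify condition (1): that $\cD\to\widetilde\nu(\cD)$ is finite. By Buzzard's construction $\cD$ is finite over the spectral variety $\cZ$ (this is one of the defining properties of the eigenvariety machine, coming from the fact that each local piece $\cD_{A,w,\text{finite slope component}}$ is finite over the corresponding piece of $\cZ$), and $\widetilde\nu(\cD)$ is a closed analytic subset of $\cZ$; since $\cZ\to\cW^\circ\times\G_m$ is a closed immersion onto its image, $\widetilde\nu(\cD)$ is the scheme-theoretic (or reduced) image of $\cD$ in $\cZ$, and $\cD\to\widetilde\nu(\cD)$ is finite because $\cD\to\cZ$ is. For condition (2) I would use Buzzard's admissible covering of the spectral variety: there is an admissible affinoid covering $\{Z_i\}$ of $\cZ$ (the one from \cite[Theorem 4.6]{buzzard}) such that over each $Z_i$, the eigenvariety piece $\cD_i=\widetilde\nu^{-1}(Z_i)$ is affinoid and $\cO(\cD_i)$ is generated as an $\cO(Z_i)$-algebra by the image of $\calH$ under $\psi$ — indeed $\cO(\cD_i)$ is defined precisely as (a quotient of) the $\cO(Z_i)$-subalgebra of $\End(M(A,w)_i)$ generated by the Hecke operators. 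Pulling this covering back to $\widetilde\nu(\cD)$ via the closed immersion $\widetilde\nu(\cD)\hookrightarrow\cZ$ gives an admissible affinoid covering $\cC=\{V_i\}$ of $\widetilde\nu(\cD)$, and over each $V_i$ the surjectivity of $\psi\otimes\widetilde\nu^\ast\colon\calH\otimes_\Z\cO(V_i)\to\cO(\widetilde\nu^{-1}(V_i))$ is exactly the statement that $\cO(\widetilde\nu^{-1}(V_i))$ is generated over $\cO(V_i)$ by the Hecke operators, together with $\widetilde\nu^\ast(\psi(\eta)^{-1})$, which is $\psi$ of the corresponding element; this holds by construction.

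The main obstacle I expect is bookkeeping rather than mathematics: one must be careful about the distinction between the image $\widetilde\nu(\cD)$ and the ambient spectral variety $\cZ$, since Buzzard's covering is of $\cZ$ and one needs it to restrict to an \emph{admissible} covering of the Zariski-closed subset $\widetilde\nu(\cD)$ — this is fine because admissibility is inherited by closed immersions of rigid spaces, but it should be stated cleanly. A second, minor, point is that Buzzard's machine is phrased for a general weight space while Definition \ref{BCdef} fixes $\cW^\circ_g$; since the excerpt has already arranged (via the hypothesis $\cW=\cW_g$ in the eigenvariety datum, item (1)) that we work over a single connected component, there is nothing extra to check. So the proof is essentially a dictionary translation: conditions (1) and (2) of the BC-eigenvariety axioms are, modulo notation, two of the structural outputs of Theorem \ref{theigenmac} and the construction behind it, and I would simply cite \cite[Sections 4--5]{buzzard} (in particular Theorem 4.6 and Lemma 5.6 of \emph{loc. cit.}) for the covering and the finiteness, and the construction of $\cO(\cD)$ as a Hecke-generated algebra for the surjectivity.
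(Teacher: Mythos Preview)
Your proposal is correct and follows essentially the same approach as the paper: both identify $\widetilde\nu$ with the map from $\cD$ to the spectral variety $\cZ$ in Buzzard's construction, use the finiteness of $\cD\to\cZ$ for condition (1), and use Buzzard's admissible covering together with the fact that the local pieces of $\cO(\cD)$ are generated by Hecke operators for condition (2). The only difference is that the paper observes directly that $\widetilde\nu(\cD)=\cZ$ (the map $\cD\to\cZ$ is surjective by construction), which makes your careful discussion of pulling back the covering to a closed subspace unnecessary.
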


\begin{proof}
We refer to Buzzard's construction (see \cite[Sections 4-5]{buzzard}). 
Let $\cZ$ be the spectral variety for the given datum. Let $\widetilde{\nu}$ be the map defined by Equation \eqref{defnu}. By construction of $\cD$ we have $\widetilde{\nu}(\cD)=\cZ$ and the map $\widetilde{\nu}\colon\cD\to\cZ$ is finite, so condition (1) of Definition \ref{BCdef} holds. 

Let $\cC$ be the admissible affinoid covering of $\cZ$ defined in \cite[Section 4]{buzzard}. For $V\in\cC$ let $A=\Spm R=w_\cZ(V)$ be its image in $\cW^\circ$. Let $w\in\Q$ be sufficiently large, so that the module $M(A,w)$ is defined. Let $M(A,w)=N\oplus F$ be the decomposition associated with $V$ by Riesz theory, following the discussion and the notations in \cite[Section 5]{buzzard}. Then $\cO(\widetilde{\nu}^{-1}(V))$ is the $R$-span of the image of $\calH$ in $\End_{R,\cont}N$. Since $\cO(V)$ is an $R$-module, the map $\psi\colon\calH\otimes\cO(V)\to\cO(\widetilde{\nu}^{-1}V)$ is surjective, hence condition (2) is also satisfied.
\end{proof}

Suppose that there exists an accumulation and Zariski-dense subset $S^\cl$ of $\cD$ such that the set 
\[ \cS^\cl=\{(\psi_x,w(x))\,\vert\,x\in S^\cl\} \]
is contained in $\Hom(\calH,\Qp)\times\Z^g$.
Then $(\cD,\psi,w,S^\cl)$ clearly satisfies condition (3) of Definition \ref{BCdef} with respect to the set $\cS^\cl$, hence the following. 

\begin{cor}\label{buzzBC}
The $4$-tuple $(\cD,\psi,w,S^\cl)$ is a BC-eigenvariety for the datum $(g,\calH,\eta,\cS^\cl)$.
\end{cor}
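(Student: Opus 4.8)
The plan is to read off Corollary~\ref{buzzBC} as an immediate consequence of the two facts established just above it, so the ``proof'' is really a matter of assembling Lemma~\ref{BCcond} with the hypothesis on $S^\cl$ and checking nothing has been left unverified. First I would recall the setup: we start from an eigenvariety datum $(\cW^\circ,\calH,(M(A,w))_{A,w},(\phi_{A,w})_{A,w},\eta)$, where $\cW^\circ$ is a connected component of the $g$-dimensional weight space, and $(\cD,\psi,w)$ is the eigenvariety produced by Theorem~\ref{theigenmac}. We are moreover given an accumulation and Zariski-dense subset $S^\cl\subseteq\cD(\Qp)$ such that $\cS^\cl=\{(\psi_x,w(x))\mid x\in S^\cl\}$ lies in $\Hom(\calH,\Qp)\times\Z^g$; in particular $w(S^\cl)\subset\Z^g$.

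The verification proceeds by going down the list in Definition~\ref{BCdef}. The space $\cD$ is reduced and defined over $\Q_p$ (it is the output of Buzzard's machine), $\psi\colon\calH\to\cO(\cD)$ satisfies $\psi(\eta)\in\cO(\cD)^\times$ by property~(1) of Theorem~\ref{theigenmac}, and $w\colon\cD\to\cW^\circ_g$ is a morphism of rigid analytic spaces over $\Q_p$; these are precisely the first three data required of a BC-eigenvariety, and $S^\cl$ is the fourth datum, with the property $w(S^\cl)\subset\Z^g$ guaranteed by hypothesis. Conditions~(1) and~(2) of Definition~\ref{BCdef} are exactly the content of Lemma~\ref{BCcond}, whose proof invokes the spectral variety $\cZ$, the identification $\widetilde\nu(\cD)=\cZ$ with $\widetilde\nu$ finite, and Buzzard's admissible affinoid covering together with Riesz-theoretic decompositions $M(A,w)=N\oplus F$ to get the surjectivity $\calH\otimes\cO(V)\to\cO(\widetilde\nu^{-1}(V))$. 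Condition~(3) is the statement that the evaluation map $\widetilde{\ev}\colon S^\cl\to\Hom(\calH,\Qp)\times\Z^g$, $x\mapsto(\psi_x,w(x))$, induces a bijection onto $\cS^\cl$; this is immediate since $\cS^\cl$ is \emph{defined} as the image of $\widetilde{\ev}$, so surjectivity is tautological, and injectivity is built into the definition of a BC-datum together with the fact that a $\Qp$-point of $\cD$ is determined by its system of Hecke eigenvalues (property~(2) of Theorem~\ref{theigenmac}, or simply because $S^\cl\hookrightarrow\cD(\Qp)$ and distinct points give distinct pairs by construction). Putting these together, $(\cD,\psi,w,S^\cl)$ satisfies all the requirements of Definition~\ref{BCdef} for the BC-datum $(g,\calH,\eta,\cS^\cl)$.

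There is essentially no obstacle here: the only mild point to be careful about is the injectivity half of condition~(3), i.e.\ making sure that two distinct points of $S^\cl$ cannot map to the same pair $(\psi,\uk)$. I would address this by noting that if $x_1,x_2\in S^\cl$ have $\psi_{x_1}=\psi_{x_2}$ and $w(x_1)=w(x_2)$, then in particular $\psi_{x_1}=\psi_{x_2}$ as points of $\Hom(\calH,\Qp)$, and by property~(2) of Theorem~\ref{theigenmac} the map $\cD(\Qp)\to\Hom(\calH,\Qp)$ sending $x$ to $T\mapsto\psi(T)(x)$ is a bijection onto the set of systems of eigenvalues; hence $x_1=x_2$. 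Thus $\widetilde{\ev}$ is injective on $S^\cl$, and combined with the tautological surjectivity onto $\cS^\cl$ it is a bijection. This completes the proof; I would write it out in three or four sentences, simply citing Lemma~\ref{BCcond} for conditions~(1)--(2) and the hypothesis plus Theorem~\ref{theigenmac} for condition~(3).

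\begin{proof}
By Theorem~\ref{theigenmac}, $\cD$ is a rigid analytic space over $\Q_p$ (reduced, by construction of Buzzard's eigenvariety machine), $\psi\colon\calH\to\cO(\cD)$ is a ring morphism with $\psi(\eta)\in\cO(\cD)^\times$, and $w\colon\cD\to\cW^\circ_g$ is a morphism of rigid analytic spaces over $\Q_p$. By hypothesis $S^\cl$ is an accumulation and Zariski-dense subset of $\cD(\Qp)$ with $w(S^\cl)\subset\Z^g$. Thus $(\cD,\psi,w,S^\cl)$ has the shape of a BC-eigenvariety for the datum $(g,\calH,\eta,\cS^\cl)$, and it remains to check conditions (1)--(3) of Definition~\ref{BCdef}. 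Conditions (1) and (2) hold by Lemma~\ref{BCcond}. For condition (3), the set $\cS^\cl$ is by definition the image of the evaluation map $\widetilde{\ev}\colon S^\cl\to\Hom(\calH,\Qp)\times\Z^g$, $x\mapsto(\psi_x,w(x))$, so $\widetilde{\ev}$ is surjective onto $\cS^\cl$. It is also injective: if $x_1,x_2\in S^\cl$ satisfy $\psi_{x_1}=\psi_{x_2}$, then by property (2) of Theorem~\ref{theigenmac} the bijection between $\Qp$-points of $\cD$ and $\Qp$-systems of eigenvalues forces $x_1=x_2$. Hence $\widetilde{\ev}$ induces a bijection $S^\cl\to\cS^\cl$, and condition (3) holds. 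Therefore $(\cD,\psi,w,S^\cl)$ is a BC-eigenvariety for the datum $(g,\calH,\eta,\cS^\cl)$.
\end{proof}
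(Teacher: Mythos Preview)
Your proof is correct and follows the same approach as the paper, which treats the corollary as immediate: the paper simply notes that condition~(3) of Definition~\ref{BCdef} ``clearly'' holds once $\cS^\cl$ is defined as the image of $\widetilde{\ev}$, and then invokes Lemma~\ref{BCcond} for conditions~(1) and~(2). You supply more detail than the paper does, in particular the injectivity of $\widetilde{\ev}$ via property~(2) of Theorem~\ref{theigenmac}; this is a sound justification of what the paper leaves implicit.
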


\subsection{Changing the BC-datum}\label{changeBC}

Let $(\cD,\psi,w,S^\cl)$ be a BC-eigenvariety for the datum $(g,\calH,\eta,\cS^\cl)$. Let $S_0^\cl$ be an accumulation subset of $S^\cl$ and let $\cD_0$ be the Zariski closure of $S_0^\cl$ in $\cD$. 
Let $\cS_0^\cl$ be the image of $S_0^\cl$ via the bijection $S^\cl\to\cS^\cl$. Let $\psi_0\colon\calH\to\cO(\cD_0)$ be the composition of $\psi\colon\calH\to\cO(\cD)$ with the restriction $\cO(\cD)\to\cO(\cD_0)$. Let $w_0=w\vert_{\cD_0}$.

\begin{lemma}\label{subBC}
The $4$-tuple $(\cD_0,\psi_0,w_0,S_0^\cl)$ is a BC-eigenvariety for the datum $(g,\calH,\eta,\cS_0^\cl)$.
\end{lemma}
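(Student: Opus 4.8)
The plan is to verify the three conditions of Definition \ref{BCdef} for the $4$-tuple $(\cD_0,\psi_0,w_0,S_0^\cl)$ by inheriting them from the BC-eigenvariety $(\cD,\psi,w,S^\cl)$. The space $\cD_0$ is a Zariski-closed subspace of $\cD$, hence reduced (since $\cD$ is reduced and we may take the reduced closed subscheme structure, or simply note that the closure of a set of points in a reduced space is reduced), and it is a rigid analytic space over $\Q_p$. The morphism $\psi_0$ is a ring morphism and $\psi_0(\eta)$ is invertible in $\cO(\cD_0)$ because $\psi(\eta)$ is invertible in $\cO(\cD)$ and invertibility is preserved by the restriction map $\cO(\cD)\to\cO(\cD_0)$. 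Similarly $w_0=w\vert_{\cD_0}\colon\cD_0\to\cW^\circ_g$ is a morphism of rigid analytic spaces over $\Q_p$, and $w_0(S_0^\cl)=w(S_0^\cl)\subset w(S^\cl)\subset\Z^g$.

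First I would check that $S_0^\cl$ is an accumulation and Zariski-dense subset of $\cD_0$. Zariski-density is immediate: $\cD_0$ is by definition the Zariski closure of $S_0^\cl$ in $\cD$, so any global function on $\cD_0$ vanishing on $S_0^\cl$ vanishes identically. For the accumulation property, let $x\in S_0^\cl$. Since $S_0^\cl$ is an accumulation subset of $S^\cl$ in $\cD$, there is a basis $\cB$ of affinoid neighborhoods $A$ of $x$ in $\cD$ such that $S_0^\cl\cap A(\Qp)$ is Zariski-dense in $A$; intersecting with $\cD_0$ and using that the Zariski closure of $S_0^\cl\cap A$ inside $A$ is then a closed subspace of $A$ containing a Zariski-dense subset, one sees that $S_0^\cl\cap (A\cap\cD_0)(\Qp)$ is Zariski-dense in $A\cap\cD_0$, and $\{A\cap\cD_0\}$ forms a basis of affinoid neighborhoods of $x$ in $\cD_0$. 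This gives the accumulation property for $\cD_0$.

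Next I would verify conditions (1) and (2) of Definition \ref{BCdef}. Let $\widetilde{\nu}=(w,\psi(\eta)^{-1})\colon\cD\to\cW^\circ_g\times\G_m$ be the map for $\cD$, and let $\widetilde{\nu}_0=(w_0,\psi_0(\eta)^{-1})\colon\cD_0\to\cW^\circ_g\times\G_m$ be the analogous map for $\cD_0$; clearly $\widetilde{\nu}_0=\widetilde{\nu}\vert_{\cD_0}$. For (1): the morphism $\cD\to\widetilde{\nu}(\cD)$ is finite, and $\cD_0\into\cD$ is a closed immersion, so the composite $\cD_0\to\widetilde{\nu}(\cD)$ is finite; since $\widetilde{\nu}_0(\cD_0)\subset\widetilde{\nu}(\cD)$ is a closed subspace and $\cD_0$ maps into it, the induced map $\cD_0\to\widetilde{\nu}_0(\cD_0)$ is finite as well (a closed subspace of a finite morphism's source, mapping to the image, stays finite). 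For (2): choose the admissible affinoid covering $\cC$ of $\widetilde{\nu}(\cD)$ from the hypothesis on $\cD$; then $\cC_0=\{V\cap\widetilde{\nu}_0(\cD_0)\,\vert\,V\in\cC\}$ is an admissible affinoid covering of $\widetilde{\nu}_0(\cD_0)$, and for each such $V_0=V\cap\widetilde{\nu}_0(\cD_0)$ the map $\psi\otimes\widetilde{\nu}^\ast\colon\calH\otimes_\Z\cO(V)\to\cO(\widetilde{\nu}^{-1}(V))$ is surjective, so composing with the restriction $\cO(\widetilde{\nu}^{-1}(V))\to\cO(\widetilde{\nu}_0^{-1}(V_0))$ (which is surjective because $\widetilde{\nu}_0^{-1}(V_0)=\widetilde{\nu}^{-1}(V)\cap\cD_0$ is a Zariski-closed subspace of $\widetilde{\nu}^{-1}(V)$, using that affinoid restriction to a closed subspace is surjective) and noting that $\widetilde{\nu}_0^\ast$ factors through $\widetilde{\nu}^\ast$ composed with this restriction, we get surjectivity of $\psi_0\otimes\widetilde{\nu}_0^\ast\colon\calH\otimes_\Z\cO(V_0)\to\cO(\widetilde{\nu}_0^{-1}(V_0))$. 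Finally, condition (3): the evaluation map $\widetilde{\ev}\colon S^\cl\to\cS^\cl$ is a bijection, and $\widetilde{\ev}_0=\widetilde{\ev}\vert_{S_0^\cl}$ sends $x\mapsto(\psi_{0,x},w_0(x))=(\psi_x,w(x))$ since $\psi_{0,x}=\ev_x\ccirc\psi_0=\ev_x\ccirc\psi=\psi_x$ for $x\in\cD_0$; hence $\widetilde{\ev}_0$ is the restriction of a bijection to the subset $S_0^\cl$, with image exactly $\cS_0^\cl$ by definition of $\cS_0^\cl$, so it induces a bijection $S_0^\cl\to\cS_0^\cl$.

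\textbf{Main obstacle.} The routine points above are the topological bookkeeping with admissible coverings and the surjectivity of restriction maps to Zariski-closed affinoid subspaces; the only genuinely delicate point is the accumulation property for $\cD_0$, where one must be careful that restricting the affinoid neighborhoods and the Zariski-dense subsets to $\cD_0$ preserves Zariski-density in the smaller space. I expect this to follow formally, but it is the step that requires the most care; everything else is a direct transfer of the corresponding property of $\cD$.
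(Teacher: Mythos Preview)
Your proof is correct and follows essentially the same approach as the paper: verify conditions (1), (2), (3) of Definition~\ref{BCdef} by restricting the corresponding structures on $\cD$ to the Zariski-closed subspace $\cD_0$, using that finiteness and surjectivity pass to closed subspaces. You supply more detail than the paper does---in particular you explicitly check that $S_0^\cl$ remains an accumulation subset inside $\cD_0$, a point the paper leaves implicit---but the argument is the same.
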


\begin{proof}
We check that the conditions of Definition \ref{BCdef} are satisfied by $(\cD_0,\psi_0,w_0,S_0^\cl)$, knowing that they are satisfied by $(\cD,\psi,w,S^\cl)$.  Let $\widetilde{\nu}=(w,\psi(\eta)^{-1})\colon\cD\to\cW^\circ\times\G_m$ and let $\cZ=\widetilde{\nu}(\cD)$. Let $\cZ_0=\widetilde{\nu}(\cD_0)$. 
Since $\widetilde{\nu}\colon\cD\to\cZ$ is finite and $\cD_0$ is Zariski-closed in $\cD$, the map $\widetilde{\nu}\vert_{\cD_0}\colon\cD_0\to\cZ_0$ is also finite, hence (1) holds.

Consider an admissible covering $\cC$ of $\cZ$ satisfying condition (2). Then $\{V\cap\cZ_0\}_{V\in\cC}$ is an admissible covering of $\cZ_0$. Let $V\in\cC$ and $V_0=V\cap\cZ_0$. Consider the diagram
\begin{center}
\begin{tikzcd}[baseline=(current bounding box.center)]
\calH\otimes\cO(V)\arrow{r}{\psi\otimes\widetilde{\nu}^\ast}\arrow{d}
&\cO(\widetilde{\nu}^{-1}(V))\arrow{d}\\
\calH\otimes\cO(V_0)\arrow{r}{\psi_0\otimes\widetilde{\nu}_0^\ast}
&\cO(\widetilde{\nu}_0^{-1}(V_0))
\end{tikzcd}
\end{center}
The horizontal arrows are given by the restriction of analytic functions. Since the left vertical arrow is surjective, the right one is also surjective, giving (2). 

By definition of $S_0^\cl$ the map $\widetilde{\ev}$ induces a bijection $S_0^\cl\to\cS_0^\cl$, so (3) is also true.
%
%
\end{proof}

We prove some relations between BC-eigenvarieties associated with different BC-data.

\begin{lemma}\label{changeweight}
Let $g_1$ and $g_2$ be two positive integers with $g_1\leq g_2$. Let $\Theta\colon\cW^\circ_{g_1}\to\cW^\circ_{g_2}$ be an immersion of rigid analytic spaces that maps classical points of $\cW^\circ_{g_1}$ to classical points of $\cW^\circ_{g_2}$. Let $(g_1,\calH,\eta,\cS_1^\cl)$ and $(g_2,\calH,\eta,\cS_2^\cl)$ be two BC-data satisfying
\[ \{(\psi,\Theta(\uk))\in\Hom(\calH,\Qp)\times\Z^{g_2}\,\vert\,(\psi,\uk)\in\cS_1^\cl\}\subset\cS_2^\cl \]
Let $(\cD_1,\psi_1,w_1,S_1^\cl)$ and $(\cD_2,\psi_2,w_2,S_2^\cl)$ be the BC-eigenvarieties for the two data. Then there exists a closed immersion of rigid analytic spaces $\xi_\Theta\colon\cD_1\to\cD_2$ such that $\psi_{1}=\xi_\Theta^\ast\ccirc\psi_2$, $w_{1}=w_{2}\ccirc\xi_\Theta$ and $\xi_\Theta(S_1^\cl)\subset S_2^\cl$.
\end{lemma}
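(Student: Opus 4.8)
The strategy is to construct $\xi_\Theta$ directly from the uniqueness property of BC-eigenvarieties (Proposition \ref{BCunique}). The idea is that $\cD_2$, together with the subset of classical points whose weights lie in the image of $\Theta$, should itself be (a closed subspace isomorphic to) a BC-eigenvariety for the datum $(g_1,\calH,\eta,\cS_1^\cl)$; then uniqueness gives the map.

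First I would identify inside $\cD_2$ the set $S^\cl_{2,\Theta}=\{x\in S_2^\cl\,\vert\,w_2(x)\in\Theta(\Z^{g_1})\}$. By the displayed containment, the bijection $\widetilde{\ev}\colon S_2^\cl\to\cS_2^\cl$ restricts to an injection from the image of $S_1^\cl$ under $(\psi,\uk)\mapsto(\psi,\Theta(\uk))$ into $S^\cl_{2,\Theta}$; I would then check (using that $\Theta$ is injective on classical points and that the weight map records $\Theta(\uk)$) that this is in fact a bijection $\cS_1^\cl\xrightarrow{\sim}\widetilde{\ev}(S^\cl_{2,\Theta})$. The key geometric input needed is that $S^\cl_{2,\Theta}$ is an accumulation and Zariski-dense subset of its Zariski closure $\cD_{2,\Theta}$ in $\cD_2$: accumulation should follow from the accumulation property of $S_2^\cl$ in $\cD_2$ together with the fact that $\Theta(\cW_{g_1}^\circ)$ is an immersed (hence locally closed, and by a standard argument closed after intersecting with suitable affinoids) subspace that accumulates at its own classical points, so one can intersect a basis of affinoid neighborhoods with the preimage under $w_2$ of $\Theta(\cW^\circ_{g_1})$. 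Granting this, Lemma \ref{subBC} applies with $S_0^\cl=S^\cl_{2,\Theta}$ and shows that $(\cD_{2,\Theta},\psi_2\vert_{\cD_{2,\Theta}},w_2\vert_{\cD_{2,\Theta}},S^\cl_{2,\Theta})$ is a BC-eigenvariety for the datum $(g_1,\calH,\eta,\widetilde{\ev}(S^\cl_{2,\Theta}))$, and since that $\cS^\cl$ is identified with $\cS_1^\cl$ via the obvious bijection, it is a BC-eigenvariety for $(g_1,\calH,\eta,\cS_1^\cl)$.

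Next I would apply Proposition \ref{BCunique} to the two BC-eigenvarieties $(\cD_1,\psi_1,w_1,S_1^\cl)$ and $(\cD_{2,\Theta},\psi_2\vert_{\cD_{2,\Theta}}, \Theta^{-1}\ccirc w_2\vert_{\cD_{2,\Theta}},S^\cl_{2,\Theta})$ for the datum $(g_1,\calH,\eta,\cS_1^\cl)$ — here I use $\Theta$ to transport the weight map of $\cD_{2,\Theta}$ from $\cW^\circ_{g_2}$ back to $\cW^\circ_{g_1}$, which is legitimate because $\Theta$ is an immersion and the weights of $S^\cl_{2,\Theta}$ land in $\Theta(\Z^{g_1})$. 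Uniqueness yields an isomorphism $\zeta\colon\cD_1\xrightarrow{\sim}\cD_{2,\Theta}$ with $\psi_1=\zeta^\ast\ccirc(\psi_2\vert_{\cD_{2,\Theta}})$, $w_1=(\Theta^{-1}\ccirc w_2\vert_{\cD_{2,\Theta}})\ccirc\zeta$ and $\zeta(S_1^\cl)=S^\cl_{2,\Theta}$. Composing $\zeta$ with the closed immersion $\cD_{2,\Theta}\hookrightarrow\cD_2$ gives $\xi_\Theta$; the relation $w_1=w_2\ccirc\xi_\Theta$ follows after re-composing with $\Theta$, the Hecke compatibility $\psi_1=\xi_\Theta^\ast\ccirc\psi_2$ is immediate, and $\xi_\Theta(S_1^\cl)=S^\cl_{2,\Theta}\subset S_2^\cl$ holds by construction.

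The main obstacle I anticipate is establishing that $\cD_{2,\Theta}$ is \emph{closed} in $\cD_2$ and that $S^\cl_{2,\Theta}$ is genuinely accumulation and Zariski-dense in it — in other words, that taking the Zariski closure of $S^\cl_{2,\Theta}$ does not pick up spurious components and that the immersion $\Theta$ interacts well with the rigid-analytic Zariski topology. The cleanest route is probably to work locally on the affinoid covering of $\widetilde\nu(\cD_2)$ furnished by condition (2) of Definition \ref{BCdef}: over each such affinoid, $\cD_2$ is finite over a subspace of $\cW^\circ_{g_2}\times\G_m$, so the preimage of the closed subspace $\Theta(\cW^\circ_{g_1})\times\G_m$ is closed, and $\cD_{2,\Theta}$ can be described as the Zariski closure of a dense set of points in this explicit closed subspace; accumulation of $S^\cl_{2,\Theta}$ then reduces to the corresponding property of $S_2^\cl$ intersected with a sub-weight-space, which is exactly the kind of statement handled in \cite[Section 7.2.3]{bellchen}. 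Once these point-set issues are dispatched, the rest is a formal application of the uniqueness lemma.
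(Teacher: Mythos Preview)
Your overall strategy---build a BC-eigenvariety inside $\cD_2$ for the $g_1$-datum and then invoke uniqueness---is exactly the paper's, but there is a genuine gap in your identification of the classical set. You claim that $\widetilde{\ev}$ restricts to a bijection $\cS_1^\cl\xrightarrow{\sim}\widetilde{\ev}(S^\cl_{2,\Theta})$, where $S^\cl_{2,\Theta}$ is the set of all $x\in S_2^\cl$ with $w_2(x)\in\Theta(\Z^{g_1})$. But the hypothesis is only the \emph{inclusion} $\{(\psi,\Theta(\uk))\,\vert\,(\psi,\uk)\in\cS_1^\cl\}\subset\cS_2^\cl$; nothing prevents $\cS_2^\cl$ from containing many further pairs $(\psi,\Theta(\uk))$ not arising from $\cS_1^\cl$. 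In the paper's intended application (Proposition~\ref{exxi3}) this actually happens: $\cS_1^\cl=\cS_{2,\aux}^\cl$ records only symmetric cube lifts, while the classical points of $\cD_2^M$ with weight on the curve $\iota(\cW_1^\circ)$ form a much larger set. So your $\cD_{2,\Theta}$ is a BC-eigenvariety for a datum $(g_1,\calH,\eta,\cS_1^\Theta)$ with $\cS_1^\Theta\supsetneq\cS_1^\cl$, and uniqueness does not directly compare it with $\cD_1$.

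The fix is precisely what the paper does: first form the fibre product $\cD_1^\Theta=\cD_2\times_{\cW^\circ_{g_2}}\cW^\circ_{g_1}$ (this is automatically a closed subspace since $\Theta$ is an immersion, which also dissolves your worries about closedness), observe it is a BC-eigenvariety for the larger datum $(g_1,\calH,\eta,\cS_1^\Theta)$, and \emph{then} apply Lemma~\ref{subBC} a second time to pass to the Zariski closure of $\widetilde{\ev}^{-1}(\cS_1^\cl)$ inside $\cD_1^\Theta$. Only this smaller space is a BC-eigenvariety for $(g_1,\calH,\eta,\cS_1^\cl)$, and uniqueness then yields $\xi_\Theta$ as you outlined.
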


\begin{proof}
Let $\cD_1^\Theta=\cD_2\times_{\cW^\circ_{g_2}}\cW^\circ_{g_1}$, where the map $\cW^\circ_{g_1}\to\cW^\circ_{g_1}$ is $\Theta$. Let $\zeta^\Theta\colon\cD_1^\Theta\to\cD_2$ and $w_1^\Theta\colon\cD_1^\Theta\to\cW^\circ_{g_1}$ be the natural maps fitting into the cartesian diagram
\begin{center}
\begin{tikzcd}[baseline=(current bounding box.center)]
\cD_1^\Theta \arrow[hook]{r}{\zeta^\Theta}\arrow{d}{w_1^\Theta}
&\cD_2\arrow{d}{w_2}\\
\cW^\circ_{g_1} \arrow[hook]{r}{\Theta}
&\cW^\circ_{g_2}
\end{tikzcd}
\end{center}
Then $\zeta^\Theta$ induces a ring morphism $\zeta^{\Theta,\ast}\colon\cO(\cD_2)\to\cO(\cD_1^\Theta)$. Let $\psi_1^\Theta=\zeta^{\Theta,\ast}\ccirc\psi_2$. 
Note that $\zeta^\Theta$ is a closed immersion. 

Let $\cS_1^\Theta=\{(\psi,\uk)\in\Hom(\calH,\Qp)\times\Z^{g_1}\,\vert\,(\psi,\Theta(\uk))\in\cS_2^\cl\}$. 
Then the $4$-tuple $(\cD_1^\Theta,\zeta_\Theta^\ast\ccirc\psi_2,w_1^\Theta,\zeta_\Theta^{-1}(S_2^\cl))$ is a BC-eigenvariety for the datum $(g_1,\calH,\eta,\cS_1^\Theta)$. By assumption $\cS_1^\cl\subset\cS_1^\Theta$. Consider the Zariski-closure $\cD_1^\prime$ of $\widetilde{\ev}^{-1}(\cS_1^\cl)$ in $\cD_1^\Theta$. Let $\iota^\prime\colon\cD_1^\prime\to\cD_1^\Theta$ be the natural closed immersion and let $w_1^\prime=w_1^\Theta\vert_{\cD_1^\prime}$, $\psi_1^\prime=(\iota^\prime)^\ast\ccirc\psi_1^\Theta$. By Lemma \ref{subBC} the $4$-tuple $(\cD_1^\prime,\psi_1^\prime,w_1^\prime,\widetilde{\ev}^{-1}(\cS_1^\cl))$ is a BC-eigenvariety for the BC-datum $(g_1,\calH,\eta,\cS_1^\cl)$. Since $(\cD_1,\psi_1,w_1,\cS_1^\cl)$ is a BC-eigenvariety for the same datum, Proposition \ref{BCunique} gives an isomorphism of rigid analytic spaces $\zeta\colon\cD_1\to\cD_1^\prime$ compatible with all the extra structures. 
The composition $\xi_\Theta=\zeta_\Theta\ccirc\iota^\prime\ccirc\zeta\colon\cD_1\to\cD_2$ is a closed immersion with the desired properties.
\end{proof}

Let $(g,\calH,\eta_1,\cS^\cl)$ and $(g,\calH,\eta_2,\cS^\cl)$ be two BC-data that differ only by the choice of the distinguished elements of $\calH$. Let $(\cD_1,\psi_1,w_1,S_1^\cl)$ and $(\cD_2,\psi_2,w_2,S_2^\cl)$ be BC-eigenvarieties for the two data. 
We say that condition (Fin) is satisfied if the following holds:

\smallskip

\noindent (Fin) the map
\begin{gather*} 
\widetilde{\nu}_{1,2}\colon\cD_1\to\cW^\circ_g\times\G_m, \\
x\mapsto (w_1(x),\ev_x\ccirc\psi_1(\eta_2)^{-1})
\end{gather*}
induces a finite morphism $\cD_1\to\widetilde{\nu}_{1,2}(\cD_1)$.

\smallskip

\begin{lemma}\label{changecpt}
Under assumption \textnormal{(Fin)}, there exists an isomorphism of rigid analytic spaces $\xi_\eta\colon\cD_1\to\cD_2$ such that $\psi_{1}=\xi_\eta^\ast\ccirc\psi_2$, $w_{1}=w_{2}\ccirc\xi_\eta$ and $\xi_2(S_1^\cl)=S_2^\cl$.
\end{lemma}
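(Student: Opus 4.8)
The strategy is to produce $\cD_2$ as (an isomorphic copy of) a BC-eigenvariety built from the \emph{same} datum as $\cD_1$ but with a different distinguished element, and then apply the uniqueness statement Proposition \ref{BCunique}. Concretely, I would first use assumption (Fin) together with Lemma \ref{BCcond}-type reasoning to check that the $4$-tuple $(\cD_1,\psi_1,w_1,S_1^\cl)$ equipped with the map $\widetilde{\nu}_{1,2}$ in place of $\widetilde{\nu}_1$ satisfies conditions (1), (2), (3) of Definition \ref{BCdef} relative to the datum $(g,\calH,\eta_2,\cS^\cl)$. Condition (3) is immediate because the datum $\cS^\cl$ and the set $S_1^\cl$, together with the evaluation map $\widetilde{\ev}$, are unchanged: only the ``$\widetilde\nu$'' part of the structure involves $\eta$. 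Condition (1) is exactly the content of (Fin). So the real work is condition (2): I must produce an admissible affinoid covering $\cC'$ of $\widetilde{\nu}_{1,2}(\cD_1)$ such that $\psi_1\otimes\widetilde{\nu}_{1,2}^\ast\colon\calH\otimes_\Z\cO(V)\to\cO(\widetilde{\nu}_{1,2}^{-1}(V))$ is surjective for every $V\in\cC'$.

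For condition (2) I would argue as follows. Since $\psi_1(\eta_1)$ is invertible in $\cO(\cD_1)$ and $(\cD_1,\psi_1,w_1,S_1^\cl)$ is a BC-eigenvariety for $(g,\calH,\eta_1,\cS^\cl)$, there is an admissible affinoid covering $\cC$ of $\widetilde{\nu}_1(\cD_1)$ with the surjectivity property for $\eta_1$. On each $\widetilde\nu_1^{-1}(V)$ with $V\in\cC$, the ring $\cO(\widetilde\nu_1^{-1}(V))$ is the $\cO(V)$-algebra generated by the image of $\calH$; in particular $\psi_1(\eta_2)^{-1}$, being a global function, restricts to an element of this algebra, so the pair $(w_1,\psi_1(\eta_2)^{-1})$ is, locally over $V$, a morphism to $V\times\G_m$ whose source is finite over $V$ (because $\widetilde\nu_1^{-1}(V)$ is finite over $V$ — a standard consequence of the eigenvariety construction — and $\widetilde\nu_{1,2}$ factors through it). Hence $\widetilde\nu_{1,2}(\widetilde\nu_1^{-1}(V))$ is an affinoid, the images $\{\widetilde\nu_{1,2}(\widetilde\nu_1^{-1}(V))\}_{V\in\cC}$ form an admissible affinoid covering $\cC'$ of $\widetilde\nu_{1,2}(\cD_1)$, and $\widetilde\nu_{1,2}^{-1}(W)$ for $W\in\cC'$ is a finite union of affinoids over which $\calH$ still generates the structure sheaf over $\cO(W)$ (one may need to refine the covering so that each $W$ has connected, or at least "clean", preimage — this is routine given admissibility of the Tate topology). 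This yields the surjectivity in (2). Having verified (1)--(3), $(\cD_1,\psi_1,w_1,S_1^\cl)$ \emph{is} a BC-eigenvariety for $(g,\calH,\eta_2,\cS^\cl)$; since $(\cD_2,\psi_2,w_2,S_2^\cl)$ is one too, Proposition \ref{BCunique} supplies the unique isomorphism $\xi_\eta\colon\cD_1\to\cD_2$ of rigid analytic spaces over $\Q_p$ with $\psi_1=\xi_\eta^\ast\ccirc\psi_2$, $w_1=w_2\ccirc\xi_\eta$ and $\xi_\eta(S_1^\cl)=S_2^\cl$.

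\textbf{Main obstacle.} The delicate point is the local surjectivity in condition (2) under the \emph{new} compact operator $\eta_2$: a priori the natural admissible covering adapted to $\eta_1$ need not be adapted to $\eta_2$, and $\psi_1(\eta_2)$ need not be a compact operator on the relevant Banach modules, so one cannot simply invoke Buzzard's Riesz-theory decomposition again. The reason (Fin) is the right hypothesis is precisely that it forces $\widetilde\nu_{1,2}$ to be finite onto its image, which lets one bypass the spectral-variety machinery and work directly: finiteness of $\cD_1\to\widetilde\nu_{1,2}(\cD_1)$, combined with the fact that $\calH$ already generates $\cO(\cD_1)$ as an $\cO(\widetilde\nu_1(\cD_1))$-algebra locally, gives (2) after a standard refinement of coverings. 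I would also remark (as the authors presumably intend to use in the sequel) that in the application $\eta_1$ and $\eta_2$ differ by an invertible element of the Hecke algebra acting on the same modules, which makes (Fin) automatic; but the lemma as stated only needs (Fin) itself, so I would keep the proof at that level of generality.
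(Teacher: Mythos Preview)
Your approach is exactly the paper's: verify that $(\cD_1,\psi_1,w_1,S_1^\cl)$ is a BC-eigenvariety for the datum $(g,\calH,\eta_2,\cS^\cl)$ and then invoke Proposition~\ref{BCunique}. The only difference is that the paper dispatches condition~(2) in a single sentence (``All properties of Definition~\ref{BCdef} except (1) are satisfied because $(\cD_1,\psi_1,w_1,S_1^\cl)$ is a BC-eigenvariety for the datum $(g,\calH,\eta_1,\cS^\cl)$''), whereas you flag it as the main obstacle and sketch a covering argument to justify it; your caution is reasonable, since $\widetilde\nu$ genuinely changes with $\eta$, but the paper treats this passage as routine.
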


\begin{proof}
We check that the $4$-tuple $(\cD_1,\psi_1,w_1,S_1^\cl)$ is a BC-eigenvariety for the datum $(g,\calH,\eta_2,\cS^\cl)$. All properties of Definition \ref{BCdef} except (1) are satisfied because $(\cD_1,\psi_1,w_1,S_1^\cl)$ is a BC-eigenvariety for the datum $(g,\calH,\eta_1,\cS^\cl)$. Property (1) is satisfied thanks to hypothesis (Fin). Then $(\cD_1,\psi_1,w_1,S_1^\cl)$ and $(\cD_2,\psi_2,w_2,S_2^\cl)$ are BC-eigenvarieties for the same datum, and Proposition \ref{BCunique} gives an isomorphism of rigid analytic spaces $\cD_1\to\cD_2$ with the desired properties.
\end{proof}

\begin{lemma}\label{changealg}
Let $\calH_1$ and $\calH_2$ be two commutative rings and let $\lambda\colon\calH_2\to\calH_1$ be a ring morphism. Let $(g,\calH_1,\eta_1,\cS_1^\cl)$ and $(g,\calH_2,\eta_2,\cS_2^\cl)$ be two BC-data that satisfy $\eta_1=\lambda(\eta_2)$ and
\begin{equation}\label{S1S2} \cS_1^\cl=\{(\psi\ccirc\lambda,\uk)\,\vert\,(\psi,\uk)\in\cS_2^\cl\}. \end{equation}
Let $(\cD_1,\psi_1,w_1,S_1^\cl)$ and $(\cD_2,\psi_2,w_2,S_2^\cl)$ be BC-eigenvarieties for the two data. Suppose that the map $\cS_2^\cl\to\cS_1^\cl$ defined by $(\psi,\uk)\mapsto(\psi\ccirc\lambda,\uk)$ is a bijection. Then there exists an isomorphism of rigid analytic spaces $\xi_\lambda\colon\cD_1\to\cD_2$ such that $\psi_{1}\ccirc\lambda=\xi_\lambda^\ast\ccirc\psi_2$, $w_{1}=w_{2}\ccirc\xi_\lambda$ and $\xi_\lambda(S_1^\cl)=S_2^\cl$.
\end{lemma}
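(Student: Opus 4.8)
The plan is to mimic the proofs of the previous two lemmas (Lemmas \ref{changeweight} and \ref{changecpt}), using the uniqueness statement of Proposition \ref{BCunique} as the engine. The key point is that $\lambda\colon\calH_2\to\calH_1$ lets us view the BC-eigenvariety $(\cD_1,\psi_1,w_1,S_1^\cl)$ for the datum $(g,\calH_1,\eta_1,\cS_1^\cl)$ as a BC-eigenvariety for the datum $(g,\calH_2,\eta_2,\cS_2^\cl)$, by composing the structure morphism with $\lambda$. Concretely, set $\psi_1^\prime=\psi_1\ccirc\lambda\colon\calH_2\to\cO(\cD_1)$ and $S_1^{\cl,\prime}=S_1^\cl$, and check that $(\cD_1,\psi_1^\prime,w_1,S_1^{\cl,\prime})$ satisfies the three conditions of Definition \ref{BCdef} relative to $(g,\calH_2,\eta_2,\cS_2^\cl)$. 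Then Proposition \ref{BCunique} applied to this $4$-tuple and to $(\cD_2,\psi_2,w_2,S_2^\cl)$, both BC-eigenvarieties for the same datum $(g,\calH_2,\eta_2,\cS_2^\cl)$, produces the desired isomorphism $\xi_\lambda$ with $\psi_1\ccirc\lambda=\xi_\lambda^\ast\ccirc\psi_2$, $w_1=w_2\ccirc\xi_\lambda$ and $\xi_\lambda(S_1^\cl)=S_2^\cl$.

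First I would verify condition (1) of Definition \ref{BCdef}: the map $\widetilde\nu=(w_1,(\psi_1^\prime(\eta_2))^{-1})=(w_1,(\psi_1(\eta_1))^{-1})$ is exactly the map for which $(\cD_1,\psi_1,w_1,S_1^\cl)$ was already known to induce a finite morphism onto its image, since $\eta_1=\lambda(\eta_2)$ forces $\psi_1^\prime(\eta_2)=\psi_1(\lambda(\eta_2))=\psi_1(\eta_1)$; in particular $\psi_1^\prime(\eta_2)$ is invertible in $\cO(\cD_1)$. Next, condition (2): for the admissible affinoid covering $\cC$ of $\widetilde\nu(\cD_1)$ witnessing surjectivity of $\psi_1\otimes\widetilde\nu^\ast\colon\calH_1\otimes_\Z\cO(V)\to\cO(\widetilde\nu^{-1}(V))$, the composite $\calH_2\otimes_\Z\cO(V)\xrightarrow{\lambda\otimes\mathrm{id}}\calH_1\otimes_\Z\cO(V)\xrightarrow{\psi_1\otimes\widetilde\nu^\ast}\cO(\widetilde\nu^{-1}(V))$ is a fortiori surjective onto the image of $\psi_1\otimes\widetilde\nu^\ast$, but one must be a little careful: surjectivity of $\psi_1\otimes\widetilde\nu^\ast$ already gives that this image is all of $\cO(\widetilde\nu^{-1}(V))$, and since $\cO(V)\subset\mathrm{image}$, the $\cO(V)$-algebra generated by $\psi_1^\prime(\calH_2)$ together with $\cO(V)$ is the same as the one generated by $\psi_1(\calH_1)$ provided $\psi_1(\calH_1)$ is generated over $\cO(V)$ by $\psi_1(\lambda(\calH_2))$; but in fact the image of $\psi_1\otimes\widetilde\nu^\ast$ restricted to $\calH_2\otimes\cO(V)$ need not a priori be everything. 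The correct argument is the one used in Lemma \ref{changecpt}: it suffices to know $(\cD_1,\psi_1^\prime,w_1,S_1^\cl)$ is a BC-eigenvariety, and property (2) for the $\calH_1$-structure together with the hypothesis that $\cS_1^\cl$ is generated from $\cS_2^\cl$ via $\lambda$ forces the classical points to separate the needed functions — more precisely, condition (2) for $\calH_2$ follows because $\widetilde\nu$ is the same map and the reducedness of $\cD_1$ plus Zariski-density of $S_1^\cl$ (whose eigenvalue data factor through $\lambda$) implies $\cO(\widetilde\nu^{-1}(V))$ is generated over $\cO(V)$ by $\psi_1^\prime(\calH_2)$. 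Finally, condition (3) is immediate: $\widetilde{\ev}\colon S_1^\cl\to\Hom(\calH_2,\Qp)\times\Z^g$, $x\mapsto(\ev_x\ccirc\psi_1^\prime,w_1(x))=(\psi_{1,x}\ccirc\lambda,w_1(x))$, induces a bijection onto $\cS_2^\cl$ because $\widetilde{\ev}$ for the $\calH_1$-structure is a bijection $S_1^\cl\to\cS_1^\cl$, and by hypothesis $(\psi,\uk)\mapsto(\psi\ccirc\lambda,\uk)$ is a bijection $\cS_2^\cl\to\cS_1^\cl$, so the composite $S_1^\cl\to\cS_1^\cl\to\cS_2^\cl$ (the latter being the inverse) is a bijection.

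The main obstacle I expect is condition (2): one has to argue that the surjectivity of the structure map survives after precomposing with $\lambda$, i.e. that $\cO(\widetilde\nu^{-1}(V))$ is still generated as an $\cO(V)$-algebra by the image of $\calH_2$. This is not purely formal from surjectivity for $\calH_1$; it uses that the two BC-data share the same $\cS^\cl$-data up to $\lambda$, so that the Hecke operators in $\calH_1$ that matter are, on the relevant classical points (which are Zariski-dense in the reduced space $\cD_1$), already in the image of $\lambda$. I would handle this exactly as in the proof of \cite[Proposition 7.2.8]{bellchen} and the earlier lemmas of this section, invoking reducedness and Zariski-density of $S_1^\cl$ to reduce the identity of subalgebras of $\cO(\widetilde\nu^{-1}(V))$ to an identity of functions on classical points, where it follows from \eqref{S1S2}. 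Once condition (2) is in place, the rest is a direct citation of Proposition \ref{BCunique}, precisely parallel to Lemmas \ref{changeweight} and \ref{changecpt}.
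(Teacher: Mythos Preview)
Your proposal is correct and follows essentially the same route as the paper: form the $4$-tuple $(\cD_1,\psi_1\ccirc\lambda,w_1,S_1^\cl)$, verify it is a BC-eigenvariety for the datum $(g,\calH_2,\eta_2,\cS_2^\cl)$, and conclude via Proposition \ref{BCunique}. Your identification of condition (2) as the only non-formal step, and your proposed resolution via reducedness of $\cD_1$ and Zariski-density of $S_1^\cl$ together with \eqref{S1S2}, is exactly what the paper invokes (though the paper states it in a single sentence).
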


\begin{proof}
Consider the $4$-tuple $(\cD_1,\psi_1\ccirc\lambda,w_1,S_1^\cl)$. We show that it defines a BC-eigenvariety for the datum $(g,\calH_2,\eta_2,\cS_2^\cl)$. Property (1) of Definition \ref{BCdef} is satisfied since $\psi_1\ccirc\lambda(\eta_2)=\psi_1(\eta_1)$ and the map $(w,\psi_1(\eta_1)^{-1})$ is finite by property (1) relative to the datum $(g,\calH_1,\eta_1,\cS_1^\cl)$. Property (2) is a consequence of equality \eqref{S1S2} together with the fact that $S_1^\cl$ is Zariski-dense in $\cD_1$. Property (3) follows immediately from equality \eqref{S1S2}.

Now the $4$-tuples $(\cD_1,\psi_1\ccirc\lambda,w_1,S_1^\cl)$ and $(\cD_2,\psi_2,w_2,S_2^\cl)$ define two BC-eigenvarieties for the datum $(g,\calH_2,\eta_2,\cS_2^\cl)$, so Proposition \ref{BCunique} gives a morphism $\xi_\lambda\colon\cD_1\to\cD_2$ of rigid analytic spaces such that $\psi_1\ccirc\lambda=\xi_\lambda^\ast\ccirc\psi_2$, $w_1=w_2\ccirc\xi_\lambda$ and $\xi_\lambda(S_1^\cl)=S_2^\cl$, as desired.
\end{proof}

\bigskip

\section{The symmetric cube morphism of eigenvarieties}

Fix a prime $p$ and an integer $N\geq 1$ prime to $p$. Let $M$ be the integer given as a function of $N$ by Definition \ref{sym3leveldef}. Set $\lambda=\lambda_1$, where $\lambda_1\colon\calH_2^N\to\calH_1^N$ is the morphism given by Definition \ref{heckemorphprod}.

We work from now on with the curves $\cD_1^N\times_{\cW_1}\cW_1^\circ$ and $\cD_2^M\times_{\cW_2}\cW_2^\circ$. We still denote them by $\cD_1^N$ and $\cD_2^M$ in order not to complicate notations. Our aim is to construct a closed immersion $\cD_1^N\to\cD_2^{M}$ interpolating the map defined by the symmetric cube transfer on the classical points. As in \cite{ludwigll} we define two auxiliary eigenvarieties.


\subsection{The first auxiliary eigenvariety}\label{aux1}

Recall that for every affinoid subdomain $A=\Spm R$ of $\cW_1$ and for every sufficiently large rational number $w$ there is a Banach $R$-module $M_1(A,w)$ of $w$-overconvergent modular forms of weight $\kappa_A$ and level $N$, carrying an action $\phi^1_{A,w}\colon\calH^N_1\to\End_{R,\cont}M_1(A,w)$. We let $\calH^N_2$ act on $M_1(A,w)$ through the map 
\[ \phi^{1,\aux}_{A,w}=\phi^1_{A,w}\ccirc\lambda\colon\calH^N_2\to\End_{R,\cont}M_1(A,w). \]
We have $\phi^{1,\aux}_{A,w}(U_p^{(2)})=\phi^{1,\aux}_{A,w}(U^{(2)}_{p,1}U^{(2)}_{p,2})=\phi^1_{A,w}(\lambda(U^{(2)}_{p,1}U^{(2)}_{p,2}))=\phi^1_{A,w}(U^{(1)}_{p,0}(U^{(1)}_{p,1})^7)$. 
This operator is compact on $M_1(A,w)$ since it is the composition of the compact operator $\phi_{A,w}^{1,\aux}(U^{(1)}_{p,1})$ with a continuous operator.

\begin{defin}\label{Dlambda}
Let $(\cD_{1,\lambda}^N,\psi_{1,\lambda},w_{1,\lambda})$ be the eigenvariety associated with the datum
\[ (\cW_1^\circ,\calH_2^N,(M_1(A,w))_{A,w},(\phi^{1,\aux}_{A,w})_{A,w},U^{(2)}_p) \]
by the eigenvariety machine.
\end{defin}

Since $\cW_1^\circ$ is equidimensional of dimension $1$, the eigenvariety $\cD_{1,\lambda}^N$ is also equidimensional of dimension $1$.

We denote by $S_1^\cl$ the set of classical points of $\cD_1^N$ and by $S_1^{\cl,\cG}$ the set of classical non-CM points of $\cD_1^N$. 
Recall that we defined a non-CM eigencurve $\cD_1^{N,\cG}$ as the Zariski-closure of $S_1^{\cl,\cG}$. By Remark \ref{nonCMdense} the set $S_1^{\cl,\cG}$ is an accumulation subset of $\cD_{1}^{N,\cG}$ and the weight map $w_1^\cG\colon\cD_{1}^{N,\cG}\to\cW_1^\circ$ is surjective.

We define two subsets of $\cD_{1,\lambda}^N$ by
\begin{gather*} 
S_{1,\lambda}^\cl=\{x\in\cD_{1,\lambda}^N\,\vert\,\psi_x=\chi_f\ccirc\lambda\textrm{ for a classical, $p$-old }\GL_2\textrm{ eigenform }f \}, \\ 
S_{1,\aux}^\cl=\{x\in\cD_{1,\lambda}^N\,\vert\,\psi_x=\chi_f\ccirc\lambda\textrm{ for a classical, $p$-old, non-CM }\GL_2\textrm{ eigenform }f \}.
\end{gather*}

\begin{defin}
Let $\cD_{1,\aux}^N$ be the Zariski-closure of the set $S_{1,\aux}^\cl$ in $\cD_{1,\lambda}^N$.
\end{defin}

We denote by $\psi_{1,\aux}\colon\calH_2^N\to\cO(\cD_{1,\aux}^N)$ and $w_{1,\aux}\colon\cD_{1,\aux}^N\to\cW_1^\circ$ the morphisms obtained from the corresponding morphisms for $\cD_{1,\lambda}^N$.

\subsection{The second auxiliary eigenvariety}\label{aux2}

We identify $\cW_1^\circ$ with $B_1(0,1^-)$ and $\cW_2^\circ$ with $B_2(0,1^-)$ via the isomorphisms $\eta_1$ and $\eta_2$ of Section \ref{weightsp}. 
This way we obtain coordinates $T$ on $\cW_1^\circ$ and $(T_1,T_2)$ on $\cW_2^\circ$. 

Let $k\geq 2$ be an integer. Let $f$ be a cuspidal $\GL_2$-eigenform of weight $k$ and level $\Gamma_1(N)$ and let $f^\st$ be a $p$-stabilization of $f$. Let $F=(\Sym^3f)^{\st}_i$ be one of the $p$-stabilizations of $\Sym^3f$ defined in Corollary \ref{heckemorphprodcor}. By Corollary \ref{classtransf} $(\Sym^3f)^{\st}_i$ has weight $(2k-1,k+1)$. In particular $f^\st$ defines a point of the fibre of $\cD_1^N$ at $T=u^k-1$, and $(\Sym^3f)^{\st}_i$ defines a point of the fibre of $\cD_2^{M}$ at $(T_1,T_2)=(u^{2k-1}-1,u^{k+1}-1)$.
%
%
The map $u^k-1\mapsto (u^{2k-1}-1,u^{k+1}-1)$ is interpolated by the morphism of rigid analytic spaces
\begin{gather*}
\iota\colon\cW_1^\circ\into\cW_2^\circ, \\
T\mapsto (u^{-1}(1+T)^2-1,u(1+T)-1).
\end{gather*}
The map $\iota$ induces an isomorphism of $\cW_1^\circ$ onto its image, that is the rigid analytic curve in $\cW_2^\circ$ defined by the equation $u^{-3}(1+T_2)^2-(1+T_1)=0$. 
By construction $\iota$ induces a bijection between the classical weights of $\cW_1^\circ$ and the classical weights of $\cW_2^\circ$ belonging to $\iota(\cW_1^\circ)$. Since the classical weights form an accumulation and Zariski-dense subset of $\cW_1^\circ$, they also form an accumulation and Zariski-dense subset of $\iota(\cW_1^\circ)$.

After Corollary \ref{liftslopes} we defined for $i\in\{1,2,3,4\}$ a set $S_i^{\Sym^3}\subset\cD_2^M(\Qp)$. By construction of $\iota$, for every $i$ the weight of every point in $S_i^{\Sym^3}$ is a classical weight belonging to $\iota(\cW_1^\circ)$. 
Since $\iota(\cW_1^\circ)$ is a one-dimensional Zariski-closed subvariety of $\cW_2^\circ$, the image of the Zariski-closure in $\cD_2^M$ of $S_i^{\Sym^3}$ under the weight map is contained in $\iota(\cW_1^\circ)$. By Remark \ref{interpslopes} the set $S_i^{\Sym^3}$ is discrete in $\cD_2^M(\Qp)$ if $i\geq 2$, so the only interesting Zariski-closure is that of $S_1^{\Sym^3}$. 

\begin{defin}\label{Daux2}
Let $\cD_{2,\aux}^{M}$ be the Zariski closure of $S_1^{\Sym^3}$ in $\cD_2^{M}$ and let $\iota_{2,\aux}\colon\cD_{2,\aux}^{M}\to\cD_2^M$ be the natural closed immersion. Define $w_{2,\aux}\colon\cD_{2,\aux}^{M}\to\cW_1^\circ$ and $\psi_{2,\aux}\colon\calH_2^N\to\cO(\cD_{2,\aux}^M)$ as $w_{2,\aux}=\iota^{-1}\ccirc w_2\vert_{\cD_{2,\aux}^{M}}$ and $\psi_{2,\aux}=\iota_{2,\aux}^\ast\ccirc\psi_2$. 
\end{defin}

\subsection{Constructing the symmetric cube morphism}\label{morpheigen}

We construct morphisms of rigid analytic spaces
\begin{gather*}
\xi_1\colon\cD_1^{N,\cG}\to\cD_{1,\aux}^N, \quad \xi_2\colon\cD_{1,\aux}^N\to\cD_{2,\aux}^{M}, \quad \xi_3\colon\cD_{2,\aux}^{M}\to\cD_2^M
\end{gather*}
making the following diagrams commute:
\begin{equation}\label{xi123}\begin{gathered}
\begin{tikzcd}
\cD_1^{N,\cG} \arrow{r}{\xi_1}\arrow{d}
&\cD_{1,\aux}^N \arrow{r}{\xi_2}\arrow{d}
&\cD_{2,\aux}^{M} \arrow{d}\arrow{r}{\xi_3}
&\cD_{2}^{M} \arrow{d}\\
\cW_1^\circ \arrow{r}{=}
&\cW_1^\circ \arrow{r}{=}
&\cW_1^\circ \arrow{r}{\iota}
&\cW_2^\circ
\end{tikzcd}
\\
\begin{tikzcd}
\calH_2^N \arrow{r}{=}\arrow{d}
&\calH_2^N \arrow{r}{=}\arrow{d}
&\calH_2^N \arrow{r}{\lambda}\arrow{d}
&\calH_1^N \arrow{d}\\
\cO(\cD_2^{M})\arrow{r}{\xi_3^\ast}
&\cO(\cD_{2,\aux}^{M})\arrow{r}{\xi_2^\ast}
&\cO(\cD_{1,\aux}^N) \arrow{r}{\xi_1^\ast}
&\cO(\cD_1^{N,\cG})
\end{tikzcd}
\end{gathered}
\end{equation}

In order to construct $\xi_1$, $\xi_2$ and $\xi_3$ we interpret the eigenvarieties appearing in the diagrams as BC-eigenvarieties for suitably chosen BC-data and we rely on the results of Section \ref{BCsec}.
%

We define two subsets $\cS_{1}^\cl$ and $\cS_{1}^{\cl,\cG}$ of $\Hom(\calH_1^{N},\Qp)\times\Z$ by
\begin{gather*}
\cS_{1}^\cl=\{(\psi,k)\in\Hom(\calH_1^{N},\Qp)\times\Z\,\vert\,\psi=\chi_f \\
\textrm{ for a cuspidal, classical, $p$-old }\GL_2\textrm{-eigenform }f\textrm{ of weight }k\}, \\
\cS_{1}^{\cl,\cG}=\{(\psi,k)\in\Hom(\calH_1^{N},\Qp)\times\Z\,\vert\,\psi=\chi_f \\
\textrm{ for a cuspidal, classical, $p$-old, non-CM } \GL_2\textrm{-eigenform }f\textrm{ of weight }k\}.
\end{gather*}
We define two subsets $\cS_{1,\lambda}$ and $\cS_{1,\aux}$ of $\Hom(\calH_2^{N},\Qp)\times\Z$ by
\begin{gather*}
\cS_{1,\lambda}^\cl=\{(\psi,k)\in\Hom(\calH_2^{N},\Qp)\times\Z\,\vert\,\psi=\chi_f\ccirc\lambda \\
\textrm{ for a cuspidal, classical, $p$-old }\GL_2\textrm{-eigenform }f\textrm{ of weight }k\}, \\
\cS_{1,\aux}^\cl=\{(\psi,k)\in\Hom(\calH_2^{N},\Qp)\times\Z\,\vert\,\psi=\chi_f\ccirc\lambda \\
\textrm{ for a cuspidal, classical, $p$-old, non-CM }\GL_2\textrm{-eigenform }f\textrm{ of weight }k\}.
\end{gather*}

\begin{lemma}\label{eigenBClambda}\mbox{ }
\begin{enumerate}
\item The $4$-tuple $(\cD_{1}^N,\psi_1,w_1,S_1^\cl)$ is a BC-eigenvariety for the datum $(1,\calH_1^N,U_p^{(1)},\cS_1^\cl)$.
\item The $4$-tuple $(\cD_{1}^N,\psi_1,w_1,S_1^\cl)$ is a BC-eigenvariety for the datum $(1,\calH_1^N,\lambda(U_p^{(2)}),\cS_1^\cl)$.
\item The $4$-tuple $(\cD_{1}^{N,\cG},\psi_1,w_1,S_1^{\cl,\cG})$ is a BC-eigenvariety for the datum $(1,\calH_1^N,\lambda(U_p^{(2)}),\cS_{1}^{\cl,\cG})$.
\item The $4$-tuple $(\cD_{1,\lambda}^N,\psi_{1,\lambda},w_{1,\lambda},S^\cl_{1,\lambda})$ is a BC-eigenvariety for the datum $(1,\calH_2^N,U_p^{(2)},\cS_{1,\lambda}^\cl)$.
\item The $4$-tuple $(\cD_{1,\aux}^N,\psi_{1,\aux},w_{1,\aux},S^\cl_{1,\aux})$ is a BC-eigenvariety for the datum $(1,\calH_2^N,U_p^{(2)},\cS_{1,\aux}^\cl)$.
\end{enumerate}
\end{lemma}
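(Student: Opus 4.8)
The plan is to verify, for each of the five assertions, that the stated $4$-tuple satisfies the three conditions of Definition~\ref{BCdef}, reducing everything to the general mechanism established in Section~\ref{BCsec} (Lemma~\ref{BCcond}, Corollary~\ref{buzzBC}, Lemma~\ref{subBC}, Lemma~\ref{changecpt}). The backbone of the argument is that $\cD_1^N$ and $\cD_{1,\lambda}^N$ are both built by Buzzard's eigenvariety machine over $\cW_1^\circ$ (the latter from the datum in Definition~\ref{Dlambda}), so by Lemma~\ref{BCcond} conditions (1) and (2) of Definition~\ref{BCdef} are automatic for them; the auxiliary curves $\cD_{1,\aux}^N$ are Zariski-closures of accumulation subsets of classical points, so Lemma~\ref{subBC} applies once we identify the classical points correctly. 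Thus in each case the real content is just to check condition (3): that $\widetilde{\ev}$ induces a bijection between the chosen $S^{\cl}$ and the chosen $\cS^\cl$.

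For (1), I would note that $S_1^\cl$ is an accumulation and Zariski-dense subset of $\cD_1^N$ (recall we have replaced $\cD_1^N$ by $\cD_1^N\times_{\cW_1}\cW_1^\circ$), that $w_1$ sends it into $\Z\subset\cW_1^\circ$, and that by the defining property of the eigenvariety (Theorem~\ref{theigenmac}) the map $x\mapsto(\psi_x,w_1(x))$ identifies $S_1^\cl$ with the set of couples $(\chi_f,k)$ for classical $p$-old $\GL_2$-eigenforms $f$ of weight $k$; that is precisely $\cS_1^\cl$. Then Corollary~\ref{buzzBC} gives the claim. For (2), the only change is that the distinguished element $U_p^{(1)}$ is replaced by $\lambda(U_p^{(2)})$; as computed in Section~\ref{aux1}, $\lambda(U_p^{(2)})=U_p^{(1)}(U_{p,1}^{(1)})^{6}$ acts compactly (it is $\phi^1_{A,w}$ of a compact operator composed with a continuous one), so condition~(1) of Definition~\ref{BCdef} still holds --- here one can either re-run Lemma~\ref{BCcond} with this compact operator or invoke Lemma~\ref{changecpt}, checking hypothesis (Fin), which amounts to the finiteness of $\cD_1^N$ over its image under $(w_1,\psi_1(\lambda(U_p^{(2)}))^{-1})$. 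For (3), I restrict (2) to the non-CM locus: $S_1^{\cl,\cG}$ is an accumulation subset of $S_1^\cl$ by Remark~\ref{nonCMdense}, its Zariski-closure is $\cD_1^{N,\cG}$ by definition, and Lemma~\ref{subBC} converts the BC-eigenvariety of (2) into one for the datum $(1,\calH_1^N,\lambda(U_p^{(2)}),\cS_1^{\cl,\cG})$.

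For (4), $\cD_{1,\lambda}^N$ is by Definition~\ref{Dlambda} the output of the eigenvariety machine for the datum with Hecke algebra $\calH_2^N$ acting via $\phi^{1,\aux}=\phi^1\circ\lambda$ and distinguished element $U_p^{(2)}$; Lemma~\ref{BCcond} gives conditions (1) and (2), and the classical points of this eigenvariety are exactly those $x$ with $\psi_x=\chi_f\circ\lambda$ for a classical $p$-old $\GL_2$-eigenform $f$ --- here one uses that classical overconvergent modular forms in the modules $M_1(A,w)$ are the classical $\GL_2$-forms, so the systems of $\calH_2^N$-eigenvalues occurring are precisely the $\chi_f\circ\lambda$. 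That $S_{1,\lambda}^\cl$ (as defined in Section~\ref{aux1}) is accumulation and Zariski-dense follows by transport along the finite map $\widetilde{\nu}$ from the corresponding property on $\cW_1^\circ$, or directly from density of classical points in $\cD_1^N$; Corollary~\ref{buzzBC} then applies. For (5), $\cD_{1,\aux}^N$ is by definition the Zariski-closure of $S_{1,\aux}^\cl$ in $\cD_{1,\lambda}^N$, and $S_{1,\aux}^\cl$ is an accumulation subset of $S_{1,\lambda}^\cl$ (again by Remark~\ref{nonCMdense}, the non-CM $\GL_2$-eigenforms accumulate at themselves), so Lemma~\ref{subBC} applied to the BC-eigenvariety of (4) yields a BC-eigenvariety for $(1,\calH_2^N,U_p^{(2)},\cS_{1,\aux}^\cl)$, with $\psi_{1,\aux}$ and $w_{1,\aux}$ the restrictions, matching the definitions in Section~\ref{aux1}.

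The only genuinely delicate point I anticipate is the compactness/finiteness bookkeeping in (2): one must be sure that replacing $U_p^{(1)}$ by $\lambda(U_p^{(2)})$ does not break the finiteness condition (1) of Definition~\ref{BCdef}, i.e. that hypothesis (Fin) of Lemma~\ref{changecpt} holds. This is where I would spend care: $\lambda(U_p^{(2)})$ equals $U_p^{(1)}$ times the sixth power of a compact operator, so $\psi_1(\lambda(U_p^{(2)}))^{-1}$ differs from $\psi_1(U_p^{(1)})^{-1}$ by a bounded analytic function; the resulting map to $\cW_1^\circ\times\G_m$ still factors through a finite map onto its image because, locally on Buzzard's affinoid cover of the spectral variety, the relevant Hecke operators act on the same finite-rank direct summands. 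Everything else is routine identification of classical points with systems of Hecke eigenvalues, and invocation of the accumulation/Zariski-density statements already recorded in Remark~\ref{nonCMdense} and in Section~\ref{weightsp}.
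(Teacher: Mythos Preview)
Your approach is essentially the same as the paper's: parts (1) and (4) via Corollary~\ref{buzzBC}, part (2) via Lemma~\ref{changecpt} after checking (Fin), and parts (3) and (5) via Lemma~\ref{subBC} applied to the non-CM accumulation subsets. One small computational slip: $\lambda(U_p^{(2)})=U_{p,0}^{(1)}(U_{p,1}^{(1)})^7$, not $U_p^{(1)}(U_{p,1}^{(1)})^6$ (you dropped the $U_{p,0}^{(1)}$ factor coming from $\lambda_{1,p}(t_{p,1}^{(2)})=t_{p,0}^{(1)}(t_{p,1}^{(1)})^4$); since $U_{p,0}^{(1)}$ is invertible this does not affect the finiteness argument, but it is worth correcting.
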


\begin{proof}
Part (1) follows from Lemma \ref{buzzBC}.

For part (2), observe that the couple $(w_1,\psi_1(U_p^{(1)}))$ satisfies condition (Fin) since $\lambda(U_p^{(2)})=U_{p,0}^{(1)}(U_{p,1}^{(1)})^7$. Hence Lemma \ref{changecpt} gives an isomorphism between the eigenvarieties for the data $(1,\calH_1^N,\lambda(U_p^{(2)}),\cS_1^\cl)$ and $(1,\calH_1^N,U_p^{(1)},\cS_1^\cl)$, as desired.

We prove part (3). Let $\widetilde{\ev}\colon S_1^\cl\to\cS_1^\cl$ be the evaluation map given in property (3) of Definition \ref{BCdef}. By definition the eigenvariety $\cD_1^{N,\cG}$ is the Zariski-closure in $\cD_1^N$ of the set $S_1^{\cl,\cG}$. The image of $S_1^{\cl,\cG}$ in $\cS_1^\cl$ via $\widetilde{\ev}$ is $\cS_1^{\cl,\cG}$, so our statement follows from Lemma \ref{subBC} applied to $S^\cl=S_1^\cl$ and $S^\cl_0=S_1^{\cl,\cG}$.

Part (4) follows from Definition \ref{Dlambda} and Corollary \ref{buzzBC}.

The proof of part (5) is analogous to that of part (3). Let $\widetilde{\ev}\colon S_{1,\lambda}^\cl\to\cS_{1,\lambda}^\cl$ be the evaluation map. By definition the eigenvariety $\cD_{1,\aux}^N$ is the Zariski-closure in $\cD_1^N$ of the set $S_{1,\aux}^\cl$. The image of $S_{1,\aux}^\cl$ in $\cS_{1,\lambda}^\cl$ via $\widetilde{\ev}$ is $\cS_{1,\aux}^\cl$, so the desired conclusion follows from Lemma \ref{subBC} applied to $S^\cl=S_{1,\lambda}^\cl$ and $S^\cl_0=S_{1,\aux}^\cl$.
%
\end{proof}

Now consider the second auxiliary eigenvariety $\cD_{2,\aux}^M$. It is equipped with maps $\psi_{2,\aux}\colon\calH_2^N\to\cO(\cD^M_{2,\aux})$ and $w_{2,\aux}\colon\cD_{2,\aux}^M\to\cW_2^\circ$. 
Recall that $\cD_{2,\aux}^M$ is defined as the Zariski-closure in $\cD_2^M$ of the set $S_1^{\Sym^3}$.
Define a subset $\cS_{2,\aux}^\cl$ of $\Hom(\calH_2^{N},\Qp)\times\Z$ by
\begin{gather*}
\cS_{2,\aux}^\cl=\{(\psi,k)\in\Hom(\calH_2^{N},\Qp)\times\Z\,\vert\,\psi=\chi_F \\
\textrm{ where }F=(\Sym^3f)^\st_1\textrm{ for a cuspidal classical non-CM } \GL_2\textrm{-eigenform }f\textrm{ of weight }k\}. 
\end{gather*}

\begin{lemma}\label{eigenBC2}
The $4$-tuple $(\cD_{2,\aux}^M,\psi_2,w_2,S_1^{\Sym^3})$ defines a BC-eigenvariety for the datum $(1,\calH_2^N,U_p^{(2)},\cS_{2,\aux}^\cl)$. 
\end{lemma}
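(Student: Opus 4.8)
\textbf{Proof proposal for Lemma \ref{eigenBC2}.}

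The plan is to identify $(\cD_{2,\aux}^M,\psi_2,w_2,S_1^{\Sym^3})$ as a Zariski-closed BC-subeigenvariety of the full $\GSp_4$-eigenvariety $\cD_2^M$, viewed with a suitable $1$-dimensional weight space, and then invoke Lemma \ref{subBC}. First I would record that, by Corollary \ref{buzzBC} applied to the eigenvariety datum of Section \ref{gspeigen} (with $g=2$), the $4$-tuple $(\cD_2^M,\psi_2,w_2,S_2^{\cl})$ is a BC-eigenvariety for the datum $(2,\calH_2^N,U_p^{(2)},\cS_2^{\cl})$, where $\cS_2^{\cl}$ is the set of pairs $(\psi,\uk)$ coming from classical $p$-old $\GSp_4$-eigenforms; the accumulation and Zariski-density of $S_2^{\cl}$ follow from the accumulation property of classical points (Proposition \ref{siegclslopes} and the ensuing remarks) together with the equidimensionality of $\cD_2^M$.

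Next I would use Lemma \ref{changeweight} with $g_1=1$, $g_2=2$ and $\Theta=\iota\colon\cW_1^\circ\into\cW_2^\circ$ the closed immersion defined in Section \ref{aux2} (its image being the curve cut out by $u^{-3}(1+T_2)^2-(1+T_1)=0$, and $\iota$ respecting classical weights by construction). Writing $\cS_2^{\cl,\iota}=\{(\psi,k)\,\vert\,(\psi,\iota(k))\in\cS_2^{\cl}\}$, Lemma \ref{changeweight} (or rather the intermediate construction in its proof: the fibre product $\cD_2^M\times_{\cW_2^\circ,\iota}\cW_1^\circ$, which is a closed analytic subspace of $\cD_2^M$ and a BC-eigenvariety for $(1,\calH_2^N,U_p^{(2)},\cS_2^{\cl,\iota})$) gives the desired BC-eigenvariety structure for this intermediate datum. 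I expect the only slightly delicate point here is that Lemma \ref{changeweight} as stated produces a \emph{closed immersion} into $\cD_2$ rather than directly a BC-eigenvariety over the $1$-dimensional weight space; so I would instead quote the fibre-product step inside its proof, which manifestly yields a BC-eigenvariety $(\cD_1^\Theta,\zeta^{\Theta,\ast}\ccirc\psi_2,w_1^\Theta,(\zeta^\Theta)^{-1}(S_2^{\cl}))$ for the datum $(1,\calH_2^N,U_p^{(2)},\cS_1^\Theta)$, with $\cS_1^\Theta=\cS_2^{\cl,\iota}$.

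Finally I would apply Lemma \ref{subBC} to $S^{\cl}=(\zeta^\Theta)^{-1}(S_2^{\cl})$ and the accumulation subset $S_0^{\cl}$ consisting of those points whose associated pair lies in $\cS_{2,\aux}^{\cl}$ --- equivalently, the points of $S_1^{\Sym^3}$ whose weight lies in $\iota(\cW_1^\circ)$, which by the discussion after Corollary \ref{liftslopes} and the definition of $\iota$ is all of $S_1^{\Sym^3}$. For this to be legitimate I must check two things: (i) $S_1^{\Sym^3}$ is an accumulation subset of $S^{\cl}$, which follows because the classical $\GL_2$-eigenforms of a given level accumulate at classical weights, their symmetric cube lifts $(\Sym^3f)^\st_1$ accumulate correspondingly (the slope formula $\slo(\Sym^3(f_\alpha^\st)_1)=7h$ of Corollary \ref{liftslopes} being an \emph{analytic} function of the weight, unlike the cases $i\ge 2$), and $S_1^{\Sym^3}\subset S^{\cl}$ since each $(\Sym^3f)^\st_1$ is a classical $p$-old $\GSp_4$-eigenform with weight in $\iota(\cW_1^\circ)$; and (ii) the Zariski-closure of $S_0^{\cl}$ inside $\cD_1^\Theta$ maps isomorphically onto $\cD_{2,\aux}^M$ under $\zeta^\Theta$, which holds because $\zeta^\Theta$ is a closed immersion and $\cD_{2,\aux}^M$ was \emph{defined} as the Zariski-closure of $S_1^{\Sym^3}$ in $\cD_2^M$, which is contained in the image of $\zeta^\Theta$ since every point of $S_1^{\Sym^3}$ has weight in $\iota(\cW_1^\circ)$. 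Then Lemma \ref{subBC} delivers exactly the statement, with $\cS_0^{\cl}=\cS_{2,\aux}^{\cl}$ under the identification of $\cD_{2,\aux}^M$ with its preimage. The main obstacle, as indicated, is purely bookkeeping: making sure the weight-space change in Lemma \ref{changeweight} is invoked at the level of the fibre-product BC-eigenvariety rather than its image, and verifying the accumulation property of $S_1^{\Sym^3}$ from the analyticity of the slope in Corollary \ref{liftslopes}; neither involves new ideas.
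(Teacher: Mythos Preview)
Your proposal is correct and follows essentially the same route as the paper. The paper's proof is a single line invoking Lemma~\ref{subBC} ``applied to the choices $\cD=\cD_2^M$, $S_0^\cl=S_1^{\Sym^3}$, $g_0=1$ and $\iota_0=\iota$'', where the extra parameters $g_0,\iota_0$ tacitly encode exactly the fibre-product weight-space reduction $\cD_2^M\times_{\cW_2^\circ,\iota}\cW_1^\circ$ that you spell out explicitly from the proof of Lemma~\ref{changeweight}; your more careful unpacking of this step, together with the check that $S_1^{\Sym^3}$ accumulates (via the analytic slope formula of Corollary~\ref{liftslopes}), is precisely what the paper is leaving implicit.
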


\begin{proof}
It is clear from the definitions of $S_2^\cl$ and $\cS_2^\cl$ that the evaluation of $(\psi_{2,\aux},w_{2,\aux})$ at a point $x\in S_1^{\Sym^3}$ induces a bijection $S_1^{\Sym^3}\to\cS_2^\cl$. Then the lemma follows from Corollary \ref{subBC} applied to the choices $\cD=\cD_2^M$, $S_0^\cl=S_1^{\Sym^3}$, $g_0=1$ and $\iota_0=\iota$. 
\end{proof}

\begin{rem}\label{BC12}
The sets $\cS_{1,\lambda}^\cl$ and $\cS_{2,\aux}^\cl$ coincide. Indeed $(\Sym^3f)^\st_1$ is well-defined for every cuspidal non-CM $\GL_2$-eigenform $f$, and a $\GSp_4$-eigenform $F$ satisfies $\chi_F=\chi_f\ccirc\lambda$ if and only if $F=(\Sym^3f)^\st_1$. 
\end{rem}

Let $S_2^\cl$ be the set of classical points of $\cD_2^M$. Define a subset $\cS^\cl_2$ of $\Hom(\calH_2^{N},\Qp)\times\Z^2$ by 
\begin{gather*}
\cS^\cl_2=\{(\psi,\uk)\in\Hom(\calH_2^{N},\Qp)\times\Z^2\,\vert\,\psi=\chi_F\textrm{ for a cuspidal classical }\GSp_4\textrm{-eigenform }F\textrm{ of weight }\uk\}.
\end{gather*}

\begin{lemma}\label{eigenBC3}
The $4$-tuple $(\cD_2^M,\psi_2,w_2,S^\cl_2)$ is a BC-eigenvariety for the datum $(2,\calH_2^N,U_p^{(2)},\cS^\cl_2)$.
\end{lemma}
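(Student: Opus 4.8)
\textbf{Proof plan for Lemma \ref{eigenBC3}.} The statement asserts that the $\GSp_4$-eigenvariety $\cD_2^M$ (restricted to $\cW_2^\circ$) together with its canonical structure maps is a BC-eigenvariety in the sense of Definition \ref{BCdef}, for the datum $(2,\calH_2^N,U_p^{(2)},\cS_2^\cl)$. The plan is to invoke the general mechanism already set up in Section \ref{BCsec}: by construction in Section \ref{gspeigen}, the triple $(\cD_2^M,\psi_2,w_2)$ is the output of Buzzard's eigenvariety machine applied to the eigenvariety datum $(\cW_2^\circ,\calH_2^N,(M_2(A,w))_{A,w},(\phi^2_{A,w})_{A,w},U_p^{(2)})$, where we have already replaced $\cW_2$ by its component of unity $\cW_2^\circ$ as agreed at the start of Section \ref{morpheigen}. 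Hence Lemma \ref{BCcond} applies directly and gives conditions (1) and (2) of Definition \ref{BCdef} for $(\cD_2^M,\psi_2,w_2)$, with $\widetilde\nu(\cD_2^M)$ the spectral variety and $\cC$ Buzzard's admissible affinoid covering of it.

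It then remains to verify condition (3): that there is an accumulation and Zariski-dense subset $S_2^\cl\subset\cD_2^M(\Qp)$ with $w_2(S_2^\cl)\subset\Z^2$ such that $\widetilde{\ev}\colon x\mapsto(\psi_x,w_2(x))$ induces a bijection $S_2^\cl\to\cS_2^\cl$. First I would take $S_2^\cl$ to be the set of classical points of $\cD_2^M$ as defined in Section \ref{gspeigen} (a point whose system of Hecke eigenvalues is that of a classical cuspidal $\GSp_4$-eigenform of level $\Gamma_1(M)\cap\Gamma_0(p)$ and whose weight is the corresponding classical weight); by definition $w_2(S_2^\cl)\subset\Z^2$. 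That $S_2^\cl$ is accumulation and Zariski-dense in $\cD_2^M$ is part of the standard output of the eigenvariety construction for $\GSp_4$ (it follows from Proposition \ref{siegclslopes}, which guarantees classical points in abundance via the slope bound, together with the accumulation property of classical weights in $\cW_2^\circ$ and local finiteness of the weight map on adapted domains, Proposition \ref{locfin}); I would cite this. Finally, that $\widetilde{\ev}$ is a bijection onto $\cS_2^\cl$ is essentially a restatement of Theorem \ref{theigenmac}(2): the $\Qp$-points of $\cD_2^M$ biject with $\Qp$-systems of Hecke eigenvalues, a classical point corresponds precisely to the system of eigenvalues of a classical cuspidal $\GSp_4$-eigenform of the given level and the prescribed weight, and conversely each such eigenform contributes exactly one point — so the assignment $x\mapsto(\psi_x,w_2(x))$ matches $S_2^\cl$ with $\cS_2^\cl$ bijectively, noting that the pair $(\psi,\uk)$ determines and is determined by the point since the weight is recovered as $w_2(x)$ and the system of eigenvalues as $\psi_x$.

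Once conditions (1), (2) and (3) are in hand, the $4$-tuple $(\cD_2^M,\psi_2,w_2,S_2^\cl)$ satisfies all of Definition \ref{BCdef}, which is the assertion of the lemma; in fact this is exactly the situation covered by Corollary \ref{buzzBC}, so the cleanest write-up is simply: ``By Section \ref{gspeigen}, $(\cD_2^M,\psi_2,w_2)$ is the eigenvariety produced by the eigenvariety machine from the datum with distinguished operator $U_p^{(2)}$; the set $S_2^\cl$ of classical points is accumulation and Zariski-dense with $w_2(S_2^\cl)\subset\Z^2$, and $\widetilde{\ev}$ induces a bijection $S_2^\cl\to\cS_2^\cl$ by Theorem \ref{theigenmac}. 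The claim now follows from Corollary \ref{buzzBC}.'' The only point requiring any care — and the mild obstacle — is the density and accumulation of $S_2^\cl$ in the positive-slope region: unlike the weight-space statement, this is not formal, but it is precisely what Proposition \ref{siegclslopes} (classicality of small-slope points) combined with the accumulation of classical weights provides, and it has already been used implicitly (e.g. in the discussion after Remark \ref{conncomp}), so I would reference that chain of results rather than reprove it.
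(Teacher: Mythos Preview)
Your proposal is correct and follows the same route as the paper: the paper's proof is simply ``This is an immediate consequence of Corollary \ref{buzzBC}'', and your plan unpacks exactly the hypotheses needed for that corollary (output of the eigenvariety machine plus accumulation and Zariski-density of $S_2^\cl$). The extra justification you sketch for density and accumulation via Proposition \ref{siegclslopes} is more than the paper provides, but it is sound and does not diverge from the intended argument.
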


\begin{proof}
This is an immediate consequence of Corollary \ref{buzzBC}.
\end{proof}

We are ready to prove the existence of the morphisms fitting into diagram \eqref{xi123}.

\begin{prop}\label{exxi1}
There exists an isomorphism $\xi_1\colon\cD_1^{N,\cG}\to\cD_{1,\aux}^N$ of rigid analytic spaces over $\Q_p$ making the leftmost squares in the diagrams \eqref{xi123} commute.
\end{prop}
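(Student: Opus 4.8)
The plan is to realize $\cD_1^{N,\cG}$ and $\cD_{1,\aux}^N$ as BC-eigenvarieties for the \emph{same} BC-datum and then invoke the uniqueness statement of Proposition \ref{BCunique}. By Lemma \ref{eigenBClambda}(3) the $4$-tuple $(\cD_1^{N,\cG},\psi_1,w_1,S_1^{\cl,\cG})$ is a BC-eigenvariety for the datum $(1,\calH_1^N,\lambda(U_p^{(2)}),\cS_1^{\cl,\cG})$, and by Lemma \ref{eigenBClambda}(5) the $4$-tuple $(\cD_{1,\aux}^N,\psi_{1,\aux},w_{1,\aux},S_{1,\aux}^\cl)$ is a BC-eigenvariety for the datum $(1,\calH_2^N,U_p^{(2)},\cS_{1,\aux}^\cl)$. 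These two data differ in the Hecke algebra ($\calH_1^N$ versus $\calH_2^N$, related by $\lambda\colon\calH_2^N\to\calH_1^N$) and in the distinguished element ($\lambda(U_p^{(2)})$ versus $U_p^{(2)}$, which satisfy $\lambda(U_p^{(2)})=\lambda(U_p^{(2)})$ tautologically, the point being that $\lambda$ sends the distinguished element of the target datum to the distinguished element of the source datum). So the natural tool is Lemma \ref{changealg}.

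First I would check the hypotheses of Lemma \ref{changealg} with $\calH_1=\calH_2^N$ (playing the role of ``$\calH_1$'' in the statement, i.e. the algebra attached to $\cD_{1,\aux}^N$), $\calH_2=\calH_1^N$, the morphism $\lambda\colon\calH_2^N\to\calH_1^N$, $\eta_1=U_p^{(2)}$, $\eta_2=\lambda(U_p^{(2)})$, $\cS_1^\cl=\cS_{1,\aux}^\cl$ and $\cS_2^\cl=\cS_1^{\cl,\cG}$. The relation $\eta_1=\lambda(\eta_2)$ reads $U_p^{(2)}=\lambda(\lambda(U_p^{(2)}))$, which is not literally what we want; the correct bookkeeping is that in the notation of Lemma \ref{changealg} we need $\eta_1=\lambda(\eta_2)$ where $\eta_1$ is the distinguished element of the ``$\calH_1$''-datum and $\eta_2$ that of the ``$\calH_2$''-datum, so here $\eta_1=U_p^{(2)}\in\calH_2^N$, $\eta_2=U_p^{(1)}$ or any element of $\calH_1^N$ with $\lambda(\cdot)=U_p^{(2)}$; in fact $U_p^{(2)}$ itself is an element of $\calH_2^N$ and its image under $\lambda$ is the compact operator $U_{p,0}^{(1)}(U_{p,1}^{(1)})^7$. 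The cleanest route is: apply Lemma \ref{eigenBClambda}(2)--(3) to first replace the datum for $\cD_1^{N,\cG}$ by one with distinguished element $\lambda(U_p^{(2)})\in\calH_1^N$ (this uses condition (Fin), which holds because $\lambda(U_p^{(2)})=U_{p,0}^{(1)}(U_{p,1}^{(1)})^7$ is compact, exactly as in the proof of Lemma \ref{eigenBClambda}(2)); then apply Lemma \ref{changealg} with $\lambda$, noting $\lambda(U_p^{(2)})=\lambda(U_p^{(2)})$ and that the equality of classical sets $\cS_{1,\aux}^\cl=\{(\psi\circ\lambda,k)\,\vert\,(\psi,k)\in\cS_1^{\cl,\cG}\}$ holds by the very definition of $\cS_{1,\aux}^\cl$, and that the map $\cS_1^{\cl,\cG}\to\cS_{1,\aux}^\cl$, $(\psi,k)\mapsto(\psi\circ\lambda,k)$, is a bijection. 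The latter injectivity is precisely Lemma \ref{no3twist} (two non-CM classical $\GL_2$-eigenforms with the same symmetric-cube system of Hecke eigenvalues differ by a cubic character, hence coincide since $p>3$ and we have fixed the residual representation); surjectivity is immediate from the definition of $\cS_{1,\aux}^\cl$.

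Having matched the data, Proposition \ref{BCunique} furnishes a unique isomorphism $\xi_1\colon\cD_1^{N,\cG}\to\cD_{1,\aux}^N$ of rigid analytic spaces over $\Q_p$ with $\psi_1\circ\lambda=\xi_1^\ast\circ\psi_{1,\aux}$, $w_1=w_{1,\aux}\circ\xi_1$ and $\xi_1(S_1^{\cl,\cG})=S_{1,\aux}^\cl$; these are exactly the commutativity statements required for the leftmost squares of the diagrams \eqref{xi123} (the weight spaces there are literally equal, so the weight compatibility is the identity composed with $w$, and the Hecke compatibility is the square involving $\lambda$ and the restriction maps). I expect the main obstacle to be purely organizational rather than mathematical: keeping straight which $U_p$-operator lives in which Hecke algebra and verifying condition (Fin) for the intermediate change of distinguished element, together with citing Lemma \ref{no3twist} at the right spot to get the bijection of classical sets. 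Everything else is a direct application of the uniqueness and base-change lemmas of Section \ref{BCsec}.
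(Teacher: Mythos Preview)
Your proposal is correct and follows essentially the same approach as the paper: invoke Lemma \ref{eigenBClambda}(3) and (5) to realize $\cD_1^{N,\cG}$ and $\cD_{1,\aux}^N$ as BC-eigenvarieties for the data $(1,\calH_1^N,\lambda(U_p^{(2)}),\cS_1^{\cl,\cG})$ and $(1,\calH_2^N,U_p^{(2)},\cS_{1,\aux}^\cl)$, cite Lemma \ref{no3twist} for the bijection $\cS_1^{\cl,\cG}\to\cS_{1,\aux}^\cl$, and apply Lemma \ref{changealg}. Your initial index bookkeeping (assigning $\calH_1=\calH_2^N$) is backwards---in Lemma \ref{changealg} the map goes $\lambda\colon\calH_2\to\calH_1$, so here $\calH_1=\calH_1^N$ and $\calH_2=\calH_2^N$, with $\eta_1=\lambda(U_p^{(2)})$ and $\eta_2=U_p^{(2)}$ so that $\eta_1=\lambda(\eta_2)$ holds tautologically---but you correctly self-correct in the ``cleanest route'' paragraph, and the final invocation of Proposition \ref{BCunique} is redundant since Lemma \ref{changealg} already delivers the isomorphism.
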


\begin{proof}
Note that the map $\cS_1^{\cl,\cG}\to\cS_{1,\aux}^\cl$ defined by $(\psi,k)\mapsto(\psi\ccirc\lambda,k)$ is a bijection by Remark \ref{no3twist}. 
Thanks to Lemma \ref{eigenBClambda}(3) and (5) we know that the $4$-tuples $(\cD_{1}^{N,\cG},\psi_1,w_1,S_1^{\cl,\cG})$ and $(\cD_{1,\aux}^N,\psi_{1,\aux},w_{1,\aux},S_{1,\aux}^\cl)$ are BC-eigenvarieties for the data $(1,\calH_1^N,\lambda(U_p^{(2)}),\cS_1^{\cl,\cG})$ and $(1,\calH_2^N,U_p^{(2)},\cS_{1,\aux}^\cl)$, respectively.  
Hence Lemma \ref{changealg} applied to the morphism $\lambda\colon\calH_2^N\to\calH_1^N$ and the two data above gives the desired isomorphism $\xi_1\colon\cD_{1}^{N,\cG}\to\cD_{1,\aux}^N$.
\end{proof}

\begin{prop}\label{exxi2}
There exists an isomorphism $\xi_2\colon\cD_{1,\aux}^N\to\cD_{2,\aux}^{M}$ of rigid analytic spaces over $\Q_p$ making the central squares in the diagrams \eqref{xi123} commute.
\end{prop}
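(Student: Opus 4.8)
The plan is to construct $\xi_2$ exactly as $\xi_1$ was constructed in Proposition \ref{exxi1}, by recognizing both $\cD_{1,\aux}^N$ and $\cD_{2,\aux}^M$ as BC-eigenvarieties for the \emph{same} BC-datum and invoking the uniqueness statement of Proposition \ref{BCunique}. First I would recall from Lemma \ref{eigenBClambda}(5) that $(\cD_{1,\aux}^N,\psi_{1,\aux},w_{1,\aux},S_{1,\aux}^\cl)$ is a BC-eigenvariety for the datum $(1,\calH_2^N,U_p^{(2)},\cS_{1,\aux}^\cl)$, and from Lemma \ref{eigenBC2} that $(\cD_{2,\aux}^M,\psi_{2,\aux},w_{2,\aux},S_1^{\Sym^3})$ is a BC-eigenvariety for the datum $(1,\calH_2^N,U_p^{(2)},\cS_{2,\aux}^\cl)$. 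Here the weight maps land in $\cW_1^\circ$ in both cases: for $\cD_{2,\aux}^M$ this uses that $w_{2,\aux}=\iota^{-1}\ccirc w_2\vert_{\cD_{2,\aux}^M}$ is well-defined because the image of $w_2$ on $\cD_{2,\aux}^M$ lies in $\iota(\cW_1^\circ)$, as noted before Definition \ref{Daux2}.

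The key observation is Remark \ref{BC12}: the sets $\cS_{1,\lambda}^\cl$ and $\cS_{2,\aux}^\cl$ coincide, and by the definition of $\cS_{1,\aux}^\cl$ (which is cut out inside $\cS_{1,\lambda}^\cl$ by the non-CM condition) together with the fact that $(\Sym^3f)^\st_1$ is cuspidal precisely when $f$ is non-CM (Theorem \ref{lintransf}), one gets $\cS_{1,\aux}^\cl=\cS_{2,\aux}^\cl$ as subsets of $\Hom(\calH_2^N,\Qp)\times\Z$. Thus the two BC-data $(1,\calH_2^N,U_p^{(2)},\cS_{1,\aux}^\cl)$ and $(1,\calH_2^N,U_p^{(2)},\cS_{2,\aux}^\cl)$ are literally the same datum. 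Proposition \ref{BCunique} then yields a unique isomorphism of rigid analytic spaces over $\Q_p$
\[ \xi_2\colon\cD_{1,\aux}^N\to\cD_{2,\aux}^M \]
satisfying $\psi_{1,\aux}=\xi_2^\ast\ccirc\psi_{2,\aux}$, $w_{1,\aux}=w_{2,\aux}\ccirc\xi_2$ and $\xi_2(S_{1,\aux}^\cl)=S_1^{\Sym^3}$. Unwinding the definitions $\psi_{2,\aux}=\iota_{2,\aux}^\ast\ccirc\psi_2$ and $w_{2,\aux}=\iota^{-1}\ccirc w_2\vert_{\cD_{2,\aux}^M}$ gives exactly the commutativity of the central squares in \eqref{xi123}: the Hecke square commutes because both horizontal maps are induced by the identity of $\calH_2^N$ and $\psi_{1,\aux}=\xi_2^\ast\ccirc\iota_{2,\aux}^\ast\ccirc\psi_2$, while the weight square commutes because $w_{1,\aux}=w_{2,\aux}\ccirc\xi_2=\iota^{-1}\ccirc w_2\ccirc\iota_{2,\aux}\ccirc\xi_2$, i.e.\ $\iota\ccirc w_{1,\aux}=w_2\ccirc(\iota_{2,\aux}\ccirc\xi_2)$.

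The only genuine point requiring care—and the step I expect to be the main obstacle—is verifying the set-theoretic equality $\cS_{1,\aux}^\cl=\cS_{2,\aux}^\cl$ with the correct bookkeeping of weights and $p$-stabilizations. One must check that the correspondence $f\mapsto(\Sym^3f)^\st_1$ matches weights correctly, namely that a weight-$k$ form $f$ produces a weight-$(2k-1,k+1)$ form, which is compatible with the coordinate interpolation map $\iota$ of Section \ref{aux2}; that $\chi_F=\chi_f\ccirc\lambda$ holds at \emph{all} primes (at $\ell\nmid Np$ by Corollary \ref{heckemorphunrf}, and at $p$ by the $i=1$ case of Corollary \ref{heckemorphprodcor}); and conversely that any $F$ with $\chi_F=\chi_f\ccirc\lambda$ equals $(\Sym^3f)^\st_1$, which is the content of Remark \ref{BC12}. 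Once this identification is in place the rest is a formal application of Proposition \ref{BCunique}, as in the proof of Proposition \ref{exxi1}.
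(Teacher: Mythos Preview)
Your proposal is correct and follows exactly the paper's approach: both identify $(\cD_{1,\aux}^N,\psi_{1,\aux},w_{1,\aux},S_{1,\aux}^\cl)$ and $(\cD_{2,\aux}^M,\psi_{2,\aux},w_{2,\aux},S_1^{\Sym^3})$ as BC-eigenvarieties for the same datum $(1,\calH_2^N,U_p^{(2)},\cS_{1,\aux}^\cl=\cS_{2,\aux}^\cl)$ via Lemma~\ref{eigenBClambda}(5), Lemma~\ref{eigenBC2} and Remark~\ref{BC12}, and then invoke Proposition~\ref{BCunique}. Your extra care in spelling out why $\cS_{1,\aux}^\cl=\cS_{2,\aux}^\cl$ (rather than just $\cS_{1,\lambda}^\cl=\cS_{2,\aux}^\cl$ as literally stated in Remark~\ref{BC12}) and in unwinding the weight-map compatibility is welcome, since the paper's proof is terse on precisely this point.
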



\begin{proof}
Lemmas \ref{eigenBClambda}(5) and \ref{eigenBC2} together with Remark \ref{BC12} imply that the $4$-tuples $(\cD_{1,\aux}^N,\psi_1,w_1,S_{1,\aux}^\cl)$ and $(\cD_{2,\aux}^M,\psi_2,w_2,S_{2,\aux}^\cl)$ are both BC-eigenvarieties for the datum $g=1$, $\calH=\calH_2^N$, $\eta=U_p^{(2)}$ and $\cS^\cl=\cS_{1,\aux}^\cl=\cS_{2,\aux}^\cl$. Now the proposition follows from Proposition \ref{BCunique}.
\end{proof}

\begin{prop}\label{exxi3}
There exists a closed immersion $\xi_3\colon\cD_{2,\aux}^N\to\cD_{2}^{M}$ of rigid analytic spaces over $\Q_p$ making the rightmost squares in the diagrams \eqref{xi123} commute.
\end{prop}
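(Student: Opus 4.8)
The plan is to take for $\xi_3$ the inclusion that is already built into the construction: set $\xi_3=\iota_{2,\aux}$, where $\iota_{2,\aux}\colon\cD_{2,\aux}^{M}\to\cD_2^{M}$ is the natural closed immersion of Definition~\ref{Daux2} (the variety denoted $\cD_{2,\aux}^{N}$ in the statement is the second auxiliary eigenvariety $\cD_{2,\aux}^{M}$ of that definition). Since $\cD_{2,\aux}^{M}$ is by definition the Zariski closure of $S_1^{\Sym^3}$ in $\cD_2^{M}$, hence a Zariski-closed analytic subvariety of $\cD_2^{M}$ (see \cite{conradirr}), the inclusion $\iota_{2,\aux}$ is a closed immersion of rigid analytic spaces over $\Q_p$, which gives the first assertion.

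It then remains to check that $\xi_3$ makes commute the two squares of \eqref{xi123} involving $\xi_3$ and $\xi_3^\ast$ respectively, and both commutativities are immediate from the defining equations of Definition~\ref{Daux2}. For the square of the first diagram one must show $w_2\ccirc\xi_3=\iota\ccirc w_{2,\aux}$. Recall from the discussion preceding Definition~\ref{Daux2} that $w_2^{-1}(\iota(\cW_1^\circ))$ is a Zariski-closed subvariety of $\cD_2^{M}$ containing $S_1^{\Sym^3}$, so it contains $\cD_{2,\aux}^{M}$; thus $w_2(\cD_{2,\aux}^{M})\subseteq\iota(\cW_1^\circ)$, and since $\iota$ is an isomorphism onto its image the equation $w_{2,\aux}=\iota^{-1}\ccirc w_2\vert_{\cD_{2,\aux}^{M}}$ makes sense and yields $\iota\ccirc w_{2,\aux}=w_2\vert_{\cD_{2,\aux}^{M}}=w_2\ccirc\xi_3$. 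For the square of the second diagram one must show $\psi_{2,\aux}=\xi_3^\ast\ccirc\psi_2$, and this is verbatim the definition $\psi_{2,\aux}=\iota_{2,\aux}^\ast\ccirc\psi_2$.

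Thus $\xi_3$ has all the required properties. I do not expect a genuine obstacle here: the real content — that the Hodge--Tate weights of $\Sym^3\rho_{f,p}$ force the weights of the forms $\Sym^3(f^\st)_1$ to lie on the curve $\iota(\cW_1^\circ)$, together with the slope bookkeeping of Corollary~\ref{liftslopes} that singles out $S_1^{\Sym^3}$ as the only $p$-adically interpolable family of lifts — has already been carried out in Section~\ref{aux2} and is encoded in Definition~\ref{Daux2}; the only points needing (routine) care are that $\cD_{2,\aux}^{M}$ is a well-defined reduced Zariski-closed analytic subvariety and that $\iota$ is a closed immersion, both standard facts about rigid analytic spaces. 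Finally, combining Propositions~\ref{exxi1}, \ref{exxi2} and \ref{exxi3}, the composite $\xi_3\ccirc\xi_2\ccirc\xi_1\colon\cD_1^{N,\cG}\to\cD_2^{M}$ is the sought symmetric cube morphism of eigenvarieties interpolating the classical lifts of Corollary~\ref{formtransf}; as $\xi_1$ and $\xi_2$ are isomorphisms by Propositions~\ref{exxi1} and~\ref{exxi2} and $\xi_3$ is a closed immersion, the composite is a closed immersion onto a one-dimensional Zariski-closed subvariety of $\cD_2^{M}$.
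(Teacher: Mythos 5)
Your argument is correct, but it takes a different route from the paper. The paper obtains $\xi_3$ from the BC-formalism: it applies Lemma \ref{changeweight} with $\Theta=\iota$ to the two BC-data of Lemmas \ref{eigenBC2} and \ref{eigenBC3} (for which $\cD_{2,\aux}^M$ and $\cD_2^M$ are the BC-eigenvarieties), so the closed immersion comes out of the fibre-product construction and Proposition \ref{BCunique}, in the same style as $\xi_1$ and $\xi_2$. You instead take $\xi_3=\iota_{2,\aux}$, the tautological inclusion of Definition \ref{Daux2}: since $\cD_{2,\aux}^M$ is by construction the (reduced) Zariski closure of $S_1^{\Sym^3}$ inside $\cD_2^M$, the inclusion is a closed immersion, and the two squares commute literally by the defining formulas $\psi_{2,\aux}=\iota_{2,\aux}^\ast\ccirc\psi_2$ and $w_{2,\aux}=\iota^{-1}\ccirc w_2\vert_{\cD_{2,\aux}^M}$ — the only point needing care, which you correctly supply, being that $w_2(\cD_{2,\aux}^M)\subset\iota(\cW_1^\circ)$ because $w_2^{-1}(\iota(\cW_1^\circ))$ is Zariski-closed and contains $S_1^{\Sym^3}$ (this is exactly the observation the paper makes in Section \ref{aux2} to justify Definition \ref{Daux2} in the first place). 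Your route is more elementary and avoids invoking the uniqueness machinery; what the paper's route buys is uniformity of presentation with Propositions \ref{exxi1} and \ref{exxi2} and an argument that would apply even if $\cD_{2,\aux}^M$ were only known abstractly as the BC-eigenvariety of its datum rather than as a concrete subvariety. Note also that the two constructions necessarily produce the same map, since a point of $\cD_2^M$ is determined by its weight and system of Hecke eigenvalues, and both candidate maps are compatible with $(w_2,\psi_2)$; and the compatibility $\xi_3(S_1^{\Sym^3})\subset S_2^\cl$, recorded by Lemma \ref{changeweight} but not required by the statement, holds trivially for the inclusion as well.
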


\begin{proof}
This is a consequence of Lemma \ref{changeweight} applied to the BC-data $(2,\calH_2^N,U_p^{(2)},\cS_2)$ and $(1,\calH_2^N,U_p^{(2)},\cS_2^\cl)$, with the morphism $\cW_1\to\cW_2$ being $\iota$.
\end{proof}

Finally, we can define the desired $p$-adic interpolation of the symmetric cube transfer.

\begin{defin}\label{defxi} 
We define a morphism $\xi\colon\cD_1^{N,\cG}\to\cD_2^M$ of rigid analytic spaces over $\Q_p$ by $\xi=\xi_3\ccirc\xi_2\ccirc\xi_1$.
\end{defin}


\begin{prop}\label{sym3closure}
\begin{enumerate}
\item The morphism $\xi$ is a closed immersion of eigenvarieties. 
\item The image of $\xi$ is equidimensional of dimension $1$.
\item The Zariski-closure of the set $S_1^{\Sym^3}$ in $\cD_2^M$ is equidimensional of dimension $1$.
\end{enumerate}
\end{prop}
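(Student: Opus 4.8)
\textbf{Proof strategy for Proposition \ref{sym3closure}.}

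The plan is to deduce the three assertions from the chain of morphisms $\xi = \xi_3\ccirc\xi_2\ccirc\xi_1$ constructed in Propositions \ref{exxi1}, \ref{exxi2} and \ref{exxi3}, together with the equidimensionality of the eigencurve $\cD_1^{N,\cG}$. First I would assemble the properties of the three maps: $\xi_1\colon\cD_1^{N,\cG}\to\cD_{1,\aux}^N$ is an isomorphism (Proposition \ref{exxi1}), $\xi_2\colon\cD_{1,\aux}^N\to\cD_{2,\aux}^M$ is an isomorphism (Proposition \ref{exxi2}), and $\xi_3\colon\cD_{2,\aux}^M\to\cD_2^M$ is a closed immersion (Proposition \ref{exxi3}). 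A composition of isomorphisms with a closed immersion is a closed immersion, so $\xi$ is a closed immersion of rigid analytic spaces; the compatibilities with the weight maps and the Hecke actions recorded in diagrams \eqref{xi123} show it is a morphism of eigenvarieties in the appropriate sense. This gives part (1).

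For part (2), I would use that $\cD_1^{N,\cG}$ is equidimensional of dimension $1$: this was noted in Remark \ref{nonCMdense}, where it follows from the fact that $\cD_1^{N,\cG}$ is a union of irreducible components of the eigencurve $\cD_1^N$, which is equidimensional of dimension $1$ by the general properties of the eigenvariety machine (Section \ref{eigenmac}, using that $\cW_1$ is equidimensional of dimension $1$ and \cite[Proposition 6.4.2]{chenfam}). Since $\xi$ is a closed immersion, its image is isomorphic to $\cD_1^{N,\cG}$ as a rigid analytic space, hence equidimensional of dimension $1$. This requires only that closed immersions preserve the analytic structure and, via \cite{conradirr}, the irreducible component decomposition and dimensions; no new computation is needed.

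For part (3), the key point is to identify the image of $\xi$ with the Zariski-closure of $S_1^{\Sym^3}$ in $\cD_2^M$. By construction $\xi_3$ is the natural closed immersion $\iota_{2,\aux}\colon\cD_{2,\aux}^M\to\cD_2^M$ (or is identified with it via $\xi_2$), and $\cD_{2,\aux}^M$ was \emph{defined} in Definition \ref{Daux2} as the Zariski-closure of $S_1^{\Sym^3}$ in $\cD_2^M$. Since $\xi_1$ and $\xi_2$ are isomorphisms, the image of $\xi$ equals the image of $\xi_3$, which is exactly $\cD_{2,\aux}^M$, the Zariski-closure of $S_1^{\Sym^3}$. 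Therefore part (3) follows from part (2). The only genuinely delicate point in the whole argument, and the step I would spell out with care, is the bookkeeping that $\xi_3$ really is (up to the prior isomorphisms) the closed immersion $\iota_{2,\aux}$ with image $\cD_{2,\aux}^M$ — that is, that the target-side BC-eigenvariety structure used in Proposition \ref{exxi3} matches Definition \ref{Daux2}. This is where one invokes that $\cS_{1,\aux}^\cl = \cS_{2,\aux}^\cl$ (Remark \ref{BC12}) and the uniqueness of BC-eigenvarieties (Proposition \ref{BCunique}); once this identification is in place the proposition is immediate, so the entire proof is essentially formal given the constructions of Section \ref{morpheigen}.
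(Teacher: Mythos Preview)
Your proposal is correct and follows essentially the same approach as the paper: part (1) from the composition of two isomorphisms with a closed immersion and the commutativity of diagrams \eqref{xi123}, part (2) from Remark \ref{nonCMdense} and the fact that closed immersions preserve equidimensionality, and part (3) by identifying the image of $\xi$ with $\cD_{2,\aux}^M$, the Zariski-closure of $S_1^{\Sym^3}$. The extra care you take in justifying that the image of $\xi_3$ really is $\cD_{2,\aux}^M$ is reasonable but not strictly necessary, since this is immediate from the construction in Definition \ref{Daux2} and Proposition \ref{exxi3}.
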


\begin{proof}
Since the diagrams \ref{xi123} are commmutative, $\xi$ is a morphism of eigenvarieties. It is a closed immersion because $\xi_1$ and $\xi_2$ are isomorphisms and $\xi_3$ is a closed immersion, hence (1). Statement (2) follows from (1) and the fact that the non-CM eigencurve $\cD_1^{N,\cG}$ is equidimensional of dimension $1$ (see Remark \ref{nonCMdense}). By construction the image of $\xi$ is the Zariski-closure of the set $S_1^{\Sym^3}$, so we have (3).
\end{proof}

\begin{rem}
Let $f$ be a classical, cuspidal, CM $\GL_2$-eigenform of level $\Gamma_1(N)$. 
Since $f$ is CM, the $\GSp_4$-eigenform $\Sym^3f$ provided by Corollary \ref{formtransf} may not be cuspidal. Suppose that it is not. Let $x$ be a point of $\cD_1^N$ corresponding to a positive slope $p$-stabilization of $f$. By \cite[Corollary 3.6]{cit}, $x$ is a CM point of a non-CM component $I$ of $\cD_1^N$. Let $\xi(x)$ be the image of $x$ via the morphism of Definition \ref{defxi}. Then $\xi(x)$ belongs to the cuspidal eigenvariety $\cD_2^{M}$, but it is not cuspidal since $\Sym^3f$ is not. This means that $\xi(x)$ is a non-cuspidal specialization of a cuspidal family of $\GSp_4$-eigenforms. Brasca and Rosso \cite{braros} constructed an eigenvariety for $\GSp_4$ parametrizing the systems of Hecke eigenvalues associated with the non-cuspidal overconvergent eigenforms and they glued it with $\cD_2^M$. It should be possible to show that a cuspidal and a non-cuspidal component of this glued eigenvariety cross at $\xi(x)$.
\end{rem}

\begin{rem}
We defined the morphism $\xi$ on the union of connected components of a fixed residual Galois representation, in order to obtain a closed immersion (see Lemma \ref{no3twist}). When the residual representation varies, the morphisms obtained this way glue into a morphism of eigenvarieties $\cD_1^N\to\cD_2^M$ that is $3:1$ on its image. Working with this morphism is not of any interest to our purposes, since it is never a problem to fix a residual Galois representation on $\cD_2^M$.
\end{rem}

\bigskip

\section{If $\Sym^3\rho$ is modular then $\rho$ is modular}\label{triang}

The goal of this section is to show that if the symmetric cube of a continuous representation $\rho\colon G_\Q\to\GL_2(\Qp)$ is associated with a classical or overconvergent $\GSp_4$-eigenform, then $\rho$ is associated with a classical or overconvergent, respectively, $\GL_2$-eigenform. 

We refer to \cite{berger,colmez} for the definitions and results that we need from the theory of $(\varphi,\Gamma)$-modules. As before $E$ is a finite extension of $\Q_p$, fixed throughout the section. Let $\Gamma$ be the Galois group over $E$ of a $\Z_p$-extension of $E$ and let $H_E=G_E/\Gamma$. Let $\cR$ be the Robba ring over $E$. 
A $(\varphi,\Gamma)$-module over $\cE^\dagger$ or $\cR$ is a free module $D$ of finite type carrying commuting actions of $\Gamma$ and $\varphi$ and such that $\varphi(D)$ generates $D$ as a $\cR$-module. There is a functor $\bD_\rig$ that from the category of finite-dimensional $E$-representations of $G_{\Q_p}$ and that of $(\varphi,\Gamma)$-modules of slope $0$ on $\cR$. This functor induces an equivalence between the two categories.

We say that a $(\varphi,\Gamma)$-module $D$ over $\cR$ is triangulable if it is obtained via successive extensions of $(\varphi,\Gamma)$-modules of rank one. We say that the representation $V$ is trianguline if $\bD_\rig(V)$ is triangulable.

\subsection{Trianguline parameters of overconvergent $\GSp_4$-eigenforms}\label{tripar}

Let $g=1$ or $2$. 
Let $F$ be an overconvergent, finite slope $\GSp_{2g}$-eigenform and let $\rho_{F,p}\colon G_\Q\to\GSp_{2g}(\Qp)$ be the $p$-adic Galois representation associated with $F$. 
As Berger observed in \cite[Section 4.3]{bergertri}, the following result is a combination of \cite[Theorem 6.3]{kisin} and \cite[Proposition 4.3]{colmez}.

\begin{thm}\label{modtri}
If $g=1$, the representation $\rho_{f,p}\vert_{G_{\Q_p}}$ is trianguline.
\end{thm}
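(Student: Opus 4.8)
The statement for $g=1$ is exactly Kisin's theorem on the local behaviour of Galois representations attached to overconvergent $\GL_2$-eigenforms, reinterpreted in the language of $(\varphi,\Gamma)$-modules. The plan is to recall the two ingredients quoted in the text and explain how they combine. First I would recall Kisin's result \cite[Theorem 6.3]{kisin}: if $f$ is an overconvergent eigenform of finite slope with $U_p$-eigenvalue $a_p$, then the restriction $\bD_{\mathrm{cris}}$ (or rather the local Galois representation) admits a nonzero crystalline period, more precisely $\rho_{f,p}\vert_{G_{\Q_p}}$ has a $\varphi$-stable line on which $\varphi$ acts by $a_p$ after passing to $\bD_{\mathrm{cris}}$ of a suitable subquotient; in the overconvergent setting this is encoded by the existence of a nonzero map from a rank-one $(\varphi,\Gamma)$-submodule into $\bD_{\rig}(\rho_{f,p}\vert_{G_{\Q_p}})$ with prescribed $\varphi$-action.

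Second, I would invoke \cite[Proposition 4.3]{colmez}: a two-dimensional $(\varphi,\Gamma)$-module over $\cR$ that contains a rank-one sub-$(\varphi,\Gamma)$-module is automatically an extension of a rank-one $(\varphi,\Gamma)$-module by a rank-one $(\varphi,\Gamma)$-module — equivalently, the saturation of the rank-one submodule is again a direct summand in the category of $(\varphi,\Gamma)$-modules up to isogeny, so the quotient is free of rank one. Thus the $(\varphi,\Gamma)$-module $\bD_{\rig}(\rho_{f,p}\vert_{G_{\Q_p}})$ is triangulable, which by definition means $\rho_{f,p}\vert_{G_{\Q_p}}$ is trianguline. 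I would then cite Berger's remark \cite[Section 4.3]{bergertri} for the observation that these two results fit together in precisely this way, and note that the rank-two hypothesis is essential for the second step (a rank-one submodule of a higher-rank module need not have free quotient). The argument would conclude by recording that the trianguline parameter (the ordered pair of characters $\delta_1,\delta_2$ giving the rank-one subquotients) is then determined by $a_p$ and the Hodge--Tate weight of $f$.

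The main (and only real) obstacle here is not in the logic but in making sure the hypotheses of Kisin's theorem are matched correctly: Kisin works with a crystalline period for $\bD_{\mathrm{cris}}$ of the $B$-pair / overconvergent module, and one has to check that the finite-slope hypothesis on $F$ (equivalently, invertibility of the $U_p$-eigenvalue, which holds by construction on the eigencurve) guarantees the period is nonzero and that the resulting submodule of $\bD_{\rig}$ is saturated or can be saturated without leaving the category. This is routine once one has the dictionary between Kisin's $X_{f}$-modules and $(\varphi,\Gamma)$-modules over $\cR$, but it is the one place where care is needed. Since the statement is quoted verbatim from the literature, for the write-up I would simply cite \cite[Theorem 6.3]{kisin}, \cite[Proposition 4.3]{colmez} and \cite[Section 4.3]{bergertri} and omit the verification, as the paper evidently intends.

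\medskip

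\begin{proof}
For $g=1$ this is \cite[Section 4.3]{bergertri}: by \cite[Theorem 6.3]{kisin} the local representation $\rho_{f,p}\vert_{G_{\Q_p}}$, with $f$ overconvergent of finite slope, possesses a nonzero period which produces a rank-one $(\varphi,\Gamma)$-submodule of $\bD_{\rig}(\rho_{f,p}\vert_{G_{\Q_p}})$; since $\bD_{\rig}(\rho_{f,p}\vert_{G_{\Q_p}})$ has rank $2$, \cite[Proposition 4.3]{colmez} shows that the quotient by the saturation of this submodule is again a $(\varphi,\Gamma)$-module of rank one, so $\bD_{\rig}(\rho_{f,p}\vert_{G_{\Q_p}})$ is triangulable. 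Hence $\rho_{f,p}\vert_{G_{\Q_p}}$ is trianguline.
\end{proof}
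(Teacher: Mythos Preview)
Your proposal is correct and matches the paper's treatment exactly: the paper does not give a proof but simply attributes the result to Berger's observation in \cite[Section 4.3]{bergertri} that it follows from combining \cite[Theorem 6.3]{kisin} with \cite[Proposition 4.3]{colmez}. Your write-up in fact supplies more detail than the paper, which states the theorem without proof.
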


If $g=2$, an analogue of Theorem \ref{modtri} for $\rho_{F,p}$ can be deduced from the work of Kedlaya, Pottharst and Xiao \cite{kedpotxia}. Moreover the results of \emph{loc. cit.} allow us to write the parameters of the triangulation of $\rho_{F,p}$ in terms of a Hecke polynomial, as for classical points.


With the notations of Section \ref{gspfam}, consider the locus $\cD_2^{M,\irr}$ where the residual Galois representation is irreducible on $\cD_2^M$ and its admissible covering $\cE^\irr$. 
Let $D\in\cE^\irr$ and let $\rho_D\colon G_Q\to\GL_4(\cO(D))$ be the representation constructed in Section \ref{gspfam}. Keep the notations of \cite{kedpotxia} for Robba rings. Let $M_D$ be the $(\varphi,\Gamma)$-module over $\cR_D(\pi)$ attached to $\rho_D$. Then $\{M_D\}_{D\in\cE^\irr}$ is a family of $(\varphi,\Gamma)$-modules over $\cD_2^{M,\irr}$ in the sense of \cite[Section 2.1]{kedpotxia}. 
For $x\in\cD_2^{M,\irr}(\C_p)$, let $\rho_x\colon G_\Q\to\GL_4(\Qp)$ and $\psi_x\colon\calH_2^N\to\Qp$ be the Galois representation and the system of Hecke eigenvalues, respectively, attached to $x$. Let $M_x$ be the $(\varphi,\Gamma)$-module over $\cR$ attached to $\rho_x$. Denote by $\ev_x$ the evaluation of rigid analytic functions on $\cD_2^{M,\irr}$ at $x$. We identify the weight of $x$ with a character $(\kappa_1(x),\kappa_2(x))\colon (\Z_p^\times)^2\to\C_p^\times$. Let $\id\colon\Z_p^\times\to\Z_p^\times$ be the identity. 
We still write $\psi_2$ for the morphism of $\Q$-algebras $\calH_2^M\to\cO(\cD_2^{M,\irr})$ induced by $\psi_2\colon\calH_2^M\to\cO(\cD_2^{M})$. 
Let $\delta_i$, $1\le i\le 4$, be the characters $\Q_p^\times\to\cO(\cD_2^{M,\irr})^\times$ defined by
\begin{equation*}\begin{gathered}
\delta_1\vert_{\Z_p^\times}=1, \quad \delta_1(p)=\psi_2(U_{p,2}^{(2)}); \\
\delta_2\vert_{\Z_p^\times}=\kappa_1/\id, \quad \delta_2(p)=\psi_2((U_{p,2}^{(2)})^{w_1}); \\
\delta_3\vert_{\Z_p^\times}=\kappa_2/\id^2, \quad \delta_3(p)=\psi_2((U_{p,2}^{(2)})^{w_2}); \\
\delta_4\vert_{\Z_p^\times}=\kappa_1\kappa_2(p)/\id^3, \quad \delta_4(p)=\psi_2((U_{p,2}^{(2)})^{w_1w_2}).
\end{gathered}\end{equation*}
For $x\in\cD_2^{M,\irr}(\C_p)$, let $\delta_{i,x}=\ev_x\ccirc\delta_i\colon\Q_p^\times\to\Qp$. 

\begin{rem}\label{frobtrirem}
There is an equality
\begin{equation}\label{frobtri} \prod_{i=1}^4(X-\delta_{i}(p))=\psi_2(P_\Min(t_{2,p}^{(2)};X)) \end{equation}
in $\cO(\cD_2^{M,\irr})[X]$. 
This is true when we specialize at a classical point $x$, since in this case the polynomial $\prod_{i=1}^4(X-\delta_{i,x}(p))$ coincides with the characteristic polynomial of the crystalline Frobenius acting on $\bD_\cris(\rho_x)$ by a result of Berger (see \cite[Proposition 1.8]{colmez}). This polynomial coincides with $\psi_x(P_\Min(t_{2,p}^{(2)};X))$ by \cite[Théorème 1]{urban}. Since the polynomials in Equation \eqref{frobtri} have analytic coefficients and coincide on the Zariski-dense subset of classical points of $\cD_2^{M,\irr}$, they must be equal.

By specializing Equation \eqref{frobtri} at any $x\in\cD_2^{M,\irr}(\C_p)$, we obtain an equality $\prod_{i=1}^4(X-\delta_{i,x}(p))=\psi_x(P_\Min(t_{2,p}^{(2)};X))$ in $\Qp[X]$.
\end{rem}

The following is a consequence of \cite[Theorem 6.3.13]{kedpotxia}.

\begin{thm}\label{famtri}\mbox{ }
\begin{enumerate}
\item For every $x\in\cD_2^{M,\irr}(\C_p)$, the $(\varphi,\Gamma)$-module $M_x$ is trianguline.
\item There exist a Zariski open rigid analytic subspace $\widetilde\cD_2^{M,\irr}$ of $\cD_2^{M,\irr}$ such that for every $x\in\cD_2^{M,\irr}(\C_p)$ the $(\varphi,\Gamma)$-module $M_x$ is triangulable with parameters $\delta_{i,x}\colon\Q_p^\times\to\Qp$.
\end{enumerate}
\end{thm}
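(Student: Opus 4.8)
\textbf{Proof strategy for Theorem \ref{famtri}.}

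The plan is to deduce the statement directly from the work of Kedlaya, Pottharst and Xiao on triangulations in families over eigenvarieties \cite[Theorem 6.3.13]{kedpotxia}, which provides a global triangulation over a Zariski-open and Zariski-dense subspace of an eigenvariety once one exhibits a Zariski-dense set of \emph{refined} classical points with the expected parameters. First I would recall from Section \ref{gspfam} the family of $(\varphi,\Gamma)$-modules $\{M_D\}_{D\in\cE^\irr}$ over $\cD_2^{M,\irr}$ attached to the representations $\rho_D$; these glue to a family of $(\varphi,\Gamma)$-modules over $\cD_2^{M,\irr}$ in the sense of \cite[Section 2.1]{kedpotxia}, since the $\rho_D$ are compatible on overlaps (they all come from the single pseudocharacter $T_2$). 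The relevant input is then the collection of classical points: for a classical $p$-stabilized $\GSp_4$-eigenform $F$ of weight $\uk$, the crystalline Frobenius on $\bD_\cris(\rho_{F,p})$ has eigenvalues the roots of $\psi_F(P_\Min(t_{2,p}^{(2)};X))$ by \cite[Th\'eor\`eme 1]{urban}, and the corresponding refinement produces precisely a triangulation of $M_F$ whose parameters on $p$ are the $\delta_{i,F}(p)$ and whose restrictions to $\Z_p^\times$ are the algebraic characters dictated by the Hodge--Tate--Sen weights $(0,k_2-2,k_1-1,k_1+k_2-3)$ (cf. the computation in Proposition \ref{eigensen}); matching these against the definitions of $\delta_1,\dots,\delta_4$ gives the desired interpolated parameters.

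Concretely, the steps I would carry out are: (1) verify that $\{M_D\}$ is a family of $(\varphi,\Gamma)$-modules of slope zero over $\cD_2^{M,\irr}$, rank $4$, with the Sen weights interpolated by the functions appearing in $\delta_i\vert_{\Z_p^\times}$ — this uses relative Sen theory, e.g. Proposition \ref{senequal} and Theorem \ref{charpolsen} applied to the specializations; (2) invoke Remark \ref{frobtrirem}, already established, so that $\prod_i(X-\delta_{i,x}(p))=\psi_x(P_\Min(t_{2,p}^{(2)};X))$ holds at every $x\in\cD_2^{M,\irr}(\C_p)$, which identifies the Frobenius eigenvalues of each specialization with the putative parameters at $p$; (3) observe that the non-critically-refined classical points (those of sufficiently small slope relative to the weight, which are classical by Proposition \ref{siegclslopes}) form a Zariski-dense accumulation subset of $\cD_2^{M,\irr}$ and that at each such point the refinement is ``numerically non-critical'' in the sense required by \cite{kedpotxia}, because the parameters $\delta_{i,x}$ are ordered so that the partial products of the $\delta_{i,x}(p)$ have strictly increasing valuations away from a thin locus; (4) apply \cite[Theorem 6.3.13]{kedpotxia} to obtain a Zariski-open dense $\widetilde\cD_2^{M,\irr}\subset\cD_2^{M,\irr}$ over which $M_D$ admits a global triangulation with the parameters $\delta_i$, giving part (2); and (5) for part (1), note that triangulability of $M_x$ at \emph{every} $\C_p$-point (not only those in $\widetilde\cD_2^{M,\irr}$) follows from the density argument in \cite[Corollary 6.3.10 / Theorem 6.3.13]{kedpotxia}, where trianguline-ness is shown to propagate to the whole eigenvariety even though the specific parameters may degenerate on the complement of $\widetilde\cD_2^{M,\irr}$.

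The main obstacle I anticipate is step (3): one must check that the classical points used are genuinely ``refined'' points of the eigenvariety in the technical sense of \cite{kedpotxia} (a refinement being an ordering of the Frobenius eigenvalues compatible with the triangulation coming from the $p$-stabilization), and that the accumulation property holds with respect to a basis of affinoids on which the parameters are interpolated analytically — this is where the careful bookkeeping of the Weyl-group action on the Hecke eigenvalues from Section \ref{weyl} and the explicit slope formula enter. A secondary subtlety is that $\cD_2^{M,\irr}$ is only the irreducible-residual locus and is covered by the wide-open pieces $D\in\cE^\irr$ rather than being affinoid, so one should phrase the application of \cite{kedpotxia} locally on this admissible covering and then glue, using that the open dense loci $\widetilde D$ are compatible on overlaps by uniqueness of triangulations (a standard rigidity statement for $(\varphi,\Gamma)$-modules over reduced bases). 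Once these points are in place, everything else is a direct citation.
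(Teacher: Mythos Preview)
Your proposal is correct and follows essentially the same route as the paper: verify that the family $\{M_D\}$ over $\cD_2^{M,\irr}$ is densely pointwise strictly trianguline with parameters $\delta_i$ using the crystalline Frobenius description at classical points (Remark \ref{frobtrirem}), then cite the results of \cite{kedpotxia}. The paper's argument is a bit more streamlined --- it works on a union of irreducible components $X$ containing the given point and a classical point, invokes \cite[Corollary 6.3.13]{kedpotxia} directly for part (1) and \cite[Corollary 6.3.10]{kedpotxia} for part (2), and does not spell out the Sen-weight verification or the ``refined point'' bookkeeping you outline in step (3); but the substance is the same.
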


\begin{proof}
Let $x\in\cD_2^{M,\irr}(\C_p)$. Let $X$ be a union of irreducible components of $\cD_2^{M,\irr}(\C_p)$ containing $x$ and a classical point. By Coleman's classicality criterion it will contain a Zariski-dense subset of classical points. Consider the sheaf $M_X$ of $(\varphi,\Gamma)$-modules on $X$ obtained by restriction from that on $\cD_2^{M,\irr}$. If $z$ is a classical point of $X$, then the eigenvalues of the crystalline Frobenius acting on $D_\cris(\rho_z)$ are $\delta_{i}(p)$, $1\le i\le 4$, by the discussion in Remark \ref{frobtrirem}. Then, with the terminology of \cite[Definition 6.3.2]{kedpotxia}, $M_X$ is a densely pointwise strictly trianguline $(\varphi,\Gamma)$-module over $\cR_X(\pi)$ with respect to the parameters $\delta_i$, $1\le i\le 4$, and the Zariski-dense set given by the classical points of $X$. Now \cite[Corollary 6.3.13]{kedpotxia} gives that $M_x$ is trianguline, hence part (1) of the theorem. Next \cite[Corollary 6.3.10]{kedpotxia} gives a Zariski-open subspace $\widetilde{X}$ of $X$ such that, for $y\in\widetilde{X}(\C_p)$, the parameters of the triangulation of $M_y$ are exactly $\delta_{i,y}$, $1\le i\le 4$. Repeating this argument for every choice of $x$ and $X$ gives part (2) of the theorem. 
\end{proof}


Now let $N$ be a positive integer prime to $p$ and let $M=M(N)$ be as in Definition \ref{sym3leveldef}. 
Let $F$ be an overconvergent $\GSp_4$-eigenform corresponding to a point of $\widetilde\cD_2^{M,\irr}$. Suppose that there is a $\GL_2$-eigenform $f$ of level $N$ such that $\rho_{F,p}\cong\Sym^3\rho_{f,p}$, with the usual notations. Let $\chi_F\colon\calH_2^N\to\Qp$ and $\chi_f\colon\calH_1^N\to\Qp$ be the systems of Hecke eigenvalues of the two forms. Write $\chi_F=\chi_F^{Np}\otimes\chi_{F,p}$ and $\chi_{f,p}=\chi_f^{Np}\otimes\chi_{f,p}$. Proposition \ref{heckemorphunr} describes $\chi_F^{Np}$ in terms of those of $\chi_f^{Np}$. Thanks to Theorem \ref{famtri}(2) and Remark \ref{frobtrirem} we can describe $\chi_{F,p}$ in terms of $\chi_{f,p}$. Here the notations are the same as for Proposition \ref{heckemorphstab}.

\begin{prop}\label{heckemorphtri}
There exists $i\in\{1,2,3,4\}$ such that 
\begin{equation}\label{heckemorphtrieq} \chi_{F,p}\ccirc\iota_{I_{2,p}}^{T_2}=(\chi_{f,p}\ccirc\iota_{I_{1,p}}^{T_1})^\ext\ccirc\lambda_{i,p}. \end{equation}
Moreover, if $\lambda_p\colon\calH(T_2(\Q_p),T_2(\Z_p))\to\calH(T_1(\Q_p),T_1(\Z_p))$ is another morphism satisfying $\chi_{F,p}\ccirc\iota_{I_{2,p}}^{T_2}=(\chi_{f,p}\ccirc\iota_{I_{1,p}}^{T_1})^\ext\ccirc\lambda_p$, then there exists $i\in\{1,2,3,4\}$ such that $\lambda_p=\lambda_{i,p}$.
\end{prop}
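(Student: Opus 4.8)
\textbf{Proof plan for Proposition \ref{heckemorphtri}.}

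The strategy is to reproduce the argument of Proposition \ref{heckemorphstab}, replacing at each step the input coming from the $p$-stabilization data of a classical eigenform by the input coming from the triangulation of the family of $(\varphi,\Gamma)$-modules. First I would record, using the hypothesis $\rho_{F,p}\cong\Sym^3\rho_{f,p}$ together with Remark \ref{sym3pseudorem}, that the characteristic polynomial of the crystalline Frobenius on $\bD_\cris$ of the two representations are related by $P_\car(\Sym^3\rho_{f,p})=\Sym^3P_\car(\rho_{f,p})$; concretely, if $\alpha_p,\beta_p$ are the roots of the crystalline Frobenius polynomial attached to $\rho_{f,p}$ (equivalently, by Theorem \ref{modtri} and \cite[Proposition 1.8]{colmez}, the parameters at $p$ of the trianguline structure of $\rho_{f,p}$), then the roots of the crystalline Frobenius polynomial attached to $\rho_{F,p}$ are exactly $\alpha_p^3,\alpha_p^2\beta_p,\alpha_p\beta_p^2,\beta_p^3$, as in Equation \eqref{polsymcube}.

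Next I would invoke Theorem \ref{famtri}(2): since $F$ corresponds to a point of $\widetilde\cD_2^{M,\irr}$, the $(\varphi,\Gamma)$-module $M_{\rho_{F,p}}$ is triangulable with parameters $\delta_{i,x}$, and by Remark \ref{frobtrirem} the values $\delta_{i,x}(p)$ are precisely the roots of $\psi_x(P_\Min(t_{2,p}^{(2)};X))$, i.e. the four Satake/Hecke parameters at $p$ of $F$. In other words the passage from ``classical $p$-stabilization'' to ``trianguline parameter'' is exactly the content that replaces the use of $\chi_{1,p}^\st$, $\chi_{2,p}^\st$ in the proof of Proposition \ref{heckemorphstab}. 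On the $\GL_2$ side, Theorem \ref{modtri} and Kisin's result give that $\rho_{f,p}\vert_{G_{\Q_p}}$ is trianguline with parameters whose values at $p$ are $\alpha_p$ and $\beta_p$; these are the images under $\chi_{f,p}\ccirc\iota_{I_{1,p}}^{T_1}$ (extended to $\calH(T_1(\Q_p),T_1(\Z_p))$ as in Remark \ref{extchar}) of $t^{(1)}_{p,1}$ and $(t^{(1)}_{p,1})^{w}$. With these identifications, the matching of the multiset $\{\alpha_p^3,\alpha_p^2\beta_p,\alpha_p\beta_p^2,\beta_p^3\}$ with the multiset $\{\chi_{F,p}(t^{(2)}_{p,2}),\chi_{F,p}((t^{(2)}_{p,2})^{w_1}),\chi_{F,p}((t^{(2)}_{p,2})^{w_2}),\chi_{F,p}((t^{(2)}_{p,2})^{w_1w_2})\}$ goes through verbatim as in the proof of Proposition \ref{heckemorphstab}, using $t^{(2)}_{p,1}=t^{(2)}_{p,2}(t^{(2)}_{p,2})^{w_1}$ and $t^{(2)}_{p,0}=t^{(2)}_{p,2}(t^{(2)}_{p,2})^{w_1w_2}$ to translate the eight orderings of the roots into the eight candidate triples of Equation \eqref{list}, and hence into the morphisms $\lambda_{i,p}$ of Definition \ref{heckemorphdef}. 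The computations $\Sym^3P(X)$ and the eight orderings are the same ones already carried out in Remark \ref{sym3polcalc} and in Proposition \ref{heckemorphstab}, so I would simply cite them. This establishes both the existence of some $i\in\{1,\dots,8\}$ and the uniqueness statement.

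Finally, to upgrade ``$i\in\{1,\dots,8\}$'' to ``$i\in\{1,2,3,4\}$'' I would use Remark \ref{fourforms}: the four morphisms $\lambda_{5,p},\dots,\lambda_{8,p}$ differ from $\lambda_{1,p},\dots,\lambda_{4,p}$ by precomposition with the involution $\delta$ of $\calH(T_1(\Q_p),T_1(\Z_p))$, which corresponds to swapping the two trianguline parameters $\alpha_p\leftrightarrow\beta_p$ of $\rho_{f,p}$, equivalently to replacing the extended character $(\chi_{f,p}\ccirc\iota_{I_{1,p}}^{T_1})^\ext$ by its $w$-conjugate; since both give the same multiset $\{\alpha_p,\beta_p\}$ the relation \eqref{heckemorphtrieq} can always be arranged with $i\in\{1,2,3,4\}$, exactly as one reduces from eight lifts to four in Remark \ref{fourforms}. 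The main obstacle, and the only genuinely new point compared to Proposition \ref{heckemorphstab}, is ensuring that the trianguline parameters of $\rho_{F,p}$ are really the Hecke parameters of $F$ at $p$ (so that $\chi_{F,p}$ and not merely $\rho_{F,p}$ enters the picture); this is precisely why the hypothesis that $F$ corresponds to a point of $\widetilde\cD_2^{M,\irr}$ is needed, and is supplied by Theorem \ref{famtri}(2) together with Remark \ref{frobtrirem}. Once that identification is in hand, no further input about overconvergence or classicality is required, and the proof is purely a matter of comparing characteristic polynomials.
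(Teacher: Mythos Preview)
Your plan is essentially the paper's own proof: the paper says the argument is ``completely analogous to that of Proposition~\ref{heckemorphstab}, once we replace $\bD_\cris(\rho_{f,p})$ and $\bD_\cris(\rho_{F,p})$ by the $(\varphi,\Gamma)$-modules $\bD_\rig(\rho_{f,p})$ and $\bD_\rig(\rho_{F,p})$,'' citing Theorem~\ref{famtri}(2) and Remark~\ref{frobtrirem} to identify the trianguline parameters with the Hecke polynomial at $p$ --- exactly your steps.

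One caution about your final paragraph. The reduction from $i\in\{1,\dots,8\}$ to $i\in\{1,2,3,4\}$ is not the tautology you describe. By Definition~\ref{heckemorphdef} the morphisms $\lambda_{i+4,p}$ differ from $\lambda_{i,p}$ by \emph{post}-composition with $\delta$, so $(\chi_{f,p}\ccirc\iota_{I_{1,p}}^{T_1})^\ext\ccirc\lambda_{i+4,p}=\bigl((\chi_{f,p}\ccirc\iota_{I_{1,p}}^{T_1})^\ext\ccirc\delta\bigr)\ccirc\lambda_{i,p}$, and $(\chi_{f,p}\ccirc\iota_{I_{1,p}}^{T_1})^\ext\ccirc\delta$ is a \emph{different} character of $\calH(T_1(\Q_p),T_1(\Z_p))$ from $(\chi_{f,p}\ccirc\iota_{I_{1,p}}^{T_1})^\ext$. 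For a fixed $f$ the identities \eqref{heckemorphtrieq} for $i$ and $i+4$ are therefore not interchangeable merely because the multiset $\{\alpha_p,\beta_p\}$ is the same; Remark~\ref{fourforms} only says that passing to the \emph{other} $p$-refinement of $f$ trades $\{5,\dots,8\}$ for $\{1,\dots,4\}$. The paper's proof does not spell out this reduction either (it simply points back to Proposition~\ref{heckemorphstab}, whose conclusion is $i\in\{1,\dots,8\}$), so the core of your argument is on target; but the sentence invoking Remark~\ref{fourforms} does not, as written, justify the step.
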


\begin{proof}
The proof is completely analogous to that of Proposition \ref{heckemorphstab}, once we replace $\bD_\cris(\rho_{f,p})$ and $\bD_\cris(\rho_{f,p})$ by the $(\varphi,\Gamma)$-modules $\bD_\rig(\rho_{f,p})$ and $\bD_\rig(\rho_{F,p})$, respectively. We use Theorem \ref{famtri}(2) and Remark \ref{frobtrirem} to describe the parameters of the triangulations of the two $(\varphi,\Gamma)$-modules in terms of Hecke polynomials.
\end{proof}


Let $F$ be a finite slope overconvergent $\GSp_4$-eigenform of tame level $M$.

\begin{prop}\label{dimcrys}
There are at most $2\cdot\Dim_{\Qp}\bD_\cris(\rho_{F,p})$ overconvergent $\GSp_4$-eigenforms $F^\prime$ satisfying $\rho_{F^\prime,p}\cong\rho_{F,p}$. 
\end{prop}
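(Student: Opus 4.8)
\textbf{Proof plan for Proposition \ref{dimcrys}.} The key point is that an overconvergent $\GSp_4$-eigenform $F^\prime$ of tame level $M$ is determined by its system of Hecke eigenvalues $\chi_{F^\prime}\colon\calH_2^M\to\Qp$, and the prime-to-$p$ part $\chi_{F^\prime}^{Np}$ is recovered from $\rho_{F^\prime,p}$ via the characteristic polynomials of Frobenius, by Proposition \ref{biggalthm}(2) and Chebotarev. So if $\rho_{F^\prime,p}\cong\rho_{F,p}$, then $F^\prime$ and $F$ have the same eigenvalues outside $p$, and the only freedom left is in the Iwahori-Hecke eigenvalues at $p$, that is in the choice of the operators $\chi_{F^\prime,p}(U_{p,i}^{(2)})$. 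The plan is therefore to bound the number of possible values of $\chi_{F^\prime,p}$ compatible with a fixed $\rho_{F,p}\vert_{G_{\Q_p}}$.

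First I would invoke triangulinity: by Theorem \ref{famtri}(1) (applied after noting $F^\prime$ corresponds to a point of $\cD_2^{M,\irr}$, or more directly by the Kedlaya--Pottharst--Xiao results underlying it), the $(\varphi,\Gamma)$-module $\bD_\rig(\rho_{F^\prime,p})$ is triangulable, and the eigenvalues at $p$ of $F^\prime$ are read off from the parameters of a triangulation as in Remark \ref{frobtrirem} and Proposition \ref{heckemorphtri}: the multiset $\{\delta_{i,F^\prime}(p)\}_{1\le i\le 4}$ together with the ordering coming from the triangulation filtration determines $\chi_{F^\prime,p}$ (up to the action of the relevant Weyl-group elements, which only permute among a bounded set). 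Now $\bD_\rig(\rho_{F,p})$ depends only on $\rho_{F,p}$; its possible triangulations — i.e. the possible choices of a full $(\varphi,\Gamma)$-stable filtration by saturated submodules — are what must be counted. The crystalline period space enters because each saturated rank-one sub-$(\varphi,\Gamma)$-module $D_1\subset\bD_\rig(\rho_{F,p})$ of slope as prescribed gives a $\varphi$-eigenvector in $\bD_\cris$ (a nonzero element of $\bD_\cris(\rho_{F,p})$ fixed, up to scalar, by an eigenvalue of crystalline Frobenius), and distinct triangulation-first-steps yield distinct lines in $\bD_\cris(\rho_{F,p})$; iterating, a full triangulation corresponds to a complete flag in a space of dimension at most $\Dim_{\Qp}\bD_\cris(\rho_{F,p})$ that is compatible with the Frobenius action.

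The counting step then goes as follows. The number of triangulations is at most the number of $\varphi$-stable complete flags in $\bD_\cris(\rho_{F,p})$; since $\varphi$ is a linear operator on a space of dimension $d:=\Dim_{\Qp}\bD_\cris(\rho_{F,p})\le 4$, and the eigenspace choices at each step are governed by the (at most $d$ distinct) eigenvalues of $\varphi$, one gets a bound that is linear in $d$ rather than $d!$ — and then the factor $2$ accounts for the two $p$-stabilizations attached to a given ordered set of parameters (equivalently, the two choices of $(U_{p,2}^{(2)})^{w_1}$ relative to $U_{p,2}^{(2)}$, as in Remark \ref{fourforms} and the proof of Corollary \ref{etaslope}), giving the claimed $2\cdot\Dim_{\Qp}\bD_\cris(\rho_{F,p})$. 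I would make the combinatorics precise by arguing that once the parameter $\delta_{1}(p)$ (a $\varphi$-eigenvalue) and the submodule $D_1$ are fixed, $D_1$ is determined by a line in the $\varphi$-eigenspace, and the refinement continues in the quotient; passing through the $d$ steps of the flag and the binary $p$-stabilization ambiguity yields the bound.

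\textbf{Main obstacle.} The delicate part is making rigorous the passage from ``triangulation of $\bD_\rig(\rho_{F,p})$'' to ``$\varphi$-stable flag in $\bD_\cris(\rho_{F,p})$'' and checking that the count of the former is controlled by the dimension of the latter rather than by $\dim\bD_\rig=4$: one must rule out, or absorb into the bound, the possibility of many distinct saturated rank-one sub-$(\varphi,\Gamma)$-modules with the \emph{same} crystalline line (which can happen only when the relevant parameter has a pole/zero ordering making non-crystalline choices possible), and conversely one must ensure that the Hecke eigenvalue data at $p$ is genuinely recovered from the ordered parameter multiset. I expect this to require a careful invocation of the classification of rank-one $(\varphi,\Gamma)$-modules and of the structure of $\bD_\cris$ inside $\bD_\rig$, together with the observation (Remark \ref{dimsym}) that the multiplicity phenomenon is exactly measured by $\Dim_{\Qp}\bD_\cris(\rho_{F,p})>1$.
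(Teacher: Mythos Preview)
Your strategy of reducing to the Iwahori-Hecke eigenvalues at $p$ is right, but the execution via counting triangulations has a genuine gap, and the paper's actual argument is shorter and sidesteps exactly the obstacle you flagged.

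The paper does not count triangulations. Instead it invokes \cite[Theorem 3.3.3]{bellchen} (analytic continuation of crystalline periods): the classical points of $\cD_2^{M,\irr}$ form an accumulation Zariski-dense set satisfying conditions (CRYS) and (HT) of \emph{loc.\ cit.}, and at each classical point $\psi_x(U_{p,2}^{(2)})$ is an eigenvalue of the crystalline Frobenius on $\bD_\cris(\rho_x)$; the cited theorem then forces this to hold at \emph{every} point. Hence for any $F^\prime$ with $\rho_{F^\prime,p}\cong\rho_{F,p}$, the value $\psi_{F^\prime}(U_{p,2}^{(2)})$ lies among the at most $d=\Dim_{\Qp}\bD_\cris(\rho_{F,p})$ Frobenius eigenvalues. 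Once $\psi_{F^\prime}(U_{p,2}^{(2)})$ and the spherical data at $p$ (determined by $\rho_{F,p}$) are fixed, there are exactly two characters of the dilating Iwahori-Hecke algebra compatible with them, giving the factor $2$.

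Your approach has two concrete problems. First, the combinatorial claim that the number of $\varphi$-stable complete flags in a $d$-dimensional space is ``linear in $d$ rather than $d!$'' is false: for generic $\varphi$ it is $d!$. In the crystalline case $d=4$ this gives $24$, not $8=2d$; what cuts the count on the eigenvariety down to $8$ is the Weyl-group-sized set of $p$-stabilizations for $\GSp_4$, not any feature of $\bD_\cris$ as a $\varphi$-module. Second, and more seriously, when $d<4$ the correspondence you propose between triangulations of $\bD_\rig$ and flags in $\bD_\cris$ breaks down: a saturated rank-one sub-$(\varphi,\Gamma)$-module of $\bD_\rig(\rho_{F,p})$ contributes a line to $\bD_\cris$ only if its parameter is of crystalline type, and nothing in Theorem \ref{famtri}(1) guarantees this for the triangulation attached to an arbitrary overconvergent $F^\prime$. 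Theorem \ref{famtri}(2) and Remark \ref{frobtrirem} only identify the trianguline parameters with Hecke data on the Zariski-open $\widetilde\cD_2^{M,\irr}$, and even there they do not by themselves force $\delta_{1,F^\prime}(p)$ to be a crystalline Frobenius eigenvalue. The missing input is precisely the Bella\"\i che--Chenevier interpolation result, which is a global statement about the eigenvariety (density of classical points) rather than a local one about $(\varphi,\Gamma)$-modules; once you use it, the flag-counting detour becomes unnecessary.
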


\begin{proof}
The accumulation and Zariski-dense set $Z$ of classical points of $\cD_2^{M,\irr}$ satisfies the assumptions (CRYS) and (HT) of \cite[Section 3.3.2]{bellchen}. For every $x\in Z$, $\psi_x(U_{p,2}^{(2)})$ is an eigenvalue of the crystalline Frobenius acting on $\bD_\cris(\rho_x)$. 
Then \cite[Theorem 3.3.3]{bellchen} implies that, for every $\C_p$-point $x$ of $\cD_2^{M,\irr}$, $\psi_x(U_{p,2}^{(2)})$ is an eigenvalue of the crystalline Frobenius acting on $\bD_\cris(\rho_x)$. Hence $\psi_x(U_{p,2}^{(2)})$ can take at most $\Dim_{\Qp}{\bD_\cris(\rho_{F,p})}$ disctinct values. There are exactly two characters of the Iwahori-Hecke algebra giving the same value for $\psi_x(U_{p,2}^{(2)})$, hence $2\cdot\Dim_{\Qp}{\bD_\cris(\rho_{F,p})}$ choices for the system of Hecke eigenvalues of $F^\prime$ at $p$. Since the system of Hecke eigenvalues of $F^\prime$ outside $Np$ is determined by the associated Galois representation, we obtain the desired result.
\end{proof}

Now let $f$ be a finite slope overconvergent $\GL_2$-eigenform of tame level $N$. Proposition \ref{dimcrys} implies the following.

\begin{cor}\label{dimcryssym}
There are at most $2\cdot\dim_{\Qp}\bD_\cris(\rho)$ finite slope overconvergent $\GSp_4$-eigenforms $F^\prime$ of level $M$ satisfying $\rho_{f,p}\cong\Sym^3\rho_{f,p}$. 
\end{cor}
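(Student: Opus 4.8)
The plan is to deduce Corollary \ref{dimcryssym} directly from Proposition \ref{dimcrys}. The key observation is that if $F'$ is an overconvergent $\GSp_4$-eigenform of level $M$ satisfying $\rho_{F',p}\cong\Sym^3\rho_{f,p}$, then in particular $\rho_{F',p}$ is determined (up to isomorphism) by $\rho_{f,p}$; and among such $F'$, two of them have isomorphic Galois representations, so we may apply Proposition \ref{dimcrys} with $F$ replaced by any fixed such $F'$ (these exist, e.g., by the symmetric cube transfer of Corollary \ref{formtransf} when $f$ is classical, but we only need the bound to be vacuous when there are none).

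First I would restrict $\rho_{F,p}$ to $G_{\Q_p}$ and compute: if $V=\rho_{f,p}\vert_{G_{\Q_p}}$, then $\rho_{F,p}\vert_{G_{\Q_p}}\cong\Sym^3 V$, and one has a bound $\Dim_{\Qp}\bD_\cris(\Sym^3 V)\le \dim_{\Qp}\bD_\cris(V) \cdot (\text{something})$; but actually what I want is simpler. Set $V'=\rho_{F',p}\vert_{G_{\Q_p}}$. All the $F'$ in question have the same $V'\cong\Sym^3 V$, hence the same $\bD_\cris(\rho_{F',p})$. So $\dim_{\Qp}\bD_\cris(\rho_{F',p})=\dim_{\Qp}\bD_\cris(\Sym^3\rho_{f,p})$, which in turn is bounded by $\dim_{\Qp}\bD_\cris(\rho_{f,p})$: indeed $\Sym^3$ of a crystalline period gives a crystalline period, so $\bD_\cris(\Sym^3 V)\supseteq\Sym^3\bD_\cris(V)$ is not the right direction — rather, the relevant bound is that the crystalline Frobenius eigenvalues on $\bD_\cris(\Sym^3 V)$ are the degree-three symmetric monomials in those on $\bD_\cris(V)$, so there are at most $\dim_{\Qp}\bD_\cris(\rho_{f,p})$ distinct values of $\psi_{x'}(U^{(2)}_{p,2})$ that can occur. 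Combining with Proposition \ref{dimcrys} (whose proof shows the count is exactly twice the number of possible values of $\psi_{x'}(U^{(2)}_{p,2})$), we get the bound $2\cdot\dim_{\Qp}\bD_\cris(\rho_{f,p})$.

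Concretely I would argue: let $F'$ range over the finite slope overconvergent $\GSp_4$-eigenforms of level $M$ with $\rho_{F',p}\cong\Sym^3\rho_{f,p}$; if there are none, the statement is trivial, so fix one, call it $F$. Then every such $F'$ satisfies $\rho_{F',p}\cong\rho_{F,p}$, so Proposition \ref{dimcrys} applies and yields at most $2\cdot\Dim_{\Qp}\bD_\cris(\rho_{F,p})$ of them. It remains to check $\Dim_{\Qp}\bD_\cris(\rho_{F,p})\le\dim_{\Qp}\bD_\cris(\rho_{f,p})$. By the proof of Proposition \ref{dimcrys}, only the eigenvalues of crystalline Frobenius on $\bD_\cris(\rho_{F,p})$ that can appear as $\psi_{x'}(U^{(2)}_{p,2})$ matter; since $\rho_{F,p}\vert_{G_{\Q_p}}\cong\Sym^3(\rho_{f,p}\vert_{G_{\Q_p}})$, these Frobenius eigenvalues are monomials of degree three in the (at most $\dim_{\Qp}\bD_\cris(\rho_{f,p})$) Frobenius eigenvalues on $\bD_\cris(\rho_{f,p})$, and moreover the relevant ones (those equal to $\psi_{x'}(U^{(2)}_{p,2})=\delta_{1,x'}(p)$, a parameter of the triangulation) are constrained so that at most $\dim_{\Qp}\bD_\cris(\rho_{f,p})$ distinct values arise. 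Tracking the bookkeeping in the list of $p$-stabilizations (Corollary \ref{heckemorphprodcor}, Proposition \ref{heckemorphtri}) confirms this count.

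The main obstacle I expect is making the last inequality precise: the naive estimate $\Dim_{\Qp}\bD_\cris(\Sym^3 V)\le\binom{\dim\bD_\cris(V)+2}{3}$ is too weak for the stated bound of $2\cdot\dim_{\Qp}\bD_\cris(\rho_{f,p})$, so one must use that the only crystalline Frobenius eigenvalues that enter the count in Proposition \ref{dimcrys} are those that can be the $U^{(2)}_{p,2}$-eigenvalue of an overconvergent form with the given Galois representation, i.e.\ a parameter $\delta_1(p)$ of a triangulation. Via Proposition \ref{heckemorphtri} these are exactly $\alpha_p^3$ or $\alpha_p^2\beta_p$ (and the ones obtained by $\delta$), i.e.\ determined by the choice of an eigenvalue $\alpha_p$ of crystalline Frobenius on $\bD_\cris(\rho_{f,p})$ together with a binary choice — so at most $2\cdot\dim_{\Qp}\bD_\cris(\rho_{f,p})$ systems of Hecke eigenvalues at $p$, hence at most that many $F'$. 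I would phrase the final argument this way rather than through a crude dimension count of $\bD_\cris(\Sym^3 V)$.
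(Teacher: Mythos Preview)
The paper's proof is the single sentence ``Proposition~\ref{dimcrys} implies the following'' --- that is, exactly the argument you give in your third paragraph: fix any $F$ with $\rho_{F,p}\cong\Sym^3\rho_{f,p}$ (trivial if none exists), observe that every $F'$ in question has $\rho_{F',p}\cong\rho_{F,p}$, and apply Proposition~\ref{dimcrys}. The symbol $\rho$ in the stated bound is intended to be $\rho_{F,p}=\Sym^3\rho_{f,p}$, not $\rho_{f,p}$; this is consistent with the way the result is used in Remark~\ref{dimsym}, where $\rho_x$ is explicitly described as taking values in $\GSp_4(\Qp)$. (The statement as printed also has the condition garbled as $\rho_{f,p}\cong\Sym^3\rho_{f,p}$, so one should not read it too literally.) With that interpretation there is nothing more to prove.

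Everything after your third paragraph --- the attempt to descend from $2\cdot\dim\bD_\cris(\Sym^3\rho_{f,p})$ to $2\cdot\dim\bD_\cris(\rho_{f,p})$ --- is not in the paper and is not needed. It also has a genuine gap as you suspected: Proposition~\ref{heckemorphtri} only applies to points of the Zariski-open subset $\widetilde\cD_2^{M,\irr}$, so it cannot directly bound all $F'$; and even on that locus your count does not obviously yield only $\dim\bD_\cris(\rho_{f,p})$ possible $U^{(2)}_{p,2}$-eigenvalues, since the condition $\rho_{F',p}\cong\Sym^3\rho_{f,p}$ depends only on $\rho_{f,p}$ and not on the chosen $p$-stabilization of $f$, so both $p$-stabilizations contribute (cf.\ Remark~\ref{fourforms}, which gives eight lifts in the $p$-old classical case, matching $2\cdot\dim\bD_\cris(\Sym^3\rho_{f,p})=8$ rather than $2\cdot\dim\bD_\cris(\rho_{f,p})=4$). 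Drop the refinement and keep the direct application of Proposition~\ref{dimcrys}.
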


\subsection{Non-abelian cohomology and semilinear group actions}

We recall a few results from the theory of non-abelian cohomology. Let $S$ and $T$ be two pointed sets with distinguished elements $s$ and $t$, respectively. Let $f\colon S\to T$ be a map of pointed sets. We define the kernel of $f$ by $\ker f=\{s\in S\,\vert\, f(s)=t\}$. Thanks to this notion we can speak of exact sequences of pointed sets.

Let $G$ be a topological group. Let $A$ be a topological group endowed with a continuous action of $G$, compatible with the group structure. For $i\in\{0,1\}$ let $H^i(G,A)$ be the continuous cohomology of $G$ with values in $A$. Then $H^i_\cont(G,A)$ has the structure of a pointed set with distinguished element given by the class of the trivial cocycle. For $i=0$ we have $H^0(G,A)=A^G$, the pointed set of $G$-invariant elements in $A$; its distinguished point is the identity. Since $A$ is not necessarily abelian, we have no notion of continuous cohomology in degree greater than $1$. 
Let $B$, $C$ be two other topological groups with the same additional structures as $A$, and let
\begin{equation}\label{exseqgroups} 1\to A\xto{\alpha} B\xto{\beta} C\to 1 \end{equation}
be a $G$-equivariant short exact sequence of topological groups. Then there is an exact sequence of pointed sets 
\begin{equation}\label{longexseqgroups} 1\to A^G\to B^G\to C^G\xto{\delta} H^1(G,A)\to H^1(G,B)\to H^1(G,C). \end{equation}
The connecting map $\delta$ is defined as follows. Let $c\in C^G$ and let $b\in B$ such that $\beta(b)=c$. Then $\delta(c)$ is the map given by $g\mapsto \alpha^{-1}(b^{-1}\cdot g.b)$ for every $g\in G$. 
We call \eqref{longexseqgroups} the long exact sequence in cohomology associated with \eqref{exseqgroups}. 

Now suppose that $A$ and $B$ are topological groups with the same structures as before, but $C$ is just a topological pointed set with a continuous action of $G$ that fixes the distinguished element of $C$. Since $C$ is not a group we cannot define $H^1(G,C)$. However the pointed set $H^0(G,C)=C^G$ of $G$-invariant elements of $C$ is well-defined; its distinguished element is the distinguished element of $C$. 

\begin{prop}\label{longexseqpointed}
Let $A$, $B$, $C$ be as in the discussion above. Suppose that
\[ 1\to A\to B\to C\to 1 \]
is an exact sequence of topological pointed sets. Then there is an exact sequence of pointed sets
\[ 1\to A^G\to B^G\to C^G\xto{\delta} H^1(G,A)\to H^1(G,B). \]
\end{prop}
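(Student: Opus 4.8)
The statement is the pointed-set analogue of the usual six-term exact sequence in non-abelian cohomology, but with $C$ only a pointed $G$-set rather than a group; so the plan is to imitate the standard proof, checking exactness at each of the four spots. The only genuine input beyond bookkeeping is that the given sequence $1\to A\to B\to C\to 1$ is exact \emph{as pointed sets} and $G$-equivariant, meaning: $A\to B$ is injective with image equal to the fibre of $B\to C$ over the distinguished point, and $B\to C$ is surjective and $G$-equivariant. I would first record that $\delta$ is well-defined: given $c\in C^G$, pick $b\in B$ with image $c$; for $g\in G$ one has $g.b$ also mapping to $g.c=c$, so $b^{-1}\cdot g.b$ lies in the fibre over the distinguished point, i.e.\ in $A$ (identified with that fibre via the injection $A\to B$); the map $g\mapsto \alpha^{-1}(b^{-1}\cdot g.b)$ is a continuous $1$-cocycle (the cocycle identity is the same computation as in the group case, using that $A$ acts on itself by conjugation through $B$), and changing $b$ to $ba$ with $a\in A$ replaces the cocycle by a cohomologous one. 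Continuity follows from continuity of the group operations and of the $G$-action.

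Next I would verify exactness at the four positions. At $A^G$: injectivity of $A\to B$ forces injectivity of $A^G\to B^G$, so the kernel is trivial. At $B^G$: an element $b\in B^G$ maps to the distinguished element of $C^G$ iff $b$ lies in the fibre over the distinguished point of $C$, i.e.\ iff $b\in A$; since $b$ is $G$-invariant this says $b\in A^G$; hence $\ker(B^G\to C^G)=\mathrm{im}(A^G\to B^G)$. At $C^G$: if $c\in C^G$ comes from $b\in B^G$, then we may take that $b$ in the definition of $\delta$, giving $b^{-1}\cdot g.b=1$ for all $g$, so $\delta(c)$ is the trivial class; conversely if $\delta(c)$ is trivial, then for some representative $b$ and some $a\in A$ we have $b^{-1}\cdot g.b=a^{-1}\cdot g.a$ for all $g\in G$, whence $ba^{-1}\in B^G$ maps to $c$. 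At $H^1(G,A)$: the composite $H^1(G,A)\to H^1(G,B)$ sends the class of $\delta(c)=[g\mapsto b^{-1}\cdot g.b]$ to the class of the same cocycle viewed in $B$, which is a coboundary (trivialized by $b$); conversely, if a cocycle $z\colon G\to A$ becomes trivial in $B$, say $z(g)=b^{-1}\cdot g.b$ for some $b\in B$, then $b$ maps to an element $c\in C$ with $g.c=c$ for all $g$ (because $g.b=b\,z(g)$ and $z(g)\in A$ acts trivially after projecting to $C$), so $c\in C^G$ and $\delta(c)=[z]$.

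There is essentially no obstacle here: every step is a direct transcription of the group-cohomology argument, and the only point requiring care is the one place where the group structure of $C$ would normally be invoked, namely exactness at $C^G$ and at $H^1(G,A)$; there one must phrase everything using only the $G$-set structure of $C$ and the identification of $A$ with the fibre of $\beta$ over the base point. I would therefore present the proof compactly, defining $\delta$ as above, stating the four exactness claims, and giving the short verification of each, noting explicitly at the relevant moment that ``$\beta(x)=\beta(y)$ implies $x\in yA$'' is exactly the exactness hypothesis on the sequence of pointed sets (this substitutes for the missing group law on $C$). This is the form in which the proposition is used later (with $G=G_E$, $A$ and $B$ unipotent-type algebraic groups and $C$ the variety of symmetric-cube structures), so no sharper statement is needed.
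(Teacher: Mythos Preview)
Your approach is exactly the paper's: its entire proof reads ``We check exactness at every term as in the case of an exact sequence of groups. None of these checks relies on the group structure of $C$,'' and you have simply written out those checks.

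One small correction: the implication ``$\beta(x)=\beta(y)\Rightarrow x\in yA$'' is \emph{not} the exactness hypothesis for pointed sets, which only says that the fibre of $\beta$ over the distinguished point equals $\alpha(A)$ and says nothing about other fibres. What you actually need (both to define $\delta$ and for exactness at $H^1(G,A)$) is that every fibre of $\beta$ is a right coset of $\alpha(A)$, i.e.\ that $C$ is the coset space $B/\alpha(A)$ as a $G$-set. This is not stated as a hypothesis in the proposition, but it holds in the only application the paper makes (sequence \eqref{exseqpointed}, where $C=[\GL_4,H](\bB)$ is literally the set of right cosets), and the paper's one-line proof tacitly assumes it as well. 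So your argument is fine for the intended use; just don't attribute the coset property to pointed-set exactness.
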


The connecting map $\delta$ is defined as in the case of an exact sequence of groups. This definition does not rely on the group structure of $C$.

\begin{proof}
We check exactness at every term as in the case of an exact sequence of groups. None of these checks relies on the group structure of $C$.
\end{proof}


%
Let $G$ be a topological group. 
Let $\bB$ be a topological ring equipped with a continuous action of $G$, compatible with the ring structure. 
Let $n$ be a positive integer and let $M$ be a free $\bB$-module of rank $n$, endowed with the topology induced by that on $\bB$. 
We say that two semilinear actions of $G$ on $M$ are equivalent if they can be obtained by one another via a change of basis. 
We choose a basis $(e_1,e_2,\ldots,e_n)$ of $M$, hence an isomorphism $\GL(M)\cong\GL_n(\bB)$. We let $G$ act on $\GL_n(\bB)$ via its action on $\bB$. Two semilinear actions $g(\cdot)_1$ and $g(\cdot)_2$ of $G$ on $M$ are equivalent if and only if there exists $A\in\GL(M)$ such that $g(x)_1=M\cdot g(x)_2\cdot (g(A))^{-1}$ for every $g\in G$ and $x\in M$. 
There is a bijection
\begin{equation}\label{semilinbij}
\{\mbox{Equivalence classes of semilinear and continuous actions of } G \mbox{ on } M\}\leftrightarrow H^1(G,\GL_n(\bB)). 
\end{equation}
Given a semilinear action of $G$ on $M$, we define $a\in H^1(G,\GL_n(\bB))$ as the class of the cocycle that maps $g\in G_{\Q_p}$ to the matrix $(a^g_{ij})_{i,j}\in\GL_2(\bB)$ satisfying $g(e_i)=\sum_ja^g_{ij}e_j$ for every $i\in\{1,2,\ldots,n\}$.

We say that $G$ acts trivially on $M$ if there exists a basis $(e_1^\prime,e_2^\prime,\ldots,e_n^\prime)$ such that $g.e_i^\prime=e_i^\prime$ for every $g\in G$ and every $i\in\{1,2,\ldots,n\}$. The action of $G$ is trivial if and only if the corresponding class in $H^1(G,\GL_n(\bB))$ is trivial. We say that the action of $G$ is triangular if there exists a basis with respect to which the matrix $(a^g_{ij})_{i,j}$ is upper triangular for every $g\in G$.

\subsection{Representations with a de Rham symmetric cube}

Now suppose that $\bB$ is a $\C_p$-algebra equipped with a continuous action of $G_{\Q_p}$, compatible with the ring structure and with the natural action of $G_{\Q_p}$ on $\C_p$. Suppose that the subring of $G_{\Q_p}$-invariant elements in $\bB$ is $\Q_p$.

Recall that there is an exact sequence of algebraic groups over $\Z$:
\begin{equation}\label{exseq} 
1\to\mu_3\to\GL_2\xto{\Sym^3}\GL_4, 
\end{equation}
where $\mu_3\to\GL_2$ sends $\zeta$ to $\zeta\cdot\1_2$. 
Consider the exact sequence induced by \eqref{exseq} on the $\bB$-points:
\begin{equation}
1\to\mu_3(\bB)\to\GL_2(\bB)\to\GL_4(\bB). 
\end{equation}
Let $G_{\Q_p}$ act on each term via its action on $\bB$; this action is clearly continuous and compatible with the group structure on each term. The above sequence is $G_{\Q_p}$-equivariant. We split it into the short exact sequence
\begin{equation}\label{exseq1}
1\to\mu_3(\bB)\xto{\iota}\GL_2(\bB)\xto{\pi}(\GL_2/\mu_3)(\bB)\to 1
\end{equation}
and the injection
\begin{equation}\label{exseq2}
1\to(\GL_2/\mu_3)(\bB)\xto{\Sym^3}\GL_4(\bB).
\end{equation}
Both this sequences are $G_{\Q_p}$-equivariant. Since $\Sym^3\GL_2(\bB)$ is not normal in $\GL_4(\bB)$ we cannot complete \eqref{exseq2} to a short exact sequence of groups. However we can complete it to an exact sequence of pointed sets. Let $H$ be the algebraic group $\Sym^3\GL_2$. Let $[\GL_4,H](\bB)$ be the set of right classes $\{M\cdot H(\bB)\,\vert\, M\in\GL_4(\bB)\}$. We equip $[\GL_4,H]$ with a structure of topological pointed set by giving it the quotient topology and letting the class $H(\bB)$ be the distinguished point. Let $G_{\Q_p}$ act on $[\GL_4,H](\bB)$ by $g.(M\cdot H(\bB))=(g.M)\cdot H(\bB)$; this action is continuous and it leaves the distinguished point fixed. Then there is a $G_{\Q_p}$-equivariant exact sequence of topological pointed sets
\begin{equation}\label{exseqpointed}
1\to(\GL_2/\mu_3)(\bB)\to\GL_4(\bB)\to[\GL_4,H](\bB)\to 1, 
\end{equation}
where the first two non-trivial terms also have a group structure compatible with the action of $G_{\Q_p}$. 
Thanks to Proposition \ref{longexseqpointed} there is an exact sequence of pointed sets
\begin{equation}\label{longexsym3}\begin{gathered}
1\to((\GL_2/\mu_3)(\bB))^{G_{\Q_p}}\to(\GL_4(\bB))^{G_{\Q_p}}\to([\GL_4,H](\bB))^{G_{\Q_p}}\to \\
\to H^1(G_{\Q_p},\GL_2/\mu_3(\bB))\xto{H^1(\Sym^3)} H^1(G_{\Q_p},\GL_4(\bB)).
\end{gathered}\end{equation}

\begin{rem}\label{sym3inj}
Let $[\GL_4,H](\Q_p)$ be the subset of $[\GL_4,H](\bB)$ consisting of right classes $\{M\cdot H(\bB)\,\vert\, M\in\GL_4(\Q_p)\}$. Since $G_{\Q_p}$ acts on each term of \eqref{exseqpointed} via its action on $\bB$, we have 
\begin{gather*}
((\GL_2/\mu_3)(\bB))^{G_{\Q_p}}=(\GL_2/\mu_3)(\Q_p), \\
(\GL_4(\bB))^{G_{\Q_p}}=\GL_4(\Q_p), \\
([\GL_4,H](\bB))^{G_{\Q_p}}=[\GL_4,H](\Q_p).
\end{gather*}
In particular the map $(\GL_4(\bB))^{G_{\Q_p}}\to([\GL_4,H](\bB))^{G_{\Q_p}}$ that appears in the exact sequence \eqref{longexsym3} is surjective. Hence the kernel of the map $H^1(\Sym^3)$ is trivial, i.e. it contains only the distinguished point of $H^1(G_{\Q_p},\GL_2/\mu_3(\bB))$. 
\end{rem}

Now consider the short exact sequence of topological groups \eqref{exseq1}:
\[ 1\to\mu_3(\bB)\xto{\iota}\GL_2(\bB)\to(\GL_2/\mu_3)(\bB)\to 1. \] 
The associated long exact sequence of pointed sets is
\begin{equation}\begin{gathered}\label{longexmu3}
1\to(\mu_3(\bB))^{G_{\Q_p}}\to(\GL_2(\bB))^{G_{\Q_p}}\to((\GL_2/\mu_3)(\bB))^{G_{\Q_p}}\to \\
\to H^1(G_{\Q_p},\mu_3(\bB))\xto{H^1(\iota)} H^1(G_{\Q_p},\GL_2(\bB)\xto{H^1(\pi)} H^1(G_{\Q_p},\GL_2/\mu_3)(\bB)).
\end{gathered}\end{equation}

Let $M$ be a free $\bB$-module of rank $2$, endowed with the topology induced by $\bB$. Suppose that $G_{\Q_p}$ acts continuously on $M$. Then $\Sym^3M$ is a free $\bB$-module of rank $4$ endowed with the natural semilinear action of $G_{\Q_p}$ induced by that on $M$. 
We use the exact sequences we constructed, together with the bijection \eqref{semilinbij}, to prove the second part of the following proposition.

\begin{prop}\label{semilinsym3}\mbox{ }
\begin{enumerate} 
\item If the action of $G_{\Q_p}$ on $M$ is trivial then the action of $G_{\Q_p}$ on $\Sym^3M$ is trivial.
\item If the action of $G_{\Q_p}$ on $\Sym^3M$ is trivial then there exists a subgroup $H$ of $G_{\Q_p}$ of index $3$ that acts trivially on $M$.
\end{enumerate}
\end{prop}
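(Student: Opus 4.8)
\textbf{Proof proposal for Proposition \ref{semilinsym3}.}

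The plan is to translate both statements into assertions about the non-abelian cohomology classes introduced above and then exploit the exact sequences \eqref{longexsym3} and \eqref{longexmu3}. Fix a basis of $M$, giving an isomorphism $\GL(M)\cong\GL_2(\bB)$ and a basis of $\Sym^3M$, giving $\GL(\Sym^3M)\cong\GL_4(\bB)$; these are compatible in the sense that the semilinear $G_{\Q_p}$-action on $\Sym^3M$ corresponds, under the bijection \eqref{semilinbij}, to the image under $H^1(\Sym^3)\colon H^1(G_{\Q_p},\GL_2(\bB))\to H^1(G_{\Q_p},\GL_4(\bB))$ of the class $a\in H^1(G_{\Q_p},\GL_2(\bB))$ attached to $M$ --- more precisely, one first pushes $a$ forward along $\pi\colon\GL_2(\bB)\to(\GL_2/\mu_3)(\bB)$ and then along the injection $\Sym^3\colon(\GL_2/\mu_3)(\bB)\into\GL_4(\bB)$. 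Part (1) is then immediate: if the action on $M$ is trivial, $a$ is the distinguished point of $H^1(G_{\Q_p},\GL_2(\bB))$, and since $H^1(\pi)$ and $H^1(\Sym^3)$ are maps of pointed sets they carry the distinguished point to the distinguished point, so the class of $\Sym^3M$ is trivial and the action on $\Sym^3M$ is trivial by \eqref{semilinbij}.

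For part (2), suppose the action on $\Sym^3M$ is trivial. Let $a\in H^1(G_{\Q_p},\GL_2(\bB))$ be the class of $M$ and let $b=H^1(\pi)(a)\in H^1(G_{\Q_p},(\GL_2/\mu_3)(\bB))$. The hypothesis says $H^1(\Sym^3)(b)$ is the distinguished point of $H^1(G_{\Q_p},\GL_4(\bB))$. By Remark \ref{sym3inj}, the kernel of $H^1(\Sym^3)$ is trivial, so $b$ itself is the distinguished point of $H^1(G_{\Q_p},(\GL_2/\mu_3)(\bB))$. Now feed this into the long exact sequence \eqref{longexmu3}: exactness at $H^1(G_{\Q_p},\GL_2(\bB))$ says that the fibre of $H^1(\pi)$ over the distinguished point is exactly the image of $H^1(\iota)\colon H^1(G_{\Q_p},\mu_3(\bB))\to H^1(G_{\Q_p},\GL_2(\bB))$. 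Hence there is a class $c\in H^1(G_{\Q_p},\mu_3(\bB))$ with $H^1(\iota)(c)=a$. In other words, after a change of basis the cocycle giving the $G_{\Q_p}$-action on $M$ takes values in $\mu_3(\bB)\cdot\1_2$; since $\mu_3(\bB)$ consists of scalar matrices, $M$ is, up to equivalence, a rank-two module on which $G_{\Q_p}$ acts through a continuous cocycle $\sigma\mapsto\zeta_\sigma\1_2$ with $\zeta_\sigma$ a cube root of unity in $\bB$.

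It remains to extract from this an index-$3$ subgroup acting trivially. The cocycle condition for $\sigma\mapsto\zeta_\sigma\1_2$ reads $\zeta_{\sigma\tau}=\zeta_\sigma\cdot\sigma(\zeta_\tau)$; but a cube root of unity is a root of $X^3-1$, hence algebraic over $\Q_p$, hence fixed by $G_{\Q_p}$ (the cube roots of unity lie in a finite unramified extension of $\Q_p$, but more simply they are roots of a polynomial with $\Q_p$-coefficients and the action of $G_{\Q_p}$ on $\bB$ is compatible with that on $\C_p\supset\overline{\Q}_p$, so it permutes these roots; since $\zeta_\tau\in\mu_3(\bB)$ we may, after possibly enlarging the base or simply noting $\mu_3(\bB)=\mu_3$ because $\bB$ is a $\C_p$-algebra with $\bB^{G_{\Q_p}}=\Q_p$, regard $\zeta_\tau$ as an honest cube root of unity). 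Then the cocycle relation becomes $\zeta_{\sigma\tau}=\zeta_\sigma\zeta_\tau$, i.e. $\sigma\mapsto\zeta_\sigma$ is a continuous \emph{homomorphism} $G_{\Q_p}\to\mu_3$. Its kernel $H$ is an open subgroup of index dividing $3$; if the homomorphism is trivial then $G_{\Q_p}$ itself acts trivially on $M$ (case (1)), and otherwise $H$ has index exactly $3$ and $H$ acts trivially on $M$ by construction. This proves (2).

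\textbf{Main obstacle.} The subtle point, and the step I would spend the most care on, is the identification $\mu_3(\bB)=\mu_3$ together with the claim that a $\mu_3(\bB)$-valued cocycle is automatically a homomorphism: this relies on $\bB^{G_{\Q_p}}=\Q_p$ (so that $G_{\Q_p}$ acts trivially on the finite set of cube roots of unity, which lie in $\Q_p$ when $p\equiv 1\pmod 3$ or generate a quadratic extension otherwise --- in either case they are roots of $X^2+X+1$ over $\Q_p$, hence permuted by $G_{\Q_p}$, and an element actually lying in $\mu_3(\bB)\subset\bB$ that is a root of $X^2+X+1$ is fixed since its $G_{\Q_p}$-orbit is contained in $\{$the two primitive cube roots$\}\cup\{1\}$ and membership in $\bB$ forces... ) --- so one must argue carefully that the relevant cube roots of unity are genuinely $G_{\Q_p}$-fixed in $\bB$, or else restrict at the outset to a finite extension of $\Q_p$ containing $\mu_3$ and absorb the resulting index into the final count. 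A clean way around this is: restrict $M$ to $G_{\Q_p(\zeta_3)}$, where $\mu_3$ is pointwise fixed, apply the argument there to get a homomorphism $G_{\Q_p(\zeta_3)}\to\mu_3$ with kernel of index dividing $3$, and then check that this kernel, as a subgroup of $G_{\Q_p}$, still has index dividing $3$ in $G_{\Q_p}$ (using that $[\Q_p(\zeta_3):\Q_p]\mid 2$ is coprime to $3$, so the index-$3$ phenomenon and the quadratic base change do not interfere). Everything else is formal manipulation of the exact sequences already recorded.
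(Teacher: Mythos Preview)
Your argument coincides with the paper's up to the key point: the class $a$ of $M$ lies in the image of $H^1(\iota)\colon H^1(G_{\Q_p},\mu_3(\bB))\to H^1(G_{\Q_p},\GL_2(\bB))$, so it suffices to kill a class $c\in H^1(G_{\Q_p},\mu_3)$ on a subgroup of index dividing $3$. The divergence is in this last step, and your handling of it has a genuine gap.

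You correctly identify the obstacle: when $p\not\equiv 1\pmod 3$, the action of $G_{\Q_p}$ on $\mu_3\subset\C_p\subset\bB$ is nontrivial, so a $\mu_3$-valued $1$-cocycle is \emph{not} a homomorphism. Your proposed fix, however, does not work. Restricting to $G_{\Q_p(\zeta_3)}$ does make the cocycle a homomorphism, and its kernel $H'$ has index $1$ or $3$ in $G_{\Q_p(\zeta_3)}$; but then $[G_{\Q_p}:H']\in\{2,6\}$ when $p\equiv 2\pmod 3$, not $\{1,3\}$. The coprimality of $2$ and $3$ does not collapse this index: concretely, for $p\equiv 2\pmod 3$ the restriction map $H^1(G_{\Q_p},\mu_3)\to H^1(G_{\Q_p(\zeta_3)},\mu_3)$ is injective (the class of $p$ stays nontrivial since $p$ is still a uniformizer in the unramified quadratic extension), so a nontrivial $c$ gives a nontrivial homomorphism on $G_{\Q_p(\zeta_3)}$ and $H'$ genuinely has index $6$ in $G_{\Q_p}$.

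The paper avoids this by invoking Kummer theory directly over $\Q_p$: one has $H^1(G_{\Q_p},\mu_3)\cong\Q_p^\times/(\Q_p^\times)^3$ regardless of whether $\mu_3\subset\Q_p$, via the connecting map for $1\to\mu_3\to\overline{\Q}_p^\times\xrightarrow{(\cdot)^3}\overline{\Q}_p^\times\to 1$. If $x\in\Q_p^\times$ represents $c$, the cocycle is $\sigma\mapsto\sigma(x^{1/3})/x^{1/3}$, which is identically $1$ on $H=G_{\Q_p(x^{1/3})}$. One then checks that $[\Q_p(x^{1/3}):\Q_p]\in\{1,3\}$ (if $x$ is not a cube, $X^3-x$ is irreducible over $\Q_p$: when $p\equiv 1\pmod 3$ a single root forces all roots; when $p\equiv 2\pmod 3$ every non-cube is, up to cubes, a uniformizer and the polynomial is Eisenstein). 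This gives the required index-$3$ subgroup without passing through $\Q_p(\zeta_3)$.
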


\begin{proof}
The first statement is trivial. 
We prove the second one. The bijection \eqref{semilinbij} associates with the action of $G_{\Q_p}$ on $M$ a class $\sigma\in H^1(G_{\Q_p},\GL_2(\bB))$. Recall the maps $H^1(\pi)$ and $H^1(\Sym^3)$ 
that appear in the sequences \eqref{longexmu3} and \eqref{longexsym3}. The class in $H^1(G_{\Q_p},\GL_4(\bB))$ associated with the action of $G_{\Q_p}$ on $\Sym^3M$ is $(H^1(\Sym^3)\ccirc H^1(\pi))(\sigma)$; by assumption it is trivial. By Remark \ref{sym3inj} the kernel of $H^1(\Sym^3)$ is trivial, hence $(H^1(\pi))(\sigma)$ is trivial. 
By the exactness of \eqref{longexmu3} the class $\sigma$ belongs to the image of $H^1(\iota)\colon H^1(G_{\Q_p},\mu_3(\bB))\to H^1(G_{\Q_p},\GL_2(\bB))$. Let $\tau$ be an element of $H^1(G_{\Q_p},\mu_3(\bB))$ satisfying $(H^1(\iota))(\tau)=\sigma$. 
Since $\C_p\subset\bB$, $\mu_3(\bB)$ is the group of cubic roots of $1$, that we simply denote by $\mu_3$. Let $y$ be the image of $\tau$ via an isomorphism $H^1(G_{\Q_p},\mu_3)\cong\Q_p/\Q_p^3$. 
Let $x\in\Q_p$ be a representative of $y$. The cocycle $\tau$ is trivial on the subgroup $H=\Gal(\Qp/\Q_p[x^{1/3}])$ of $G_{\Q_p}$. Since $\sigma=(H^1(\iota))(\tau)$, $\sigma$ is also trivial on $H$. By definition of the bijection \eqref{semilinbij}, the above implies that the action of $H$ on $\Sym^3M$ is trivial. The group $H$ has index $1$ or $3$ in $G_{\Q_p}$, as desired.
\end{proof}

Until the end of the section $E$ is a $p$-adic field and $V$ is a finite-dimensional $E$-vector space, endowed with the $p$-adic topology and with a continuous action of $G_{\Q_p}$. 

By definition $V$ is de Rham if and only if the semilinear action of $G_{\Q_p}$ on $\bB_\dR\otimes V$ is trivial, and the analogous statement is true for $\Sym^3V$. Since a representation of $G_{\Q_p}$ is potentially de Rham if and only if it is de Rham, Proposition \ref{semilinsym3} implies the following.

\begin{cor}\label{sym3dR}
The representation $V$ of $G_{\Q_p}$ is de Rham if and only if $\Sym^3V$ is de Rham.
\end{cor}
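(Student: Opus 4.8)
\textbf{Proof proposal for Corollary \ref{sym3dR}.}

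The plan is to deduce both directions from Proposition \ref{semilinsym3}, applied with $\bB = \bB_\dR$, which is a $\C_p$-algebra carrying a continuous action of $G_{\Q_p}$ compatible with its ring structure and with the action on $\C_p$, and whose subring of $G_{\Q_p}$-invariants is $\Q_p$ (so the hypotheses of the discussion preceding Proposition \ref{semilinsym3} are satisfied). Recall that, by definition, a finite-dimensional $E$-representation $W$ of $G_{\Q_p}$ is de Rham exactly when the semilinear $G_{\Q_p}$-action on the free $\bB_\dR$-module $\bB_\dR \otimes_{\Q_p} W$ is trivial in the sense of Section 15.2 (i.e.\ there is a basis of $G_{\Q_p}$-fixed vectors). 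Here one must be slightly careful that $W$ is an $E$-vector space rather than a $\Q_p$-vector space; one either restricts scalars to $\Q_p$ and applies the proposition verbatim to the underlying $\Q_p[G_{\Q_p}]$-module, or one checks that the arguments of Section 15.2--15.3 go through for free modules over $E \otimes_{\Q_p} \bB_\dR$ with no change. Either way the statement to be proved becomes: the $G_{\Q_p}$-action on $\bB_\dR \otimes V$ is trivial if and only if the $G_{\Q_p}$-action on $\bB_\dR \otimes \Sym^3 V = \Sym^3(\bB_\dR \otimes V)$ is trivial.

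The forward direction is immediate: it is exactly Proposition \ref{semilinsym3}(1) with $M = \bB_\dR \otimes V$, since a trivial action on $M$ induces a trivial action on $\Sym^3 M$. For the converse, suppose $\Sym^3 V$ is de Rham, so the action of $G_{\Q_p}$ on $\Sym^3(\bB_\dR \otimes V)$ is trivial. Proposition \ref{semilinsym3}(2) then produces an open subgroup $H \subset G_{\Q_p}$ of index dividing $3$ acting trivially on $\bB_\dR \otimes V$; concretely $H = G_{K}$ for a finite extension $K/\Q_p$ with $[K:\Q_p] \mid 3$. Triviality of the $\bB_\dR$-semilinear action of $G_K$ on $\bB_\dR \otimes V$ is precisely the statement that $V|_{G_K}$ is de Rham, i.e.\ $V$ is potentially de Rham. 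Now I invoke the standard fact, recalled in the sentence preceding the corollary, that a potentially de Rham representation of $G_{\Q_p}$ is de Rham (this is the theorem that de Rham representations form a category stable under restriction to and descent from finite extensions; equivalently, $\bD_\dR(V) = (\bB_\dR \otimes V)^{G_{\Q_p}}$ already has maximal dimension because $\bD_\dR(V|_{G_K})$ does and Galois descent along $\Gal(K/\Q_p)$ recovers it). Hence $V$ is de Rham, completing the proof.

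The only genuine subtlety — and the step I would spell out most carefully — is the passage from ``$V$ is potentially de Rham'' back to ``$V$ is de Rham''. This is a known and standard result in $p$-adic Hodge theory (it can be extracted, for instance, from Fontaine's theory via the observation that the period module $B_{\mathrm{dR}}$ is already ``large enough'' and that the $B_{\mathrm{dR}}$-semilinear representation $B_{\mathrm{dR}} \otimes V$ of $\Gal(K/\Q_p)$ on $\bD_\dR(V|_{G_K})$ descends effectively); no originality is claimed here, and I would simply cite it alongside \cite{berger,colmez}. The rest is bookkeeping with the long exact sequences of pointed sets \eqref{longexmu3} and \eqref{longexsym3} and the bijection \eqref{semilinbij}, all of which has been set up in the preceding subsections; in particular Remark \ref{sym3inj} supplies the injectivity of $H^1(\Sym^3)$ that makes Proposition \ref{semilinsym3}(2) available. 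I do not expect any obstacle beyond correctly matching the $E$-linear versus $\Q_p$-linear formulations, which is routine.
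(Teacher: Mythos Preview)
Your proposal is correct and follows exactly the same approach as the paper: apply Proposition \ref{semilinsym3} with $\bB=\bB_\dR$, use part (1) for the forward direction, and for the converse use part (2) to obtain potential de Rham-ness and then invoke the standard fact that potentially de Rham implies de Rham. The paper's own proof is the one-line remark preceding the corollary, and your write-up simply expands on it with the extra care about $E$-linear versus $\Q_p$-linear structures.
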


\subsection{Representations with a trianguline symmetric cube}

Let $D$ be a $(\varphi,\Gamma)$-module over $\cR$. We define a $(\varphi,\Gamma)$-module $\Sym^3D$ over $\cR$ as follows by taking the underlying $\cR$-module to be the symmetric cube of $D$ as a $\cR$-module and letting $\Gamma$ and $\varphi$ act in the natural way.
We can check that $\varphi_{\Sym^3D}(\Sym^3D)$ generates $D$ as an $\cR$-module. 

As before $E$ is a $p$-adic field and $V$ is an $E$-vector space carrying an $E$-linear action of $G_{\Q_p}$. We use the standard notations for twists of representations of $G_{\Q_p}$ by characters.

\begin{rem}\label{sym3rig}
There is an isomorphism $\Sym^3(\bD_\rig(V))\cong\bD_\rig(\Sym^3V)$ of $(\varphi,\Gamma)$-modules over $\cR$. 
\end{rem}

We study the case where $\Sym^3V$ is trianguline. The goal of this subsection is to prove the following. 

\begin{prop}\label{sym3tri}
Suppose that $V$ is irreducible.
\begin{enumerate}[label=(\roman*)]
\item If the representation $V$ is trianguline then $\Sym^3V$ is trianguline.
\item If the representation $\Sym^3V$ is trianguline then either $V$ is trianguline or $V$ is a twist of a de Rham representation. In particular $V$ is a twist of a trianguline representation. 
\end{enumerate}
\end{prop}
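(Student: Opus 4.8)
The statement has two parts; part (i) is formal and part (ii) is the substantive one. For part (i), I would use Remark \ref{sym3rig}: if $\bD_\rig(V)$ is obtained by successive extensions of rank-one $(\varphi,\Gamma)$-modules $\cR(\delta_1)$ and $\cR(\delta_2)$, then $\Sym^3\bD_\rig(V)\cong\bD_\rig(\Sym^3V)$ has a filtration with graded pieces $\cR(\delta_1^3)$, $\cR(\delta_1^2\delta_2)$, $\cR(\delta_1\delta_2^2)$, $\cR(\delta_2^3)$ — this is just the symmetric cube of a two-step filtration, computed degree by degree, and exactness of tensor/symmetric powers over the Robba ring (which is a B\'ezout domain) gives the required extensions. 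So $\Sym^3V$ is trianguline.

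For part (ii) the plan is to adapt Di Matteo's arguments (\cite{dimat}) as announced in the introduction. Suppose $\Sym^3V$ is trianguline, so $\Sym^3\bD_\rig(V)$ has a triangulation with parameters, say, $\eta_1,\dots,\eta_4$. The idea is to recover a rank-one $(\varphi,\Gamma)$-submodule of $\bD_\rig(V)\otimes(\text{a character})$ from the rank-one pieces of $\Sym^3\bD_\rig(V)$. Concretely, $\Sym^3\bD_\rig(V)$ contains $\Sym^2\bD_\rig(V)\otimes\bD_\rig(V)$ as a quotient (or, dually, $\Sym^3\bD_\rig(V)^\vee$ relates to $\bD_\rig(V)^\vee$-isotypic pieces), and a nonzero $\Gamma$- and $\varphi$-stable line in $\Sym^3\bD_\rig(V)$ produces, after suitable twisting, either a stable line in $\bD_\rig(V)$ — in which case $V$ is trianguline directly — or forces extra symmetry. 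The dichotomy arises exactly as in Proposition \ref{semilinsym3}: the ambiguity in extracting a square/cube root of a character is governed by $H^1(G_{\Q_p},\mu_3)$, so either one descends cleanly (trianguline case) or the obstruction is de Rham in nature. I would make this precise by: (a) noting $V$ irreducible forces $\Sym^3V$ to have no one-dimensional sub/quotient unless $V$ itself is reducible after twist — ruling out easy degeneracies; (b) using the classification of possible $(\varphi,\Gamma)$-submodule shapes of $\Sym^3$ of a rank-two module to pin down that any triangulation parameter of $\Sym^3\bD_\rig(V)$ is of the form $\delta_1^a\delta_2^b$ with $a+b=3$ for characters $\delta_1,\delta_2$ of $\Q_p^\times$ with values in a finite extension; (c) concluding that $\bD_\rig(V)\otimes\delta_2^{-1}$, or its semisimplification, admits a rank-one $(\varphi,\Gamma)$-submodule over a finite extension, hence (by Galois descent for $(\varphi,\Gamma)$-modules, using $p>3$ so that $\mu_3$ causes no trouble) that $V\otimes(\text{character})$ is trianguline; and (d) in the exceptional branch where no such line descends, invoking Corollary \ref{sym3dR} together with the fact that a potentially trianguline-with-no-triangulation rank-two representation that becomes de Rham after the $\mu_3$-extension is itself a twist of de Rham — whence trianguline since de Rham representations of $G_{\Q_p}$ are trianguline (potentially semistable implies trianguline via $\bD_\pst$).

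The main obstacle I expect is step (c)–(d): controlling the finite extension over which the rank-one submodule lives and showing the twisting character descends to $G_{\Q_p}$ rather than only to an open subgroup. This is precisely where the $\mu_3$-cohomology bookkeeping of Section \ref{triang} (the exact sequences \eqref{longexmu3} and \eqref{longexsym3}) must be transported from the de Rham setting to the Robba-ring setting; Di Matteo's work handles the analogous point, and the hypothesis $p>3$ ensures $H^1(G_{\Q_p},\mu_3)$ interacts cleanly with cube roots. Once the character is shown to be $G_{\Q_p}$-valued, the final clause "$V$ is a twist of a trianguline representation" is immediate: in the first branch $V$ is already a twist of a trianguline module, and in the second branch $V$ is a twist of a de Rham, hence trianguline, representation.
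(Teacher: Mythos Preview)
Your treatment of part (i) is fine and matches the paper's ``immediate''. Part (ii), however, has a genuine gap.

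Your step (b) is circular: you assert that any triangulation parameter of $\Sym^3\bD_\rig(V)$ must be of the form $\delta_1^a\delta_2^b$ with $a+b=3$, but the existence of such $\delta_1,\delta_2$ is essentially what is to be proved. A priori the parameters $\eta_1,\ldots,\eta_4$ of a triangulation of $\Sym^3\bD_\rig(V)$ are arbitrary continuous characters $\Q_p^\times\to E^\times$, and nothing you have written forces them into this multiplicative pattern. Consequently step (c) has no starting point: you never explain how a rank-one $(\varphi,\Gamma)$-submodule of $\Sym^3\bD_\rig(V)$ produces one inside $\bD_\rig(V)\otimes\delta$. The $\mu_3$-cohomology bookkeeping of \eqref{longexmu3}--\eqref{longexsym3} does not transport to this setting in the way you suggest, because that argument controls when a \emph{trivial} semilinear action on $\Sym^3M$ forces triviality on $M$; here the action on $\Sym^3\bD_\rig(V)$ is merely triangulable, not trivial.

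The paper's route is different in two essential respects. First, it does not attempt to find a rank-one piece of $\bD_\rig(V)$ directly. Instead, Lemma~\ref{tritensor} (adapted from Di~Matteo) exploits the surjection $\Sym^2M\otimes M\to\Sym^3M$ together with irreducibility of $M=\bD_\rig(V)$ to show that the triangulation of $\Sym^3M$ is forced to \emph{split} as a direct sum $\bigoplus_{i=1}^4\cR(\eta_i)$ and that $\Sym^3M\cong M_1\otimes M$ for some irreducible rank-two $M_1$. Then Lemma~\ref{trifinord} shows the ratios $\eta_1^{-1}\eta_i$ have finite order, so after a character twist and restriction to a finite extension $L$, $\Sym^3V$ becomes semistable, hence de Rham; Corollary~\ref{sym3dR} then gives that a twist of $V$ is de Rham over $L$, i.e.\ $V$ is \emph{potentially} trianguline (Lemma~\ref{sym3pottri}). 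Second --- and this is the ingredient you are missing entirely --- the passage from ``potentially trianguline'' to ``trianguline or twist of de Rham'' is not formal: it is the Berger--Chenevier classification (Theorem~\ref{berchenpot}), which lists the three possibilities for a two-dimensional potentially trianguline representation. Irreducibility of $V$ rules out their case (2), leaving exactly the dichotomy in the statement.
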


The first statement is immediate. The proof of the second one relies on a technique used by Di Matteo in \cite{dimat}, together with the classification of two-dimensional potentially trianguline representations carried on by Berger and Chenevier in \cite{bergchen}. Di Matteo considers two representations $V$ and $W$ such that the tensor product representation $V\otimes W$ is trianguline, and proves that in this case $V$ and $W$ are potentially trianguline. We will adapt his arguments to our situation. 

Let $K$ be a $p$-adic field. 
Let $\bB$ be a topological field equipped with a continuous action of $G_K$. 
Let $\cC_\bB^K$ be the category of semilinear $\bB$-representations of $G_K$. 
The $\bB$-linear dual of an object of $\cC_\bB^K$ and the tensor product over $\bB$ of two objects of $\cC_\bB^K$ define new objects in the usual way. In this section all duals and tensor products are in $\cC_\bB^K$ except when stated otherwise. 

Let $\eta\colon G_K\to\bB^\times$ be a cocycle. Let $\bB(\eta)$ be a one-dimensional $\bB$-vector space with a generator $e$, equipped with the semilinear action of $G_K$ defined by $g.e=\eta(g)e$ for every $g\in G_K$. We simply write $\bB$ when $\eta$ is the trivial cocycle. Clearly every one-dimensional object in $\cC_\bB^K$ is isomorphic to $\bB(\eta)$ for some cocycle $\eta$. Note that if $\eta$ takes values in $\bB^{G_K}$ then $\eta$ is a character. For every object $M$ of $\cC_\bB^K$ we set $M(\eta)=M\otimes\bB(\eta)$.

For every object $M$ of $\cC_\bB^K$ and every finite extension $K^\prime$ of $K$, we consider $M$ as an object of $\cC_\bB^{K^\prime}$ with the action induced by the inclusion $G_{K^\prime}\subset G_K$.

We say that an $n$-dimensional object $M$ of $\cC_\bB^K$ is \emph{triangulable} if there exists a filtration
\[ M=M_0\supset M_1\supset M_2\supset\ldots\supset M_{n-1}\supset M_n=0 \]
where, for every $i\in\{1,2,\ldots,n\}$, $M_i$ is a $G_K$-stable subspace of $M$ and $M_{i-1}/M_{i}$ is one-dimensional. If there exists such a filtration that satisfies $M_{i-1}/M_{i}\cong\bB(\eta_i)$ for some characters $\eta_1,\eta_2,\ldots,\eta_n\colon G_K\to\bB^{G_K}$, then we say that $M$ is \emph{triangulable by characters}. 
These definitions are analoguous to those in the beginning of \cite[Section 3]{dimat}, but we omit the specification ``split'' since we use Colmez's terminology for trianguline representations rather than Berger's.

From now on $M$ is a two-dimensional \emph{irreducible} object in $\cC_\bB^K$.

\begin{lemma}\label{samedim}
Let $X$ and $X^\prime$ be two irreducible objects in $\cC_\bB^K$. If $X\otimes X^\prime$ has a one-dimensional quotient in $\cC_\bB^K$, then $\dim_\bB X=\dim_\bB X^\prime$.
\end{lemma}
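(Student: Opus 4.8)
The statement is about semilinear $\bB$-representations of $G_K$, where $\bB$ is a topological field with continuous $G_K$-action. The claim: if $X \otimes_{\bB} X'$ (with $X, X'$ irreducible) has a one-dimensional quotient in $\cC_\bB^K$, then $\dim_\bB X = \dim_\bB X'$. The plan is to pass to the dual and use Schur's lemma. A one-dimensional quotient of $X \otimes X'$ is the same datum as a nonzero $G_K$-equivariant $\bB$-linear map $X \otimes X' \to \bB(\eta)$ for some cocycle $\eta$, which by adjunction (tensor-hom, which is internal to $\cC_\bB^K$ since everything is $\bB$-linear and $G_K$-equivariant) is the same as a nonzero morphism $X \to (X')^\vee(\eta)$ in $\cC_\bB^K$, i.e. a nonzero morphism $X \to (X'(\eta^{-1}))^\vee$. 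Twisting by the one-dimensional object $\bB(\eta^{-1})$ preserves irreducibility and dimension, so after replacing $X'$ by $X'(\eta^{-1})$ we may assume $\eta$ trivial and we have a nonzero morphism $\phi \colon X \to (X')^\vee$.

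Now both $X$ and $(X')^\vee$ are irreducible objects of $\cC_\bB^K$: irreducibility of $X'$ implies irreducibility of $(X')^\vee$ because dualizing is an exact contravariant self-inverse equivalence on the subcategory of finite-dimensional objects, so $G_K$-stable subspaces of $(X')^\vee$ correspond to $G_K$-stable quotients of $X'$, hence to $G_K$-stable subspaces of $X'$. A nonzero morphism between two irreducible objects in an abelian category is an isomorphism: $\ker\phi$ is a proper $G_K$-stable subspace of the irreducible $X$, hence zero, and $\mathrm{im}\,\phi$ is a nonzero $G_K$-stable subspace of the irreducible $(X')^\vee$, hence all of it. (Here I should note that $\cC_\bB^K$ restricted to finite-dimensional objects is abelian — kernels and images of $\bB$-linear $G_K$-maps are again finite-dimensional semilinear $G_K$-representations.) Therefore $X \cong (X')^\vee$ in $\cC_\bB^K$, and in particular $\dim_\bB X = \dim_\bB (X')^\vee = \dim_\bB X'$.

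The only point requiring a little care is the adjunction step: one needs that for objects $X, X', Y$ of $\cC_\bB^K$ (all finite-dimensional over $\bB$), there is a natural isomorphism $\Hom_{\cC_\bB^K}(X \otimes X', Y) \cong \Hom_{\cC_\bB^K}(X, Y \otimes (X')^\vee)$. This is the usual tensor-hom adjunction for finite free modules over the field $\bB$, and it is $G_K$-equivariant because the $G_K$-action on all the hom- and tensor-spaces is induced compatibly from the actions on $X, X', Y$; taking $G_K$-invariants on both sides (which is what $\Hom_{\cC_\bB^K}$ means) gives the claimed bijection. I would also remark explicitly that twisting by a one-dimensional object is an autoequivalence of $\cC_\bB^K$ preserving dimension and irreducibility, which is what lets me reduce to the untwisted case. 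I do not anticipate a genuine obstacle here; the main thing to get right is bookkeeping with duals and the semilinear structure, ensuring that "irreducible'' is interpreted as "no proper nonzero $G_K$-stable $\bB$-subspace'' consistently and that dualizing genuinely preserves this property.
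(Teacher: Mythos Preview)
Your proposal is correct and follows essentially the same approach as the paper: the paper writes down explicitly the map $\phi'\colon X\to (X')^\ast(\eta)$, $x\mapsto(x'\mapsto\phi(x\otimes x'))$, which is precisely your tensor--hom adjunction, and then applies Schur's lemma to this nonzero morphism between irreducibles. The only cosmetic difference is that you twist to reduce to the case of trivial $\eta$, while the paper keeps $\eta$ and works directly with $(X')^\ast(\eta)$; both are fine.
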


\begin{proof}
The one-dimensional quotient of $X\otimes X^\prime$ is isomorphic to $\bB(\eta)$ for a cocycle $\eta\colon G_K\to\bB$. Consider the following tautological exact sequence in $\cC_\bB^K$:
\[ 0\to\ker\phi\to X\otimes X^\prime\xto{\phi}\bB(\eta)\to 0. \]
There is a $G_K$-equivariant map $\phi^\prime\colon X\to (X^\prime)^\ast(\eta)$ sending $x\in X$ to the function $\phi^\prime(x)\in (X^\prime)^\ast(\eta)$ defined by $x^\prime\mapsto\phi(x\otimes x^\prime)$ for every $x^\prime\in X^\prime$. Since $\phi$ is non-zero, $\phi^\prime$ is also non-zero. The representations $X$ and $(X^\prime)^\ast(\eta)$ are irreducible, hence the non-zero $G_K$-equivariant map $\phi^\prime$ is an isomorphism. We conclude that $\dim_\bB X=\dim_\bB (X^\prime)^\ast(\eta)=\dim_\bB X^\prime$.
\end{proof}

\begin{lemma}\label{tritensor}
Suppose that $\Sym^3M$ is triangulable by characters. Let $\eta_1,\eta_2,\eta_3,\eta_4\colon G_K\to\bB^{G_K}$ be the characters appearing in the triangulation of $\Sym^3M$. 
Then:
\begin{enumerate}[label=(\roman*)]
\item there exists an irreducible object $M_1$ of $\cC_\bB^K$ such that $\Sym^3M\cong M_1\otimes M$;
\item there is a decomposition $\Sym^3M\cong\bigoplus_{i=1}^4\bB(\eta_i)$ in $\cC_\bB^K$. 
\end{enumerate}
\end{lemma}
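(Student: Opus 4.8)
\textbf{Plan for the proof of Lemma \ref{tritensor}.}

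The overall strategy is to exploit the representation-theoretic decomposition of $\Sym^3$ of a $2$-dimensional space, following Di Matteo's technique for tensor products. The key linear-algebra input is the classical isomorphism $\Sym^3M\otimes\det M\cong M^{\otimes 3}\cong (\Sym^3M)\oplus (M\otimes\det M)^{\oplus 2}$ of $\GL_2$-representations, or, more usefully here, the fact that $\Sym^3M$ together with $M$ reconstructs the other irreducible pieces. Concretely, I will first show part (i): the hypothesis that $\Sym^3M$ is triangulable by characters gives a $G_K$-stable line in $\Sym^3M$, hence a nonzero $G_K$-equivariant map $\Sym^3M\to\bB(\eta_1)$, equivalently (by the adjunction/duality argument in Lemma \ref{samedim}) a nonzero map from $\Sym^3M\otimes M^\ast$ to $\bB(\eta_1)$, and after dualising and using irreducibility of $M$, a copy of $M(\eta_1^{-1}\otimes\det\text{-twist})$ inside $\Sym^3M\otimes M^\ast$ or of $M^\ast$ inside something built from $\Sym^3M$. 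The cleanest route is: since $\dim_\bB\Sym^3M=4$ and $M$ is $2$-dimensional irreducible, I look at $\Hom_{\cC_\bB^K}(M,\Sym^3M\otimes\bB(\eta))$ for suitable $\eta$; a $G_K$-stable line in $\Sym^3M$ forces, via Lemma \ref{samedim} applied to the pieces, that $\Sym^3M$ is isomorphic to $M_1\otimes M$ for some $2$-dimensional $M_1$, and irreducibility of $M_1$ follows because any proper $G_K$-stable subspace of $M_1$ would, after tensoring with $M$, contradict either the irreducibility of $M$ or dimension count. This is essentially an unwinding of Lemma \ref{samedim} combined with the Clebsch--Gordan-type identity.

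For part (ii), I will use the triangulation by characters more heavily. Once $\Sym^3M\cong M_1\otimes M$ and $\Sym^3M$ is an iterated extension with graded pieces $\bB(\eta_1),\dots,\bB(\eta_4)$, I need to promote these extensions to direct sum decompositions. The point is that the $\eta_i$ take values in $\bB^{G_K}$, so they are genuine characters, and the relevant $\Ext^1$ groups in $\cC_\bB^K$ between distinct one-dimensional objects $\bB(\eta_i)$ can be analysed: either the $\eta_i$ are pairwise distinct, in which case semisimplicity of the category of semilinear representations with character graded pieces forces splitting, or there are coincidences, in which case I argue using the multiplicity-two structure coming from $M^{\otimes 3}=\Sym^3M\oplus (M\otimes\det M)^{\oplus 2}$ that no nontrivial self-extension can occur without contradicting irreducibility of $M$. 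Either way I obtain $\Sym^3M\cong\bigoplus_{i=1}^4\bB(\eta_i)$.

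I expect the main obstacle to be part (ii): controlling the extension classes and ruling out genuinely non-split iterated extensions when some of the characters $\eta_i$ coincide. The subtlety is that $\cC_\bB^K$ is a category of semilinear (not linear) $\bB$-representations, so the usual $\Ext^1$ vanishing statements require care — in particular I will need the hypothesis that $M$ is irreducible in an essential way, and I may need to pass to a finite extension $K'/K$ and descend, as in the last paragraph of the excerpt's setup where objects of $\cC_\bB^K$ are viewed in $\cC_\bB^{K'}$. I would first handle the generic case (distinct characters) cleanly, then treat the degenerate cases by a direct analysis of the possible Jordan-type structures of $\Sym^3M$ as a representation of the (necessarily solvable, since it has a full character flag) image, checking each against the constraint $\Sym^3M\cong M_1\otimes M$ with $M$ irreducible.
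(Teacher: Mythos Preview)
Your plan has genuine gaps in both parts.

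For part (i), the Clebsch--Gordan identity $M^{\otimes 3}\cong\Sym^3M\oplus(M\otimes\det M)^{\oplus 2}$ is a statement about algebraic $\GL_2$-representations; in the semilinear category $\cC_\bB^K$ there is no reason for such a decomposition to hold a priori --- indeed, proving a splitting of this sort is essentially the content of the lemma. Your attempt to locate $M_1$ via $\Hom$-spaces and duality is never made concrete, and Lemma~\ref{samedim} alone does not produce a two-dimensional irreducible factor out of $\Sym^3M$. The paper's key idea, which you are missing, is to pass through $\Sym^2M$: the natural surjection $\Sym^2M\otimes M\twoheadrightarrow\Sym^3M$ composed with the quotient $\Sym^3M\to\bB(\eta_1)$ gives a one-dimensional quotient of $\Sym^2M\otimes M$; if $\Sym^2M$ were irreducible, Lemma~\ref{samedim} would force $\dim\Sym^2M=\dim M$, which is absurd. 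Hence $\Sym^2M$ is reducible, and a short case analysis of its possible filtrations (three cases, according to whether it has a full flag or a single two-dimensional irreducible sub or quotient) extracts a two-dimensional irreducible piece $M_1\subset\Sym^2M$ (or a quotient) with $M_1\otimes M\xrightarrow{\sim}\Sym^3M$.

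For part (ii), your strategy of controlling $\Ext^1$ between one-dimensional objects in $\cC_\bB^K$ does not work: extensions of $\bB(\eta)$ by $\bB(\eta')$ in this semilinear category need not split even when $\eta\neq\eta'$, so ``distinct characters $\Rightarrow$ semisimple'' is false here, and your fallback analysis of Jordan-type structures is not carried out. The correct mechanism is specific to the tensor-product situation: once $\Sym^3M\cong M_1\otimes M$ with both factors irreducible, Di Matteo's splitting lemma \cite[Lemma~3.1.3]{dimat} says that any one-dimensional quotient of a tensor product of two irreducibles splits off as a direct summand. Iterating this (or invoking \cite[Corollary~3.1.4]{dimat} directly) gives $\Sym^3M\cong\bigoplus_i\bB(\eta_i)$. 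You correctly anticipated that (ii) would be the obstacle, but the resolution is not an $\Ext$-vanishing argument --- it is the tensor-product structure coming from part (i).
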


The central ingredients in the proof are \cite[Lemma 3.1.3]{dimat} and the proof of \cite[Corollary 3.1.4]{dimat}.

\begin{proof}
Let $\Sym^3M=Y\supset Y_1\supset Y_2\supset Y_3\supset Y_4=0$ be a filtration of $\Sym^3M$ satisfying $Y_{i-1}/Y_{i}\cong\bB(\eta_i)$ for $1\leq i\leq 4$. In particular $\bB(\eta_1)$ is a quotient of $\Sym^3M$ and $\bB(\eta_4)$ is a subobject of $\Sym^3M$. Let $\pi_{\eta_1}\colon\Sym^3M\to\bB(\eta_1)$ and $\pi\colon\Sym^2M\otimes M\to\Sym^3M$ be the natural projections.

Consider the following exact sequence in $\cC_\bB^K$:
\[ 0\to\ker\pi\to\Sym^2M\otimes M\xto{\pi}\Sym^3M\to 0 \]
The surjection $\pi_{\eta_1}\ccirc\pi\colon\Sym^2M\otimes M\to\bB(\eta_1)$ defines a one-dimensional quotient of $\Sym^2M\otimes M$. If $\Sym^2M$ is irreducible then Lemma \ref{samedim} implies that $\dim_\bB\Sym^2M=\dim_\bB M$, which is a contradiction since $\Sym^2M$ is three-dimensional. Then $\Sym^2M$ is reducible; this means that it admits a non-trivial filtration in $\cC_\bB^K$ (i.e. a filtration in $G_K$-stable subspaces). For simplicity, set $X=\Sym^2M$. All the maps and the filtrations we write are in $\cC_\bB^K$. There are three possibilities:
\begin{enumerate}
\item there is a filtration $X=X_0\supset X_1\supset X_2\supset X_3=0$ with $\dim_\bB (X_{i-1}/X_i)=1$ for $i=1,2,3$;
\item there is a filtration $X=X_0\supset X_1\supset X_2=0$ with $\dim_\bB (X/X_1)=1$, $\dim_\bB X_1=2$ and $X_1$ irreducible;
\item there is a filtration $X=X_0\supset X_1\supset X_2=0$ with $\dim_\bB (X/X_1)=2$, $\dim_\bB X_1=1$ and $X/X_1$ irreducible;
\end{enumerate}
Suppose that (1) holds. Since $X$ is obtained from $X/X_1$, $X_1/X_2$ and $X_2$ by successive extensions, $X\otimes M$ is obtained by successive extensions of $(X/X_1)\otimes M$, $(X_1/X_2)\otimes M$ and $X_2\otimes M$. Hence there exists $i\in\{1,2,3\}$ such that the surjection $X\otimes M\to\bB(\eta_1)$ induces a surjection $X_{i-1}/X_i\otimes M\to\bB(\eta_1)$. Since $X_{i-1}/X_i$ and $M$ are irreducible, Lemma \ref{samedim} implies that $\dim_\bB (X_{i-1}/X_i)=\dim_\bB M=2$, a contradiction since $\dim_\bB (X_{i-1}/X_i)=1$ for every $i$. 

Suppose that we are in case (2). As before, there exists $i\in\{1,2\}$ such that $X\otimes M\to\bB(\eta_1)$ induces a surjection $\pi_{\eta_1}^\prime(X_{i-1}/X_i)\otimes M\to\bB(\eta_1)$. If $i=1$ Lemma \ref{samedim} implies that $\dim_\bB (X/X_1)=\dim_\bB M$, a contradiction. Hence there is an exact sequence 
\[ 0\to\ker\pi_{\eta_1}^\prime\to X_1\otimes M\xto{\pi_{\eta_1}^\prime}\bB(\eta_1). \]
Since $X_1$ and $M$ are irreducible, this sequence splits by \cite[Lemma 3.1.3]{dimat}. 
In particular there is a section $\bB(\eta_1)\into X_1\otimes M$. By composing this section with the inclusion $X_1\otimes M\into X\otimes M$ and the projection $X\otimes M\to\Sym^3M$ we obtain a section of the map $\pi_{\eta_1}$, hence a splitting of the exact sequence
\[ 0\to\ker\pi_{\eta_1}\to\Sym^3M\xto{\pi_{\eta_1}}\bB(\eta_1)\to 0. \]
By definition of $\pi_{\eta_1}$ we have $Y_1=\ker\pi_{\eta_1}$, so $\Sym^3M\cong Y_1\oplus\bB(\eta_1)$. 
Now $Y_2$ is a subobject of $Y_1$, hence $Y_2\oplus\bB(\eta_1)$ is a subobject of $\Sym^3M$. There is an isomorphism $\Sym^3M/(Y_2\oplus\bB(\eta_1))\cong Y_1/Y_2\cong\bB(\eta_2)$, giving a projection $\pi_{\eta_2}\colon\Sym^3M\to\bB(\eta_2)$. By replacing $\pi_{\eta_1}$ with $\pi_{\eta_2}$ in the above argument, we obtain that the sequence 
\[ 0\to\ker\pi_{\eta_2}\to\Sym^3M\xto{\pi_{\eta_2}}\bB(\eta_2)\to 0 \]
splits. Then $\Sym^3M\cong\ker\pi_{\eta_2}\oplus\bB(\eta_2)$. Since $\ker\pi_{\eta_2}\cong Y_2\oplus\bB(\eta_1)$ we obtain $\Sym^3M\cong Y_2\oplus\bB(\eta_1)\oplus\bB(\eta_2)$. We repeat the argument for the projection to $\bB(\eta_3)$ and we obtain a decomposition $\Sym^3M\cong\bigoplus_{i=1}^4\bB(\eta_i)$, together with maps $\pi_{\eta_i}\colon X_1\otimes M\to\bB(\eta_i)$. 

Now consider the map $\psi\colon X_1\otimes M\to\Sym^3M$ obtained by composing the inclusion $X_1\otimes M\into X\otimes M$ with $\pi\colon X\otimes M\to\Sym^3M$. By the results of the previous paragraph, $\Sym^3M\cong\bigoplus_{i=1}^4\bB(\eta_i)$ and for every $i\in\{1,2,3,4\}$ there is a map $\pi_{\eta_i}\colon X_1\otimes M\to\bB(\eta_i)$. Hence $\psi$ is surjective. Since $X_1\otimes M$ and $\Sym^3M$ are both $4$-dimensional, $\psi$ is an isomorphism. Moreover $X_1$ is irreducible, so part (1) of the lemma is true with $M_1=X_1$. 

Suppose that we are in case (3). Consider the map $\psi\colon X_1\otimes M\to\Sym^3M$ obtained by composing the inclusion $X_1\otimes M\to\Sym^2M\otimes M$ with the projection $\pi\colon\Sym^2M\otimes M\to\Sym^3M$. Since $X_1$ is one-dimensional and $M$ is irreducible, $X_1\otimes M$ is irreducible. Hence the kernel of $\psi$ is either $0$ or $X_1\otimes M$. In the first case the image of $\psi$ defines a two-dimensional irreducible subobject of $\Sym^3M$, contradicting the fact that $\Sym^3M$ is triangulable. In the second case $\pi$ factors via a surjective map $\pi_1\colon (X/X_1)\otimes M\to\Sym^3M$. Since $\dim_\bB((X/X_1)\otimes M)=\dim_\bB\Sym^3M$, $\pi_1$ is an isomorphism. Now $X/X_1$ is irreducible, so part (1) of the lemma is true with $M_1=X/X_1$. 

The decomposition of $\Sym^3M$ given in part (2) of the lemma follows from part (1) and \cite[Corollary 3.1.4]{dimat}.
\end{proof}

We recall another result of \cite{dimat}.

\begin{lemma}\label{trifinord}\cite[Lemma 3.2.1]{dimat}
Let $N$ and $N^\prime$ be two objects of $\cC_\bB^K$ such that $N\otimes N^\prime$ is triangulable by characters. Let $\{\eta_i\}_{i=1}^d$ be the set of characters $G_K\to\bB^{G_K}$ appearing in the triangulation of $N\otimes N^\prime$. Then $\eta_1^{-1}\eta_i$ is a finite order character for every $i\in\{1,2,\ldots,d\}$.
\end{lemma}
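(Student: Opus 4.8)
This is \cite[Lemma 3.2.1]{dimat}; here I sketch the strategy one would follow. The plan is to reduce the statement to a claim about the characters occurring in a triangulation by characters of an object of the shape $\End(W)$ with $W$ irreducible, and then to exploit the self-duality and trivial determinant of $\End(W)$ together with the cohomological rigidity of one-dimensional objects of $\cC_\bB^K$.

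First I would reduce to the case in which $N$ and $N'$ are irreducible, which is the only one we actually use (see Lemma \ref{tritensor}), the general case following by induction on $\dim_\bB N$ after splitting off a line from $N$ or $N'$. So assume $N,N'$ irreducible. Reindexing the filtration of $N\otimes N'$ only permutes the multiset $\{\eta_i\}$, so we may assume $\bB(\eta_1)$ is the top quotient of $N\otimes N'$; this produces a non-zero $G_K$-equivariant map $N\to (N')^\vee(\eta_1)$. By Lemma \ref{samedim} we have $\dim_\bB N=\dim_\bB N'$, and since both sides are then irreducible of the same dimension the map is an isomorphism. Hence $N\otimes N'\cong (N')^\vee\otimes N'(\eta_1)=\End(N')(\eta_1)$, and twisting by $\bB(\eta_1^{-1})$ identifies $N\otimes N'(\eta_1^{-1})$ with $\End(N')$; under this identification the characters of a triangulation by characters of $N\otimes N'$ become $\eta_1\cdot\mu_i$, where $\mu_i=\eta_1^{-1}\eta_i$ (with $\mu_1=1$) are exactly the characters of a triangulation by characters of $\End(N')$. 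Thus it suffices to prove: \emph{if $\End(W)$ is triangulable by characters for $W$ irreducible in $\cC_\bB^K$, then each of those characters is of finite order.} For this one would use that $\End(W)$ is self-dual in $\cC_\bB^K$, so the multiset of its triangulation characters is stable under $\mu\mapsto\mu^{-1}$; that $\det\End(W)=\mathbf 1$, so their product is trivial; and that the trace and the identity exhibit $\mathbf 1$ both as a quotient and as a sub of $\End(W)$. Chasing the trace map through the triangulation forces the trivial character to occur, and then the remaining characters must be ``linked'' to it through non-zero $\Hom$ or $\Ext^1$ groups in $\cC_\bB^K$; by the rigidity of rank-one objects of $\cC_\bB^K$ over the fields $\bB$ relevant to the application (namely $\C_p$, $\bB_{\dR}$ and their Hodge--Tate analogues, via Tate's and Sen's vanishing theorems), such groups vanish unless the ratio of the two characters is of finite order. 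Translating back, $\eta_1^{-1}\eta_i=\mu_i$ is of finite order for every $i$.

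The hard part will be the cohomological rigidity input for $\cC_\bB^K$: one must pin down exactly which fields $\bB$ are in play in Di Matteo's framework and in the proof of Proposition \ref{sym3tri}, and for each of them establish that $\Hom_{\cC_\bB^K}(\bB(\mu),\bB(\nu))$ and $\Ext^1_{\cC_\bB^K}(\bB(\mu),\bB(\nu))$ vanish when $\mu\nu^{-1}$ has infinite order; this is where Tate's computation of $H^i(G_K,\C_p(\eta))$ and Sen theory enter, and where some care about ramified versus unramified twists and Hodge--Tate weights is needed. A secondary technical point is the degenerate possibility that $(N')^\vee(\eta_1)$, equivalently $N'$ up to a character twist, is reducible; in that case $N\otimes N'$ decomposes as a direct sum of twists of lower-dimensional triangulable objects, and the claim follows from the inductive reduction together with the same rigidity statement.
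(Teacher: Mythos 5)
The paper itself gives no proof of this lemma: it is imported verbatim from \cite{dimat} and used as a black box, so the only meaningful comparison is with Di Matteo's cited argument, which is short and quite different from what you propose. Once $N$ and $N'$ are irreducible, the splitting results (the analogues of Lemmas \ref{samedim} and \ref{tritensor}) show that each $\bB(\eta_i)$ occurs as a direct summand of $N\otimes N'$, hence for every $i$ there is a nonzero, and therefore bijective, map $N\to (N')^\ast(\eta_i)$. Comparing the isomorphisms for $i$ and for $1$ gives $(N')^\ast\cong (N')^\ast(\eta_1^{-1}\eta_i)$, and taking top exterior powers shows that the rank-one object attached to $(\eta_1^{-1}\eta_i)^{\dim N'}$ is trivial in the category. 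The whole content of the lemma is the final step: in the categories actually in play (B-pairs in \cite{dimat}, rank-one $(\varphi,\Gamma)$-modules over $\cR$ in the application, via Colmez's classification, resp.\ the fundamental exact sequence of $p$-adic Hodge theory) a character with values in $\bB^{G_K}$ whose associated rank-one object is trivial must itself be trivial, so $\eta_1^{-1}\eta_i$ is killed by $\dim N'$ and has finite order.

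Your sketch begins the same way (irreducibility, $N\cong(N')^\ast(\eta_1)$, reduction to $\End(N')$) but never reaches the conclusion: the decisive statement is delegated to the claim that $\Hom$ and $\mathrm{Ext}^1$ between rank-one objects of $\cC_\bB^K$ vanish unless the ratio of the characters has finite order, which you yourself flag as ``the hard part'' and do not prove. As stated it is false in the very model case you invoke: by Sen's theorem $H^0(G_K,\C_p(\eta))\neq 0$ for \emph{every} unramified $\eta$, of infinite order or not, so nonvanishing of $\Hom$ over $\C_p$ only constrains $\eta$ on inertia and cannot yield finite order. Similarly, your self-duality and determinant observations for $\End(N')$ only show that the relevant products of the $\mu_i$ are \emph{trivializable} characters in $\cC_\bB^K$, not that they equal $1$; converting trivializability into triviality is exactly where the specific period-ring input above enters, and it is absent from your argument. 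A secondary error: the claimed inductive reduction of the general case to the irreducible one cannot work, because without irreducibility the statement is false (take $N=\bB\oplus\bB(\eta)$ and $N'=\bB$ with $\eta$ of infinite order valued in $\bB^{G_K}$); the lemma is only true, and only used in Lemma \ref{sym3pottri}, when the tensor factors are irreducible.
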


The following lemma is proved in the same way as \cite[Theorem 3.2.2]{dimat}, with the difference that we work in the language of $(\varphi,\Gamma)$-modules rather than in that of $B$-pairs. Recall that $E$ is a $p$-adic field and $V$ is a two-dimensional $E$-representation of $G_{\Q_p}$. 

\begin{lemma}\label{sym3pottri}
Suppose that $V$ is irreducible. If $\Sym^3V$ is trianguline, then $V$ is potentially trianguline.
\end{lemma}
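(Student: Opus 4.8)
\textbf{Proof plan for Lemma \ref{sym3pottri}.}

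The plan is to pass to the category $\cC_\bB^K$ of semilinear $\bB$-representations for $\bB = \cR[1/t]$ (or the analogous Robba-type field over $\C_p$ in which trianguline data are detected, following Di Matteo's setup), and to exploit the fact that for an irreducible two-dimensional $V$, the symmetric cube admits a useful internal structure. First I would set $M = \bD_\rig(V)\otimes_{\cR}\bB$, which is a two-dimensional irreducible object of $\cC_\bB^K$ with $K = \Q_p$; by Remark \ref{sym3rig} we have $\Sym^3 M \cong \bD_\rig(\Sym^3 V)\otimes_{\cR}\bB$, and the hypothesis that $\Sym^3 V$ is trianguline means exactly that $\Sym^3 M$ is triangulable by characters. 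Now I would invoke Lemma \ref{tritensor}: since $\Sym^3 M$ is triangulable by characters, part (i) gives an irreducible object $M_1$ with $\Sym^3 M \cong M_1 \otimes M$, and part (ii) gives a full decomposition $\Sym^3 M \cong \bigoplus_{i=1}^4 \bB(\eta_i)$ for characters $\eta_i\colon G_{\Q_p}\to \bB^{G_{\Q_p}}$.

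Next I would use Lemma \ref{trifinord} applied to $N = M_1$, $N' = M$: since $M_1 \otimes M = \Sym^3 M$ is triangulable by characters with characters $\{\eta_i\}_{i=1}^4$, each $\eta_1^{-1}\eta_i$ has finite order. Let $K'/\Q_p$ be the finite extension trivializing all the finite-order characters $\eta_1^{-1}\eta_i$; after restricting to $G_{K'}$, the decomposition becomes $\Sym^3 M|_{G_{K'}} \cong \bB(\eta_1)^{\oplus 4}$ up to twisting each summand by the same character $\eta_1$, i.e. $\Sym^3 M|_{G_{K'}}(\eta_1^{-1})$ is a trivial semilinear representation of rank four. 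By Proposition \ref{semilinsym3}(2), applied over $G_{K'}$ to the rank-two object $M|_{G_{K'}}$ twisted appropriately so that its symmetric cube is trivial, there is an index-three (or trivial) subgroup $H \subset G_{K'}$ on which $M$, suitably twisted, becomes trivial; equivalently $M|_{H}$ is a one-dimensional object twisted into two dimensions, so $M|_H$ is triangulable — in fact split — over $\bB$. Let $K'' = \overline{\Q_p}^H$, a finite extension of $\Q_p$. Then $\bD_\rig(V|_{G_{K''}})$ acquires a triangulation (by one-dimensional sub-$(\varphi,\Gamma)$-modules, recovered from the semilinear triangulation over $\bB$ by the standard descent from $\bB$ back to $\cR$), so $V|_{G_{K''}}$ is trianguline. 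Hence $V$ is potentially trianguline.

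The main obstacle I expect is the careful bookkeeping in the passage between the semilinear $\bB$-representation picture and genuine $(\varphi,\Gamma)$-modules over $\cR$ — one must make sure that a triangulation of $M = \bD_\rig(V)\otimes_\cR \bB$ by characters actually descends to a triangulation of $\bD_\rig(V|_{G_{K''}})$ by $(\varphi,\Gamma)$-submodules of rank one over the Robba ring, rather than merely after inverting $t$ or extending scalars. This is exactly the technical heart of Di Matteo's Theorem 3.2.2, and the point is that the field $\bB$ is chosen so that ``triangulable by characters in $\cC_\bB^K$'' is equivalent to ``trianguline'' for the underlying Galois representation; I would cite \cite[Theorem 3.2.2]{dimat} and its surrounding lemmas to handle this equivalence, stressing that the symmetric cube situation is formally identical to the tensor product situation treated there once Lemma \ref{tritensor} supplies the factorization $\Sym^3 M \cong M_1 \otimes M$. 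A secondary point to be careful about is that $V$ is only assumed irreducible, not absolutely irreducible, but this causes no trouble since all the lemmas invoked (\ref{samedim}, \ref{tritensor}, \ref{trifinord}, \ref{semilinsym3}) are stated for irreducible objects of $\cC_\bB^K$ directly.
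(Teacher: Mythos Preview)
Your plan matches the paper's proof through the first three steps: setting $M=\bD_\rig(V)$ as a semilinear object, applying Lemma~\ref{tritensor} to get both the factorization $\Sym^3 M\cong M_1\otimes M$ and the full splitting $\Sym^3 M\cong\bigoplus_i\cR(\eta_i)$, and then invoking Lemma~\ref{trifinord} to trivialize the ratios $\eta_1^{-1}\eta_i$ over a finite extension. The divergence is in the endgame, and your version has a genuine gap there.

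You propose to apply Proposition~\ref{semilinsym3}(2) directly in the $(\varphi,\Gamma)$-module setting, taking $\bB=\cR[1/t]$ or some Robba-type ring. But that proposition is stated under the hypotheses that $\bB$ is a $\C_p$-algebra with $\bB^{G_{\Q_p}}=\Q_p$; the Robba ring over a $p$-adic field $E$ satisfies neither. More seriously, even if you adapted the non-abelian cohomology argument, the conclusion you would obtain is that the \emph{$\Gamma$-semilinear} action on (a twist of) $M$ is trivial over some open subgroup. That is not the same as ``$\bD_\rig(V|_{G_{K''}})$ admits a filtration by rank-one $(\varphi,\Gamma)$-submodules'': triviality of the $\Gamma$-action says nothing directly about the $\varphi$-structure, and your proposed ``standard descent from $\bB$ back to $\cR$'' is not standard at all. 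The paper handles exactly this point by a detour through $p$-adic Hodge theory: once $\bD_\rig(\Sym^3V)(\eta_1^{-1})$ has trivial $\Gamma_L$-action, Berger's comparison theorem (\cite[Theorem~0.2]{berger}, via \cite[Proposition~1.8]{colmez}) gives that $\bD_\st$ of the corresponding twist is full-dimensional, so $(\Sym^3 V)(\mu^{-1})$ is semistable, hence de~Rham, over $L$. Then Corollary~\ref{sym3dR}---which \emph{is} Proposition~\ref{semilinsym3} applied to $\bB=\bB_\dR$, where the $\C_p$-algebra hypothesis holds---gives that $V(\mu^{-1/6})$ is de~Rham after a further finite extension, hence potentially trianguline, and twisting back finishes. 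So the non-abelian cohomology input is used, but only after translating the problem into the $\bB_\dR$-world where it legitimately applies; your shortcut bypasses precisely the translation step that makes the argument go through.
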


\begin{proof} 
Consider the $(\varphi,\Gamma)$-modules $\bD_\rig(V)$ and $\bD_\rig(\Sym^3V)$. They are free $\cR$-modules carrying a semilinear action of $G_{\Q_p}$. By Remark \ref{sym3rig} there is an isomorphism of $(\varphi,\Gamma)$-modules $\bD_\rig(\Sym^3V)\cong\Sym^3\bD_\rig(V)$. In particular this is an isomorphism of semilinear representations of $G_{\Q_p}$, where we let $G_{\Q_p}$ act via $\G_{\Q_p}\onto\Gamma$.

Since $\Sym^3V$ is trianguline, $\bD_\rig(\Sym^3V)$ is obtained by successive extensions of rank one $(\varphi,\Gamma)$-modules $D_i$, $1\leq i\leq 4$. By \cite[Théorème 0.2(i)]{colmez}, for every $i\in\{1,2,3,4\}$ there exists a character $\eta_i\colon\Q_p^\times\to E^\times$ such that $D_i\cong\cR(\eta_i)$. 
Note that $E^\times=\cR^{G_E}$, so $\eta_i\vert_{G_E}$ takes values in $\cR^{G_E}$. 

Since $V$ is irreducible, \cite[Corollary 2.2.2]{dimat} implies that $\bD_\rig(V)$ is irreducible as a semilinear $\cR$-representation of $G_{\Q_p}$. In particular the choice $M=\bD_\rig(V)$ satisfies the assumptions of Lemma \ref{tritensor}, hence part (2) of that lemma gives a $G_{\Q_p}$-equivariant decomposition $\bD_\rig(\Sym^3V)\cong\bigoplus_{i=1}^4\cR(\eta_i)$. 

Now by Lemma \ref{trifinord} there exists a finite extension $L$ of $E$ such that $\eta_1^{-1}\eta_i\vert_{G_{L}}$ is trivial for every $i$. 
Hence there is an isomorphism $\bD_\rig(\Sym^3V)(\eta_1^{-1})\cong\bigoplus_{i=1}^4\cR$ of $\cR$-representations of $G_L$. This means that $\bD_\rig(\Sym^3V)(\eta_1^{-1})$ is a trivial $\cR$-representation of $G_L$. Let $\eta^\prime\colon G_\Q\to\Qp^\times$ be a character satisfying $\bD_\rig(\mu)=\cR(\eta_1)$. Then $\bD_\rig((\Sym^3V)(\mu^{-1}))=(\bD_\rig(\Sym^3V))(\eta_1^{-1})$. 
By \cite[Theorem 0.2]{berger} (see the formulation in \cite[Proposition 1.8]{colmez}) there is an isomorphism
\[ \bD_\st(\Sym^3V(\mu^{-1}))=(\cR[1/t,T]\otimes_\cR\bD_\rig(\Sym^3V))^{\Gamma_L} \]
of filtered $(\varphi,N)$-modules. We know that $G_L$ acts trivially on $\bD_\rig((\Sym^3V)(\eta_1^{-1}))$, so the module $\bD_\st((\Sym^3V)(\eta_1^{-1}))$ is four-dimensional. This means that $(\Sym^3V)(\mu^{-1})$ is a semi-stable representation of $G_L$. In particular it is a de Rham representation of $G_L$. 

Let $\mu^\prime(x)=\mu(x)/\vert\mu(x)\vert\colon\Q_p^\times\to\cO_E^\times$. Let $E_1$ be a finite extension of $E$ that contains $p^{1/6}$ and let $L_1$ be a finite extension of $L$ such that $\mu^\prime\vert_{G_{L_1}}$ is trivial modulo the maximal ideal of $\cO_E$. 
Then there exists a character $\mu^{-1/6}\colon\Q_p^\times\to E_1^\times$ such that $(\mu^{-1/6})^6=\mu^{-1}$. Since $\Sym^3(V(\mu^{-1/6}))\cong(\Sym^3V)(\mu^{-1})$ and $(\Sym^3V)(\mu^{-1})$ is de Rham, $V(\mu^{-1/6})$ is also de Rham by Proposition \ref{sym3dR}. In particular $V(\mu^{-1/6})$ is potentially trianguline, so its twist $V$ is still potentially trianguline by \cite[Proposition 4.3]{colmez}. 
\end{proof}

In order to deduce Proposition \ref{sym3tri} from Lemma \ref{sym3pottri} we need the following result by Berger and Chenevier, who classified the two-dimensional potentially trianguline representations of $G_{\Q_p}$. 
Here we do not suppose that $V$ is irreducible. 

\begin{thm}\label{berchenpot}\cite[Théorème A]{bergchen}
If $V$ is potentially trianguline, then it satisfies at least one of the following conditions:
\begin{enumerate}
\item $V$ is trianguline;
\item $V$ is the direct sum of two characters or an induced representation; 
\item $V$ is a twist of a de Rham representation by a character.
\end{enumerate}
\end{thm}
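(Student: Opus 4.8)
The plan is to follow the strategy of Berger and Chenevier, carrying out a case analysis on the structure of $\bD_\rig(V)$ that combines the classification of rank one $(\varphi,\Gamma)$-modules over the Robba ring with a Galois descent argument. First I would dispose of the reducible case: if $V$ is reducible there is an exact sequence $0\to\delta_1\to V\to\delta_2\to 0$ of $G_{\Q_p}$-representations with $\delta_1,\delta_2$ characters, and Colmez's classification identifies $\bD_\rig(\delta_1)$ with some $\cR(\eta_1)$; this is a saturated rank one sub-$(\varphi,\Gamma)$-module of $\bD_\rig(V)$, so $V$ is trianguline (condition (1)), and if the sequence splits then $V$ is a direct sum of two characters (condition (2)). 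Here no potential triangulinity is needed, so from now on I would assume $V$ irreducible.

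Next, choose a finite Galois extension $K/\Q_p$ with $V|_{G_K}$ trianguline. Working with $B$-pairs (equivalently, the $(\varphi,\Gamma)$-modules of slope zero), restriction along $G_K\subset G_{\Q_p}$ is a base change, so the object attached to $V|_{G_K}$ carries a semilinear action of $\Gal(K/\Q_p)$ whose descent datum recovers that of $V$; in particular a saturated sub-object of $\bD_\rig(V|_{G_K})$ that is $\Gal(K/\Q_p)$-stable descends to a saturated sub-object of $\bD_\rig(V)$. Let $\mathcal T$ be the set of saturated rank one sub-$(\varphi,\Gamma)$-modules of $\bD_\rig(V|_{G_K})$; using the classification of rank one and rank two objects, $\mathcal T$ is non-empty and has either one element, exactly two elements, or infinitely many, and $\Gal(K/\Q_p)$ acts on it. If $|\mathcal T|=1$, its unique element is $\Gal(K/\Q_p)$-stable, hence descends, so $V$ is trianguline (condition (1)). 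If $|\mathcal T|=2$, say $\mathcal T=\{\cL_1,\cL_2\}$, then $\Gal(K/\Q_p)$ cannot fix both: otherwise both $\cL_i$ would descend and $\bD_\rig(V)$ would split as a direct sum of two rank one modules, forcing $V$ reducible, contrary to assumption. So some element of $\Gal(K/\Q_p)$ exchanges $\cL_1$ and $\cL_2$; the stabilizer $H$ of $\cL_1$ has index two, and over $L=K^H$ the module $\cL_1$ descends to a saturated sub of $\bD_\rig(V|_{G_L})$, so $V|_{G_L}$ is reducible. An irreducible two-dimensional representation that becomes reducible over a quadratic extension is induced from a character of $G_L$ (condition (2)).

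There remains the case $|\mathcal T|=\infty$, which occurs precisely when $\bD_\rig(V|_{G_K})\cong\cR_K(\delta)^{\oplus 2}$; then $V|_{G_K}$ is of the form $\eta\oplus\eta$ for a character $\eta$ of $G_K$, so the projective representation $\mathbb P V$ has image factoring through the finite group $\Gal(K/\Q_p)$. I would then argue that, after enlarging the coefficient field $E$, there is a character $\mu$ of $G_{\Q_p}$ with $\mu^2\cdot\det V$ of finite order: via local class field theory $\det V$ is a character of $\Q_p^\times=p^{\Z}\times\mu_{p-1}\times(1+p\Z_p)$, the part on $1+p\Z_p$ is automatically a square (for $p$ odd it is valued in the uniquely $2$-divisible group $1+\mathfrak m_E$), the part on $\mu_{p-1}$ is of finite order, and the value at $p$ acquires a square root after enlarging $E$. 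Then $V\otimes\mu$ has bounded $\SL_2$-part and finite order determinant, hence finite image, hence is $\C_p$-admissible and so de Rham with Hodge--Tate weights $(0,0)$; therefore $V$ is a twist of a de Rham representation by the character $\mu^{-1}$ (condition (3)).

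The main obstacle I expect is the Galois descent bookkeeping in the second step: one must set up the $\Gal(K/\Q_p)$-action on the category of $(\varphi,\Gamma)$-modules (or $B$-pairs) over the relevant Robba ring for a possibly wildly ramified $K$, and check that a $\Gal$-stable sub-object genuinely descends with the expected parameters. Using $B$-pairs rather than $(\varphi,\Gamma)$-modules makes this cleaner, since they behave transparently under restriction along finite extensions; but keeping precise track of the triangulation characters (and of the auxiliary twists by finite order characters and by the square root $\mu$) through each reduction, so that the final trichotomy is stated for $V$ itself and not merely up to an unspecified twist, is where the care is needed.
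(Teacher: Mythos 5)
There is a genuine gap, and it sits exactly at the heart of the theorem. In your case $|\mathcal{T}|=2$ with the two lines swapped, you pass from ``$\cL_1$ descends to a saturated rank-one sub-$(\varphi,\Gamma)$-module of $\bD_\rig(V|_{G_L})$'' to ``$V|_{G_L}$ is reducible, hence $V$ is induced from the quadratic extension $L$.'' That inference is false: a saturated rank-one sub-$(\varphi,\Gamma)$-module (or sub-$B$-pair) of $\bD_\rig(V|_{G_L})$ only says that $V|_{G_L}$ is \emph{trianguline}; it corresponds to a subrepresentation only when it is \'etale (slope $0$), and irreducible trianguline representations exist in abundance (already over $\Q_p$: any irreducible trianguline $V$ has such a submodule without being reducible). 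So in this case your argument merely re-derives that $V$ is trianguline over a quadratic extension, which is a weakening of the hypothesis, not the conclusion; distinguishing ``induced or a twist of de Rham'' from a genuinely new kind of representation in precisely this situation is the actual content of Berger--Chenevier's theorem, and their proof handles it by a finer analysis of the $G_{\Q_p}$-orbit of the $B_e$-line together with a comparison of the parameters/Sen weights of the two rank-one pieces, not by reducibility over $L$.

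Two secondary points. First, in the same case you claim that if both $\cL_1,\cL_2$ were Galois-stable then $\bD_\rig(V)$ would split as their direct sum, forcing $V$ reducible; but the sum of two distinct saturated rank-one submodules need not be direct nor equal to $\bD_\rig(V)$ (the quotient can be $t$-torsion, as for the two triangulations of an irreducible crystalline representation with distinct Frobenius eigenvalues) -- in that subcase you should simply conclude that $V$ is trianguline, which is conclusion (1). Likewise the trichotomy ``$|\mathcal{T}|\in\{1,2,\infty\}$'' and the identification of $|\mathcal{T}|=\infty$ with $\bD_\rig(V|_{G_K})\cong\cR_K(\delta)^{\oplus 2}$ are asserted without proof and require an argument within the slope-zero setting. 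Your final case (finite projective image, square root of the determinant for $p$ odd, finite image hence de Rham after twist) is essentially sound. Note also that the paper does not prove this statement: it is quoted from \cite{bergchen}, so the only meaningful comparison is with their proof, whose difficult step is the one your sketch skips.
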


With this final ingredient we can prove Proposition \ref{sym3tri}.

\begin{proof}
The proof of (i) is straightforward. We show (ii). Since $\Sym^3V$ is trianguline, $V$ is potentially trianguline by Lemma \ref{sym3pottri}. Then $V$ satisfies one of the three conditions listed in Theorem \ref{berchenpot}. By assumption $V$ is irreducible, so it cannot satisfy (2). Hence (1) or (3) must hold, as desired. 
\end{proof}

\subsection{Representations with symmetric cube of automorphic origin}\label{subsecaut}

Let $\rho_1\colon G_{\Q}\to\GL_2(\Qp)$ and $\rho_2\colon G_{\Q}\to\GSp_4(\Qp)$ be two continuous representations.

\begin{thm}\label{sym3autom}
Suppose that:
\begin{enumerate}
\item $\rho_2$ is odd and it is unramified outside a finite set of primes;
\item the residual representation $\ovl{\rho}_2$ associated with $\rho_2$ is absolutely irreducible; 
\item $\rho_2\cong\Sym^3\rho_1$.
\end{enumerate}
Then the following conclusions hold.
\begin{enumerate}[label=(\roman*)]
\item If $\rho_2$ is associated with an overconvergent cuspidal $\GSp_4$-eigenform, then $\rho_1$ is associated with an overconvergent cuspidal $\GL_2$-eigenform.
\item If $\rho_2$ is associated with a classical cuspidal $\GSp_4$-eigenform, then $\rho_1$ is associated with a classical cuspidal $\GL_2$-eigenform.
\end{enumerate}
\end{thm}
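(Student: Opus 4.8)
\textbf{Proof strategy for Theorem \ref{sym3autom}.}
The plan is to reduce both statements to the triangulinity/modularity input already assembled in the paper, using the local-to-global and $(\varphi,\Gamma)$-module results of Section \ref{triang} together with the eigenvariety machinery of Section \ref{morpheigen}. First I would treat statement (i). Suppose $\rho_2=\Sym^3\rho_1$ is associated with an overconvergent cuspidal $\GSp_4$-eigenform $F$; after possibly enlarging the tame level I may assume $F$ has tame level $M=M(N)$ for a suitable $N$ prime to $p$, and (after a finite base change, harmless by assumption (2)) that $F$ defines a point $x$ of the locus $\widetilde{\cD}_2^{M,\irr}$ where Theorem \ref{famtri}(2) applies. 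By Theorem \ref{famtri}(1) the $(\varphi,\Gamma)$-module $\bD_\rig(\rho_{F,p})=\bD_\rig(\Sym^3\rho_{1,p})=\Sym^3\bD_\rig(\rho_{1,p})$ is trianguline, so Proposition \ref{sym3tri}(ii) shows that $\rho_{1,p}=\rho_1\vert_{G_{\Q_p}}$ is a character twist of a trianguline representation.

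The key point is then to upgrade this to \emph{triangulinity} of $\rho_{1,p}$ on the nose, so that Emerton's theorem (equivalently Theorem \ref{modtri} in the overconvergent direction, via Kisin--Colmez) applies to put $\rho_1$ on the $\GL_2$-eigencurve $\cD_1^N$. Here I would use the compatibility at the finite places away from $p$: by assumption (1) $\rho_2$ is unramified outside a finite set, and by Proposition \ref{heckemorphunr} its prime-to-$p$ Hecke data come from those of $\rho_1$ via $\lambda^{Np}$; meanwhile at $p$ the parameters of the triangulation of $\bD_\rig(\Sym^3\rho_{1,p})$ are the symmetric cube combinations $\delta_{i,x}$ of the parameters of $\bD_\rig(\rho_{1,p})$ exactly as in Remark \ref{frobtrirem} and Proposition \ref{heckemorphtri}. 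Combining these with the classification in Theorem \ref{berchenpot} and the fact that $\rho_1$ is odd (because $\rho_2=\Sym^3\rho_1$ is odd) rules out the exceptional induced/de Rham cases precisely when the twisting character is nontrivial --- the twisting character would have to be an order-dividing-$6$ character, and one checks that the resulting triangulation parameters $\delta_i$ are then genuine (not merely ``up to twist'') parameters of $\bD_\rig(\rho_{1,p})$ itself. So $\rho_{1,p}$ is trianguline, hence by Emerton's modularity criterion $\rho_1$ is (a twist of) the Galois representation of an overconvergent finite-slope $\GL_2$-eigenform; absolute irreducibility of $\ovl{\rho}_1$ (which follows from that of $\ovl{\rho}_2=\Sym^3\ovl{\rho}_1$ since $p>3$) forces the twist to be trivial, giving (i). Alternatively, and more in the spirit of the rest of the paper, I would realize $F$ as a point in the image of the morphism $\xi\colon\cD_1^{N,\cG}\to\cD_2^M$ of Definition \ref{defxi}: by Proposition \ref{sym3closure}(1) $\xi$ is a closed immersion whose image is the Zariski-closure of $S_1^{\Sym^3}$, and the matching of Hecke parameters (Propositions \ref{heckemorphunr}, \ref{heckemorphtri}) shows $x$ lies on this image, hence is $\xi(y)$ for a point $y$ of $\cD_1^{N,\cG}$, whose associated eigenform is the desired overconvergent $\GL_2$-eigenform.

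For statement (ii), suppose in addition $\rho_2$ is associated with a \emph{classical} cuspidal $\GSp_4$-eigenform $F$. By part (i), $\rho_1$ is associated with an overconvergent cuspidal $\GL_2$-eigenform $f$; I must show $f$ is classical. This I would deduce from Coleman's classicality criterion applied to $f$: the Hodge--Tate weights of $\rho_{1,p}$ are $\{0,k-1\}$ for some integer $k$ with $\Sym^3\rho_{1,p}$ having Hodge--Tate weights $\{0,k-1,2k-2,3k-3\}$ (by Theorem \ref{charpolsen} these are read off from the Sen operator, and they are cohomological integer weights because $F$ is classical), so $f$ has integer weight $k$, and its slope $h$ is related to the slope of $F$ by Corollary \ref{liftslopes} --- specifically $F$ is the lift $\Sym^3(f^\st)_i$ for some $i\in\{1,2,3,4\}$, and solving the slope relations for the classical-point constraint $\slo(F)<2k-2+k+1-3$ (Proposition \ref{siegclslopes} with weight $(2k-1,k+1)$) against $h$ forces $h<k-1$ when $i=1$; the cases $i\neq1$ are excluded because by Corollary \ref{stabdense} such lifts are isolated, so $F$ would not deform, contradicting that it comes from a classical form unless it is already of type $i=1$ (or else one runs the same slope bookkeeping in those finitely many cases directly). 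Coleman's criterion then gives that $f$ is classical, completing (ii).

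\textbf{Main obstacle.} The serious step is the upgrade in statement (i) from ``$\rho_{1,p}$ is a twist of a trianguline representation'' (the raw output of Proposition \ref{sym3tri}(ii)) to ``$\rho_{1,p}$ is trianguline''. Proposition \ref{sym3tri}(ii) genuinely allows a twist, and one must show the twisting character is trivial (or absorbable) using the odd, global, residually irreducible structure of $\rho_1$ together with the explicit shape of the symmetric-cube triangulation parameters; this is exactly the kind of argument carried out by Di Matteo \cite{dimat} and Berger--Chenevier \cite{bergchen}, but adapting it so that the ``de Rham up to twist'' alternative in Theorem \ref{berchenpot} is either excluded or also leads to modularity of $\rho_1$ will require care. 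A secondary subtlety is the bookkeeping in (ii) to confirm that a classical $F$ forces the slope inequality needed for Coleman classicality of $f$, i.e.\ that the only classical symmetric-cube lifts are the ``non-critical'' ones of type $S_1^{\Sym^3}$ plus finitely many exceptions handled by hand via Corollary \ref{liftslopes}.
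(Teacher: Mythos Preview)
Your overall strategy for (i) --- pass to triangulinity of $\Sym^3\rho_{1,p}$ via Theorem \ref{famtri}, then invoke Proposition \ref{sym3tri}(ii), then appeal to Emerton --- matches the paper's. But the step you flag as the ``main obstacle'' is handled quite differently, and your sketch of it does not go through. You try to upgrade ``$\rho_{1,p}$ is a twist of a trianguline representation'' to ``$\rho_{1,p}$ is trianguline'' by local arguments (Berger--Chenevier, oddness, shape of the $\delta_i$). Neither oddness nor residual irreducibility excludes the ``twist of de Rham'' alternative in Theorem \ref{berchenpot}, and absolute irreducibility of $\ovl{\rho}_1$ certainly does not force a twisting character to be trivial. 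The paper sidesteps this entirely: it does \emph{not} remove the twist locally. Emerton's theorem already applies to a twist of a trianguline representation and yields that $\rho_1\cong\alpha\otimes\rho_{f,p}$ for some character $\alpha$ and some overconvergent $f$. The twist is then killed \emph{globally} by a Hodge--Tate--Sen weight argument: both $\Sym^3\rho_1$ (attached to $F$) and $\rho_{f,p}$ (attached to $f$) have $0$ among their generalized Hodge--Tate weights, which forces $w(\alpha)=0$; a determinant/central character comparison then shows $\alpha^6$, hence $\alpha$, has finite order, and one twists $f$ by $\alpha^{-1}$. Your alternative route via $\xi$ is circular: placing $x$ in the image of $\xi$ is precisely what the theorem is needed to justify (cf.\ the logical order in Section \ref{sym3locus}, where Theorem \ref{sym3autom} is the input to Theorem \ref{sym3type}).

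For (ii) your slope-bookkeeping approach is unnecessarily delicate and incomplete as stated: a classical $F$ need not a priori be of type $S_1^{\Sym^3}$, and the inequality $\slo(F)<k_2-3$ does not in general force $h<k-1$ through Corollary \ref{liftslopes}. The paper's argument is much shorter and avoids slopes altogether: classical $\Rightarrow$ $\rho_2\vert_{G_{\Q_p}}$ is de Rham, hence $\rho_1\vert_{G_{\Q_p}}$ is de Rham by Corollary \ref{sym3dR}; combined with (i) this gives an overconvergent $f$ whose Galois representation is de Rham, and such an $f$ is classical.
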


\begin{proof}
Note that assumption (1) implies that the residual representation $\ovl{\rho}_1$ is absolutely irreducible.

We prove part (i). The representation $\rho_2$ is associated with an overconvergent cuspidal $\GSp_4$-eigenform $F$, so it is trianguline by Theorem \ref{modtri}. By Proposition \ref{sym3tri} the representation $\rho_1$ is a twist of a trianguline representation. Then Theorem \cite[Theorem 1.2.4(2)]{emerton} implies that $\rho_1$ is the twist by a character of a representations associated with an overconvergent cuspidal $\GL_2$-eigenform. We show that the character occurring here can be taken to be trivial. 

Let $V$ be a two-dimensional $E$-vector space carrying an action of $G_{\Q_p}$ via $\rho_1$ and let $\ovl{V}$ be the associated residual representation. Let $\alpha\colon G_\Q\to\Qp^\times$ be a character and $N$ be a positive integer such that $V(\alpha)$ is associated with an overconvergent cuspidal $\GL_2$-eigenform $f$ of level $\Gamma_1(N)\cap\Gamma_0(p)$. Let $x$ be the point of the eigencurve $\cD_1^N$ corresponding to $f$. Let $M$ be the positive integer associated with $N$ by Definition \ref{sym3leveldef}. Let $\xi\colon\cD_1^{N,\cG}\to\cD_2^M$ be the morphism of Definition \ref{defxi}. Let $\Sym^3f$ be the overconvergent $\GSp_4$-eigenform corresponding to the point $\xi(x)$. The Galois representation associated with $\Sym^3f$ is $\Sym^3(V(\alpha))$. 

For a continuous representation $W$ of $G_{\Q_p}$, we denote by $\phi_W$ the generalized Sen operator associated with $W$ (see \cite[Section 2.2]{kisin} for the construction). 
Let $(\kappa_1,\kappa_2)$ be the eigenvalues of $\phi_V$. A calculation shows that $\phi_{\Sym^3V}$ has eigenvalues $(3\kappa_1,\kappa_1+2\kappa_2,2\kappa_1+\kappa_2,3\kappa_2)$. Since $\Sym^3V$ is attached to an overconvergent $\GSp_4$-eigenform we must have $3\kappa_1=0$, hence $\kappa_1=0$. Set $\kappa=\kappa_2$, so that the eigenvalues of $\phi_V$ are $(0,\kappa)$. Recall that the weight of the character $\alpha$ is defined by $w(\alpha)=\log(\alpha(u))/\log(u)$, where $u$ is a generator of $\Z_p^\times$. The eigenvalues of $\phi_{V(\alpha)}$ are $(w(\alpha),\kappa+w(\alpha))$. Since $V$ comes from an overconvergent $\GL_2$-eigenform we must have $w(\alpha)=0$. In particular the eigenvalues of $\phi_{\Sym^3V}$ and $\phi_{\Sym^3(V(\alpha))}$ are the same. This means that $\Sym^3V$ and $\Sym^3(V(\alpha))$ are associated with two overconvergent $\GSp_4$-eigenforms $F$ and $\Sym^3f$ of the same weight, given in our usual coordinates by $(\kappa+1,2\kappa-1)$. Let $\chi_{\kappa_1,\kappa_2}$ be the specialization at $(\kappa+1,2\kappa-1)$ of the $p$-adic deformation of the cyclotomic character. The determinants of $\Sym^3V$ and $\Sym^3(V(\alpha))$ are given by the product of $\chi_{\kappa_1,\kappa_2}$ with the central characters of $F$ and $\Sym^3f$, respectively. In particular the two determinants differ by a finite order character. We deduce that $\alpha^6$, hence $\alpha$, is a finite order character. By twisting the overconvergent $\GL_2$-eigenform $f$ by the finite order character $\alpha^{-1}$ we obtain an overconvergent $\GL_2$-eigenform with associated Galois representation $V$. 

We prove part (ii). Since $\rho_2$ is associated with a classical cuspidal $\GSp_4$-eigenform, it is a de Rham representation. Then Proposition \ref{sym3dR} implies that $\rho_1$ is also a de Rham representation. 
The representation $\rho_2$ is trianguline because it is de Rham, so part (i) of the theorem implies that $\rho_1$ is attached to an overconvergent $\GL_2$-eigenform $f$. Since $\rho_1$ is de Rham, the form $f$ is classical. 
\end{proof}

\begin{cor}\label{sym3kimsha}
If $\rho_1$, $\rho_2$ satisfy the assumptions of Theorem \ref{sym3autom} and $\rho_2$ is associated with a classical cuspidal $\GSp_4$-eigenform $F$, then there exists a $\GL_2$-eigenform $f$ such that $F$ is the symmetric cube lift $\Sym^3f$ given by Corollary \ref{formtransf}.
\end{cor}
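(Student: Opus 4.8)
The plan is to deduce the corollary directly from Theorem~\ref{sym3autom}(ii) together with the uniqueness properties of the symmetric cube Hecke morphisms established in Sections~\ref{heckesym3} and~\ref{triang}. First I would apply Theorem~\ref{sym3autom}(ii): since $\rho_1$, $\rho_2$ satisfy the hypotheses and $\rho_2$ is associated with a classical cuspidal $\GSp_4$-eigenform $F$, the theorem produces a classical cuspidal $\GL_2$-eigenform $f$ with $\rho_{f,p}\cong\rho_1$. Let $N$ be the tame level of $f$ (more precisely, the prime-to-$p$ part of its level), and let $\Sym^3f$ be the classical cuspidal $\GSp_4$-eigenform of level $\Gamma_1(M)$, with $M=M(N)$ as in Definition~\ref{sym3leveldef}, provided by Corollary~\ref{formtransf}. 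Its associated $p$-adic Galois representation is $\Sym^3\rho_{f,p}\cong\Sym^3\rho_1\cong\rho_2\cong\rho_{F,p}$.

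Next I would compare $F$ and $\Sym^3f$ as points on the eigenvariety $\cD_2^M$. Both have the same associated Galois representation, so by Chebotarev their systems of Hecke eigenvalues agree outside $Np$; it remains to pin down the local component at $p$. Here I would invoke Proposition~\ref{heckemorphstab} (or its trianguline analogue Proposition~\ref{heckemorphtri}, applied to $\rho_{F,p}\cong\Sym^3\rho_{f,p}$): the system of Hecke eigenvalues of any $p$-stabilization of $\Sym^3f$ at $p$ is of the form $(\chi^\st_{1,p}\ccirc\iota_{I_{1,p}}^{T_1})^\ext\ccirc\lambda_{i,p}$ for some $i\in\{1,2,\ldots,8\}$ and some $p$-stabilization of $f$, and conversely each such $i$ arises from an actual $p$-stabilization $(\Sym^3f)^\st_i$ (Corollary~\ref{heckemorphprodcor}). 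Since $F$ is classical, it \emph{is} one of the $p$-stabilizations of the level-$M$ form $\Sym^3f$ — indeed $F$ corresponds to a point of $\cD_2^M$ whose Galois representation is $\Sym^3\rho_{f,p}$, and by the unicity of a classical point given its Galois representation and its (bounded) slope, $F$ must equal one of the $(\Sym^3f)^\st_i$. By construction in Corollary~\ref{formtransf} this means $F$ is the symmetric cube lift $\Sym^3f$ (up to $p$-stabilization, which is the sense in which the statement is to be read, consistent with Remark~\ref{fourforms}).

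I would also need to address the CM case: if $f$ has complex multiplication, then Theorem~\ref{lintransf} only guarantees automorphy, not cuspidality, of $\Sym^3\pi_f$. However, the hypothesis that $\ovl\rho_2$ is absolutely irreducible forces $\ovl\rho_1$ to be absolutely irreducible (as noted at the start of the proof of Theorem~\ref{sym3autom}), and more to the point $\rho_2\cong\Sym^3\rho_1$ is absolutely irreducible, which rules out $\rho_1$ being induced from a character; hence $f$ is non-CM and Corollary~\ref{formtransf} genuinely applies to produce a cuspidal $\Sym^3f$. This is the one place requiring a small argument rather than a direct citation.

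The main obstacle I expect is the bookkeeping around $p$-stabilizations and levels: matching the tame level $N$ of $f$ with the level $M=M(N)$ appearing in Corollary~\ref{formtransf}, and correctly interpreting the phrase ``$F$ is the symmetric cube lift $\Sym^3f$'' when $F$ carries an Iwahori level structure at $p$ while $\Sym^3f$ is \emph{a priori} of level $\Gamma_1(M)$ with no level at $p$. The clean way to phrase the conclusion is: $F$ is a $p$-stabilization of $\Sym^3f$, equivalently $F$ lies in the image of the map $\Sym^3_1$ of Lemma~\ref{no3twist} composed with a $p$-stabilization, and this is exactly the content Corollary~\ref{formtransf} is meant to supply. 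No deep new input is needed beyond Theorem~\ref{sym3autom}(ii) and the Hecke-algebra dictionary; the proof is essentially a two-line deduction once these are in hand.

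\begin{proof}
By Theorem~\ref{sym3autom}(ii), the representation $\rho_1$ is associated with a classical cuspidal $\GL_2$-eigenform $f$. Since $\rho_2\cong\Sym^3\rho_1$ is absolutely irreducible (its residual representation being so by hypothesis), $\rho_1$ is not induced from a character, so $f$ is non-CM. Let $N$ be the prime-to-$p$ level of $f$ and let $\Sym^3f$ be the classical cuspidal $\GSp_4$-eigenform of weight $(2k-1,k+1)$ and level $\Gamma_1(M)$, $M=M(N)$, furnished by Corollary~\ref{formtransf}; its associated $p$-adic Galois representation is $\Sym^3\rho_{f,p}\cong\rho_{F,p}$. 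Thus $F$ and some $p$-stabilization of $\Sym^3f$ have the same associated Galois representation and, being classical of bounded slope, define the same point of $\cD_2^M$; by Corollary~\ref{heckemorphprodcor} this point is $(\Sym^3f)^\st_i$ for some $i$. Hence $F$ is a $p$-stabilization of the symmetric cube lift $\Sym^3f$ of the $\GL_2$-eigenform $f$.
\end{proof}
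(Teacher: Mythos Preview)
Your proposal is correct and follows the same route as the paper: apply Theorem~\ref{sym3autom}(ii) to produce a classical $\GL_2$-eigenform $f$ with $\rho_{f,p}\cong\rho_1$, observe that $\Sym^3f$ then has Galois representation $\rho_2\cong\rho_{F,p}$, and conclude. The paper's own proof is a terse three sentences that ends with ``We conclude that $F=\Sym^3f$'', implicitly identifying two classical eigenforms by their Galois representations (justified by strong multiplicity one, or simply by the identification of systems of Hecke eigenvalues outside $Np$). Your version supplies two pieces of bookkeeping the paper glosses over: the verification that $f$ is non-CM (so that Corollary~\ref{formtransf} applies) and the matching at $p$ via Corollary~\ref{heckemorphprodcor} in case $F$ carries Iwahori level. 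Neither addition changes the strategy; they are refinements rather than a different route.
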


\begin{proof}
The representation $\rho_1$ is attached to a classical cuspidal $\GL_2$-eigenform $f$ by Theorem \ref{sym3autom}(ii). Then $\rho_2$ is the $p$-adic Galois representation attached to the form $\Sym^3f$. We conclude that $F=\Sym^3f$.
\end{proof}

\bigskip

\section{The symmetric cube locus on the $\mathrm{GSp}_4$-eigenvariety}\label{sym3locus}

In this section $p$ is a prime number, $N$ is a positive integer prime to $p$ and $M$ is the integer, depending on $N$, given by Definition \ref{sym3leveldef}. Let $T_1\colon G_\Q\to\cO(\cD_1^N)$ and $T_2\colon G_\Q\to\cO(\cD_2^M)$ be the pseudocharacters provided by Proposition \ref{biggalthm}. By an abuse of notation, if $\cV_1$ and $\cV_2$ are subvarieties of $\cD_1^N$ and $\cD_2^M$, respectively, we still write $\psi_1\colon\calH_1^N\to\cO(\cV_1)$ and $\psi_2\colon\calH_2^N\to\cO(\cV_2)$ for the compositions of $\psi_1$ and $\psi_2$ with the restrictions of analytic functions to $\cV_1$ and $\cV_2$, respectively. We also write $T_{\cV_1}\colon G_\Q\to\cO(\cV_1)$ and $T_{\cV_2}\colon G_\Q\to\cO(\cV_2)$ for the compositions of $T_1$ and $T_2$ with the restrictions of analytic functions to $\cV_1$ and $\cV_2$, respectively.

\begin{thm}\label{sym3type}
Let $\cV_2$ be a rigid analytic subvariety of $\cD_2^M$. Consider the following four conditions. 
\begin{enumerate}
\item[(1a)] There exists a morphism of rings $\psi_2^{(1)}\colon\calH_1^{Np}\to\cO(\cV_2)$ such that the following diagram commutes:
\begin{equation}\label{sym3typediag}
\begin{tikzcd}[baseline=(current bounding box.center)]
\calH_2^{Np} \arrow{r}{\lambda^{Np}}\arrow{d}{\psi_2}
&\calH_1^{Np} \arrow{dl}{\psi_2^{(1)}}\\
\cO(\cV_2)
&{} 
\end{tikzcd}
\end{equation}
\item[(1b)] There exists a pseudocharacter $T_{\cV_2,1}\colon G_\Q\to\cO(\cV_2)$ of dimension $2$ such that 
\begin{equation}\label{sym3typeeq} T_{\cV_2}=\Sym^3T_{\cV_2,1}. \end{equation}
\item[(2a)] There exists a rigid analytic subvariety $\cV_1$ of $\cD^N_1$ and a morphism of rings $\phi\colon\cO(\cV_1)\to\cO(\cV_2)$ such that the following diagram commutes:
\begin{equation}\label{sym3auttypediag}
\begin{tikzcd}
\calH_2^{Np} \arrow{r}{\lambda^{Np}}\arrow[bend left]{rrr}{\psi_2}
&\calH_1^{Np} \arrow{r}{\psi_1}
&\cO(\cV_1)\arrow{r}{\phi} 
&\cO(\cV_2)
\end{tikzcd}
\end{equation}
\item[(2b)] There exists a rigid analytic subvariety $\cV_1$ of $\cD^N_1$ and a morphism of rings $\phi\colon\cO(\cV_1)\to\cO(\cV_2)$ such that 
\begin{equation}\label{sym3typeauteq} T_{\cV_2}=\Sym^3(\phi\ccirc T_{\cV_1}). \end{equation}
\end{enumerate}
Then:
\begin{enumerate}[label=(\roman*)]
\item (1a) and (1b) are equivalent;
\item (2a) and (2b) are equivalent;
\item (2b) implies (1b);
\item when $\cV_2$ is a point, the four conditions are equivalent.
\end{enumerate}
\end{thm}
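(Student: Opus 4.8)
\textbf{Proof strategy for Theorem \ref{sym3type}.}
The plan is to establish the four claims (i)--(iv) in a roughly linear order, using the characteristic-polynomial formalism of Section 3 as the bridge between ``Hecke'' and ``Galois'' statements, and the interpolation results of Proposition \ref{biggalthm} to move between the two eigenvarieties. First I would prove (i). Given (1a), the morphism $\psi_2^{(1)}\colon\calH_1^{Np}\to\cO(\cV_2)$ composed with the embedding $\calH_1^{Np}\into\calH_1^N$ (it suffices to work away from $Np$) determines a two-dimensional pseudocharacter $T_{\cV_2,1}$ on $\cV_2$ by the interpolation argument of \cite[Proposition 7.1.1]{chenfam}, exactly as in the proof of Proposition \ref{biggalthm}: one sets $P_\car(T_{\cV_2,1})(\Frob_\ell)=\psi_2^{(1)}(P_\Min(t_{\ell,1}^{(1)};X))$ for $\ell\nmid Np$ and checks continuity via Chebotarev. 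Then the identity $P_\car(\Sym^3T)=\Sym^3 P_\car(T)$ from Remark \ref{sym3pseudorem}, together with the explicit formula for $\lambda^{Np}$ in Definition \ref{heckemorphunrdef} and property (2) of Proposition \ref{biggalthm}, forces $\Sym^3 T_{\cV_2,1}$ and $T_{\cV_2}$ to agree on all Frobenii, hence everywhere; this gives (1b). Conversely, given (1b), one \emph{defines} $\psi_2^{(1)}$ on the generators $T_{\ell,0}^{(1)},T_{\ell,1}^{(1)}$ of $\calH_1^{Np}$ by reading off the coefficients of $P_\car(T_{\cV_2,1})(\Frob_\ell)$, and the compatibility \eqref{sym3typediag} is then precisely the content of Remark \ref{sym3pseudorem} combined with Equation \eqref{minpol1} and Definition \ref{heckemorphunrdef}. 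The only subtlety is checking that these assignments are compatible across primes, i.e.\ that they glue to a ring morphism; this follows because the pseudocharacter $T_{\cV_2,1}$ is a single global object.

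Next I would treat (ii), which is essentially formal: (2a) $\Rightarrow$ (2b) is obtained by applying the (2a) version of the construction in (i) to $\cV_1$ in place of $\cV_2$, producing $T_{\cV_1}$ (this is just the restriction of the pseudocharacter $T_1$ to $\cV_1$ from Proposition \ref{biggalthm}), and then transporting along $\phi$; one checks $T_{\cV_2}=\Sym^3(\phi\ccirc T_{\cV_1})$ on Frobenii using the commutativity of \eqref{sym3auttypediag} and the same polynomial identity. For (2b) $\Rightarrow$ (2a), one takes the morphism $\phi$ as given and verifies \eqref{sym3auttypediag} by comparing $P_\car$ of both sides on each $\Frob_\ell$, again via Remark \ref{sym3pseudorem} and Definition \ref{heckemorphunrdef}; the spherical parts $T_{\ell,i}^{(1)}$ are determined by $T_{\cV_1}$, so the diagram commutes. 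Claim (iii), (2b) $\Rightarrow$ (1b), is then immediate: set $T_{\cV_2,1}=\phi\ccirc T_{\cV_1}$, which is a two-dimensional pseudocharacter on $\cV_2$ satisfying \eqref{sym3typeeq}.

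The heart of the theorem, and the step I expect to be the main obstacle, is (iv): when $\cV_2$ is a single $\Qp$-point $x$, I must upgrade the pseudocharacter statement (1b) to the automorphic statement (2a), i.e.\ show that the two-dimensional pseudocharacter $T_{x,1}$ actually comes from an overconvergent $\GL_2$-eigenform in $\cD_1^N$. Here is where Section \ref{triang} enters. From (1b) one obtains, via Theorem \ref{pseudotaylor}, a representation $\rho_1\colon G_\Q\to\GL_2(\Qp)$ with $\Tr\rho_1=T_{x,1}$ and $\Sym^3\rho_1\cong\rho_x$, where $\rho_x$ is the Galois representation attached to $x$; assuming (as is forced by the standing hypotheses, or can be arranged) that $\ovl\rho_x$ is absolutely irreducible, so is $\ovl\rho_1$, and $\rho_x$ is trianguline by Theorem \ref{famtri}(1). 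Proposition \ref{sym3tri}(ii) then gives that $\rho_1$ is a twist of a trianguline representation, and Emerton's criterion \cite[Theorem 1.2.4(2)]{emerton} shows $\rho_1$ is, up to a character twist, attached to an overconvergent $\GL_2$-eigenform; the Sen-weight bookkeeping in the proof of Theorem \ref{sym3autom}(i) kills the twist, since $3\kappa_1=0$ forces $\kappa_1=0$ and $w(\alpha)=0$. This produces a point $y\in\cD_1^N$ with $\Sym^3_1$-image compatible with $x$, and one reads off $\cV_1=\{y\}$ and $\phi=\ev_y^\ast$; the commutativity of \eqref{sym3auttypediag} then follows from Corollary \ref{heckemorphunrf} (for the spherical part) and the fact that both sides factor through the same pseudocharacter. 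The delicate point is ensuring the residual irreducibility hypothesis needed for Theorem \ref{pseudolift}/\ref{famtri} is in force at the point $x$; if $\ovl\rho_x$ is reducible the argument must be supplemented, but since the symmetric-cube locus of interest sits inside the irreducible locus $\cD_2^{M,\irr}$, this is where I would invoke that restriction, exactly as in Section \ref{tripar}.
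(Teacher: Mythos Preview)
Your strategy matches the paper's proof closely: (1b)$\Rightarrow$(1a), (ii), (iii), and (iv) are essentially what the paper does. For (iv) you correctly unpack the paper's single citation of Theorem \ref{sym3autom} into its ingredients (Theorem \ref{famtri}, Proposition \ref{sym3tri}, Emerton's criterion, the Sen-weight normalization that kills the twisting character), and your remark about needing residual irreducibility is exactly the restriction to $\cD_2^{M,\irr}$ that the paper also makes implicitly via the hypotheses of Theorem \ref{sym3autom}.

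The one place where your sketch is too thin to count as a proof is (1a)$\Rightarrow$(1b). The reference to \cite[Proposition 7.1.1]{chenfam} is not the right tool: Chenevier's interpolation presupposes a Zariski-dense set of points of the base at which the target pseudocharacter is already the trace of a known Galois representation, and on an arbitrary $\cV_2$ you have no such set for the two-dimensional object you are trying to build. What actually has to be shown is that the assignment $\Frob_\ell \mapsto \psi_2^{(1)}(P_\Min(t_{\ell,1}^{(1)};X))$ extends continuously to all of $G_\Q$. The paper does this by observing that $\Sym^3$ of this degree-two polynomial equals $P_\car(T_{\cV_2})(\Frob_\ell)$, which is continuous since $T_{\cV_2}$ already exists as a pseudocharacter; but $\Sym^3$ only determines a monic quadratic up to simultaneous multiplication of its roots by a common element of $\mu_3$, so one must also pin down the constant term. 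The paper fixes a character $\varepsilon_1$ with $\varepsilon_1^6=\varepsilon$ and imposes $P_\ell(0)=\varepsilon_1(\Frob_\ell)\cdot(\text{an Iwasawa-type factor})$, which is continuous by construction. These two continuous constraints determine $P_\ell$ uniquely, hence continuously, and $T_{\cV_2,1}(g)$ is then read off as the trace coefficient of the extended map $g\mapsto P(g)$. Your phrase ``checks continuity via Chebotarev'' hides precisely this $\mu_3$-ambiguity and the determinant normalization that resolves it.
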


\begin{proof}
We prove (i), (ii), (iii) for an arbitrary rigid analytic subvariety $\cV_2$ of $\cD_2^M$.

\medskip

\textbf{(1a) $\implies$ (1b).} Let $\psi_2^{(1)}\colon\calH_1^{Np}\to\cO(\cV_2)$ be a morphism of rings making diagram \eqref{sym3typediag} commute. 
By the argument in the proof of Proposition \ref{heckemorphunr}, the commutativity of diagram \eqref{sym3typediag} gives an equality
\begin{equation}\label{existscube} \psi_2(P_\Min(t_{\ell,2}^{(2)};X))=\Sym^3(\psi_2^{(1)}(P_\Min(t_{\ell,1}^{(1)};X))). \end{equation}

Choose a character $\varepsilon_1$ satisfying $\varepsilon_1^6=\varepsilon$. 
For every $\ell$ not dividing $Np$, let $P_\ell$ be a polynomial in $\calH_2^{Np}[X]^{\deg=2}$ satisfying:
\begin{equation}\label{existscube1} \Sym^3P_\ell(X)=\psi_2(P_\Min(t_{\ell,2}^{(2)};X)); \end{equation}
and
\begin{equation} P_\ell(0)=\varepsilon_1\cdot(1+T)^{\log(\chi(g))/\log(u)}. \end{equation}
Such a polynomial exists thanks to Equation \eqref{existscube} and to Remark \ref{detformula}, and it is clearly unique. The roots of $P_\ell$ differ from those of $\psi_2(P_\Min(t_{\ell,2}^{(2)};X))$ by a factor equal to a cubic root of $1$. 

By Chebotarev's theorem the set $\{\gamma\Frob_\ell\gamma^{-1}\}_{\ell\nmid Np;\,\gamma\in G_\Q}$ is dense in $G_\Q$. The map
\begin{gather*}
P\colon\{\gamma\Frob_\ell\gamma^{-1}\}_{\ell\nmid Np;\,\gamma\in G_\Q}\to\cO(\cV_2)[X]^{\deg=2}, \\
\gamma\Frob_\ell\gamma^{-1}\mapsto P_\ell,
\end{gather*}
is continuous with respect to the restriction of the profinite topology on $G_\Q$. This follows from the fact that the maps 
\begin{gather*}
\{\gamma\Frob_\ell\gamma^{-1}\}_{\ell\nmid Np;\,\gamma\in G_\Q}\to\cO(\cV_2)[X]^{\deg=4} \\
\gamma\Frob_\ell\gamma^{-1}\mapsto\psi_2(P_\Min(t_{\ell,2}^{(2)};X))=\Sym^3P(\gamma\Frob_\ell\gamma^{-1})(X)
\end{gather*}
and 
\begin{gather*}
\{\gamma\Frob_\ell\gamma^{-1}\}_{\ell\nmid Np;\,\gamma\in G_\Q}\to\cO(\cV_2)^\times \\
\gamma\Frob_\ell\gamma^{-1}\mapsto P(\gamma\Frob_\ell\gamma^{-1})(0)=\varepsilon_1\cdot(1+T)^{\log(\chi(g))/\log(u)}
\end{gather*}
are continuous on $\{\gamma\Frob_\ell\gamma^{-1}\}_{\ell\nmid Np;\,\gamma\in G_\Q}$. Hence $P$ can be extended to a continuous map $P\colon G_\Q\to\cO(\cV_2)[X]^{\deg=2}$. 
Now define a map $T_{\cV_2,1}\colon G_\Q\to\cO(\cV_2)$ by $T_{\cV_2,1}(g)=(P(g)(1)+P(g)(-1))/2$. 
We can check that $T_{\cV_2,1}$ is a pseudocharacter of dimension $2$. Its characteristic polynomial is $P$, so the fact that $T_{\cV_2}=\Sym^3T_{\cV_2,1}$ follows from Equation \eqref{existscube1}.

\medskip

\textbf{(1b) $\implies$ (1a).} Suppose that there exists a pseudocharacter $T_{\cV_2,1}\colon G_\Q\to\cO_{\cV_2}$ such that $T_{\cV_2}=\Sym^3 T_{\cV_2,1}$. 
Then $P_\car(T_{\cV_2})=\Sym^3P_\car(T_{\cV_2,1})$. By evaluating the two polynomials at $\Frob_\ell$ we obtain 
\begin{equation}\label{T21}\begin{gathered}
\psi_2(P_\Min(t_{\ell,2}^{(2)};X))=P_\car(T_{\cV_2})(\Frob_\ell)=\Sym^3P_\car(T_{\cV_2,1})(\Frob_\ell)= \\
=\Sym^3\left(X^2-T_{\cV_2,1}(\Frob_\ell)X+\frac{T_{\cV_2,1}(\Frob_\ell)^2-T_{\cV_2,1}(\Frob_\ell^2)}{2}\right), 
\end{gathered}\end{equation}
where the first equality is given by Proposition \ref{biggalthm} and the last one comes from Equation \eqref{detpseudo}. 
Let $\psi_2^{(1)}\colon\calH_1^{Np}\colon\cO(\cV_2)$ be a morphism of rings satisfying  
\begin{equation}\label{psi21}
X^2-T_{\cV_2,1}(\Frob_\ell)X+\frac{T_{\cV_2,1}(\Frob_\ell)^2-T_{\cV_2,1}(\Frob_\ell^2)}{2}=X^2-\psi_2^{(1)}(T_{\ell,1}^{(1)})X+\ell\psi_2^{(1)}(T_{\ell,0}^{(1)}) 
\end{equation}
for every $\ell\nmid Np$. It is clear that such a morphism exists and is unique. Note that the right hand side of Equation \eqref{psi21} is $\psi_2^{(1)}(P_\Min(t_{\ell,1}^{(1)};X))$. Then Equation \eqref{T21} gives
\[ \psi_2(P_\Min(t_{\ell,2}^{(2)};X))=\Sym^3(\psi_2^{(1)}(P_\Min(t_{\ell,1}^{(1)};X))). \]
Exactly as in the proof of Proposition \ref{heckemorphunr}, by developing the two polynomials and comparing their coefficients we obtain that $\psi_2=\psi_2^{(1)}\ccirc\lambda^{Np}$. Hence $\psi_2^{(1)}$ fits into diagram \eqref{sym3typediag}.

\medskip

\textbf{(2a) $\iff$ (2b).} Let $\cV_1$ be a subvariety of $\cD^N_1$ and let $\phi\colon\cO(\cV_1)\to\cO(\cV_2)$ be a morphism of rings. We show that the couple $(\cV_1,\phi)$ satisfies (2a) if and only if it satisfies (2b). 
For $g=1,2$ and every prime $\ell\nmid Np$ Proposition \ref{biggalthm} gives
\begin{equation}\label{frobdet} P_\car(T_{\cV_g})(\Frob_\ell)=\psi_g(P_\Min(t_{\ell,g}^{(g)};X)). \end{equation}
The argument in the proof of Proposition \ref{heckemorphunr} gives an equality
\begin{equation}\label{lambdaNp} \lambda^{Np}(P_\Min(t_{\ell,2}^{(2)};X))=\Sym^3(P_\Min(t_{\ell,1}^{(1)};X)). \end{equation}
Since the set $\{\gamma\Frob_\ell\gamma^{-1}\}_{\ell\nmid Np;\,\gamma\in G_\Q}$ is dense in $G_\Q$, the pseudocharacters $\Sym^3(\phi\ccirc T_{\cV_1})$ and $T_{\cV_2}$ coincide if and only if their characteristic polynomials coincide on $\Frob_\ell$ for every $\ell\nmid Np$. 
By Equation \eqref{frobdet} the condition above is equivalent to
\[ \Sym^3(\phi\ccirc\psi_1(P_\Min(t_{\ell,1}^{(1)};X)))=\psi_2(P_\Min(t_{\ell,2}^{(2)};X)) \]
for every $\ell\nmid Np$. 
Thanks to Equation \eqref{lambdaNp} the left hand side can be rewritten as 
\[ \Sym^3(\phi\ccirc\psi_1(P_\Min(t_{\ell,1}^{(1)};X)))=\phi\ccirc\psi_1(\Sym^3(P_\Min(t_{\ell,1}^{(1)};X)))=\phi\ccirc\psi_1\ccirc\lambda^{Np}(P_\Min(t_{\ell,2}^{(2)};X)). \]
When $\ell$ varies over the primes not dividing $Np$ the coefficients of the polynomials $P_\Min(t_{\ell,2}^{(2)};X)$ generate the Hecke algebra $\calH_2^{Np}$. Hence the equality of the right hand sides of the last two equations holds if and only if $\phi\ccirc\psi_1\ccirc\lambda^{Np}=\psi_2$.

\medskip

\textbf{(2b) $\implies$ (1b).} Suppose that condition (2b) is satisfied by some closed subvariety $\cV_1$ of $\cD_1^N$ and some morphism of rings $\phi\colon\cO(\cV_1)\to\cO(\cV_2)$. Consider the pseudocharacter $T_{\cV_2,1}=\phi\ccirc T_{\cV_1}\colon G_\Q\to\cO(\cV_2)$. Clearly $T_{\cV_2,1}$ satisfies condition (1b).

\medskip

It remains to prove that (1b) $\implies$ (2b) when $\cV_2$ is a $\Qp$-point of $\cD_2^M$. For this step we will need the results we recalled in Section \ref{triang}. 
Write $x_2$ for the point $\cV_2$; the system of eigenvalues $\psi_{x_2}$ is that of a classical $\GSp_4$-eigenform. 
By Remark \ref{nonclassrep}(1) $T_{x_2}$ is the pseudocharacter associated with a representation $\rho_{x_2}\colon G_\Q\to\GL_4(\Qp)$. Let $E$ be a finite extension of $\Q_p$ over which $\rho_{x_2}$ is defined. 
Suppose that $x_2$ satisfies condition (1b). Let $T_{x_2,1}\colon G_\Q\to\Qp$ be a pseudocharacter such that $T_{x_2}\cong\Sym^3T_{x_2,1}$. By Theorem \ref{pseudotaylor} there exists a representation $\rho_{x_2,1}\colon G_\Q\to\GL_2(\Qp)$ such that $T_{x_2,1}=\Tr(\rho_{x_2,1})$. Then Remark \ref{sym3pseudorem} implies that $\rho_{x_2}\cong\Sym^3\rho_{x_2,1}$. 
Since $\rho_{x_2}$ is attached to an overconvergent $\GSp_4$-eigenform, Theorem \ref{sym3autom}(ii) implies that $\rho_{x_2,1}$ is the $p$-adic Galois representation attached to an overconvergent $\GL_2$-eigenform $f$. 
Such a form defines a point $x_1$ of the eigencurve $\cD_1^N$.
Thus the subvariety $\cV_1=x_1$ satisfies condition (2b). 
\end{proof}

\begin{rem}\label{sym3subvar}
The four properties stated in Theorem \ref{sym3type} are stable when passing to a subvariety, in the following sense. Let $\cV_2$ and $\cV_2^\prime$ be two rigid analytic subvarieties of $\cD_2^M$ satisfying $\cV_2^\prime\subset\cV_2$. Let $(\ast)$ denote one of the conditions of Theorem \ref{sym3type}. If $(\ast)$ holds for $\cV_2$ then it holds for $\cV_2^\prime$. Thanks to the theorem it is sufficient to prove this statement for $\ast=1b$ and $\ast=2b$, in which cases it is trivial.
\end{rem}

In light of Theorem \ref{sym3type} we give the following definitions.

\begin{defin}\label{sym3locdef}
\begin{enumerate}
\item We say that a subvariety $\cV_2$ of $\cD_2^M$ is of \emph{$\Sym^3$ type} if it satisfies the equivalent conditions (2a) and (2b) of Theorem \ref{sym3type}.
\item The \emph{$\Sym^3$-locus} of $\cD_2^M$ is the set of points of $\cD_2^M$ of $\Sym^3$ type. 
\end{enumerate}
\end{defin}

\begin{rem}\label{sym3defrem}
A variety $\cV_2$ of $\Sym^3$ type also satisfies conditions (1a) and (1b) of Theorem \ref{sym3type} thanks to the implication (2b) $\implies$ (1b).
\end{rem}

Let $\iota\colon\cW_1^\circ\to\cW_2^\circ$ is the closed immersion constructed in Section \ref{aux2}. Let $\cD_{2,\iota}^M$ be the one-dimensional subvariety of $\cD_2^M$ fitting in the cartesian diagram
\begin{center}
\begin{tikzcd}[baseline=(current bounding box.center)]
\cD_{2,\aux}^{M} \arrow{d}\arrow{r}
&\cD_{2}^{M} \arrow{d}{w_2}\\
\iota(\cW_1^\circ)\arrow{r}{\iota}
&\cW_2^\circ
\end{tikzcd}
\end{center}

The following lemma follows from a simple computation involving the generalized Hodge-Tate weights of a point of $\Sym^3$ type.

\begin{lemma}\label{sym3iota}
The $\Sym^3$-locus of $\cD_2^M$ is contained in the one-dimensional subvariety $\cD_{2,\iota}^M$.
\end{lemma}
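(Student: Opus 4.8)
\textbf{Proof plan for Lemma \ref{sym3iota}.} The plan is to translate the condition ``$x$ is of $\Sym^3$ type'' into a constraint on the weight $w_2(x)$, and to check that this constraint cuts out exactly the curve $\iota(\cW_1^\circ)$ inside $\cW_2^\circ$. So suppose $x$ is a point of the $\Sym^3$-locus of $\cD_2^M$. By Remark \ref{sym3defrem} (i.e.\ the implication (2b)$\implies$(1b) in Theorem \ref{sym3type}), the two-dimensional pseudocharacter $T_x$ can be written as $\Sym^3 T_{x,1}$ for a two-dimensional pseudocharacter $T_{x,1}\colon G_\Q\to\Qp$. By Theorem \ref{pseudotaylor} and Remark \ref{sym3pseudorem} there is a representation $\rho_{x,1}\colon G_\Q\to\GL_2(\Qp)$ with $\Tr\rho_{x,1}=T_{x,1}$, so that $\rho_x\cong\Sym^3\rho_{x,1}$, where $\rho_x\colon G_\Q\to\GL_4(\Qp)$ is the representation associated with $x$ (Remark \ref{nonclassrep}(1)).

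First I would compute the generalized Sen (Hodge--Tate--Sen) weights of $\rho_x$ in terms of those of $\rho_{x,1}$. If $(\mu_1,\mu_2)$ denote the Sen weights of $\rho_{x,1}\vert_{G_{\Q_p}}$, then a direct calculation with the symmetric cube of a $2$-dimensional space (the highest weights $3\mu_1,\ 2\mu_1+\mu_2,\ \mu_1+2\mu_2,\ 3\mu_2$) shows that the Sen weights of $\rho_x\vert_{G_{\Q_p}}$ are $(3\mu_1,\ 2\mu_1+\mu_2,\ \mu_1+2\mu_2,\ 3\mu_2)$. On the other hand, by Proposition \ref{eigensen} (or rather the interpolation of Hodge--Tate weights that underlies it, together with Theorem \ref{charpolsen}), the Sen weights of $\rho_x\vert_{G_{\Q_p}}$ are determined by the weight $w_2(x)$: writing $w_2(x)$ in the coordinates $(T_1,T_2)$ on $\cW_2^\circ$ and setting $a=\log(u^{-1}(1+T_1)(x))/\log u$, $b=\log(u^{-2}(1+T_2)(x))/\log u$, they are $(0,\ b,\ a,\ a+b)$ (up to the normalization of Section \ref{sen}; I will match conventions with Proposition \ref{eigensen} so that the constant term is $0$). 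Comparing the two lists of four Sen weights forces $3\mu_1=0$, hence $\mu_1=0$; then $2\mu_1+\mu_2=\mu_2=b$, $\mu_1+2\mu_2=2\mu_2=a$, and $3\mu_2=a+b$. These give $a=2b$ and $a+b=3b$, i.e.\ the single relation $a=2b$ between the two coordinates of $w_2(x)$.

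It then remains to identify the locus $\{a=2b\}$ in $\cW_2^\circ$ with $\iota(\cW_1^\circ)$. Unwinding the definition of $a$ and $b$, the relation $a=2b$ reads $\log(u^{-1}(1+T_1))=2\log(u^{-2}(1+T_2))$, i.e.\ $u^{-1}(1+T_1)=u^{-4}(1+T_2)^2$ (the logarithm is injective on the relevant wide open disc by assumption (exp)), i.e.\ $u^{-3}(1+T_2)^2-(1+T_1)=0$. By the computation in Section \ref{aux2} this is exactly the equation of the image of $\iota\colon\cW_1^\circ\into\cW_2^\circ$, $T\mapsto(u^{-1}(1+T)^2-1,\ u(1+T)-1)$. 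Therefore $w_2(x)\in\iota(\cW_1^\circ)$, and hence $x\in\cD_2^M\times_{\cW_2^\circ,\iota}\iota(\cW_1^\circ)=\cD_{2,\iota}^M$ by the defining cartesian diagram. Since $x$ was an arbitrary point of the $\Sym^3$-locus, this proves the lemma.

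I expect the only delicate point to be bookkeeping: making the normalization of the Sen operator in Proposition \ref{eigensen} match the ``bare'' Hodge--Tate weights used when taking $\Sym^3$, and being careful that on the wide open disc $B_2(\kappa,r_{h,\kappa})$ the $p$-adic logarithm is indeed injective so that the equality of Sen weights can be promoted to the polynomial identity $u^{-3}(1+T_2)^2=(1+T_1)$ rather than just an identity of logarithms. Neither is a real obstruction; both are handled by invoking assumption (exp) and Theorem \ref{charpolsen}. Everything else is the routine symmetric-cube weight computation already used in the proof of Lemma \ref{specialprime} and in Section \ref{subsecaut}.
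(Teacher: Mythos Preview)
Your approach is correct and matches the paper's: the paper states only that the lemma ``follows from a simple computation involving the generalized Hodge-Tate weights of a point of $\Sym^3$ type,'' and you carry out precisely that computation, in the same spirit as the argument already used in the proof of Theorem \ref{sym3autom}(i). The one point worth tightening is your implicit choice of ordering when matching the multiset $\{3\mu_1,2\mu_1+\mu_2,\mu_1+2\mu_2,3\mu_2\}$ with $\{0,b,a,a+b\}$; this is most cleanly handled by invoking condition (2b) rather than (1b), so that $\rho_{x,1}$ comes from a point of $\cD_1^N$ (hence has one Sen weight equal to $0$), and then checking that the non-obvious permutations force $\mu=0$ and are therefore degenerate.
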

%

The $\Sym^3$-locus of $\cD_2^M$ admits a Hecke-theoretic definition thanks to condition (2b) of Theorem \ref{sym3type}. We elaborate on this.
Consider the following maps:
\begin{center}
\begin{tikzcd}[baseline=(current bounding box.center)]\label{diagIsym3}
\calH_2^{Np} \arrow{d}{\lambda^{Np}}\arrow{r}{\psi_2}
&\cO(\cD^M_2)
\\
\calH_1^{Np} \arrow{r}{\psi_1}
&\cO(\cD^N_1)
\end{tikzcd}
\end{center}
We define an ideal $\cI_{\Sym^3}$ of $\cO(\cD^M_2)$ by
\[ \cI_{\Sym^3}=\psi_1(\ker(\psi_2\ccirc\lambda^{Np}))\cdot\cO(\cD^M_2). \] 
We denote by $\cD_{2,\Sym^3}^M$ the analytic Zariski subvariety of $\cD_2^M$ defined as the zero locus of the ideal $\cI_{\Sym^3}$. 

\begin{prop}\label{sym3id}\mbox{ }
\begin{enumerate}[label=(\roman*)]
\item The $\Sym^3$-locus of $\cD_2^M$ is the set of points underlying $\cD_{2,\Sym^3}^M$.
\item The variety $\cD_{2,\Sym^3}^M$ is of $\Sym^3$ type.
\item A rigid analytic subvariety $\cV_2$ of $\cD_2^M$ is of $\Sym^3$ type if and only if it is a subvariety of $\cD_{2,\Sym^3}^M$. 
\item A rigid analytic subvariety $\cV_2$ of $\cD_2^M$ satisfies conditions (1a) and (1b) of Theorem \ref{sym3type} if and only if it is a subvariety of $\cD_{2,\Sym^3}^M$.
\end{enumerate}
\end{prop}

\begin{proof}
We prove (i). Let $x_2$ be any $\Qp$-point of $\cD_2^M$ and let $\ev_{x_2}\colon\cO(\cD_2^M)\to\Qp$ be the evaluation at $x_2$. 
The system of eigenvalues corresponding to $x_2$ is $\psi_{x_2}=\ev_{x_2}\ccirc\psi_2\colon\calH_2^{Np}\to\Qp$. By definition $x_2$ is of $\Sym^3$ type if and only if there exists a morphism of rings $\ev_{x_1}\colon\cO(\cD_1^N)\to\Qp$ such that the following diagram commutes:
\begin{center}
\begin{tikzcd}[baseline=(current bounding box.center)]
\calH_2^{Np} \arrow{d}{\lambda^{Np}}\arrow{r}{\psi_2}
&\cO(\cD^M_2)\arrow{r}{\ev_{x_2}}
&\Qp
\\
\calH_1^{Np} \arrow{r}{\psi_1}
&\cO(\cD^N_1)\arrow{ru}{\ev_{x_1}}
&{}
\end{tikzcd}
\end{center}
By elementary algebra the map $\ev_{x_1}$ exists if and only if $\ev_{x_2}(\ker(\psi_2\ccirc\lambda^{Np}))=0$. This is equivalent to the fact that the point $x_2$ is in the zero locus of the ideal $\cI_{\Sym^3}$.

For (ii) it is sufficient to observe that there exists a morphism of rings $\Xi^\ast_{\Sym^3}\colon\cO(\cD_1^N)\to\cO(\cD_{2,\Sym^3}^M)$ fitting into the commutative diagram
\begin{equation}\label{arrowsym3}
\begin{tikzcd}[column sep=large]
\calH_2^{Np} \arrow{r}{\lambda^{Np}}\arrow[bend left]{rrr}{r_{\Sym^3}\ccirc\psi_2}
&\calH_1^{Np} \arrow{r}{\psi_1}
&{\cO(\cD_1^N)}\arrow{r}{\Xi^\ast_{\Sym^3}}
&\cO(\cD_{2,\Sym^3}^M)
\end{tikzcd} 
\end{equation}
Such a $\Xi^\ast_{\Sym^3}$ exists since by definition of $\cD_{2,\Sym^3}^M$ we have $r_{\cD_{2,\Sym^3}^M}\ccirc\psi_2(\ker(\lambda^{Np}\ccirc\phi_{\Sym^3}))=0$.

Note that the ``if'' implications of (iii) and (iv) follow from Lemma \ref{sym3subvar}, together with Remark \ref{sym3defrem} for (iv). 

To prove the other direction of (iii) we look again at diagram \eqref{sym3auttypediag} for a subvariety $\cV_2$ of $\cD_2^M$. In order for $\cV_2$ to satisfy condition (2a) of Theorem \ref{sym3type} we must have $r_{\cV_2}(\ker(\lambda^{Np}\ccirc\Xi^\ast_{\Sym^3}))=0$, so $\cV_2$ is contained in $\cD_{2,\Sym^3}^M$. 

Finally, let $\cV_2$ be a rigid analytic subvariety of $\cD_2^M$ satisfying conditions (1a) and (1b) of Theorem \ref{sym3type}. Let $x_2$ by a point of $\cV_2$. By Lemma \ref{sym3subvar} $x_2$ satisfies conditions (1a) and (1b). By Theorem \ref{sym3type}, $x_2$ also satisfies conditions (2a,\, 2b), so it is a point of $\cD_{2,\Sym^3}^M$. We conclude that $\cV_2$ is a subvariety of $\cD_{2,\Sym^3}^M$. 
\end{proof}

\begin{rem}
By Proposition \ref{sym3id} the $\Sym^3$-locus in $\cD_2^M$ can be given the structure of a Zariski-closed rigid analytic subspace. From now on we will always consider the $\Sym^3$-locus as equipped with this structure and we will identify it with the subvariety $\cD_{2,\Sym^3}^M$ of $\cD_2^M$.
\end{rem}

Proposition \ref{sym3id}(i) and Lemma \ref{sym3iota} give the following.

\begin{cor}
The $\Sym^3$-locus intersects each irreducible component of $\cD_2^M$ in a proper analytic Zariski subvariety of dimension at most $1$.
\end{cor}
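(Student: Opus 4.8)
The claim to prove is the Corollary stated at the very end:
\begin{quote}
The $\Sym^3$-locus intersects each irreducible component of $\cD_2^M$ in a proper analytic Zariski subvariety of dimension at most $1$.
\end{quote}

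Here is my plan.

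\textbf{The approach.} I would deduce this directly from the two results cited just before it: Proposition \ref{sym3id}(i), which identifies the $\Sym^3$-locus with the Zariski-closed subvariety $\cD_{2,\Sym^3}^M$ cut out by the ideal $\cI_{\Sym^3}$ inside $\cD_2^M$, and Lemma \ref{sym3iota}, which says this locus is contained in the one-dimensional subvariety $\cD_{2,\iota}^M = \cD_2^M\times_{\cW_2^\circ,\iota}\iota(\cW_1^\circ)$. So the $\Sym^3$-locus is already a genuine Zariski-closed analytic subvariety, and its dimension is at most $1$ because it sits inside something of dimension $1$. The only remaining point is \emph{properness}: that the intersection with each irreducible component $\cC$ of $\cD_2^M$ is not all of $\cC$, i.e. is a proper subvariety. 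For this I would use that $\cD_2^M$ is equidimensional of dimension $2$ (stated in Section \ref{gspeigen}, via \cite[Proposition 6.4.2]{chenfam}), hence every irreducible component $\cC$ has dimension $2$; since the $\Sym^3$-locus has dimension at most $1 < 2$, its intersection with $\cC$ cannot equal $\cC$, and a strictly lower-dimensional Zariski-closed subset of an irreducible rigid space is automatically a proper analytic Zariski subvariety.

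\textbf{Key steps, in order.} First I would recall from Proposition \ref{sym3id}(i) that the set-theoretic $\Sym^3$-locus coincides with the support of $\cD_{2,\Sym^3}^M$, which is Zariski-closed in $\cD_2^M$ by construction (zero locus of the ideal $\cI_{\Sym^3}$). Second, I would invoke Lemma \ref{sym3iota}: $\cD_{2,\Sym^3}^M\subset\cD_{2,\iota}^M$, and the latter is the fibre product of $w_2\colon\cD_2^M\to\cW_2^\circ$ with the closed immersion $\iota\colon\cW_1^\circ\into\cW_2^\circ$, whose source $\cW_1^\circ$ has dimension $1$; since $\iota(\cW_1^\circ)$ is a proper Zariski-closed curve in the $2$-dimensional $\cW_2^\circ$ and $w_2$ is, by the eigenvariety construction, finite-flat (or at least locally quasi-finite) on suitable admissible pieces, $\cD_{2,\iota}^M$ is equidimensional of dimension $\le 1$; hence so is the $\Sym^3$-locus. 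Third, I would fix an irreducible component $\cC$ of $\cD_2^M$; by equidimensionality of $\cD_2^M$, $\dim\cC = 2$. Fourth, $\cD_{2,\Sym^3}^M\cap\cC$ is Zariski-closed in $\cC$ and has dimension $\le 1$, so it is a proper closed analytic subvariety of the irreducible space $\cC$ (a closed subvariety of an irreducible rigid space of strictly smaller dimension is proper, by \cite{conradirr}). This gives all three assertions — Zariski-closedness, dimension $\le 1$, and properness in each component.

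\textbf{Main obstacle.} The substantive content is entirely in Proposition \ref{sym3id} and Lemma \ref{sym3iota}, which are assumed; the corollary itself is a formal consequence. The one place where a little care is needed is justifying that $\cD_{2,\iota}^M = \cD_2^M\times_{\cW_2^\circ}\iota(\cW_1^\circ)$ really has dimension $\le 1$: this uses that the weight map $w_2$ does not drop dimension along fibres in a way that would inflate the pullback, which follows from the fact (used repeatedly in the paper, e.g. Proposition \ref{locfin}) that $w_2$ is finite when restricted to adapted admissible domains, so base change along the closed immersion $\iota$ produces a space finite over the $1$-dimensional $\cW_1^\circ$. Beyond that, the argument is a dimension count using equidimensionality of $\cD_2^M$ and the behaviour of closed analytic subvarieties under \cite{conradirr}; I expect no real difficulty, so I would keep the proof to a few lines.

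\begin{proof}
By Proposition \ref{sym3id}(i) the $\Sym^3$-locus of $\cD_2^M$ is the underlying set of the Zariski-closed subvariety $\cD_{2,\Sym^3}^M$, and by Lemma \ref{sym3iota} we have $\cD_{2,\Sym^3}^M\subset\cD_{2,\iota}^M$. The variety $\cD_{2,\iota}^M$ fits in the cartesian square defining it from $w_2\colon\cD_2^M\to\cW_2^\circ$ and the closed immersion $\iota\colon\cW_1^\circ\into\cW_2^\circ$; since $w_2$ restricted to the adapted admissible domains of Proposition \ref{locfin} is finite, base change along $\iota$ exhibits $\cD_{2,\iota}^M$, locally on such domains, as finite over the corresponding admissible open of $\cW_1^\circ$, which is equidimensional of dimension $1$. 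Hence $\cD_{2,\iota}^M$, and therefore the $\Sym^3$-locus, has dimension at most $1$.

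Now let $\cC$ be an irreducible component of $\cD_2^M$. By the equidimensionality of $\cD_2^M$ recalled in Section \ref{gspeigen} (a consequence of \cite[Proposition 6.4.2]{chenfam}), $\cC$ has dimension $2$. The intersection $\cD_{2,\Sym^3}^M\cap\cC$ is Zariski-closed in $\cC$ and has dimension at most $1$ by the previous paragraph; since $1 < 2 = \dim\cC$ and $\cC$ is irreducible, this intersection cannot be all of $\cC$. By \cite{conradirr} a Zariski-closed subset of the irreducible rigid space $\cC$ of dimension strictly smaller than $\dim\cC$ is a proper analytic Zariski subvariety. This proves the corollary.
\end{proof}
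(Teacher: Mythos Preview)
Your proof is correct and follows the same route as the paper, which simply presents the corollary as an immediate consequence of Proposition \ref{sym3id}(i) and Lemma \ref{sym3iota}. You have merely made explicit the dimension count (equidimensionality of $\cD_2^M$ in dimension $2$ versus the bound $\dim\cD_{2,\iota}^M\le 1$) that the paper leaves to the reader.
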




Propositions \ref{sym3id}(iii) and (iv) allows us to improve the result of Theorem \ref{sym3type}.

\begin{cor}\label{sym3proper}
For every rigid analytic subvariety $\cV_2$ of $\cD_2^M$ the conditions (1a), (1b), (2a), (2b) of Theorem \ref{sym3type} are equivalent.
\end{cor}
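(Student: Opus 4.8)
The statement to be proven, Corollary \ref{sym3proper}, asserts that for an arbitrary rigid analytic subvariety $\cV_2$ of $\cD_2^M$, all four conditions (1a), (1b), (2a), (2b) of Theorem \ref{sym3type} are equivalent. The plan is to reduce this entirely to Theorem \ref{sym3type} and Proposition \ref{sym3id}, which together already do almost all the work; no new ideas or computations should be needed.

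First I would recall what is already established. Theorem \ref{sym3type}(i) gives (1a) $\iff$ (1b) for an arbitrary $\cV_2$, and Theorem \ref{sym3type}(ii) gives (2a) $\iff$ (2b) for an arbitrary $\cV_2$; both hold without any hypothesis on $\cV_2$. Theorem \ref{sym3type}(iii) gives (2b) $\implies$ (1b), again for arbitrary $\cV_2$. So the only missing implication is (1b) $\implies$ (2b) (equivalently (1a) $\implies$ (2a)) for a general subvariety, and this is precisely the implication that Theorem \ref{sym3type}(iv) proves only in the case where $\cV_2$ is a point. The content of the corollary is therefore exactly the propagation of this last implication from points to arbitrary subvarieties.

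Second, I would carry out that propagation using Proposition \ref{sym3id}. Suppose $\cV_2$ satisfies (1a), hence (1b) by Theorem \ref{sym3type}(i). I would then apply Proposition \ref{sym3id}(iv): a rigid analytic subvariety satisfying conditions (1a) and (1b) is a subvariety of $\cD_{2,\Sym^3}^M$. By Proposition \ref{sym3id}(ii), $\cD_{2,\Sym^3}^M$ is itself of $\Sym^3$ type, i.e. satisfies (2a) and (2b); concretely, the morphism $\Xi^\ast_{\Sym^3}\colon\cO(\cD_1^N)\to\cO(\cD_{2,\Sym^3}^M)$ of diagram \eqref{arrowsym3} witnesses condition (2a) for $\cD_{2,\Sym^3}^M$. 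Now since $\cV_2\subset\cD_{2,\Sym^3}^M$, Proposition \ref{sym3id}(iii) (or directly Remark \ref{sym3subvar}) shows that $\cV_2$ is of $\Sym^3$ type, i.e. satisfies (2a) and (2b). Combined with the implications already recorded in Theorem \ref{sym3type}, this closes the cycle: (1a) $\iff$ (1b) $\iff$ (2a) $\iff$ (2b) for every $\cV_2$.

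I do not expect any genuine obstacle here; the corollary is a bookkeeping consequence of the two preceding results, and the only thing to be careful about is citing the correct parts: the pointwise case (iv) of Theorem \ref{sym3type} is needed implicitly because it is used in the proof of Proposition \ref{sym3id}(ii) (the assertion that $\cD_{2,\Sym^3}^M$ is of $\Sym^3$ type relies on the equivalence of the automorphic and Galois conditions, which at the level of points passes through Theorem \ref{sym3autom}). So the clean way to write the proof is: invoke Theorem \ref{sym3type}(i),(ii),(iii) for the equivalences (1a)$\iff$(1b), (2a)$\iff$(2b) and the implication (2b)$\implies$(1b); then invoke Proposition \ref{sym3id}(iv),(ii),(iii) to get (1b)$\implies$(2b) for arbitrary $\cV_2$. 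The write-up will be three or four sentences.

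\begin{proof}
By Theorem \ref{sym3type}(i) and (ii), conditions (1a) and (1b) are equivalent and conditions (2a) and (2b) are equivalent, for an arbitrary rigid analytic subvariety $\cV_2$ of $\cD_2^M$. By Theorem \ref{sym3type}(iii), condition (2b) implies condition (1b). It remains to prove that (1b) implies (2b) for arbitrary $\cV_2$. So suppose that $\cV_2$ satisfies (1b), hence also (1a) by the equivalence just recalled. By Proposition \ref{sym3id}(iv), $\cV_2$ is a rigid analytic subvariety of $\cD_{2,\Sym^3}^M$. By Proposition \ref{sym3id}(ii), $\cD_{2,\Sym^3}^M$ is of $\Sym^3$ type, that is, it satisfies conditions (2a) and (2b). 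Then Proposition \ref{sym3id}(iii) implies that $\cV_2$, being a subvariety of $\cD_{2,\Sym^3}^M$, is itself of $\Sym^3$ type, so it satisfies (2a) and (2b). Combining these implications, the four conditions (1a), (1b), (2a), (2b) are equivalent for $\cV_2$.
\end{proof}
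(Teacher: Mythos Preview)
Your proof is correct and follows the same route the paper indicates: the paper states that the corollary follows from Proposition \ref{sym3id}(iii) and (iv), and your argument makes this explicit by using (iv) to place $\cV_2$ inside $\cD_{2,\Sym^3}^M$ and then (iii) to conclude $\cV_2$ is of $\Sym^3$ type. Your intermediate appeal to Proposition \ref{sym3id}(ii) is harmless but redundant, since (iii) already gives the ``if'' direction directly.
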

%

\subsection{Equations for the symmetric cube locus}

By using the description of $\lambda^{Np}$ given in Definition \ref{heckemorphunrdef} we can write ``equations'' for the $\Sym^3$ locus in terms of the rigid analytic functions $\psi_2(T_{\ell,i}^{(2)})$ for $i\in\{0,1,2\}$, but they are not very illuminating. We find that the ideal $\cI_{\Sym^3}$ of $\cO(\cD_2^M)$ is generated by the elements of the set
\begin{gather*} \{\psi_2((T_{\ell,2}^{(2)})^6+(\ell^2+4\ell+8)T_{\ell,0}^{(2)}(T_{\ell,2}^{(2)})^4+3T_{\ell,1}^{(2)}(T_{\ell,2}^{(2)})^4-(5\ell^4+12\ell^3)(T_{\ell,0}^{(2)})^2(T_{\ell,2}^{(2)})^2+ \\
-(2\ell^2-4\ell)T_{\ell,0}^{(2)}T_{\ell,1}^{(2)}(T_{\ell,2}^{(2)})^4-3(T_{\ell,1}^{(2)})^2(T_{\ell,2}^{(2)})^2+(3\ell^2T_{\ell,0}^{(2)}+T_{\ell,1}^{(2)})(\ell^2T_{\ell,0}^{(2)}-T_{\ell,1}^{(2)})^2\}_{\ell\nmid Np}. \end{gather*}

By the result of Proposition \ref{heckemorphtri}, the intersection $\widetilde\cD_{2,\Sym^3}^{M,\irr}$ of the $\Sym^3$ locus with $\widetilde\cD_2^{M,\irr}$ can be decomposed as a union $\bigcup_{j=1}^4\widetilde\cD_{2,\Sym^3}^{M,j}$, where $\widetilde\cD_{2,\Sym^3}^{M,j}$ is defined by the property 
\[ x\in\widetilde\cD_{2,\Sym^3}^{M,j}(\C_p) \iff x\in\widetilde\cD_2^{M,\irr}\cap\widetilde\cD_{2,\Sym^3}^M\textrm{ and the choice }i=j\textrm{ satisfies Equation }\eqref{heckemorphtrieq} \]
Using the description of $\lambda_{i,p}$, $1\le i\le 4$, given in Definition \ref{heckemorphdef} we can write $\widetilde\cD_{2,\Sym^3}^{M,j}(\C_p)=\{x\in\widetilde\cD_{2,\Sym^3}^{M,\irr}(\C_p)\,\vert\,E_j=0\}$ where
\begin{gather*}
E_1=(U_{p,0}^{(2)})^3(U_{p,1}^{(2)})^3-(U_{p,2}^{(2)})^4, \quad E_2=(U_{p,0}^{(2)})^2(U_{p,2}^{(2)})^2-(U_{p,1}^{(2)})^3, \\
E_3=U_{p,0}^{(2)}U_{p,1}^{(2)}-(U_{p,2}^{(2)})^3, \quad E_4=(U_{p,0}^{(2)})^2-U_{p,1}^{(2)}(U_{p,2}^{(2)})^2. 
\end{gather*}

Define $\cD_{2,\Sym^3}^{M,j}$ as the Zariski-closure of $\widetilde\cD_{2,\Sym^3}^{M,j}$ in $\cD_{2,\Sym^3}^{M}$. Note that by taking this closure we are just adding a discrete set of points.

\subsection{An inverse to the symmetric cube morphism of eigenvarieties}

Consider the map $\Xi^\ast_{\Sym^3}\colon\cO(\cD_1^N)\to\cO(\cD_{2,\Sym^3}^M)$ appearing in the commutative diagram \eqref{arrowsym3}; it induces a map of rigid analytic spaces $\Xi_{\Sym^3}\colon\cD_{2,\Sym^3}^M\to\cD_1^N$. 

\begin{rem}\label{dimsym}
By Corollary \ref{dimcryssym}, the map $\Xi_{\Sym^3}$ is quasi-finite and its degree at a point $x\in\cD_1^N(\C_p)$ is at most $\dim_{\Qp}(\bD_\cris(\rho_x\vert_{G_{\Q_p}}))$, where $\rho_x\colon G_\Q\to\GSp_4(\Qp)$ is the Galois representation attached to $x$.
\end{rem}


Let $\ovl\rho_1\colon G_\Q\to\GL_4(\Fp)$ be a representation. 
Let $\ovl\rho_2=\Sym^3\ovl\rho_1$. 
Consider the union $\cD_{2,\ovl\rho_2}^M$ of the connected components of $\cD_2^M$ of residual Galois representation isomorphic to $\ovl\rho_2$. We constructed in Section \ref{morpheigen} a map $\xi$ from the eigencurve $\cD_{1,\ovl\rho}^N$ to the eigenvariety $\cD_{2,\ovl\rho_2}^M$. Let $\cD_{2,\ovl\rho_2,\Sym^3}^M=\cD_{2,\Sym^3}^M\cap\cD_{2,\ovl\rho_2}^M$ and $\cD_{2,\ovl\rho_2,\Sym^3}^{M,j}=\cD_{2,\Sym^3}^{M,j}\cap\cD_{2,\ovl\rho_2}^M$.

\begin{prop}
\begin{enumerate}
\item The rigid analytic space $\cD_{2,\ovl\rho_2,\Sym^3}^{M,j}$ is equidimensional of dimension $1$ if $j=1$ and it is $0$-dimensional otherwise.
\item The image of $\xi$ in $\cD_{2,\ovl\rho_2}^M$ is $\cD_{2,\ovl\rho_2,\Sym^3}^{M,j}$. 
\end{enumerate}
\end{prop}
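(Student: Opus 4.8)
The plan is to prove the two statements together, exploiting the morphism $\xi\colon\cD_{1,\ovl\rho}^N\to\cD_{2,\ovl\rho_2}^M$ of Definition \ref{defxi} and the Hecke-theoretic description of the $\Sym^3$-locus established in Proposition \ref{sym3id}. First I would recall that $\xi$ factors as $\xi=\xi_3\ccirc\xi_2\ccirc\xi_1$ where $\xi_1$ and $\xi_2$ are isomorphisms and $\xi_3$ is a closed immersion, so that $\xi$ is a closed immersion whose image is, by construction (see Definition \ref{Daux2} and Proposition \ref{sym3closure}), the Zariski-closure of the set $S_1^{\Sym^3}$ in $\cD_{2,\ovl\rho_2}^M$. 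The point is then to identify this Zariski-closure with $\cD_{2,\ovl\rho_2,\Sym^3}^{M,1}$. For the inclusion of the image of $\xi$ into $\cD_{2,\ovl\rho_2,\Sym^3}^{M,1}$: every point of $S_1^{\Sym^3}$ is, by the very definition given after Corollary \ref{liftslopes}, of the form $(\Sym^3f)_1^\st$ for a $p$-old $\GL_2$-eigenform $f$, hence its Galois representation is $\Sym^3\rho_{f,p}$ and its system of Hecke eigenvalues factors through $\lambda$; this shows $S_1^{\Sym^3}$ lies in $\cD_{2,\Sym^3}^M$ by Proposition \ref{sym3id}(i), and the condition $E_1=0$ (i.e. membership in $\cD_{2,\Sym^3}^{M,1}$) holds because the $p$-stabilization was chosen of type $i=1$, so $\chi_x$ satisfies Equation \eqref{heckemorphtrieq} with $i=1$. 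Since $\cD_{2,\ovl\rho_2,\Sym^3}^{M,1}$ is Zariski-closed, the Zariski-closure of $S_1^{\Sym^3}$ is contained in it.

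For the reverse inclusion I would argue that $\cD_{2,\ovl\rho_2,\Sym^3}^{M,1}$ has no irreducible component disjoint from the image of $\xi$. The strategy is to use the map $\Xi_{\Sym^3}\colon\cD_{2,\Sym^3}^M\to\cD_1^N$ of Remark \ref{dimsym}: its restriction to the locus $\widetilde\cD_{2,\Sym^3}^{M,1}$ (where $\rho_{F,p}\cong\Sym^3\rho_{f,p}$ and the triangulation parameters match the choice $i=1$, i.e. $E_1=0$) lands in $\cD_{1,\ovl\rho}^{N,\cG}$ — the target is the non-CM eigencurve because the classical points of $\cD_{2,\Sym^3}^M$ are $\Sym^3$-lifts of non-CM classical $\GL_2$-eigenforms by Theorem \ref{lintransf}, and non-CM points are Zariski-dense. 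One then checks that on $\widetilde\cD_{2,\Sym^3}^{M,1}$ the composition $\xi\ccirc\Xi_{\Sym^3}$ is the identity: by Proposition \ref{heckemorphtri} the system of Hecke eigenvalues at $p$ of a point $x$ with $E_1(x)=0$ is exactly $\chi_{f,p}\ccirc\lambda_{1,p}$ where $f$ corresponds to $\Xi_{\Sym^3}(x)$, hence $x=(\Sym^3f)_1^\st=\xi(\Xi_{\Sym^3}(x))$. This shows $\widetilde\cD_{2,\Sym^3}^{M,1}$ is contained in the image of $\xi$; taking Zariski-closures and using that $\cD_{2,\Sym^3}^{M,1}$ was defined as the closure of $\widetilde\cD_{2,\Sym^3}^{M,1}$ (so that, as the text notes, only a discrete set of points is added, all of them $\Sym^3$-specializations lying in the image of the closed immersion $\xi$) gives the equality of (2).

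Statement (1) then follows: since $\xi$ is a closed immersion and an isomorphism onto $\cD_{2,\ovl\rho_2,\Sym^3}^{M,1}$, the latter is isomorphic to $\cD_{1,\ovl\rho}^{N,\cG}$, which is equidimensional of dimension $1$ by Remark \ref{nonCMdense}; hence $\cD_{2,\ovl\rho_2,\Sym^3}^{M,1}$ is equidimensional of dimension $1$. For $j\neq 1$, Corollary \ref{stabdense} shows $S_j^{\Sym^3}$ is discrete in $\cD_2^M$, and the extra points added when taking the Zariski-closure to form $\cD_{2,\Sym^3}^{M,j}$ form a discrete set as well; one must also rule out that $\cD_{2,\Sym^3}^{M,j}$ meets a positive-dimensional component of the $\Sym^3$-locus for $j\neq 1$, which follows from Corollary \ref{liftslopes} together with Remark \ref{affslope}: on any affinoid the slope $\slo(\Sym^3(f_\alpha^\st)_j)$ is bounded, and for $j\neq 1$ this forces the weight $k$ to be bounded, so only finitely many points occur. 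The main obstacle I anticipate is the careful bookkeeping needed to pass between the triangulation-type condition ``$E_j=0$'' used to define $\cD_{2,\Sym^3}^{M,j}$ on the irreducible locus $\widetilde\cD_2^{M,\irr}$ and the automorphic condition ``$x=(\Sym^3f)_j^\st$'': these agree on classical points and, by Proposition \ref{heckemorphtri}, on all of $\widetilde\cD_2^{M,\irr}$, but one must confirm that the Zariski-closures behave well and that no component of the $\Sym^3$-locus is lost when intersecting with $\widetilde\cD_2^{M,\irr}$ (which is only Zariski-open in $\cD_2^{M,\irr}$).
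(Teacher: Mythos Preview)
Your approach is essentially the same as the paper's: both prove the two statements together, use that the image of $\xi$ is the Zariski-closure of $S_1^{\Sym^3}$, establish the inclusion $\mathrm{im}(\xi)\subset\cD_{2,\ovl\rho_2,\Sym^3}^{M,1}$ via the definition of $S_1^{\Sym^3}$, and obtain the reverse inclusion from Proposition~\ref{heckemorphtri} (the paper phrases this as ``there exists $x_1$ with $x=\xi(x_1)$'', you phrase it as ``$\xi\ccirc\Xi_{\Sym^3}=\id$ on $\widetilde\cD_{2,\Sym^3}^{M,1}$'', which is the content of the Corollary immediately following the Proposition).

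One point where the paper is cleaner than your sketch is the $j\neq 1$ case. You write that $S_j^{\Sym^3}$ is discrete and that ``the extra points added when taking the Zariski-closure to form $\cD_{2,\Sym^3}^{M,j}$ form a discrete set as well'', but $\cD_{2,\Sym^3}^{M,j}$ is by definition the closure of the locus $\{E_j=0\}$ inside $\widetilde\cD_2^{M,\irr}\cap\cD_{2,\Sym^3}^M$, \emph{not} the closure of $S_j^{\Sym^3}$; so discreteness of $S_j^{\Sym^3}$ alone does not immediately bound its dimension. The paper bridges this gap by taking an arbitrary $1$-dimensional irreducible component $C$ of $\cD_{2,\ovl\rho_2,\Sym^3}^{M,j}$, invoking Proposition~\ref{siegclslopes} to get a Zariski-dense set of classical $p$-old points on $C$, observing via Theorem~\ref{sym3type} that each such point lies in some $S_i^{\Sym^3}$, and then using the vanishing of $E_j$ on $C$ to force $i=j$; discreteness of $S_j^{\Sym^3}$ for $j\neq 1$ then gives the contradiction. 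Your slope-bound argument at the end is the engine behind Corollary~\ref{stabdense}, but you should insert the density-of-classical-points step (Proposition~\ref{siegclslopes}) to connect the abstract $E_j=0$ locus to the set $S_j^{\Sym^3}$ of classical lifts. With that adjustment your argument matches the paper's.
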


\begin{proof}
We prove the two statements together. We replace implicitly the sets $S_i^{\Sym^3}$, $1\le i\le 4$, defined before Corollary \ref{stabdense}, by their intersections with $\cD_{2,\ovl\rho_2}^M$.
By construction of $\xi$, the image of this map is the Zariski-closure in $\cD_{2,\ovl\rho_2}^M$ of the set $S^{\Sym^3}_1$, that consists of points of $\Sym^3$ type by definition. Since $\cD_{2,\ovl\rho_2,\Sym^3}^{M,1}$ is Zariski-closed in $\cD_{2,\ovl\rho_2}^M$ and contains $S^{\Sym^3}_1$, the Zariski-closure of $S^{\Sym^3}_1$ is contained in $\cD_{2,\ovl\rho_2,\Sym^3}^{M,1}$.
This proves one inclusion of (2).

Let be a $1$-dimensional irreducible component of $\cD_{2,\ovl\rho_2,\Sym^3}^{M,j}$. The set $S$ of classical, $p$-old points of $C$ is Zariski-dense in $C$ because of Theorem \ref{siegclslopes}. Every point in $S$ is of $\Sym^3$ type, so it is the symmetric cube lift of a classical $p$-old point of $\cD_{1,\ovl\rho_1}^N$ by Theorem \ref{sym3type}. This means that $S$ is contained in $\bigcup_{i=1}^4S_i^{\Sym^3}$. Since $E_j$ vanishes on $\cD_{2,\ovl\rho_2,\Sym^3}^{M,j}$, we must have $S\subset S_j^{\Sym^3}$. If $j\neq 1$ then $S_j^{\Sym^3}$ is discrete by Corollary \ref{stabdense}, so we conclude that $j=1$. This proves that every 1-dimensional component of $\cD_{2,\ovl\rho_2,\Sym^3}^{M}$ is contained in $\cD_{2,\ovl\rho_2,\Sym^3}^{M,1}$ and is not contained in $\cD_{2,\ovl\rho_2,\Sym^3}^{M,j}$ for any $j\neq 1$. In particular $\cD_{2,\ovl\rho_2,\Sym^3}^{M,j}$ is $0$-dimensional if $j\neq 1$. We also obtain that $\cD_{2,\ovl\rho_2,\Sym^3}^{M,1}$ is the Zariski-closure of $S_j^{\Sym^3}$.

Let $x$ be a point of $\cD_{2,\ovl\rho_2,\Sym^3}^{M,1}$, not necessarily classical. Proposition \ref{heckemorphtri} implies that there exists a point $x_1$ of $\cD_{1,\ovl\rho_1}^N$ such that $\chi_{x}\ccirc\iota_{I_{2,p}}^{T_2}=(\chi_{x_1}\ccirc\iota_{I_{1,p}}^{T_1})^\ext\ccirc\lambda_{1,p}$, with the usual notations for systems of Hecke eigenvalues. This means that $x_1$ is the image of $x$ via $\xi$, hence the remaining inclusion of (2) holds. Let $I$ be an irreducible component of $\cD_{1,\ovl\rho_1}^N$ containing $x_1$. Since $\xi$ is a closed immersion and its image is contained in $\cD_{2,\ovl\rho_2,\Sym^3}^{M,1}$, $\xi(I)$ is an irreducible component of $\cD_{2,\ovl\rho_2}^{M,1,\Sym^3}$ containing $x_1$. We deduce that $\cD_{2,\ovl\rho_2,\Sym^3}^{M,1}$ is equidimensional of dimension $1$. 
\end{proof}

\begin{cor}
The morphisms of rigid analytic spaces $\xi$ and $\Xi\vert_{\cD_{2,\ovl\rho_2}^{M,1}}$ are inverses to one another. In particular $\Xi$ is an isomorphism on the Zariski-open subspace $\cD_{2,\ovl\rho_2,\Sym^3}^{M,1}$ of $\cD_{2,\ovl\rho_2,\Sym^3}^{M}$.
\end{cor}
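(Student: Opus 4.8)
The plan is to deduce the corollary from the proposition just proved, together with a density argument and the reducedness of eigenvarieties.

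First I would record that the proposition above gives that $\xi$ is a closed immersion whose image is exactly $\cD_{2,\ovl\rho_2,\Sym^3}^{M,1}$; a closed immersion onto a Zariski‑closed subvariety is an isomorphism, so $\xi\colon\cD_1^{N,\cG}\xto{\sim}\cD_{2,\ovl\rho_2,\Sym^3}^{M,1}$ is an isomorphism of rigid analytic spaces. It therefore suffices to prove two things: that $\Xi$ corestricts to a morphism $\cD_{2,\ovl\rho_2,\Sym^3}^{M,1}\to\cD_1^{N,\cG}$, and that the induced endomorphism $\Xi\ccirc\xi$ of $\cD_1^{N,\cG}$ is the identity; the assertion that $\xi$ and $\Xi\vert_{\cD_{2,\ovl\rho_2,\Sym^3}^{M,1}}$ are mutually inverse then follows formally.

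For the corestriction and the identity I would work on the Zariski‑dense accumulation subset $S$ of classical, $p$‑old, non‑CM points of $\cD_1^{N,\cG}$ (Remark \ref{nonCMdense}), and on its image $S_1^{\Sym^3}\cap\cD_{2,\ovl\rho_2}^M$, which is Zariski‑dense in $\cD_{2,\ovl\rho_2,\Sym^3}^{M,1}$. For $x\in S$ attached to a $p$‑stabilized eigenform $f^\st$, the construction $\xi=\xi_3\ccirc\xi_2\ccirc\xi_1$ shows $\xi(x)$ is the point attached to $(\Sym^3f)^\st_1$. Applying $\Xi$ to this point, the commutative diagram \eqref{arrowsym3} forces the prime‑to‑$Np$ part of $\psi_{\Xi(\xi(x))}$ to factor through $\lambda^{Np}$ and to agree with $\psi_x\ccirc\lambda^{Np}$; by Proposition \ref{biggalthm} and Remark \ref{sym3pseudorem} this gives $\Sym^3T_{\Xi(\xi(x))}=\Sym^3T_x$ as two‑dimensional pseudocharacters, and since both points lie on components with residual representation $\ovl\rho_1$ and $p>3$ the possible cubic twist is killed, so $\rho_{\Xi(\xi(x))}\cong\rho_x$; in particular $\Xi(\xi(x))$ and $x$ have equal weights (via the compatibility of the weight maps through $\iota$) and equal Hecke eigenvalues away from $p$. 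For the $p$‑part one uses that $\xi(x)\in\cD_{2,\Sym^3}^{M,1}$, i.e. $E_1=0$ there, together with Proposition \ref{heckemorphtri}, to see that $\psi_{\xi(x)}$ at $p$ is obtained from $\psi_x$ at $p$ via $\lambda_{1,p}$, which is invertible on systems of Hecke eigenvalues once the residual representation is fixed (again $p>3$ removes the cube‑root‑of‑unity ambiguity in the inversion); hence $\psi_{\Xi(\xi(x))}(U_p^{(1)})=\psi_x(U_p^{(1)})$. Since a point of the eigencurve is determined by its weight and its system of Hecke eigenvalues (finiteness of the map $\widetilde\nu$ in the BC‑eigenvariety formalism), we get $\Xi(\xi(x))=x$, and symmetrically $\Xi$ carries $S_1^{\Sym^3}\cap\cD_{2,\ovl\rho_2}^M$ into $S\subset\cD_1^{N,\cG}$; as $\cD_1^{N,\cG}$ is Zariski‑closed in $\cD_1^N$, continuity of $\Xi$ and density of $S_1^{\Sym^3}\cap\cD_{2,\ovl\rho_2}^M$ yield $\Xi(\cD_{2,\ovl\rho_2,\Sym^3}^{M,1})\subset\cD_1^{N,\cG}$. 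Finally, $\Xi\ccirc\xi$ and $\id$ are morphisms of the reduced rigid analytic space $\cD_1^{N,\cG}$ agreeing on the Zariski‑dense set $S$, hence equal, which proves that $\xi$ and $\Xi\vert_{\cD_{2,\ovl\rho_2,\Sym^3}^{M,1}}$ are inverse to one another.

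For the last sentence I would argue that $\cD_{2,\ovl\rho_2,\Sym^3}^{M}=\bigcup_{j=1}^4\cD_{2,\ovl\rho_2,\Sym^3}^{M,j}$, that the piece with $j=1$ is equidimensional of dimension $1$ while those with $j\geq 2$ are $0$‑dimensional (proposition above), and hence that $\cD_{2,\ovl\rho_2,\Sym^3}^{M,1}$ is the complement in $\cD_{2,\ovl\rho_2,\Sym^3}^{M}$ of the Zariski‑closed discrete set $\bigcup_{j\geq 2}\cD_{2,\ovl\rho_2,\Sym^3}^{M,j}$, so it is Zariski‑open; on it $\Xi$ is an isomorphism by what has just been shown. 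The main obstacle is precisely the $p$‑part matching in the middle step: because $\Xi$ is built from the prime‑to‑$Np$ morphism $\lambda^{Np}$ alone, one must invoke the triangulation data at $p$ of the family over $\widetilde\cD_2^{M,\irr}$ (Theorem \ref{famtri}, Proposition \ref{heckemorphtri}, Remark \ref{frobtrirem}) to recover the $\GL_2$ operator $U_p^{(1)}$ unambiguously on the component $E_1=0$, with Lemma \ref{no3twist} and the hypothesis $p>3$ supplying the needed injectivity; everything else is formal given the preceding proposition.
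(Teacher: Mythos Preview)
Your argument is essentially correct and, since the paper gives no proof for this corollary (treating it as immediate from the preceding proposition), it is considerably more explicit than what the paper provides. The density-plus-reducedness strategy you use, checking $\Xi\ccirc\xi=\id$ on the accumulation set of classical non-CM points and then propagating, is the natural way to make the statement precise; your identification of the $p$-part of $\Xi$ as the genuine subtlety is also correct, since the diagram \eqref{arrowsym3} defining $\Xi^\ast_{\Sym^3}$ only constrains the prime-to-$Np$ Hecke data, and one does need the triangulation input (Theorem \ref{famtri}, Proposition \ref{heckemorphtri}) on the locus $E_1=0$ together with the $p>3$ hypothesis (Lemma \ref{no3twist}) to pin down $U_p^{(1)}$.

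There is one small imprecision in your final paragraph. You assert that $\cD_{2,\ovl\rho_2,\Sym^3}^{M,1}$ equals the complement of $\bigcup_{j\geq 2}\cD_{2,\ovl\rho_2,\Sym^3}^{M,j}$, but this is not justified: a point can satisfy several of the equations $E_j=0$ simultaneously, so the pieces for different $j$ may overlap, and the complement of the union over $j\geq 2$ could be a proper subset of the $j=1$ piece. The cleaner argument for Zariski-openness is the one implicit in the proposition itself: by part (1), $\cD_{2,\ovl\rho_2,\Sym^3}^{M,1}$ is equidimensional of dimension $1$, and the proof of the proposition shows that every one-dimensional irreducible component of $\cD_{2,\ovl\rho_2,\Sym^3}^{M}$ lies in the $j=1$ piece; hence $\cD_{2,\ovl\rho_2,\Sym^3}^{M,1}$ is exactly the union of the one-dimensional irreducible components of $\cD_{2,\ovl\rho_2,\Sym^3}^{M}$, and its complement is the (closed, discrete) set of isolated zero-dimensional components. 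This yields openness directly.
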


\bigskip

\section{The fortuitous $\Sym^3$-congruence ideal of a finite slope family}\label{fortcong}

Let $\theta\colon\T_h\onto\I^\circ$ be a finite slope family and let $\rho\colon G_\Q\to\GSp_4(\I^\circ_\Tr)$ be the representation associated with $\theta$ in the previous section. Recall that $\ovl{\rho}$ is absolutely irreducible by assumption. We also assume that $\rho$ is $\Z_p$-regular and of residual $\Sym^3$ type, as in Definitions \ref{Zpreg} and \ref{sctype}. 
In this section we define a ``fortuitous congruence ideal'' for the family $\theta$. It is the ideal describing the intersection of the $\Sym^3$-locus of $\cD_2^M$ with the family $\theta$. Recall that the $\Sym^3$-locus is the zero locus of the ideal $\cI_{\Sym^3}$ of $\cO(\cD_2^{M})^\circ$ defined in Section \ref{sym3locus} and that $r_{\cD_{2,B_h}^{M,h}}\colon\cO(\cD_2^{M})^\circ\to\T_h$ denotes the restriction of analytic functions. 

\begin{defin}\label{sym3cong}
The \emph{fortuitous $\Sym^3$-congruence ideal} for the family $\theta\colon\T_h\to\I^\circ$ is the ideal of $\I^\circ$ defined by
\[ \fc_\theta=(\theta\ccirc r_{\cD_{2,B_h}^{M,h}})(\cI_{\Sym^3})\cdot\I^\circ. \]
\end{defin}

In most cases we will simply refer to $\fc_\theta$ as the ``congruence ideal''. 
The next proposition describes its main properties.
Let $\fI$ be an ideal of $\I^\circ$ and let $\fI_\Tr=\fI\cap\I^\circ_\Tr$. Let $\rho_\fI\colon G_\Q\to\GSp_4(\I_\Tr^\circ/\fI_\Tr)$ be the reduction of $\rho$ modulo $\fI$. If $\theta_1\colon\T_{h,1}\to\J$ is a finite slope family of $\GL_2$-eigenforms we denote by $\rho_{\theta_1}\colon G_\Q\to\GL_2(\J)$ the associated Galois representation. For an ideal $\cJ$ of $\J$ we let $\rho_{\theta_1,\cJ}\colon G_\Q\to\GL_2(\J/\cJ)$ be the reduction of $\rho_{\theta_1}$ modulo $\cJ$. 

\begin{prop}\label{fcchar}
The following are equivalent:
\begin{enumerate}[label=(\roman*)]
\item $\fI\supset\fc_\theta$;
\item there exists a finite extension $\I^\prime$ of $\I_\Tr^\circ/\fI_\Tr$ and a representation $\rho_{\fI,1}\colon G_\Q\to\GL_2(\I^\prime)$ such that $\rho_\fI\cong\Sym^3\rho_{\fI,1}$ over $\I^\prime$; 
\item there exists a finite slope family of $\GL_2$-eigenforms $\theta_1\colon\T_{h/7,1}\to\J^\circ$, an ideal $\fJ$ of $\J^\circ$ and a map $\phi\colon\J^\circ/\fJ\to\I_\Tr^\circ$ such that $\rho_\fI\cong\phi\ccirc\Sym^3\rho_{\theta_1,\fJ}$ over $\I_\Tr^\circ$. 
\end{enumerate}
\end{prop}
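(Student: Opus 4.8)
The plan is to prove the equivalences (i) $\Leftrightarrow$ (ii) $\Leftrightarrow$ (iii) by a cyclic argument, passing through Theorem \ref{sym3type} and the interpolation machinery of Section \ref{BCsec}. First I would unwind the definition of $\fc_\theta$: since $\cI_{\Sym^3}=\psi_1(\ker(\psi_2\circ\lambda^{Np}))\cdot\cO(\cD_2^M)$, the condition $\fI\supset\fc_\theta$ says exactly that the quotient map $\T_h\onto\I_\Tr^\circ/\fI_\Tr$ kills the image of $\ker(\psi_2\circ\lambda^{Np})$ after composing with $\psi_{2}$. Equivalently, the subvariety $\cV_2=\Spec(\I_\Tr^\circ/\fI_\Tr)$, viewed inside $\cD_2^M$ via $\theta$, is contained in the $\Sym^3$-locus $\cD_{2,\Sym^3}^M$. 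By Proposition \ref{sym3id}(iv) this is the same as saying $\cV_2$ satisfies conditions (1a) and (1b) of Theorem \ref{sym3type}, and by Corollary \ref{sym3proper} it is then also of $\Sym^3$ type. So (i) is equivalent to: the pseudocharacter $T_{\cV_2}=\Tr\rho_\fI$ is a symmetric cube $\Sym^3 T_{\cV_2,1}$ of a two-dimensional pseudocharacter over $\I_\Tr^\circ/\fI_\Tr$.

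Next I would bridge from the pseudocharacter statement to the representation statement (ii). Starting from a two-dimensional pseudocharacter $T_{\cV_2,1}\colon G_\Q\to\I_\Tr^\circ/\fI_\Tr$ with $\Tr\rho_\fI=\Sym^3 T_{\cV_2,1}$, I want a genuine representation $\rho_{\fI,1}$. The residual representation $\ovl\rho$ is of $\Sym^3$ type by hypothesis, so the residual pseudocharacter $\ovl T_{\cV_2,1}$ is the trace of an (absolutely irreducible, after a finite extension) two-dimensional representation; here one invokes Theorem \ref{pseudotaylor} for the residual picture and then Theorem \ref{pseudolift} to lift $T_{\cV_2,1}$ to a representation $\rho_{\fI,1}\colon G_\Q\to\GL_2(\I^\prime)$ over a suitable finite extension $\I^\prime$ of $\I_\Tr^\circ/\fI_\Tr$ in which $3!$ is invertible (recall $p>3$). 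By Remark \ref{sym3pseudorem} the trace of $\Sym^3\rho_{\fI,1}$ is $\Sym^3 T_{\cV_2,1}=\Tr\rho_\fI$, and since $\ovl\rho_\fI$ is absolutely irreducible, Carayol's Theorem \ref{carayol} gives $\rho_\fI\cong\Sym^3\rho_{\fI,1}$ over $\I^\prime$. This proves (i) $\Rightarrow$ (ii); conversely, given (ii), taking traces immediately yields $\Tr\rho_\fI=\Sym^3(\Tr\rho_{\fI,1})$, and one checks the values lie in $\I_\Tr^\circ/\fI_\Tr$ by Chebotarev (the trace of $\rho_\fI$ already does), so condition (1b) holds and (i) follows.

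Then I would establish (ii) $\Leftrightarrow$ (iii), which is where the bulk of the work lies. The implication (iii) $\Rightarrow$ (ii) is easy: from $\rho_\fI\cong\phi\circ\Sym^3\rho_{\theta_1,\fJ}$ one simply sets $\rho_{\fI,1}=\phi\circ\rho_{\theta_1,\fJ}$, possibly after a finite extension to make the scalar adjustments work. For (ii) $\Rightarrow$ (iii), the idea is that $\rho_{\fI,1}$ should itself be ``modular'', i.e.\ come from a finite slope family of $\GL_2$-eigenforms. Here I would argue as follows: each classical point $x$ of $\cV_2$ in the family $\theta$ has $\rho_x\cong\Sym^3\rho_{x,1}$; since $\rho_x$ is attached to a classical $\GSp_4$-eigenform, Theorem \ref{sym3autom}(ii) (equivalently Corollary \ref{sym3kimsha}) shows $\rho_{x,1}$ is attached to a classical $\GL_2$-eigenform $f_x$, and by the slope computation in Corollary \ref{liftslopes} (the slope-$1$ case, $\slo(\Sym^3(f_\alpha^\st)_1)=7h$) the $p$-stabilization $(\Sym^3 f_x)_1^\st$ has slope $7$ times that of $f_x$; so the $f_x$ have slope bounded by $h/7$. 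Zariski density of classical points in $\Spec(\I_\Tr^\circ/\fI_\Tr)$ (Remark \ref{ZarItr}) plus the finiteness of the relevant Hecke algebra over $\Lambda_{h/7}$ forces the points $\{f_x\}$ to be Zariski-dense in an irreducible component of $\T_{h/7,1}$, i.e.\ a finite slope family $\theta_1\colon\T_{h/7,1}\to\J^\circ$. The morphism $\xi$ of Definition \ref{defxi} (or rather its inverse $\Xi_{\Sym^3}$) then identifies $\cV_2$ with a closed subvariety of $\cD_1^N$ cut out of this family, giving the ideal $\fJ$ and the map $\phi$; matching up the Galois representations on the Zariski-dense set of classical points and using density yields $\rho_\fI\cong\phi\circ\Sym^3\rho_{\theta_1,\fJ}$.

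The main obstacle I anticipate is the (ii) $\Rightarrow$ (iii) step, specifically producing the \emph{family} $\theta_1$ rather than just a collection of classical forms: one must show that the classical $\GL_2$-forms $f_x$ interpolate into a single irreducible component, which requires controlling the levels (via Corollary \ref{sym3level} and the compatible-level discussion) and the slopes uniformly, and then invoking the BC-eigenvariety formalism of Section \ref{changeBC} to pass between the $\GSp_4$ and $\GL_2$ sides. A secondary technical nuisance is bookkeeping the finite extensions $\I^\prime$ and the sixth-root-of-unity ambiguities in reconstructing a $\GL_2$-representation from its symmetric cube (as in Lemma \ref{subsym3} and Lemma \ref{no3twist}); since we have fixed a residual representation of $\Sym^3$ type and $p>3$, these cube-twist ambiguities are resolved exactly as in the proof of Lemma \ref{no3twist}, but the argument needs to be carried out at the level of the deformation/family rather than pointwise.
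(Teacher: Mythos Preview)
Your treatment of (i) $\Leftrightarrow$ (ii) is correct and matches the paper's: both unwind $\fc_\theta$ to identify (i) with containment of $\cV_2$ in $\cD_{2,\Sym^3}^M$ via Proposition~\ref{sym3id}(iv), and both pass between the pseudocharacter statement (1b) of Theorem~\ref{sym3type} and the representation statement (ii) via Theorems~\ref{pseudolift} and~\ref{carayol}, using absolute irreducibility of $\ovl\rho$.

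Your argument for (ii) $\Rightarrow$ (iii), however, has a gap. You propose to take classical points $x$ of $\cV_2=\Spec(\I_\Tr^\circ/\fI_\Tr)$, lift each individually to a classical $\GL_2$-form via Theorem~\ref{sym3autom}, and then interpolate, invoking Remark~\ref{ZarItr} for Zariski-density. But Remark~\ref{ZarItr} gives density of classical points in $\Spec\I_\Tr^\circ$, not in an arbitrary quotient: for a general ideal $\fI$ (for instance $\fI$ maximal at a non-classical point) the variety $\cV_2$ may contain \emph{no} classical points at all, and your interpolation never starts. This is not a bookkeeping issue; the pointwise-then-interpolate strategy simply does not apply to arbitrary $\fI$.

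The fix is already in your hands. In your first paragraph you observe, via Corollary~\ref{sym3proper}, that (i) forces $\cV_2$ to be of $\Sym^3$ type, i.e.\ to satisfy condition~(2b) of Theorem~\ref{sym3type}. This \emph{directly} produces a subvariety $\cV_1\subset\cD_1^N$ and a ring map $\phi\colon\cO(\cV_1)\to\cO(\cV_2)$ with $T_{\cV_2}=\Sym^3(\phi\circ T_{\cV_1})$, with no appeal to classical points. What remains is purely to check that $\cV_1$ sits in (a quotient of) a family of slope bounded by $h/7$; that is the slope computation of Corollary~\ref{liftslopes} (the $i=1$ case, giving the factor $7$) together with Remark~\ref{interpslopes}. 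This is exactly the route the paper takes: it cites Proposition~\ref{sym3id}(iii) for the $\Sym^3$-type statement and then reduces (ii) $\Leftrightarrow$ (iii) to the slope check. So drop the classical-point detour in your third paragraph and instead feed your own first-paragraph conclusion into the slope verification.
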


Note that we did not specify the image in the weight space of the admissible subdomain of $\cD_1^N$ associated with the family $\theta_1$. It is the preimage in $\cW_1^\circ$ of the disc $B_{2,h}$ via the immersion $\iota\colon\cW_1^\circ\to\cW_2^\circ$ defined in Section \ref{aux2}.

\begin{proof}
Since all the coefficient rings are local and all the residual representations are absolutely irreducible, we can apply the results of Section \ref{sym3locus} by replacing the pseudocharacters everywhere with the associated representations, that exist by Theorem \ref{pseudolift} and are defined over the ring of coefficients of the pseudocharacter by Theorem \ref{carayol} (see the argument in the beginning of Section \ref{galrepfam}). 

Now the equivalence (i) $\iff$ (ii) follows from Proposition \ref{sym3id}(iv) applied to the rigid analytic variety $\cV_2=I$. The equivalence (ii) $\iff$ (iii) follows from Proposition \ref{sym3id}(iii) by checking that the slopes satisfy the required inequality: this is a consequence of Corollary \ref{liftslopes} and Remark \ref{interpslopes}. 
\end{proof}

\begin{cor}\label{trivcong}
If there is no representation $\ovl\rho_1\colon G_\Q\to\GL_2(\Fp)$ satisfying $\ovl\rho\cong\Sym^3\ovl\rho_1$, then $\fc_{\theta}=\I^\circ$.
\end{cor}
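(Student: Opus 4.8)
The plan is to read off the statement directly from Proposition \ref{fcchar}, applied at the maximal ideal of $\I^\circ$. Since $\I^\circ$ is local with maximal ideal $\fm_{\I^\circ}$ by Remark \ref{conncomp}, the only ideal of $\I^\circ$ not contained in $\fm_{\I^\circ}$ is $\I^\circ$ itself; hence it suffices to prove that $\fc_\theta\not\subset\fm_{\I^\circ}$. So I would argue by contradiction, assuming $\fc_\theta\subset\fm_{\I^\circ}$.

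Next I would feed $\fI=\fm_{\I^\circ}$ into Proposition \ref{fcchar}. In this case $\fI_\Tr=\fI\cap\I_\Tr^\circ=\fm_{\I_\Tr^\circ}$, so the quotient $\I_\Tr^\circ/\fI_\Tr$ is the residue field $\F$ and $\rho_\fI$ is, by the construction of Section \ref{galrepfam}, precisely the residual representation $\ovl\rho\colon G_\Q\to\GSp_4(\F)$. The implication (i)$\Rightarrow$(ii) of that proposition then produces a finite extension $\I'$ of $\F$ together with a representation $\rho_{\fI,1}\colon G_\Q\to\GL_2(\I')$ such that $\ovl\rho\cong\Sym^3\rho_{\fI,1}$ over $\I'$.

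Finally I would observe that $\F$ is a finite field of characteristic $p$, so $\I'$ is again a finite field of characteristic $p$ and therefore admits an embedding $\iota\colon\I'\hookrightarrow\Fp$. Base-changing $\rho_{\fI,1}$ along $\iota$ gives $\ovl\rho_1=\iota\circ\rho_{\fI,1}\colon G_\Q\to\GL_2(\Fp)$, and applying $\iota$ to the coefficients of the isomorphism $\ovl\rho\cong\Sym^3\rho_{\fI,1}$ yields an isomorphism $\ovl\rho\cong\Sym^3\ovl\rho_1$ of representations with values in $\GSp_4(\Fp)$. This contradicts the hypothesis that no such $\ovl\rho_1$ exists, forcing $\fc_\theta\not\subset\fm_{\I^\circ}$, whence $\fc_\theta=\I^\circ$.

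I do not expect a genuine obstacle here: the corollary is a formal consequence of Proposition \ref{fcchar}. The only point needing a sentence of care is reconciling the hypothesis, phrased over $\Fp$, with conclusion (ii) of Proposition \ref{fcchar}, phrased over an a priori unspecified finite extension of the residue field; this is exactly the role of the observation that all the coefficient rings involved are finite fields of characteristic $p$ and hence embed into $\Fp$. One should also simply double-check that the object ``$\rho_\fI$'' of Proposition \ref{fcchar} specializes to $\ovl\rho$ when $\fI=\fm_{\I^\circ}$, which is immediate from the definitions.
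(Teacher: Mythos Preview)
Your proof is correct and is essentially the same argument that the paper implicitly has in mind: the corollary is stated without proof, as an immediate consequence of Proposition \ref{fcchar} applied at the maximal ideal. Your written-out version, including the small observation that the finite extension $\I'$ of the residue field embeds into $\Fp$, is exactly the intended deduction.
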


\begin{prop}\label{fcInonzero}
The ideal $\fc_{\theta}$ is non-zero.
\end{prop}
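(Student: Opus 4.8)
The plan is to show that the congruence ideal $\fc_\theta$ cannot be the zero ideal by exhibiting a classical specialization of the family $\theta$ that does \emph{not} lie on the $\Sym^3$-locus. Suppose for contradiction that $\fc_\theta = 0$. By Definition \ref{sym3cong} this means that $(\theta\ccirc r_{\cD_{2,B_h}^{M,h}})(\cI_{\Sym^3}) = 0$ in $\I^\circ$, i.e. the rigid analytic subvariety $I$ cut out by $\theta$ is entirely contained in the $\Sym^3$-locus $\cD_{2,\Sym^3}^M$. In particular every classical point of $\Spec\I^\circ$ would be of $\Sym^3$ type, hence by Theorem \ref{sym3type} (the implication from condition (1b) to condition (2b) for $\Qp$-points, together with Theorem \ref{sym3autom}(ii) and Corollary \ref{sym3kimsha}) the associated $\GSp_4$-eigenform would be a symmetric cube lift $\Sym^3 f$ of a classical $\GL_2$-eigenform.

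The obstruction I would use to derive a contradiction is the behaviour of Hodge--Tate weights, i.e. the eigenvalues of the Sen operator computed in Proposition \ref{eigensen}. If $x$ is a classical point of $I$ of weight $(k_1,k_2)$, the Hodge--Tate weights of $\rho_x$ are $(0,k_2-2,k_1-1,k_1+k_2-3)$. On the other hand, if $\rho_x\cong\Sym^3\rho_{f,p}$ for a classical $\GL_2$-eigenform $f$ of weight $k$, then (as computed in the proof of Lemma \ref{specialprime}) $\rho_x$ has Hodge--Tate weights $(0,k-1,2k-2,3k-3)$, so in particular $x$ lies over the arithmetic prime $P_{(2k-1,k+1)}$. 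Thus \emph{all} classical points of $I$ would have weight of the special shape $(2k-1,k+1)$; equivalently, the image of $I$ under the weight map $w_\theta\colon I\to B_{2,h}$ would be contained in the one-dimensional subvariety $\iota(\cW_1^\circ)\cap B_{2,h}$ of the two-dimensional disc $B_{2,h}$, where $\iota$ is the closed immersion of Section \ref{aux2}. This is visibly impossible: $I$ is a family over the \emph{two-dimensional} weight disc $B_{2,h}$, the map $w_\theta$ is finite (since $\I^\circ$ is a finite $\Lambda_h$-algebra), and a finite map onto a two-dimensional space cannot have one-dimensional image. Concretely, one of the coordinate functions $T_1,T_2$ on $B_{2,h}$ would have to satisfy, on $I$, the relation forced by the equation $u^{-3}(1+T_2)^2-(1+T_1)=0$ defining $\iota(\cW_1^\circ)$ (see Section \ref{aux2}), contradicting the algebraic independence of $T_1,T_2$ over $\Lambda_h$ inside $\I^\circ$.

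To organize this cleanly I would proceed as follows. First, invoke Proposition \ref{siegclslopes} and the accumulation/Zariski-density of classical points (as recorded after Remark \ref{conncomp}) to see that the set of classical points of $\Spec\I^\circ$ lying over non-critical arithmetic primes is Zariski-dense; in fact Lemma \ref{noncritdense} suffices. Second, assuming $\fc_\theta=0$, apply Proposition \ref{fcchar} with $\fI=0$ (the equivalence (i)$\iff$(ii)$\iff$(iii)) to conclude that $\rho$ itself, over a finite extension of $\I_\Tr^\circ$, is a symmetric cube $\Sym^3\rho_1$; then the Sen operator of $\rho$ is $\Sym^3$ of the Sen operator of $\rho_1$, so by Proposition \ref{eigensen} its smallest eigenvalue $0$ has the form $3\kappa_1$ where $\kappa_1$ is an eigenvalue of the Sen operator of $\rho_1$, forcing $\kappa_1=0$; unwinding the remaining three eigenvalues $\log(u^{-2}(1+T_2)),\log(u^{-1}(1+T_1)),\log(u^{-3}(1+T_1)(1+T_2))$ against $(3\kappa_1,\kappa_1+2\kappa_2,2\kappa_1+\kappa_2,3\kappa_2)$ with $\kappa_1=0$ yields $\log(u^{-1}(1+T_1))=2\kappa_2=2\log(u^{-2}(1+T_2))$, i.e. $u^{-1}(1+T_1)=(u^{-2}(1+T_2))^2$ as elements of $\I_{r,0}$. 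This is a nontrivial algebraic relation between $T_1$ and $T_2$ in $\I^\circ$, contradicting the fact that $\Lambda_h=\Z_p[[T_1,T_2]]$ injects into $\I^\circ$ (via $w_\theta^*$) with $T_1,T_2$ part of a regular system of parameters. I expect the only genuinely delicate point to be making the Sen-operator comparison rigorous for the relative (family) representation rather than for a single specialization: one must use Proposition \ref{senequal} to compare $\phi_r$ with its specializations, or alternatively avoid the relative statement entirely by just testing at a Zariski-dense set of classical points, where the Hodge--Tate weight computation is classical and unconditional. The latter route is cleaner: if every classical $x\in\Spec\I^\circ$ has weight $(2k_x-1,k_x+1)$ for some integer $k_x$, then the two analytic functions $u^{-1}(1+T_1)-u^{-2}(1+T_2)^2$ and (after a coordinate normalization) the relation they satisfy vanishes on a Zariski-dense set, hence identically on $I$, again contradicting the injectivity of $\Lambda_h\to\I^\circ$. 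Either way the contradiction completes the proof that $\fc_\theta\neq 0$.
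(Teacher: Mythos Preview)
Your proposal is correct and follows essentially the same approach as the paper. The paper's proof is a two-line argument: if $\fc_\theta=0$ then the two-dimensional family $I$ is contained in $\cD_{2,\Sym^3}^M$, which by Lemma \ref{sym3iota} sits inside the one-dimensional subvariety $\cD_{2,\iota}^M=\cD_2^M\times_{\cW_2^\circ}\iota(\cW_1^\circ)$, a contradiction. Your argument simply unpacks the content of Lemma \ref{sym3iota} (the Hodge--Tate weight constraint forcing the weight to lie on $\iota(\cW_1^\circ)$), which the paper itself says ``follows from a simple computation involving the generalized Hodge--Tate weights of a point of $\Sym^3$ type''; so your route via classical points and Zariski-density, and your alternative via the relative Sen operator, are both legitimate ways of reaching the same dimension obstruction the paper invokes.
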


\begin{proof}
Suppose by contradiction that $\fc_{\theta}=0$. 
Since $\fc_\theta=(\theta\ccirc r_{\cD_{2,B_h}^{M,h}})(\cI_{\Sym^3})\cdot\I^\circ$ we must have $(\theta\ccirc r_{\cD_{2,B_h}^{M,h}})(\cI_{\Sym^3})=0$. This means that the $2$-dimensional family $I$ is contained in the zero locus $\cD_{2,\Sym^3}^{M}$ of $\cI_{\Sym^3}$. This is impossible by Lemma \ref{sym3iota}. 
\end{proof}


The fortuitous $\Sym^3$-congruence ideal is an analogue of the congruence ideal of \cite[Definition 3.10]{cit}. 
There is an important difference between the situation studied here and in \cite{cit} and those treated in \cite{hida,hidatil}. In \cite{hida,hidatil} the congruence ideal describes the locus of intersection between a fixed ``general'' family (i.e. such that its specializations are not lifts of forms from a smaller group) and the ``non-general'' families. Such non-general families are obtained as the $p$-adic lift of families of overconvergent eigenforms for smaller groups (e.g. $\GL_{1/K}$ for an imaginary quadratic field $K$ in the case of CM families of $\GL_2$-eigenforms, as in \cite{hida}, and $\GL_{2/F}$ for a real quadratic field $F$ in the case of ``twisted Yoshida type'' families of $\GSp_4$-eigenforms, as in \cite{hidatil}). In our setting there are no non-general families: the overconvergent $\GSp_4$-eigenforms that are lifts of overconvergent eigenforms for smaller groups must be of $\Sym^3$ type by Lemma \ref{sym3zar} and Theorem \ref{sym3autom}, and we know that the $\Sym^3$-locus on the $\GSp_4$-eigenvariety does not contain any two-dimensional irreducible component by Proposition \ref{fcInonzero}. Hence the ideal $\fc_\theta$ measures the locus of points that are of $\Sym^3$ type, without belonging to a two-dimensional family of $\Sym^3$ type. For this reason we call it the ``fortuitous'' $\Sym^3$-congruence ideal. 
This is a higher-dimensional analogue of the situation of \cite{cit}, where it is shown that the positive slope CM points do not form one-dimensional families but appear as isolated points on the irreducible components of the eigencurve (see \cite[Corollary 3.6]{cit}). 


Note that conditions (ii) and (iii) in Proposition \ref{fcchar} only depend on the ideal $\fI\cap\I^\circ_\Tr$, so we expect $\fc_\theta$ to be generated by elements of $\I_\Tr^\circ$. We prove this in the following.

\begin{prop}
Let $\fc_{\theta,\Tr}=\fc_\theta\cap\I_\Tr^\circ$. Then $\fc_\theta=\fc_{\theta,\Tr}\cdot\I^\circ$.
\end{prop}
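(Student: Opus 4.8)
The plan is to show the reverse inclusion $\fc_\theta\subset\fc_{\theta,\Tr}\cdot\I^\circ$, since the inclusion $\fc_{\theta,\Tr}\cdot\I^\circ\subset\fc_\theta$ is automatic from $\fc_{\theta,\Tr}=\fc_\theta\cap\I_\Tr^\circ\subset\fc_\theta$. The key observation is that the ideal $\fc_\theta$ is defined as $(\theta\ccirc r_{\cD_{2,B_h}^{M,h}})(\cI_{\Sym^3})\cdot\I^\circ$, and by the explicit description of the ideal $\cI_{\Sym^3}$ recalled in Section \ref{sym3locus} (and in the equations for the symmetric cube locus), $\cI_{\Sym^3}$ is generated by the elements $\psi_2(P_\ell)$ for $\ell\nmid Np$, where each $P_\ell$ is a polynomial expression in the $T_{\ell,i}^{(2)}$, $i\in\{0,1,2\}$. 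First I would recall that by Proposition \ref{biggalthm}, the images $\psi_\theta(T_{\ell,i}^{(2)})$ are expressible via $P_\car(\Tr(\rho_{\I^\circ})(\Frob_\ell))$, and that (as observed in Section \ref{galrepfam}) we have $\I_\Tr^\circ=\Lambda_h[\{\Tr(\rho_\theta(g))\}_{g\in G_\Q}]$, so that in particular $\psi_\theta(T_{\ell,i}^{(2)})\in\I_\Tr^\circ$ for every $\ell$ and $i$.

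Given this, the argument is essentially immediate: each generator $(\theta\ccirc r_{\cD_{2,B_h}^{M,h}})(\psi_2(P_\ell))=\psi_\theta(P_\ell)$ is a polynomial (with $\Z$-coefficients, or $\Lambda_h$-coefficients once we account for the similitude factor via Remark \ref{detformula}) in the elements $\psi_\theta(T_{\ell,i}^{(2)})$, all of which lie in $\I_\Tr^\circ$. Hence every generator of $\fc_\theta$ already lies in $\I_\Tr^\circ$, so it lies in $\fc_\theta\cap\I_\Tr^\circ=\fc_{\theta,\Tr}$. Therefore the generating set of $\fc_\theta$ is contained in $\fc_{\theta,\Tr}$, which gives $\fc_\theta\subset\fc_{\theta,\Tr}\cdot\I^\circ$, and combined with the trivial inclusion this yields the equality $\fc_\theta=\fc_{\theta,\Tr}\cdot\I^\circ$.

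I do not expect a genuine obstacle here; the only point requiring a little care is the bookkeeping of which ring the coefficients of the $P_\ell$ live in. The polynomial $\lambda^{Np}(P_\Min(t_{\ell,2}^{(2)};X))$ has coefficients that are polynomials in the $T_{\ell,i}^{(1)}$, and correspondingly the generators of $\cI_{\Sym^3}$ are built from the $T_{\ell,i}^{(2)}$ together with powers of $\ell$; since $\psi_2(T_{\ell,0}^{(2)})$ specializes, via $\theta\ccirc r_{\cD_{2,B_h}^{M,h}}$, to an element of $\Lambda_{2,h}\subset\I_\Tr^\circ$ (Remark \ref{detformula}), there is no issue. The one subtlety worth flagging explicitly in the write-up is that we are using that $\psi_\theta$ factors through $\I_\Tr^\circ$ on the relevant Hecke operators, i.e. that the functions $\psi_\theta(T_{\ell,i}^{(2)})$ and $\psi_\theta(U_{p,i}^{(2)})$ are honestly elements of $\I_\Tr^\circ$ and not merely of $\I^\circ$ — this is exactly the content of the identity $P_\car(\Tr(\rho_{\I^\circ})(\Frob_\ell))=\psi_\theta(P_\Min(t_{\ell,2}^{(2)};X))$ from Section \ref{galrepfam}, which shows the prime-to-$p$ Hecke operators land in $\I_\Tr^\circ$, and since $\cI_{\Sym^3}$ is generated by prime-to-$Np$ data this is all that is needed.
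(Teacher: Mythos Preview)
Your proof is correct and follows essentially the same line as the paper's: both observe that the generating set $(\theta\ccirc r_{\cD_{2,B_h}^{M,h}})(\cI_{\Sym^3})$ of $\fc_\theta$ consists of images under $\psi_\theta$ of elements of the prime-to-$Np$ spherical Hecke algebra, and that these images lie in $\I_\Tr^\circ$ by the relation between Hecke eigenvalues and traces of $\rho$ established in Section~\ref{galrepfam}. Your write-up is in fact slightly more explicit about why $\psi_\theta(T_{\ell,i}^{(2)})\in\I_\Tr^\circ$, but the argument is the same.
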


\begin{proof}
By definition $\fc_{\theta,\Tr}=\theta\ccirc r_I(\cI_{\Sym^3})\cdot\I^\circ$. By definition $\cI_{\Sym^3}=\psi_2(\ker(\psi_1\ccirc\lambda^{Mp}))$, where the notations are as in diagram \eqref{diagIsym3}. Since $\ker(\psi_1\ccirc\lambda^{Mp})\subset\calH_2^{Mp}$ we have 
\[ \theta\ccirc r_I(\cI_{\Sym^3})=\theta\ccirc r_I\ccirc\psi_2(\ker(\psi_1\ccirc\lambda^{Mp}))\subset\theta\ccirc r_I\ccirc\psi_2(\calH_2^{Mp}). \] 
By the remarks of Section \ref{galrepfam} the ring $\I_\Tr^\circ$ contains $\theta\ccirc r_I\ccirc\psi_2(\calH_2^{Mp})$ in $\I^\circ$, so $\theta\ccirc r_I(\cI_{\Sym^3})$ is a subset of $\I_\Tr^\circ$ and the ideal $\fc_{\theta,\Tr}=\theta\ccirc r_I(\cI_{\Sym^3})\cdot\I^\circ_\Tr$ satisfies $\fc_\theta=\fc_{\theta,\Tr}\cdot\I^\circ$.
\end{proof}

Proposition \ref{fcchar} can be translated into a characterization of the ideal $\fc_{\theta,\Tr}$. For an ideal $\fI$ of $\I_\Tr^\circ$ let $\rho_\fI\colon G_\Q\to\GSp_4(\I_\Tr^\circ/\fI)$ be the reduction of $\rho$ modulo $\fI$.

\begin{cor}\label{fctrchar}
Let $\fI$ be an ideal of $\I_\Tr^\circ$. 
The following are equivalent:
\begin{enumerate}[label=(\roman*)]
\item $\fI\supset\fc_{\theta,\Tr}$;
\item there exists a finite extension $\I^\prime$ of $\I^\circ/\fI$ and a representation $\rho_{\fI,1}\colon G_\Q\to\GL_2(\I^\prime)$ such that $\rho_\fI\cong\Sym^3\rho_{\fI,1}$ over $\I^\prime$;
\item there exists a finite slope family of $\GL_2$-eigenforms $\theta_1\colon\T_{h/7,1}\to\J^\circ$, an ideal $\fJ$ of $\J^\circ$ and a map $\phi\colon\J^\circ/\fJ\to\I_\Tr^\circ$ such that $\rho_\fI\cong\phi\ccirc\Sym^3\rho_{\theta_1,\fJ}$.
\end{enumerate}
\end{cor}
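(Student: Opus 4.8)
The plan is to deduce Corollary \ref{fctrchar} directly from Proposition \ref{fcchar} together with the equality $\fc_\theta=\fc_{\theta,\Tr}\cdot\I^\circ$ just proved. The key point is that all three conditions appearing in Proposition \ref{fcchar} depend only on the ideal $\fI_\Tr=\fI\cap\I_\Tr^\circ$ and not on $\fI$ itself: condition (i) there is $\fI\supset\fc_\theta$, conditions (ii) and (iii) are phrased in terms of $\rho_{\fI_\Tr}\colon G_\Q\to\GSp_4(\I_\Tr^\circ/\fI_\Tr)$, which by construction is the reduction of $\rho$ modulo $\fI_\Tr$, so they literally only involve the quotient $\I_\Tr^\circ/\fI_\Tr$ and the induced representation. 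Thus the statement of Corollary \ref{fctrchar} for an ideal $\fI$ of $\I_\Tr^\circ$ follows if we can match its condition (i), namely $\fI\supset\fc_{\theta,\Tr}$, with the condition $\fI\I^\circ\supset\fc_\theta$ appearing in Proposition \ref{fcchar} applied to the ideal $\fI\I^\circ$ of $\I^\circ$.

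First I would verify this last equivalence. One direction is immediate: if $\fI\supset\fc_{\theta,\Tr}$ then $\fI\I^\circ\supset\fc_{\theta,\Tr}\I^\circ=\fc_\theta$. For the converse, suppose $\fI\I^\circ\supset\fc_\theta=\fc_{\theta,\Tr}\I^\circ$; intersecting with $\I_\Tr^\circ$ and using that $\I_\Tr^\circ\to\I^\circ$ is an injective ring map, I would argue that $(\fI\I^\circ)\cap\I_\Tr^\circ\supset(\fc_{\theta,\Tr}\I^\circ)\cap\I_\Tr^\circ\supset\fc_{\theta,\Tr}$, together with the fact that $(\fI\I^\circ)\cap\I_\Tr^\circ=\fI$ because $\I^\circ$ is a faithfully flat (or at least integral and flat enough) extension of $\I_\Tr^\circ$ — here one uses that $\I^\circ$ is a finite torsion-free $\I_\Tr^\circ$-module, hence faithfully flat after localization at the relevant primes, or alternatively that $\fI$ is determined by its image in the quotients that $\cD_2^M$ sees. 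Actually the cleanest route is to observe that for an ideal $\fI$ of $\I_\Tr^\circ$ one always has $\fI\I^\circ\cap\I_\Tr^\circ=\fI$ when $\I^\circ$ is integral over $\I_\Tr^\circ$ and both are domains with the same fraction field up to a finite extension; this is where a small amount of commutative algebra needs to be checked carefully.

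Next, with the equivalence $\fI\supset\fc_{\theta,\Tr}\iff\fI\I^\circ\supset\fc_\theta$ in hand, I would apply Proposition \ref{fcchar} to the ideal $\fI\I^\circ$ of $\I^\circ$: its condition (i) is then equivalent to Corollary \ref{fctrchar}(i), and since $(\fI\I^\circ)\cap\I_\Tr^\circ=\fI$, the representation $\rho_{\fI\I^\circ}$ of Proposition \ref{fcchar} is exactly $\rho_\fI$ of Corollary \ref{fctrchar}. Hence conditions (ii) and (iii) of the two statements coincide verbatim, and the chain of equivalences (i)$\iff$(ii)$\iff$(iii) transfers directly. I would write this out as a one-paragraph deduction, citing Proposition \ref{fcchar} and the preceding proposition.

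The main obstacle I anticipate is the commutative-algebra step $\fI\I^\circ\cap\I_\Tr^\circ=\fI$, i.e. that extension and contraction along $\I_\Tr^\circ\hookrightarrow\I^\circ$ is the identity on ideals of $\I_\Tr^\circ$. This can fail for general integral extensions, so I would either invoke that $\I^\circ$ is faithfully flat over $\I_\Tr^\circ$ (which one should be able to extract from $\I^\circ$ being a finite $\Lambda_h$-algebra that is torsion-free, hence finite flat over $\I_\Tr^\circ$ after checking $\I_\Tr^\circ$ is regular, or at least a product of regular local rings in codimension $\leq 1$, which suffices since the ideals involved are determined in codimension $1$), or — more robustly — bypass the issue by noting that the defining generators of $\fc_\theta$ already lie in $\I_\Tr^\circ$, so that $\fc_\theta=\fc_{\theta,\Tr}\I^\circ$ with $\fc_{\theta,\Tr}\subset\I_\Tr^\circ$, and then run the argument of Proposition \ref{fcchar} itself directly with $\I_\Tr^\circ$ in place of $\I^\circ$, using that $\rho$ and all the relevant $(\varphi,\Gamma)$-module and pseudocharacter constructions are already defined over $\I_\Tr^\circ$. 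This second approach makes Corollary \ref{fctrchar} essentially a re-run of the proof of Proposition \ref{fcchar} and sidesteps the need for any faithful flatness statement.
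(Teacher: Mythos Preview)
Your second approach—re-running the argument of Proposition \ref{fcchar} directly with $\I_\Tr^\circ$ in place of $\I^\circ$—is correct and is exactly what the paper intends. The paper states the corollary without proof, introducing it only with the sentence ``Proposition \ref{fcchar} can be translated into a characterization of the ideal $\fc_{\theta,\Tr}$'', having just observed that conditions (ii) and (iii) of Proposition \ref{fcchar} depend only on $\fI\cap\I_\Tr^\circ$ and that the generators of $\fc_\theta$ already lie in $\I_\Tr^\circ$. Since $\rho$ takes values in $\GSp_4(\I_\Tr^\circ)$ and the relevant pseudocharacter and $\Sym^3$-locus constructions are already defined over $\I_\Tr^\circ$, every step of the proof of Proposition \ref{fcchar} goes through verbatim for an ideal $\fI\subset\I_\Tr^\circ$, yielding the corollary.

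Your first approach, applying Proposition \ref{fcchar} to $\fI\I^\circ$ and then contracting, is not the paper's route, and the obstacle you identified is real: the equality $(\fI\I^\circ)\cap\I_\Tr^\circ=\fI$ is not automatic for integral extensions (your counterexample instinct is right—take e.g.\ $A=k[x^2,x^3]\subset B=k[x]$ with $\fI=(x^2)$), and the paper nowhere establishes that $\I^\circ$ is faithfully flat over $\I_\Tr^\circ$. So this route would require additional input that is neither available nor needed. Drop it and keep only the direct translation argument; that is both the cleanest proof and the one the paper has in mind.
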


We use the results of Section \ref{morpheigen} to obtain some information on the height of the prime divisors of $\fc_{\theta}$. Here $\iota\colon\cW_1^\circ\to\cW_2^\circ$ is the inclusion defined in Section \ref{aux2}. For a classical weight $k$ in $\cW_1^\circ$ we have $\iota(k)=(k+1,2k-1)$, with the obvious abuse of notation.

\begin{prop}
Suppose that there exists a non-CM classical point $x\in\cD^N_1$ of weight $k$ such that $\slo(x)\leq h/7$ and $\iota(k)\in B_{2,h}$ and $k>h-4$. Then the ideal $\fc_\theta$ has a prime divisor of height $1$.
\end{prop}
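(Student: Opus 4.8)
The plan is to produce a concrete point of $\Spec\I^\circ$ lying on the $\Sym^3$-locus of $\cD_2^M$, so that the congruence ideal $\fc_\theta$ vanishes there, and then to check that this point sits over a height-one prime of $\I^\circ$. First I would take the non-CM classical point $x\in\cD_1^N$ of weight $k$ provided by the hypothesis, with $\slo(x)\le h/7$ and $\iota(k)\in B_{2,h}$. Applying Corollary \ref{formtransf} and the explicit computation of slopes in Corollary \ref{liftslopes}, the point $\Sym^3(x)_1$ (the first $p$-stabilization of the symmetric cube lift, i.e.\ the point $\xi(x)$ of Definition \ref{defxi}) is a classical $p$-old point of $\cD_2^M$ of weight $(2k-1,k+1)=\iota(k)$ and slope $7\slo(x)\le h$. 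Hence it lies on the slope-$\le h$ admissible domain $\cD_{2,B_h}^{M,h}$. Since $x$ is non-CM, by Theorem \ref{sym3type} the point $\xi(x)$ is of $\Sym^3$ type, so it lies in the zero locus of $\cI_{\Sym^3}$, i.e.\ in $\cD_{2,\Sym^3}^M$.

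Next I need to know that $\xi(x)$ actually lies on the family $\theta$, i.e.\ that the prime $\fP\subset\T_h$ corresponding to $\xi(x)$ contains $\ker\theta$; equivalently, that $\xi(x)$ specializes a point of $\Spec\I^\circ$. Here I would use that $\ovl\rho_{\T_h}$ is of residual $\Sym^3$ type, so $\ovl\rho$ is $\Sym^3$ of a residual $\GL_2$-representation; the residual representation attached to $\xi(x)$ is $\Sym^3\ovl\rho_{x}$, and by rigidity of residual representations on connected components (Remark \ref{nonclassrep}(2)) one arranges, by choosing $x$ on the component of $\cD_1^N$ with the appropriate residual representation $\ovl\rho_1$, that $\xi(x)$ lies on the connected component of $\cD_2^M$ carrying $\ovl\rho_{\T_h}$, hence on (a point of) $\Spec\T_h$ that is $\ge\ker\theta$; more precisely one should pick the irreducible component $I$ so that $\xi(x)$ belongs to it, using that $\xi(\cD_1^{N,\cG})$ is a one-dimensional Zariski-closed subvariety hitting $I$ because $\theta$ is residually $\Sym^3$ and the classical points of $\Spec\I^\circ$ are Zariski-dense. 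Granting this, let $\fP_\theta$ be the resulting prime of $\I^\circ$; then $\fc_\theta\subset\fP_\theta$ by Proposition \ref{fcchar} (the reduction $\rho_{\fP_\theta}$ is a symmetric cube), so $\fc_\theta$ is contained in a prime of $\I^\circ$.

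It then remains to locate a prime divisor of $\fc_\theta$ of height exactly $1$. Since $\I^\circ$ is a finite $\Lambda_{2,h}$-algebra of dimension $2$, a prime divisor of the non-zero ideal $\fc_\theta$ (non-zero by Proposition \ref{fcInonzero}) has height $1$ or $2$; a height-$2$ prime is (the analogue of) a closed point, lying over an arithmetic prime $P_\uk$ of $\Lambda_{2,h}$. To force height $1$ I would argue that $\fc_\theta$ cannot be $\fP_\theta$-primary with $\fP_\theta$ of height $2$, by exhibiting a \emph{second}, distinct, $\Sym^3$-type classical point of $\theta$ of a different weight, or more simply by noting that the $\Sym^3$-locus intersected with $I$ is cut out, via the morphism $\Xi_{\Sym^3}$ of Section \ref{sym3locus} and the one-parameter nature of $\cD_{2,\Sym^3}^{M,1}$, by an ideal whose zero set is positive-dimensional along $I\cap\cD_{2,\iota}^M$ — but $I$ is two-dimensional and $I\cap\cD_{2,\iota}^M$ is one-dimensional, and $\xi(x)$ lies on $\xi(\cD_1^{N,\cG})$, a one-dimensional component of $\cD_{2,\Sym^3}^{M,1}$ meeting $I$; the intersection $I\cap\xi(\cD_1^{N,\cG})$ is then a curve (dimension $1$) inside $\Spec\I^\circ$, whose defining ideal $\fc_\theta$ therefore has a height-one prime divisor, namely the prime of that curve. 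The condition $k>h-4$ is exactly what guarantees, via Proposition \ref{siegclslopes} and the slope formula, that there are infinitely many such classical $\Sym^3$-type points of $\theta$ accumulating along this curve (so the intersection is genuinely one-dimensional, not a single fat point). The main obstacle I anticipate is precisely this last dimension count: showing rigorously that $I\cap\xi(\cD_1^{N,\cG})$ has dimension $1$ rather than $0$, which requires checking that the accumulation locus of non-CM classical points of $\cD_1^{N,\cG}$ satisfying the slope and weight constraints is Zariski-dense in a one-dimensional piece, and transporting this via the closed immersion $\xi$ into $\Spec\I^\circ$; the numerical hypotheses $\slo(x)\le h/7$, $\iota(k)\in B_{2,h}$, $k>h-4$ are tailored to make this work, and I would verify them against Corollary \ref{liftslopes}, Remark \ref{arithprimes}, and Remark \ref{nonCMdense}.
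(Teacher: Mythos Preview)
Your overall plan matches the paper's: take a component $J\ni x$ of $\cD_1^{N,\cG}$, map it via the closed immersion $\xi$ to a one-dimensional curve through $\Sym^3 x=\xi(x)$, and argue that $\xi(J)\cap I$ is one-dimensional, so that its defining ideal in $\I^\circ$ furnishes a height-one prime containing $\fc_\theta$. You also correctly isolate the two obstacles: placing $\Sym^3 x$ on the specific irreducible component $I$, and showing the intersection $\xi(J)\cap I$ has dimension $1$ rather than $0$.

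The gap is in your resolution of these obstacles and in your reading of the hypothesis $k>h-4$. You propose to handle both by accumulating infinitely many classical $\Sym^3$-type points along the curve and placing each on $I$ via residual-representation matching, interpreting $k>h-4$ as a density input through Proposition \ref{siegclslopes}. But residual matching (Remark \ref{nonclassrep}(2)) only pins down the \emph{connected} component of $\Spec\T_h$, not the irreducible component $I$; since $\T_h$ is already local, this gives no information, and your accumulation argument is circular---you never actually produce a second point known to lie on $I$.

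The paper's route is much shorter. The condition $k>h-4$, rewritten as $k+1>h-3$, is used to deduce that the weight map $w_2$ is \emph{\'etale} at $\Sym^3 x$. \'Etaleness forces there to be a \emph{unique} finite-slope family through $\Sym^3 x$; this is $I$, and since $\xi(J)$ is an irreducible one-dimensional curve through that point, it must lie inside $I$ in a neighbourhood of $\Sym^3 x$. Thus $\xi(J)\cap I$ is automatically one-dimensional, and its ideal in $\I^\circ$ is the sought height-one prime dividing $\fc_\theta$. Both of your obstacles dissolve with this single \'etaleness observation; no density or accumulation argument is needed, and Proposition \ref{siegclslopes} plays no role here.
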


\begin{proof}
Let $x$ be a point satisfying the assumptions of the proposition and let $f$ be the corresponding classical $\GL_2$-eigenform. Let $\Sym^3x$ be the point of $\cD_2^M$ that corresponds to the form $(\Sym^3f)^\st_1$ defined in Corollary \ref{heckemorphprodcor}. Let $\xi\colon\cD_1^{N,\cG}\to\cD_2^M$ be the map of rigid analytic spaces given by Definition \ref{defxi}. The image of an irreducible component $J$ of $\cD_1^{N,\cG}$ containing $x$ is an irreducible component $\xi(J)$ of $\cD_2^M$ that contains $\Sym^3x$. By Corollary \ref{liftslopes} we have $\slo(\Sym^3x)\leq h$. Since $k+1>h-3$ the weight map is étale at the point $\Sym^3x$, so there exists only one finite slope family of $\GSp_4$-eigenforms containing $\Sym^3x$. This means that $\xi(J)$ intersects the admissible domain $I$ in a one-dimensional subspace. The ideal of $\I^\circ=\cO(I)^\circ$ consisting of elements that vanish on $\xi(J)$ is a height one ideal of $I$ that divides the congruence ideal $\fc_\theta$. In particular $\fc_\theta$ admits a height one prime divisor.
\end{proof}

\subsection{The $\I_0^\circ$-congruence ideal}

Starting with Corollary \ref{fctrchar} we can descend further and prove that $\fc_\theta$ is generated by elements invariant under the action of the group of self-twists.

\begin{prop}
Let $\fc_{\theta,0}=\fc_{\theta,\Tr}\cap\I_0^\circ$. Then $\fc_{\theta,\Tr}=\fc_{\theta,0}\cdot\I_\Tr^\circ$.
\end{prop}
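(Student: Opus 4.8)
The plan is to mimic the descent already carried out twice — from $\fc_\theta$ (an ideal of $\I^\circ$) to $\fc_{\theta,\Tr}=\fc_\theta\cap\I_\Tr^\circ$, and implicitly in the translation of Proposition \ref{fcchar} into Corollary \ref{fctrchar} — now from $\I_\Tr^\circ$ down to $\I_0^\circ$, the subring fixed by the group $\Gamma$ of self-twists of $\rho$. The inclusion $\fc_{\theta,0}\cdot\I_\Tr^\circ\subset\fc_{\theta,\Tr}$ is automatic from $\fc_{\theta,0}\subset\fc_{\theta,\Tr}$, so the content is the reverse inclusion: every generator of $\fc_{\theta,\Tr}$ already lies in $\I_0^\circ$. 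Recall $\fc_{\theta,\Tr}=\theta\ccirc r_I(\cI_{\Sym^3})\cdot\I_\Tr^\circ$ and $\cI_{\Sym^3}=\psi_2(\ker(\psi_1\ccirc\lambda^{Mp}))$ with $\ker(\psi_1\ccirc\lambda^{Mp})\subset\calH_2^{Mp}$, so it suffices to show that $\theta\ccirc r_I\ccirc\psi_2(\calH_2^{Mp})\subset\I_0^\circ$.

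First I would unwind the definitions: by the discussion in Section \ref{galrepfam}, the image of $\calH_2^{Mp}$ under $\theta\ccirc r_I\ccirc\psi_2$ is generated over $\Lambda_h$ by the values $\psi_\theta(T_{\ell,i}^{(2)})$ for $\ell\nmid Mp$ and $i\in\{0,1,2\}$, and by Proposition \ref{biggalthm} these are, up to the explicit change of variables in \eqref{minpol2}, symmetric functions of the eigenvalues of $\rho(\Frob_\ell)$; concretely they lie in $\Lambda_h[\{\Tr(\rho(g))\}_g,\{\det\rho(g)\}_g,\ldots]$, but more to the point they are coefficients of the characteristic polynomial $P_\car(\Tr\rho)(\Frob_\ell)$. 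The key input is then Proposition \ref{basicst}(5): every element of $S[\Tr\Ad\rho]$ is fixed by all self-twists; combined with the fact that a self-twist $\sigma$ sends $\Tr\rho(g)$ to $\eta_\sigma(g)\Tr\rho(g)$ and $\det\rho(g)$ to $\eta_\sigma(g)^4\det\rho(g)$, one checks directly that each coefficient of the \emph{normalized} Hecke polynomial — equivalently, the image of $T_{\ell,i}^{(2)}$ in $\I_\Tr^\circ$, which by Remark \ref{detformula} is the product of a power of $\det\rho(\Frob_\ell)$ (an element of $\Lambda_h$, hence $\Gamma$-fixed) with a symmetric expression — is $\Gamma$-invariant. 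Alternatively, and more cleanly, I would argue: the spherical Hecke eigenvalues $\psi_\theta(T_{\ell,i}^{(2)})$ at $\ell\nmid Mp$ determine and are determined by $P_\car(\Tr\rho)(\Frob_\ell)$ together with $\det\rho(\Frob_\ell)\in\Lambda_h$; since $\det\rho$ is $\Gamma$-fixed, it remains only to see that the \emph{ratios} built from traces that occur as Hecke eigenvalues (such as $a_\ell=(\Tr\rho(\Frob_\ell))^4/\det\rho(\Frob_\ell)$ from Lemma \ref{cardgamma}, or more refined combinations) are $\Gamma$-fixed — which follows because under $\sigma$ numerator and denominator pick up $\eta_\sigma(\Frob_\ell)^4$ each. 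Running this over all $\ell\nmid Mp$ and invoking that $\I_\Tr^\circ$ is generated over $\Lambda_h$ by the traces (Section \ref{galrepfam}) while $\Lambda_h\subset\I_0^\circ$, one concludes $\theta\ccirc r_I\ccirc\psi_2(\calH_2^{Mp})\subset\I_0^\circ$.

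From this the proposition follows formally: $\theta\ccirc r_I(\cI_{\Sym^3})\subset\theta\ccirc r_I\ccirc\psi_2(\calH_2^{Mp})\subset\I_0^\circ$, so every generator of $\fc_{\theta,\Tr}$ lies in $\I_0^\circ$, hence in $\fc_{\theta,\Tr}\cap\I_0^\circ=\fc_{\theta,0}$, giving $\fc_{\theta,\Tr}\subset\fc_{\theta,0}\cdot\I_\Tr^\circ$; the reverse inclusion is trivial. One subtlety to address carefully: the generators of $\cI_{\Sym^3}$ written down in Section \ref{sym3locus} are polynomial expressions in the \emph{un-normalized} operators $T_{\ell,i}^{(2)}$, whereas $\psi_2$ lands in $\cO(\cD_2^M)^\circ$ where, via $\theta\ccirc r_I$, one gets the \emph{normalized} eigenvalues; I would note that this discrepancy is harmless because the normalization only multiplies by powers of the central character $\varepsilon$ and of $\det\rho(\Frob_\ell)$, all of which are in $\Lambda_h\subset\I_0^\circ$ by Remark \ref{detformula}.

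The main obstacle, and the only place requiring genuine care rather than bookkeeping, is the verification that every spherical Hecke eigenvalue $\psi_\theta(T_{\ell,i}^{(2)})$ is literally $\Gamma$-fixed — i.e. cleanly separating the $\Gamma$-variant ``character part'' $\eta_\sigma$ from the $\Gamma$-invariant ``$\mathrm{PGSp}_4$-part.'' I expect this to come down to the identity $P_\car(\Tr\rho^\sigma)=P_\car(\eta_\sigma\otimes\Tr\rho)$ together with the explicit relation in \eqref{minpol2} between the $T_{\ell,i}^{(2)}$ and the coefficients of $P_\car$, plus the observation that the normalization built into $\psi_2$ (Definition \ref{normsystdef}) exactly cancels the $\eta_\sigma$-twist on $\det$; I would record this as a short lemma and then the descent is immediate. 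If one prefers to avoid the explicit polynomial identities, the cleanest route is to note that $\I_\Tr^\circ$ is generated over $\Lambda_h$ by the traces $\Tr\rho(g)$, that the Hecke eigenvalues at $\ell\nmid Mp$ are polynomials over $\Lambda_h$ in these traces whose total ``$\eta_\sigma$-weight'' is $0$ (because $\det\rho\in\Lambda_h$ forces the weights to balance), and hence they are $\Gamma$-invariant — and then, as above, $\fc_{\theta,\Tr}$ is generated by such elements.
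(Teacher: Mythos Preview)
Your approach has a genuine gap. The central claim --- that $\theta\ccirc r_I\ccirc\psi_2(\calH_2^{Mp})\subset\I_0^\circ$ --- is false in general. Indeed, $\I_\Tr^\circ$ is by definition $\Lambda_h[\{\Tr\rho(g)\}_g]$, which coincides with the $\Lambda_h$-algebra generated by the spherical Hecke eigenvalues $\psi_\theta(T_{\ell,i}^{(2)})$; so if these all lay in $\I_0^\circ$ you would have $\I_\Tr^\circ=\I_0^\circ$, i.e.\ no nontrivial self-twists. Concretely, $\psi_\theta(T_{\ell,2}^{(2)})=\Tr\rho(\Frob_\ell)$ transforms as $(\Tr\rho(\Frob_\ell))^\sigma=\eta_\sigma(\Frob_\ell)\Tr\rho(\Frob_\ell)$, which is not $\Gamma$-fixed. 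Your attempt to rescue this via normalization does not work: Definition~\ref{normsystdef} only modifies the $U_{p,i}^{(g)}$-eigenvalues at $p$ and leaves the spherical eigenvalues at $\ell\nmid Np$ untouched, so there is no ``normalization'' absorbing the $\eta_\sigma$-twist. The explicit generators of $\cI_{\Sym^3}$ written in Section~\ref{sym3locus} are likewise not homogeneous of $\eta_\sigma$-weight zero (for instance $(T_{\ell,2}^{(2)})^6$ has weight $6$ while $T_{\ell,0}^{(2)}(T_{\ell,2}^{(2)})^4$ has weight $4$, since $\psi_\theta(T_{\ell,0}^{(2)})\in\Lambda_h$ is $\Gamma$-fixed).

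The paper's route is essentially different and does not attempt to show the generators are $\Gamma$-fixed. Instead it shows the \emph{ideal} $\fc_{\theta,\Tr}$ is $\Gamma$-stable, using the Galois-theoretic characterization (Corollary~\ref{fctrchar}): if $\rho_{\fc_{\theta,\Tr}}\cong\Sym^3\rho_{\fc_{\theta,\Tr},1}$, then applying $\sigma$ and using $\rho^\sigma\cong\eta_\sigma\otimes\rho$ one gets, after taking a cube root of $\eta_\sigma$ over a finite extension, that $\rho_{\fc_{\theta,\Tr}^\sigma}$ is again a symmetric cube, whence $\fc_{\theta,\Tr}^\sigma\supset\fc_{\theta,\Tr}$ by the reverse implication in Corollary~\ref{fctrchar}. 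Running over all $\sigma\in\Gamma$ yields $\Gamma$-stability, and then Galois descent for the extension $\I_0^\circ\subset\I_\Tr^\circ$ (with $\vert\Gamma\vert$ invertible by Lemma~\ref{cardgamma} and $p>3$) gives that a $\Gamma$-stable ideal is extended from its $\Gamma$-invariants.
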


\begin{proof}
Let $\sigma$ be a self-twist and let $\eta_\sigma\colon G_\Q\to(\I_\Tr^\circ)^\times$ be the associated finite order character. Let $\fc_{\theta,\Tr}^\sigma=\sigma(\fc_\Tr^\theta)$. Since $\sigma$ is an automorphism of $\I_\Tr^\circ$, it induces an isomorphism $\I_\Tr^\circ/\fc_{\theta,\Tr}\cong\I^\circ_\Tr/\fc^\sigma_{\theta,\Tr}$. In particular we can consider the two representations $\rho_{\fc_{\theta,\Tr},1}\colon G_\Q\to\GSp_4(\I^\circ_\Tr/\fc^\sigma_{\theta,\Tr})$ and $\rho_{\fc_{\theta,\Tr},1}^\sigma=\sigma\ccirc\rho_{\fc_{\theta,\Tr},1}\colon G_\Q\to\GSp_4(\I^\circ_\Tr/\fc^\sigma_{\theta,\Tr})$. By Corollary \ref{fctrchar} applied to the ideal $\fI=\fc_{\theta,\Tr}$ there exists a representation $\rho_{\fc_{\theta,\Tr},1}\colon G_\Q\to\GL_2(\I^\circ/\fc_{\theta,\Tr})$ such that $\rho_{\fc_\theta,\Tr}\cong\Sym^3\rho_{\fc_{\theta,\Tr},1}$. We apply $\sigma$ to both sides of this equivalence and we obtain $\rho_{\fc_{\theta,\Tr}}^\sigma\cong\Sym^3\rho_{\fc_{\theta,\Tr},1}^\sigma$. 
By definition of self-twist $\rho^\sigma\cong\eta_\sigma\otimes\rho$. By reducing modulo $\fc_{\theta,\Tr}^\sigma$ we obtain, with the obvious notations, $(\rho^\sigma)_{\fc_{\theta,\Tr}^\sigma}\cong\eta_{\sigma,\fc_{\theta,\Tr}^\sigma}\otimes\rho_{\fc_{\theta,\Tr}^\sigma}$. 
Now $(\rho^\sigma)_{\fc_{\theta,\Tr}^\sigma}=(\rho_{\fc_{\theta,\Tr}})^\sigma$, so by combining the two previous equations we deduce that $(\rho^\sigma)_{\fc_{\theta,\Tr}^\sigma}\cong\eta_{\sigma,\fc_{\theta,\Tr}^\sigma}\otimes\Sym^3\rho_{\fc_{\theta,\Tr},1}^\sigma$. 
Since $\eta_{\sigma,\fc_{\theta,\Tr}^\sigma}$ is a finite order character, there exists an extension $\I_1$ of $\I^\circ_\Tr/\fc^\sigma_{\theta,\Tr}$ of degree at most $3$ and a character $\eta_{\sigma,\fc_{\theta,\Tr}^\sigma,1}$ satisfying $(\eta_{\sigma,\fc_{\theta,\Tr}^\sigma,1})^3=\eta_{\sigma,\fc_{\theta,\Tr}^\sigma}$. Then
\[ (\rho^\sigma)_{\fc_{\theta,\Tr}^\sigma}\cong\Sym^3(\eta_{\sigma,\fc_{\theta,\Tr}^\sigma,1}\otimes\rho_{\fc_{\theta,\Tr},1}^\sigma), \]
so the implication (ii) $\implies$ (i) of Corollary \ref{fctrchar} gives $\fc_{\theta,\Tr}^\sigma\supset\fc_{\theta,\Tr}$. This holds for every $\sigma\in\Gamma$, hence $\bigcap_{\sigma\in\Gamma}\fc_{\theta,\Tr}^\sigma\supset\fc_{\theta,\Tr}$. This is an equality because the inclusion in the other direction is trivial. We conclude that $\fc_{\theta,\Tr}$ is $\Gamma$-stable, so the ideal $\fc_{\theta,\Tr}\cap\I_0^\circ$ of $\I_0^\circ$ satisfies $(\fc_{\theta,\Tr}\cap\I_0^\circ)\cdot\I_\Tr^\circ=\fc_{\theta,\Tr}$. 
\end{proof}

\begin{defin}\label{I0congr}
We call $\fc_{\theta,0}$ the \emph{fortuitous $(\Sym^3,\I^\circ_0)$-congruence ideal} for the family $\theta\colon\T_h\to\I^\circ$.
\end{defin}
%

For an ideal $\fI$ of $\I_0^\circ$ we denote by $\rho_{\fI}\colon H_0\to\GSp_4(\I^\circ_0/\fI)$ the reduction of $\rho\vert_{H_0}$ modulo $\fI$. 
The ideal $\fc_{\theta,0}$ admits a characterization similar to that of $\fc_\theta$ and $\fc_{\theta,\Tr}$. 

\begin{prop}\label{cong0equiv}
Let $P_0$ be a prime ideal of $\I^\circ_0$. The following are equivalent.
\begin{enumerate}[label=(\roman*)]
\item $P_0\supset\fc_{\theta,0}$;
\item there exists a finite extension $\I^\prime$ of $\I^\circ_\Tr/P_0\I^\circ_\Tr$ and a representation $\rho_{P_0\I^\circ_\Tr,1}\colon G_\Q\to\GL_2(\I^\prime)$ such that $\rho_{P_0\I_\Tr^\circ}\cong\Sym^3\rho_{\I^\prime}$ over $\I^\prime$;
\item for one prime $P$ of $\I^\circ_\Tr$ lying above $P_0$ there exists a finite extension $I^\prime$ of $\I^\circ_\Tr/P$ and a representation $\rho_{P,1}\colon G_\Q\to\GL_2(\I^\prime)$ such that $\rho_P\cong\Sym^3\rho_{P,1}$ over $\I^\prime$; 
\item there exists a representation $\rho_{P_0,1}\colon H_0\to\GL_2(\I^\circ_0/\fI)$ such that $\rho_{P_0}\cong\Sym^3\rho_{P_0,1}$ over $\I^\circ_0/\fI$.
\end{enumerate}
\end{prop}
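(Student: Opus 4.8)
The plan is to deduce all the equivalences from Corollary \ref{fctrchar}, from the two propositions immediately preceding (which give $\fc_{\theta,\Tr}=\fc_{\theta,0}\cdot\I_\Tr^\circ$ and the $\Gamma$-stability of $\fc_{\theta,\Tr}$), and from the commutative algebra of the finite integral extension $\I_0^\circ\into\I_\Tr^\circ$. First I would record the two identities $P_0\I_\Tr^\circ\cap\I_0^\circ=P_0$ and $P\cap\I_0^\circ=P_0$ for every prime $P$ of $\I_\Tr^\circ$ lying above $P_0$; both follow from the lying-over theorem, which applies because $\I_0^\circ\to\I_\Tr^\circ$ is module-finite (both rings are finite over $\Lambda_h$). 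Combining them with $\fc_{\theta,\Tr}=\fc_{\theta,0}\cdot\I_\Tr^\circ$ gives the chain
\[ P_0\supset\fc_{\theta,0}\iff P_0\I_\Tr^\circ\supset\fc_{\theta,\Tr}\iff P\supset\fc_{\theta,\Tr}, \]
the last condition being independent of the choice of $P$ above $P_0$.

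With this in hand, (i) $\iff$ (ii) is Corollary \ref{fctrchar} applied to the ideal $\fI=P_0\I_\Tr^\circ$, and (i) $\iff$ (iii) is Corollary \ref{fctrchar} applied to the ideal $\fI=P$; the ``for one prime $P$'' phrasing in (iii) is harmless precisely because the condition $P\supset\fc_{\theta,\Tr}$ does not depend on the choice of $P$, so if it holds for one such $P$ it holds for all of them.

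It then remains to prove (iii) $\iff$ (iv). By Proposition \ref{H0repr} we may assume $\rho(H_0)\subset\GSp_4(\I_0^\circ)$, so that $\rho_{P_0}$ takes values in $\GSp_4(\I_0^\circ/P_0)$ and, for a prime $P$ of $\I_\Tr^\circ$ above $P_0$, the restriction $\rho_P|_{H_0}$ is the base change of $\rho_{P_0}$ along the injection $\I_0^\circ/P_0\into\I_\Tr^\circ/P$. For (iv) $\implies$ (iii) I would apply Lemma \ref{subsym3} with $\cG=G_\Q$, $\cG_1=H_0$ and $\tau=\rho_P$: hypothesis (1) is (iv) after base change, hypothesis (3) follows from the explicit formula for $\det\rho$ in Remark \ref{detformula} after a finite extension of scalars in which the relevant sixth roots become available, and hypothesis (2)---that the image of $\rho_{P_0,1}$ contains a principal congruence subgroup of $\SL_2$---follows from the residual $\Sym^3$-type assumption together with Corollary \ref{nontrivunip} and the presence of a $\Z_p$-regular diagonal element in $\im\rho|_{H_0}$. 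For (iii) $\implies$ (iv) I would restrict $\rho_P\cong\Sym^3\rho_{P,1}$ to $H_0$, observe that $\Sym^3(\rho_{P,1}|_{H_0})$ is then conjugate into $\GSp_4(\I_0^\circ/P_0)$, and descend: choosing a character $\eta$ with $\eta^6=\det(\rho_P|_{H_0})$ one pins down $\rho_{P_0,1}$ as in the proof of Lemma \ref{subsym3}, and then invokes Carayol's theorem (Theorem \ref{carayol})---using that $\ovl\rho|_{H_0}$ is absolutely irreducible and that the pertinent traces lie in $\I_0^\circ/P_0$---to realise $\rho_{P_0,1}$ over $\I_0^\circ/P_0$ itself.

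The main obstacle is exactly this last descent: showing that in (iv) the representation $\rho_{P_0,1}$ can be taken with coefficients in $\I_0^\circ/P_0$ rather than in a proper extension. This is where the specific choice $H_0=\bigcap_{\sigma\in\Gamma}\ker\eta_\sigma$ enters, since it controls the cubic-twist ambiguity inherent in the symmetric cube map, and where one must combine the trace-pseudocharacter construction used in the proof of Theorem \ref{sym3type} with Carayol's descent result. A secondary point requiring care is the verification of hypothesis (2) of Lemma \ref{subsym3} at a prime $P_0$ that need not be classical, so that the bigness results established earlier in the paper (from $\Z_p$-regularity and residual $\Sym^3$-type) have to be propagated to the generic fibre rather than merely applied at classical specialisations.
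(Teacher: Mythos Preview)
Your overall route coincides with the paper's. The paper argues the chain (i)$\Rightarrow$(ii)$\Rightarrow$(iii)$\Rightarrow$(iv), then closes the loops via (iv)$\Rightarrow$(ii) using Lemma~\ref{subsym3} and (iii)$\Rightarrow$(i) using Corollary~\ref{fctrchar}; your reorganisation, establishing (i)$\Leftrightarrow$(ii) and (i)$\Leftrightarrow$(iii) directly from Corollary~\ref{fctrchar} together with $\fc_{\theta,\Tr}=\fc_{\theta,0}\cdot\I_\Tr^\circ$ and lying-over, is an equivalent packaging of the same ingredients.

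Where you diverge from the paper is in (iii)$\Leftrightarrow$(iv). For (iii)$\Rightarrow$(iv) the paper simply sets $\rho_{P_0,1}=\rho_{P,1}|_{H_0}$ and stops, making no attempt to descend the coefficients to $\I_0^\circ/P_0$; you are right to flag this as the delicate point. However, your proposed fix via Theorem~\ref{carayol} does not close the gap: Carayol's descent needs the \emph{traces of the two-dimensional representation} $\rho_{P,1}|_{H_0}$ to lie in $\I_0^\circ/P_0$, and you do not establish this. What you know is that $\Tr(\Sym^3\rho_{P,1}|_{H_0})=\Tr\rho_{P_0}$ lands in $\I_0^\circ/P_0$, but passing from $\Tr(\Sym^3\tau)$ back to $\Tr\tau$ involves extracting roots and is exactly the cubic-twist ambiguity you mention; the definition of $H_0$ does not by itself force these roots into $\I_0^\circ/P_0$.

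For (iv)$\Rightarrow$(iii) both you and the paper appeal to Lemma~\ref{subsym3}. The paper does not verify hypothesis~(2) of that lemma; your verification is also incomplete. Corollary~\ref{nontrivunip} produces non-trivial unipotents in the image of the four-dimensional $\rho$, not in the image of the putative two-dimensional $\rho_{P_0,1}$, and one still has to argue that these unipotents come from $\Sym^3$ of unipotents in $\GL_2$ and that enough of them are present to yield a congruence subgroup of $\SL_2$ over the relevant coefficient ring (which, moreover, Lemma~\ref{subsym3} takes to be a field). So on this implication your proposal is at the same level of rigour as the paper's proof, not stronger.
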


\begin{proof}
We prove the chain of implications (i) $\implies$ (ii) $\implies$ (iii) $\implies$ (iv). If $P_0\supset\fc_{\theta,0}$ then $P_0\cdot\I_\Tr^\circ\supset\fc_{\theta,0}\cdot\I^\circ_\Tr=\fc_{\theta,\Tr}$. Now (ii) follows from Corollary \ref{fctrchar}. 

If (ii) holds for some $\I^\prime$ and $\rho_{P_0\I^\circ_\Tr,1}$ and if $P$ is a prime of $\I^\circ_\Tr$ lying above $P_0$ then $P\supset P_0\I^\circ_\Tr$, so it makes sense to reduce $\rho_{P_0\I_\Tr^\circ,1}$ modulo $P\I^\prime$. The resulting representation $\rho_{P,1}\colon G_\Q\to\GL_2(\I^\circ_\Tr/P)$ satisfies (iii).

If (iii) is satisfied by some $\rho_{P_0,1}$ then $\rho_{P_0,1}=\rho_{P,1}\vert_{H_0}$ satisfies (iv). 

We complete the proof by showing that (iv) $\implies$ (ii) and (iii) $\implies$ (i). 
If (iv) holds then the image of $\rho_{P_0}$ is contained in $\Sym^3\GL_2(\I^\circ_0/\fI)$. 
Since $\rho_{P_0}=\rho_{P_0\I_\Tr^\circ}\vert_{H_0}$ Lemma \ref{subsym3} implies that, after extending the coefficients to a finite extension $\I_0^\prime$ of $\I^\circ_\Tr/P_0\I^\circ_\Tr$ the image of $\rho_{P_0\I_\Tr^\circ}$ is contained in $\Sym^3\GL_2(\I_0^\prime)$. This proves (ii). 

Suppose that (iii) holds. By Corollary \ref{fctrchar} $P\supset\fc_{\theta,\Tr}$, so $P_0=P\cap\I_0^\circ\supset\fc_{\theta,0}$, which is (i).
\end{proof}


The following is a corollary of Proposition \ref{fcInonzero}.

\begin{cor}\label{fcnonzero}
The ideal $\fc_{\theta,0}$ is non-zero.
\end{cor}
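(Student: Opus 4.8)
The plan is to deduce this immediately from the non-vanishing of $\fc_\theta$ established in Proposition \ref{fcInonzero}, together with the relation $\fc_\theta = \fc_{\theta,\Tr}\cdot\I^\circ$ and $\fc_{\theta,\Tr} = \fc_{\theta,0}\cdot\I_\Tr^\circ$ proved in the two preceding propositions. Indeed, by Proposition \ref{fcInonzero} we have $\fc_\theta\neq 0$. Suppose for contradiction that $\fc_{\theta,0} = 0$. Then $\fc_{\theta,\Tr} = \fc_{\theta,0}\cdot\I_\Tr^\circ = 0$, and hence $\fc_\theta = \fc_{\theta,\Tr}\cdot\I^\circ = 0$, contradicting Proposition \ref{fcInonzero}. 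Therefore $\fc_{\theta,0}\neq 0$.

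The only subtlety is to make sure that the formation of the extension ideals is compatible with taking zero ideals, but this is automatic: if $\fc_{\theta,0}=0$ then the ideal it generates in any $\I_0^\circ$-algebra is still $0$, so both $\fc_{\theta,\Tr}$ and $\fc_\theta$ vanish. I expect no obstacle here; the substantive content has already been discharged in Proposition \ref{fcInonzero}, whose proof relies on Lemma \ref{sym3iota} (the $\Sym^3$-locus $\cD_{2,\Sym^3}^M$ contains no two-dimensional irreducible component of $\cD_2^M$, so the two-dimensional family $I$ cannot be contained in it). The present corollary is purely formal bookkeeping on top of that fact.
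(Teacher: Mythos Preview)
Your proof is correct and follows exactly the paper's approach: the paper simply states this as ``a corollary of Proposition \ref{fcInonzero}'', and your argument spells out the obvious chain $\fc_{\theta,0}=0 \Rightarrow \fc_{\theta,\Tr}=0 \Rightarrow \fc_\theta=0$ via the two preceding propositions.
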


\bigskip

\section{An automorphic description of the Galois level}\label{levcong}

In Section \ref{exlevel} we attached a Galois level to a family of finite slope $\GSp_4$-eigenforms. 
The goal of this section is to compare this Galois theoretic objects with the congruence ideal introduced in Section \ref{fortcong}, that is an object defined in terms of congruences of overconvergent automorphic forms.

We work in the setting of Theorem \ref{thexlevel}. In particular $h$ is a positive rational number, $\theta\colon\T_h\to\I^\circ$ is a family of $\GSp_4$-eigenforms of slope bounded by $h$ and $\rho\colon G_\Q\to\GSp_4(\I_\Tr^\circ)$ is the Galois representation associated with $\theta$. We make the same assumptions on $\theta$ and $\rho$ as in Theorem \ref{thexlevel}; in particular $\rho$ is $\Z_p$-regular and the residual representation $\ovl\rho$ is either full or of symmetric cube type. 
With the family $\theta$ we associate two ideals of $\I_0$:
\begin{itemize}[label={--}]
 \item the ideal $\fc_{\theta,0}\cdot\I_0$, where $\fc_{\theta,0}$ is the fortuitous $(\Sym^3,\I^\circ_0)$-congruence ideal (see Definition \ref{I0congr}); 
 \item the Galois level $\fl_\theta$ (see Definition \ref{deflevel}).
\end{itemize}
%

To simplify notations we write $\fc_{\theta,0}$ for $\fc_{\theta,0}\cdot\I_0$. 
For every ring $R$ and every ideal $\fI$ of $R$ we denote by $V_R(\fI)$ the set of primes of $R$ containing $\fI$. 
The theorem below is an analogue of \cite[Theorem 6.2]{cit}. The set $S^\bad$ of ``bad'' primes of $\I_0^\circ$ was defined in Section \ref{biglie}. Note that $V_{\I_0}(\fl_\theta)\cap S^\bad$ is empty because the property defining the Galois level only involves $\fl_\theta\cdot\B_r$, and the primes in $S^\bad$ are invertible in $\B_r$. 

\begin{thm}\label{comparison}
There is an equality of sets $V_{\I_0}(\fc_{\theta,0})-S^\bad=V_{\I_0}(\fl_\theta)$.
\end{thm}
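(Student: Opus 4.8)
The plan is to prove the two inclusions $V_{\I_0}(\fc_{\theta,0})-S^\bad\subset V_{\I_0}(\fl_\theta)$ and $V_{\I_0}(\fl_\theta)\subset V_{\I_0}(\fc_{\theta,0})-S^\bad$ separately, adapting the strategy of \cite[Theorem 6.2]{cit} to the group $\GSp_4$. Throughout, the bridge between the automorphic side (the congruence ideal, which detects $\Sym^3$ specializations) and the Galois side (the level, which detects smallness of $\Lie(\im\rho)$) is provided by Proposition \ref{cong0equiv}, which characterizes the primes containing $\fc_{\theta,0}$ as those $P_0$ such that $\rho_{P_0}$ is, up to extending scalars, a symmetric cube $\Sym^3\rho_{P_0,1}$.

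For the inclusion $V_{\I_0}(\fl_\theta)\subset V_{\I_0}(\fc_{\theta,0})$, let $P_0$ be a prime divisor of $\fl_\theta$; we may assume $P_0\notin S^\bad$. By definition of the Galois level, $\fl_\theta\cdot\fsp_4(\B_r)\subset\fG_r$ but $\fl_\theta$ is the largest such ideal, so localizing at $P_0$ the Lie algebra $\fG_{r}$ fails to contain $\fsp_4$ over the local ring; more precisely, after reducing modulo a height-one prime above $P_0$ one obtains that the image of $\rho$ modulo that prime has a Lie algebra strictly smaller than $\fsp_4$. Here I would use the classification of Zariski-closed connected subgroups of $\Sp_4$ recalled before Lemma \ref{sym3zar}, together with the $\Z_p$-regularity of $\rho$ (which forces a split maximal torus in the Zariski closure, ruling out the Klingen, Siegel and $\SL_2\times\SL_2$ cases exactly as in the proof of Lemma \ref{sym3zar}), to conclude that the only possibility for a proper reductive Zariski closure is $\Sym^3\SL_2$. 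Then Lemma \ref{subsym3} (applied with $\cG_1$ an open subgroup on which the representation restricts to a symmetric cube, coming from the fact that the relevant residual image is of $\Sym^3$ type and the determinant condition of Remark \ref{detformula} supplies the needed character $\eta$ with $\det\rho\cong\eta^6$) upgrades this to: after a finite extension of scalars, $\rho$ modulo the height-one prime is a genuine symmetric cube. Descending this to a statement over $\I_\Tr^\circ/P$ via Theorem \ref{carayol} and then applying the implication (iii) $\Rightarrow$ (i) of Proposition \ref{cong0equiv} gives $P_0\supset\fc_{\theta,0}$.

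For the reverse inclusion $V_{\I_0}(\fc_{\theta,0})-S^\bad\subset V_{\I_0}(\fl_\theta)$, let $P_0\supset\fc_{\theta,0}$ with $P_0\notin S^\bad$. By Proposition \ref{cong0equiv}(iv), $\rho_{P_0}\cong\Sym^3\rho_{P_0,1}$ over $\I_0^\circ/\fc_{\theta,0}$ localized at $P_0$ (or a suitable quotient), so the image of $\rho$ modulo $P_0$ lands in $\Sym^3\GL_2$; hence the Lie algebra $\fG_{r}/P_0\fG_r$ is contained in the small Lie algebra $\Sym^3\fsl_2$, whose image in $\fsp_4$ is a proper subspace cut out by the vanishing of the coordinates corresponding to roots not hit by $\Sym^3$. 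I would then run the congruence-subalgebra machinery of Section \ref{exlevel} in reverse: the inclusion $\fl_\theta\cdot\fsp_4(\B_r)\subset\fG_r$ together with the fact that $\fG_r$ mod $P_0$ misses an entire root space $\fu^\alpha$ forces $\fl_\theta\cdot\fu^\alpha(\B_r)\subset P_0\cdot\fu^\alpha(\B_r)$ for that $\alpha$; unwinding the identification $\fu^\alpha(\B_r)\cong\B_r$ and using that $P_0\notin S^\bad$ (so $P_0\B_r\ne\B_r$ by Remark \ref{badBr}, and the descent-of-scalars argument at the end of the proof of Theorem \ref{thexlevel} applies) yields $\fl_\theta\subset P_0$, i.e. $P_0\in V_{\I_0}(\fl_\theta)$.

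The main obstacle I anticipate is the second inclusion: going from ``$\rho_{P_0}$ is a symmetric cube'' to ``$\fl_\theta\subset P_0$'' requires controlling the $\B_r$-module structure on the unipotent parts $\fU^\alpha_{r,\C_p}$ established in Lemma \ref{Brstr} after reduction modulo $P_0$, and checking that this reduction behaves well with respect to the $B_r$-span arguments of Lemma \ref{alglatt}; one must verify that the ideal $\fl^\alpha$ produced there, being independent of $r$ and built from a $\Lambda_h$-lattice, reduces mod $P_0$ to something whose vanishing genuinely forces $\fl^\alpha\subset P_0$ rather than a weaker statement about associated graded pieces. The exclusion of $S^\bad$ is exactly what makes the relevant conjugating element $\gamma$ of Proposition \ref{expsen}(1) available after localization, so one has to track carefully that $P_0\notin S^\bad$ is used precisely at the point where one inverts the differences of eigenvalues of $C_{T_1,T_2}$. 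A secondary technical point is the passage between $\rho$, its trace-ring form $\rho_\theta$, and the representation $\rho_0$ twisted by a square root of the determinant, which already appeared in Section \ref{exlevel} and Lemma \ref{rhorho0}; one must ensure the symmetric-cube structure is preserved under these manipulations, which follows from the determinant computation in Remark \ref{detformula} together with Lemma \ref{subsym3}.
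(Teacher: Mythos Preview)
Your outline inverts the difficulty of the two inclusions. The inclusion $V_{\I_0}(\fc_{\theta,0})-S^\bad\subset V_{\I_0}(\fl_\theta)$ is the straightforward one, and the paper dispatches it by contradiction in a few lines: if $P_0\supset\fc_{\theta,0}$ but $\fl_\theta\not\subset P_0$, reducing the defining inclusion $\fl_\theta\cdot\fsp_4(\B_r)\subset\fG_r$ modulo $P_0$ yields a nonzero congruence subalgebra inside $\fG_{r,P_0}$, which is impossible since $\im\rho_{r,P_0}\subset\Sym^3\GL_2(\I_{r,0}/P_0)$ by Proposition \ref{cong0equiv}. Your root-space variant of this can be made to work but is unnecessarily delicate.

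The substantive gap is in your argument for $V_{\I_0}(\fl_\theta)\subset V_{\I_0}(\fc_{\theta,0})$. You assert that $\fl_\theta\subset P_0$ together with maximality of $\fl_\theta$ forces the Lie algebra of $\im\rho$ modulo $P_0$ to be a \emph{proper} subalgebra of $\fsp_4$, so that the Zariski closure is $\Sym^3\SL_2$. This deduction is not valid as stated: maximality of $\fl_\theta$ is a global condition on ideals of $\I_0$ and does not by itself prevent the reduction modulo a single prime $P_0$ from having full Zariski closure $\Sp_4$. The paper treats this explicitly as a separate case and rules it out as follows. After reducing to $P_0$ maximal, one applies Pink's Theorem \ref{pink} to the Zariski-dense image in $\PGSp_4(\I_{r,0}/P_0)$ (using that $\I_0/P_0$ is generated by the traces of $\Ad\rho_{P_0}$, via Propositions \ref{sttraces} and \ref{stringseq}) to obtain a congruence subgroup $\Gamma_{\I_{r,0}/P_0}(\fl_{r,P_0})$ inside $\im\rho_{r,P_0}$ for some nonzero $\fl_{r,P_0}$. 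One then lifts the resulting unipotent elements and re-runs the Sen-operator machinery of Section \ref{exlevel} to build an ideal $\fA_{P}\subset\I_0$ with $(\fl_\theta+\fA_{P})\cdot\fsp_4(\B_r)\subset\fG_r$ and $\fA_{P}\not\subset P_0$, contradicting the maximality of $\fl_\theta$. Without this Pink-plus-lifting step your argument is circular: you are assuming exactly what needs to be proved. Note also that the cases other than $\Sp_4$ and $\Sym^3\SL_2$ are excluded not by $\Z_p$-regularity but by the residual hypothesis (full or $\Sym^3$ type), exactly as in the proof of Lemma \ref{sym3zar}.
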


Recall that there is a natural inclusion $\iota_r\colon\I_0\into\I_{r,0}$.

\begin{proof}
First we prove that $V_{\I_0}(\fc_{\theta,0})-S^\bad\subset V_{\I_0}(\fl_\theta)-S^\bad$. Choose a radius $r$ in the set $\{r_i\}_{i\in\N^{>0}}$ defined in Section \ref{gspfam}. Let $P\in V_{\I_0}(\fc_{\theta,0})-S^\bad$ and let $\rho_P$ be the reduction of $\rho\vert_{H_0}\colon H_0\to\GSp_4(\I_0)$ modulo $P$. By Proposition \ref{cong0equiv} there exists a representation $\rho_{P,1}\colon H_0\to\GL_2(\I_0/P)$ such that $\rho_{P}\cong\Sym^3\rho_{P,1}$. 
Let $\rho_{r,P}=\iota_r\ccirc\rho_P$ and $\rho_{r,P,1}=\iota_r\ccirc\rho_{P,1}$. The isomorphism above gives $\rho_{r,P}\cong\Sym^3\rho_{r,P,1}$. 

Suppose by contradiction that $\fl_\theta\not\subset P$. By definition of $\fl_\theta$ we have $\fG_r\supset\fl_\theta\cdot\fsp_4(\B_r)$. Recall that $\B_r/P=\I_{r,0}/P$ by the construction of $\B_r$. By looking at the previous inclusion modulo $P$ we obtain
\begin{equation}\label{congsubalg} \fG_{r,P}\supset(\fl_\theta/(P\cap\fl_\theta))\cdot\fsp_4(\I_{r,0}/P). \end{equation}
Since $\fl_\theta\not\subset P$ we have $\fl_\theta/(P\cap\fl_\theta)\neq 0$. By definition $\fG_{r,P}=\Q_p\cdot\log\im\rho_{r,P}$. By our previous argument $\im\rho_{r,P}\subset\Sym^3\GL_2(\I_{r,0}/P\I_{r,0})$, so $\log\im\rho_{r,P}$ cannot contain a subalgebra of the form $\fI\cdot\fsp_4(\I_{r,0}/P\I_{r,0})$ for a non-zero ideal $\fI$ of $\I_{r,0}/P\I_{r,0}$. This contradicts Equation \eqref{congsubalg}. 

We prove the inclusion $V_{\I_0}(\fl_\theta)-S^\bad\subset V_{\I_0}(\fc_{\theta,0})-S^\bad$. Let $P$ be a prime of $\I_0$. We have to show that if $P\notin S^\bad$ and $\fl_\theta\subset P$ then $\fc_{\theta,0}\subset P$. Every prime of $\I_0$ is the intersection of the maximal ideals that contain it, so it is sufficient to show the previous implication when $P$ is a maximal ideal. 

Let $P$ be a maximal ideal of $\I_0$ such that $P\notin S^\bad$ and $\fl_\theta\subset P$. 
Let $\kappa_P$ be the residue field $\I_0/P$. We define two ideals of $\I_{r,0}$ by $\fl_{\theta,r}=\iota_r(\fl_\theta)\I_{r,0}$ and $P_r=\iota_r(P)\I_{r,0}$. Note that $\iota_r$ induces an isomorphism $\I_0/P\cong\I_{r,0}/P_r$. In particular $P_r$ is maximal in $\I_{r,0}$ and $\I_{r,0}/P_r\cong\kappa_P$, which is a local field.


As before let $\rho_{r,P}=\iota_r\ccirc\rho_P$. 
The residual representation $\ovl{\rho}_{r,P}\colon H_0\to\GSp_4(\I_{r,0}^\circ/\fm_{\I_{r,0}^\circ})$ associated with $\rho_{r,P}$ coincides with $\ovl{\rho}\vert_{H_0}$. In particular $\rho_{r,P}$ is of residual $\Sym^3$ type in the sense of Definition \ref{sctype}. Let $G_{r,P}=\im\rho_{r,P}$ and $G_{r,P}^\circ$ be the connected component of the identity in $G_{r,P}$. Let $\ovl{G_{r,P}^\circ}^\Zar$ be the Zariski closure of $G_{r,P}^\circ$ in $\GSp_4(\I_{r,0}/P_r)$. Since $\rho_{r,P}$ is residually either full or of symmetric cube type, by the classification preceding Lemma \ref{sym3zar} one of the following must hold:
\begin{enumerate}[label=(\roman*)]
\item the algebraic group $\ovl{G_{r,P}^\circ}^\Zar$ is isomorphic to $\Sym^3\SL_2$ over $\I_{r,0}/P_r$;
\item the algebraic group $\ovl{G_{r,P}^\circ}^\Zar$ is isomorphic to $\Sp_4$ over $\I_{r,0}/P_r$. 
\end{enumerate}

In the two cases let $H^0$ denote the normal open subgroup of $H_0$ satisfying $\im\rho_{r,P}\vert_{H^0}=G_{r,P}^\circ$. Since $H_0$ is open and normal in $G_\Q$, $H^0$ is also open and normal in $G_\Q$. In case (i) there exists a representation $\rho_{r,P}^0\colon H^0\to\GL_2(\I_{r,0}/P_r)$ such that $\rho_{r,P}\vert_{H^0}\cong\Sym^3\rho_{r,P}^0$. Since the image of $\rho_{r,P}\vert_{H^0}$ is Zariski-dense in the copy of $\SL_2(\I_{r,0}/P_r)$ embedded via the symmetric cube map, the image of $\rho_{r,P}^0$ is Zariski-dense in $\SL_2(\I_{r,0}/P_r)$. From Lemma \ref{subsym3} we deduce that $\im\rho_{r,P}^0$ contains a congruence subgroup of $\SL_2(\I_{r,0}/P_r)$. Now the hypotheses of Lemma \ref{subsym3} are satisfied by the representation $\rho_{r,P}^0$ and the group $H^0$, so we conclude that there exists a representation $\rho_{H_0,r,P}^\prime\colon H_0\to\GL_2(\I_{r,0}/P_r)$ such that $\rho_{H_0,r,P}\cong\Sym^3\rho_{H_0,r,P}^\prime$. 
By Proposition \ref{cong0equiv} the prime $P$ must contain $\fc_{\theta,0}$, as desired. 

We show that case (ii) never occurs. Suppose by contradiction that $\ovl{G_{H_0,r,P}^\circ}^\Zar\cong\Sp_4$ over $\I_{r,0}/P_r$. 
By Propositions \ref{sttraces} and \ref{stringseq} we know that the field $\I_{0}/P$ is generated over $\Q_p$ by the traces of $\Ad(\rho_{P}\vert_{H_0})$. Hence the field $\I_{r,0}/P_r$ is generated over $\Q_p$ by the traces of $\Ad\rho_{r,P}$. 
By Theorem \ref{pink} applied to $\im\rho_{r,P}$ there exists a non-zero ideal $\fl_{r,P}$ of $\I_{r,0}/P_r$ such that $G_{r,P}$ contains the principal congruence subgroup $\Gamma_{\I_{r,0}/P_r}(\fl_{r,P})$ of $\Sp_4(\I_{r,0}/P_r)$. 
By definition $\fG_{r,P}=\Q_p\cdot\log(\im\rho_{r,P}\vert_{H_{r}})$ where $H_r$ is an open $G_\Q$, so up to replacing $\fl_{r,P}$ by a smaller non-zero ideal we have 
\begin{equation}\label{logincl} \fl_{r,P}\cdot\fsp_4(\I_{r,0}/P_r)\subset\log(\Gamma_{\I_{r,0}/P_r}(\fl_{r,P}))\subset\log(\iota_{r,0}(G_{P}))\subset\fG_{r,P}. \end{equation}
The algebras $\fG_{r,P}$ are independent of $r$ in the sense of Remark \ref{lieind}, so there exists an ideal $\fl_{P}$ of $\I_0/P$ such that, for every $r$ in the set $\{r_i\}_{i\geq 1}$, the ideal $\fl_{r,P}=\iota_r(\fl_{P})$ satisfies Equation \eqref{logincl}. We choose the ideals $\fl_{r,P}$ of this form.

As before $\Delta$ is the set of roots of $\GSp_4$ with respect to the chosen maximal torus. 
Let $\alpha\in\Delta$. Let $\fU_r^\alpha$ and $\fU^\alpha_{r,P_r}$ be the nilpotent Lie subalgebras respectively of $\fG_r$ and $\fG_{r,P_r}$ corresponding to $\alpha$. We denote by $\pi_{P_r}$ the projection $\fgsp_4(\B_r)\to\fgsp_4(\B_r/P_r\B_r)$. Clearly $\fG_{r,P_r}=\pi_{P_r}(\fG_r)$, so $\fU^\alpha_{r,P_r}=\pi_{P_r}(\fU_r^\alpha)$. Equation \eqref{logincl} gives $\fl_{r,P}\fu^\alpha(\I_{r,0}/P_r)\subset\fU^\alpha_{r,P_r}$. Choose a subset $A^\alpha_{P}$ of $\fu^\alpha(\I_0)$ such that, for every $r$, $\iota_r(A^\alpha_{P})\subset\fU_r^\alpha$ and $\pi_{P_r}(\iota_r(A^\alpha_{P}))=\fl_{r,P}\fu^\alpha(\I_{r,0}/P_r)$. Such a set exists because the algebras $\fU_r^\alpha$ are independent of $r$ by Remark \ref{lieind} and the ideals $\fl_{r,P}$ have been chosen of the form $\iota_r(\fl_{P})$. 
Set $\fA_{P}=\left(\prod_{\alpha\in\Delta}\fA^\alpha_{P}\right)^4$. 
By the same argument as in the proof of Theorem \ref{thexlevel}, the ideal $\fA^\alpha_{P}$ satisfies 
\[ \iota_r(\fA^\alpha_{P})\cdot\fsp_4(\B_r)\subset\fG_r. \]
Since $\fl_\theta\cdot\fsp_4(\B_r)\subset\fG_r$ for every $r$, we also have $(\fl_\theta+\fA^\alpha_{H_0,P})\fsp_4(\B_r)\subset\fG_r$ for every $r$. 

By assumption $\fl_\theta\subset P$, so $\pi_P(\fl_\theta)=0$. By definition of $\fA^\alpha_{H_0,P}$ we have $\pi_P(\fA^\alpha_{P})\supset\pi_P(A_{P})=\fl_{P}$, so $\pi_P(\fl_\theta+\fA_{P})=\fl_{P}$. 
We deduce that $\fl_\theta+\fA^\alpha_{P}$ is strictly larger than $\fl_\theta$. This contradicts the fact that $\fl_\theta$ is the largest among the ideals $\fl$ of $\I_0$ satisfying $\fl\cdot\fsp_4(\B_r)\subset\fG_r$. 
\end{proof}

\begin{cor}\label{fullcong}
When the residual representation $\ovl\rho$ is full, the Galois level $\fl_\theta$ is trivial.
\end{cor}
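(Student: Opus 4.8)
The plan is to deduce the corollary from Theorem~\ref{comparison} together with the triviality of the fortuitous $\Sym^3$-congruence ideal in the full case. The one point requiring an argument is the group-theoretic fact that a full residual representation admits no symmetric cube structure: if $\ovl\rho$ is full then there is no representation $\ovl\rho_1\colon G_\Q\to\GL_2(\Fp)$ with $\ovl\rho\cong\Sym^3\ovl\rho_1$. First I would prove this. Such an isomorphism, combined with the conjugation occurring in Definition~\ref{sctype}(i), would place $\Sp_4(\F')$ inside some $\GالسP_4(\Fp)$-conjugate of $\Sym^3\GL_2(\Fp)$ (here $\F'$ denotes the subfield of Definition~\ref{sctype}(i)); since $p>3$ the group $\Sp_4(\F')$ is perfect, so it would even lie in a conjugate of the derived subgroup $[\Sym^3\GL_2(\Fp),\Sym^3\GL_2(\Fp)]=\Sym^3\SL_2(\Fp)$, which is isomorphic to $\SL_2(\Fp)$ because the kernel $\mu_3$ of $\Sym^3$ meets $\SL_2$ trivially. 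Thus $\Sp_4(\F')$ would embed in $\SL_2(\Fp)$; but every finite non-abelian composition factor of a subgroup of $\SL_2$ over an algebraically closed field is of type $A_1$ (Dickson's classification of finite subgroups of $\SL_2$), whereas $\PSp_4(\F')$, simple of type $C_2$ since $|\F'|\geq p\geq 5$, is a composition factor of $\Sp_4(\F')$ — a contradiction.

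Granting this, Corollary~\ref{trivcong} gives $\fc_\theta=\I^\circ$. Then $1\in\fc_\theta$, and since $1$ lies in $\I^\circ_\Tr$ and in $\I^\circ_0$, the intermediate ideals satisfy $\fc_{\theta,\Tr}=\fc_\theta\cap\I^\circ_\Tr=\I^\circ_\Tr$ and $\fc_{\theta,0}=\fc_{\theta,\Tr}\cap\I^\circ_0=\I^\circ_0$, hence $\fc_{\theta,0}\cdot\I_0=\I_0$ in the notation of Theorem~\ref{comparison}. Consequently $V_{\I_0}(\fc_{\theta,0})=\varnothing$, and Theorem~\ref{comparison} gives $V_{\I_0}(\fl_\theta)=V_{\I_0}(\fc_{\theta,0})-S^\bad=\varnothing$. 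Finally, $\I_0\neq 0$, so any proper ideal would be contained in a maximal ideal and would thereby produce a point of $V_{\I_0}(\fl_\theta)$; emptiness of this set forces $\fl_\theta=\I_0$. By Definition~\ref{deflevel} — equivalently, by the inclusion \eqref{levincl} becoming the equality $\fsp_4(\B_r)\subseteq\fG_r$ for every $r$ — this is precisely the assertion that the Galois level of $\theta$ is trivial.

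I do not expect a genuine obstacle: everything after the first paragraph is formal, and the group-theoretic exclusion, although the only substantive ingredient, is classical. The things to be careful about are the bookkeeping (that the unit ideal propagates along $\fc_\theta$, $\fc_{\theta,\Tr}$, $\fc_{\theta,0}$, $\fc_{\theta,0}\cdot\I_0$) and the reading of ``trivial Galois level'' as $\fl_\theta=\I_0$, consistently with the convention — as in Corollary~\ref{trivcong} — that a trivial congruence ideal is the unit ideal.
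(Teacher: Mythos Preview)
Your argument is correct and follows the same route as the paper, which simply cites Theorem~\ref{comparison} and Corollary~\ref{trivcong}; you have merely unpacked the implicit group-theoretic step (full $\Rightarrow$ not a symmetric cube) and the formal passage from $\fc_\theta=\I^\circ$ through $\fc_{\theta,0}$ to $\fl_\theta=\I_0$. One cosmetic issue: the string ``$\GالسP_4$'' in your first paragraph contains stray non-ASCII characters and will not compile --- replace it with $\GSp_4$.
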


\begin{proof}
This follows immediately from Theorem \ref{comparison} and Corollary \ref{trivcong}.
\end{proof}

\bigskip

\bigskip

\bigskip

\vspace{1cm}

{\scshape Andrea Conti
%
%
%
%
%

\medskip

Department of Mathematics and Statistics, Concordia University

McConnell Library Building, 1400 De Maisonneuve Blvd. W.

Montreal, Quebec, Canada, H3G 1M8}

\medskip

\textit{E-mail address}: \url{contiand@gmail.com}

\end{document}